\documentclass{article}

\usepackage{amssymb, amsmath,amsthm}
\usepackage{xfrac}
\usepackage{mathrsfs}
\usepackage{bbm}
\usepackage{amscd}
\usepackage[active]{srcltx}
\usepackage{verbatim}
\usepackage{graphicx}

\usepackage[backend=bibtex,style=alphabetic]{biblatex}
\usepackage[utf8]{inputenc}
\usepackage[T1]{fontenc}
\usepackage{lmodern}

\usepackage{hyperref} 
\usepackage{todonotes}
\usepackage{enumerate}
\usepackage{mathtools}
\usepackage{bbm}

\usepackage{algorithm}
\usepackage{algpseudocode}
\usepackage{algorithmicx}
\algdef{SE}[DOWHILE]{Do}{doWhile}{\algorithmicdo}[1]{\algorithmicwhile\ #1}

\usepackage{array,multirow,float}
\usepackage{soul}

\usepackage{tikz}
\usetikzlibrary{arrows.meta}

\mathtoolsset{showonlyrefs,showmanualtags}

\theoremstyle{plain}
\newtheorem{theorem}{Theorem}[section]
\theoremstyle{remark}
\newtheorem{remark}[theorem]{Remark}
\newtheorem{example}[theorem]{Example}
\theoremstyle{plain}
\newtheorem{corollary}[theorem]{Corollary}
\newtheorem{lemma}[theorem]{Lemma}
\newtheorem{proposition}[theorem]{Proposition}
\newtheorem{definition}[theorem]{Definition}

\numberwithin{equation}{section}

\allowdisplaybreaks

\renewcommand{\d}{{\boldsymbol{d}}}
\renewcommand{\b}{{\bf{w}}}

\DeclareMathOperator{\GQ}{GQ}
\newcommand{\PP}{\mathbb{P}}

\title{Unified framework for asymptotically uniform iterative construction of generalised random graphs with  local constraints}
\author{Ivan Kryven, Rik Versendaal, Mike de Vries}
\date{\today}

\begin{document}

	\maketitle

	\begin{abstract}
		We develop a unified framework for constructing combinatorial structures under local constraints. Our approach extends the configuration model for random graphs with a prescribed degree sequence, and covers many special cases, including bipartite graphs, directed graphs, oriented graphs, edge-colored (bipartite) graphs, and (directed) hypergraphs.
		
		By reformulating half-edge matching as an independent set problem in an auxiliary graph, we identify \emph{2-uniformity}, a property characterising when greedy sampling preserves asymptotic uniformity. We classify all 2-uniform graphs and show that only two classes, the configuration space and the bipartite configuration space, have unbounded independence number, enabling the asymptotic regime. Our main theorem then gives the asymptotic sampling distribution and enumeration formulae for configurations, with error terms of order $O(d_{\max}^4\log m/m+d_{\max}^2(\log m)^2/m)$ as the number of edges $m$ tends to infinity with maximum degree $d_{\max}=O(m^{\sfrac14}/\log m)$. This settles the long-standing $O(m^{\sfrac14-\tau})$ bound (for some fixed $\tau > 0$), making the critical exponent explicit.
		
		Furthermore, our theorem accommodates forbidden edges, provided that each vertex participates in at most $O(m^{\sfrac14}/\log m)$ of them. In particular, this enables the sampling of edge-colored graphs with prescribed degree sequences for each color class by constructing the colored subgraphs one at a time.
	\end{abstract}

	\tableofcontents

	\section{Introduction}\label{section_introduction}

	The study of graphs with prescribed degree sequences began with the foundational combinatorial characterisation results of Havel~\cite{havel1955}, Hakimi~\cite{hakimi1962}, and Erdős--Gallai~\cite{erdos1960}. These results answered the basic existence question: which sequences of non-negative integers arise as degree sequences of simple graphs?  Havel and Hakimi provided constructive processes, while Erdős and Gallai gave a non-constructive characterisation in terms of linear inequalities. This line of inquiry was continued with the Gale--Ryser Theorem~\cite{Gale1957,Ryser1957} for bipartite graphs and Fulkerson's characterisation~\cite{fulkerson1960} for directed graphs. Tutte's $f$-factor theorem~\cite{Tutte1952} addresses a related question: whether a host graph contains a spanning subgraph satisfying prescribed per-vertex degree constraints.
	Further extensions include the sequence packing problem~\cite{West2012}, where one verifies whether multiple degree sequences can be realised by edge-disjoint subgraphs on the same vertex set. This can also be interpreted as the realisation problem for edge-colored graphs. The realisation problem for hypergraphs has also been addressed~\cite{Li2025,Deza2018}.

	As probabilistic methods gained prominence in combinatorics, the focus has shifted from realisation problems to two closely related follow-up questions: how many graphs have a given degree sequence, and how to sample uniformly at random from the set of all such graphs. 
	While the enumeration questions were treated in works of McKay, Bender, Greenhill and their co-authors~\cite{mckay1985asymptotics,mckay1984asymptotics,BENDER1978296,GREENHILL2008,greenhill2024enumerationdihypergraphsspecifieddegrees,greenhill2026asymptoticenumerationconstrainedbipartite},
	the first sampling results were addressed separately  with Markov chain Monte Carlo, using switching moves to walk on the space of realisations. For these approaches, rapid mixing has been established for bounded-degree graphs by McKay and Wormald~\cite{MCKAY199052} and later for directed graphs with irregular degree sequences by Greenhill and Sfragara~\cite{Greenhill2018}.
	
	Parallel to these developments, Bayati, Kim, and Saberi~\cite{Bayati} (who followed up on Steger and Wormald \cite{Steger1999}) considered constructing simple graphs sequentially. They introduced an iterative process that achieves a $1+o(1)$ deviation from uniformity when the maximum degree is bounded as $d_{\max} = O(m^{\sfrac14-\tau})$ for some fixed $\tau > 0$ as the number of edges $m$ tends to infinity. The advantage of sequential approaches is that they run in near-linear time and can simultaneously estimate the partition function, thereby yielding enumeration estimates. However, in contrast to the plethora of cases for which the realisation problem was resolved, few other generalisations of the iterative process are known to date, besides the extension to directed graphs~\cite{Ieperen2024}, and the reverse-process approach of Arman, Guo and Wormald~\cite{Arman2021}, which starts from a complete configuration model multigraph and iteratively repairs it via local edge switches.

	The iterative process introduced in~\cite{Bayati}  builds on the configuration model for random graphs with a given degree sequence~\cite{molloy1995}. In the configuration model, each vertex $v_i$ is assigned $d_i$ half-edges, and a uniformly random perfect matching is placed on the collection of all half-edges to obtain a multigraph. Conditioning on the event that no self-loops or multi-edges occur yields uniformly random simple graphs, but the probability of this event decays exponentially as the maximum degree grows. To overcome this probability-decay issue, the iterative process selects half-edge pairs sequentially, rejecting any pair that creates a self-loop or multi-edge. 
	By assigning non-uniform sampling weights to admissible pairs, and thereby compensating for the combinatorial bias introduced by the sequential constraints, the iterative process yields a distribution on simple graphs that is asymptotically uniform under the above-mentioned maximum degree assumption.

	\paragraph*{Unified framework.}
	In this paper, we introduce a unified framework for the asymptotically uniform
	iterative construction of generalised random graphs. To ensure broad generality, we reformulate the configuration model in a more abstract setting by viewing half-edge matching as the independent set problem in an auxiliary graph, which we refer to as the \emph{configuration space}.
	We then identify a fundamental property of the configuration space, which we call \emph{$2$-uniformity}, that characterises when greedy construction of a maximal independent set preserves asymptotic uniformity. In Theorem~\ref{theorem_classification}, we classify all $2$-uniform graphs. Apart from the configuration space, only one additional infinite family admits unbounded independence number, which is related to the configuration space for bipartite graphs. We thus refer to this family as the \emph{bipartite configuration space}. This demonstrates that $2$-uniform graphs are a natural way to generalize the configuration model. Indeed, any asymptotic statement about the configuration model for undirected or bipartite graphs translates directly to 2-uniform graphs. This division into two classes of configuration spaces is already implicitly present in the literature, wherein asymptotic results are typically handled separately for configuration models for undirected or bipartite (directed) graphs: see for instance~\cite{mckay1985asymptotics} vs.~\cite{mckay1984asymptotics} or~\cite{BENDER1978296} vs.~\cite{GREENHILL2008} for asymptotic enumeration of graphs, and~\cite{Bayati} vs.~\cite{Ieperen2024} for iterative sampling.

	To formulate our generalised iterative process on $2$-uniform graphs, we
	furthermore generalize the concepts of forbidden structures (such as self-loops or multi-edges in the configuration model for simple graphs) by introducing a set $F$ of forbidden vertices and an equivalence relation $R$ whose equivalence classes consist of forbidden pairs of vertices in the configuration space.  Then, the iterative graph sampling process translates to the greedy sampling of an independent set that does not contain forbidden structures from $F$ and $R$. By further introducing a sampling weight function $\b$, we define the \emph{iterative maximal feasible independent set (IMFIS) process} on a $2$-uniform graph. In our main theorem, Theorem~\ref{theorem_main}, we state the asymptotic sampling distribution of the IMFIS process on a $2$-uniform graph. By choosing $F$, $R$, and $\b$ appropriately, we show that the IMFIS process can be used on the (bipartite) configuration space to sample undirected graphs, bipartite graphs, directed graphs, oriented graphs, edge-colored (bipartite) graphs, and (directed) hypergraphs. Theorem~\ref{theorem_main} then allows us to determine the asymptotic sampling distribution of these processes. As an immediate consequence, we also obtain asymptotic enumeration formulae for the size of the sample space for each of the aforementioned graphs. 
	
	The freedom to choose arbitrary forbidden structures $F$ and $R$ is crucial for our framework. For instance, the extension to edge-coloured (bipartite) graphs is realised by choosing appropriate forbidden structures, so we can satisfy the degree sequence for each color class one by one. The extension to directed hypergraphs is then realised by embedding the incidence graph of the hypergraph into the bipartite configuration space and using two colours to represent in- and out-edges. We also extend the setting from~\cite{Bayati} by accommodating forbidden edges, essentially packing a degree sequence with a fixed graph, which is also related to Tutte's $f$-factor problem~\cite{Tutte1952}, as we look for an $f$-factor in a complement graph.
	
	Finally, we use our general setting of $2$-uniform graphs to sharpen the asymptotic bounds for the special cases available in the literature. For instance, when sampling undirected graphs, we show that our iterative process allows $d_{\max}=O(m^{\sfrac14}/\log m)$ rather than $d_{\max}=O(m^{\sfrac14-\tau})$ for some fixed $\tau>0$, and we show that the $1+o(1)$ deviation factor is bounded by $1+O(d_{\max}^4\log m/m+d_{\max}^2(\log m)^2/m)$. This settles the $O(m^{\sfrac14-\tau})$ bound from \cite{Bayati}, making the critical exponent of $\sfrac14$ explicit. These improved bounds generalize to each of the aforementioned types of graphs, and we expect that these bounds are tight up to a polylogarithmic factor. We also provide a secondary main result, Theorem~\ref{theorem_space}, on the probability of fixed structures appearing in the IMFIS process, which we use to bound the probability of certain forbidden structures appearing when sampling (directed) hypergraphs.

	\paragraph*{Structure of the paper and proof techniques.} Before stating our main theorems in the abstract setting of $2$-uniform graphs, Section~\ref{section_results} presents a list of corollaries of the main theorems applied to recognisable special cases of random graphs: undirected graphs, bipartite graphs, directed graphs, oriented graphs, edge-colored (bipartite) graphs, and (directed) hypergraphs. For each of these special cases, an iterative sampling  process is defined, and a theorem is given that provides the resulting asymptotic sampling distribution and a graph counting formula. The section is self-contained, in that it is not yet necessary to be familiar with the general IMFIS process for this section, nor is it necessary to read through this section to understand the rest of the paper. The section serves as a lookup table for cases of application, as well as for comparing implications of our main theorem, such as enumeration estimates, with results present in the literature.
	
	In Section~\ref{section_independent}, we build up the abstract machinery of the configuration space and the bipartite configuration space, as well as the IMFIS process, which we use throughout the rest of the paper. We show that the IMFIS process is well-behaved on the (bipartite) configuration space, and we use this discussion to motivate our definition of $2$-uniform graphs. We furthermore discuss the basic properties of $2$-uniform graphs, and we give their full classification. 
	
	In Section~\ref{section_main}, we use the abstract machinery from Section~\ref{section_independent} to state our main theorem, Theorem~\ref{theorem_main}, and our secondary theorem, Theorem~\ref{theorem_space}, including some remarks and an immediate corollary on uniformity and enumeration.
	
	In Section~\ref{section_applications}, we apply the results from Section~\ref{section_main} to prove the results from Section~\ref{section_results}. This section mainly serves to show how all the aforementioned types of graphs can be modelled by means of $2$-uniform graphs. 
	
	Finally, Section~\ref{section_proof} contains the proofs of all the results from Section~\ref{section_main}, dominated by the proof of Theorem~\ref{theorem_main}. The argument combines concentration inequalities, stochastic ordering theory, and a double-counting technique. To analyze the outcome distribution of the IMFIS process, we calculate the probability of an arbitrary specified sample outcome set $S$. We consider a uniformly random order in which the elements of $S$ can be chosen. After choosing $r$ elements, this yields a uniformly random subset of $S$ of size $r$, which we compare to a simplified model where each element of $S$ is added independently with probability $r/|S|$. This lets us apply Vu's concentration inequality for graph parameters~\cite{Vu}, controlling the deviation of our iterative selection process from its expected behavior. We then bound the probability that the IMFIS process terminates prematurely at an incomplete set $S$ (because no admissible pairs remain outside $S$) using a stochastic order argument~\cite{BookSO}. Finally, we compare the collection of all incomplete sets with the collection of all complete sets via a double-counting argument, using a concept of relatedness inspired by~\cite{MCKAY2003273}.

	\section{Sampling Various Types of Graphs}\label{section_results}
	This section catalogues applications of Theorem~\ref{theorem_main} to concrete 
	graph families: undirected, bipartite, directed, oriented, edge-colored, and hypergraphs. For each family we state an iterative sampling process achieving asymptotic uniformity, allow for forbidden edges, and give the corresponding 
	deviation bound. These results serve both as immediate corollaries of our main theorem and as a self-contained reference for readers focused on applying our Theorem~to a specific settings.
	
	The section is designed to be read independently from the rest of the paper. A reader seeking only the practical implications can consult this section alone, while those interested in the proof machinery should proceed to Sections~\ref{section_independent}--\ref{section_proof}.
	
	For each graph family, we report two quantities: The asymptotic deviation from uniformity and a counting formula for the number of graphs with the specified degree sequence. The deviation bounds either improve existing results or, where no prior bounds were known, fill a gap in the literature. Counting formulae agree with established results for undirected, bipartite, directed, oriented, and directed hypergraphs, and appear to be new for edge-colored (bipartite) graphs and hypergraphs.

	We use the following notation consistently throughout this section. Consider vertex sets $A=\{v_1,\ldots,v_n\}$ and $B=\{v_1',\ldots,v_{n'}'\}$ with degree sequences $\d=(d_1,\ldots,d_n)$ and $\d'=(d_1',\ldots,d_{n'}')$ of non-negative integers with sums $s:=d_1+\ldots+d_n>0$ and $s':=d_1'+\ldots+d_{n'}'>0$. We define the \emph{branching factor} of the degree sequence $\d$ by $$\lambda_{\d}:=\frac1s\sum_{i=1}^{n}d_i(d_i-1).$$ The interpretation of this quantity is that, if half‑edges were paired uniformly at random in the configuration model, the branching factor approximates the average number of second-order neighbours. For a set $X$ of unordered pairs of vertices from $A$, we define its \emph{degree-degree interaction weight} by $$\mu_{\d}^{X}:=\frac1s\sum_{\{v_i,v_j\}\in X}d_id_j.$$ This quantifies how strongly high‑degree vertices are over-represented in $X$ by the configuration model. Analogously, for a set $X\subset A\times B$, we define $$\mu_{\d,\d'}^{X}:=\frac1{\sqrt{ss'}}\sum_{(v_i,v_j')\in X}d_id_j'.$$ 
	If $n=n'$, then we write $\d\leq\d'$ if $d_i\leq d_i'$ for $i=1,\ldots,n$, we define $\d-\d':=(d_1-d_1',\ldots,d_n-d_n')$, and we define $$\lambda_{\d,\d'}:=\frac1{\sqrt{ss'}}\sum_{i=1}^{n}d_id_i'.$$ Finally, for real numbers $m,\Delta>0$, we define the quantity
	\begin{equation}\label{equation_script_e}
		\mathscr{E}(m,\Delta):=\frac{\Delta^2\log m}{m}+\frac{\Delta(\log m)^2}{m},
	\end{equation}
	which occurs in many of our asymptotic results.
	
	\begin{remark}\label{remark_asymptotics}
		Throughout the paper, we generally consider the asymptotics as one specific parameter tends to infinity. We emphasize for clarity that this does not rely on some underlying sequence of objects and values that depend on this parameter. So unless a value is explicitly mentioned to be fixed, the constants hidden by the asymptotics may never depend on such a value.
		
		For example, consider Theorem~\ref{theorem_simple} from Section \ref{subsection_results_simple}, where $m\to\infty$. The assumption $\Delta=O(m^{1/2}/(\log m)^2)$ simply means that we assume there exist constants $C,N>0$ such that $m\geq N$ implies $|\Delta|\leq Cm^{1/2}/(\log m)^2$, as defined by big O notation. Then the terms $O(\Delta/m)$ and $O(\mathscr{E})$ in the conclusion imply the existence of constants $C_1,N_1,C_2,N_2>0$ such that these terms are absolutely bounded by $C_1\Delta/m$ and $C_2\mathscr{E}$ if $m\geq N_1$ or $m\geq N_2$ respectively. Note that the constants $C_1,N_1,C_2,N_2$ may thus depend on $C$ and $N$, but not on anything else.
	\end{remark}
	
	\subsection{Undirected graphs}\label{subsection_results_simple}
	
	We consider the following iterative stochastic process for sampling a random simple undirected graph with a given degree sequence. This is essentially a reformulation of Procedure A from~\cite{Bayati}.
	
	\begin{definition}[Iterative undirected graph process]\label{definition_process_simple}
		Consider the vertex set $V=\{v_1,\ldots,v_n\}$, consider a degree sequence $\d=(d_1,\ldots,d_n)$ of non-negative integers with $2m:=d_1+\ldots+d_n>0$ even, and consider a set $X$ of forbidden edges.
		\begin{itemize}
			\item Start with no edges $E_0=\emptyset$.
			\item For $r=0,\ldots,m-1$, let $d_{i,r}$ denote the degree of vertex $v_i$ in $(V,E_r)$, and consider those pairs of vertices $v_i,v_j\in V$ with $i\neq j$, $d_{i,r}<d_i$, $d_{j,r}<d_j$, and $\{v_i,v_j\}\not\in E_r\cup X$.
			\begin{itemize}
				\item If no such pairs exist, let $E_{r+1}=E_r$.
				\item Otherwise, choose such a pair $v_i,v_j$ at random with probability proportional to $(d_i-d_{i,r})(d_j-d_{j,r})(1-\tfrac{d_id_j}{4m})$, and let $E_{r+1}=E_r\cup\{\{v_i,v_j\}\}$.
			\end{itemize}
			\item Finally, apply rejection sampling to sample $E=E_m$ with $m$ edges.
		\end{itemize}
		This guarantees that $(V,E)$ is a simple undirected graph with degree sequence $\d$ and with no edges from $X$.
	\end{definition}
	
	To clarify the rejection sampling, if $E_m$ does not have $m$ edges, then we redo the entire process, and we repeat this until we find that $E_m$ does have $m$ edges.
	
	\begin{theorem}\label{theorem_simple}
		Consider the iterative undirected graph process on $V$, $\d$, and $X$. Let $x_{\max}$ be the maximum degree in $(V,X)$, and let
		\begin{align}
			d_{\max}&:=\max\{d_1,\ldots,d_n\},
			\\\Delta&:=d_{\max}(d_{\max}+x_{\max}).
		\end{align}
		Consider the asymptotics as $m\to\infty$, and assume that $\Delta=O(m^{\sfrac12}/(\log m)^2)$. Then the probability of rejection is $O(\Delta/m)$. In particular, for $m$ large enough, there exists a simple undirected graph with degree sequence $\d$ and with no edges from $X$. Furthermore, any such graph is sampled with probability $(1+O(\mathscr{E}))\mathscr{P}$, where $\mathscr{E}:=\mathscr{E}(m,\Delta)$ and $$\mathscr{P}:=\exp(\tfrac12\lambda_\d+\tfrac14\lambda_\d^2+\mu_{\d}^{X})\frac1{(2m-1)!!}\prod_{i=1}^{n}d_i!,$$ which is asymptotically uniform. The total number of such graphs is thus $(1+O(\mathscr{E}))\mathscr{P}^{-1}$.
	\end{theorem}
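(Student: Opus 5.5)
Theorem~\ref{theorem_simple} is meant as a corollary of the main result, Theorem~\ref{theorem_main}, applied to the IMFIS process on the (non-bipartite) configuration space. So the plan is a translation argument rather than a direct analysis.

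\textbf{Setting up the translation.} Give each vertex $v_i$ its $d_i$ half-edges, $2m$ in total. The configuration space is the auxiliary graph whose vertices are the unordered pairs of half-edges, with two pairs adjacent when they share a half-edge. Independent sets of size $m$ are then exactly the perfect matchings, i.e.\ configurations. The forbidden structures are chosen as follows.
\begin{itemize}
\item $F$ consists of the pairs whose two half-edges lie at the same vertex (self-loops), together with the pairs joining $v_i$ and $v_j$ with $\{v_i,v_j\}\in X$.
\item $R$ declares two pairs equivalent when they join the same two vertices, so choosing two pairs from one class would give a multi-edge.
\end{itemize}
With these choices, a feasible maximal independent set of size $m$ is a configuration projecting to a simple graph avoiding $X$. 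Each such graph has exactly $\prod_i d_i!$ preimages.

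\textbf{Matching the process.} Take the weight $\b(\{h,h'\})=1-\tfrac{d_id_j}{4m}$ for $h\in v_i$, $h'\in v_j$. Choosing an admissible half-edge pair with probability proportional to $\b$, then projecting to vertices, picks $\{v_i,v_j\}$ with probability proportional to $(d_i-d_{i,r})(d_j-d_{j,r})(1-\tfrac{d_id_j}{4m})$. This is the step of Definition~\ref{definition_process_simple}.

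There is one subtlety: the graph process forgets which half-edges were used. I would check that, conditioned on the projected graph, all $\prod_i d_i!$ configurations are equally likely. Then the graph probability is $\prod_i d_i!$ times the probability of any single configuration. Rejection sampling corresponds to conditioning the IMFIS process on reaching size $m$.

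\textbf{Applying Theorem~\ref{theorem_main} and simplifying.} The main theorem should give three things:
\begin{itemize}
\item the probability of premature termination;
\item a per-configuration probability of the form $(1+O(\mathscr{E}))\tfrac{1}{(2m-1)!!}\exp(\cdots)$;
\item an exponent expressed through sums over the forbidden structures.
\end{itemize}
I need to check that its parameters, namely the maximum number of forbidden/related neighbours per configuration-space vertex, are bounded by $\Delta=d_{\max}(d_{\max}+x_{\max})$. A half-edge at $v_i$ meets at most $d_{\max}$ half-edges at its own vertex, at most $x_{\max}d_{\max}$ half-edges through $X$, and $R$-classes of size at most $d_{\max}^2$. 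Then the hypothesis $\Delta=O(m^{1/2}/(\log m)^2)$ matches.

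The remaining computation evaluates the exponent from the main theorem in this instance:
\begin{itemize}
\item the self-loop contribution gives $\tfrac12\lambda_\d$;
\item the double-edge (same $R$-class) contribution gives $\tfrac14\lambda_\d^2$;
\item the $X$ contribution gives $\mu_\d^X$;
\item the weight correction $1-\tfrac{d_id_j}{4m}$ cancels the bias terms, up to $O(\mathscr{E})$.
\end{itemize}
Multiplying by $\prod d_i!$ gives $\mathscr{P}$. Dividing by $\mathscr{P}$ gives the count, and positivity of the success probability gives existence.

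\textbf{Main obstacle.} I expect the hardest step to be the exact bookkeeping of these exponents. In particular, the quadratic term $\tfrac14\lambda_\d^2$ comes from $R$-classes of size $d_id_j$ interacting with the weights. Errors such as $\sum d_i^2d_j^2/m^2$ must be absorbed into $O(\Delta^2\log m/m)$, which requires using $\sum_i d_i^2\le d_{\max}\cdot 2m$.
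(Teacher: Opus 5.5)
Your route is the paper's: the configuration space with $F=F_\d\cup X_\d$ and $R=R_\d$, a check that $M=O(\Delta)$, Theorem~\ref{theorem_main}, evaluation of the exponent, and multiplication by $\prod_i d_i!$. Two steps are wrong as written, however.

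\textbf{The weight convention.} In Definition~\ref{definition_imfis} a vertex is chosen with probability proportional to $e^{-\b(v)}$, and Theorem~\ref{theorem_main} needs $\b_{\max}\alpha\le M$. Your $\b=1-\tfrac{d_id_j}{4m}$ has $\b_{\max}\alpha\approx m$, and inside $e^{-\b}$ it would even favour high-degree pairs. So the theorem does not apply to it, and its $\mathscr{P}(S)$ is not the quantity you want. Take $\b(v)=\tfrac{d_id_j}{4m}$ instead, and use Proposition~\ref{proposition_leniency}~(\ref{proposition_leniency_log}) to pass from $e^{-\b}$ to the factor $1-\b$ at a cost of $1+O(\mathscr{E})$.

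$M$ also contains $R^{(b)}_{\max}$, $(R^{(c)}_{\max})^2$ and $\b_{\max}\alpha$, which you did not bound. They are at most $2d_{\max}^2$, $4d_{\max}^2$ and $\tfrac14d_{\max}^2$ respectively: a matching has at most $d_{\max}$ pairs meeting $W_i$, and an $R$-class contains at most $d_{\max}$ pairs through a fixed half-edge. So $M=O(\Delta)$ does hold.

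\textbf{The cancellation mechanism.} The weight does not cancel the bias ``up to $O(\mathscr{E})$'' vertex by vertex. Off $F$ one has $\b(v)-\tfrac12P|R^*(v)|=\tfrac{d_id_j-(d_i-1)(d_j-1)}{4m}=\tfrac{d_i+d_j-1}{4m}$. Summed over an FMIS this gives $\tfrac1{4m}\sum_id_i^2-\tfrac14$, which can be of order $d_{\max}$.

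The missing observation is that this sum is the same for every perfect matching, since each half-edge of $W_i$ contributes $\tfrac{d_i}{4m}$ exactly once. The paper uses this through Proposition~\ref{proposition_leniency}~(\ref{proposition_leniency_f},\ref{proposition_leniency_b}) to substitute $\b'(v)=\tfrac12P|R^*(v)|$. Corollary~\ref{corollary_uniform} then gives the $S$-independent value $\exp\bigl(P|F|+\tfrac12P^2\sum_v|R^*(v)|\bigr)/(2m-1)!!$ with $P=\tfrac1{2m}$. A direct evaluation of $\tfrac12P|R^*(S)|+P\b(V)-\b(S)$ also works, because $P\b(V)=\tfrac14\lambda_\d^2+\tfrac1{4m}\sum_id_i^2-\tfrac14+O(d_{\max}^3/m)$ cancels that constant exactly.

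Relatedly, the full double sum $\sum_{i\ne j}d_i^2d_j^2/m^2$ is of order $d_{\max}^2$. It is the leading part of $\tfrac14\lambda_\d^2$, not an error term. The genuine errors are:
\begin{itemize}
\item diagonal terms such as $\sum_id_i^4/m^2=O(d_{\max}^3/m)$;
\item in the paper's route, the $X$-pairs missing from $\sum_v|R^*(v)|$, of order $d_{\max}^3x_{\max}/m$.
\end{itemize}
Both are $O(\Delta^2/m)$.

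Finally, your symmetry argument for the $\prod_id_i!$ preimages is correct but unnecessary. Each preimage is an FMIS to which Theorem~\ref{theorem_main} applies directly.
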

	
	Note that the bounds $d_{\max},x_{\max}=O(m^{\sfrac14}/\log m)$ would be sufficient for the required bound on $\Delta$. The proof of Theorem~\ref{theorem_simple} is in Section~\ref{subsection_applications_simple}.
	
	Theorem~\ref{theorem_simple} strengthens the sampling result Theorem~1 from~\cite{Bayati}. Our new result adds the possibility of forbidden edges, allows larger values of $d_{\max}$, and gives stronger asymptotic bounds on the probability of rejection and the deviation from uniformity.
	
	As for our graph counting result, the same formula was given in Theorem~4.6 from~\cite{mckay1985asymptotics}. Our error bound $O(\mathscr{E})$ is only a logarithmic factor larger than their error bound of $O(\Delta^2/m)$.
	
	\subsection{Bipartite graphs and directed graphs}\label{subsection_results_bipartite}
	
	We consider the following iterative stochastic process for sampling a random simple bipartite graph with given degree sequences on each part.
	
	\begin{definition}[Iterative bipartite graph process]
		Consider the bipartite vertex set $V=A\cup B$ with $A=\{v_1,\ldots,v_n\}$ and $B=\{v_1',\ldots,v_{n'}'\}$, consider degree sequences $\d=(d_1,\ldots,d_n)$ and $\d'=(d_1',\ldots,d_{n'}')$ of non-negative integers with $m:=d_1+\ldots+d_n=d_1'+\ldots+d_{n'}'>0$, and consider a set $X$ of forbidden edges.
		\begin{itemize}
			\item Start with no edges $E_0=\emptyset$.
			\item For $r=0,\ldots,m-1$, let $d_{i,r}$ and $d_{j,r}'$ denote the degrees of vertices $v_i$ and $v_j'$ in $(V,E_r)$, and consider those pairs of vertices $(v_i,v_j')\in A\times B$ with $d_{i,r}<d_i$, $d_{j,r}'<d_j'$, and $(v_i,v_j')\not\in E_r\cup X$.
			\begin{itemize}
				\item If no such pairs exist, let $E_{r+1}=E_r$.
				\item Otherwise, choose such a pair $(v_i,v_j')$ at random with probability proportional to $(d_i-d_{i,r})(d_j'-d_{j,r}')(1-\tfrac{d_id_j'}{2m})$, and let $E_{r+1}=E_r\cup\{(v_i,v_j')\}$.
			\end{itemize}
			\item Finally, apply rejection sampling to sample $E=E_m$ with $m$ edges.
		\end{itemize}
		This guarantees that $(V,E)$ is a simple bipartite graph with degree sequences $\d$ and $\d'$ on its parts and with no edges from $X$.
	\end{definition}
	
	\begin{theorem}\label{theorem_bipartite}
		Consider the iterative bipartite graph process on $V$, $\d$, $\d'$, and $X$. Let $x_{\max}$ be the maximum degree in $(V,X)$, and let
		\begin{align}
			d_{\max}&:=\max\{d_1,\ldots,d_n,d_1',\ldots,d_{n'}'\},
			\\\Delta&:=d_{\max}(d_{\max}+x_{\max}).
		\end{align}
		Consider the asymptotics as $m\to\infty$, and assume that $\Delta=O(m^{\sfrac12}/(\log m)^2)$. Then the probability of rejection is $O(\Delta/m)$. In particular, for $m$ large enough, there exists a simple bipartite graph with degree sequences $\d$ and $\d'$ on its parts and with no edges from $X$. Furthermore, any such graph is sampled with probability $(1+O(\mathscr{E}))\mathscr{P}$, where $\mathscr{E}:=\mathscr{E}(m,\Delta)$ and $$\mathscr{P}:=\exp(\tfrac12\lambda_\d\lambda_{\d'}+\mu_{\d,\d'}^{X})\frac1{m!}\prod_{i=1}^{n}d_i!\prod_{j=1}^{n'}d_j'!,$$ which is asymptotically uniform. The total number of such graphs is thus $(1+O(\mathscr{E}))\mathscr{P}^{-1}$.
	\end{theorem}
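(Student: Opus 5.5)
The plan is to deduce Theorem~\ref{theorem_bipartite} from the main theorem, Theorem~\ref{theorem_main}, applied to the bipartite configuration space, in the same way as Theorem~\ref{theorem_simple} is deduced from it on the configuration space.

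\textbf{Encoding.} Give each $v_i\in A$ a set of $d_i$ half-edges and each $v_j'\in B$ a set of $d_j'$ half-edges; both sides have $m$ half-edges in total. The bipartite configuration space is the auxiliary graph whose vertices are the $m^2$ pairs (half-edge of $A$, half-edge of $B$). Two such pairs are adjacent when they share a half-edge. Its maximal independent sets of size $m$ are exactly the perfect matchings, and there are $m!$ of them.

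\textbf{Forbidden structures.} Choose them so that feasible independent sets correspond to simple graphs avoiding $X$.
\begin{itemize}
\item $F$ consists of all pairs whose endpoints $(v_i,v_j')$ lie in $X$.
\item $R$ relates two pairs whenever they join half-edges of the same vertex pair $(v_i,v_j')$; this forbids multi-edges.
\item $\b$ is constant on each class, equal to $1-\frac{d_id_j'}{2m}$.
\end{itemize}
Check that one step of the IMFIS process, projected to vertex pairs, coincides with the iterative bipartite graph process. The number of free half-edge pairs between $v_i$ and $v_j'$ at step $r$ is $(d_i-d_{i,r})(d_j'-d_{j,r}')$, so summing the weight $\b$ over the class gives exactly the stated selection probability.

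\textbf{Counting.} Each simple bipartite graph with the prescribed degrees corresponds to exactly $\prod_i d_i!\prod_j d_j'!$ matchings in the configuration space. Its sampling probability is therefore this product times the probability of any fixed feasible perfect matching.

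\textbf{Applying Theorem~\ref{theorem_main}.} Verify its hypotheses with the relevant degree-type parameters.
\begin{itemize}
\item The number of pairs in the $R$-class or $F$-neighbourhood of a given configuration-space vertex is at most $d_{\max}(d_{\max}+x_{\max})=\Delta$, up to a constant.
\item The number of configuration-space vertices is $m^2$ and the independence number is $m$.
\item The assumption $\Delta=O(m^{1/2}/(\log m)^2)$ matches the hypothesis of Theorem~\ref{theorem_main}.
\end{itemize}
The main theorem then gives a rejection probability of $O(\Delta/m)$. It also gives a probability for each feasible matching of $(1+O(\mathscr{E}))\frac1{m!}\exp(\cdot)$, where the exponent is expressed through sums over $F$ and over the $R$-classes.

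\textbf{Evaluating the exponent.} This is the main computational step: translate the abstract exponent from Theorem~\ref{theorem_main} into $\tfrac12\lambda_\d\lambda_{\d'}+\mu^X_{\d,\d'}$.
\begin{itemize}
\item The $F$ contribution is roughly $\frac1m\sum_{(v_i,v_j')\in X}d_id_j'$, which equals $\mu^X_{\d,\d'}$ since $s=s'=m$.
\item For the $R$ contribution, each class of size $d_id_j'$ contributes its number of related pairs, about $\binom{d_i}{2}\binom{d_j'}{2}$-type terms weighted by $m^{-2}$. Summing gives $\frac{1}{m^2}\sum_{i,j}d_i(d_i-1)d_j'(d_j'-1)\cdot\tfrac12=\tfrac12\lambda_\d\lambda_{\d'}$.
\item The weight correction $\b$ must exactly cancel the first-order bias term $\frac1{m^2}\sum d_id_j'(d_i-1)(d_j'-1)$-type. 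The choice $1-\frac{d_id_j'}{2m}$ is designed for this, and I would verify that cancellation carefully.
\item Discrepancies such as $d_i^2$ versus $d_i(d_i-1)$, or a missing $\tfrac12$, contribute $O(\Delta^2/m)$. This is absorbed in $O(\mathscr{E})$.
\end{itemize}

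\textbf{Conclusion.} Existence for large $m$ follows because the rejection probability is less than $1$. Asymptotic uniformity holds because $\mathscr{P}$ does not depend on the graph. The count is obtained by summing the sampling probability over all such graphs, which gives $(1+O(\mathscr{E}))\mathscr{P}^{-1}$.
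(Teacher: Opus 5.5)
Your encoding matches the paper's: $\ell=2$, $\alpha=m$, $P=\frac1m$, $F$ from $X$, $R=R_{\d,\d'}$, and the factor $\prod_i d_i!\prod_j d_j'!$. However, the weight function is wrong, and the error falls exactly in the step you defer. The IMFIS process picks $v$ with probability proportional to $e^{-\b(v)}$, not to $\b(v)$. Your choice $\b(v)=1-\frac{d_id_j'}{2m}$ equals $-\frac{d_id_j'}{2m}$ up to an additive constant, and such a constant does not change the process. So the process weights a pair by $e^{d_id_j'/(2m)}$: it favours high-degree pairs instead of penalising them, and it is not the iterative bipartite graph process. Taken literally, it also violates the hypothesis of Theorem~\ref{theorem_main}, since $\b_{\max}\alpha\approx m$. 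The exact encoding is $\b(v)=-\log(1-\frac{d_id_j'}{2m})$. You then need Proposition~\ref{proposition_leniency}~(\ref{proposition_leniency_log}) to replace it by $\frac{d_id_j'}{2m}$ at a cost of $1+O(\mathscr{E})$. Your hypothesis check should also cover $R^{(b)}_{\max}\le 2d_{\max}^2$, $R^{(c)}_{\max}\le 2d_{\max}$ and $\b_{\max}m=O(d_{\max}^2)$, not only $F$ and the $R$-classes.

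The exponent in Theorem~\ref{theorem_main} is $P|F|+\frac12P|R^*(S)|+P\b(V)-\b(S)$, which depends on $S$. Showing that $\frac12P|R^*(S)|-\b(S)$ is independent of $S$ is the whole content of uniformity, and you leave it unverified. You describe the $R$-term as related pairs summed over classes; that is the quantity $\frac12P^2\sum_v|R^*(v)|$ from Corollary~\ref{corollary_uniform}. That formula only becomes available after replacing $\b$ by $\frac12P|R^*(v)|$. The paper does this via Proposition~\ref{proposition_leniency}~(\ref{proposition_leniency_f},\ref{proposition_leniency_b}), using that $\frac{d_id_j'-(d_i-1)(d_j'-1)}{2m}=\frac{d_i+d_j'-1}{2m}$ has the same sum over every MIS.

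Alternatively, with $\b=\frac{d_id_j'}{2m}$ you can compute directly. For a feasible $S$ inducing a graph $H$, each $s\in S$ has $|R^*(s)|=(d_i-1)(d_j'-1)$. Hence $\frac12P|R^*(S)|-\b(S)=\frac{m-\sum_id_i^2-\sum_jd_j'^2}{2m}$ for every $H$. Adding $P\b(V)=\frac1{2m^2}\sum_id_i^2\sum_jd_j'^2$ gives exactly $\frac12\lambda_\d\lambda_{\d'}$, and $P|F|=\mu^X_{\d,\d'}$.

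This also shows that your last bullet is wrong. Here the difference between using $d_i^2$ and $d_i(d_i-1)$ is $\frac{\sum_id_i^2+\sum_jd_j'^2-m}{2m}=\frac12(\lambda_\d+\lambda_{\d'}+1)$, not $O(\Delta^2/m)$. It cannot be absorbed into $O(\mathscr{E})$; it has to cancel exactly. With your sign of $\b$, the same computation gives $\sum_{(v_i,v_j')\in E(H)}\frac{(d_i-1)(d_j'-1)+d_id_j'}{2m}$, which depends on $H$, so uniformity would fail.
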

	
	Note that the bounds $d_{\max},x_{\max}=O(m^{\sfrac14}/\log m)$ would be sufficient for the required bound on $\Delta$. The proof of Theorem~\ref{theorem_bipartite} is in Section~\ref{subsection_applications_bipartite}.
	
	In Remark 1 from~\cite{Bayati}, it is stated without proof that their sampling result can be modified to sample bipartite graphs. Theorem~\ref{theorem_bipartite} strengthens this claim. Again, our new result adds the possibility of forbidden edges, allows larger values of $d_{\max}$, and gives stronger asymptotic bounds on the probability of rejection and the deviation from uniformity.
	
	As for our graph counting result, the same formula was given in Theorem~4.6 from~\cite{mckay1984asymptotics}. Again, our error bound $O(\mathscr{E})$ is only a logarithmic factor larger than their error bound of $O(\Delta^2/m)$.
	
	The iterative bipartite graph process can also be adapted to sample directed graphs as follows.
	
	\begin{definition}[Iterative directed graph process]
		Consider the vertex set $V=\{v_1,\ldots,v_n\}$, consider degree sequences $\d=(d_1,\ldots,d_n)$ and $\d'=(d_1',\ldots,d_{n}')$ of non-negative integers with $m:=d_1+\ldots+d_n=d_1'+\ldots+d_{n}'>0$, and consider a set $X$ of forbidden arcs. We define $\widetilde{V}:=A \cup B$ with $A=\{w_1,\ldots,w_n\}$ and $B=\{w_1',\ldots,w_n'\}$ disjoint, and $$\widetilde{X}:=\{(w_i,w_j'):(v_i,v_j)\in X\}\cup\{(w_i,w_i'):i=1,\ldots,n\}.$$ Consider sampling a bipartite graph from the iterative bipartite graph process on $\widetilde{V}$, $\d$, $\d'$, and $\widetilde{X}$. Identify any pair $w_i$, $w_i'$ with the vertex $v_i$ and any edge $(w_i,w_j')$ with an arc from $v_i$ to $v_j$. This results in a simple directed graph with out- and in- degree sequences $\d$ and $\d'$ and with no arcs from $X$.
	\end{definition}
	
	This lets us apply Theorem~\ref{theorem_bipartite} to sample directed graphs. We get $$\mathscr{P}=\exp(\lambda_{\d,\d'}+\tfrac12\lambda_{\d}\lambda_{\d'}+\mu_{\d,\d'}^{X})\frac1{m!}\prod_{i=1}^{n}d_i!d_i'!.$$ This strengthens the sampling result Theorem~1.2 from~\cite{Ieperen2024}. Our result adds the possibility of forbidden arcs, allows larger values of $d_{\max}$, and gives stronger asymptotic bounds on the probability of rejection and the deviation from uniformity.
	
	As for the resulting counting result, the same formula follows from Corollary~3.3 from~\cite{greenhill2026asymptoticenumerationconstrainedbipartite}. Again, our error bound $O(\mathscr{E})$ is only a logarithmic factor larger than their error bound of $O(\Delta^2/m)$.
	
	\subsection{Oriented graphs}\label{subsection_results_oriented}
	
	An \emph{oriented graph} is a simple directed graph for which no two vertices are connected by arcs in both ways. We consider the following iterative stochastic process for sampling a random oriented graph with given out- and in-degree sequences.
	
	\begin{definition}[Iterative oriented graph process]
		Consider the vertex set $V=\{v_1,\ldots,v_n\}$, consider degree sequences $\d=(d_1,\ldots,d_n)$ and $\d'=(d_1',\ldots,d_{n}')$ of non-negative integers with $m:=d_1+\ldots+d_n=d_1'+\ldots+d_{n}'>0$, and consider a set $X$ of forbidden arcs.
		\begin{itemize}
			\item Start with no edges $E_0=\emptyset$.
			\item For $r=0,\ldots,m-1$, let $d_{i,r}$ and $d_{i,r}'$ denote the out- and in-degrees of vertex $v_i$ in $(V,E_r)$, and consider those pairs of vertices $(v_i,v_j)\in V^2$ with $i\neq j$, $d_{i,r}<d_i$, $d_{j,r}'<d_j'$, $(v_i,v_j)\not\in E_r\cup X$, and $(v_j,v_i)\not\in E_r$.
			\begin{itemize}
				\item If no such pairs exist, let $E_{r+1}=E_r$.
				\item Otherwise, choose such a pair $(v_i,v_j)$ at random with probability proportional to $(d_i-d_{i,r})(d_j'-d_{j,r}')(1-\tfrac{d_id_j'}{2m})$, and let $E_{r+1}=E_r\cup\{(v_i,v_j)\}$.
			\end{itemize}
			\item Finally, apply rejection sampling to sample $E=E_m$ with $m$ arcs.
		\end{itemize}
		This guarantees that $(V,E)$ is an oriented graph with out- and in-degree sequences $\d$ and $\d'$ and with no arcs from $X$.
	\end{definition}
	
	\begin{theorem}\label{theorem_oriented}
		Consider the iterative oriented graph process on $V$, $\d$, $\d'$, and $X$. Let $x_{\max}$ be the maximum out- or in-degree in $(V,X)$, and let
		\begin{align}
			d_{\max}&:=\max\{d_1,\ldots,d_n,d_1',\ldots,d_{n}'\},
			\\\Delta&:=d_{\max}(d_{\max}+x_{\max}).
		\end{align}
		Consider the asymptotics as $m\to\infty$, and assume that $\Delta=O(m^{\sfrac12}/(\log m)^2)$. Then the probability of rejection is $O(\Delta/m)$. In particular, for $m$ large enough, there exists an oriented graph with out- and in-degree sequences $\d$ and $\d'$ and with no arcs from $X$. Furthermore, any such graph is sampled with probability $(1+O(\mathscr{E}))\mathscr{P}$, where $\mathscr{E}:=\mathscr{E}(m,\Delta)$ and $$\mathscr{P}:=\exp(\lambda_{\d,\d'}+\tfrac12\lambda_{\d,\d'}^2+\tfrac12\lambda_{\d}\lambda_{\d'}+\mu_{\d,\d'}^{X})\frac1{m!}\prod_{i=1}^{n}d_i!d_i'!,$$ which is asymptotically uniform. The total number of such graphs is thus $(1+O(\mathscr{E}))\mathscr{P}^{-1}$.
	\end{theorem}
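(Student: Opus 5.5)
We plan to prove Theorem~\ref{theorem_oriented} the same way as the directed-graph corollary: embed the oriented graph process into the bipartite configuration space and invoke Theorem~\ref{theorem_main}. Take $\widetilde V=A\cup B$ with $A=\{w_1,\ldots,w_n\}$ and $B=\{w_1',\ldots,w_n'\}$, and identify the arc $(v_i,v_j)$ with the edge $(w_i,w_j')$. The half-edge pairs of the bipartite configuration space are then arcs. We declare forbidden vertices $F$: all half-edge pairs lying over $(w_i,w_i')$ (self-loops) or over an arc of $X$. The forbidden relation $R$ has to rule out two kinds of pairs. The first is two half-edge pairs over the same $(w_i,w_j')$ (multi-arcs), as in the bipartite case. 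The second is new: any pair over $(w_i,w_j')$ together with any pair over $(w_j,w_i')$ (opposite arcs). The equivalence classes of $R$ are therefore the unions of the fibres over $\{(w_i,w_j'),(w_j,w_i')\}$ for $i\neq j$. With the weight $\b$ chosen as in the bipartite application, the IMFIS process coincides in distribution with the iterative oriented graph process. Each admissible oriented graph corresponds to exactly $\prod_i d_i!d_i'!$ maximal feasible independent sets, namely the relabellings of half-edges.

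Next we verify the hypotheses of Theorem~\ref{theorem_main}. The local parameter $\Delta$ is controlled by $d_{\max}(d_{\max}+x_{\max})$. The class of a pair over $(w_i,w_j')$ has size at most $d_id_j'+d_jd_i'\le 2d_{\max}^2$. The forbidden vertices meeting a given half-edge of $w_i$ number at most $(1+x_{\max})d_{\max}$. So the assumption $\Delta=O(m^{\sfrac12}/(\log m)^2)$ transfers to the form required by Theorem~\ref{theorem_main}. That theorem then gives rejection probability $O(\Delta/m)$ and sampling probability $(1+O(\mathscr E))$ times an explicit exponential correction, divided by $m!$ and multiplied by the relabelling factor.

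The remaining work, and the main computation, is to evaluate the exponent that Theorem~\ref{theorem_main} produces from $F$ and $R$ to accuracy $O(\mathscr E)$. We expect it to decompose into three contributions. The forbidden vertices contribute $\lambda_{\d,\d'}$ (from $\sum_i d_id_i'/m$) and $\mu^X_{\d,\d'}$. The classes of $R$ contribute the pair-collision terms. The multi-arc part gives $\tfrac12\lambda_\d\lambda_{\d'}$ exactly as in Theorem~\ref{theorem_bipartite}. The opposite-arc part gives $\sum_{i<j}d_id_j'd_jd_i'/m^2=\tfrac12\lambda_{\d,\d'}^2$ minus a diagonal correction $\sum_i d_i^2d_i'^2/(2m^2)=O(d_{\max}^2/m)$, which is absorbed in the error. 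We must also check that overlaps are negligible. The overlaps are between self-loop forbidden vertices and the classes of $R$, and between the two types of collision inside one class, which is a triple of arcs on the same pair $\{i,j\}$. These should give terms of order $d_{\max}^4/m^2$ or $\Delta/m$, hence $O(\mathscr E)$.

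The hardest step is this bookkeeping. We must match the general exponent of Theorem~\ref{theorem_main}, which is stated in terms of the sizes of the $R$-classes and their interaction with the weight $\b$, against the closed form. This requires the weight $(1-\tfrac{d_id_j'}{2m})$ to be the one Theorem~\ref{theorem_main} prescribes even though the $R$-classes are now larger. We will first try to treat the enlarged class by splitting it into its two sub-fibres and summing their contributions. If the theorem's exponent involves squared class sizes, a cross term appears. We expect this cross term to be exactly the source of $\tfrac12\lambda_{\d,\d'}^2$. Finally, summing $\mathscr P$ over all admissible graphs gives probability $1-O(\Delta/m)$. This yields the counting formula, and existence of an admissible graph for large $m$ follows because the rejection probability is below $1$.
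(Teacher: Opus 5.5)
\textbf{Gap: the weight.} Your embedding ($F$, $R$), the bounds on $M$ and the relabelling count agree with the paper. The step that fails is ``with the weight $\b$ chosen as in the bipartite application'', and the remedy you sketch misreads Theorem~\ref{theorem_main}.

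The $S$-dependent part of $\mathscr{P}(S)$ there is
\[
\tfrac12P|R^*(S)|-\b(S)=\sum_{v\in S}\Bigl(\tfrac1{2m}|R^*(v)|-\b(v)\Bigr),
\]
which is linear in the class sizes; no squared class sizes enter it. For $v$ over $(w_i,w_j')$ with $i\neq j$, the enlarged class gives $|R^*(v)|=(d_i-1)(d_j'-1)+d_i'd_j$. With your weight $\b(v)=\tfrac{d_id_j'}{2m}$ (say for $X=\emptyset$) this becomes
\[
\tfrac12P|R^*(S)|-\b(S)=\tfrac12-\tfrac1{2m}\Big(\sum_i d_i^2+\sum_j d_j'^2\Big)+\tfrac1{2m}\sum_{(v_i,v_j)\in E}d_i'd_j .
\]
The last sum depends on which arcs are present unless all products $d_i'd_j$ coincide (e.g.\ regular sequences). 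For suitable bounded-degree sequences it already differs by an amount of order $1$ between admissible graphs, far more than $O(\mathscr{E})$. So Proposition~\ref{proposition_leniency}~(iii) is unavailable, the process with that weight is not asymptotically uniform, and no bookkeeping will yield a graph-independent $\mathscr{P}$.

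\textbf{What is needed.} The weight must agree with $\tfrac12P|R^*(v)|$ up to a term whose sum over every MIS is constant. The paper's proof takes $\b(v)=\tfrac{d_id_j'+d_i'd_j}{2m}$. Then $\b(v)-\tfrac12P|R^*(v)|=\tfrac{d_i+d_j'-1}{2m}$ sums to the same value over every MIS, with Proposition~\ref{proposition_leniency}~(i) handling $F$. Corollary~\ref{corollary_uniform} then reduces everything to $\overline{\mathscr{P}}$, with exponent $P|F|+\tfrac12P^2\sum_{v}|R^*(v)|$. This means the process must select with factor $1-\tfrac{d_id_j'+d_i'd_j}{2m}$; the factor printed in the definition of the process should be read as a typo.

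Your cross-term intuition belongs here, not in Theorem~\ref{theorem_main} itself:
\begin{itemize}
\item $\sum_v|R^*(v)|$ is essentially a sum of squared class sizes.
\item The product of the two fibres, $d_id_j'\cdot d_i'd_j$ summed over $i\neq j$, yields $\tfrac12\lambda_{\d,\d'}^2$.
\item The same-fibre parts $d_id_j'(d_i-1)(d_j'-1)$ give $\tfrac12\lambda_{\d}\lambda_{\d'}$.
\item $P|F|$ gives $\lambda_{\d,\d'}+\mu^X_{\d,\d'}$.
\end{itemize}

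One further caution, which the paper's argument also passes over. If $(v_j,v_i)\in X$ but $(v_i,v_j)\notin X$, the reverse fibre lies in $F$, so $|R^*(v)|=(d_i-1)(d_j'-1)$. The term $\tfrac{d_i'd_j}{2m}$ must then be dropped from $\b(v)$ too, as the edge-colored weights drop pairs lying in $X^{(h)}$. Otherwise a graph-dependent sum over arcs whose reversal is forbidden reappears, and it need not be $O(\mathscr{E})$.
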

	
	Note that the bounds $d_{\max},x_{\max}=O(m^{\sfrac14}/\log m)$ would be sufficient for the required bound on $\Delta$. Furthermore, note that, compared to the iterative directed graph process, the resulting value of $\mathscr{P}$ only differs by the additional term $\frac12\lambda_{\d,\d'}^2$ in the exponent. The proof of Theorem~\ref{theorem_oriented} is in Section~\ref{subsection_applications_oriented}.
	
	As for the resulting counting results, the same formula follows from Corollary~5.3 from~\cite{greenhill2026asymptoticenumerationconstrainedbipartite}, though forbidden edges are not considered. Again, our error bound $O(\mathscr{E})$ is only a logarithmic factor larger than their error bound of $O(\Delta^2/m)$.
	
	\subsection{Edge-colored graphs}\label{subsection_results_colored}
	
	In an edge-colored graph, there is a set of colors, and any edge is assigned one color. A simple edge-colored graph is thus essentially a set of edge-disjoint simple graphs, one for each color, on a common set of vertices. We thus consider the problem where a set of degree sequences is given, one for each color, on a common set of vertices, and we need to sample edge-disjoint simple graphs with the given degree sequences.
	
	This relates to the packing problem of degree sequences, which asks whether this sampling is possible for given degree sequences. In general, this is an NP-complete decision problem~\cite{West2012}, even for only two degree sequences. However, by assuming certain asymptotic bounds on the given degrees, we can still give an iterative stochastic process that results in asymptotically uniform sampling. Since the process is successful with high probability, the degree sequences are eventually guaranteed to pack in the given regime.
	
	The main idea behind our process is to sample the graphs one by one. Sampling each graph uniformly from its degree sequence, we also have to forbid the edges from all previously sampled graphs to guarantee edge-disjointness. However, these forbidden edges affect the total number of possible graphs, so this results in a bias towards sequences of edge-disjoint graphs for which the earlier graphs leave less options for the later graphs. We fix this by introducing an opposite bias to the distribution of the earlier graphs, such that the two biases cancel each other asymptotically for each pair of sampled graphs.
	
	\begin{definition}[Iterative edge-colored graph process]
		Consider the vertex set $V=\{v_1,\ldots,v_n\}$, and for $g=1,\ldots,\chi$, consider a degree sequence $\d^{(g)}=(d_1^{(g)},\ldots,d_n^{(g)})$ of non-negative integers with $2m^{(g)}:=d_1^{(g)}+\ldots+d_n^{(g)}>0$ even, and consider a set $X^{(g)}$ of forbidden edges. For $g=1,\ldots,\chi$, define $$\b^{(g)}_{i,j}:=\frac{d_i^{(g)}d_j^{(g)}}{4m^{(g)}}+\sum_{\substack{h=g+1,\ldots,\chi\\\{v_i,v_j\}\not\in X^{(h)}}}\frac{d_i^{(h)}d_j^{(h)}}{2m^{(h)}}.$$ For $g=1,\ldots,\chi$, sample $E^{(g)}$ with the following steps:
		\begin{itemize}
			\item Start with no edges $E_0^{(g)}=\emptyset$.
			\item For $r=0,\ldots,m^{(g)}-1$, let $d_{i,r}^{(g)}$ denote the degree of vertex $v_i$ in $(V,E_r^{(g)})$, and consider those pairs of vertices $v_i,v_j\in V$ with $i\neq j$, $d_{i,r}^{(g)}<d_i^{(g)}$, $d_{j,r}^{(g)}<d_j^{(g)}$, and $\{v_i,v_j\}\not\in E_r^{(g)}\cup X^{(g)}\cup E^{(1)}\cup\ldots\cup E^{(g-1)}$.
			\begin{itemize}
				\item If no such pairs exist, let $E_{r+1}^{(g)}=E_r^{(g)}$.
				\item Otherwise, choose such a pair $v_i,v_j$ at random with probability proportional to $(d_i^{(g)}-d_{i,r}^{(g)})(d_j^{(g)}-d_{j,r}^{(g)})(1-\b^{(g)}_{i,j})$, and let $E_{r+1}^{(g)}=E_r^{(g)}\cup\{\{v_i,v_j\}\}$.
			\end{itemize}
			\item Finally, apply rejection sampling to sample $E^{(g)}=E_{m^{(g)}}^{(g)}$ with $m^{(g)}$ edges.
		\end{itemize}
		This guarantees that $(V,E^{(g)})$ for $g=1,\ldots,\chi$ are edge-disjoint simple graphs with degree sequence $\d^{(g)}$ and with no edges from $X^{(g)}$.
	\end{definition}
	
	\begin{theorem}\label{theorem_colored}
		Consider the iterative edge-colored graph process on $V$ and $\d^{(g)}$ and $X^{(g)}$ for $g=1,\ldots,\chi$. Let $x_{\max}^{(g)}$ be the maximum degree in $(V,X^{(g)})$, and let
		\begin{align}
			d_{\max}^{(g)}&:=\max\{d_1^{(g)},\ldots,d_n^{(g)}\},
			\\\Delta^{(g)}&:=d_{\max}^{(g)}\left(x_{\max}^{(g)}+\sum_{h=1}^{g}d_{\max}^{(h)}\right)+m^{(g)}\sum_{h=g+1}^{\chi}\frac{(d_{\max}^{(h)})^2}{m^{(h)}}.
		\end{align}
		Consider the asymptotics as $m_{\min}:=\min\{m^{(1)},\ldots,m^{(\chi)}\}\to\infty$, and assume that $\Delta^{(g)}=O((m^{(g)})^{\sfrac12}/(\log m^{(g)})^2)$ for $g=1,\ldots,\chi$. Then each iteration $g=1,\ldots,\chi$ has probability $O(\Delta^{(g)}/m^{(g)})$ of rejection. In particular, for $m_{\min}$ large enough, there exist edge-disjoint simple graphs for $g=1,\ldots,\chi$ with degree sequence $\d^{(g)}$ and with no edges from $X^{(g)}$. Furthermore, any such graphs are sampled with probability $e^{O(\mathscr{E})}\mathscr{P}$, where $\mathscr{E}:=\sum_{g=1}^{\chi}\mathscr{E}(m^{(g)},\Delta^{(g)})$ and
		\begin{align}
			\mathscr{P}&:=\exp\left(\sum_{g=1}^{\chi}\left(\tfrac12\lambda_{\d^{(g)}}+\tfrac14\lambda_{\d^{(g)}}^2+\tfrac12\sum_{h=g+1}^{\chi}\lambda_{\d^{(g)},\d^{(h)}}^2+\mu_{\d^{(g)}}^{X^{(g)}}\right)\right)
			\\&\quad\cdot\prod_{g=1}^{\chi}\left(\frac1{(2m^{(g)}-1)!!}\prod_{i=1}^{n}d_i^{(g)}!\right),
		\end{align}
		which is asymptotically uniform if $\mathscr{E}=o(1)$. The total number of such graphs is thus $e^{O(\mathscr{E})}\mathscr{P}^{-1}$.
	\end{theorem}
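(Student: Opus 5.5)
We prove Theorem~\ref{theorem_colored} by induction over the colour classes $g=1,\ldots,\chi$, conditioning on $E^{(1)},\ldots,E^{(g-1)}$. At stage $g$ the process is an iterative undirected graph process with degree sequence $\d^{(g)}$, forbidden set $Y^{(g)}:=X^{(g)}\cup E^{(1)}\cup\cdots\cup E^{(g-1)}$, and a modified weight $1-\b^{(g)}_{i,j}$ instead of $1-\tfrac{d_id_j}{4m}$. So we will not use Theorem~\ref{theorem_simple} directly. Instead we apply the main Theorem~\ref{theorem_main} on the configuration space with this $F$, $R$ (self-loops and multi-edges) and general weight function $\b=\b^{(g)}$. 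We expect it to give the sampling probability of a fixed valid $E^{(g)}$ as
\[
\bigl(1+O(\mathscr{E}(m^{(g)},\Delta^{(g)}))\bigr)\exp\Bigl(\tfrac12\lambda_{\d^{(g)}}+\tfrac14\lambda_{\d^{(g)}}^2+\mu^{Y^{(g)}}_{\d^{(g)}}-\Phi^{(g)}+\Psi^{(g)}(E^{(g)})\Bigr)\frac{\prod_i d_i^{(g)}!}{(2m^{(g)}-1)!!}.
\]
Here $\Psi^{(g)}(E^{(g)})=\sum_{\{v_i,v_j\}\in E^{(g)}}\bigl(\b^{(g)}_{i,j}-\tfrac{d_i^{(g)}d_j^{(g)}}{4m^{(g)}}\bigr)$ is the bias coming from the extra weight, and $\Phi^{(g)}$ is its configuration-model expectation, a deterministic normalisation. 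The parameter $\Delta^{(g)}$ is chosen to dominate all the quantities Theorem~\ref{theorem_main} needs. The term $d^{(g)}_{\max}(x^{(g)}_{\max}+\sum_{h\le g}d^{(h)}_{\max})$ bounds $d_{\max}$ times the maximum degree of $Y^{(g)}$, since each earlier $E^{(h)}$ has maximum degree $d^{(h)}_{\max}$. The term $m^{(g)}\sum_{h>g}(d^{(h)}_{\max})^2/m^{(h)}$ bounds $m^{(g)}\max\b^{(g)}$. The rejection bound $O(\Delta^{(g)}/m^{(g)})$ and existence then follow stage by stage, as in Theorem~\ref{theorem_simple}.

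Next we multiply the conditional probabilities over $g$. The forbidden-edge term splits as
\[
\mu^{Y^{(g)}}_{\d^{(g)}}=\mu^{X^{(g)}}_{\d^{(g)}}+\sum_{h<g}\frac1{2m^{(g)}}\sum_{\{v_i,v_j\}\in E^{(h)}\setminus X^{(g)}}d_i^{(g)}d_j^{(g)},
\]
up to the overlap between $X^{(g)}$ and earlier edges, which is absorbed by the definition via the condition $\{v_i,v_j\}\notin X^{(h)}$. This sum, for the pair $h<g$, is exactly the $g$-part of $\Psi^{(h)}(E^{(h)})$ with the opposite sign. So the random, outcome-dependent terms $\Psi^{(h)}$ cancel against the $\mu$-contributions of later colours, and every pair $(h,g)$ cancels. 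This cancellation is why $\b^{(g)}$ was defined the way it is.

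What remains are the deterministic normalisations $\Phi^{(h)}$. We need to show that $\sum_h\Phi^{(h)}$ minus the $\tfrac14\lambda^2$-type interaction corrections equals $-\tfrac12\sum_{h<g}\lambda^2_{\d^{(h)},\d^{(g)}}$, up to error $O(\mathscr{E})$. Heuristically, $\Phi$ is the expected value of $\sum_{e\in E^{(h)}}\tfrac{d^{(g)}d^{(g)}}{2m^{(g)}}$ under the configuration model, namely $\tfrac{1}{2m^{(h)}}\sum_{i\ne j}d_i^{(h)}d_j^{(h)}\tfrac{d_i^{(g)}d_j^{(g)}}{2m^{(g)}}\cdot\tfrac12$. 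This is $\tfrac12\lambda^2_{\d^{(h)},\d^{(g)}}$ minus a diagonal term that is $O(\Delta/m)$. The exponential form $\exp(\tfrac12\lambda\lambda)$ arises like the $\tfrac14\lambda^2$ term in the simple case, because weights in $\b$ enter Theorem~\ref{theorem_main} through $\log(1-\b)$ at second order. Terms with $X^{(h)}$ exclusions differ by $O(\Delta/m)$ and are absorbed into the error. Multiplying the $\chi$ factors $(1+O(\cdot))$ gives $e^{O(\mathscr{E})}$, and summing over outcomes gives the count.

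The main obstacle is the first step: getting from Theorem~\ref{theorem_main} a precise form of the outcome-dependent factor $\exp(\Psi^{(g)}(E^{(g)}))$ with a deterministic normalisation, for a weight $\b$ that is not of product form. Its error must be controlled by $\mathscr{E}(m^{(g)},\Delta^{(g)})$, which requires that $\max\b=O(\Delta^{(g)}/m^{(g)})$. The remainder of the argument is bookkeeping the cross-colour cancellations.
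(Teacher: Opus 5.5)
Your overall route is the paper's: apply Theorem~\ref{theorem_main} colour by colour on $G_{\d^{(g)}}$, with $F$ containing $X^{(g)}$ and the earlier $E^{(h)}$, and with $\b=\b^{(g)}$. Check $M=O(\Delta^{(g)})$, and let the edge-dependent weight term of colour $h$ cancel against the forbidden-vertex term $P|F|$ of each later colour. However, your key display has the sign of the weight contribution reversed, and this breaks both halves of your bookkeeping.

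Theorem~\ref{theorem_main} gives $\mathscr{P}(S)\propto\exp(P\b(V)-\b(S))$, since a larger weight makes a vertex less likely to be chosen. So the extra part of $\b^{(g)}$ contributes $\exp(\Phi^{(g)}-\Psi^{(g)}(E^{(g)}))$, not $\exp(\Psi^{(g)}-\Phi^{(g)})$. With your sign, $+\Psi^{(h)}$ and the $E^{(h)}$-part of $\mu^{Y^{(g)}}_{\d^{(g)}}$ are identical quantities with the same sign, so they would double instead of cancel. Your remark that they appear ``with the opposite sign'' is true only for the corrected formula.

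The same slip produces the confused deterministic part. Your heuristic $\Phi^{(h)}=P\b_{\mathrm{extra}}(V)=\sum_{g>h}\tfrac12\lambda^2_{\d^{(h)},\d^{(g)}}-(\text{diagonal})$ is correct. Since $\Phi$ enters with a plus sign, it directly yields the $+\tfrac12\lambda^2$ terms of $\mathscr{P}$, already at first order in $\b$. There is nothing to prove of the form ``$\sum_h\Phi^{(h)}$ minus corrections equals $-\tfrac12\sum\lambda^2$''. The proposed second-order $\log(1-\b)$ mechanism is also wrong: replacing $e^{-\b}$ by $1-\b$ changes $\mathscr{P}(S)$ only by a factor $1+O(M^2/\alpha)$, by Proposition~\ref{proposition_leniency}~(\ref{proposition_leniency_log}).

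What you call the main obstacle is not one. Theorem~\ref{theorem_main} already states the factor $\exp(P\b(V)-\b(S))$ for an arbitrary weight with $\b_{\max}\alpha\le M$; no product structure is needed. The only additional step in the paper is to replace the summand $d_i^{(g)}d_j^{(g)}/(4m^{(g)})$ by $\tfrac12P|R^*(v)|$ via Proposition~\ref{proposition_leniency}~(\ref{proposition_leniency_f},\ref{proposition_leniency_b}). Then $\tfrac12P|R^*(S)|-\b'(S)=-\Psi^{(g)}(E^{(g)})$ exactly, and one evaluates $P\b'(V)$.

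In that evaluation, half-edge pairs inducing edges of an earlier $E^{(h)}$ lie in $F$ and so have $R^*(v)=\emptyset$. This gives an outcome-dependent correction of size $O(d^{(h)}_{\max}(d^{(g)}_{\max})^3/m^{(g)})$. The diagonal and $X^{(h)}$-exclusion corrections are of order $(\Delta^{(h)})^2/m^{(h)}$ after summing over colour pairs, rather than your claimed $O(\Delta/m)$. All of these are still within $O(\mathscr{E})$.
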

	
	Note that the number of colors $\chi$ need not be fixed, but the bound $\Delta^{(g)}=O((m^{(g)})^{\sfrac12}/(\log m^{(g)})^2)$ is uniform over $g=1,\ldots,\chi$. So we require that $\Delta^{(g)}\leq C(m^{(g)})^{\sfrac12}/(\log m^{(g)})^2$ for all $g=1,\ldots,\chi$ whenever $m_{\min}\geq N$ for some fixed constants $C,N>0$. However, we need $\mathscr{E}=o(1)$ to guarantee asymptotic uniformity. If $\chi$ is fixed, then $\mathscr{E}=o(1)$ follows, but if $\chi$ grows unboundedly as $m_{\min}\to\infty$, then $\mathscr{E}$ may also grow unboundedly.
	
	If $m^{(1)}\leq\ldots\leq m^{(\chi)}$, note that the bounds $$d_{\max}^{(g)}=O\left(\frac{(m^{(g)})^{\sfrac14}}{\chi^{\sfrac12}\log m^{(g)}}\right),\ x_{\max}^{(g)}=O\left(\frac{\chi^{\sfrac12}(m^{(g)})^{\sfrac14}}{\log m^{(g)}}\right)$$ for $g=1,\ldots,\chi$ would be sufficient for the required bound on $\Delta^{(g)}$. Indeed, since $x\mapsto x^{\sfrac14}/\log x$ is eventually increasing and $x\mapsto x^{-\sfrac12}/(\log x)^2$ is eventually decreasing, we get
	\begin{align}
		d_{\max}^{(h)}&=O\left(\frac{(m^{(g)})^{\sfrac14}}{\chi^{\sfrac12}\log m^{(g)}}\right)\ \text{for}\ h\leq g,
		\\\frac{(d_{\max}^{(h)})^2}{m^{(h)}}&=O\left(\frac{(m^{(g)})^{-\sfrac12}}{\chi(\log m^{(g)})^2}\right)\ \text{for}\ h\geq g.
	\end{align}
	This suggests that it helps to sort the colors by their required number of edges before applying the iterative edge-colored graph process. The proof of Theorem~\ref{theorem_colored} is in Section~\ref{subsection_applications_colored}.
	
	\subsection{Edge-colored bipartite graphs}\label{subsection_results_bipcol}
	
	For a set of colors, we consider the problem where a set of pairs of degree sequences is given, one pair for each color, on a common bipartite vertex set, and we need to sample edge-disjoint simple bipartite graphs with the given pairs of degree sequences on their parts.
	
	\begin{definition}[Iterative edge-colored bipartite graph process]
		Consider the bipartite vertex set $V=A\cup B$ with $A=\{v_1,\ldots,v_n\}$ and $B=\{v_1',\ldots,v_{n'}'\}$, and for $g=1,\ldots,\chi$, consider degree sequences $\d^{(g)}=(d_1^{(g)},\ldots,d_n^{(g)})$ and $\d'^{(g)}=(d_1'^{(g)},\ldots,d_{n'}'^{(g)})$ of non-negative integers with $m^{(g)}:=d_1^{(g)}+\ldots+d_n^{(g)}=d_1'^{(g)}+\ldots+d_{n'}'^{(g)}>0$, and consider a set $X^{(g)}$ of forbidden edges. For $g=1,\ldots,\chi$, define $$\b^{(g)}_{i,j}:=\frac{d_i^{(g)}d_j'^{(g)}}{2m^{(g)}}+\sum_{\substack{h=g+1,\ldots,\chi\\(v_i,v_j')\not\in X^{(h)}}}\frac{d_i^{(h)}d_j'^{(h)}}{m^{(h)}}.$$ For $g=1,\ldots,\chi$, sample $E^{(g)}$ with the following steps:
		\begin{itemize}
			\item Start with no edges $E_0^{(g)}=\emptyset$.
			\item For $r=0,\ldots,m^{(g)}-1$, let $d_{i,r}^{(g)}$ and $d_{j,r}'^{(g)}$ denote the degrees of vertices $v_i$ and $v_j'$ in $(V,E_r^{(g)})$, and consider those pairs of vertices $(v_i,v_j')\in A\times B$ with $d_{i,r}^{(g)}<d_i^{(g)}$, $d_{j,r}'^{(g)}<d_j'^{(g)}$, and $(v_i,v_j')\not\in E_r^{(g)}\cup X^{(g)}\cup E^{(1)}\cup\ldots\cup E^{(g-1)}$.
			\begin{itemize}
				\item If no such pairs exist, let $E_{r+1}^{(g)}=E_r^{(g)}$.
				\item Otherwise, choose such a pair $(v_i,v_j')$ at random with probability proportional to $(d_i^{(g)}-d_{i,r}^{(g)})(d_j'^{(g)}-d_{j,r}'^{(g)})(1-\b^{(g)}_{i,j})$, and let $E_{r+1}^{(g)}=E_r^{(g)}\cup\{(v_i,v_j')\}$.
			\end{itemize}
			\item Finally, apply rejection sampling to sample $E^{(g)}=E_{m^{(g)}}^{(g)}$ with $m^{(g)}$ edges.
		\end{itemize}
		This guarantees that $(V,E^{(g)})$ for $g=1,\ldots,\chi$ are edge-disjoint simple bipartite graphs with degree sequences $\d^{(g)}$ and $\d'^{(g)}$ on their parts and with no edges from $X^{(g)}$.
	\end{definition}
	
	\begin{theorem}\label{theorem_bipcol}
		Consider the iterative edge-colored graph process on $V$ and $\d^{(g)}$, $\d'^{(g)}$ and $X^{(g)}$ for $g=1,\ldots,\chi$. Let $x_{\max}^{(g)}$ be the maximum degree in $(V,X^{(g)})$, and let
		\begin{align}
			d_{\max}^{(g)}&:=\max\{d_1^{(g)},\ldots,d_n^{(g)},d_1'^{(g)},\ldots,d_{n'}'^{(g)}\},
			\\\Delta^{(g)}&:=d_{\max}^{(g)}\left(x_{\max}^{(g)}+\sum_{h=1}^{g}d_{\max}^{(h)}\right)+m^{(g)}\sum_{h=g+1}^{\chi}\frac{(d_{\max}^{(h)})^2}{m^{(h)}}.
		\end{align}
		Consider the asymptotics as $m_{\min}:=\min\{m^{(1)},\ldots,m^{(\chi)}\}\to\infty$, and assume that $\Delta^{(g)}=O((m^{(g)})^{\sfrac12}/(\log m^{(g)})^2)$ for $g=1,\ldots,\chi$. Then each iteration $g=1,\ldots,\chi$ has probability $O(\Delta^{(g)}/m^{(g)})$ of rejection. In particular, for $m_{\min}$ large enough, there exist edge-disjoint simple bipartite graphs for $g=1,\ldots,\chi$ with degree sequences $\d^{(g)}$ and $\d'^{(g)}$ on their parts and with no edges from $X^{(g)}$. Furthermore, any such graphs are sampled with probability $e^{O(\mathscr{E})}\mathscr{P}$, where $\mathscr{E}:=\sum_{g=1}^{\chi}\mathscr{E}(m^{(g)},\Delta^{(g)})$ and
		\begin{align}
			\mathscr{P}&:=\exp\Bigg(\sum_{g=1}^{\chi}\Bigg(\tfrac12\lambda_{\d^{(g)}}\lambda_{\d'^{(g)}}+\sum_{h=g+1}^{\chi}\lambda_{\d^{(g)},\d^{(h)}}\lambda_{\d'^{(g)},\d'^{(h)}}+\mu_{\d^{(g)},\d'^{(g)}}^{X^{(g)}}\Bigg)\Bigg)
			\\&\quad\cdot\prod_{g=1}^{\chi}\left(\frac1{m^{(g)}!}\prod_{i=1}^{n}d_i^{(g)}!\prod_{j=1}^{n'}d_j'^{(g)}!\right),
		\end{align}
		which is asymptotically uniform if $\mathscr{E}=o(1)$. The total number of such graphs is thus $e^{O(\mathscr{E})}\mathscr{P}^{-1}$.
	\end{theorem}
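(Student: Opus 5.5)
We prove Theorem~\ref{theorem_bipcol} by induction over the colors $g=1,\ldots,\chi$, applying the main theorem (Theorem~\ref{theorem_main}) to each color class separately on the bipartite configuration space. The argument follows that of Theorem~\ref{theorem_colored}, with the configuration space replaced by its bipartite counterpart.

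\textbf{Setting up one color class.} Fix $g$ and condition on previously sampled edge sets $E^{(1)},\ldots,E^{(g-1)}$. Stage $g$ is then an IMFIS process on the bipartite configuration space with half-edge counts $\d^{(g)},\d'^{(g)}$. The forbidden pairs are the multi-edges (encoded by the equivalence relation $R$). The forbidden edge set is $Y^{(g)}:=X^{(g)}\cup E^{(1)}\cup\ldots\cup E^{(g-1)}$, and the sampling weight is $\b^{(g)}$.

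The maximum degree of $(V,Y^{(g)})$ is at most $x_{\max}^{(g)}+\sum_{h<g}d_{\max}^{(h)}$. Each weight satisfies $\b^{(g)}_{i,j}=O\big((d_{\max}^{(g)})^2/m^{(g)}+\sum_{h>g}(d_{\max}^{(h)})^2/m^{(h)}\big)$. Hence the parameter controlling the error in Theorem~\ref{theorem_main} is $O(\Delta^{(g)})$, which is exactly why $\Delta^{(g)}$ has its stated form. Theorem~\ref{theorem_main} then gives:
\begin{itemize}
\item a rejection probability of $O(\Delta^{(g)}/m^{(g)})$;
\item for each admissible $E^{(g)}$, a sampling probability of $(1+O(\mathscr{E}(m^{(g)},\Delta^{(g)})))$ times
\begin{equation}
\exp\Big(\tfrac12\lambda_{\d^{(g)}}\lambda_{\d'^{(g)}}+\mu^{Y^{(g)}}_{\d^{(g)},\d'^{(g)}}\Big)\prod_{(v_i,v_j')\in E^{(g)}}\big(1-\b^{(g)}_{i,j}\big)^{-1}\frac{\prod_i d_i^{(g)}!\prod_j d_j'^{(g)}!}{m^{(g)}!}.
\end{equation}
\end{itemize}
I expect this to be the same shape as in the proof of Theorem~\ref{theorem_bipartite}, where the weight $\b_{i,j}=d_id_j'/(2m)$ cancels the $\tfrac12$-correction for multi-edges.

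\textbf{Product over colors and cancellation.} Multiply the per-color probabilities; the error factors combine to $e^{O(\mathscr{E})}$. Next expand the color-dependent factors to first order: $\prod(1-\b)^{-1}=\exp\big(\sum\b+O(\sum\b^2)\big)$, with the quadratic error absorbed into $O(\mathscr{E})$. We then split $\mu^{Y^{(g)}}=\mu^{X^{(g)}}+\sum_{h<g}\mu^{E^{(h)}\setminus X^{(g)}}$ (up to overlaps of negligible weight).

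The term $\mu^{E^{(h)}}_{\d^{(g)},\d'^{(g)}}=\sum_{(v_i,v_j')\in E^{(h)}}d_i^{(g)}d_j'^{(g)}/m^{(g)}$ appears in the stage-$g$ exponent with a $+$ sign. It is cancelled by the part of $\sum_{E^{(h)}}\b^{(h)}$ coming from the $g$-summand, which appears with a $-$ sign via $(1-\b^{(h)})^{-1}$. This cancellation is the purpose of the condition $(v_i,v_j')\notin X^{(g)}$ in the definition of $\b^{(h)}$.

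The self-term $\sum_{E^{(g)}}d_i^{(g)}d_j'^{(g)}/(2m^{(g)})$ is not cancelled. Since $E^{(g)}$ has exact degrees, it can be written as a degree-only quantity up to a fluctuation. I expect the main obstacle is precisely here: evaluating these sums without dependence on the particular graphs. Reading it off the per-color formula of Theorem~\ref{theorem_bipartite}, the diagonal terms should already be absorbed into $\tfrac12\lambda\lambda'$. The genuinely new cross terms should produce $\lambda_{\d^{(g)},\d^{(h)}}\lambda_{\d'^{(g)},\d'^{(h)}}$.

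Concretely, I would try to show that whatever remains after cancellation equals its configuration-model expectation up to $O(\mathscr{E})$. If instead the cancellation is exact, I would check whether Theorem~\ref{theorem_main}'s formula already contains an averaged term $\sum_{\text{pairs}}\b\cdot p_{ij}$. In the bipartite configuration model $p_{ij}\approx d_id_j'/m$, so this average equals $\sum_{h>g}\lambda_{\d^{(g)},\d^{(h)}}\lambda_{\d'^{(g)},\d'^{(h)}}+O(\cdot)$, giving the stated exponent.

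\textbf{Conclusion.} Summing over $g$ yields $\mathscr{P}$ up to $e^{O(\mathscr{E})}$. The sampling probability is therefore independent of the particular edge-disjoint graphs up to this factor. Summing over all outcomes, whose total probability is $1-o(1)$, gives the existence claim and the count $e^{O(\mathscr{E})}\mathscr{P}^{-1}$.
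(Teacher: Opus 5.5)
Your architecture is the paper's: one IMFIS process per colour on the bipartite configuration space with $F$ encoding $X^{(g)}\cup E^{(1)}\cup\dots\cup E^{(g-1)}$, the check $M=O(\Delta^{(g)})$, a product over colours, and telescoping of the graph-dependent cross terms. However, the per-colour formula you write down is not what Theorem~\ref{theorem_main} gives, and the step you call the main obstacle is an artefact of that error.

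Theorem~\ref{theorem_main} gives $\mathscr{P}(S)=\exp(P|F|+\tfrac12P|R^*(S)|+P\b(V)-\b(S))/m^{(g)}!$ with $P=1/m^{(g)}$. Three things differ from your version:
\begin{itemize}
\item The chosen edges contribute $e^{-\b(S)}\approx\prod_{E^{(g)}}(1-\b^{(g)}_{i,j})$, not its inverse.
\item There is a normalising factor $e^{P\b(V)}$, which you dropped.
\item $\tfrac12\lambda_{\d^{(g)}}\lambda_{\d'^{(g)}}$ in Theorem~\ref{theorem_bipartite} is already the net effect of $\tfrac12P|R^*(S)|$ together with the self-weight $d_i^{(g)}d_j'^{(g)}/(2m^{(g)})$. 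Multiplying additionally by a product over the full $\b^{(g)}_{i,j}$ therefore counts the self-weight twice.
\end{itemize}
The sign matters for your cancellation. With $(1-\b^{(h)}_{i,j})^{-1}$, the $g$-summand of $\b^{(h)}$ would enter with a plus sign and add to the term $+\mu^{E^{(h)}\setminus X^{(g)}}_{\d^{(g)},\d'^{(g)}}$ coming from $P|F|$ at stage $g$. Your text asserts a minus sign, which is only correct for $e^{-\b(S)}$.

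Your primary remedy for the leftover self-sum would fail. The quantity $\sum_{(v_i,v_j')\in E^{(g)}}d_i^{(g)}d_j'^{(g)}/(2m^{(g)})$ is not within $O(\mathscr{E})$ of a degree-only quantity for every graph. Take $a\ge D$ vertices of degree $D$ and $aD$ vertices of degree $1$ on each side, with $D$ fixed and $a\to\infty$. Wiring high-to-high versus high-to-low gives self-sums $\tfrac14(D^2+1)$ and $\tfrac12D$, which differ by $\tfrac14(D-1)^2$, while $\mathscr{E}\to0$. Since the theorem is a statement about every outcome, no concentration over typical graphs can help.

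The missing idea is an exact identity. Work at stage $g$ with $d=d^{(g)}$ and $m=m^{(g)}$. For feasible $S$ the classes $R^*(v)$, $v\in S$, are pairwise disjoint with $|R^*(v)|=(d_i-1)(d_j'-1)$. Hence
$\tfrac12P|R^*(S)|-\sum_{v\in S}\tfrac{d_id_j'}{2m}=-\tfrac1{2m}\sum_{v\in S}(d_i+d_j'-1)=-\tfrac1{2m}\big(\sum_id_i^2+\sum_jd_j'^2-m\big)$,
which is the same for every perfect matching. The paper implements this via Proposition~\ref{proposition_leniency}, replacing the first term of $\b^{(g)}_{i,j}$ by $\tfrac12P|R^*(v)|$. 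This leaves
$\mathscr{P}(S)=\exp\big(P|F|+P\b'(V)-\sum_{h>g}\sum_{(v_i,v_j')\in E^{(g)}\setminus X^{(h)}}d_i^{(h)}d_j'^{(h)}/m^{(h)}\big)/m^{(g)}!$.

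Your second alternative is then exactly the right mechanism. $P\b'(V)$ is the average of $\b'(T)$ over all MISs $T$, and it evaluates deterministically to $\tfrac12\lambda_{\d^{(g)}}\lambda_{\d'^{(g)}}+\sum_{h>g}\lambda_{\d^{(g)},\d^{(h)}}\lambda_{\d'^{(g)},\d'^{(h)}}$, up to three kinds of correction: from vertices inducing edges of $X^{(g)}$, from those inducing edges of $E^{(h)}$ with $h<g$ (in both cases $R^*(v)=\emptyset$), and from the pairs excluded by $X^{(h)}$ in $\b^{(g)}$. Some of these depend on the sampled graphs, so you still need to check that, summed over colours, they are $O(\mathscr{E})$. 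The remaining graph-dependent terms then cancel across colours as you describe.
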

	
	If $m^{(1)}\leq\ldots\leq m^{(\chi)}$, note that the bounds $$d_{\max}^{(g)}=O\left(\frac{(m^{(g)})^{\sfrac14}}{\chi^{\sfrac12}\log m^{(g)}}\right),\ x_{\max}^{(g)},y_{\max},z_{\max}^{(g)}=O\left(\frac{\chi^{\sfrac12}(m^{(g)})^{\sfrac14}}{\log m^{(g)}}\right)$$ for $g=1,\ldots,\chi$ would be sufficient for the required bound on $\Delta^{(g)}$. Indeed, since $x\mapsto x^{\sfrac14}/\log x$ is eventually increasing and $x\mapsto x^{-\sfrac12}/(\log x)^2$ is eventually decreasing, we get
	\begin{align}
		d_{\max}^{(h)}&=O\left(\frac{(m^{(g)})^{\sfrac14}}{\chi^{\sfrac12}\log m^{(g)}}\right)\ \text{for}\ h\leq g,
		\\\frac{(d_{\max}^{(h)})^2}{m^{(h)}}&=O\left(\frac{(m^{(g)})^{-\sfrac12}}{\chi(\log m^{(g)})^2}\right)\ \text{for}\ h\geq g.
	\end{align}
	The proof of Theorem~\ref{theorem_bipcol} is in Section~\ref{subsection_applications_bipcol}.
	
	\subsection{Hypergraphs}\label{subsection_results_hyper}
	
	A \emph{simple hypergraph} is a tuple $(V,E)$, where $V$ is a set of vertices and $E$ is a set of \emph{hyperedges}, which are sets of at least two vertices from $V$. Note that there cannot be duplicated hyperedges or duplicated vertices within a hyperedge.
	
	Any simple hypergraph can be described by a simple bipartite graph between its vertices and hyperedges by connecting any hyperedge to all its vertices. We use this idea to randomly sample a simple hypergraph, given a degree sequence $\d$ and a sequence $\d'$ of hyperedge cardinalities.
	
	\begin{definition}[Iterative hypergraph process]
		Consider the vertex set $V=\{v_1,\ldots,v_n\}$ with a degree sequence $\d=(d_1,\ldots,d_n)$ of non-negative integers. Consider a sequence $\d'=(d_1',\ldots,d_m')$ of integers of at least two. Assume that $s:=d_1+\ldots+d_n=d_1'+\ldots+d_m'>0$. Finally, consider a set $X$ of forbidden hyperedges. For $V'=\{v_1',\ldots,v_m'\}$, use the iterative bipartite graph process to sample a simple bipartite graph $(V\cup V',\widetilde{E})$ with degree sequences $\d$ and $\d'$ on its parts. Apply rejection sampling to make sure that all vertices in $V'$ have pairwise distinct neighborhoods that are not in $X$. Finally, identify each such neighborhood with a hyperedge on $V$ to sample a simple hypergraph $(V,E)$ with degree sequence $\d$ and with hyperedges not in $X$ with cardinalities matching $\d'$.
	\end{definition}
	
	\begin{theorem}\label{theorem_hyper}
		Consider the iterative hypergraph process on $V$, $\d$, $\d'$, and $X$. Let $$d_{\max}:=\max\{d_1,\ldots,d_n,d_1',\ldots,d_m'\},$$ and for $k\geq2$, let $c_k$ be the required number of hyperedges of cardinality $k$, and let $x_k$ be the number of hyperedges in $X$ of cardinality $k$. Define
		\begin{align}
			\rho_k&:=k!\binom{c_k}{2}\left(\frac{\lambda_{\d}}{s}\right)^k\leq\binom{c_k}{2}\left(\frac{k\lambda_{\d}}{s}\right)^k,
			\\\rho_X&:=\sum_{e\in X}\frac{c_{|e|}|e|!}{s^{|e|}}\prod_{v_i\in e}d_i\leq\sum_{k=2}^{d_{\max}}x_{k}c_{k}\left(\frac{kd_{\max}}{s}\right)^{k}.
		\end{align}
		Consider the asymptotics as $s\to\infty$, and assume that $d_{\max}=O(s^{\sfrac14}/\log s)$ and $\rho_2,\rho_X=o(1)$. Then the probability of rejection in the iterative bipartite graph process is $O(d_{\max}^2/s)$, and the probability of rejection afterwards is $O(\rho_2+\rho_3+\rho_X+1/s)$. In particular, for $s$ large enough, there exists a simple hypergraph with degree sequence $\d$ and with hyperedges not in $X$ with cardinalities matching $\d'$. Furthermore, any such hypergraph is sampled with probability $(1+O(\mathscr{E}+\rho_2+\rho_X))\mathscr{P}$, where $\mathscr{E}:=\mathscr{E}(s,d_{\max}^2)$ and $$\mathscr{P}:=\exp\left(\tfrac12\lambda_{\d}\lambda_{\boldsymbol{\d'}}\right)\frac1{s!}\prod_{i=1}^{n}d_i!\prod_{k=2}^{d_{\max}}(k!^{c_k}\cdot c_k!),$$ which is asymptotically uniform. The total number of such hypergraphs is thus $(1+O(\mathscr{E}+\rho_2+\rho_X))\mathscr{P}^{-1}$.
	\end{theorem}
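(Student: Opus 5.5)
The argument reduces Theorem~\ref{theorem_hyper} to Theorem~\ref{theorem_bipartite}, applied with $X=\emptyset$, plus a second-moment analysis of the hypergraph-specific rejection events.

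\textbf{Step 1: the bipartite stage.} We apply Theorem~\ref{theorem_bipartite} to $V\cup V'$ with degree sequences $\d$ and $\d'$, total $m=s$, and no forbidden edges, so $x_{\max}=0$ and $\Delta=d_{\max}^2=O(s^{\sfrac12}/(\log s)^2)$. This gives rejection probability $O(d_{\max}^2/s)$. Every simple bipartite graph with these degrees is then sampled with probability $(1+O(\mathscr{E}))\mathscr{P}_{\mathrm{bip}}$, where
\[
\mathscr{P}_{\mathrm{bip}}=\exp(\tfrac12\lambda_\d\lambda_{\d'})\frac1{s!}\prod_i d_i!\prod_j d_j'!.
\]
A simple hypergraph whose hyperedges are not in $X$ and have the prescribed cardinalities corresponds to exactly $\prod_k c_k!$ bipartite graphs with pairwise distinct neighbourhoods outside $X$, one for each labelling of the hyperedges of each cardinality by the vertices of $V'$ with that degree. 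Since $\prod_j d_j'!=\prod_k k!^{c_k}$, the probability that the full process outputs a given hypergraph equals $\prod_k c_k!\,(1+O(\mathscr{E}))\mathscr{P}_{\mathrm{bip}}$ divided by the acceptance probability $1-q$ of the second rejection step. This has the stated form $\mathscr{P}$, provided we show $q=O(\rho_2+\rho_3+\rho_X+1/s)$ and that $\rho_3$ and $1/s$ are absorbed into the final error.

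\textbf{Step 2: bounding $q$.} Let $\mathbb{P}_{\mathrm{bip}}$ denote the law of the bipartite graph output by Step 1. By Step 1, $\mathbb{P}_{\mathrm{bip}}$ is within a factor $1+O(\mathscr{E})$ of the uniform distribution on simple bipartite graphs, and that uniform distribution is in turn comparable to the configuration model conditioned on simplicity. We therefore bound probabilities under the configuration model, for which Theorem~\ref{theorem_space} gives the probabilities of fixed structures. For two hyperedge-vertices $v_a',v_b'$ of degree $k$, the probability that they have the same neighbourhood is at most
\[
k!\prod\Big(\text{choices of a common }k\text{-set}\Big)\approx k!\Big(\sum_i d_i(d_i-1)/s^2\Big)^k=k!(\lambda_\d/s)^k.
\]
The union bound over the $\binom{c_k}{2}$ pairs gives $\rho_k$. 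For $k\ge 4$, the term $\rho_k$ is dominated geometrically: $\lambda_\d\le d_{\max}$ makes $k\lambda_\d/s$ tiny, and we compare with $\rho_2,\rho_3$ plus $O(1/s)$. Similarly, the probability that some $v_j'$ of degree $|e|$ has neighbourhood exactly $e\in X$ is about $c_{|e|}|e|!\prod_{v_i\in e}d_i/s^{|e|}$, and summing gives $\rho_X$. Hence $q=O(\rho_2+\rho_3+\rho_X+1/s)$.

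\textbf{Step 3: sharpening to the stated error.} To obtain the final error $O(\mathscr{E}+\rho_2+\rho_X)$ without $\rho_3$, note that $\rho_3\le\binom{c_3}{2}(3\lambda_\d/s)^3$. With $c_3\le s$ and $\lambda_\d\le d_{\max}$, this gives $\rho_3=O(d_{\max}^3/s)$, which is $O(\mathscr{E})$ because $d_{\max}^3/s\le d_{\max}^4\log s/s$. The same estimate shows that $1/s=O(\mathscr{E})$.

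\textbf{Main obstacle.} The hard part is making the pair-collision estimates rigorous under the sampling law rather than under an idealised independent matching. This needs Theorem~\ref{theorem_space} to bound the probability that a fixed set of $2k$ edges, forming two stars onto a common $k$-set, all appear in the IMFIS outcome; such a bound should be of the form $(1+o(1))\prod d$-factors$/s^{2k}$ uniformly in $k\le d_{\max}$. Controlling the $(1+o(1))$ uniformly over large $k$, where the structure has up to $2d_{\max}$ edges, is the delicate point. I would first try to bound it crudely by $e^{O(k^2 d_{\max}^2/s)}$ and check that this stays bounded for $k\le d_{\max}=O(s^{\sfrac14}/\log s)$.
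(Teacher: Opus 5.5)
Your proposal follows the paper's proof: Theorem~\ref{theorem_bipartite} with no forbidden edges for the bipartite stage, then the $\prod_k c_k!$ relabelling count, then a first-moment bound on the second rejection obtained by counting FISs $T$ (a star onto some $e\in X$, or two stars onto a common $k$-set) and applying Theorem~\ref{theorem_space} to each, with the tail bounded directly by $\sum_{k\ge 4}\rho_k=O(d_{\max}^4/s^2)=O(1/s)$ (not by comparison with $\rho_2,\rho_3$) and with $\rho_3=O(d_{\max}^3/s)$ and $1/s$ absorbed into $\mathscr{E}$. The ``delicate point'' you flag is settled by Proposition~\ref{proposition_qt}~(\ref{proposition_qt_small}), which gives containment probability $s^{-|T|}\exp(O((|T|^2+|T|d_{\max}^2)/s))=(1+O(\mathscr{E}))s^{-|T|}$ uniformly in $|T|\le 2d_{\max}$; and since Theorem~\ref{theorem_space} applies to the IMFIS process itself, the detour through the configuration model is unnecessary.
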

	
	Note that the terms $\rho_3$ and $1/s$ do not need to be controlled, and they disappear in the asymptotic error bound, because it can be shown that $\rho_3,1/s=O(\mathscr{E})$. Furthermore, note that the set $X$ only affects the asymptotic bounds, so in the given regime, it does not significantly affect the size of the sample space. The proof of Theorem~\ref{theorem_hyper} is in Section~\ref{subsection_applications_hyper}.
	
	The formula from our counting result appears to be new, though the same counting formula for $r$-uniform hypergraphs (where all hyperedges have cardinality $r\geq3$) and without forbidden edges is given in Theorem~1.1 from~\cite{BLINOVSKY2016287}. This setting simplifies our error bound to $O(\mathscr{E})$, which is only a logarithmic factor larger than their error bound of $O(d_{\max}^3/m)=O(rd_{\max}^3/s)$ if $d_{\max}=O(r)$.
	
	\subsection{Directed hypergraphs}\label{subsection_results_dihy}
	
	A \emph{simple directed hypergraph} is a tuple $(V,E)$, where $V$ is a set of vertices and $E$ is a set of hyperarcs, which are tuples $e=(D,C)$ of disjoint non-empty sets of vertices from $V$. The sets $D$ and $C$ are respectively called the domain and codomain of $e$, and the \emph{order} of $e$ is defined as the tuple of cardinalities $|e|:=(|D|,|C|)$. Note that there cannot be duplicated hyperarcs or duplicated vertices in the domain or codomain of a hyperarc. The out- or in-degrees of a vertex are the number of hyperarcs where the vertex appears in its domain or codomain respectively.
	
	Any simple directed hypergraph can be described by a simple edge-colored bipartite graph between its vertices and hyperarcs by connecting any hyperarc to all vertices in its domain with one color and to all vertices in its codomain with a second color. We use this idea to randomly sample a simple directed hypergraph, given out- and in-degree sequences $\d^{(1)}$ and $\d^{(2)}$ and sequences $\d'^{(1)}$ and $\d'^{(2)}$ of domain and codomain cardinalities of hyperarcs.
	
	\begin{definition}[Iterative directed hypergraph process]
		Consider the vertex set $V=\{v_1,\ldots,v_n\}$ with out- and in-degree sequences $\d^{(1)}=(d_1^{(1)},\ldots,d_n^{(1)})$ and $\d^{(2)}=(d_1^{(2)},\ldots,d_n^{(2)})$ of non-negative integers. Consider sequences $\d'^{(1)}=(d_1'^{(1)},\ldots,d_m'^{(1)})$ and $\d'^{(2)}=(d_1'^{(2)},\ldots,d_m'^{(2)})$ of positive integers with $s^{(i)}:=d_1^{(i)}+\ldots+d_n^{(i)}=d_1'^{(i)}+\ldots+d_m'^{(i)}>0$ for $i\in\{1,2\}$, and consider a set $X$ of forbidden hyperarcs. For $V':=\{v_1',\ldots,v_m'\}$, use the iterative edge-colored bipartite graph process to sample edge-disjoint simple bipartite graphs $(V\cup V',\widetilde{E}^{(i)})$ with degree sequences $\d^{(i)}$ and $\d'^{(i)}$ on its parts for $i\in\{1,2\}$. Apply rejection sampling to make sure that all vertices in $V'$ have distinct pairs of neighborhoods that are not in $X$. Finally, identify each such pair of neighborhoods with a hyperarc on $V$ to sample a simple directed hypergraph $(V,E)$ with out- and in-degree sequences $\d^{(1)}$ and $\d^{(2)}$ and with hyperarcs not in $X$ with orders $(d_1'^{(1)},d_1'^{(2)}),\ldots,(d_m'^{(1)},d_m'^{(2)})$.
	\end{definition}
	
	\begin{theorem}\label{theorem_dihy}
		Consider the iterative directed hypergraph process on $V$, $\d^{(1)}$, $\d^{(2)}$, $\d'^{(1)}$, $\d'^{(2)}$, and $X$. Let $$d_{\max}:=\max\{d_1^{(1)},\ldots,d_n^{(1)},d_1^{(2)},\ldots,d_n^{(2)},d_1'^{(1)},\ldots,d_m'^{(1)},d_1'^{(2)},\ldots,d_m'^{(2)}\},$$ and for $k^{(1)},k^{(2)}\geq1$, let $c_{k^{(1)},k^{(2)}}$ be the required number of hyperarcs of order $(k^{(1)},k^{(2)})$, and let $x_{k^{(1)},k^{(2)}}$ be the number of hyperarcs in $X$ of order $(k^{(1)},k^{(2)})$. Define
		\begin{align}
			\rho_{k^{(1)},k^{(2)}}&:=k^{(1)}!k^{(2)}!\binom{c_{k^{(1)},k^{(2)}}}{2}\left(\frac{\lambda_{\d^{(1)}}}{s^{(1)}}\right)^{k^{(1)}}\left(\frac{\lambda_{\d^{(2)}}}{s^{(2)}}\right)^{k^{(2)}}
			\\&\leq\binom{c_{k^{(1)},k^{(2)}}}{2}\left(\frac{k^{(1)}\lambda_{\d^{(1)}}}{s^{(1)}}\right)^{k^{(1)}}\left(\frac{k^{(2)}\lambda_{\d^{(2)}}}{s^{(2)}}\right)^{k^{(2)}},
			\\\rho_X&:=\sum_{(D,C)\in X}\frac{c_{|D|,|C|}|D|!|C|!}{(s^{(1)})^{|D|}(s^{(2)})^{|C|}}\prod_{v_{i^{(1)}}\in D}d_{i^{(1)}}^{(1)}\prod_{v_{i^{(2)}}\in C}d_{i^{(2)}}^{(2)}
			\\&\leq\sum_{k^{(1)}=1}^{d_{\max}}\sum_{k^{(2)}=1}^{d_{\max}}x_{k^{(1)},k^{(2)}}c_{k^{(1)},k^{(2)}}\left(\frac{k^{(1)}d_{\max}}{s^{(1)}}\right)^{k^{(1)}}\left(\frac{k^{(2)}d_{\max}}{s^{(2)}}\right)^{k^{(2)}}.
		\end{align}
		Assume that $s^{(1)}\leq s^{(2)}$ and consider the asymptotics as $s^{(1)}\to\infty$. Assume that $d_{\max}=O((s^{(1)})^{\sfrac14}/\log(s^{(1)}))$ and $\rho_{1,1},\rho_X=o(1)$. Then the total probability of rejection in the iterative edge-colored bipartite graph process is $O(d_{\max}^2/s^{(1)})$ in both iterations, and the probability of rejection afterwards is $O(\rho_{1,1}+\rho_{1,2}+\rho_{2,1}+\rho_X+1/s^{(1)})$. In particular, for $s^{(1)}$ large enough, there exists a simple directed hypergraph with out- and in degree sequences $\d^{(1)}$ and $\d^{(2)}$ and with hyperarcs not in $X$ and with orders $(d_1'^{(1)},d_1'^{(2)}),\ldots,(d_m'^{(1)},d_m'^{(2)})$. Furthermore, any such directed hypergraph is sampled with probability $(1+O(\mathscr{E}+\rho_{1,1}+\rho_X))\mathscr{P}$, where $\mathscr{E}:=\mathscr{E}(s^{(1)},d_{\max}^2)$ and
		\begin{align}
			\mathscr{P}&:=\exp\left(\tfrac12\lambda_{\d^{(1)}}\lambda_{\d'^{(1)}}+\tfrac12\lambda_{\d^{(2)}}\lambda_{\d'^{(2)}}+\lambda_{\d^{(1)},\d^{(2)}}\lambda_{\d'^{(1)},\d'^{(2)}}\right)
			\\&\quad\cdot\frac1{s^{(1)}!s^{(2)}!}\prod_{i=1}^{n}d_i^{(1)}!d_i^{(2)}!\prod_{k^{(1)}=1}^{d_{\max}}\prod_{k^{(2)}=1}^{d_{\max}}((k^{(1)}!k^{(2)}!)^{c_{k^{(1)},k^{(2)}}}(c_{k^{(1)},k^{(2)}})!),
		\end{align}
		which is asymptotically uniform. The total number of such directed hypergraphs is thus $(1+O(\mathscr{E}+\rho_{1,1}+\rho_X))\mathscr{P}^{-1}$.
	\end{theorem}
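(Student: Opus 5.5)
Theorem~\ref{theorem_dihy} will follow from Theorem~\ref{theorem_bipcol}, applied with $\chi=2$ colours on the bipartite vertex set $V\cup V'$, combined with a second-moment (Poisson-type) estimate for the final rejection step.

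\emph{Step 1: the bipartite stage.} Take $A=V$ and $B=V'$. Use colour $1$ with degree sequences $\d^{(1)},\d'^{(1)}$ and colour $2$ with $\d^{(2)},\d'^{(2)}$, and set $X^{(1)}=X^{(2)}=\emptyset$. The colours are ordered so that $s^{(1)}\le s^{(2)}$. We check that $\Delta^{(g)}=O(d_{\max}^2)$. The only extra term is $m^{(1)}(d_{\max}^{(2)})^2/m^{(2)}=s^{(1)}d_{\max}^2/s^{(2)}\le d_{\max}^2$. Hence $d_{\max}=O((s^{(1)})^{1/4}/\log s^{(1)})$ gives the hypothesis of Theorem~\ref{theorem_bipcol}. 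We also get $\mathscr{E}(s^{(g)},\Delta^{(g)})=O(\mathscr{E}(s^{(1)},d_{\max}^2))$, because $x\mapsto\mathscr{E}(x,\Delta)$ is eventually decreasing. So each iteration is rejected with probability $O(d_{\max}^2/s^{(1)})$. Each edge-disjoint pair $(\widetilde E^{(1)},\widetilde E^{(2)})$ is then sampled with probability $(1+O(\mathscr{E}))\mathscr{P}_{\mathrm{bip}}$, where $\mathscr{P}_{\mathrm{bip}}$ is the exponential factor $\tfrac12\lambda_{\d^{(1)}}\lambda_{\d'^{(1)}}+\tfrac12\lambda_{\d^{(2)}}\lambda_{\d'^{(2)}}+\lambda_{\d^{(1)},\d^{(2)}}\lambda_{\d'^{(1)},\d'^{(2)}}$ times $\prod d!/s!$.

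\emph{Step 2: from coloured bipartite graphs to hypergraphs.} Each simple directed hypergraph with hyperarcs not in $X$ and with the prescribed orders arises in the same number of ways. Namely, we may assign the hyperarcs to $V'$ by any bijection respecting orders, giving $\prod_{k^{(1)},k^{(2)}}c_{k^{(1)},k^{(2)}}!$ choices. These preimages are distinct labelled coloured bipartite graphs, and all of them pass the rejection step. The probability of a hypergraph is therefore $\prod c!\cdot(1+O(\mathscr{E}))\mathscr{P}_{\mathrm{bip}}/(1-p_{\mathrm{rej}})$, where $p_{\mathrm{rej}}$ is the rejection probability of the final step. The factors $k^{(1)}!k^{(2)}!$ in $\mathscr{P}$ come from $\prod_j d_j'^{(1)}!\,d_j'^{(2)}!$ in $\mathscr{P}_{\mathrm{bip}}$, grouped by order. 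It then remains to show $p_{\mathrm{rej}}=O(\rho_{1,1}+\rho_{1,2}+\rho_{2,1}+\rho_X+1/s^{(1)})$. The $p_{\mathrm{rej}}$ in the denominator must be of size $O(\mathscr{E}+\rho_{1,1}+\rho_X)$. For that we show $\rho_{1,2},\rho_{2,1},1/s^{(1)}=O(\mathscr{E})$, arguing as in the remark after Theorem~\ref{theorem_hyper}: $\rho_{1,2}\lesssim c^2 d_{\max}^3/s^3\lesssim d_{\max}^3/s$.

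\emph{Step 3: bounding the final rejection (main obstacle).} By a union bound, $p_{\mathrm{rej}}$ is at most the sum of two contributions. The first is the probability that two vertices $v_a',v_b'$ of equal order have identical domain and codomain neighbourhoods. The second is the probability that some $v_j'$ realises a forbidden hyperarc of $X$. Each such event asks that a fixed set of $2(k^{(1)}+k^{(2)})$ (respectively $k^{(1)}+k^{(2)}$) prescribed coloured edges be present. Theorem~\ref{theorem_space} bounds the probability of a fixed structure appearing in the IMFIS process, up to $1+o(1)$, by the configuration-model probability. That probability is roughly $\prod_v (d_v)_{t_v}\cdot\prod_j(d_j')_{\cdot}/(s^{(1)})^{\cdot}(s^{(2)})^{\cdot}$. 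Summing over the common neighbourhoods and applying $\sum_i d_i(d_i-1)=\lambda s$ gives the $\rho_{k^{(1)},k^{(2)}}$ terms. Likewise, summing over $e\in X$ and over the $c_{|e|}$ compatible slots gives $\rho_X$.

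The difficulty lies in summing over all orders. Orders with $k^{(1)}+k^{(2)}\ge4$ must be absorbed into $\rho_{1,1}+\rho_{1,2}+\rho_{2,1}$ plus $O(1/s^{(1)})$. I would attempt this with a geometric-decay bound: since $k\lambda/s\le kd_{\max}/s$ and $c_{k}\le s/k$, each increase of $k$ gains a factor $O(d_{\max}^2/s)=o(1)$. The same decay should handle the case where Theorem~\ref{theorem_space} is applied with structures of size up to $2d_{\max}$. Checking that its error terms stay uniform in this range is where I expect the most care to be needed.
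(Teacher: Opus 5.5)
Your proposal is correct and takes essentially the same route as the paper. It applies Theorem~\ref{theorem_bipcol} with $\chi=2$, counts the $\prod c_{k^{(1)},k^{(2)}}!$ preimages, and bounds the final rejection by first moments from Theorem~\ref{theorem_space} and Proposition~\ref{proposition_qt}~(\ref{proposition_qt_small}), absorbing orders with $k^{(1)}+k^{(2)}\ge4$ into $O(1/s^{(1)})$; the paper does this last step via $k!/4!\le d_{\max}^{k-4}$ rather than geometric decay, to the same effect. The uniformity you flag is harmless: $|T|\le 2d_{\max}$ and $M'=O(d_{\max}^2)$ make the error in Proposition~\ref{proposition_qt}~(\ref{proposition_qt_small}) equal to $O(d_{\max}^3/s^{(1)})=O(\mathscr{E})$ uniformly over all orders.
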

	
	Note that the terms $\rho_{1,2}$, $\rho_{2,1}$, and $1/s^{(1)}$ do not need to be controlled, and they disappear in the asymptotic error bound, because it can be shown that $\rho_{1,2},\rho_{2,1},1/s^{(1)}=O(\mathscr{E})$. Furthermore, note that the set $X$ only affects the asymptotic bounds, so in the given regime, it does not significantly affect the size of the sample space. Finally, in case $s^{(1)}\leq s^{(2)}$ does not hold, one can swap $\d^{(1)}$ with $\d^{(2)}$ and $\d'^{(1)}$ with $\d'^{(2)}$ and invert all the hyperarcs in the sampled directed hypergraph. The proof of Theorem~\ref{theorem_dihy} is in Section~\ref{subsection_applications_dihy}.
	
	The formula from our counting result is also given in Theorem~1.1 from~\cite{greenhill2024enumerationdihypergraphsspecifieddegrees}, though forbidden hyperarcs and hyperarcs of order $(1,1)$ are not considered. This setting simplifies our error bound to $O(\mathscr{E})$, which is only a logarithmic factor larger than a simplified version of their error bound, which is $O(d_{\max}^4/s^{(1)})$.

	\section{The Configuration Space and the IMFIS Process}\label{section_independent}

	We aim to generalize the process of iteratively randomly matching half-edges in the configuration model under certain constraints that guarantee that a simple graph is sampled. This involves capturing the similarity and the difference between the configuration models for undirected graphs and for bipartite graphs. For this purpose, we reformulate the problem of matching half-edges in the (bipartite) configuration model as an independent set problem in a graph, which we call the (bipartite) configuration space. This lets us generalize the iterative random matching of half-edges in the configuration model to the iterative random construction of independent sets in the configuration space, which we call the iterative maximal feasible independent set (IMFIS) process.
	
	We show that the configuration space and the bipartite configuration space share a special property, which we call $2$-uniformity, and we demonstrate that $2$-uniformity is what makes the IMFIS process well-behaved. An exact formulation of this statement is given in our main theorem, Theorem~\ref{theorem_main} in Section~\ref{section_main}. Finally, we discuss the basic properties and a classification of $2$-uniform graphs. We show that $2$-uniformity captures the similarity between the configuration models for simple graphs and for simple bipartite graphs, and we introduce a parameter $\ell(G)$ for $2$-uniform graphs $G$ that captures their difference.
	
	Throughout the rest of this paper, we use the following conventions for the definitions of graphs, matchings, and independent sets.
	
	\begin{definition}[Graph]
		A \emph{graph} is a tuple $G=(V,E)$ with $V$, a finite set of vertices, and $E$, a set of edges, which are unordered pairs of vertices. In a \emph{multigraph}, $E$ and its elements are multisets, which allows \emph{self-loops} and \emph{multi-edges}. To emphasize the distinction from a multigraph, a graph may be referred to as a \emph{simple graph}.
	\end{definition}
	
	\begin{definition}[Matching]
		A \emph{matching} in a graph is a set of pairwise disjoint edges. The matching is \emph{perfect} if the union of its edges contains all vertices. A \emph{(perfect) matching on a set $X$} is a (perfect) matching in the complete graph with vertex set $X$. For disjoint sets $X$ and $Y$, a \emph{(perfect) bipartite matching between $X$ and $Y$} is a (perfect) matching in the complete bipartite graph with parts $X$ and $Y$.
	\end{definition}
	
	\begin{definition}[Independent set]
		An \emph{independent set} of a graph is a set of vertices no two of which are adjacent. The independent set is \emph{maximal} if it is not a strict subset of another independent set, and it is \emph{maximum} if no other independent set has strictly more vertices. The abbreviation \emph{MIS} stands for a maximum independent set.
	\end{definition}
	
	\subsection{Reformulating the configuration model}\label{subsection_independent_configuration}
	
	We reformulate the problem of matching half-edges in the (bipartite) configuration model~\cite{BOLLOBAS1980,molloy1995} as an independent set problem in a graph, which we call the (bipartite) configuration space.
	
	\subsubsection{The configuration model for undirected graphs}\label{subsection_independent_simple}
	
	Let $\d=(d_1,\ldots,d_n)$ be a sequence of non-negative integers with even sum $d_1+\ldots+d_n$. Let $W_1,\ldots,W_n$ be pairwise disjoint sets with $|W_i|=d_i$ for $i=1,\ldots,n$. We refer to the elements of $W:=W_1\cup\ldots\cup W_n$ as half-edges. A perfect matching on the set $W$ naturally induces a multigraph with degree sequence $\d$ by identifying each set $ W_i $ with a vertex $w_i$ of the multigraph. We translate perfect matchings on $ W $ into MISs of the following auxiliary graph.
	
	\begin{definition}[Configuration space]\label{definition_space}
		The \emph{configuration space} for $\d$ is the graph $G_\d:=(V_\d,E_\d)$, where $V_\d$ consists of all unordered pairs of half-edges, and where two vertices in $V_\d$ are adjacent if their corresponding pairs of half-edges overlap. Note that $G_{\d}$ is the line graph of the complete graph on $W$.
	\end{definition}
	
	It follows directly from the definition that matchings on $W$ correspond exactly to independent sets of $G_\d$. The matching is perfect if and only if the corresponding independent set of $G_\d$ is maximum (or equivalently, maximal in case of $G_\d$). Thus, to sample a uniformly random perfect matching on $W$, we may instead sample a uniformly random MIS of $G_\d$.
	
	A straightforward but inefficient way to obtain simple graphs is through rejection sampling, which discards any generated graphs with self-loops or multi-edges. Then a uniform distribution of perfect matchings on $W$ induces a uniform distribution of simple graphs with degree sequence $\d$, as any such graph is induced by $d_1!\ldots d_n!$ perfect matchings on $W$.
	
	To implement rejection sampling on the configuration space, we need some notion to detect self-loops and multi-edges. For this purpose, we introduce a set of forbidden vertices $F_\d\subset V_\d$, and an equivalence relation $R_\d\subset V_\d^2$ of forbidden pairs of vertices in the configuration space. Specifically, $F_\d$ consists of those vertices that induce a self-loop, meaning they pair half-edges from the same set $W_i$, and we declare two vertices in $V_\d$ as $R_\d$-equivalent if they induce the same edge, that is, if they pair half-edges from the same sets $W_i$ and $W_j$. An MIS of $G_\d$ is then rejected if it contains a vertex from $F_\d$, or if it contains two vertices that are $R_\d$-equivalent.
	
	\subsubsection{The configuration model for bipartite graphs}\label{subsection_independent_bipartite}
	
	Let $\d=(d_1,\ldots,d_n)$ and $\d'=(d_1',\ldots,d_{n'}')$ be sequences of non-negative integers with equal sums $d_1+\ldots+d_n=d_1'+\ldots+d_{n'}'$. Let $W_1,\ldots,W_n$ and $W_1',\ldots,W_{n'}'$  be pairwise disjoint sets with $|W_i|=d_i$ for $i=1,\ldots,n$ and $|W_i'|=d_i'$ for $i=1,\ldots,n'$. We refer to the elements of $W:=W_1\cup\ldots\cup W_n$ and $W':=W_1'\cup\ldots\cup W_{n'}'$ as half-edges. A perfect bipartite matching between the sets $W$ and $W'$ naturally induces a bipartite multigraph with degree sequences $\d$ and $\d'$ on its parts by identifying each set $W_i$ with a vertex $w_i$ and each set $W_j'$ with a vertex $w_j'$ of the bipartite multigraph. We translate perfect bipartite matchings between $W$ and $W'$ into MISs of the following auxiliary graph.
	
	\begin{definition}[Bipartite configuration space]\label{definition_bispace}
		The \emph{bipartite configuration space} for $\d$ and $\d'$ is the graph $G_{\d,\d'}:=(V_{\d,\d'},E_{\d,\d'})$, where $V_{\d,\d'}$ consists of all pairs of half-edges, one from $W$ and one from $W'$, and where two vertices in $V_{\d,\d'}$ are adjacent if their corresponding pairs of half-edges overlap. Note that $G_{\d,\d'}$ is the line graph of the complete bipartite graph with parts $W$ and $W'$.
	\end{definition}
	
	Similarly to the configuration model for simple graphs, we find that bipartite matchings between $W$ and $W'$ correspond exactly to independent sets of $G_{\d,\d'}$. We find that any simple bipartite graph with degree sequences $\d$ and $\d'$ on its parts is induced by $d_1!\ldots d_n!d_1'!\ldots d_{n'}'!$ perfect bipartite matchings between $W$ and $W'$, so a uniform distribution of such graphs is induced by a uniform distribution of perfect bipartite matchings between $W$ and $W'$ through rejection sampling.
	
	To implement rejection sampling on the bipartite configuration space, since self-loops are not possible, it suffices to detect multi-edges through an equivalence relation $R_{\d,\d'}\subset V_{\d,\d'}^2$. Specifically, we declare two vertices in $V_{\d,\d'}$ as $R_{\d,\d'}$-equivalent if they induce the same edge, that is, if they pair half-edges from the same sets $W_i$ and $W_j'$. An MIS of $G_{\d,\d'}$ is then rejected if it contains two vertices that are $R_{\d,\d'}$-equivalent.
	
	\subsection{Iterative stochastic processes for sampling independent sets}
	
	We generalize the iterative random matching of half-edges in the configuration model to the iterative random greedy construction of independent sets in the configuration space. We first study a uniform iterative stochastic process, which we call the IMIS process. We show that the (bipartite) configuration space has a special property, which we call regular independent sets, that guarantees the IMIS process samples MISs uniformly at random. We then study a more sophisticated process, which we call the IMFIS process, that permits certain feasibility requirements of independent sets and a weight function to be specified. We show that the (bipartite) configuration space has another special property, which we call $2$-uniformity, and we argue that $2$-uniformity is what makes the IMFIS process well-behaved.
	
	\subsubsection{Regular independent sets and the IMIS process}\label{subsection_independent_regular}
	
	We use the following notation for the neighborhood of a vertex $v$ or a set of vertices $U$ in a graph $G=(V,E)$:
	\begin{align}
		N(v)&:=N_G(v):=\{w\in V:\{v,w\}\in E\},&N(U)&:=N_G(U):=\cup_{u\in U}N(u),\\
		\overline{N}(v)&:=\overline{N}_G(v):=N(v)\cup\{v\},&\overline{N}(U)&:=\overline{N}_G(U):=\cup_{u\in U}\overline{N}(u).
	\end{align}
	We consider the following uniform iterative stochastic process for sampling maximal independent sets in a graph.
	
	\begin{definition}[Iterative maximal independent set (IMIS) process]
		Consider a graph $G=(V,E)$. Start with the empty set $\boldsymbol{S}_0=\emptyset$. For $r=0$ onward, if $V\setminus\overline{N}(\boldsymbol{S}_r)$ is empty, let $\boldsymbol{S}_{r+1}=\boldsymbol{S}_r$. Otherwise, choose $v\in V\setminus\overline{N}(\boldsymbol{S}_r)$ uniformly at random, and let $\boldsymbol{S}_{r+1}=\boldsymbol{S}_r\cup\{v\}$. Finally, define $\boldsymbol{S}_{\infty}=\cup_{r=0}^{\infty}\boldsymbol{S}_r$.
	\end{definition}
	
	By induction, each set $\boldsymbol{S}_r$ is an independent set. Furthermore, the set $\boldsymbol{S}_r$ gains exactly one vertex at each step, as long as $V\setminus\overline{N}(\boldsymbol{S}_r)$ is non-empty, so that $|\boldsymbol{S}_r|=r$ until $\boldsymbol{S}_r$ is a maximal independent set. Once this occurs, we have $\boldsymbol{S}_{r'}=\boldsymbol{S}_r$ for all $r'>r$, and therefore $\boldsymbol{S}_{\infty}=\boldsymbol{S}_r$. Thus, the set $\boldsymbol{S}_{\infty}$ is guaranteed to be a maximal independent set.
	
	In the (bipartite) configuration space, the set $\boldsymbol{S}_{\infty}$ is in fact a uniformly random MIS. Indeed, note that any sequence $(\boldsymbol{S}_r)_{r=0}^{\infty}$ with $|\boldsymbol{S}_{\infty}|=k$ appears with probability $$\prod_{r=0}^{k-1}\frac{1}{|V\setminus \overline{N}(\boldsymbol{S}_r)|}.$$ In the (bipartite) configuration space, we can show that the sets $V\setminus\overline{N}(\boldsymbol{S}_r)$ always have the same size for a fixed value of $r$, such that all sequences are equally likely and the size of $\boldsymbol{S}_{\infty}$ is deterministic. All maximal independent sets $S$ are thus maximum, and there are $|S|!$ possible sequences $(\boldsymbol{S}_r)_{r=0}^{\infty}$ with $\boldsymbol{S}_{\infty}=S$. Crucially, the (bipartite) configuration space satisfies the following definition.
	
	\begin{definition}[Regular independent sets]
		A graph $G$ has \emph{regular independent sets} if any two independent sets $S$ and $T$ of $G$ with $|S|=|T|$ satisfy $|V\setminus\overline{N}(S)|=|V\setminus\overline{N}(T)|$, or equivalently, $|N(S)|=|N(T)|$.
	\end{definition}
	
	The previous discussion thus gives the following proposition.
	
	\begin{proposition}\label{proposition_imis}
		Let $G$ be a graph with regular independent sets. Then each possible sequence $(\boldsymbol{S}_r)_{r=0}^{\infty}$ for the IMIS process is equally likely. It follows that all maximal independent sets are maximum, and $\boldsymbol{S}_{\infty}$ is a uniformly random MIS of $G$, and since $\boldsymbol{S}_1$ can be any singular vertex, each vertex appears in the same number of MISs of $G$.
	\end{proposition}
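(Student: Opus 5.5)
The argument follows the IMIS dynamics step by step, using regularity to show that every admissible run has the same probability. First, define $a_r:=|V\setminus\overline{N}(S)|$ for any independent set $S$ with $|S|=r$. Regularity says this depends only on $r$, so $a_r$ is well defined for every $r$ up to the independence number $\alpha(G)$.

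Fix a possible run of the process, meaning a sequence $(\boldsymbol{S}_r)_{r=0}^{\infty}$ that occurs with positive probability. Let $k$ be the first index at which $V\setminus\overline{N}(\boldsymbol{S}_k)=\emptyset$. Each step adds a uniformly chosen vertex from $V\setminus\overline{N}(\boldsymbol{S}_r)$, so the run has probability $\prod_{r=0}^{k-1}a_r^{-1}$.

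The key point is that the stopping time $k$ is deterministic. A run stops exactly when $a_k=0$. Since $a_r$ depends only on $r$, every run stops at the first $r$ with $a_r=0$, which is the same $k$ for all runs. Hence every possible run has the same probability $\prod_{r<k}a_r^{-1}$, proving the first claim.

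For the second claim, let $S$ be any maximal independent set. Then $V\setminus\overline{N}(S)=\emptyset$, so $a_{|S|}=0$. Every subset of an independent set is independent, so any ordering $s_1,\ldots,s_{|S|}$ of $S$ gives a valid run $\boldsymbol{S}_r=\{s_1,\ldots,s_r\}$. This is because $s_{r+1}\notin\overline{N}(\{s_1,\ldots,s_r\})$ by independence of $S$.

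Such a run must not stop before step $|S|$. Indeed, for $r<|S|$ the element $s_{r+1}$ lies in $V\setminus\overline{N}(\boldsymbol{S}_r)$, so $a_r\ge1$. Therefore $|S|$ is the first zero of $(a_r)$, giving $|S|=k$. All maximal independent sets thus have the common size $k$, and in particular they are maximum.

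Each MIS $S$ arises from exactly $k!$ equally likely runs, one per ordering of $S$. So $\mathbb{P}(\boldsymbol{S}_\infty=S)=k!\prod_{r<k}a_r^{-1}$, which does not depend on $S$. Hence $\boldsymbol{S}_\infty$ is a uniformly random MIS.

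For the last claim, $\boldsymbol{S}_1$ is a uniform vertex of $V$, assuming $V\neq\emptyset$, since $a_0=|V|$. Fix a vertex $v$. The number of runs with $\boldsymbol{S}_1=\{v\}$ equals $(k-1)!$ times the number of MISs containing $v$. By uniformity over runs, $\mathbb{P}(\boldsymbol{S}_1=\{v\})=1/|V|$ is proportional to this count. The count is therefore independent of $v$, so each vertex lies in the same number of MISs.

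The only subtle step is confirming that the stopping time is common to all runs, and that every maximal set really is reached by a run that does not stop early. Both follow from the argument above, so no real obstacle is expected.
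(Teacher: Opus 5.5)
Your proof is correct and follows the same route as the paper's discussion preceding the proposition: each run has probability $\prod_{r<k}|V\setminus\overline{N}(\boldsymbol{S}_r)|^{-1}$, regularity makes these factors depend only on $r$ so all runs are equally likely with a common length, and each MIS $S$ is reached by exactly $|S|!$ runs. You also spell out two points the paper leaves implicit: why the stopping index is the same for every run, and why every maximal independent set is actually the outcome of some run.
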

	
	Finally, we briefly show that the (bipartite) configuration space indeed has regular independent sets.
	
	\begin{example}[Configuration space]
		Let $2m:=d_1+\ldots+d_n$ be the number of half-edges in $W$. Then any independent set $S$ of $G_\d$ of size $|S|=r$ corresponds to a matching on $W$ of size $r$, which covers $2r$ half-edges. The set $V_\d\setminus\overline{N}(S)$ therefore consists only of pairs formed from the remaining $2m-2r$ half-edges, so its size depends only on $r$.
	\end{example}
	
	\begin{example}[Bipartite configuration space]
		Let $m:=d_1+\ldots+d_n=d_1'+\ldots+d_{n'}'$ be the number of half-edges in $W$ and in $W'$. Then any independent set $S$ of $G_{\d,\d'}$ of size $|S|=r$ corresponds to a bipartite matching between $W$ and $W'$ of size $r$, which covers $r$ half-edges of both $W$ and $W'$. The set $V_{\d,\d'}\setminus\overline{N}(S)$ therefore consists only of pairs pairs formed from the remaining $m-r$ half-edges of $W$ and of $W'$, so its size  depends only on $r$.
	\end{example}
	
	\subsubsection{Feasible independent sets and the IMFIS process}\label{subsection_independent_feasible}
	
	As discussed in Section~\ref{subsection_independent_configuration}, a straightforward way to obtain a uniformly random simple (bipartite) graph with prescribed degrees is to apply rejection sampling to a uniformly random MIS of the (bipartite) configuration space. However, the probability of rejection can get arbitrarily close to $1$, making this strategy practically infeasible. Instead, we consider an adaptation of the IMIS process designed to avoid rejection. To do so, we introduce the following definition.
	
	\begin{definition}[Feasible independent set]\label{definition_feasible}
		Given a graph $G=(V,E)$ with a set $F\subset V$ of forbidden vertices and an equivalence relation $R\subset V^2$ of forbidden pairs of vertices, an independent set $S$ of $G$ is called \emph{feasible} (with respect to $F$ and $R$) if it is disjoint from $F$ and no two of its vertices are $R$-equivalent. The abbreviation \emph{FIS} stands for feasible independent set. An \emph{FMIS} (feasible maximum independent set) is an MIS that is feasible. An FIS is \emph{maximal} if it is not a strict subset of another FIS.
	\end{definition}
	
	In the configuration space $G_\d$, the FMISs with respect to $F=F_\d$ and $R=R_\d$ correspond exactly to those MISs that induce a simple graph. In the bipartite configuration space $G_{\d,\d'}$, the FMISs with respect to $F=\emptyset$ and $R=R_{\d,\d'}$ correspond exactly to those MISs that induce a simple bipartite graph. We therefore aim to sample from a uniform distribution of FMISs.
	
	For a graph $G=(V,E)$ with a set $F\subset V$ of forbidden vertices and an equivalence relation $R\subset V^2$ of forbidden pairs of vertices, we use the following notation for the equivalence class of a vertex $v$ or a set of vertices $U$:
	\begin{align}
		R(v)&:=\{w\in V:vRw\},
		\\R^*(v)&:=R(v)\setminus(\overline{N}(v)\cup F),
		\\R^*(U)&:=\cup_{u\in U}R^*(u).
	\end{align}
	Note that $R(v)$ is the usual equivalence class, but in our setting we are primarily interested in $R^*(v)$. This is because the set $R^*(v)$ consists of those vertices which cannot appear together with $v$ in an FIS \emph{due to} their $R$-equivalence. We now consider the following adaptation of the IMIS process.
	
	\begin{definition}[Iterative maximal feasible independent set (IMFIS) process]\label{definition_imfis}
		Consider a tuple $(G,F,R,\b)$, where $G=(V,E)$ is a graph with $F\subset V$ a set of forbidden vertices, $R\subset V^2$ an equivalence relation of forbidden pairs of vertices, and $\b:V\to\mathbb{R}$ a weight function.
		
		Start with the empty set $\boldsymbol{S}_0=\emptyset$. For $r=0$ onward, if $(V\setminus\overline{N}(\boldsymbol{S}_r))\setminus(F\cup R^*(\boldsymbol{S}_r))$ is empty, let $\boldsymbol{S}_{r+1}=\boldsymbol{S}_r$. Otherwise, choose $v\in(V\setminus\overline{N}(\boldsymbol{S}_r))\setminus(F\cup R^*(\boldsymbol{S}_r))$ at random with probability proportional to $e^{-\b(v)}$, and let $\boldsymbol{S}_{r+1}=\boldsymbol{S}_r\cup\{v\}$. Finally, define $\boldsymbol{S}_{\infty}=\cup_{r=0}^{\infty}\boldsymbol{S}_r$.
	\end{definition}
	
	By induction, each set $\boldsymbol{S}_r$ is an FIS. Furthermore, the set $\boldsymbol{S}_r$ gains exactly one vertex at each step, as long as $(V\setminus\overline{N}(\boldsymbol{S}_r))\setminus(F\cup R^*(\boldsymbol{S}_r))$ is non-empty, so that $|\boldsymbol{S}_r|=r$ until $\boldsymbol{S}_r$ is a maximal FIS. Once this occurs, we have $\boldsymbol{S}_{r'}=\boldsymbol{S}_r$ for all $r'>r$, and therefore $\boldsymbol{S}_{\infty}=\boldsymbol{S}_r$. Thus, the set $\boldsymbol{S}_{\infty}$ is guaranteed to be a maximal FIS.
	
	Not all maximal FISs are FMISs. Specifically, if $V\setminus\overline{N}(\boldsymbol{S}_r)$ is non-empty but contained in $F\cup R^*(\boldsymbol{S}_r)$, then $\boldsymbol{S}_r$ is a maximal FIS, but not an MIS. Hence, the process does not always lead to an FMIS $\boldsymbol{S}_{\infty}$, and we still need to perform a rejection step. Nevertheless, the probability of rejection now vanishes under certain conditions, as we show as part of our main theorem, Theorem~\ref{theorem_main}~(\ref{theorem_main_termination}).
	
	To motivate the introduction of the weight function $\b$, first consider the case when $\b(v)=0$ for all vertices $v$. In this situation, the IMFIS process has a bias towards FMISs with vertices from large $R$-equivalence classes. Choosing a vertex from a large $R$-equivalence class eliminates many other admissible vertices from future consideration. This alters the number of sequences leading to different FMISs, thereby introducing a combinatorial bias. In Corollary~\ref{corollary_uniform}, we show that this bias can be sufficiently neutralized by choosing an appropriate weight function $\b$, and that this results in an almost uniform distribution.
	
	One might also expect, by a similar reasoning, that the IMFIS process would be biased toward FMISs with vertices adjacent to many forbidden vertices. However, it turns out that both the configuration space and the bipartite configuration space possess another special property, beyond having regular independent sets, that prevents such a bias. Specifically, for every MIS $S$, we can show that every vertex outside $S$ is adjacent to exactly two vertices of $S$. The total bias contributed by forbidden vertices adjacent to an FMIS is therefore the same for every FMIS, so it has a negligible effect on the overall distribution.
	
	\begin{definition}[$2$-Uniform graph]\label{definition_uniform}
		A \emph{$2$-uniform graph} is a non-edgeless graph $G$ with regular independent sets, such that, for any MIS $S$ of $G$ and any vertex $v\notin S$, the vertex $v$ is adjacent to exactly two vertices of $S$.
	\end{definition}
	
	It turns out that the notion of a $2$-uniform graph captures exactly the property required to analyze the sampling distribution of the IMFIS process. Our main theorem on the IMFIS process is thus stated in terms of $2$-uniform graphs. The following examples show that the configuration space and the bipartite configuration space are indeed $2$-uniform if they are non-edgeless, which thus means that our main theorem, Theorem~\ref{theorem_main}, applies to both cases.
	
	\begin{example}[(Bipartite) configuration space]
		An MIS $S$ corresponds to a perfect (bipartite) matching of half-edges. A vertex $v\notin S$ thus corresponds to a pair of half-edges from two different matched pairs of half-edges, which are the two vertices of $S$ adjacent to $v$.
	\end{example}
	
	\subsection{Properties of 2-uniform graphs}\label{subsection_independent_properties}
	
	For our main theorem, it is helpful to have some basic understanding of $2$-uniform graphs. We let $\alpha(G)$ denote the independence number of any graph $G$, which is the size of any MIS. We furthermore introduce a parameter $\ell(G)$ to help quantify certain properties of any $2$-uniform graph $G$. In particular, we show that the total number of MISs of $G$ can be calculated from $\alpha(G)$ and $\ell(G)$. It turns out that the parameter $\ell(G)$ captures the difference between the configuration space and the bipartite configuration space. Finally, we classify all $2$-uniform graphs, and we use this classification to argue that the property of $2$-uniformity captures the similarity between the configuration models for undirected graphs and for bipartite graphs.
	
	\subsubsection{Parameters for $2$-uniform graphs}\label{subsection_independent_parameter}
	
	We introduce the parameter $\ell(G)$ for a $2$-uniform graph $G$, which involves discussing some other parameters as well. For $k\leq\alpha(G)$, let $d_k(G)$ denote the size of $N(S)$ for any independent set $S$ of size $|S|=k$. This is well-defined, because $G$ has regular independent sets. Note that $G$ is regular with degree $d(G):=d_1(G)$, since any singular vertex forms an independent set of size $1$. The parameter $\ell(G)$ is given by the following proposition.
	
	\begin{proposition}\label{proposition_ell}
		Consider a $2$-uniform graph $G$. Then $\alpha(G)\geq2$. We may thus define $\ell(G):=2d_1(G)-d_2(G)$. Then for any two non-adjacent vertices $v$ and $w$, we have $|N(v)\cap N(w)|=\ell(G)$. It follows that $\ell(G)\geq1$.
	\end{proposition}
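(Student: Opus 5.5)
We prove the three claims in order: first $\alpha(G)\geq2$, then $|N(v)\cap N(w)|=2d_1(G)-d_2(G)$ for non-adjacent $v,w$, and finally $\ell(G)\geq1$. The common tool is the observation that a $2$-uniform graph, having regular independent sets, satisfies Proposition~\ref{proposition_imis}: every maximal independent set is maximum.

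\emph{Step 1: $\alpha(G)\geq2$.} Since $G$ is non-edgeless, it has an edge $\{u,v\}$. Extend $\{u\}$ greedily to a maximal independent set $S$. By Proposition~\ref{proposition_imis}, $S$ is an MIS. The vertex $v$ is adjacent to $u\in S$, so $v\notin S$. By $2$-uniformity, $v$ is adjacent to exactly two vertices of $S$, hence $|S|\geq2$. Therefore $\alpha(G)\geq2$, so $d_2(G)$ is defined and $\ell(G)$ makes sense.

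\emph{Step 2: the common-neighbourhood identity.} Let $v,w$ be distinct non-adjacent vertices. Then $\{v,w\}$ is an independent set of size $2$, so $|N(\{v,w\})|=d_2(G)$. Since $G$ is $d_1(G)$-regular, inclusion--exclusion gives
\[
|N(v)\cap N(w)|=|N(v)|+|N(w)|-|N(v)\cup N(w)|=2d_1(G)-d_2(G)=\ell(G).
\]
We must check that $N(\{v,w\})=N(v)\cup N(w)$ with no correction for $v,w$ themselves. This holds because $v\notin N(w)$ and $w\notin N(v)$, as the vertices are non-adjacent, and there are no self-loops.

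\emph{Step 3: $\ell(G)\geq1$.} It suffices to exhibit one pair of non-adjacent vertices with a common neighbour, since by Step 2 the value $\ell(G)$ does not depend on the pair. Take the edge $\{u,v\}$ and the MIS $S\ni u$ from Step 1. By $2$-uniformity, $v$ is adjacent to exactly two vertices of $S$: one is $u$, and the other is some $u'\in S$ with $u'\neq u$. Since $u,u'\in S$, they are non-adjacent, and $v$ is a common neighbour of both. Hence $\ell(G)=|N(u)\cap N(u')|\geq1$.

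No step is a real obstacle. The only point needing care is Step 1, where maximality must be upgraded to maximum; this is exactly where Proposition~\ref{proposition_imis} is used, because $2$-uniformity is only stated for MISs.
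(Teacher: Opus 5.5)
Your proof is correct and follows essentially the same argument as the paper. A vertex outside an MIS has exactly two neighbours in it, which gives both $\alpha(G)\geq2$ and a non-adjacent pair with a common neighbour, and inclusion--exclusion on the independent pair $\{v,w\}$ gives $|N(v)\cap N(w)|=2d_1(G)-d_2(G)$. The only difference is that you build the MIS around an endpoint of an edge via Proposition~\ref{proposition_imis}. The paper instead takes any MIS and notes that non-edgelessness forces some vertex to lie outside it, so your detour is harmless but not needed.
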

	
	\begin{proof}
		Since $G$ is non-edgeless, for any MIS $S$, there exists a vertex outside $S$. Such a vertex must be adjacent to exactly two vertices of $S$, so $S$ must have at least two vertices, giving $\alpha(G)\geq2$.
		
		By the inclusion-exclusion principle, if $\{v,w\}$ is an independent set, and hence $|N(\{v,w\})| = d_2(G)$, we have $$|N(v)\cap N(w)|=|N(v)|+|N(w)|-|N(\{v,w\})|=2d_1(G)-d_2(G)=\ell(G).$$
		
		Since $G$ is non-edgeless, for any MIS $S$, there exists a vertex outside $S$. Such a vertex must be adjacent to exactly two vertices $v,w\in S$, which gives $\ell(G)=|N(v)\cap N(w)|\geq1$.
	\end{proof}
	
	Note that the value $\ell(G)$ captures the difference between the configuration space and the bipartite configuration space. We have $\ell(G_\d)=4$, since there are $2\times 2=4$ ways to pair up a half-edge from one pair with a half-edge from another disjoint pair. However, we have $\ell(G_{\d,\d'})=2$, since the restriction that pairs are between $W$ and $W'$ excludes two out of the four options. See Figure~\ref{fig:the_figure}.
	\begin{figure}
		\centering
		\begin{minipage}{0.45\textwidth}
			\centering
			\includegraphics[width=0.5\textwidth]{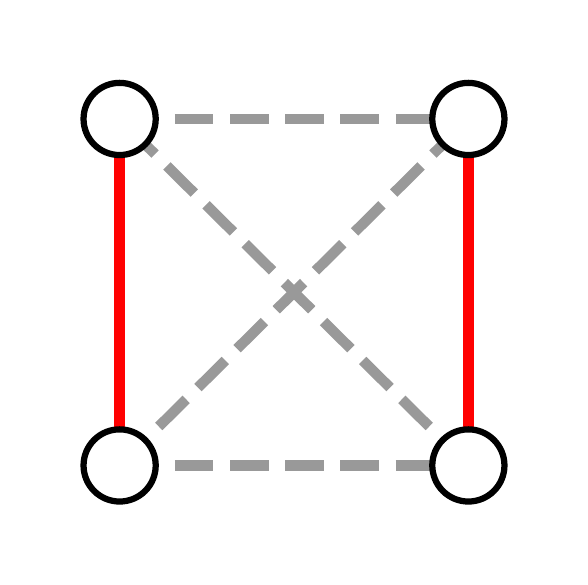}\includegraphics[width=0.5\textwidth]{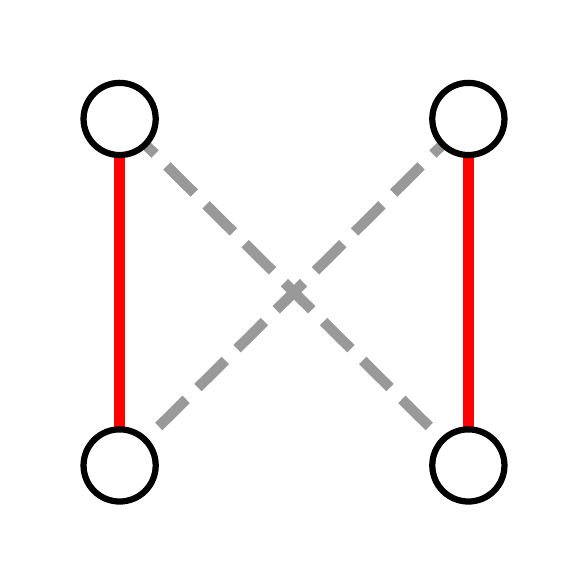}
		\end{minipage}
		\caption{The left panel shows $\ell(G_{\d})=4$, and the right panel shows $\ell(G_{\d,\d'})=2$. The red solid lines represent non-adjacent vertices $v$ and $w$ as disjoint pairs of half-edges, and the grey dashed lines represent the pairs of half-edges in $N(v)\cap N(w)$.}\label{fig:the_figure}
	\end{figure}
	
	Finally, we note that the number of vertices of $G$ and the degree $d(G)$ of any vertex can be expressed in terms of $\alpha(G)$ and $\ell(G)$.
	
	\begin{proposition}\label{proposition_n}
		Any $2$-uniform graph $G$ has $\ell(G)\binom{\alpha(G)}{2}+\alpha(G)$ vertices and $d(G)=\ell(G)(\alpha(G)-1)$.
	\end{proposition}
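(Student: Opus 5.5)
Fix an MIS $S$ of $G$, which has size $\alpha:=\alpha(G)$, and double count incidences between $S$ and $V\setminus S$.

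\emph{Step 1: the vertex count.} By Definition~\ref{definition_uniform}, every vertex $v\notin S$ is adjacent to exactly two vertices of $S$. This defines a map from $V\setminus S$ to the unordered pairs $\{u,w\}\subseteq S$. Conversely, let $\{u,w\}\subseteq S$ be any pair. The vertices outside $S$ mapped to it are exactly those in $N(u)\cap N(w)$. No vertex of $S$ lies in $N(u)\cap N(w)$, since $S$ is independent. The vertices $u$ and $w$ are non-adjacent, so Proposition~\ref{proposition_ell} gives $|N(u)\cap N(w)|=\ell(G)$. So the fibre over each of the $\binom{\alpha}{2}$ pairs has exactly $\ell(G)$ elements. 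Hence $|V\setminus S|=\ell(G)\binom{\alpha}{2}$ and $|V|=\ell(G)\binom{\alpha}{2}+\alpha$.

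\emph{Step 2: the degree.} The graph $G$ is $d(G)$-regular, so fix $u\in S$. Its neighbours all lie outside $S$, because $S$ is independent. A vertex $v\notin S$ is adjacent to $u$ exactly when the pair of $S$-neighbours assigned to $v$ contains $u$. There are $\alpha-1$ pairs $\{u,w\}$ with $w\in S\setminus\{u\}$, each with fibre of size $\ell(G)$. Therefore $d(G)=|N(u)|=\ell(G)(\alpha-1)$.

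\emph{Step 3: consistency and the main point of care.} As a check, summing degrees over $S$ gives $\alpha\,\ell(G)(\alpha-1)=2\,\ell(G)\binom{\alpha}{2}$. This matches the count of edges between $S$ and $V\setminus S$, since each outside vertex contributes exactly two such edges. There is no real obstacle here. The only point needing care is the claim that the common neighbourhood of $u,w\in S$ lies entirely outside $S$; this follows from the independence of $S$. We also rely on the fact that an MIS exists and has size $\alpha(G)\geq2$, which Proposition~\ref{proposition_ell} provides.
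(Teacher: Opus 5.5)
Your proof is correct and follows the paper's own argument: fix an MIS $S$, use $2$-uniformity so that each outside vertex corresponds to a pair in $S$, and use Proposition~\ref{proposition_ell} to see that each pair has exactly $\ell(G)$ such vertices. The degree then follows because each vertex of $S$ lies in $\alpha(G)-1$ pairs.
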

	
	\begin{proof}
		Consider an MIS $S$ of $G$, such that $|S|=\alpha(G)$. All vertices outside $S$ are adjacent to exactly two vertices of $S$, and any pair of vertices $v,w\in S$ has $|N(v)\cap N(w)|=\ell(G)$. The total number of vertices outside $S$ is thus $\ell(G)$ times the number of pairs of vertices in $S$. The expression for $d(G)$ furthermore follows, since any vertex in $S$ is included in $\alpha(G)-1$ pairs of vertices in $S$.
	\end{proof}
	
	\subsubsection{The hereditary nature of $2$-uniformity}\label{subsection_independent_hereditary}
	
	We give a formula for the number of MISs in a $2$-uniform graph, and we show that $\ell(G)$ is even, and thus at least $2$. We obtain these results by studying the hereditary nature of $2$-uniformity.
	
	\begin{proposition}\label{proposition_hereditary}
		Let $G=(V,E)$ be a $2$-uniform graph, and let $S$ be an independent set of size $r$ with $\alpha(G)-r\geq2$. Then the induced graph $H$ on $V\setminus\overline{N}(S)$ is another $2$-uniform graph, with $\alpha(H)=\alpha(G)-r$ and $\ell(H)=\ell(G)$.
	\end{proposition}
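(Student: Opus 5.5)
The plan is to work throughout with the correspondence between independent sets of $H$ and independent sets of $G$ containing $S$. Write $V_H:=V\setminus\overline{N}(S)$. For $T\subset V_H$, the set $T$ is independent in $H$ if and only if $S\cup T$ is independent in $G$. Moreover,
\[
N_H(T)=N_G(T)\cap V_H=N_G(S\cup T)\setminus N_G(S),
\]
because no vertex of $T$ or $S$ lies in $N_G(S)$ minus itself, and $S$ itself is removed. Hence $|N_H(T)|=d_{r+|T|}(G)-d_r(G)$. This depends only on $|T|$, so $H$ has regular independent sets.

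Next I identify the maximum independent sets of $H$. By Proposition~\ref{proposition_imis}, $G$ has regular independent sets, so every maximal independent set of $G$ is maximum. Let $T$ be an MIS of $H$. Then $S\cup T$ is a maximal independent set of $G$: any vertex addable to $S\cup T$ lies outside $\overline{N}(S)$, hence in $V_H$, which would contradict the maximality of $T$. So $|S\cup T|=\alpha(G)$, giving $\alpha(H)=\alpha(G)-r\geq 2$ and showing that every MIS of $H$ extends $S$ to an MIS of $G$.

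The $2$-uniform condition then follows. Take $v\in V_H\setminus T$. In $G$, the vertex $v$ is adjacent to exactly two vertices of $S\cup T$. It is adjacent to none of $S$, since $v\notin N(S)$, so both neighbours lie in $T$, and adjacency in $H$ is inherited from $G$.

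It remains to check that $H$ is non-edgeless and that $\ell(H)=\ell(G)$.

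\emph{Non-edgeless.} Since $\alpha(H)\geq2$, pick two vertices $v,w$ of an MIS $T$ of $H$. By Proposition~\ref{proposition_ell}, $|N_G(v)\cap N_G(w)|=\ell(G)\geq1$. Any common neighbour $u$ of $v$ and $w$ lies outside the MIS $S\cup T$ and has its two neighbours in $S\cup T$ exactly $v$ and $w$. Therefore $u\notin N_G(S)$, and $u\notin S$ because $S\cup T$ is independent, so $u\in V_H$. This gives an edge $uv$ in $H$.

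\emph{Equality of $\ell$.} The same argument shows $N_H(v)\cap N_H(w)=N_G(v)\cap N_G(w)$ for any non-adjacent $v,w\in V_H$: extend $S\cup\{v,w\}$ to an MIS of $G$ and apply the $2$-uniform property. Hence $\ell(H)=\ell(G)$ via Proposition~\ref{proposition_ell}, now applicable to $H$.

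The main subtlety is this last step. One must make sure common neighbours of $v$ and $w$ in $G$ do not fall into $N_G(S)$. This is exactly where the $2$-uniform property ("exactly two neighbours in the MIS") is used, through extending $S\cup\{v,w\}$ to an MIS containing $S$.
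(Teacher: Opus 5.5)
Your proof is correct and follows the paper's argument: the count $|N_H(T)|=d_{r+|T|}(G)-d_r(G)$ gives regular independent sets; extending an MIS $T$ of $H$ to the MIS $S\cup T$ of $G$ gives $\alpha(H)$ and the two-neighbour property; and a common neighbour of non-adjacent $v,w\in V\setminus\overline{N}(S)$ already has its two neighbours in an MIS containing $S\cup\{v,w\}$, so it avoids $N(S)$, which gives non-edgelessness and $\ell(H)=\ell(G)$. You spell out why $S\cup T$ is maximal, which the paper leaves implicit; the only slip is that regularity of $G$ comes from the definition of $2$-uniformity, not from Proposition~\ref{proposition_imis} (that proposition is what turns maximal into maximum).
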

	
	\begin{proof}
		For any independent set $T$ of $H$, we have $$|N_H(T)|=|N_G(T\cup S)|-|N_G(S)|=d_{|T|+|S|}(G)-d_{|S|}(G),$$ since $T\cup S$ and $S$ are independent sets. Since this depends only on the size of $T$, we find that $H$ has regular independent sets.
		
		Let $T$ be an MIS of $H$. Note that $T\cup S$ is then an MIS of $G$, so $\alpha(H)=|T|=\alpha(G)-r$. Since $G$ is $2$-uniform, any vertex in $H$ outside $T$ has exactly two neighbors in $T\cup S$. By definition of $H$, neither neighbor is in $S$, so both neighbors are in $T$.
		
		Finally, since $\alpha(G)-r\geq2$, there exist non-adjacent vertices $v,w$ in $H$. Since $G$ is $2$-uniform, any vertex in $G$ outside $S\cup\{v,w\}$ is adjacent to at most two vertices of $S\cup{v,w}$. Therefore, any vertex in $N_G(v)\cap N_G(w)$ cannot be adjacent to $S$, and hence must be in $H$. We find that $|N_H(v)\cap N_H(w)|=\ell(G)$ is non-zero, so $H$ is $2$-uniform with $\ell(H)=\ell(G)$.
	\end{proof}
	
	\begin{corollary}\label{corollary_hereditary}
		Let $G=(V,E)$ be a $2$-uniform graph, and let $S$ be an independent set of size $r$. Then the induced graph $H$ on $V\setminus\overline{N}(S)$ has $\ell(G)\binom{\alpha(G)-r}{2}+\alpha(G)-r$ vertices. Furthermore, if $\alpha(G)-r\geq1$, then $H$ is regular of degree $\ell(G)(\alpha(G)-r-1)$.
	\end{corollary}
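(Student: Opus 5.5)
The plan is a case split on $k:=\alpha(G)-r$, the independence number of $H$. In each case we use Proposition~\ref{proposition_hereditary} together with Proposition~\ref{proposition_n} where they apply, and count directly otherwise.

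\emph{Case $k\geq2$.} Proposition~\ref{proposition_hereditary} shows that $H$ is $2$-uniform with $\alpha(H)=k$ and $\ell(H)=\ell(G)$. Applying Proposition~\ref{proposition_n} to $H$ gives the vertex count $\ell(G)\binom{k}{2}+k$ and the regular degree $\ell(G)(k-1)$. This is the main case and needs no further work.

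\emph{Case $k=1$.} Here $\binom{1}{2}=0$, so we must show that $H$ has exactly one vertex and is regular of degree $0$. Extend $S$ to an MIS $S\cup\{u\}$, which is possible because $S$ is independent and $\alpha(G)=r+1$; then $u\in H$. Suppose some other vertex $x$ lies in $H$. It is non-adjacent to every vertex of $S$ and differs from $u$, so it lies outside the MIS $S\cup\{u\}$. By $2$-uniformity, $x$ must have exactly two neighbours in $S\cup\{u\}$, but it can have at most one (namely $u$). This is a contradiction, so $V(H)=\{u\}$ and $H$ is $0$-regular, as claimed.

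\emph{Case $k=0$.} Now $S$ is an MIS, and the count should give $0$ vertices. Every vertex outside $S$ is adjacent to two vertices of $S$, hence lies in $N(S)$. Therefore $V\setminus\overline{N}(S)=\emptyset$. The regularity claim is vacuous in this case since $k\geq1$ fails. (One should also note that $r\leq\alpha(G)$ automatically, since $S$ is independent.)

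The only mildly delicate step is the $k=1$ case, where the hereditary proposition does not apply and the $2$-uniformity definition must be used directly, as above.
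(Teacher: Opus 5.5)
Your proof is correct and follows the paper's argument essentially verbatim. It uses the same three-way split on $\alpha(G)-r$, with Propositions~\ref{proposition_hereditary} and~\ref{proposition_n} for $\alpha(G)-r\geq2$ and a direct $2$-uniformity argument for $\alpha(G)-r\in\{0,1\}$. One small point: in the case $\alpha(G)-r=1$, extending $S$ to an MIS $S\cup\{u\}$ does not follow from $\alpha(G)=r+1$ alone, since in a general graph $S$ could already be maximal. It follows instead from Proposition~\ref{proposition_imis}, because in a graph with regular independent sets every maximal independent set is maximum.
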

	
	\begin{proof}
		For $\alpha(G)-r\geq2$, this follows from Proposition~\ref{proposition_hereditary} and Proposition~\ref{proposition_n}. For $\alpha(G)-r=0$, we find that $S$ is a maximum independent set, so $H$ has no vertices.
		
		For $\alpha(G)-r=1$, there exists $v\in V$ such that $S\cup\{v\}$ is an MIS. Every vertex outside $S\cup\{v\}$ is thus adjacent to exactly two elements of $S\cup\{v\}$, so, in particular, at least one element of $S$. We find that $H$ consists only of vertex $v$.
	\end{proof}
	
	\begin{corollary}\label{corollary_mis_count}
		The total number of MISs in a $2$-uniform graph $G$ is given by the formula $$\frac1{\alpha(G)!}\prod_{t=1}^{\alpha(G)}\left(\ell(G)\binom{t}{2}+t\right)=\prod_{t=1}^{\alpha(G)}\left(\frac{\ell(G)}{2}(t-1)+1\right).$$ In particular, if $\ell(G)=2$, this simplifies to $\alpha(G)!$, and if $\ell(G)=4$, this simplifies to $(2\alpha(G)-1)!!=\tfrac{(2\alpha(G))!}{2^{\alpha(G)}\alpha(G)!}$.
	\end{corollary}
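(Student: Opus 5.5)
The formula follows by counting the sequences $(\boldsymbol{S}_r)_{r=0}^{\infty}$ of the IMIS process in two ways, using Proposition~\ref{proposition_imis} together with Corollary~\ref{corollary_hereditary}. Write $\alpha:=\alpha(G)$ and $\ell:=\ell(G)$.

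\textbf{Counting ordered sequences.} Since $G$ has regular independent sets, Proposition~\ref{proposition_imis} shows that every maximal independent set is maximum. Hence every run of the IMIS process picks exactly $\alpha$ vertices, one after another. At step $r$ (for $r=0,\ldots,\alpha-1$) the process chooses a vertex from $V\setminus\overline{N}(\boldsymbol{S}_r)$, and $\boldsymbol{S}_r$ is an independent set of size $r$. By Corollary~\ref{corollary_hereditary}, this set has exactly $\ell\binom{\alpha-r}{2}+\alpha-r$ elements. This number depends only on $r$, so the number of ordered sequences $(v_1,\ldots,v_\alpha)$ the process can produce is
\[\prod_{r=0}^{\alpha-1}\left(\ell\binom{\alpha-r}{2}+\alpha-r\right)=\prod_{t=1}^{\alpha}\left(\ell\binom{t}{2}+t\right),\]
after substituting $t=\alpha-r$.

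\textbf{Each MIS is counted $\alpha!$ times.} Fix an MIS $S$ and any ordering of its elements. Every prefix of that ordering is an independent subset of $S$. So the next element of the ordering lies outside $\overline{N}$ of the prefix: it is not in the prefix, and it is not adjacent to any prefix element. The next element is therefore always an admissible choice, and every ordering of $S$ is a valid sequence. Conversely, each valid sequence determines its final set $\boldsymbol{S}_\infty$, which is an MIS. Dividing by $\alpha!$ gives the first expression for the number of MISs.

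\textbf{Simplifications.} Since
\[\ell\binom{t}{2}+t=t\left(\tfrac{\ell}{2}(t-1)+1\right),\]
the factors $t$ multiply to $\alpha!$ and cancel the prefactor $\frac1{\alpha!}$, which gives the second expression.
\begin{itemize}
\item For $\ell=2$, each factor equals $t$, so the product is $\alpha!$.
\item For $\ell=4$, each factor equals $2t-1$, so the product is $(2\alpha-1)!!$.
\item The identity $(2\alpha-1)!!=\frac{(2\alpha)!}{2^\alpha\alpha!}$ is standard: divide $(2\alpha)!$ by the product of its even factors, $2^\alpha\alpha!$.
\end{itemize}

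The only step that needs care is showing that every ordering of an MIS is realizable. As argued above, this follows directly from independence, so no step here is a serious obstacle.
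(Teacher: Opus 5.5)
Your proposal is correct and matches the paper's proof: count the IMIS sequences as $\prod_{r=0}^{\alpha-1}|V\setminus\overline{N}(\boldsymbol{S}_r)|$ using Corollary~\ref{corollary_hereditary}, then divide by the $\alpha(G)!$ sequences that lead to each MIS. Your explicit check that every ordering of an MIS is admissible, and the factorisation $\ell\binom{t}{2}+t=t\bigl(\tfrac{\ell}{2}(t-1)+1\bigr)$, spell out steps that the paper leaves implicit.
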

	
	\begin{proof}
		In the IMIS process, for $r=0,\ldots,\alpha(G)-1$, there are $|V\setminus\overline{N}(\boldsymbol{S}_{r})|=\ell(G)\binom{\alpha(G)-r}{2}+\alpha(G)-r$ possible vertices to be added to $\boldsymbol{S}_{r+1}$. Since each possible MIS $S$ has $\alpha(G)!$ possible sequences $(\boldsymbol{S}_{r})_{r=0}^{\infty}$ resulting in $\boldsymbol{S}_{\infty}=S$, the result follows by substituting $t=\alpha(G)-r$.
	\end{proof}
	
	Finally, to show that $\ell(G)$ is even, we study the intersection of neighborhoods $N(v)\cap N(w)$ for non-adjacent vertices $v,w$.
	
	\begin{proposition}\label{proposition_even}
		Let $G$ be a $2$-uniform graph. Let $v$ and $w$ be non-adjacent vertices. Then the induced graph on $(N(v)\cap N(w))\cup\{v,w\}$ is the edgewise complement of a set of disjoint pairs of adjacent vertices. It follows that $\ell(G)$ is even, and thus at least $2$.
	\end{proposition}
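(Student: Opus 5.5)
The plan is to show that non-adjacency inside $C:=N(v)\cap N(w)$ is a perfect matching: every $x\in C$ has exactly one non-neighbour in $C$. Every vertex of $C$ is adjacent to both $v$ and $w$, and $v,w$ are non-adjacent to each other. So the induced graph on $C\cup\{v,w\}$ is then the complete graph minus the disjoint pairs $\{v,w\}$ and $\{x,y\}$ for the matched $x,y\in C$. Since $|C|=\ell(G)$ by Proposition~\ref{proposition_ell}, the vertex count $\ell(G)+2$ is then even, so $\ell(G)$ is even. Together with $\ell(G)\geq1$ this gives $\ell(G)\geq2$.

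First I fix an MIS $S$ containing $v$ and $w$. It exists because $\{v,w\}$ is independent and extends to a maximal independent set, which is maximum by Proposition~\ref{proposition_imis}. Take any $x\in C$. Then $x\notin S$, since $x$ is adjacent to $v$. By $2$-uniformity, $x$ has exactly two neighbours in $S$, namely $v$ and $w$. Hence $S':=(S\setminus\{v,w\})\cup\{x\}$ is an independent set of size $\alpha(G)-1$.

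Next I apply Corollary~\ref{corollary_hereditary} with $r=\alpha(G)-1$. The induced graph on $V\setminus\overline{N}(S')$ then consists of a single vertex $y$, and $y\neq x$ since $x\in S'$. For existence: $y$ is not $v$ or $w$, because both are adjacent to $x$. Also $y\notin S\setminus\{v,w\}$, since those vertices lie in $S'$. So $y\notin S$, and by $2$-uniformity $y$ has exactly two neighbours in $S$. Since $y$ avoids $\overline{N}(S\setminus\{v,w\})$, these two neighbours must be $v$ and $w$. Therefore $y\in C$ and $y$ is non-adjacent to $x$.

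For uniqueness, let $z\in C$ be non-adjacent to $x$. Then $z\notin S$, and its two neighbours in $S$ are $v$ and $w$, so $z$ has no neighbour in $S\setminus\{v,w\}$. Since $z\neq x$ and $z$ is not adjacent to $x$, this gives $z\in V\setminus\overline{N}(S')$, which forces $z=y$.

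Non-adjacency is symmetric, so the pairs $\{x,y\}$ partition $C$ into disjoint pairs, which completes the argument. The only delicate point is the middle step: identifying the vertex supplied by Corollary~\ref{corollary_hereditary} as a member of $C$, which requires carefully ruling out $v$, $w$ and $S$ from $V\setminus\overline{N}(S')$.
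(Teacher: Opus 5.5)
Your proof is correct, but it reaches the conclusion by a different mechanism than the paper.

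The paper applies Corollary~\ref{corollary_hereditary} one level higher, with $r=\alpha(G)-2$ and the independent set $S\setminus\{v,w\}$. The induced graph $H$ on $V\setminus\overline{N}(S\setminus\{v,w\})=(N(v)\cap N(w))\cup\{v,w\}$ then has $\ell(G)+2$ vertices and is regular of degree $\ell(G)$. So its complement is $1$-regular, i.e.\ a perfect matching, and parity follows at once. This is a pure counting argument that delivers existence and uniqueness of each vertex's non-neighbour in one stroke. It does, however, lean on the regularity part of the corollary, which rests on Proposition~\ref{proposition_hereditary} and Proposition~\ref{proposition_n}. It also leaves implicit the identification of $V\setminus\overline{N}(S\setminus\{v,w\})$ with $C\cup\{v,w\}$, which is easy but uses $2$-uniformity.

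You instead use only the trivial one-vertex case $\alpha(G)-r=1$ of the corollary, together with $|C|=\ell(G)$ from Proposition~\ref{proposition_ell}. You then construct the matching partner of each $x\in C$ explicitly, as the unique vertex completing $(S\setminus\{v,w\})\cup\{x\}$ to an MIS, and verify existence and uniqueness separately. This is longer, but more elementary. It also makes visible the swap structure, exchanging $v,w$ for the non-adjacent pair $x,y$ inside an MIS, which the paper later exploits in Lemma~\ref{lemma_cycle}.
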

	
	\begin{proof}
		If $S$ is an MIS containing $v$ and $w$, then, by Corollary~\ref{corollary_hereditary}, the induced graph $H$ on $(N(v)\cap N(w))\cup\{v,w\}=V\setminus\overline{N}(S\setminus\{v,w\})$ has $\ell(G)+2$ vertices and $d(H)=\ell(G)$. Hence, the edgewise complement of $H$ is regular of degree $1$. The non-adjacencies of $H$ thus give a pairing of the vertices of $H$, so there must be an even number of such vertices.
	\end{proof}
	
	\subsubsection{Classification of $2$-uniform graphs}
	
	We give a complete classification of $2$-uniform graphs. We first give the descriptions of all $2$-uniform graphs other than the (bipartite) configuration space. To start off, we have the following infinite family.
	
	\begin{example}[Complement pairing]
		For $k\geq2$, consider the complete $k$-partite graph $K_{k\times2}$, which is the edge-wise complement of $k$ disjoint pairs of adjacent vertices, which we refer to as the $k$ parts. We find that any independent set $S$ is a subset of a single part. Then all their elements have the same neighborhood, which is all vertices outside the part. We find that $K_{k\times2}$ is $2$-uniform with $\alpha(K_{k\times2})=2$ and $\ell(K_{k\times2})=2k-2$.
	\end{example}
	
	Next, we have the following sporadic example.
	
	\begin{example}[Schl\"afli graph]
		The Schl\"afli graph was originally defined as the edge-wise complement of the intersection graph of the $27$ lines on a cubic surface. It is also the edge-wise complement of the collinearity graph of the unique generalized quadrangle $\GQ(2,4)$.
		
		An elementary construction due to~\cite{Bussemaker1992ExceptionalGW} is as follows. Let $e_1,\ldots,e_8$ denote the standard basis of $\mathbb{R}^8$, and define $c:=(\frac12,\ldots,\frac12)\in\mathbb{R}^8$. Consider the vertex set $$V:=\{e_i+e_j:i\leq6,j\geq7\}\cup\{c-e_i-e_j:i<j\leq6\},$$ and connect any two vertices by an edge if they have inner product $1$. It can be computationally verified that the resulting graph $G$ is $2$-uniform with $\alpha(G)=3$ and $\ell(G)=8$.
	\end{example}
	
	We now give the complete classification result on $2$-uniform graphs.
	
	\begin{theorem}\label{theorem_classification}
		Any $2$-uniform graph is isomorphic to either a configuration space, a bipartite configuration space, a complete $k$-partite graph $K_{k\times 2}$, or the Schl\"afli graph.
	\end{theorem}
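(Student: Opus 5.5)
The plan is to show that every $2$-uniform graph is strongly regular with smallest eigenvalue $-2$, and then to invoke Seidel's classification of such graphs. The two generic families are the line graphs $L(K_{2\alpha})$ (triangular graphs) and $L(K_{\alpha,\alpha})$ (lattice graphs). By Definitions~\ref{definition_space} and~\ref{definition_bispace}, these are exactly the configuration space and the bipartite configuration space: $G_\d$ depends only on $|W|$, and $G_{\d,\d'}$ only on $|W|=|W'|$. Once $G$ is identified within Seidel's list, what remains is to discard the exceptional members that fail $2$-uniformity.

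\emph{Case $\alpha(G)=2$.} By Proposition~\ref{proposition_even}, applied to any non-adjacent $v,w$, the induced graph on $(N(v)\cap N(w))\cup\{v,w\}$ is the complement of a perfect matching on $\ell(G)+2$ vertices. By Corollary~\ref{corollary_hereditary} with $r=0$, $G$ has exactly $\ell(G)+2$ vertices, so this induced graph is all of $G$. Hence $G\cong K_{k\times 2}$ with $k=\ell(G)/2+1$. (Note that $K_{2\times2}=L(K_{2,2})$ and $K_{3\times2}=L(K_4)$, so the lists overlap.)

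\emph{Case $\alpha(G)\ge3$: strong regularity.} By Proposition~\ref{proposition_n} and Corollary~\ref{corollary_hereditary}, $G$ is regular of degree $k=\ell(\alpha-1)$ on $N=\ell\binom{\alpha}{2}+\alpha$ vertices. By Proposition~\ref{proposition_ell}, non-adjacent vertices have exactly $\mu=\ell$ common neighbours. It remains to show that adjacent $v,w$ have a constant number $\lambda$ of common neighbours. Take an MIS $S\ni v$; then $w$ is adjacent to $v$ and to exactly one other $u\in S$. Count the vertices $x\in N(v)\cap N(w)$ by splitting on whether $x\in S$ (only $u$ qualifies) and, for $x\notin S$, on which pair of $S$ the vertex $x$ sits over. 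I expect the $2$-uniformity condition, applied to the MIS obtained from $S$ by the exchange $v\mapsto w$ (when this is possible), together with Proposition~\ref{proposition_even} inside $N(v)\cap N(u)$, to force $\lambda=\ell/2+\alpha-2$. This value matches both $L(K_{2\alpha})$ and $L(K_{\alpha,\alpha})$. An alternative route to the same conclusion is to compute $|E|$ in two ways via the MIS decomposition of $V$. This step is the main obstacle. If a direct local argument fails, I would instead show that $A+2I$ is positive semidefinite: write $A+2I=MM^\top$ with $M$ the incidence matrix between vertices and the pairs of $S$, which the $2$-uniform structure over a fixed MIS suggests. From this, smallest eigenvalue $-2$ and the strongly regular parameters follow together.

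\emph{Applying Seidel's classification.} Strongly regular graphs with smallest eigenvalue $-2$ are: $T(n)=L(K_n)$, $L_2(n)=L(K_{n,n})$, the cocktail party graphs $K_{k\times2}$, the Petersen graph, the Shrikhande graph, the three Chang graphs, the Clebsch graph, and the Schl\"afli graph. It remains to eliminate the exceptional ones. The Petersen and Clebsch graphs have the wrong parameters for any $(\alpha,\ell)$ with $N=\ell\binom{\alpha}{2}+\alpha$ and $k=\ell(\alpha-1)$. For the Shrikhande graph (parameters of $L_2(4)$) and the Chang graphs (parameters of $T(8)$), exhibit two independent sets of equal size with different $|N(\cdot)|$, or an MIS with some vertex adjacent to a number of its members other than two. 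This uses the explicit known fact that these graphs lack the clique and coclique structure of $L_2(4)$ and $T(8)$. The surviving graphs are $L(K_{2\alpha})$ with $\ell=4$, $L(K_{\alpha,\alpha})$ with $\ell=2$, $K_{k\times2}$, and Schl\"afli with $\alpha=3,\ell=8$. This is the statement of Theorem~\ref{theorem_classification}.
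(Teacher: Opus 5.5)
Your overall route (show $G$ is strongly regular, read off smallest eigenvalue $-2$, apply Seidel's classification, discard the sporadic graphs) is a legitimate alternative to the paper's. The paper treats $\alpha=2$ essentially as you do. For $\alpha\ge3$ it simply invokes Zara's classification, applied to the complement of $G$: all maximal cliques there have size $\alpha$, and every vertex outside a maximal clique sees exactly $\alpha-2$ of its vertices (axioms A1, A2 with $r=\alpha$, $t=r-2$).

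\textbf{The main gap: strong regularity.} Your route hinges on $\lambda$ being constant, and this is not established. What you write about it is also wrong in several places.
\begin{itemize}
\item No vertex of $S$ is a common neighbour of $v$ and $w$. In particular $u$ is not one, since $u\in S$ is non-adjacent to $v\in S$.
\item The predicted value $\lambda=\ell/2+\alpha-2$ fits neither family: $T(2\alpha)$ has $\lambda=2\alpha-2$ and $L_2(\alpha)$ has $\lambda=\alpha-2$. If $\lambda$ is constant, $k(k-\lambda-1)=(N-k-1)\mu$ forces $\lambda=\tfrac{\ell}{2}\alpha-2$.
\item A global count of edges or triangles only gives the average of $|N(v)\cap N(x)|$ over $x\in N(v)$, not its constancy.
\item The fallback fails. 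An incidence matrix with columns indexed by pairs of $S$ has $(MM^\top)_{xy}=1$ exactly when $x,y$ lie over the same pair, which is not adjacency. The Schl\"afli graph is $2$-uniform but not a generalized line graph, so no factorization of that type exists in general. And positive semidefiniteness of $A+2I$ would not yield constant $\lambda$ anyway.
\end{itemize}

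The missing argument is short. Every $x\in N(v)$ lies over a unique pair $\{v,y\}$ with $y\in S$.
\begin{itemize}
\item For $y=u$: by Proposition~\ref{proposition_even}, $w$ is adjacent to all of $N(v)\cap N(u)$ except itself and its partner, giving $\ell-2$ common neighbours.
\item For $y\neq u$: Proposition~\ref{proposition_even} splits $N(v)\cap N(y)$ into $\ell/2$ non-adjacent pairs $\{x,x'\}$. Then $(S\setminus\{v,y\})\cup\{x,x'\}$ is an MIS containing $u$ but not $w$. Since $u$ is the only neighbour of $w$ in $S\setminus\{v,y\}$, $w$ is adjacent to exactly one of $x,x'$, giving $\ell/2$ for each such $y$.
\end{itemize}
Hence $\lambda=(\ell-2)+(\alpha-2)\tfrac{\ell}{2}=\tfrac{\ell}{2}\alpha-2$. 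With $k=\ell(\alpha-1)$ and $\mu=\ell$, the restricted eigenvalues are $-2$ and $\tfrac{\ell}{2}(\alpha-2)$.

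\textbf{The elimination step also needs repair.}
\begin{itemize}
\item The Petersen graph is \emph{not} excluded by parameters: $(10,3,0,1)$ is exactly what $(\alpha,\ell)=(4,1)$ produces. You must use that $\ell$ is even (Proposition~\ref{proposition_even}).
\item Only the $10$-regular Clebsch graph $(16,10,6,6)$ fails on parameters, since it would require $6\alpha=16$.
\item For the Shrikhande and Chang graphs, ``exhibit'' must become an actual check. Proposition~\ref{proposition_even} supplies one: in a $2$-uniform graph every $\mu$-graph is the cocktail party graph on $\ell$ vertices.
\begin{itemize}
\item In the Shrikhande graph (the Cayley graph on $\mathbb{Z}_4^2$ with connection set $\pm(1,0),\pm(0,1),\pm(1,1)$), the non-adjacent vertices $(0,0)$ and $(1,2)$ have the adjacent common neighbours $(0,1)$ and $(1,1)$.
\item In the Chang graph obtained from $T(8)$ by switching with respect to $\{12,34,56,78\}$, the non-adjacent vertices $12$ and $13$ have common neighbourhood $\{35,36,37,38\}$, which is a $K_4$. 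The other two Chang graphs yield to the same kind of check.
\end{itemize}
\end{itemize}

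With these repairs your proof is complete. It trades the paper's appeal to Zara's specialized classification for the standard Seidel theorem, at the cost of the $\lambda$ computation and a finite exclusion of sporadic cases.
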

	
	\begin{proof}
		Let $G$ be $2$-uniform. Since $G$ is non-edgeless, we have $\alpha(G)\geq2$. If $\alpha(G)=2$, then $G$ is isomorphic to $K_{(\frac12\ell(G)+1)\times2}$ by Proposition~\ref{proposition_ell}.
		
		For $\alpha(G)\geq3$, the result follows from the classification given in~\cite[Section~7, Part~A]{ZARA}, in particular, see~\cite[Remark~7.7]{ZARA}. In the notations of this work, the edge-wise complement of $G$ satisfies axiom A1 with $r=\alpha(G)$ and axiom A2 with $t=r-2$.
	\end{proof}
	
	Note that, up to isomorphism, the configuration space $G_{\d}$ is determined by the sum $d_1+\ldots+d_n$, and similarly for the bipartite configuration space. So there are three singularly parameterized infinite families of $2$-uniform graphs and one sporadic $2$-uniform graph. A $2$-uniform graph $G$ is a configuration space if $\ell(G)=4$, a bipartite configuration space if $\ell(G)=2$, a complement pairing if $\alpha(G)=2$, or the Schl\"afli graph if $\ell(G)=8$ and $\alpha(G)=3$. So the parameters $\alpha(G)$ and $\ell(G)$ completely determine any $2$-uniform graph $G$. Note that there is overlap between the infinite families where they are isomorphic.
	
	Our main theorem is stated asymptotically in terms of a $2$-uniform graph $G$ with $\ell(G)$ fixed, and with $\alpha(G)\to\infty$. It follows from the classification that this is only possible if $\ell(G)=4$, giving the configuration space, or if $\ell(G)=2$, giving the bipartite configuration space. This suggests that the property of $2$-uniformity indeed captures the similarity between the configuration space and the bipartite configuration space, and the parameter $\ell(G)$ captures their difference.

	\section{Main Results}\label{section_main}

	We analyze the sampling distribution of the IMFIS process from Definition \ref{definition_imfis} from Section~\ref{subsection_independent_feasible} on $(G,F,R,\b)$. We state our main result, which gives an asymptotic formula for the outcome distribution of the IMFIS process when $G$ is a $2$-uniform graph, as defined in Definition \ref{definition_uniform}. We also give a corollary that describes when this distribution is asymptotically uniform. We also state a secondary theorem, which gives the probability that the IMFIS process results in an FMIS $\boldsymbol{S}_{\infty}$ that does or does not contain certain specified vertices. This is particularly useful for our application to sampling hypergraphs, where we will use it to calculate the probability of rejection in Section~\ref{subsection_applications_hyper} and Section~\ref{subsection_applications_dihy}. Finally, we have a proposition on the possibility of altering the weight function $\b$ without affecting the outcome distribution of the IMFIS process asymptotically. This will be used throughout Section~\ref{section_applications} to simplify the calculations. All results in this section are proven in Section~\ref{section_proof}.
	
	Throughout this section, we assume that $G$ is a $2$-uniform graph and we write $G=(V,E)$. We also write $\ell=\ell(G)$ and $\alpha=\alpha(G)$. In order for the IMFIS process to result in an almost uniform distribution of FMISs, we need to bound the quantities
	\begin{align}
		F_{\max}&:=\max\{|\overline{N}(v)\cap F|:v\in V\},
		\\R_{\max}^{(a)}&:=\max\{|R^*(v)|:v\in V\},
		\\R_{\max}^{(b)}&:=\max\{|\overline{N}(v)\cap R^*(S)|:v\in V,S\text{ an independent set of }G\},
		\\R_{\max}^{(c)}&:=\max\{|\overline{N}(v)\cap R^*(w)|:v,w\in V\},
		\\\b_{\max}&:=\max\{|\b(v)|:v\in V\},
		\\M&:=\max\{F_{\max},R_{\max}^{(a)},R_{\max}^{(b)},(R_{\max}^{(c)})^2,\b_{\max}\alpha,1\}.
	\end{align}
	For a non-negative integer $t$, we write $n_t:=\ell\binom{t}{2}+t$. Finally, we write $P:=(\tfrac{\ell}{2}\alpha)^{-1}$ and $\b(U):=\sum_{u\in U}\b(u)$ for $U\subset V$. We can now state our main theorem.
	
	\begin{theorem}\label{theorem_main}
		Assume that $\alpha\to\infty$, with $\ell$ fixed and $M=O(\alpha^{\sfrac12}/(\log\alpha)^2)$.
		\begin{enumerate}[(i)]
			\item\label{theorem_main_equation} Consider an FMIS $S$ of $G$. Define 
			\begin{align}
				\mathscr{P}(S)&:=\exp(P|F|+\tfrac12P|R^*(S)|+P\b(V)-\b(S))\prod_{t=1}^{\alpha}\frac{t}{n_t},\\
				\mathscr{E}(\alpha,M)&:=\frac{M^2\log\alpha}{\alpha}+\frac{M(\log\alpha)^2}{\alpha}.
			\end{align}
			Then the IMFIS process samples $S$ with probability
			\begin{align}
				\mathbb{P}(\boldsymbol{S}_{\infty}=S)&=(1+O(\mathscr{E}(\alpha,M)))\mathscr{P}(S).
			\end{align}
			\item\label{theorem_main_termination} The IMFIS process samples an FMIS with probability $$\mathbb{P}(|\boldsymbol{S}_{\infty}|=\alpha)=1-O(M/\alpha).$$ In particular, for $\alpha$ sufficiently large, this probability is positive, so $G$ has at least one FMIS.
		\end{enumerate}
	\end{theorem}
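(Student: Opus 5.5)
Fix an FMIS $S$ and expand $\mathbb{P}(\boldsymbol{S}_\infty=S)$ as a sum over the $\alpha!$ orderings $(s_1,\ldots,s_\alpha)$ of $S$. Writing $S_r=\{s_1,\ldots,s_r\}$ and $A_r:=(V\setminus\overline N(S_r))\setminus(F\cup R^*(S_r))$, each ordering has probability
\[
\prod_{r=0}^{\alpha-1}\frac{e^{-\b(s_{r+1})}}{\sum_{v\in A_r}e^{-\b(v)}} ,
\]
whose numerator product is always $e^{-\b(S)}$. By Corollary~\ref{corollary_hereditary}, $|V\setminus\overline N(S_r)|=n_{\alpha-r}$, so the denominator is
\[
Z_r = n_{\alpha-r}\Bigl(1-\tfrac{X_r}{n_{\alpha-r}}\Bigr),
\]
where $X_r$ collects the forbidden and $R^*$-excluded vertices among the remaining ones together with the weight correction $\sum_{v}(1-e^{-\b(v)})\approx \b$-mass. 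Since $\prod_r n_{\alpha-r}^{-1}$ times $\alpha!$ gives $\prod_t t/n_t$, it remains to show that the average over uniformly random orderings of $\prod_r (1-X_r/n_{\alpha-r})^{-1}$ equals
\[
(1+O(\mathscr{E}))\exp\bigl(P|F|+\tfrac12P|R^*(S)|+P\b(V)\bigr).
\]
This is an expectation of $\exp(\sum_r X_r/n_{\alpha-r}+O(\sum_r X_r^2/n_{\alpha-r}^2))$, and $M$ controls $X_r=O(M(\alpha-r))$ for the local terms. Here $n_{\alpha-r}\approx \tfrac{\ell}{2}(\alpha-r)^2$.

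Next I compute the first-order terms. For a forbidden vertex $f\notin\overline N(S)$ — which by $2$-uniformity is adjacent to exactly two elements of $S$ — $f$ survives to step $r$ iff neither neighbour is in $S_r$. The probability of this is $\approx(1-r/\alpha)^2$, so
\[
\sum_r \frac{(1-r/\alpha)^2}{\tfrac{\ell}{2}(\alpha-r)^2}\approx \frac{1}{\tfrac{\ell}{2}\alpha}=P .
\]
Summing over $F$ (where the $F\cap\overline N(S)$ part is $F_{\max}$-small) gives $P|F|$; the weight term similarly gives $P\b(V)$ up to $\b_{\max}\alpha$ corrections. For $R^*$: a vertex $u\in R^*(s)$, $s\in S$, is excluded only after $s$ is chosen, which introduces the extra factor $r/\alpha$ and yields
\[
P\int_0^1 x\,dx=\tfrac12P .
\]
Hence the total is $\tfrac12P|R^*(S)|$; overlaps between classes are governed by $R^{(b)}_{\max}$ and $(R^{(c)}_{\max})^2$.

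To make these heuristics rigorous, I replace $S_r$ by a binomial subset with inclusion probability $r/|S|$ and apply Vu's concentration inequality to the polynomial $X_r$, whose degree is at most two in the indicators, showing it is within $O(\sqrt{M(\alpha-r)}\log\alpha)$ of its mean. Summing squared errors over $r$ up to $\alpha-O(\log^2\alpha)$ then yields the $\mathscr{E}$ error. The last $O(\log\alpha)$-sized tail steps are handled crudely. I expect this tail to be the main obstacle: there $n_{\alpha-r}$ is small, so relative errors blow up, and I would cut at $\alpha-r\approx M\log\alpha$ and bound the remaining product deterministically.

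For part (ii), I would bound premature termination. The probability that the process stops at an incomplete maximal FIS $T$ is at most the sum over such $T$ of their sampling probabilities. These are computed exactly as in (i), but they are compared with complete FMISs via a double counting: each incomplete $T$ of size $\alpha-k$ extends, after switching $O(1)$ elements, to many FMISs, while each FMIS arises from few such $T$. This shows that the incomplete mass is $O(M/\alpha)$, dominated by $k=1$, where the leftover vertex lies in $F\cup R^*$ and has probability $O(M/\alpha)$. A stochastic domination argument on the number of admissible vertices near the end controls $k\ge2$ geometrically. Positivity of the FMIS probability for large $\alpha$ then yields existence.
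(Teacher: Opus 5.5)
Your outline follows the paper's architecture: average over orderings of $S$, compare $S_r$ with a binomial subset, read off $P|F|+\tfrac12P|R^*(S)|+P\b(V)$ from first-order terms, apply Vu's inequality, and use double counting for termination. However, the step you defer as ``handled crudely'' is where most of the proof lives, and a deterministic treatment of it cannot work.

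With $t=\alpha-r$ elements of $S$ left and $t\lesssim M$, nothing prevents every non-$S$ vertex of $V\setminus\overline{N}(S_r)$ from being excluded. So a single factor can be as large as $n_t/t\approx\tfrac{\ell}{2}t$. For larger $t$, the local bound $X_r\lesssim tM$ only gives factors $(1-\tfrac{2M}{\ell t})^{-1}$, whose product over $2M\le t\le M\log\alpha$ is of order $\exp(cM\log\log\alpha)$. Nor is the tail typically benign: the expected number of pairs among the last $t$ elements of $S$ whose common neighbourhood meets $F\cup R^*(S)$ can be of order $t^2M/\alpha$. This is large at $t=M\log\alpha$ when $M$ is near $\alpha^{1/2}/(\log\alpha)^2$.

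What is needed is control of $\mathbb{E}\Pi$ itself, weighing large values of the product against small probabilities; Vu's inequality alone only gives high-probability control. The paper does this through four ingredients:
(a) a hypergeometric tail bound on such ``interfering pairs'' for $t<\lambda\log\alpha$ (Lemma~\ref{lemma_stronger});
(b) a layered decomposition over the events $A(\lambda_i)$, bounding $\mathbb{E}(e^{C\Sigma^{(2)}})$ outside $A(\lambda_0)$ (Lemma~\ref{lemma_an});
(c) the events $B_0(t_0),B_1(t_1)$, which restrict interfering pairs at the very end (Lemma~\ref{lemma_pb});
(d) the near-stuck event $\mathcal{P}\setminus\mathcal{P}^{\delta}$, of probability $O(M/\alpha)$ (Lemma~\ref{lemma_remaining_p}), whose contribution to $\mathbb{E}\Pi$ is bounded by re-running the analysis on the prefix $S_r$ viewed as a non-maximum FIS (Lemmas~\ref{lemma_recycle} and~\ref{lemma_remaining_e}).

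Your concentration step also contains errors.
\begin{itemize}
\item With $x_s=1$ iff $s\notin S_r$, a vertex $v\in R^*(s)$ with $N(v)\cap S=\{a,b\}$ contributes $(1-x_s)x_ax_b$. So $X_r$ is cubic, not quadratic, and this cubic part is exactly where $(R^{(c)}_{\max})^2\le M$ is used.
\item The scale $O(\sqrt{M(\alpha-r)}\log\alpha)$ is too optimistic. The correct scale is $(\alpha-r)^{3/2}M/\alpha+(\alpha-r)\sqrt{M/\alpha}$ (cf.\ Lemma~\ref{lemma_psipsi}), and the first term is attained. For example, it is of order $\sqrt{\alpha}M$ at $r=\alpha/2$ if about half of the $s\in S$ have $|N(s)\cap F|\approx M$ and the rest have almost none.
\item The deviations are strongly correlated in $r$, so summing squared errors does not bound $\mathbb{E}(\log\Pi)^2$. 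The paper uses $(\log\Pi)^2\le\bigl(\sum_r(\alpha-r)\widetilde{Q}_r^2\bigr)\bigl(\sum_r\tfrac{1}{\alpha-r}\bigr)$ (Lemma~\ref{lemma_error}), which is where the logarithms in $\mathscr{E}$ come from. It also needs $\mathbb{E}(\Psi_r)-\psi_{p_r}=O(q_rM)$ to pass from the binomial model to the true model.
\end{itemize}

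For (ii), the probability of an incomplete maximal FIS $T$ with $|T|=\alpha-k$ is not ``computed exactly as in (i)''. Vertices may have one or zero neighbours in $T$, and the terminal set has size $K=n_k$. The paper obtains only the upper bound $\alpha^{o(k)}\mathscr{P}_k(T)$ of Lemma~\ref{lemma_generalization}, using additional tools such as the negative-association argument of Lemma~\ref{lemma_sigma1hat}.

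Moreover, $k$ ranges up to $M+1$ (Lemma~\ref{lemma_size}), so switching $O(1)$ elements is not enough. The paper relates $T$ to FMISs via $k$ disjoint chordless $6$-cycles, removing $2k$ vertices and adding $3k$. It proves the counts $e^{-O(k)}(\alpha^2k)^k$ and $e^{O(k)}(\alpha^2M/k)^k$ and the ratio $\alpha^{o(k)}(k^2/\alpha)^k$, giving $\mathbb{P}(|\boldsymbol{S}_\infty|=\alpha-k)\le\alpha^{o(k)}(M/\alpha)^k$ uniformly in $k$. The unspecified ``stochastic domination'' you invoke for $k\ge2$ does not replace this.
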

	
	\begin{remark}
		\begin{enumerate}[(i)]
			\item If rejection sampling is applied to the IMFIS process to sample an FMIS $\boldsymbol{S}_{\infty}$, then it follows from Theorem~\ref{theorem_main} that the probability of rejection is $O(M/\alpha)$, and any FMIS $S$ of $G$ is sampled with probability $$\mathbb{P}(\boldsymbol{S}_{\infty}=S\,|\,|\boldsymbol{S}_{\infty}|=\alpha)=(1+O(\mathscr{E}(\alpha,M)))\mathscr{P}(S).$$
			\item The assumption that $\alpha\to\infty$ with $\ell$ fixed implies that either $\ell=4$ and $G$ is a configuration space, or $\ell=2$ and $G$ is a bipartite configuration space, as shown in Theorem~\ref{theorem_classification}.
			\item We emphasize for clarity that all the asymptotics are in terms of $\alpha\to\infty$, and this does not rely on some underlying sequence of objects that depend on $\alpha$. See Remark \ref{remark_asymptotics}.
			\item The notation $\mathscr{E}(\alpha,M)$ is consistent with the notation $\mathscr{E}(m,\Delta)$ from Equation \eqref{equation_script_e} in Section~\ref{section_results}.
			\item The value $\prod_{t=1}^{\alpha}\frac{t}{n_t}$ is the reciprocal of the total number of MISs of $G$, as shown in Corollary~\ref{corollary_mis_count}.
		\end{enumerate}
	\end{remark}
	
	The following corollary describes when the outcome distribution of the IMFIS process is asymptotically uniform.
	
	\begin{corollary}\label{corollary_uniform}
		Assume that $\b(v)=\frac12P|R^*(v)|$ for all $v\in V$. Then $\b_{\max}\alpha\leq\frac1{\ell}R_{\max}^{(a)}$, and therefore $$M=\max\{F_{\max},R_{\max}^{(a)},R_{\max}^{(b)},(R_{\max}^{(c)})^2,1\}.$$ Assume that $\alpha\to\infty$, with $\ell$ fixed and $M=O(\alpha^{\sfrac12}/(\log\alpha)^2)$. For any FMIS $S$ of $G$, we have $$\mathscr{P}(S)=\overline{\mathscr{P}}:=\exp\left(P|F|+\tfrac12P^2\sum_{v\in V}|R^*(v)|\right)\prod_{t=1}^{\alpha}\frac{t}{n_t},$$ such that $\mathbb{P}(\boldsymbol{S}_{\infty}=S\,|\,|\boldsymbol{S}_{\infty}|=\alpha)$ is asymptotically uniform. Hence, the total number of FMISs of $G$ is $(1+O(\mathscr{E}(\alpha,M)))\overline{\mathscr{P}}^{-1}$.
	\end{corollary}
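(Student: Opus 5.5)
We will obtain the corollary as a direct substitution into Theorem~\ref{theorem_main}; no new probabilistic input is needed. There are three things to do: bound $\b_{\max}\alpha$, compute $\mathscr{P}(S)$ for the chosen weight, and then read off uniformity and the count.

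\emph{Step 1: bounding $\b_{\max}\alpha$.} With $\b(v)=\frac12P|R^*(v)|$ and $P=(\frac{\ell}{2}\alpha)^{-1}$ we have
\[
|\b(v)|=\frac{|R^*(v)|}{\ell\alpha}\leq\frac{R_{\max}^{(a)}}{\ell\alpha}.
\]
Hence $\b_{\max}\alpha\leq\frac1\ell R_{\max}^{(a)}\leq R_{\max}^{(a)}$, since $\ell\geq2$ by Proposition~\ref{proposition_even}. So the entry $\b_{\max}\alpha$ in the definition of $M$ is dominated by $R_{\max}^{(a)}$ and can be dropped, which gives the stated formula for $M$. The hypotheses of Theorem~\ref{theorem_main} then hold.

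\emph{Step 2: computing $\mathscr{P}(S)$.} Substituting the weight gives
\[
P\b(V)=\tfrac12P^2\sum_{v\in V}|R^*(v)|
\qquad\text{and}\qquad
\b(S)=\tfrac12P\sum_{v\in S}|R^*(v)|.
\]
The key observation is that for an FMIS $S$ the sets $R^*(v)$, $v\in S$, are pairwise disjoint, so that $\sum_{v\in S}|R^*(v)|=|R^*(S)|$. To see this, suppose $u\in R^*(v)\cap R^*(w)$ for distinct $v,w\in S$. Then $vRu$ and $uRw$, so $vRw$ by transitivity. This contradicts feasibility of $S$, since two distinct vertices of an FIS cannot be $R$-equivalent.

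With disjointness, $\b(S)=\frac12P|R^*(S)|$, which exactly cancels the term $\frac12P|R^*(S)|$ in the exponent of $\mathscr{P}(S)$. What remains is $\exp\bigl(P|F|+\frac12P^2\sum_{v\in V}|R^*(v)|\bigr)\prod_{t=1}^{\alpha}\frac{t}{n_t}=\overline{\mathscr{P}}$, which is independent of $S$.

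\emph{Step 3: uniformity and the count.} By the remark following Theorem~\ref{theorem_main}, $\mathbb{P}(\boldsymbol{S}_\infty=S\mid|\boldsymbol{S}_\infty|=\alpha)=(1+O(\mathscr{E}(\alpha,M)))\overline{\mathscr{P}}$ for every FMIS $S$, with the error uniform in $S$. This is the asymptotic uniformity.

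For the count, sum this over all FMISs. The conditional probabilities sum to $1$, which is legitimate because part~(\ref{theorem_main_termination}) makes the conditioning event have positive probability for large $\alpha$. Thus $1=N\,(1+O(\mathscr{E}))\overline{\mathscr{P}}$, where $N$ is the number of FMISs. Inverting, $N=(1+O(\mathscr{E}))\overline{\mathscr{P}}^{-1}$, using that $\mathscr{E}=o(1)$ under $M=O(\alpha^{1/2}/(\log\alpha)^2)$ so that $(1+O(\mathscr{E}))^{-1}=1+O(\mathscr{E})$.

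The only step that needs care is the disjointness of the sets $R^*(v)$ in Step~2; everything else is bookkeeping.
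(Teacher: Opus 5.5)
Your proposal is correct and matches the paper's intended argument. The paper gives no separate proof of this corollary and treats it as a direct substitution of $\b(v)=\frac12P|R^*(v)|$ into Theorem~\ref{theorem_main}: $\b_{\max}\alpha\le\frac1\ell R_{\max}^{(a)}$ is absorbed into $M$, and $\b(S)=\frac12P|R^*(S)|$ cancels in $\mathscr{P}(S)$. The one non-routine fact you isolate, that the sets $R^*(v)$ for $v$ in a feasible set are pairwise disjoint by transitivity of $R$, is the same observation the paper uses in the proof of Lemma~\ref{lemma_vu}, and your summation of the conditional probabilities (using $\mathscr{E}=o(1)$ and an error bound uniform in $S$) correctly yields the count.
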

	
	In Section~\ref{subsection_proof_distribution}, we will use our main theorem, Theorem~\ref{theorem_main}, to calculate the probability that the IMFIS process results in an FMIS that does or does not contain certain specified vertices. This yields the following secondary result.
	
	\begin{theorem}\label{theorem_space}
		Let $T$ be an FIS of $G$ and let $V':=V\setminus\overline{N}(T)$, $\alpha':=\alpha-|T|$, and $P':=(\tfrac{\ell}{2}\alpha')^{-1}$. Let $U\subset V'\setminus(F\cup R^*(T))$ with $U_{\max}:=\max\{|\overline{N}(v)\cap U|:v\in V'\}$, and define $M':=\max\{M,U_{\max},\tfrac{\alpha}{\alpha'}R_{\max}^{(a)}\}$. Finally, define
		\begin{align}
			\mathscr{Q}(T,U)&:=\exp\Bigg(\tfrac12P'\sum_{v\in V'}\left(P|R^*(v)|-P'|R^*(v)\cap V'|\right)\Bigg.
			\\&\quad+P|F|-P'|F\cap V'|+P\b(V)-P'\b(V')
			\\&\quad\Bigg.+\tfrac12P|R^*(T)|-P'|R^*(T)\cap V'|-\b(T)-P'|U|\Bigg)\prod_{t=\alpha'+1}^{\alpha}\frac{t}{n_t}.
		\end{align}
		Assume that $\alpha'\to\infty$, with $\ell$ fixed and $M'=O((\alpha')^{\sfrac12}/(\log\alpha')^2)$. Then the IMFIS process samples an FMIS that contains the \emph{target set} $T$ and is disjoint from the \emph{exclusion set} $U$ with probability $$\mathbb{P}(|\boldsymbol{S}_{\infty}|=\alpha\wedge T\subset\boldsymbol{S}_{\infty}\wedge U\cap\boldsymbol{S}_{\infty}=\emptyset)=(1+O(\mathscr{E}(\alpha',M')))\mathscr{Q}(T,U).$$
	\end{theorem}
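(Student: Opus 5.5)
The idea is to decompose the event according to the moment at which the elements of $T$ are chosen, and to apply Theorem~\ref{theorem_main} to a suitable reduced instance on $V'$. Concretely, I would sum over FMISs directly:
$$\mathbb{P}(|\boldsymbol{S}_{\infty}|=\alpha\wedge T\subset\boldsymbol{S}_{\infty}\wedge U\cap\boldsymbol{S}_{\infty}=\emptyset)=\sum_{S}\mathbb{P}(\boldsymbol{S}_{\infty}=S),$$
where $S$ ranges over FMISs of $G$ with $T\subset S$ and $S\cap U=\emptyset$. Every such $S$ has the form $S=T\cup S'$. By Proposition~\ref{proposition_hereditary}, $S'$ is an MIS of the induced $2$-uniform graph $H$ on $V'$, with $\alpha(H)=\alpha'$ and $\ell(H)=\ell$. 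Feasibility of $S$ in $G$ amounts to feasibility of $S'$ in $H$ with respect to
$$F':=(F\cup R^*(T))\cap V',\qquad R'=R|_{V'},$$
together with avoidance of $U$.

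Applying Theorem~\ref{theorem_main}(\ref{theorem_main_equation}) to each $S$ gives $(1+O(\mathscr{E}(\alpha,M)))\mathscr{P}(S)$. Since $\alpha'\le\alpha$ and $M\le M'$, this error is $O(\mathscr{E}(\alpha',M'))$. Next I would factor $\mathscr{P}(S)$. We have $\b(S)=\b(T)+\b(S')$, and
$$|R^*(S)|=|R^*(T)|+|R^*(S')|-|R^*(T)\cap R^*(S')|.$$
The overlap term should be controlled by $R^{(b)}_{\max}$ and $R^{(a)}_{\max}$, or it may vanish: for $v\in S'$, elements of $R^*(T)$ that are non-adjacent to $S'$ must lie outside $S$. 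This bookkeeping needs care. The product $\prod_{t=1}^{\alpha}t/n_t$ splits into $\prod_{t=\alpha'+1}^{\alpha}$ times the MIS-count factor of $H$.

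The remaining sum over $S'$ is a weighted count of FMISs of $H$ avoiding $U$. I would evaluate it by defining a new IMFIS instance on $H$ with forbidden set $F'\cup U$, relation $R'$, and weight $\b'(v)=\b(v)-\tfrac12 P|R^*(v)|+\tfrac12P'|R^*(v)\cap V'|$, chosen so that Corollary~\ref{corollary_uniform}-style cancellation makes the reduced $\mathscr{P}_H(S')$ match the $S'$-dependence of $\mathscr{P}(S)$. Summing Theorem~\ref{theorem_main}(\ref{theorem_main_equation}) for $H$ over all $S'$ gives total mass $\mathbb{P}_H(|\boldsymbol{S}_\infty|=\alpha')=1-O(M'/\alpha')$ by Theorem~\ref{theorem_main}(\ref{theorem_main_termination}). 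This converts $\sum_{S'}\exp(-\b'(S'))$ into $\exp(-P'|F'\cup U|-P'\b'(V')+\dots)$ times the inverse MIS-count factor of $H$. Collecting the exponents should reproduce exactly the terms of $\mathscr{Q}(T,U)$:
\begin{itemize}
\item the $-P'|F\cap V'|$ and $-P'|R^*(T)\cap V'|$ pieces come from $|F'|$;
\item $-P'|U|$ comes from $U\subset V'\setminus(F\cup R^*(T))$, which makes the union $F'\cup U$ disjoint;
\item $-P'\b(V')$ together with the sum $\tfrac12P'\sum(P|R^*(v)|-P'|R^*(v)\cap V'|)$ comes from $-P'\b'(V')$.
\end{itemize}

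The main obstacle is verifying the hypotheses of Theorem~\ref{theorem_main} for the reduced instance $(H,F'\cup U,R',\b')$, namely $M_H=O(M')$.
\begin{itemize}
\item $F'_{\max}$ is bounded by $F_{\max}+R^{(b)}_{\max}+U_{\max}$.
\item The $R$-parameters only shrink under restriction, with $R^{*}$ computed inside $H$. I need to check that adjacency agrees, since $H$ is induced.
\item The weight satisfies $\b'_{\max}\alpha'\lesssim \b_{\max}\alpha+PR^{(a)}_{\max}\alpha'+P'R^{(a)}_{\max}\alpha'$. This is where the term $\tfrac{\alpha}{\alpha'}R^{(a)}_{\max}$ in $M'$ enters.
\end{itemize}
A secondary delicate point is the $R^*(T)\cap R^*(S')$ overlap and the step of absorbing the $O(M'/\alpha')$ termination error into $O(\mathscr{E}(\alpha',M'))$. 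The latter holds because $M'/\alpha'\le\mathscr{E}(\alpha',M')$ once $(\log\alpha')^2\ge1$.
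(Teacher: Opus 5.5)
Your overall route is the paper's proof. You restrict to the induced $2$-uniform graph on $V'$ with forbidden set $(F\cap V')\cup(R^*(T)\cap V')\cup U$ and relation $R$ restricted to $V'$, reweight, apply Theorem~\ref{theorem_main}~(\ref{theorem_main_equation}) to both instances and Theorem~\ref{theorem_main}~(\ref{theorem_main_termination}) to the reduced one, and check that the reduced $M$ is $O(M')$. The overlap $|R^*(T)\cap R^*(S')|$ you worry about is exactly zero. Since $T\cup S'$ is feasible, the classes $R(s)$, $s\in T\cup S'$, are pairwise disjoint, so $|R^*(T\cup S')|=|R^*(T)|+|R^*(S')|$.

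The step that fails as written is the claimed exact matching of the $S'$-dependence. Your weight uses the $R^*$ of $G$, namely $\tfrac12P'|R^*(v)\cap V'|$. But $\mathscr{P}_H(S')$ contains $\tfrac12P'|R^*_H(S')|$, where $R^*_H(v)=(R(v)\cap V')\setminus(\overline{N}(v)\cup F'\cup U)$ is taken relative to the reduced forbidden set. For feasible $S'$ the $R^*(T)$ part drops out, but vertices of $U$ may lie in $R^*(S')$. Hence $R^*_H(S')=(R^*(S')\cap V')\setminus U$, and $\mathscr{P}(T\cup S')/\mathscr{P}_H(S')$ equals your constant times $\exp(\tfrac12P'|R^*(S')\cap U|)$. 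This factor is not uniformly $1+o(1)$.

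For a concrete case, take $T=\emptyset$ in a bipartite configuration space with all degrees $d\geq2$. Let $U$ be a perfect matching of half-edges inducing a simple $d$-regular graph $J$, so $U_{\max}\leq2$. Let $S'$ be another perfect matching inducing $J$ that, in every class, shares no half-edge with the element of $U$. Then $|R^*(S')\cap U|=m=\alpha'$ and the factor is $e^{1/2}$, whereas for typical $S'$ it is $1+O(d^2/m)$. So your argument would need an extra concentration estimate.

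The simple fix is the paper's: use the reduced instance's own $R^*$ in the weight, $\b'(v):=\b(v)-\tfrac12P|R^*(v)|+\tfrac12P'|R^*_H(v)|$. Then all $S'$-dependent terms cancel identically. Passing from $|R^*_H(v)|$ to $|R^*(v)\cap V'|$ in the constant $P'\b'(V')$ changes the exponent by at most $\tfrac12(P')^2\sum_{v\in V'}|R^*(v)\cap V'\cap(R^*(T)\cup U)|=O(M'^2/\alpha')$, which is absorbed into $O(\mathscr{E}(\alpha',M'))$. This bound follows from three facts: $w\in R^*(v)$ iff $v\in R^*(w)$ for $v,w\notin F$; $|R^*(T)\cap V'|\leq\alpha'R^{(b)}_{\max}$ and $|U|\leq\alpha'U_{\max}$; and vertices of $F\cap V'$ contribute at most $|F\cap V'|R^{(a)}_{\max}\leq\alpha'F_{\max}R^{(a)}_{\max}$.

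A minor correction: your bound on $\b'_{\max}\alpha'$ does not actually need the term $\tfrac{\alpha}{\alpha'}R^{(a)}_{\max}$, since $P\alpha'\leq P'\alpha'=\tfrac{2}{\ell}$.
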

	
	Note that, if rejection sampling is applied to the IMFIS process to sample an FMIS $\boldsymbol{S}_{\infty}$, then it follows from Theorem~\ref{theorem_main}~(\ref{theorem_main_termination}) that the probability of rejection is $O(M/\alpha)$, so it follows by Theorem~\ref{theorem_space} that the resulting FMIS contains $T$ and is disjoint from $U$ with probability $$\mathbb{P}(T\subset\boldsymbol{S}_{\infty}\wedge U\cap\boldsymbol{S}_{\infty}=\emptyset\,|\,|\boldsymbol{S}_{\infty}|=\alpha)=(1+O(\mathscr{E}(\alpha',M')))\mathscr{Q}(T,U).$$ The following proposition, which is also proven in Section~\ref{subsection_proof_distribution}, can be used to simplify the expression $\mathscr{Q}(T,U)$ from Theorem~\ref{theorem_space}.
	
	\begin{proposition}\label{proposition_qt}
		Consider the quantity $\mathscr{Q}(T,U)$ from Theorem~\ref{theorem_space}.
		\begin{enumerate}[(i)]
			\item\label{proposition_qt_small} We have
			\begin{align}
				\mathscr{Q}(T,U)&=\left(\frac{2}{\ell}\right)^{|T|}\frac{(\alpha')!}{\alpha!}\exp\left(-P'|U|+O\left(\frac{|T|M'}{\alpha'}\right)\right)
				\\&=P^{|T|}\exp\left(-P|U|+O\left(\frac{|T|^2+|T|M'}{\alpha'}\right)\right).
			\end{align}
			\item\label{proposition_qt_uniform} If $\b(v)=\frac12P|R^*(v)|$ for all $v\in V$, then
			\begin{align}
				\mathscr{Q}(T,U)&=\exp\Bigg(P|F|-P'|F\cap V'|-P'|R^*(T)\cap V'|-P'|U|\Bigg.
				\\&\quad\Bigg.+\tfrac12P^2\sum_{v\in V}|R^*(v)|-\tfrac12(P')^2\sum_{v\in V'}|R^*(v)\cap V'|\Bigg)\prod_{t=\alpha'+1}^{\alpha}\frac{t}{n_t}.
			\end{align}
			Therefore, a uniformly random FMIS of $G$ contains $T$ and is disjoint from $U$ with probability $(1+O(\mathscr{E}(\alpha',M')))\mathscr{Q}(T,U)$.
		\end{enumerate}
	\end{proposition}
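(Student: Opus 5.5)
The plan is to treat the two parts separately. Part~(\ref{proposition_qt_small}) is a term-by-term estimate of the exponent of $\mathscr{Q}(T,U)$ plus an estimate of the product. Part~(\ref{proposition_qt_uniform}) is a direct substitution followed by an appeal to Corollary~\ref{corollary_uniform}.

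Throughout, write $k:=|T|=\alpha-\alpha'$. I will use the following facts.
\begin{itemize}
\item By Corollary~\ref{corollary_hereditary}, $|V|=n_\alpha$ and $|V'|=n_{\alpha'}$.
\item Every vertex of $V\setminus V'=\overline{N}(T)$ lies in some $\overline{N}(t)$ with $t\in T$, so $|V\setminus V'|\le k(d(G)+1)=O(k\alpha)$ by Proposition~\ref{proposition_n}.
\item $P-P'=-\tfrac{2}{\ell}\tfrac{k}{\alpha\alpha'}$, so $|P-P'|=O(k/(\alpha\alpha'))$.
\item Double counting bounds the global sizes. Since $G$ is regular, $\sum_{v}|\overline{N}(v)\cap F|=|F|(d(G)+1)\le |V|F_{\max}$, hence $|F|=O(\alpha F_{\max})$. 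In the same way, using that the induced graph on $V'$ is regular by Corollary~\ref{corollary_hereditary}, $|U|=O(\alpha' U_{\max})$. Trivially $\sum_{v\in V}|R^*(v)|\le n_\alpha R^{(a)}_{\max}$ and $|\b(X)|\le |X|M/\alpha$.
\end{itemize}

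\textbf{Part (i).} I bound each exponent term by $O(kM'/\alpha')$, always splitting $P\,x(V)-P'\,x(V')=(P-P')\,x(V')+P\,x(V\setminus V')$.
\begin{itemize}
\item For $F$: $(P-P')|F\cap V'|=O\bigl(\tfrac{k}{\alpha\alpha'}\cdot\alpha' F_{\max}\bigr)$, using $|F\cap V'|\le|F|=O(\alpha F_{\max})$. Also $P|F\setminus V'|\le P\,|F\cap\overline{N}(T)|\le PkF_{\max}$.
\item The $\b$ terms are handled the same way, using $\b_{\max}\le M/\alpha$.
\item For $\tfrac12P|R^*(T)|-P'|R^*(T)\cap V'|-\b(T)$: each piece is at most $O(kR^{(a)}_{\max}/\alpha)+kM/\alpha$.
\item The double sum needs the most care. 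Write it as
\begin{equation}
\tfrac12P'(P-P')\sum_{v\in V'}|R^*(v)|+\tfrac12P'^2\sum_{v\in V'}|R^*(v)\setminus V'|.
\end{equation}
The first piece is $O\bigl(\tfrac1{\alpha'}\cdot\tfrac{k}{\alpha\alpha'}\cdot\alpha'^2R^{(a)}_{\max}\bigr)$. For the second, I use the symmetry $w\in R^*(v)\iff v\in R^*(w)$, which holds because $R$ is an equivalence relation and $G$ has no loops. It lets me count ordered pairs from the $w$-side, giving $\sum_{v\in V'}|R^*(v)\setminus V'|\le |\overline{N}(T)|R^{(a)}_{\max}=O(k\alpha R^{(a)}_{\max})$. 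Multiplied by $P'^2$ this is $O(kR^{(a)}_{\max}\alpha/\alpha'^2)$. This is exactly where the term $\tfrac{\alpha}{\alpha'}R^{(a)}_{\max}\le M'$ in $M'$ is needed.
\end{itemize}

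For the product, $t/n_t=\bigl(\tfrac{\ell}{2}(t-1)+1\bigr)^{-1}=\tfrac{2}{\ell}(t-1+\tfrac2\ell)^{-1}$. Comparing with $\prod 1/t$ gives $(2/\ell)^k\,\alpha'!/\alpha!\cdot e^{O(k/\alpha')}$. This proves the first line of (\ref{proposition_qt_small}).

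For the second line I use two estimates. First, $\alpha'!/\alpha!=\alpha^{-k}e^{O(k^2/\alpha')}$, so $(2/\ell)^k\alpha'!/\alpha!=P^k e^{O(k^2/\alpha')}$. Second, $(P'-P)|U|=O\bigl(\tfrac{k}{\alpha\alpha'}\,\alpha' U_{\max}\bigr)=O(kM'/\alpha')$, which lets me replace $P'|U|$ by $P|U|$.

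\textbf{Part (ii).} I substitute $\b=\tfrac12P|R^*|$. The elements of $T$ are pairwise non-$R$-equivalent because $T$ is an FIS, so the sets $R^*(t)$ for $t\in T$ lie in distinct equivalence classes and are disjoint. Hence $\b(T)=\tfrac12P|R^*(T)|$, which cancels against $\tfrac12P|R^*(T)|$. Next, $P\b(V)=\tfrac12P^2\sum_V|R^*|$. Finally, $P'\b(V')=\tfrac12P'P\sum_{V'}|R^*(v)|$ cancels the positive half of the double sum, leaving the stated formula. I expect this step to be pure bookkeeping.

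For the final claim, Corollary~\ref{corollary_uniform} shows that the conditional IMFIS law given $|\boldsymbol{S}_\infty|=\alpha$ is uniform on FMISs up to a factor $1+O(\mathscr{E}(\alpha,M))$. Summing over the FMISs with $T\subset S$ and $S\cap U=\emptyset$, and combining with Theorem~\ref{theorem_space} and the remark after it, gives the result once I check that $\mathscr{E}(\alpha,M)=O(\mathscr{E}(\alpha',M'))$. This holds because $M\le M'$, $\alpha'\le\alpha$, and $x\mapsto(\log x)^j/x$ is eventually decreasing.

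The main obstacle is the $\sum_{v\in V'}|R^*(v)\setminus V'|$ term in part~(i). It is the only place where the crude bound fails, and the symmetric double-counting step is what rescues it.
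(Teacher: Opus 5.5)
Your argument for part (i) follows the paper's proof essentially term by term: the same splitting through $P-P'$, the same product estimate, and the same passage to $P^{|T|}$. Your part (ii) is the paper's substitution followed by Corollary~\ref{corollary_uniform}. Those pieces are fine. The gap is exactly at the step you call the main obstacle. The symmetry $w\in R^*(v)\iff v\in R^*(w)$ is false in general, because $R^*(v)=R(v)\setminus(\overline{N}(v)\cup F)$ removes $F$ only on the side of $w$. If $v\in F$, $w\notin F$, $vRw$ and $w\notin\overline{N}(v)$, then $w\in R^*(v)$ but $v\notin R^*(w)$. Your double count therefore bounds only $\sum_{v\in V'\setminus F}|R^*(v)\setminus V'|$, and the given parameters do not control the terms with $v\in F\cap V'$.

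Here is a counterexample. Let $G$ be the line graph of $K_{\alpha,\alpha}$, with vertices $(a_i,b_j)$ and $\ell=2$. Take $T=\{(a_1,b_1)\}$, $U=\emptyset$, $\mathbf{w}\equiv0$, and let the only non-trivial $R$-class be $C_0\cup F$. Here $C_0=\{(a_1,b_j):2\le j\le K+1\}$. The set $F\subset V'$ avoids the columns $b_1,\dots,b_{K+1}$, has at most $K^2/2$ elements in each row and each column, and has $\Theta(\alpha K^2)$ elements in total. Then $R^*(v)=C_0$ for $v\in F$ and $R^*(v)=\emptyset$ for all other $v$. Hence $F_{\max}\le K^2$ and $R^{(a)}_{\max},R^{(b)}_{\max},R^{(c)}_{\max}\le K$, so $M'\le K^2$. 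With, say, $K=\alpha^{1/8}$, the hypotheses of Theorem~\ref{theorem_space} hold. Yet the exponent of $\mathscr{Q}(T,\emptyset)$ equals $\tfrac12PP'|F|K-(P'-P)|F|=\Theta(K^3/\alpha)$. This is not $O(|T|M'/\alpha')=O(K^2/\alpha)$ as $K\to\infty$.

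The paper's own proof contains the same step. It asserts $\sum_{v\in V}|R^*(v)\cap\overline{N}(T)|=\sum_{w\in\overline{N}(T)}|R^*(w)|$, and in the example the left-hand side is $|F|K$ while the right-hand side is $0$. So you inherit this gap rather than introduce it, and part (i) as stated needs an extra hypothesis or a weaker error term.

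The clean repair is to assume that every $R$-class is either contained in $F$ or disjoint from $F$. Then $R^*(v)=\emptyset$ for $v\in F$, $R^*$ is symmetric on $V\setminus F$, and your double count is valid as written. This hypothesis holds wherever the paper uses the proposition: in the hypergraph and directed-hypergraph applications, $F$ is a union of full $R$-classes.

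Without that hypothesis, bound the $F$-part directly. Use $|F\cap V'|\le\alpha'F_{\max}$, by double counting against an MIS of the graph induced on $V'$. Also use $|R^*(v)\cap\overline{N}(T)|\le\min\{|T|R^{(c)}_{\max},R^{(a)}_{\max}\}$. This adds $O(\min\{|T|M'^{3/2},M'^2\}/\alpha')$ to the exponent. That term is absorbed by $\mathscr{E}(\alpha',M')$ once combined with Theorem~\ref{theorem_space}, but it is weaker than the $O(|T|M'/\alpha')$ claimed in (i).
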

	
	Finally, since we study the distribution of $\boldsymbol{S}_{\infty}$ only up to a factor of $1+O(\mathscr{E}(\alpha,M))$, it is possible to slightly alter the weight function $\b$ without affecting the outcome distribution of the IMFIS process asymptotically. The following proposition, which is also proven in Section~\ref{subsection_proof_distribution}, gives three conditions for such an alteration. The first condition essentially states that the weights of forbidden vertices are negligible, which is to be expected, since they are ignored by the IMFIS process. The second condition indicates that the weight $e^{-\b(v)}$ in the IMFIS process may be replaced by the weight $1-\b(v)$. Finally, the third condition is essentially an algebraic trick, which will be used throughout Section~\ref{section_applications} to simplify calculations.
	
	\begin{proposition}\label{proposition_leniency}
		Assume that $\alpha\to\infty$, with $\ell$ fixed and $M=O(\alpha^{\sfrac12}/(\log\alpha)^2)$. Consider the following three conditions on an alternative weight function $\b':V\to\mathbb{R}$ with $\b'_{\max}:=\max\{|\b'(v)|:v\in V\}=O(M/\alpha)$.
		\begin{enumerate}[(i)]
			\item\label{proposition_leniency_f} We have $\b'(v)=\b(v)$ for all $v\in V\setminus F$.
			\item\label{proposition_leniency_log} We have $\b'(v)=-\log(1-\b(v))$ for all $v\in V$.
			\item\label{proposition_leniency_b} The quantity $\b(S)-\b'(S)$ is the same for all MISs $S$ of $G$. 
		\end{enumerate}
		If $\b'$ satisfies any of these conditions, then for any FMIS $S$ of $G$, substituting $\b'$ for $\b$ in the definition of $\mathscr{P}(S)$ multiplies its value by a factor of $1+O(M^2/\alpha)$. It follows that the IMFIS process on $(G,F,R,\b')$ gives the same distribution on $\boldsymbol{S}_{\infty}$ as the IMFIS process on $(G,F,R,\b)$, up to a factor of $1+O(\mathscr{E}(\alpha,M))$.
		
		Furthermore, for any FIS $T$ of $G$ and vertex set $U$ satisfying the requirements of Theorem~\ref{theorem_space}, substituting $\b'$ for $\b$ in the definition of $\mathscr{Q}(T,U)$ multiplies its value by a factor of $1+O(\mathscr{E}(\alpha',M'))$.
	\end{proposition}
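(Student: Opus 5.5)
We prove Proposition~\ref{proposition_leniency} by comparing $\mathscr{P}(S)$ and $\mathscr{Q}(T,U)$ computed with $\b$ and with $\b'$ directly, and then transferring the comparison to the sampling distributions via Theorem~\ref{theorem_main} and Theorem~\ref{theorem_space}.

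\textbf{Where $\b$ enters.} In $\mathscr{P}(S)$, the weight function appears only through $P\b(V)-\b(S)$. So replacing $\b$ by $\b'$ multiplies $\mathscr{P}(S)$ by
$$\exp\bigl(P(\b'(V)-\b(V))-(\b'(S)-\b(S))\bigr),$$
and it suffices to show that the exponent is $O(M^2/\alpha)$.

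\textbf{The parameter $M$ for $\b'$.} Since $\b'_{\max}=O(M/\alpha)$, we have $\b'_{\max}\alpha=O(M)$. Hence the parameter $M$ for $(G,F,R,\b')$ is $O(M)$, and Theorem~\ref{theorem_main} still applies with the same error order $O(\mathscr{E}(\alpha,M))$. Since $M^2/\alpha\le\mathscr{E}(\alpha,M)$, once the exponent is bounded the distribution claim follows by applying Theorem~\ref{theorem_main}~(\ref{theorem_main_equation}) to both processes and comparing.

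\textbf{Condition (\ref{proposition_leniency_f}).} The difference $\b'-\b$ is supported on $F$.
\begin{itemize}
\item Since $S$ is an FMIS, $S\cap F=\emptyset$, so $\b'(S)=\b(S)$.
\item For the other term, $|\b'(V)-\b(V)|\le|F|\,(\b_{\max}+\b'_{\max})=|F|\cdot O(M/\alpha)$.
\item Every vertex of $F$ lies in $\overline N(v)$ for some $v$ in a fixed MIS $S_0$, since $S_0$ is maximal. Therefore $|F|\le\alpha F_{\max}\le\alpha M$.
\end{itemize}
Combining these, $P|\b'(V)-\b(V)|=O\bigl(\tfrac1\alpha\cdot\alpha M\cdot M/\alpha\bigr)=O(M^2/\alpha)$.

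\textbf{Condition (\ref{proposition_leniency_log}).} Since $\b_{\max}=O(M/\alpha)=o(1)$, Taylor expansion gives $\b'(v)-\b(v)=O(\b(v)^2)=O(M^2/\alpha^2)$ uniformly in $v$. Then:
\begin{itemize}
\item $\b'(S)-\b(S)=O(\alpha M^2/\alpha^2)=O(M^2/\alpha)$, because $|S|=\alpha$.
\item $P(\b'(V)-\b(V))=O\bigl(P|V|M^2/\alpha^2\bigr)$. By Proposition~\ref{proposition_n}, $|V|=\ell\binom{\alpha}{2}+\alpha=O(\alpha^2)$, and $P=O(1/\alpha)$, so this term is also $O(M^2/\alpha)$.
\end{itemize}

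\textbf{Condition (\ref{proposition_leniency_b}).} Write $c:=\b(S)-\b'(S)$, which is the same for every MIS $S$. Averaging over a uniformly random MIS, Proposition~\ref{proposition_imis} says each vertex lies in the same number of MISs. So each vertex lies in a uniform MIS with probability $\alpha/|V|$, giving $\mathbb E[\b(S)]=\tfrac{\alpha}{|V|}\b(V)$. Hence
$$c=\tfrac{\alpha}{|V|}\bigl(\b(V)-\b'(V)\bigr).$$
Now compare $\tfrac{\alpha}{|V|}=\tfrac{1}{\ell(\alpha-1)/2+1}$ with $P=\tfrac{2}{\ell\alpha}$. These differ by a relative factor $1+O(1/\alpha)$, and $|\b(V)-\b'(V)|=O(\alpha^2\cdot M/\alpha)=O(\alpha M)$. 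So the exponent $P(\b'(V)-\b(V))+c$ is $O(M)\cdot O(1/\alpha)=O(M/\alpha)\subset O(M^2/\alpha)$.

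\textbf{The statement for $\mathscr{Q}(T,U)$.} In $\mathscr{Q}(T,U)$, the weight function enters through $P\b(V)-P'\b(V')-\b(T)$. We repeat the three arguments on $V'$, which by Proposition~\ref{proposition_hereditary} is $2$-uniform with $\alpha(H)=\alpha'$ and $|V'|=O(\alpha'^2)$.
\begin{itemize}
\item Condition (\ref{proposition_leniency_f}): $T\cap F=\emptyset$, and $|F\cap V'|\le\alpha'M$.
\item Condition (\ref{proposition_leniency_log}): the Taylor-expansion bounds go through unchanged on $V'$.
\item Condition (\ref{proposition_leniency_b}): $\b-\b'$ is constant on MISs of $G$. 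Fixing $T$, it is then constant on MISs of $H$, because each such MIS extends $T$ to an MIS of $G$. Applying the same averaging identity on $H$ gives $\b(V')-\b'(V')=\tfrac{|V'|}{\alpha'}(c-\b(T)+\b'(T))$.
\end{itemize}
The main obstacle is the bookkeeping in condition (\ref{proposition_leniency_b}). The identity $c=\tfrac{\alpha}{|V|}(\b(V)-\b'(V))$ must be combined with its analogue on $V'$, so that the $\b(T)$ terms and the $P$ versus $P'$ normalisations cancel up to the relative errors $O(1/\alpha')$. Multiplying these relative errors by the $O(\alpha'M)$ sizes leaves $O(M/\alpha')=O(\mathscr{E}(\alpha',M'))$, as required.
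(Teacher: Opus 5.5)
Your argument is correct. For $\mathscr{P}(S)$ it is the paper's argument: $|F|\le\alpha F_{\max}$ for (i), a second-order Taylor bound together with $|V|=O(\alpha^2)$ for (ii), and averaging over MISs via Proposition~\ref{proposition_imis} for (iii).

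In (iii) you are more careful than the paper. The paper asserts the exact identity $\b(S)-\b'(S)=P(\b(V)-\b'(V))$. But each vertex lies in a fraction $\alpha/n_\alpha$ of all MISs, and $\alpha/n_\alpha=P$ only when $\ell=2$. So your $O(M/\alpha)$ correction is what is actually needed when $\ell=4$.

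The genuine difference from the paper is in the $\mathscr{Q}(T,U)$ part. The paper does no algebra there. It sums the pointwise comparison $\mathbb{P}(\boldsymbol{S}'_{\infty}=S)=(1+O(\mathscr{E}(\alpha,M)))\mathbb{P}(\boldsymbol{S}_{\infty}=S)$ over all FMISs containing $T$ and disjoint from $U$. It then applies Theorem~\ref{theorem_space} to both weight functions, so the two versions of $\mathscr{Q}(T,U)$ approximate probabilities that agree up to $1+O(\mathscr{E}(\alpha,M))$. That route is shorter and never inspects how $\b$ enters $\mathscr{Q}$, but it rests on the full strength of Theorems~\ref{theorem_main} and~\ref{theorem_space}. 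Your direct computation is elementary and yields a sharper factor.

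Your final bookkeeping sentence is loose, however. Relative errors $O(1/\alpha')$ times quantities of size $O(\alpha'M)$ would only give $O(M)$; what the relative errors actually multiply are the normalised sums. Set $D:=\b-\b'$, $c:=D(S)$, $\epsilon:=\frac{2-\ell}{\ell\alpha}$ and $\epsilon':=\frac{2-\ell}{\ell\alpha'}$. Your two averaging identities then read $PD(V)=(1+\epsilon)c$ and $P'D(V')=(1+\epsilon')(c-D(T))$. So the change in the exponent of $\mathscr{Q}(T,U)$ is
\[
-PD(V)+P'D(V')+D(T)=-\epsilon c+\epsilon'\bigl(c-D(T)\bigr)=O(M/\alpha).
\]
This holds because $|c|=O(M)$ and $|c-D(T)|=|D(S')|=O(\alpha'M/\alpha)$ for any MIS $S'$ of the induced graph on $V'$. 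The bound $O(M/\alpha)$ is well inside $O(\mathscr{E}(\alpha',M'))$.
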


	\section[Applying the Main Results to Sampling Graphs]{\hspace{-0.35cm}Applying the Main Results to Sampling Graphs}\label{section_applications}

	We apply the results from Section~\ref{section_main} to prove the results from Section~\ref{section_results}. We show that each iterative stochastic process can be modeled as an IMFIS process with appropriately chosen $(G,F,R,\b)$. Therefore, we only need to verify the conditions of Theorem~\ref{theorem_main} and evaluate $\mathscr{P}(S)$ or $\overline{\mathscr{P}}$.
	
	\subsection{Undirected graphs: Proof of Theorem~\ref{theorem_simple}}\label{subsection_applications_simple}
	
	We consider the configuration space $G:=G_\d$ from Definition \ref{definition_space} from Section~\ref{subsection_independent_simple} equipped with the set of forbidden vertices $F_\d$ and the equivalence relation $R:=R_\d$, which were subsequently defined to cover the self-loops and multi-edges respectively. For a vertex $v\in V_\d$ connecting $W_i$ and $W_j$, we define the weight function $\b(v):=\tfrac{d_id_j}{4m}$. Finally, we let $X_\d$ contain all vertices in $V_\d$ that induce a forbidden edge from $X$, and we define $F:=F_\d\cup X_\d$.
	
	Consider the IMFIS process on $(G,F,R,\b)$, and apply rejection sampling on $\boldsymbol{S}_{\infty}$ to sample an FMIS $S$. As explained in Section~\ref{subsection_independent_simple}, if we identify the sets $W_1,\ldots,W_n$ with vertices $v_1,\ldots,v_n$ and we identify any vertex $v\in S$ connecting half-edges from $W_i$ and $W_j$ with an edge between $v_i$ and $v_j$, then this results in a simple graph with degree sequence $\d$ and with no edges from $X$. The main idea is that this process is equivalent to the iterative undirected graph process as defined in Definition \ref{definition_process_simple}. Indeed, the factor $(d_i-d_{i,r})(d_j-d_{j,r})$ accounts for the number of vertices in $V_{\d}\setminus\overline{N}(\boldsymbol{S}_r)$ that induce an edge between $v_i$ and $v_j$, and the factor $1-\frac{d_id_j}{4m}$ is equivalent to the factor $e^{-\b(v)}$ from the IMFIS process by Proposition~\ref{proposition_leniency}~(\ref{proposition_leniency_log}).
	
	Note that $\alpha(G)=m\to\infty$, and $\ell(G)=4$ is fixed, as shown in Section~\ref{subsection_independent_parameter}. Consider $v\in V_\d$ connecting half-edges $e\in W_i$ and $f\in W_j$. The following proves $M=O(\Delta)$.
	\begin{itemize}
		\item There are at most $d_{\max}$ pairs in $F_\d$ and at most $x_{\max}d_{\max}$ pairs in $X_\d$ that contain $e$. The same holds for $f$, so $F_{\max}\leq2d_{\max}+2x_{\max}d_{\max}$.
		\item There are at most $d_{\max}^2$ pairs in $V_\d$ that are $R$-equivalent to $v$, which gives $R_{\max}^{(a)}\leq d_{\max}^2$.
		\item For an independent set $S$ of $G$, at most $d_{\max}$ pairs in $S$ connect $W_i$ with some $W_k$, so $R^*(S)$ contains at most $d_{\max}^2$ pairs with $e$. The same holds for $f$, so $R_{\max}^{(b)}\leq2d_{\max}^2$.
		\item Any equivalence class $R^*(w)$ contains at most $d_{\max}$ pairs with $e$, and at most $d_{\max}$ with $f$, which gives $R_{\max}^{(c)}\leq2d_{\max}$.
		\item We have $\b(v)\leq\frac1{4m}d_{\max}^2$, so $\b_{\max}m\leq\frac14d_{\max}^2$.
	\end{itemize}
	Since $\Delta=O(m^{\sfrac12}/(\log m)^2)$ by assumption, we may thus apply Theorem~\ref{theorem_main}. We find that the probability of rejection is $O(\Delta/m)$, and any FMIS $S$ of $G$ is sampled with probability $(1+O(\mathscr{E}))\mathscr{P}(S)$.
	
	To calculate $\mathscr{P}(S)$, by Proposition~\ref{proposition_leniency}~(\ref{proposition_leniency_f},\ref{proposition_leniency_b}), we can substitute the weight function $\b'(v):=\frac12P|R^*(v)|$ for $\b(v)$. Indeed, for $v\in V_{\d}\setminus F_{\d}$ connecting $W_i$ with $W_j$, since $P=\frac1{2m}$, we get $$\b(v)-\b'(v)=\frac{d_id_j-(d_i-1)(d_j-1)}{4m}=\frac{d_i+d_j-1}{4m}.$$ Summing this over an MIS $T$ of $G$ gives $\frac1{4m}\sum_{i=1}^{n}d_i^2-\frac14$, which is independent of $T$, since any term $\frac{d_i}{4m}$ appears $d_i$ times in the sum, and the sum has $m$ terms in total. By Corollary~\ref{corollary_uniform}, it suffices to calculate $\overline{\mathscr{P}}$.
	
	Any simple graph $H$ with degree sequence $\d$ and with no forbidden edges from $X$ is realized by $\prod_{i=1}^{n}d_i!$ FMISs of $G$, due to all possible permutations of the half-edges within the sets $W_1,\ldots,W_n$. We find that $H$ is sampled with probability $(1+O(\mathscr{E}))\overline{\mathscr{P}}\prod_{i=1}^{n}d_i!$ for $$\overline{\mathscr{P}}=\exp\left(\frac1{2m}|F|+\frac1{8m^2}\sum_{v\in V_\d}|R^*(v)|\right)\prod_{t=1}^{m}\frac{t}{n_t}.$$ Since $|F_\d|=\sum_{i=1}^{n}\binom{d_i}{2}$, $|X_\d|=\sum_{\{v_i,v_j\}\in X}d_id_j$, and $\prod_{t=1}^{m}\frac{t}{n_t}=\frac1{(2m-1)!!}$ (see Corollary~\ref{corollary_mis_count}), it remains only to evaluate the term $\sum_{v\in V_\d}|R^*(v)|$. Only pairs $v\in V_\d$ connecting $W_i$ and $W_j$ with $i\neq j$ and $\{v_i,v_j\}\not\in X$ have non-empty $R^*(v)$, giving $|R^*(v)|=(d_i-1)(d_j-1)$. We get
	\begin{align}
		\sum_{v\in V_\d}|R^*(v)|
		&=\sum_{i=1}^{n}\sum_{j=i+1}^{n}d_id_j(d_i-1)(d_j-1)-\sum_{\{v_i,v_j\}\in X}d_id_j(d_i-1)(d_j-1)
		\\&=\frac12\left(\sum_{i=1}^{n}d_i(d_i-1)\right)^2+O\left(\sum_{i=1}^{n}d_id_{\max}^3x_{\max}\right).
	\end{align}
	We thus get $\frac1{8m^2}\sum_{v\in V_\d}|R^*(v)|=\frac14\lambda_\d^2+O(\Delta^2/m)$.\hfill$\square$
	
	\subsection{Bipartite graphs: Proof of Theorem~\ref{theorem_bipartite}}\label{subsection_applications_bipartite}
	
	We consider the bipartite configuration space $G:=G_{\d,\d'}$ from Definition \ref{definition_bispace} from Section~\ref{subsection_independent_bipartite} equipped with the equivalence relation $R:=R_{\d,\d'}$, which was subsequently defined to cover the multi-edges. For a vertex $v\in V_{\d,\d'}$ connecting $W_i$ and $W_j'$, we define the weight function $\b(v):=\tfrac{d_id_j'}{2m}$. Finally, we let $F$ contain all vertices in $V_{\d,\d'}$ that induce a forbidden edge from $X$.
	
	Consider the IMFIS process on $(G,F,R,\b)$, and apply rejection sampling on $\boldsymbol{S}_{\infty}$ to sample an FMIS $S$. As explained in Section~\ref{subsection_independent_bipartite}, if we identify the sets $W_1,\ldots,W_n,W_1',\ldots,W_{n'}'$ with vertices $v_1,\ldots,v_n,v_1',\ldots,v_{n'}'$ and we identify any vertex $v\in S$ connecting half-edges from $W_i$ and $W_j'$ with an edge between $v_i$ and $v_j'$, then this results in a simple bipartite graph with degree sequences $\d$ and $\d'$ on its parts and with no edges from $X$. By the same reasoning as in Section~\ref{subsection_applications_simple}, this process is equivalent to the iterative bipartite graph process.
	
	Note that $\alpha(G)=m\to\infty$, and $\ell(G)=2$ is fixed, as shown in Section~\ref{subsection_independent_parameter}. By similar reasoning as in Section~\ref{subsection_applications_simple}, we get $F_{\max}\leq2x_{\max}d_{\max}$, $R_{\max}^{(a)}\leq d_{\max}^2$, $R_{\max}^{(b)}\leq2d_{\max}^2$, $R_{\max}^{(c)}\leq2d_{\max}$, and $\b_{\max}m\leq\frac12d_{\max}^2$, which shows that $M=O(\Delta)$ holds, so we can apply Theorem~\ref{theorem_main} again. We find that the probability of rejection is $O(\Delta/m)$, and any FMIS $S$ of $G$ is sampled with probability $(1+O(\mathscr{E}))\mathscr{P}(S)$. By similar reasoning as in Section~\ref{subsection_applications_simple} again, to calculate $\mathscr{P}(S)$, we can substitute the weight function $\b'(v):=\frac12P|R^*(v)|$ for $\b(v)$, and it thus suffices to calculate $\overline{\mathscr{P}}$.
	
	Any simple bipartite graph $H$ with degree sequences $\d$ and $\d'$ on its parts and with no edges from $X$ is realized by $\prod_{i=1}^{n}d_i!\prod_{j=1}^{n'}d_j'!$ FMISs of $G$, due to all possible permutations of the half-edges within the sets $W_1,\ldots,W_n,W_1',\ldots,W_{n'}'$. We find that $H$ is sampled with probability $(1+O(\mathscr{E}))\overline{\mathscr{P}}\prod_{i=1}^{n}d_i!\prod_{j=1}^{n'}d_j'!$ for $$\overline{\mathscr{P}}=\exp\left(\frac1{m}|F|+\frac1{2m^2}\sum_{v\in V_{\d,\d'}}|R^*(v)|\right)\prod_{t=1}^{m}\frac{t}{n_t}.$$ We get $|F|=\sum_{(v_i,v_j')\in X}d_id_j'$ and $\prod_{t=1}^{m}\frac{t}{n_t}=\frac1{m!}$ (see Corollary~\ref{corollary_mis_count}), and by using similar reasoning as in Section~\ref{subsection_applications_simple} one more time, we also obtain $\frac1{2m^2}\sum_{v\in V_{\d,\d'}}|R^*(v)|=\frac12\lambda_\d\lambda_{\d'}+O(\Delta^2/m)$.\hfill$\square$
	
	\subsection{Oriented graphs: Proof of Theorem~\ref{theorem_oriented}}\label{subsection_applications_oriented}
	
	We consider the bipartite configuration space $G:=G_{\d,\d'}$ from Definition \ref{definition_bispace} from Section~\ref{subsection_independent_bipartite}, and we declare two vertices in $V_{\d,\d'}$ as $R$-equivalent if they induce an arc on the same two vertices in $V$ regardless of direction. For a vertex $v\in V_{\d,\d'}$ connecting $W_i$ and $W_j'$, we define the weight function $\b(v):=\tfrac{d_id_j'+d_i'd_j}{2m}$. Finally, we let $X_{\d,\d'}$ and $F_{\d,\d'}$ contain all vertices in $V_{\d,\d'}$ that induce a forbidden arc from $X$ or a self-loop respectively, and we define $F:=F_{\d,\d'}\cup X_{\d,\d'}$.
	
	Consider the IMFIS process on $(G,F,R,\b)$, and apply rejection sampling on $\boldsymbol{S}_{\infty}$ to sample an FMIS $S$. If we identify the sets $W_1\cup W_1',\ldots,W_n\cup W_n'$ with vertices $v_1,\ldots,v_n$, and identify any vertex $v\in S$ connecting $W_i$ and $W_j'$ with an arc from $v_i$ to $v_j$, then this results in an oriented graph with out- and in-degree sequences $\d$ and $\d'$ and with no arcs from $X$. By the same reasoning as in Section~\ref{subsection_applications_simple}, this process is equivalent to the iterative oriented graph process.
	
	Note that $\alpha(G)=m\to\infty$, and $\ell(G)=2$ is fixed, as shown in Section~\ref{subsection_independent_parameter}. By similar reasoning as in Section~\ref{subsection_applications_simple}, we get $F_{\max}\leq2d_{\max}+2x_{\max}d_{\max}$, $R_{\max}^{(a)}\leq2d_{\max}^2$, $R_{\max}^{(b)}\leq4d_{\max}^2$, $R_{\max}^{(c)}\leq2d_{\max}$, and $\b_{\max}m\leq d_{\max}^2$, which shows that $M=O(\Delta)$ holds, so we can apply Theorem~\ref{theorem_main} again. We find that the probability of rejection is $O(\Delta/m)$, and any FMIS $S$ of $G$ is sampled with probability $(1+O(\mathscr{E}))\mathscr{P}(S)$. By similar  reasoning as in Section~\ref{subsection_applications_simple} again, to calculate $\mathscr{P}(S)$, we can substitute the weight function $\b'(v):=\frac12P|R^*(v)|$ for $\b(v)$, and it thus suffices to calculate $\overline{\mathscr{P}}$.
	
	Any oriented graph $H$ with out- and in-degree sequences $\d$ and $\d'$ and with no arcs from $X$ is realized by $\prod_{i=1}^{n}d_i!d_i'!$ FMISs of $G$, due to all possible permutations of the half-edges within the sets $W_1,\ldots,W_n,W_1',\ldots,W_{n}'$. We find that $H$ is sampled with probability $(1+O(\mathscr{E}))\overline{\mathscr{P}}\prod_{i=1}^{n}d_i!d_i'!$ for $$\overline{\mathscr{P}}=\exp\left(\frac1{m}|F|+\frac1{2m^2}\sum_{v\in V_{\d,\d'}}|R^*(v)|\right)\prod_{t=1}^{m}\frac{t}{n_t}.$$ Since $|F_{\d,\d'}|=\sum_{i=1}^{n}d_id_i'$, $|X_{\d,\d'}|=\sum_{(v_i,v_j)\in X}d_id_j'$, and $\prod_{t=1}^{m}\frac{t}{n_t}=\frac1{m!}$ (see Corollary~\ref{corollary_mis_count}), it remains only to evaluate the term $\sum_{v\in V_{\d,\d'}}|R^*(v)|$. Pairs $v\in V_{\d,\d'}$ connecting $W_i$ and $W_j'$ with $i\neq j$ and $(v_i,v_j),(v_j,v_i)\not\in X$ have $|R^*(v)|=(d_i-1)(d_j'-1)+d_i'd_j$. If $(v_i,v_j)\in X$ or $(v_j,v_i)\in X$, then one or both terms vanish, resulting in a deviation of at most $2d_{\max}^2$. We get
	\begin{align}
		\sum_{v\in V_{\d,\d'}}|R^*(v)|
		&=\sum_{i=1}^{n}\sum_{j=1}^{n}d_id_j'((d_i-1)(d_j'-1)+d_i'd_j)
		\\&\quad-\sum_{i=1}^{n}d_id_i'((d_i-1)(d_i'-1)+d_i'd_i)
		\\&\quad+O\Bigg(\sum_{(v_i,v_j)\in X}(d_id_j'+d_i'd_j)d_{\max}^2\Bigg)
		\\&=\left(\sum_{i=1}^{n}d_i(d_i-1)\right)\left(\sum_{i=1}^{n}d_i'(d_i'-1)\right)+\left(\sum_{i=1}^{n}d_id_i'\right)^2
		\\&\quad+O\left(\sum_{i=1}^{n}(d_i+d_i')d_{\max}^3x_{\max}\right).
	\end{align}
	We thus get $\frac1{2m^2}\sum_{v\in V_{\d,\d'}}|R^*(v)|=\frac12\lambda_{\d,\d'}^2+\frac12\lambda_\d\lambda_{\d'}+O(\Delta^2/m)$.\hfill$\square$
	
	\subsection{Edge-colored graphs: Proof of Theorem~\ref{theorem_colored}}\label{subsection_applications_colored}
	
	For $g=1,\ldots,\chi$, we consider the configuration space $G^{(g)}:=G_{\d^{(g)}}$ from Definition \ref{definition_space} from Section~\ref{subsection_independent_configuration} equipped with the set of forbidden vertices $F_{\d^{(g)}}$ and the equivalence relation $R^{(g)}:=R_{\d^{(g)}}$, which were subsequently defined to cover the self-loops and multi-edges respectively. For a vertex $v\in V_{\d^{(g)}}$ connecting $W_i$ and $W_j$, we define the weight function $\b^{(g)}(v):=\b_{i,j}^{(g)}$. Finally, we let $X_{\d^{(g)}}$ contain all vertices in $V_{\d^{(g)}}$ that induce a forbidden edge from $X^{(g)}\cup E^{(1)}\cup\ldots\cup E^{(g-1)}$, and we define $F^{(g)}:=F_{\d^{(g)}}\cup X_{\d^{(g)}}$.
	
	Consider the IMFIS process on $(G^{(g)},F^{(g)},R^{(g)},\b^{(g)})$, and apply rejection sampling on $\boldsymbol{S}_{\infty}$ to sample an FMIS $S$. If we identify the sets $W_1,\ldots,W_n$ with vertices $v_1,\ldots,v_n$ and we identify any vertex $v\in S$ connecting half-edges from $W_i$ and $W_j$ with an edge between $v_i$ and $v_j$, then this results in a simple graph with degree sequence $\d$ and with no edges from $X^{(g)}\cup E^{(1)}\cup\ldots\cup E^{(g-1)}$. By the same reasoning as in Section~\ref{subsection_applications_simple}, this process is equivalent to iteration $g$ of the iterative edge-colored graph process.
	
	For convenience, write $G=G^{(g)}$, $F=F^{(g)}$, $R=R^{(g)}$, and $\b=\b^{(g)}$. Note that $\alpha(G)=m^{(g)}\geq m_{\min}\to\infty$, and $\ell(G)=4$ is fixed, as shown in Section~\ref{subsection_independent_simple}. Using similar reasoning as in Section~\ref{subsection_applications_simple}, we get $$F_{\max}\leq2d_{\max}^{(g)}(1+x_{\max}^{(g)}+d_{\max}^{(1)}+\ldots+d_{\max}^{(g-1)}),$$ $R_{\max}^{(a)}\leq(d_{\max}^{(g)})^2$, $R_{\max}^{(b)}\leq2(d_{\max}^{(g)})^2$, $R_{\max}^{(c)}\leq2d_{\max}^{(g)}$, and $$\b_{\max}m^{(g)}\leq\tfrac14(d_{\max}^{(g)})^2+\tfrac12m^{(g)}\sum_{h=g+1}^{\chi}\frac{(d_{\max}^{(h)})^2}{m^{(h)}},$$ which shows that $M=O(\Delta^{(g)})$ holds, so we can apply Theorem~\ref{theorem_main} again. We find that the probability of rejection in iteration $g$ is $O(\Delta^{(g)}/m^{(g)})$, and any FMIS $S$ of $G$ is sampled with probability $(1+O(\mathscr{E}^{(g)}))\mathscr{P}(S)$, where $\mathscr{E}^{(g)}:=\mathscr{E}(m^{(g)},\Delta^{(g)})$.
	
	To calculate $\mathscr{P}(S)$, for $v\in V_{\d^{(g)}}$ connecting half-edges from $W_i$ and $W_j$, consider the weight function defined by $\b'(v):=0$ if $i=j$ and $$\b'(v):=\b_{i,j}^{(g)}+\tfrac12P|R^*(v)|-\frac{d_i^{(g)}d_j^{(g)}}{4m^{(g)}}$$ otherwise, which replaces the first term of $\b_{i,j}^{(g)}$ with $\frac12P|R^*(v)|$. By similar reasoning as in Section~\ref{subsection_applications_simple} again, we can substitute $\b'$ for $\b$.
	
	Any simple graph $H^{(g)}=(V,E^{(g)})$ with degree sequence $\d^{(g)}$ and with no edges from $X^{(g)}\cup E^{(1)}\cup\ldots\cup E^{(g-1)}$ is realized by $\prod_{i=1}^{n}d_i^{(g)}!$ FMISs of $G$, due to all possible permutations of the half-edges within the sets $W_1,\ldots,W_n$. We find that $H^{(g)}$ is sampled with probability $(1+O(\mathscr{E}^{(g)}))\mathscr{P}(E^{(g)})\prod_{i=1}^{n}d_i^{(g)}!$ for
	\begin{align}
		\mathscr{P}(E^{(g)})&:=\exp\bigg(\frac1{2m^{(g)}}|F|+\frac1{2m^{(g)}}\b'(V_{\d^{(g)}})\bigg.
		\\&\quad\bigg.-\sum_{h=g+1}^{\chi}\sum_{\{v_i,v_j\}\in E^{(g)}\setminus X^{(h)}}\frac{d_i^{(h)}d_j^{(h)}}{2m^{(h)}}\bigg)\prod_{t=1}^{m^{(g)}}\frac{t}{n_t}.
	\end{align}
	To simplify this expression, first note that $|F|=|F_{\d^{(g)}}|+|X_{\d^{(g)}}|$, $|F_{\d^{(g)}}|=\sum_{i=1}^{n}\tbinom{d_i^{(g)}}{2}$, and $\prod_{t=1}^{m^{(g)}}\tfrac{t}{n_t}=\tfrac1{(2m^{(g)}-1)!!}$ (see Corollary~\ref{corollary_mis_count}). Next, note that we can rewrite $X^{(g)}\cup E^{(1)}\cup\ldots\cup E^{(g-1)}$ as the disjoint union of sets $X^{(g)},E^{(1)}\setminus X^{(g)},\ldots,E^{(g-1)}\setminus X^{(g)}$, so we get $$|X_{\d^{(g)}}|=\sum_{\{v_i,v_j\}\in X^{(g)}}d_i^{(g)}d_j^{(g)}+\sum_{h=1}^{g-1}\sum_{\{v_i,v_j\}\in E^{(h)}\setminus X^{(g)}}d_i^{(g)}d_j^{(g)}.$$ Finally, it remains to evaluate
	\begin{align}
		\frac{\b'(V_{\d^{(g)}})}{2m^{(g)}}&=\sum_{i=1}^{n}\sum_{j=i+1}^{n}\frac{d_i^{(g)}d_j^{(g)}}{2m^{(g)}}\left(\frac{(d_i^{(g)}-1)(d_j^{(g)}-1)}{4m^{(g)}}+\sum_{h=g+1}^{\chi}\frac{d_i^{(h)}d_j^{(h)}}{2m^{(h)}}\right)
		\\&\quad-\sum_{\{v_i,v_j\}\in X^{(g)}}\frac{d_i^{(g)}d_j^{(g)}(d_i^{(g)}-1)(d_j^{(g)}-1)}{8(m^{(g)})^2}
		\\&\quad-\sum_{h=1}^{g-1}\sum_{\{v_i,v_j\}\in E^{(h)}\setminus X^{(g)}}\frac{d_i^{(g)}d_j^{(g)}(d_i^{(g)}-1)(d_j^{(g)}-1)}{8(m^{(g)})^2}
		\\&\quad-\sum_{h=g+1}^{\chi}\sum_{\{v_i,v_j\}\in X^{(h)}}\frac{d_i^{(g)}d_j^{(g)}d_i^{(h)}d_j^{(h)}}{4m^{(g)}m^{(h)}}.
	\end{align}
	Here, the second and third term correct for the case $|R^*(v)|=0$, and the fourth term corrects for the excluded terms in the definition of $\b_{i,j}^{(g)}$. We get
	\begin{align}
		\frac{\b'(V_{\d^{(g)}})}{2m^{(g)}}&=\tfrac14\lambda_{\d^{(g)}}^2-\sum_{i=1}^{n}\left(\frac{d_i^{(g)}(d_i^{(g)}-1)}{4m^{(g)}}\right)^2
		\\&\quad+\sum_{h=g+1}^{\chi}\left(\tfrac12\lambda_{\d^{(g)},\d^{(h)}}^2-\sum_{i=1}^{n}\frac{(d_i^{(g)})^2(d_i^{(h)})^2}{8m^{(g)}m^{(h)}}\right)
		\\&\quad+O\left(\sum_{h=g}^{\chi}\sum_{i=1}^{n}\frac{x_{\max}^{(h)}d_i^{(g)}d_{\max}^{(g)}(d_{\max}^{(h)})^2}{m^{(g)}m^{(h)}}+\sum_{h=1}^{g-1}\sum_{i=1}^{n}\frac{d_{\max}^{(h)}d_i^{(g)}(d_{\max}^{(g)})^3}{(m^{(g)})^2}\right)
		\\&=\tfrac14\lambda_{\d^{(g)}}^2+\tfrac12\sum_{h=g+1}^{\chi}\lambda_{\d^{(g)},\d^{(h)}}^2
		\\&\quad+O\left(\sum_{h=g}^{\chi}\frac{x_{\max}^{(h)}d_{\max}^{(g)}(d_{\max}^{(h)})^2}{m^{(h)}}+\sum_{h=1}^{g-1}\frac{d_{\max}^{(h)}(d_{\max}^{(g)})^3}{m^{(g)}}\right).
	\end{align}
	We conclude that
	\begin{align}
		\mathscr{P}(E^{(g)})&=\exp\Bigg(\tfrac12\lambda_{\d^{(g)}}+\tfrac14\lambda_{\d^{(g)}}^2+\tfrac12\sum_{h=g+1}^{\chi}\lambda_{\d^{(g)},\d^{(h)}}^2+\mu_{\d^{(g)}}^{X^{(g)}}\Bigg.
		\\&\quad+\sum_{h=1}^{g-1}\sum_{\{v_i,v_j\}\in E^{(h)}\setminus X^{(g)}}\frac{d_i^{(g)}d_j^{(g)}}{2m^{(g)}}-\sum_{h=g+1}^{\chi}\sum_{\{v_i,v_j\}\in E^{(g)}\setminus X^{(h)}}\frac{d_i^{(h)}d_j^{(h)}}{2m^{(h)}}
		\\&\quad\Bigg.+O\Bigg(\sum_{h=g}^{\chi}\frac{x_{\max}^{(h)}d_{\max}^{(g)}(d_{\max}^{(h)})^2}{m^{(h)}}+\sum_{h=1}^{g-1}\frac{d_{\max}^{(h)}(d_{\max}^{(g)})^3}{m^{(g)}}\Bigg)\Bigg)\frac1{(2m^{(g)}-1)!!}.
	\end{align}
	
	Finally, consider any sequence of edge-disjoint simple graphs $(V,E^{(g)})$ for $g=1,\ldots,\chi$ with degree sequence $\d^{(g)}$ and with no edges from $X^{(g)}$. By the chain rule, this sequence is sampled with probability $$\exp\left(O\left(\sum_{g=1}^{\chi}\mathscr{E}^{(g)}\right)\right)\prod_{g=1}^{\chi}\left(\mathscr{P}(E^{(g)})\prod_{i=1}^{n}d_i^{(g)}!\right).$$ Finally, we capitalize off of our choice of weight function and notice that the double sums in our expression for $\mathscr{P}(E^{(g)})$ all cancel each other through this product, leaving us with
	\begin{align}
		&\exp\Bigg(\sum_{g=1}^{\chi}\Bigg(\tfrac12\lambda_{\d^{(g)}}+\tfrac14\lambda_{\d^{(g)}}^2+\tfrac12\sum_{h=g+1}^{\chi}\lambda_{\d^{(g)},\d^{(h)}}^2+\mu_{\d^{(g)}}^{X^{(g)}}\Bigg)\Bigg.
		\\&+O\Bigg(\sum_{g=1}^{\chi}\frac{(d_{\max}^{(g)})^3}{m^{(g)}}\sum_{h=1}^{g-1}d_{\max}^{(h)}+\sum_{h=1}^{\chi}\frac{x_{\max}^{(h)}(d_{\max}^{(h)})^2}{m^{(h)}}\sum_{g=1}^{h}d_{\max}^{(g)}+\mathscr{E}\Bigg)\Bigg)
		\\&\cdot\prod_{g=1}^{\chi}\left(\frac1{(2m^{(g)}-1)!!}\prod_{i=1}^{n}d_i^{(g)}!\right).
	\end{align}
	Since
	\begin{align}
		\sum_{g=1}^{\chi}\frac{(d_{\max}^{(g)})^3}{m^{(g)}}\sum_{h=1}^{g-1}d_{\max}^{(h)}&=O\left(\sum_{g=1}^{\chi}\frac{(\Delta^{(g)})^2}{m^{(g)}}\right)=O(\mathscr{E}),
		\\\sum_{h=1}^{\chi}\frac{x_{\max}^{(h)}(d_{\max}^{(h)})^2}{m^{(h)}}\sum_{g=1}^{h}d_{\max}^{(g)}&=O\left(\sum_{h=1}^{\chi}\frac{(\Delta^{(h)})^2}{m^{(h)}}\right)=O(\mathscr{E}),
	\end{align}
	the result follows.\hfill$\square$
	
	\subsection{Edge-colored bipartite graphs: Proof of Theorem~\ref{theorem_bipcol}}\label{subsection_applications_bipcol}
	
	For $g=1,\ldots,\chi$, we consider the bipartite configuration space $G^{(g)}:=G_{\d^{(g)},\d'^{(g)}}$ from Definition \ref{definition_bispace} from Section~\ref{subsection_independent_configuration} equipped with the equivalence relation $R^{(g)}:=R_{\d^{(g)},\d'^{(g)}}$, which was subsequently defined to cover the multi-edges. For a vertex $v\in V_{\d^{(g)},\d'^{(g)}}$ connecting $W_i$ and $W_j'$, we define the weight function $\b^{(g)}(v):=\b_{i,j}^{(g)}$. Finally, we let $F^{(g)}$ contain all vertices in $V_{\d^{(g)},\d'^{(g)}}$ that induce an edge from $X^{(g)}\cup E^{(1)}\cup\ldots\cup E^{(g-1)}$.
	
	Consider the IMFIS process on $(G^{(g)},F^{(g)},R^{(g)},\b^{(g)})$, and apply rejection sampling on $\boldsymbol{S}_{\infty}$ to sample an FMIS $S$. Identifying the sets $W_1,\ldots,W_n,W_1',\ldots,W_{n'}'$ with vertices $v_1,\ldots,v_n,v_1',\ldots,v_{n'}'$ and identifying any vertex $v\in\boldsymbol{S}_{\infty}$ connecting half-edges from $W_i$ and $W_j'$ with an edge between $v_i$ and $v_j'$, results in a simple bipartite graph with degree sequences $\d$ and $\d'$ on its parts and with no edges from $X^{(g)}\cup E^{(1)}\cup\ldots\cup E^{(g-1)}$. By the same reasoning as in Section~\ref{subsection_applications_simple}, this process is equivalent to iteration $g$ of the iterative edge-colored bipartite graph process.
	
	For convenience, write $G=G^{(g)}$, $R=R^{(g)}$, $F=F^{(g)}$, and $\b=\b^{(g)}$. Note that $\alpha(G)=m^{(g)}\geq m_{\min}\to\infty$, and $\ell(G)=2$ is fixed, as shown in Section~\ref{subsection_independent_parameter}. Using similar reasoning as in Section~\ref{subsection_applications_simple}, we get $$F_{\max}\leq2d_{\max}(x_{\max}^{(g)}+d_{\max}^{(1)}+\ldots+d_{\max}^{(g-1)}),$$ $R_{\max}^{(a)}\leq(d_{\max}^{(g)})^2$, $R_{\max}^{(b)}\leq2(d_{\max}^{(g)})^2$, $R_{\max}^{(c)}\leq2d_{\max}^{(g)}$, and $$\b_{\max}m^{(g)}\leq\tfrac12(d_{\max}^{(g)})^2+m^{(g)}\sum_{h=g+1}^{\chi}\frac{(d_{\max}^{(h)})^2}{m^{(h)}},$$ which shows that $M=O(\Delta^{(g)})$ holds, so we can apply Theorem~\ref{theorem_main} again. We find that the probability of rejection in iteration $g$ is $O(\Delta^{(g)}/m^{(g)})$, and any FMIS $S$ of $G$ is sampled with probability $(1+O(\mathscr{E}^{(g)}))\mathscr{P}(S)$, where $\mathscr{E}^{(g)}:=\mathscr{E}(m^{(g)},\Delta^{(g)})$.
	
	To calculate $\mathscr{P}(S)$, for $v\in V_{\d^{(g)},\d'^{(g)}}$ connecting half-edges from $W_i$ and $W_j'$, consider the weight function defined by $\b'(v):=0$ if $i=j$ and $$\b'(v):=\b(v)+\tfrac12P|R^*(v)|-\frac{d_i^{(g)}d_j'^{(g)}}{2m^{(g)}}$$ otherwise, which replaces the first term of $\b_{i,j}^{(g)}$ with $\frac12P|R^*(v)|$. By similar reasoning as in Section~\ref{subsection_applications_simple} again, we can substitute $\b'$ for $\b$.
	
	Any simple bipartite graph $H^{(g)}=(V,E^{(g)})$ with degree sequences $\d^{(g)}$ and $\d'^{(g)}$ on its parts and with no edges from $X^{(g)}\cup E^{(1)}\cup\ldots\cup E^{(g-1)}$ is realized by $\prod_{i=1}^{n}d_i^{(g)}!\prod_{j=1}^{n'}d_j'^{(g)}!$ FMISs of $G$, due to all possible permutations of the half-edges within the sets $W_1,\ldots,W_n,W_1',\ldots,W_{n'}'$. We find that $H^{(g)}$ is sampled with probability $(1+O(\mathscr{E}^{(g)}))\mathscr{P}(E^{(g)})\prod_{i=1}^{n}d_i^{(g)}!\prod_{j=1}^{n'}d_j'^{(g)}!$ for
	\begin{align}
		\mathscr{P}(E^{(g)})&:=\exp\bigg(\frac1{m^{(g)}}|F|+\frac1{m^{(g)}}\b'(V_{\d^{(g)},\d'^{(g)}})\bigg.
		\\&\quad\bigg.-\sum_{h=g+1}^{\chi}\sum_{(v_i,v_j')\in E^{(g)}\setminus X^{(h)}}\frac{d_i^{(h)}d_j'^{(h)}}{m^{(h)}}\bigg)\prod_{t=1}^{m^{(g)}}\frac{t}{n_t}.
	\end{align}
	Following the same reasoning as in Section~\ref{subsection_applications_colored}, this expression simplifies to
	\begin{align}
		\mathscr{P}(E^{(g)})&=\exp\Bigg(\tfrac12\lambda_{\d^{(g)}}\lambda_{\d'^{(g)}}+\sum_{h=g+1}^{\chi}\lambda_{\d^{(g)},\d^{(h)}}\lambda_{\d'^{(g)},\d'^{(h)}}+\mu_{\d^{(g)},\d'^{(g)}}^{X^{(g)}}\Bigg.
		\\&\quad+\sum_{h=1}^{g-1}\sum_{(v_i,v_j')\in E^{(h)}\setminus X^{(g)}}\frac{d_i^{(g)}d_j'^{(g)}}{m^{(g)}}-\sum_{h=g+1}^{\chi}\sum_{(v_i,v_j')\in E^{(g)}\setminus X^{(h)}}\frac{d_i^{(h)}d_j'^{(h)}}{m^{(h)}}
		\\&\quad\Bigg.+O\Bigg(\sum_{h=g}^{\chi}\frac{x_{\max}^{(h)}d_{\max}^{(g)}(d_{\max}^{(h)})^2}{m^{(h)}}+\sum_{h=1}^{g-1}\frac{d_{\max}^{(h)}(d_{\max}^{(g)})^3}{m^{(g)}}\Bigg)\Bigg)\frac1{m^{(g)}!}.
	\end{align}
	Finally, following the same reasoning as in Section~\ref{subsection_applications_colored} again, the result follows.\hfill$\square$
	
	\subsection{Hypergraphs: Proof of Theorem~\ref{theorem_hyper}}\label{subsection_applications_hyper}
	
	By Theorem~\ref{theorem_bipartite}, the probability of rejection of the iterative bipartite graph process is $O(d_{\max}^2/s)$, and any simple bipartite graph $(V\cup V',\widetilde{E})$ with degree sequences $\d$ and $\d'$ on its parts is sampled with probability $(1+O(\mathscr{E}))\widetilde{\mathscr{P}}$, where $$\widetilde{\mathscr{P}}:=\exp\left(\tfrac12\lambda_{\d}\lambda_{\d'}\right)\frac1{s!}\prod_{i=1}^{n}d_i!\prod_{k=2}^{d_{\max}}k!^{c_k}.$$
	
	Consider the IMFIS process on $(G,F,R,\b)$ defined in Section~\ref{subsection_applications_bipartite} for the proof of Theorem~\ref{theorem_bipartite}, which was shown to be equivalent to the iterative bipartite graph process. Consider some forbidden hyperedge $e\in X$ and any $j=1,\ldots,m$ with $k:=|e|=d_j'$. Then there are $k!\prod_{v_i\in e}d_i$ possible FISs $T$ of $G$ that induce $k$ edges connecting $v_j'$ to all vertices in $e$. By Theorem~\ref{theorem_space} and Proposition~\ref{proposition_qt}~(\ref{proposition_qt_small}), for any such FIS $T$, the probability that the IMFIS process samples an FMIS containing $T$ is $(1+O(\mathscr{E}))s^{-k}$. Indeed, note that $|T|=k\leq d_{\max}$ gives $kd_{\max}^2/s=O(\mathscr{E})$. It follows that the expected number of vertices in $V'$ with a neighborhood inducing a forbidden hyperedge from $X$ is $(1+O(\mathscr{E}))\rho_X$, so the probability that such a vertex exists is $O(\rho_X)$.
	
	Next, consider any $1\leq j_1<j_2\leq m$ with $k:=d'_{j_1}=d'_{j_2}$ and any $1\leq i_1<\ldots<i_k\leq n$. Then there are $$k!^2\prod_{l=1}^kd_{i_l}(d_{i_l}-1)$$ possible FISs $T$ of $G$ that induce $2k$ edges connecting both vertices $v_{j_1}'$ and $v_{j_2}'$ to all vertices $v_{i_1},\ldots,v_{i_k}$. By Theorem~\ref{theorem_space} and Proposition~\ref{proposition_qt}~(\ref{proposition_qt_small}) again, for any such FIS $T$, the probability that the IMFIS process samples an FMIS containing $T$ is $(1+O(\mathscr{E}))s^{-2k}$. It follows that the expected number of pairs of vertices in $V'$ with identical neighborhoods inducing the same hyperedge of cardinality $k$ is
	\begin{align}
		&(1+O(\mathscr{E}))\binom{c_k}{2}\frac{k!^2}{s^{2k}}\sum_{1\leq i_1<\ldots<i_k\leq n}\prod_{l=1}^{k}d_{i_l}(d_{i_l}-1)
		\\&\leq(1+O(\mathscr{E}))\binom{c_k}{2}\frac{k!^2}{s^{2k}}\cdot\frac1{k!}\left(\sum_{i=1}^{n}d_{i}(d_{i}-1)\right)^k
		\\&\leq(1+O(\mathscr{E}))\rho_k.
	\end{align}
	The probability that there exist two vertices in $V'$ with identical neighborhoods is thus $O(\rho_2+\rho_3+\ldots+\rho_{d_{\max}})$. Since $\lambda_{\d}\leq d_{\max}$, we can simplify
	\begin{align}
		\sum_{k=4}^{d_{\max}}\rho_k&\leq4!\left(\frac{d_{\max}}{s}\right)^4\cdot\sum_{k=4}^{d_{\max}}\frac{k!}{4!}\left(\frac{d_{\max}}{s}\right)^{k-4}\binom{c_k}{2}
		\\&=O\left(\left(\frac{d_{\max}}{s}\right)^4\cdot\sum_{k=4}^{d_{\max}}\binom{c_k}{2}\right)
		\\&=O\left(d_{\max}^4s^{-4}\binom{m}{2}\right)=O(d_{\max}^4/s^2)=O(1/s).
	\end{align}
	The second line uses $k!/4!\leq d_{\max}^{k-4}$ and $d_{\max}^2/s=o(1)$, and the third line uses $c_4+\ldots+c_{d_{\max}}\leq m\leq s$. We conclude that identical hyperedges occur with probability $O(\rho_2+\rho_3+1/s)$.
	
	We conclude that the probability of rejection after the iterative bipartite graph process is $O(\rho_2+\rho_3+\rho_X+1/s)$. Note that any simple hypergraph $H$ with degree sequence $\d$ and with hyperedges not in $X$ with cardinalities matching $\d'$ is realized by $\prod_{k=2}^{d_{\max}}c_k!$ possible simple bipartite graphs with degree sequences $\d$ and $\d'$ on its parts, due to all possible permutations of the vertices in $V'$ with the same degree. It follows that $H$ is sampled with probability $(1+O(\mathscr{E}))\widetilde{\mathscr{P}}\prod_{k=2}^{d_{\max}}c_k!$. Since $\rho_3=O(s^2(d_{\max}/s)^3)=O(d_{\max}^3/s)=O(\mathscr{E})$, the result follows.\hfill$\square$
	
	\subsection{Directed hypergraphs: Proof of Theorem~\ref{theorem_dihy}}\label{subsection_applications_dihy}
	
	By Theorem~\ref{theorem_bipcol}, the probability of rejection of the iterative edge-colored bipartite graph process is $O(d_{\max}^2/s)$ in both iterations, and any simple edge-disjoint bipartite graphs $(V\cup V',\widetilde{E}^{(i)})$ with degree sequences $\d^{(i)}$ and $\d'^{(i)}$ on its parts for $i\in\{1,2\}$ are sampled with probability $(1+O(\mathscr{E}))\widetilde{\mathscr{P}}$, where
	\begin{align}
		\widetilde{\mathscr{P}}&:=\exp\left(\tfrac12\lambda_{\d^{(1)}}\lambda_{\d'^{(1)}}+\tfrac12\lambda_{\d^{(2)}}\lambda_{\d'^{(2)}}+\lambda_{\d^{(1)},\d^{(2)}}\lambda_{\d'^{(1)},\d'^{(2)}}\right)
		\\&\quad\cdot\frac1{s^{(1)}!s^{(2)}!}\prod_{i=1}^{n}d_i^{(1)}!d_i^{(2)}!\prod_{k^{(1)}=1}^{d_{\max}}\prod_{k^{(2)}=1}^{d_{\max}}(k^{(1)}!k^{(2)}!)^{c_{k^{(1)},k^{(2)}}}.
	\end{align}
	
	Consider the IMFIS processes on $(G^{(i)},F^{(i)},R^{(i)},\b^{(i)})$ for $i\in\{1,2\}$ defined in Section~\ref{subsection_applications_bipcol} for the proof of Theorem~\ref{theorem_bipcol}, which were shown to be equivalent to iterations $i=1$ and $i=2$ of the iterative edge-colored bipartite graph process. Consider some forbidden hyperarc $(e^{(1)},e^{(2)})\in X$ and any $j=1,\ldots,m$ with $(k^{(1)},k^{(2)}):=|e|=(d_j'^{(1)},d_j'^{(2)})$. Then there are $$k^{(1)}!k^{(2)}!\prod_{v_{i^{(1)}}\in e^{(1)}}d_{i^{(1)}}^{(1)}\prod_{v_{i^{(2)}}\in e^{(2)}}d_{i^{(2)}}^{(2)}$$ possible pairs of FISs $T^{(1)}$ and $T^{(2)}$ of $G^{(1)}$ and $G^{(2)}$ that induce $k^{(1)}$ edges connecting $v_j'$ to all vertices in $e^{(1)}$ and $k^{(2)}$ edges connecting $v_j'$ to all vertices in $e^{(2)}$ respectively. By Theorem~\ref{theorem_space} and Proposition~\ref{proposition_qt}~(\ref{proposition_qt_small}), for any such pairs of FISs, the probability that the IMFIS processes for $i=1$ and $i=2$ sample FMISs containing $T^{(1)}$ and $T^{(2)}$ respectively is $$(1+O(\mathscr{E}))(s^{(1)})^{-k^{(1)}}(s^{(2)})^{-k^{(2)}}.$$ Indeed, note that $|T^{(i)}|=k^{(i)}$ gives $(k^{(1)}+k^{(2)})d_{\max}^2/s=O(\mathscr{E})$. It follows that the expected number of vertices in $V'$ with a pair of neighborhoods inducing a forbidden hyperarc from $X$ is $(1+O(\mathscr{E}))\rho_X$, so the probability that such a vertex exists is $O(\rho_X)$.
	
	By similar reasoning as in Section~\ref{subsection_applications_hyper}, the expected number of pairs of vertices in $V'$ with identical pairs of neighborhoods inducing the same hyperarc of order $(k^{(1)},k^{(2)})$ is $O(\rho_{k^{(1)},k^{(2)}})$. The probability that there exist two vertices in $V'$ with identical pairs of neighborhoods is thus $$O\left(\sum_{k^{(1)}=1}^{d_{\max}}\sum_{k^{(2)}=1}^{d_{\max}}\rho_{k^{(1)},k^{(2)}}\right).$$ By similar reasoning as in Section~\ref{subsection_applications_hyper}, we can simplify the sum over $k^{(1)}+k^{(2)}\geq4$ to $O(1/s)$, leaving a probability of $O(\rho_{1,1}+\rho_{1,2}+\rho_{2,1}+1/s)$ that identical hyperarcs occur.
	
	We conclude that the probability of rejection after the iterative edge-colored bipartite graph process is $O(\rho_{1,1}+\rho_{1,2}+\rho_{2,1}+\rho_X+1/s)$. Note that any simple directed hypergraph $H$ with out- and in-degree sequences $\d^{(1)}$ and $\d^{(2)}$ and with hyperarcs not in $X$ with orders $(d_1'^{(1)},d_1'^{(2)}),\ldots,(d_m'^{(1)},d_m'^{(2)})$ is realized by $\prod_{k^{(1)}=1}^{d_{\max}}\prod_{k^{(2)}=1}^{d_{\max}}c_{k^{(1)},k^{(2)}}!$ possible simple edge-disjoint bipartite graphs $(V\cup V',\widetilde{E}^{(i)})$ with degree sequences $\d^{(i)}$ and $\d'^{(i)}$ on its parts for $i\in\{1,2\}$, due to all possible permutations of the vertices in $V'$ with the same pair of degrees. It follows that $H$ is sampled with probability $$(1+O(\mathscr{E}))\widetilde{\mathscr{P}}\prod_{k^{(1)}=1}^{d_{\max}}\prod_{k^{(2)}=1}^{d_{\max}}c_{k^{(1)},k^{(2)}}!.$$ Since $\rho_{1,2},\rho_{2,1}=O(s^2(d_{\max}/s)^3)=O(d_{\max}^3/s)=O(\mathscr{E})$, the result follows.\hfill$\square$

	\section{Proofs of Main Results}\label{section_proof}

	In this section, we prove Theorem~\ref{theorem_main}, Theorem~\ref{theorem_space}, Proposition~\ref{proposition_qt}, and Proposition~\ref{proposition_leniency}. First, in Section~\ref{subsection_proof_distribution}, we briefly show how Theorem~\ref{theorem_space} follows from Theorem~\ref{theorem_main}, and we prove Proposition~\ref{proposition_qt} and Proposition~\ref{proposition_leniency}. Then, in Section~\ref{subsection_proof_framework}--\ref{subsection_proof_termination}, we prove Theorem~\ref{theorem_main}.
	
	\subsection{Distribution}\label{subsection_proof_distribution}
	
	We use Theorem~\ref{theorem_main} to calculate the probability that the IMFIS process does or does not end up using certain specified vertices. The main idea is to compare with a modified IMFIS process that guarantees the specified vertex requirements.
	
	\begin{proof}[Proof of Theorem~\ref{theorem_space}]
		We need to calculate
		\begin{equation}\label{eq:sum_S_inf=s}
			\mathbb{P}(|\boldsymbol{S}_{\infty}|=\alpha\wedge T\subset\boldsymbol{S}_{\infty}\wedge U\cap\boldsymbol{S}_{\infty}=\emptyset)=\sum_{S\in\mathcal{G}(T,U)}\mathbb{P}(\boldsymbol{S}_{\infty}=S),
		\end{equation}
		where $\mathcal{G}(T,U)$ is the set of all FMISs of $G$ that contain the target set $T$ and are disjoint from the exclusion set $U$.
		
		We now consider the induced subgraph $G'$ of $G$ on the set $V'$ of vertices that are compatible with $T$ in an independent set. First, we rewrite equation \eqref{eq:sum_S_inf=s} as
		\begin{equation}\label{eq:sum_S_inf=t_s}
			\mathbb{P}(|\boldsymbol{S}_{\infty}|=\alpha\wedge T\subset\boldsymbol{S}_{\infty}\wedge U\cap\boldsymbol{S}_{\infty}=\emptyset)=\sum_{S'\in\mathcal{G}'(T,U)}\mathbb{P}(\boldsymbol{S}_{\infty}=T\cup S'),
		\end{equation}
		where $\mathcal{G}'(T,U)$ is the set of all $S'\subset V'\setminus U$ for which $T\cup S'$ is an FMIS of $G$. Let furthermore $R'$ be $R$ restricted to $V'$, and let $F':=(F\cap V')\cup(R^*(T)\cap V')\cup U$. Then $\mathcal{G}'(T,U)$ is the set of all FMISs of $G'$ with respect to $F'$ and $R'$.
		
		Now, we choose a modified weight function $\b':V'\to\mathbb{R}$, $$\b'(v):=\b(v)+\tfrac12P'|(R')^*(v)|-\tfrac12P|R^*(v)|$$ on $G'$ in such a way that the IMFIS process on $(G',F',R',\b')$ behaves like the original IMFIS process conditioned on containing $T$. We write $\boldsymbol{S}_{\infty}'$ to distinguish this IMFIS process from the one on $(G,F,R,\b)$.
		
		The main idea is to apply Theorem~\ref{theorem_main} to the IMFIS process on $(G',F',R',\b')$. Supposing the assumptions are met, by Theorem~\ref{theorem_main}~(\ref{theorem_main_termination}), we have $$\PP(|\boldsymbol{S}_{\infty}'| = \alpha') = 1-O(M'/\alpha').$$ This can be rewritten as $$\sum_{S'\in\mathcal{G}'(T,U)}\mathbb{P}(\boldsymbol{S}_{\infty}=T\cup S')\frac{\mathbb{P}(\boldsymbol{S}_{\infty}'=S')}{\mathbb{P}(\boldsymbol{S}_{\infty}=T\cup S')} = 1-O(M'/\alpha').$$ The result then follows from \eqref{eq:sum_S_inf=t_s} if we show that Theorem~\ref{theorem_main}~(\ref{theorem_main_equation}) gives
		\begin{equation}\label{eq:partial_asymptotic_prob}
			\frac{\mathbb{P}(\boldsymbol{S}_{\infty}'=S')}{\mathbb{P}(\boldsymbol{S}_{\infty}=T\cup S')}=(1+O(\mathscr{E}(\alpha',M')))\mathscr{Q}(T,U)^{-1}
		\end{equation}
		for any $S'\in\mathcal{G}'(T,U)$. In what follows, we thus verify the assumptions of Theorem~\ref{theorem_main} and we derive \eqref{eq:partial_asymptotic_prob}.
		
		First, by Proposition~\ref{proposition_hereditary}, $G'$ is indeed $2$-uniform with $\alpha(G')=\alpha'\to\infty$, and with $\ell(G')=\ell$ fixed. Next, note that $(R')_{\max}^{(a)}\leq R_{\max}^{(a)}\leq M'$, $(R')_{\max}^{(b)}\leq R_{\max}^{(b)}\leq M'$, and $((R')_{\max}^{(c)})^2\leq(R_{\max}^{(c)})^2\leq M'$. For any $v\in V'$, we have $$|\overline{N}_{G'}(v)\cap F'|\leq|\overline{N}_G(v)\cap F|+|\overline{N}_G(v)\cap R^*(T)|+|\overline{N}_G(v)\cap U|,$$ so $F'_{\max}\leq F_{\max}+R_{\max}^{(b)}+U_{\max}\leq3M'$. Finally, we have $$\b'_{\max}\alpha\leq \b_{\max}\alpha+\tfrac12P'\alpha(R')_{\max}^{(a)}+\tfrac12P\alpha R_{\max}^{(a)}\leq2M',$$ since $\ell\geq2$ gives $P\leq\alpha^{-1}$ and $P'\leq(\alpha')^{-1}$. We conclude that $$\max\{F'_{\max},(R')_{\max}^{(a)},(R')_{\max}^{(b)},((R')_{\max}^{(c)})^2,\b'_{\max}\alpha,1\}\leq3M'.$$
		
		We may thus apply Theorem~\ref{theorem_main} to the IMFIS process on $(G',F',R',\b')$. It remains to derive \eqref{eq:partial_asymptotic_prob} by evaluating
		\begin{align}
			&\frac{\mathbb{P}(\boldsymbol{S}_{\infty}'=S')}{\mathbb{P}(\boldsymbol{S}_{\infty}=T\cup S')}=\frac{(1+O(\mathscr{E}(\alpha',M')))\mathscr{P}'(S')}{(1+O(\mathscr{E}(\alpha,M)))\mathscr{P}(T\cup S')}
		\end{align} 
		for $S'\in\mathcal{G}'(T,U)$, where
		\begin{align}
			&\mathscr{P}'(S')=\exp(P'|F'|+\tfrac12P'|(R')^*(S')|+P'\b'(V')-\b'(S'))\prod_{t=1}^{\alpha'}\frac{t}{n_t},
			\\&\mathscr{P}(T\cup S')=\exp(P|F|+\tfrac12P|R^*(T\cup S')|+P\b(V)-\b(T\cup S'))\prod_{t=1}^{\alpha}\frac{t}{n_t},
		\end{align}
		Comparing to \eqref{eq:partial_asymptotic_prob}, it suffices to show $\mathscr{P}(T\cup S')/\mathscr{P}'(S')=\mathscr{Q}(T,U)$. We have
		\begin{align}
			\frac{\mathscr{P}(T\cup S')}{\mathscr{P}'(S')}&=\exp\bigg(\tfrac12P|R^*(S')|-\tfrac12P'|(R')^*(S')|-\b(S')+\b'(S')\bigg.
			\\&\quad+P|F|-P'|F'|+P\b(V)-P'\b'(V')
			\\&\quad\bigg.+\tfrac12P|R^*(T)|-\b(T)\bigg)\prod_{t=\alpha'+1}^{\alpha}\frac{t}{n_t}.
		\end{align}
		Here, all terms in the exponent that depend on $S'$ are grouped in the first line, and by definition of $\b'$, they sum to zero, since $S'$ is feasible. Since $|F'|=|F\cap V'|+|R^*(T)\cap V'|+|U|$, it remains to evaluate $$P'\b'(V')=P'\b(V')+\tfrac12P'\sum_{v\in V'}(P'|(R')^*(v)|-P|R^*(v)|).$$ The result follows, since $(R')^*(v)=R^*(v)\cap V'$ for $v\in V'$.
	\end{proof}
	
	To prove Proposition~\ref{proposition_qt} and Proposition~\ref{proposition_leniency}, we use the following bounds.
	
	\begin{lemma}\label{lemma_terms}
		Consider the IMFIS process on $(G,F,R,\b)$ with $G=(V,E)$ $2$-uniform. Assume $\ell$ is fixed, and let $\alpha\to\infty$. Then
		\begin{enumerate}[(i)]
			\item\label{lemma_terms_n} $\begin{aligned}[t]
				|V|=n_{\alpha}=(1+O(\alpha^{-1}))\tfrac{\ell}{2}\alpha^2,
			\end{aligned}$
			\item\label{lemma_terms_f} $\begin{aligned}[t]
				|F|\leq\alpha F_{\max}\leq\alpha M,
			\end{aligned}$
			\item\label{lemma_terms_rs} $\begin{aligned}[t]
				|R^*(U)|\leq|U|R_{\max}^{(a)}\leq|U|M,\ \forall U\subset V.
			\end{aligned}$
		\end{enumerate}
	\end{lemma}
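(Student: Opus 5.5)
For part (\ref{lemma_terms_n}), the plan is to use Proposition~\ref{proposition_n}. It gives $|V|=\ell\binom{\alpha}{2}+\alpha=n_\alpha$. Expanding, $n_\alpha=\tfrac{\ell}{2}\alpha^2-\tfrac{\ell}{2}\alpha+\alpha=\tfrac{\ell}{2}\alpha^2\bigl(1+(\tfrac{2}{\ell}-1)\alpha^{-1}\bigr)$. Since $\ell$ is fixed, the factor in brackets is $1+O(\alpha^{-1})$.

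For part (\ref{lemma_terms_f}), the key step is a double count against a fixed MIS $S$ of $G$, which has $|S|=\alpha$. Every vertex $f\in F$ lies in $\overline{N}(s)$ for some $s\in S$. Indeed, either $f\in S$, or, by $2$-uniformity, $f$ is adjacent to exactly two vertices of $S$. Hence $F\subset\bigcup_{s\in S}(\overline{N}(s)\cap F)$, and therefore
\begin{equation}
|F|\leq\sum_{s\in S}|\overline{N}(s)\cap F|\leq\alpha F_{\max}.
\end{equation}
The second inequality $F_{\max}\leq M$ holds by the definition of $M$, giving $|F|\leq\alpha M$. The only care needed is to note that an MIS exists and has size $\alpha$, with no feasibility required, so this is immediate.

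For part (\ref{lemma_terms_rs}), I would apply the union bound directly to the definition $R^*(U)=\bigcup_{u\in U}R^*(u)$. This gives $|R^*(U)|\leq\sum_{u\in U}|R^*(u)|\leq|U|R^{(a)}_{\max}$. Then $R^{(a)}_{\max}\leq M$ by the definition of $M$.

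None of the steps is a real obstacle. The only point requiring an idea is the covering argument in (\ref{lemma_terms_f}), which relies on the $2$-uniformity of $G$ to guarantee that the closed neighbourhoods of an MIS cover $V$.
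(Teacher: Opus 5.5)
Your proposal is correct and follows the paper's proof essentially step for step. Part (i) expands $n_\alpha$ from Proposition~\ref{proposition_n}. Part (ii) double counts incidences between $F$ and a fixed MIS $S$, using $2$-uniformity to ensure every vertex of $F$ lies in $\overline{N}(s)$ for some $s\in S$. Part (iii) is the union bound over $R^*(U)=\bigcup_{u\in U}R^*(u)$.
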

	
	\begin{proof}
		(\ref{lemma_terms_n}) By Proposition~\ref{proposition_n}, we have $|V|=n_{\alpha}=\ell\binom{\alpha}{2}+\alpha=\frac{\ell}{2}\alpha^2+(1-\frac{\ell}{2})\alpha$.
		
		(\ref{lemma_terms_f}) Let $S$ be an MIS of $G$. Let $C$ be the number of pairs $(v,s)\in F\times S$ with $v\in\overline{N}(s)$. By definition, we have $C\leq|S|F_{\max}$, and since $G$ is $2$-uniform, every vertex $v\in F$ is either in $S$ or adjacent to two vertices in $S$, which gives $C\geq|F|$.
		
		(\ref{lemma_terms_rs}) We have $|R^*(U)|\leq\sum_{v\in U}|R^*(v)|$.
	\end{proof}
	
	\begin{proof}[Proof of Proposition~\ref{proposition_qt}]
		(\ref{proposition_qt_small}) We first prove the second equality. Using $|T|=\alpha-\alpha'$, we find that $$\frac{\alpha^{|T|}(\alpha')!}{\alpha!}=\prod_{t=\alpha'+1}^{\alpha}\frac{\alpha}{t}=\prod_{t=\alpha'+1}^{\alpha}\left(1+\frac{\alpha-t}{t}\right)$$ has logarithm of order $O(|T|^2/\alpha')$, since $0\leq\alpha-t<|T|$ for $t=\alpha'+1,\ldots,\alpha$. It thus suffices to prove $P'|U|=P|U|+O(|T|M'/\alpha')$. Note that $$|P-P'|=\frac{|\alpha-\alpha'|}{\frac{\ell}{2}\alpha\alpha'}=O\left(\frac{|T|}{\alpha\alpha'}\right).$$ By the same argument as in the proof of Lemma~\ref{lemma_terms}~(\ref{lemma_terms_f}), we have $|U|\leq\alpha U_{\max}\leq\alpha M'$, so we get $$P'|U|=P|U|-(P-P')|U|=P|U|+O(|T|M'/\alpha').$$
		
		We now prove the first equality, so we need to study $\mathscr{Q}(T,U)$. We have $$\prod_{t=\alpha'+1}^{\alpha}\frac{t}{n_t}=\prod_{t=\alpha'+1}^{\alpha}\frac{1}{\frac{\ell}{2}t+1-\frac{\ell}{2}}=\prod_{t=\alpha'+1}^{\alpha}\frac{1}{\frac{\ell}{2}t}\left(1+\frac{1-\frac{\ell}{2}}{\frac{\ell}{2}t}\right)^{-1}.$$ Using $\log(1+x)=O(x)$ for $x=o(1)$ and $|T|=\alpha-\alpha'$, we get $$\prod_{t=\alpha'+1}^{\alpha}\frac{t}{n_t}=\left(\frac{2}{\ell}\right)^{|T|}\frac{(\alpha')!}{\alpha!}\exp\left(O\left(\frac{|T|}{\alpha'}\right)\right).$$ It thus suffices to prove that the exponent in the definition of $\mathscr{Q}(T,U)$ is $-P'|U|+O(|T|M'/\alpha')$.
		
		Note that $M\leq M'$ and $\alpha'\leq\alpha$, so $|T|M/\alpha\leq|T|M'/\alpha'$. We have
		\begin{align}
			\tfrac12P|R^*(T)|&\leq\tfrac12P|T|R_{\max}^{(a)}=O(|T|M/\alpha),
			\\P'|R^*(T)\cap V'|&\leq P'|T|R_{\max}^{(a)}=O(|T|M/\alpha'),\\|\b(T)|&\leq|T|\b_{\max}=O(|T|M/\alpha).
		\end{align}
		Since $R^*(T)$ is disjoint from $F$, Lemma~\ref{lemma_terms}~(\ref{lemma_terms_f}) gives
		\begin{align}
			|P|F|-P'|F\cap V'||
			&=|(P-P')|F|+P'|F\cap\overline{N}(T)||
			\\&\leq|P-P'|\alpha F_{\max}+P'|T|F_{\max}=O(|T|M/\alpha').
		\end{align}
		Next, by Proposition~\ref{proposition_n}, we have $d(G)=O(\alpha)$, giving $|\overline{N}(T)|=O(\alpha|T|)$, and $|V|=O(\alpha^2)$. We get
		\begin{align}
			|P\b(V)-P'\b(V')|
			&=|(P-P')\b(V)+P'\b(\overline{N}(T))|
			\\&\leq|P-P'||V|\b_{\max}+P'|\overline{N}(T)|\b_{\max}=O(|T|M/\alpha').
		\end{align}
		Finally, we have
		\begin{align}
			&\sum_{v\in V'}|P|R^*(v)|-P'|R^*(v)\cap V'||
			\\&=\sum_{v\in V'}|(P-P')|R^*(v)|+P'|R^*(v)\cap\overline{N}(T)||
			\\&\leq|P-P'||V|R_{\max}^{(a)}+P'\sum_{v\in V}|R^*(v)\cap\overline{N}(T)|.
		\end{align}
		This is $O(|T|\alpha R_{\max}^{(a)}/\alpha')$, since $$\sum_{v\in V}|R^*(v)\cap\overline{N}(T)|=\sum_{w\in\overline{N}(T)}|R^*(w)|\leq|\overline{N}(T)|R_{\max}^{(a)}.$$ Including the factor $\frac12P'=O(1/\alpha')$, we find that this term is also $O(|T|M'/\alpha')$.
		
		(\ref{proposition_qt_uniform}) This follows by plugging $\b(v)=\frac12P|R^*(v)|$ into the definition of $\mathscr{Q}(T,U)$, and by Corollary~\ref{corollary_uniform}.
	\end{proof}
	
	\begin{proof}[Proof of Proposition~\ref{proposition_leniency}]
		Let $S$ be an FMIS of $G$. We first show for each class of weight functions $\b'$ that substituting $\b'$ for $\b$ in the definition of $\mathscr{P}(S)$ multiplies its value by a factor of $1+O(M^2/\alpha)$.
		
		(\ref{proposition_leniency_f}) This follows since $S$ is disjoint from $F$ and $|P\b(F)-P\b'(F)|\leq P|F|(\b_{\max}+\b'_{\max})=O(M^2/\alpha)$ by Lemma~\ref{lemma_terms}~(\ref{lemma_terms_f}).
		
		(\ref{proposition_leniency_log}) We use $\log(1-x)=-x+O(x^2)$ for $x=o(1)$. Since $\b_{\max}=o(1)$, we get $|\b(v)-\b'(v)|=O(\b_{\max}^2)$ for any $v\in V$. It follows that $\b'_{\max}=(1+O(\b_{\max}))\b_{\max}$, $\b(S)-\b'(S)=O(M^2/\alpha)$, and $P\b(V)-P\b'(V)=O(M^2/\alpha)$ by Lemma~\ref{lemma_terms}~(\ref{lemma_terms_n}).
		
		(\ref{proposition_leniency_b}) Since $G$ has regular independent sets, from the IMIS process, by Proposition~\ref{proposition_imis}, every vertex is in the same number of MISs. It follows that $P(\b(V)-\b'(V))$ is the arithmetic mean of $\b(T)-\b'(T)$ over all possible MISs $T$. We thus get $\b(S)-\b'(S)=P(\b(V)-\b'(V))$, so the value of $\mathscr{P}(S)$ actually stays the same.
		
		For the final part of the statement, consider a weight function $\b'$ from any of the three classes. Let $\boldsymbol{S}_{\infty}$ and $\boldsymbol{S}'_{\infty}$ denote the results from the IMFIS process on $(G,F,R,\b)$ and $(G,F,R,\b')$ respectively. By Theorem~\ref{theorem_main}, for any FMIS $S$ of $G$, we get $$\frac{\mathbb{P}(\boldsymbol{S}'_{\infty}=S)}{\mathbb{P}(\boldsymbol{S}_{\infty}=S)}=\frac{(1+O(\mathscr{E}(\alpha,M)))\mathscr{P}(S)}{(1+O(\mathscr{E}(\alpha,M)))\mathscr{P}(S)}=1+O(\mathscr{E}(\alpha,M)).$$ Consider an FIS $T$ of $G$ and a vertex set $U$ satisfying the requirements of Theorem~\ref{theorem_space}. Let $\mathcal{G}(T,U)$ be the set of all FMISs of $G$ that contain $T$ and are disjoint from $U$, such that we get
		\begin{align}
			&\mathbb{P}(|\boldsymbol{S}'_{\infty}|=\alpha\wedge T\subset\boldsymbol{S}'_{\infty}\wedge U\cap\boldsymbol{S}_{\infty}'=\emptyset)
			\\&=\sum_{S\in\mathcal{G}(T,U)}\mathbb{P}(\boldsymbol{S}'_{\infty}=S)
			\\&=(1+O(\mathscr{E}(\alpha,M)))\sum_{S\in\mathcal{G}(T,U)}\mathbb{P}(\boldsymbol{S}_{\infty}=S)
			\\&=(1+O(\mathscr{E}(\alpha,M)))\mathbb{P}(|\boldsymbol{S}_{\infty}|=\alpha\wedge T\subset\boldsymbol{S}_{\infty}\wedge U\cap\boldsymbol{S}_{\infty}=\emptyset).
		\end{align}
		By Theorem~\ref{theorem_space}, the right hand side approximates $\mathscr{Q}(T,U)$ up to a factor of $1+O(\mathscr{E}(\alpha',M'))$, and the left hand side does the same, but with $\b'$ substituted for $\b$ in the definition of $\mathscr{Q}(T,U)$. It follows that substituting $\b'$ for $\b$ in the definition of $\mathscr{Q}(T,U)$ multiplies its value by a factor of $1+O(\mathscr{E}(\alpha',M'))$.
	\end{proof}
	
	\subsection{Framework}\label{subsection_proof_framework}
	
	\begin{figure}[htbp]
		\centering
		\begin{tikzpicture}[
			>=Stealth,
			every node/.style={font=\normalsize, align=center},
			thick
			]
			% --- Top level ---
			%\node (thm) at (2.5, 6)      {Theorem~\ref{theorem_main}};
			
			% --- Second level ---
			\node (i)   at (0, 4.5)      {Theorem~\ref{theorem_main}~(\ref{theorem_main_equation})};
			\node (ii)  at (5, 4.5)      {Theorem~\ref{theorem_main}~(\ref{theorem_main_termination})};
			
			% --- Third level ---
			\node (lem2) at (2.5, 3)     {Lemma~\ref{lemma_generalization}};
			
			% --- Fourth level ---
			\node (lem3) at (-1, 1)      {Lemma~\ref{lemma_expectation} \\ Section~\ref{subsection_proof_simplified}};
			\node (lem4) at (2.5, 1)     {Lemma~\ref{lemma_product} \\ Section~\ref{subsection_proof_product}};
			\node (lem5) at (6, 1)       {Lemma~\ref{lemma_concentration}};
			
			% --- Fifth level ---
			\node (lem68)  at (-1, -1.5) {Lemmas~\ref{lemma_bar_e}--\ref{lemma_sigma} \\ Sections~\ref{subsection_proof_concentration}--\ref{subsection_proof_psi1}};
			\node (lem9)   at (3, -1.5)  {Lemma~\ref{lemma_remaining_p} \\ Section~\ref{subsection_proof_remaining}};
			\node (lem10)  at (6.5, -1.5) {Lemma~\ref{lemma_remaining_e} \\ Sections~\ref{subsection_proof_further} and \ref{subsection_proof_rpe}};
			
			% --- Arrows: top level ---
			%\draw[->] (i) -- (thm);
			%\draw[->] (ii) -- (thm);
			
			% --- Arrows: second level ---
			\draw[->] (lem2) -- (i);
			\draw[->] (lem2) -- (ii) node[midway, right, xshift=2pt] {Section~\ref{subsection_proof_termination}};
			
			% --- Arrows: third level ---
			\draw[->] (lem3) -- (lem2);
			\draw[->] (lem4) -- (lem2);
			\draw[->] (lem5) -- (lem2);
			
			% --- Arrows: fourth level (all converge into Lemma~6.5) ---
			\draw[->] (lem68) -- (lem5);
			\draw[->] (lem9)  -- (lem5);
			\draw[->] (lem10) -- (lem5);
			
		\end{tikzpicture}
		\caption{
			Schematic representation of the proof structure for Theorem~\ref{theorem_main}. Arrows indicate logical dependencies between components. Anything labeled with section numbers is shown in those sections. The rest is established here, in Section~\ref{subsection_proof_framework}.}
		\label{fig:proof-framework}
	\end{figure}
	We show that Theorem~\ref{theorem_main} is implied by a series of lemmas, the proofs of which are spread across Sections \ref{subsection_proof_simplified}-\ref{subsection_proof_rpe}.
	Figure~\ref{fig:proof-framework} gives a schematic representation of the proof structure.
	%We give a framework for the proof of Theorem~\ref{theorem_main}, breaking it down into a series of lemmas that we prove throughout the rest of this section. See Figure~\ref{fig:proof-framework} for a schematic representation.

	We thus start analysing the IMFIS process on $(G,F,R,\b)$, with $G=(V,E)$ a $2$-uniform graph with $\alpha\to\infty$, $\ell$ fixed, and $M=O(\alpha^{\sfrac12}/(\log\alpha)^2)$. Write $\mathscr{E}:=\mathscr{E}(\alpha,M)$. In order to prove Theorem~\ref{theorem_main}~(\ref{theorem_main_termination}), we need to study the event $|\boldsymbol{S}_{\infty}|<\alpha$, which requires that $(V\setminus\overline{N}(\boldsymbol{S}_r))\setminus(F\cup R^*(\boldsymbol{S}_r))$ is empty for some $r<\alpha$. We thus consider the following generalization of Theorem~\ref{theorem_main}~(\ref{theorem_main_equation}).
	
	\begin{lemma}\label{lemma_generalization}
		Let $S$ be an FIS of $G$ of size $\alpha-k$ for some $k=O(M)$. Let $K:=|(V\setminus\overline{N}(S))\cap(F\cup R^*(S))|$ be the number of vertices $v\in V$ for which $S\cup\{v\}$ is still an independent set, but not an FIS. Finally, define $$\mathscr{P}_k(S):=\exp(P|F|+\tfrac12P|R^*(S)|+P\b(V)-\b(S))\prod_{t=k+1}^{\alpha}\frac{t-k}{n_t-K}.$$
		\begin{enumerate}[(i)]
			\item\label{lemma_generalization_zero} If $k=0$, then $\mathbb{P}(\boldsymbol{S}_{\alpha-k}=S)=(1+O(\mathscr{E}))\mathscr{P}_k(S)$.
			\item\label{lemma_generalization_small} If $k\geq1$ with $k=O(1)$, then $\mathbb{P}(\boldsymbol{S}_{\alpha-k}=S)\leq(1+o(1))\mathscr{P}_k(S)$.
			\item\label{lemma_generalization_large} If $k\geq1$, then $\mathbb{P}(\boldsymbol{S}_{\alpha-k}=S)\leq\alpha^{o(k)}\mathscr{P}_k(S)$.
		\end{enumerate}
	\end{lemma}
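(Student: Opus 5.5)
The plan is to sum over the orderings of $S$. Fix an ordering $(s_1,\ldots,s_{\alpha-k})$ and set $S_r=\{s_1,\ldots,s_r\}$. The IMFIS process follows this ordering with probability
$$e^{-\b(S)}\prod_{r=0}^{\alpha-k-1}Z_r^{-1},\qquad Z_r:=\sum_{v\in A_r}e^{-\b(v)},\quad A_r:=(V\setminus\overline{N}(S_r))\setminus(F\cup R^*(S_r)).$$
Summing over all orderings gives
$$\mathbb{P}(\boldsymbol{S}_{\alpha-k}=S)=e^{-\b(S)}(\alpha-k)!\;\mathbb{E}_\pi\Big[\prod_r Z_r^{-1}\Big],$$
where $\pi$ is a uniformly random ordering. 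By Corollary~\ref{corollary_hereditary}, $|V\setminus\overline{N}(S_r)|=n_{\alpha-r}$. We then write $Z_r=n_{\alpha-r}-X_r$, where $X_r$ counts the vertices of $F$ and of $R^*(S_r)$ that survive in $V\setminus\overline{N}(S_r)$, plus the weight correction $\sum_{v\in A_r}(1-e^{-\b(v)})$. The deterministic part $(\alpha-k)!\prod_t(n_t-K)^{-1}$ accounts for the vertices still blocked at the end, and after reindexing $t=\alpha-r$ it becomes $\prod_{t=k+1}^{\alpha}\frac{t-k}{n_t-K}$. It remains to show that $\mathbb{E}_\pi[\exp(\sum_r (X_r-\text{final blocked part})/n_{\alpha-r}+O(X_r^2/n^2))]$ equals $\exp(P|F|+\tfrac12P|R^*(S)|+P\b(V))$ up to the stated error.

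Next we compute the first-order expectation (Lemma~\ref{lemma_expectation}). Under $\pi$, $S_r$ is a uniformly random $r$-subset of $S$; we compare it with the model that keeps each element of $S$ independently with probability $r/|S|$. By $2$-uniformity, a vertex $f\in F\setminus\overline{N}(S)$ has exactly two neighbours in an MIS containing $S$. It survives in $V\setminus\overline{N}(S_r)$ roughly with probability $(1-r/\alpha)^2$. Since $n_{\alpha-r}\approx\frac{\ell}{2}(\alpha-r)^2$, its total contribution to the sum is
$$\sum_r\frac{(1-r/\alpha)^2}{\frac{\ell}{2}(\alpha-r)^2}\approx\frac{2}{\ell\alpha}=P.$$
This yields $P|F|$. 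A vertex of $R^*(s)$ contributes only after $s$ has been chosen, with probability about $(r/\alpha)(1-r/\alpha)^2$, and the same integral now carries an extra factor $x$, producing $\tfrac12P|R^*(S)|$. The weight correction gives $P\b(V)$ by the same reasoning. The vertices adjacent to only one element of $S$, or to none (which is possible when $k\ge1$), are exactly those that make up $K$ and the shifted factors $t-k$. Boundary terms are $O(M^2/\alpha)$ by Lemma~\ref{lemma_terms}, using $F_{\max},R_{\max}^{(a)},R_{\max}^{(b)}\le M$.

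The main obstacle is passing from the expectation of the exponent to the expectation of the exponential with error $O(\mathscr{E})$ (Lemmas~\ref{lemma_product} and \ref{lemma_concentration}). We will apply Vu's concentration inequality in the independent model: the exponent is a low-degree polynomial in the inclusion indicators whose partial derivatives are controlled by $F_{\max}$, $R_{\max}^{(b)}$ and $(R_{\max}^{(c)})^2$, all at most $M$. This should give deviations of order $M\log\alpha/\sqrt{\alpha}$ with superpolynomially small failure probability. Squaring these deviations gives the $M^2\log\alpha/\alpha$ term, and the $(\log\alpha)^2$ losses from Vu's inequality give the $M(\log\alpha)^2/\alpha$ term. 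The steps $r$ close to $\alpha$, where $n_{\alpha-r}$ is small and concentration fails, need separate handling: we cut at $\alpha-r\approx\log\alpha$ or $M$ and bound the tail products directly.

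For $k=0$ (part (\ref{lemma_generalization_zero})) both a lower and an upper bound are needed, so the de-Poissonization from the independent model must be two-sided. For $k\ge1$ only upper bounds are claimed, so in parts (\ref{lemma_generalization_small}) and (\ref{lemma_generalization_large}) we will bound $Z_r\ge n_{\alpha-r}-X_r$ crudely and absorb the last $O(k)$ steps. Each of those steps loses at most a factor $\alpha^{o(1)}$, giving the $\alpha^{o(k)}$ factor, and when $k=O(1)$ the loss is only $1+o(1)$.
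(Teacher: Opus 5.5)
Your skeleton is the same as the paper's: sum over orderings, factor out $e^{-\b(S)}(\alpha-k)!\prod_r(n_{\alpha-r}-K)^{-1}$, get the first-order term $P|F|+\frac12P|R^*(S)|+P\b(V)$ from the independent-inclusion model, and use Vu's inequality for concentration. The gap is in the step you treat as routine, namely the late steps, and with them parts~(\ref{lemma_generalization_small}) and~(\ref{lemma_generalization_large}).

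\textbf{Why the crude treatment fails.} The only deterministic lower bound on the available weight is $Z_r\ge\frac12|V_r|\ge\frac12(t+n_k-K)$ with $t=\alpha-k-r$, whereas $n_{\alpha-r}-K=n_t+\ell tk+(n_k-K)$. So a single step can cost a factor $\Theta(t+k)$, not $\alpha^{o(1)}$. Collapse of the denominator can also occur at every $t<T^{\delta}=O(M)$ (Lemma~\ref{lemma_t}), not only in the last $O(k)$ steps. Since $M$ may be $\alpha^{1/2-o(1)}$, crude bounds lose factors like $e^{O(M\log M)}$. That destroys both $\alpha^{o(k)}$ for small $k$ and $1+O(\mathscr{E})$ for $k=0$. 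Concentration does not help. The event $\mathcal{P}\setminus\mathcal{P}^{\delta}$, where some late denominator drops below a fixed fraction of its typical size, has probability only bounded by $O(M/\alpha)$ already for $k=0$ (Lemma~\ref{lemma_remaining_p}), which is the size of the allowed error. You would therefore need $\Pi$ to be $O(1)$ on average on this event. Yet pointwise $\Pi$ can be polynomially large there, and the event is correlated with all earlier factors, which are then atypically large too. Cutting at $\alpha-r\approx\log\alpha$ or $M$ and bounding the tail directly does not close this.

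\textbf{What is missing.} The missing idea is the paper's treatment of this event (Lemma~\ref{lemma_remaining_e}), which has three parts:
\begin{enumerate}
\item Decompose according to the first step $r=\alpha-k-t$ at which $n_{\alpha-r}-\Psi_r<\delta(n_{\alpha-r}-K)$.
\item Bound the probability of that event by interference counting, roughly $(M/\alpha)^{(1-O(\delta))t}$ (Lemma~\ref{lemma_pd}).
\item Bound the conditional expectation of the product over the earlier steps given $S_r$. Conditionally, $(s_0,\dots,s_{r-1})$ is a uniform ordering of the FIS $S_r$ of size $\alpha-(k+t)$, so the estimates for $k'=k+t$ can be recycled (Lemma~\ref{lemma_recycle}). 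A separate bound is needed for the extra blocked vertices $(K'-K)^+$ (Lemma~\ref{lemma_depth}).
\end{enumerate}
The $t\log t$ loss must then be beaten by the $t\log(\alpha/M)$ gain. This is where $M=O(\alpha^{1/2}/(\log\alpha)^2)$ and $\delta\asymp\log\log\alpha/\log\alpha$ enter. It is also where the factor $\alpha^{o(k)}=e^{O(k\log\alpha/\log\log\alpha)}$ really comes from. This recursion is why the lemma is stated for all $k=O(M)$: even part~(\ref{lemma_generalization_zero}) relies on the $k'\ge1$ bounds.

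\textbf{A second issue for $k\ge1$.} The vertices with exactly one neighbour in $S$ are not part of $K$, contrary to what you say. $K$ counts only blocked vertices outside $\overline{N}(S)$. The one-neighbour vertices lying in $F\cup R^*(S)$ produce a fluctuating term whose exponential moment, at rates up to about $\frac12\log\alpha/\log\log\alpha$, must be shown to be $e^{o(k)}$. The paper does this by a negative-association argument (Lemma~\ref{lemma_sigma1hat}).
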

	
	Note that Theorem~\ref{theorem_main}~(\ref{theorem_main_equation}) is indeed implied by Lemma~\ref{lemma_generalization}~(\ref{lemma_generalization_zero}). In Section~\ref{subsection_proof_termination}, we use some additional arguments to prove that Theorem~\ref{theorem_main}~(\ref{theorem_main_termination}) also follows from Lemma~\ref{lemma_generalization}. Until then, we focus on the proof of Lemma~\ref{lemma_generalization}, throughout which we assume that $S$ is an FIS of $G$ of size $\alpha-k$ for some $k=O(M)$. We also assume without loss of generality that $\b$ is non-negative. Indeed, we may consider substituting $\b+\b_{\max}$ for $\b$, which does not change the IMFIS process, since all weights are scaled by the same factor, and which changes $M$ by a factor of $O(1)$.
	
	We first show that Lemma~\ref{lemma_generalization} is implied by three more subordinate lemmas. This involves considering all possible sequences $(\boldsymbol{S}_r)_{r=0}^{\alpha-k}$ of the IMFIS process with $\boldsymbol{S}_{\alpha-k}=S$, and averaging their probabilities. By taking this average, any set $\boldsymbol{S}_r$ for $r=0,\ldots,\alpha-k$ is a uniformly random subset of $S$ of size $r$, which we then compare to a simplified model, where each element of $S$ is included with probability $r/|S|$.
	
	Let $\mathcal{P}$ be the set of all possible permutations $\pi=(s_0,\ldots,s_{\alpha-k-1})$ of the elements of $S$. Let $S_{\pi}$ denote the event that $\boldsymbol{S}_{r+1}=\boldsymbol{S}_r\cup\{s_r\}$ for all $r=0,\ldots,\alpha-k-1$, so that $$\mathbb{P}(\boldsymbol{S}_{\alpha-k}=S)=\sum_{\pi\in\mathcal{P}}\mathbb{P}(S_{\pi}).$$ Let $S_r(\pi):=\{s_0,\ldots,s_{r-1}\}$ be the value of $\boldsymbol{S}_r$ in the event $S_{\pi}$. Let $\widetilde{V}_r(\pi):=V\setminus\overline{N}(S_r(\pi))$ be the set of vertices $v\in V$ for which $S_r(\pi)\cup\{v\}$ is still an independent set. Note that $|\widetilde{V}_r(\pi)|=n_{\alpha-r}$ by Corollary~\ref{corollary_hereditary}. Finally, let $V_r(\pi):=\widetilde{V}_r(\pi)\setminus(F\cup R^*(S_r(\pi)))$ be the set of vertices $v\in V$ for which $S_r(\pi)\cup\{v\}$ is still an FIS. By repeated conditioning, the IMFIS process gives
	\begin{align}
		\mathbb{P}(S_{\pi})
		&=\prod_{r=0}^{\alpha-k-1}\mathbb{P}(\boldsymbol{S}_{r+1}=S_{r+1}(\pi)|\boldsymbol{S}_r=S_r(\pi))
		\\&=\prod_{r=0}^{\alpha-k-1}\frac{e^{-\b(s_r)}}{\sum_{v\in V_r(\pi)}e^{-\b(v)}}.
	\end{align}
	Note that $\b_{\max}=O(M/\alpha)=o(1)$, so since $e^{-x}=1-x+O(x^2)$ for $x=o(1)$, for $\Psi_r(\pi):=|\widetilde{V}_r(\pi)\setminus V_r(\pi)|+\b(V_r(\pi))$, the sum in the denominator is $$|V_r(\pi)|-\b(V_r(\pi))+O(|V_r(\pi)|\b_{\max}^2)=(1+O(M^2/\alpha^2))(n_{\alpha-r}-\Psi_r(\pi)),$$ because $n_{\alpha-r}-\Psi_r(\pi)=|V_r(\pi)|-\b(V_r(\pi))=\Omega(|V_r(\pi)|)$. Since $\log(1+x)=O(x)$ for $x=o(1)$, we get $$\mathbb{P}(S_{\pi})=\exp(-\b(S)+O(M^2/\alpha))\prod_{r=0}^{\alpha-k-1}\frac{1}{n_{\alpha-r}-\Psi_r(\pi)}.$$
	
	Now, suppose that $\pi$ is a uniformly random permutation from $\mathcal{P}$, such that
	\begin{align}
		\mathbb{P}(\boldsymbol{S}_{\alpha-k}=S)&=(\alpha-k)!\mathbb{E}(\mathbb{P}(S_{\pi}|\pi))
		\\&=(\alpha-k)!\exp(-\b(S)+O(M^2/\alpha))\mathbb{E}\left(\prod_{r=0}^{\alpha-k-1}\frac{1}{n_{\alpha-r}-\Psi_r(\pi)}\right).
	\end{align}
	Note that the objects $S_r(\pi)$, $\widetilde{V}_r(\pi)$, $V_r(\pi)$, and $\Psi_r(\pi)$ now correspond to random variables $S_r$, $\widetilde{V}_r$, $V_r$, and $\Psi_r$. Indeed, for any fixed value of $r$, the random variable $S_r$ is a uniformly random subset of $S$ of size $r$, and the random variables $\widetilde{V}_r$, $V_r$ and $\Psi_r$ are derived from $S_r$. We refer to this construction as the \emph{true model}.

	To define the simplified model, we now approximate the distribution on $S_r$ in the true model by including each element of $S$ independently with probability $p=p_r:=r/(\alpha-k)$. For general $p\in[0,1]$, let $S_p$ be the resulting random subset of $S$. From $S_p$, we can derive the random variables $\widetilde{V}_p$, $V_p$, and $\Psi_p$, using the same definitions as in the true model. We refer to this construction as the \emph{simplified model}, and the subscript $p$ is used to distinguish it from the true model. Furthermore, we write $q:=1-p$ and $q_r:=1-p_r$.
	
	For $p\in[0,1]$, let $\psi_p:=\mathbb{E}(\Psi_p)$. We get $$\prod_{r=0}^{\alpha-k-1}\frac1{n_{\alpha-r}-\Psi_r}=\prod_{r=0}^{\alpha-k-1}\frac{n_{\alpha-r}-\psi_{p_r}}{n_{\alpha-r}-\Psi_r}\prod_{r=0}^{\alpha-k-1}\frac{n_{\alpha-r}-K}{n_{\alpha-r}-\psi_{p_r}}\prod_{t=k+1}^{\alpha}\frac1{n_t-K},$$ such that the following three lemmas suffice to prove Lemma~\ref{lemma_generalization}.
	
	\begin{lemma}\label{lemma_expectation}
		For $p\in[0,1]$, we have the expected value $$\psi_p=K+q^2(|F|+p|R^*(S)|+\b(V))+O(qM^2+(n_k-K)M/\alpha).$$
	\end{lemma}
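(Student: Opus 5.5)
We compute $\psi_p=\mathbb{E}(\Psi_p)$ by linearity, writing
$$\Psi_p=|\widetilde V_p\setminus V_p|+\b(V_p)=\sum_{v\in V}\mathbbm{1}[v\in\widetilde V_p]\bigl(\mathbbm{1}[v\in F\cup R^*(S_p)]+\mathbbm{1}[v\notin F\cup R^*(S_p)]\,\b(v)\bigr),$$
and we split the vertices $v\in V$ by their relation to the fixed FIS $S$, of size $\alpha-k$. The key structural input is Corollary~\ref{corollary_hereditary} together with $2$-uniformity applied to an MIS extending $S$. Every vertex $v\notin \overline N(S)$ lies in $V\setminus\overline N(S)$, which has $n_k$ elements, and is in $\widetilde V_p$ with probability $1$. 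Vertices $v\in S$ are in $\widetilde V_p$ exactly when $v\notin S_p$, which has probability $q$. Every other vertex $v\in N(S)$ has one or two neighbours in $S$. Those with exactly two neighbours in $S$ survive with probability $q^2$. Those with exactly one neighbour in $S$ survive with probability $q$, and by counting via an MIS $S\cup T$ with $|T|=k$, there are only $O(k\alpha)=O(M\alpha)$ of them.

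The contribution of the vertices outside $\overline N(S)$ is as follows. Those in $F\cup R^*(S)$ contribute exactly $K$ in the limit where they are forbidden. More precisely, a vertex $v\in R^*(S)\setminus R^*(S_p)$ is counted only with some probability, but then it contributes $\b(v)\le\b_{\max}$. The remaining $n_k-K$ vertices contribute at most $\b_{\max}$ each, which gives the error $(n_k-K)M/\alpha$. To make the count of $K$ exact I would check that a vertex of $R^*(S)\setminus\overline N(S)$ is hit by $R^*(S_p)$ with probability close to $1$; the slack is absorbed into the same error term.

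The main term comes from the vertices with two neighbours in $S$. For $F$: by Lemma~\ref{lemma_terms}~(\ref{lemma_terms_f}), all but $O(F_{\max}(k+1))$ vertices of $F$ are of this two-neighbour type, each is counted with probability $q^2$, and the leftover vertices (in $S$, with one neighbour in $S$, or outside $\overline N(S)$) produce $O(qM^2)$ after being absorbed, or are already accounted for in $K$. For $\b(V)$ the argument is analogous, with $\b$ summing to $\b(V)$ up to $q$-weighted corrections of order $\b_{\max}\cdot O(\alpha M)=O(M^2)$. For $R^*$: a two-neighbour vertex $v\notin F$ lies in $R^*(S_p)$ if some $s\in S_p$ has $v\in R^*(s)$. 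Since $v\in\widetilde V_p$ forces its two neighbours to lie outside $S_p$, this event is approximately independent of survival. Its probability is $1-(1-p)^{j}\approx p\,j$, where $j$ is the number of $s\in S$ with $v\in R^*(s)$. Summing $j$ over $v$ gives $|R^*(S)|$ counted with multiplicity. The correction from multiplicity (vertices with $j\ge2$) and from $s$ adjacent to $v$ is controlled by $R^{(b)}_{\max}$ and $(R^{(c)}_{\max})^2$: these count pairs $(s,s')$ sharing an $R^*$-vertex inside a neighbourhood, and contribute $O(qM^2)$. The $\b(v)$-weighted versions of these corrections are smaller by a factor $\b_{\max}$.

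The main obstacle is this last step: showing that the inclusion--exclusion between $R^*(s)$ for distinct $s\in S_p$, and the dependence between "$v$ survives" and "$v\in R^*(S_p)$", cost only $O(qM^2)$ and carry an overall factor $q$. I would first handle it by noting that every discrepancy term requires at least one element of $S$ to be absent from $S_p$ (survival). This gives the factor $q$, while the number of such configurations is bounded by $\alpha\cdot(R^{(b)}_{\max}+(R^{(c)}_{\max})^2)/\alpha\cdot$(number of $s$) $=O(M^2)$.
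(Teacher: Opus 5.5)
The step you flag as the main obstacle is where the proposal fails. The reason is a one-line structural fact you did not use.

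Since $S$ is feasible, no two distinct elements of $S$ are $R$-equivalent. If $v\in R^*(s)\cap R^*(s')$ with $s,s'\in S$, then $vRs$ and $vRs'$ give $sRs'$, so $s=s'$. Thus the sets $R^*(s)$, $s\in S$, are pairwise disjoint, and your multiplicity $j$ is always $0$ or $1$. Moreover, $v\in R^*(s)$ rules out $v\in\overline N(s)$ by the very definition of $R^*$, so the case ``$s$ adjacent to $v$'' never arises. Hence the event $\{v\in R^*(S_p)\}=\{s\in S_p\}$ is \emph{exactly} independent of $\{v\in\widetilde V_p\}$, which depends only on the neighbours of $v$ in $S$.

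Write $F^{(i)},R^{(i)},V^{(i)}$ for the vertices of $F$, $R^*(S)$, $V\setminus S$ with exactly $i$ neighbours in $S$. Then $\mathbb{P}(v\in\widetilde V_p\cap R^*(S_p))=pq^i$ exactly for $v\in R^{(i)}$, so $\psi_p$ is an exact linear combination (Lemma~\ref{lemma_expect}). The lemma then follows from $|F^{(1)}|\le kF_{\max}$, $|R^{(1)}|\le kR^{(b)}_{\max}$, $|V^{(1)}|=\ell k(\alpha-k)$ and $K=|F^{(0)}|+|R^{(0)}|\le n_k=O(k^2)=O(M^2)$.

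Your inclusion--exclusion route cannot replace this, for three reasons:
\begin{enumerate}[(i)]
\item The error in $1-(1-p)^j\approx pj$ is of order $\binom{j}{2}p^2$ and carries no factor of $q$ beyond the survival factor $q^2$.
\item $R^{(b)}_{\max}$ and $R^{(c)}_{\max}$ bound intersections of $R^*$-sets with closed neighbourhoods, not with one another, so they say nothing about multiplicities. Also, $(R^{(c)}_{\max})^2$ plays no role in this lemma at all.
\item Your count $\alpha\cdot(R^{(b)}_{\max}+(R^{(c)}_{\max})^2)/\alpha\cdot|S|$ is $O(\alpha M)$, not $O(M^2)$.
\end{enumerate}

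Your treatment of the vertices outside $\overline N(S)$ also contains a false claim. A vertex of $R^{(0)}=R^*(S)\setminus\overline N(S)$ lies in $R^*(S_p)$ with probability exactly $p$ (its unique $R$-partner in $S$ must be in $S_p$), not with probability close to $1$. These vertices therefore contribute $|F^{(0)}|+p|R^{(0)}|$ plus weight terms, i.e.\ $K-q|R^{(0)}|+\dots$. The slack $q|R^{(0)}|$ cannot be absorbed into $(n_k-K)M/\alpha$: that term vanishes when $K=n_k$, while $q|R^{(0)}|$ can then be of order $qn_k$. It is absorbed into $O(qM^2)$ instead, via $|R^{(0)}|\le n_k=O(k^2)$ and $k=O(M)$.

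With these two corrections, the rest of your bookkeeping goes through and coincides with the paper's argument: the $F$ and $\b(V)$ terms, the $q\b(S)$ term, and the $O(k\alpha)$ one-neighbour vertices.
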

	
	\begin{lemma}\label{lemma_product}
		We have the product estimate $$\prod_{r=0}^{\alpha-k-1}\frac{n_{\alpha-r}-K}{n_{\alpha-r}-\psi_{p_r}}=\exp(P|F|+\tfrac12P|R^*(S)|+P\b(V)+O(M^2\log\alpha/\alpha)).$$
	\end{lemma}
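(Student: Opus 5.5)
The plan is to take logarithms and reduce the product to a Riemann-type sum, using Lemma~\ref{lemma_expectation} as the only probabilistic input. Write $t=\alpha-r$, which ranges over $k+1,\ldots,\alpha$. Then $p_r=r/(\alpha-k)$ and $q_r=(t-k)/(\alpha-k)$. Put $A(p):=|F|+p|R^*(S)|+\b(V)$. Each factor of the product equals $\bigl(1-x_t\bigr)^{-1}$ with $x_t:=(\psi_{p_r}-K)/(n_t-K)$. Hence the logarithm of the product is $\sum_t x_t+O(\sum_t x_t^2)$, provided I show $x_t=o(1)$ uniformly. By Lemma~\ref{lemma_expectation},
\[
x_t=\frac{q_r^2A(p_r)}{n_t-K}+O\Bigl(\frac{q_rM^2}{n_t-K}+\frac{(n_k-K)M}{\alpha(n_t-K)}\Bigr).
\]

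First I collect the elementary bounds. By Lemma~\ref{lemma_terms}, $0\le A(p)\le|F|+|R^*(S)|+\b(V)=O(\alpha M)$, using $|V|=O(\alpha^2)$ and $\b_{\max}\le M/\alpha$ (with $\b\ge0$ as assumed). Next, $K\le|V\setminus\overline N(S)|=n_k$, and the map $x\mapsto(a-x)/(b-x)$ is decreasing for $a<b$. Therefore $(n_k-K)/(n_t-K)\le n_k/n_t=O(k^2/t^2)$, and likewise $n_t-K\ge n_t-n_k=\Omega(t^2-k^2)=\Omega(t(t-k))$. With these, the error terms sum over $t$ as follows. The term $q_rM^2/(n_t-K)=O(M^2/(\alpha t))$ sums to $O(M^2\log\alpha/\alpha)$. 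The term $(M/\alpha)(k^2/t^2)$ sums over $t>k$ to $O(Mk/\alpha)=O(M^2/\alpha)$, using $k=O(M)$. For the square terms, the main part satisfies $q_r^2A/(n_t-K)=O\bigl((t-k)^2\alpha M/(\alpha^2 t(t-k))\bigr)=O(M/\alpha)$, and each error part is also $o(1)$. So $x_t=o(1)$ as required, and $\sum_t x_t^2=O(\alpha\cdot M^2/\alpha^2)+\text{(smaller)}=O(M^2/\alpha)$.

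It remains to evaluate the main sum $\sum_{t=k+1}^{\alpha}q_r^2A(p_r)/(n_t-K)$. I replace $n_t-K$ by $\tfrac{\ell}{2}t^2$ in two stages. The relative error from $n_t=(1+O(1/t))\tfrac{\ell}{2}t^2$ costs $O(\alpha M\sum_t 1/(\alpha^2 t))=O(M\log\alpha/\alpha)$. Subtracting $K\le n_k$ costs a relative factor $O(k^2/t^2)$ against terms of size $O(M/\alpha)$, which again totals $O(Mk/\alpha)$. Then I replace $q_r^2/t^2=(t-k)^2/((\alpha-k)^2t^2)$ by $1/(\alpha-k)^2$. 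The relative error is $O(k/t)$, so this costs $O(\alpha M\cdot k\log\alpha/\alpha^2)=O(M^2\log\alpha/\alpha)$. What is left is
\[
\frac{2}{\ell(\alpha-k)^2}\sum_{r=0}^{\alpha-k-1}\bigl(|F|+\b(V)+p_r|R^*(S)|\bigr).
\]
Here $\sum_r1=\alpha-k$ and $\sum_rp_r=\tfrac12(\alpha-k)+O(1)$. This gives $P|F|+P\b(V)+\tfrac12P|R^*(S)|$ up to $O(\alpha M\cdot k/\alpha^2)+O(M/\alpha)$, after replacing $(\alpha-k)^{-1}$ by $\alpha^{-1}$.

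The main obstacle is the range where $t$ is close to $k$. There $n_t-K$ may be much smaller than $n_t$, so any bound that only uses $n_t-K\ge1$ loses a factor $\alpha$. The key device is the monotonicity $(n_k-K)/(n_t-K)\le n_k/n_t$, combined with $n_t-n_k=\Omega(t(t-k))$, together with the factor $q_r=(t-k)/(\alpha-k)$, which vanishes exactly at that end. I would verify carefully that every appearance of $1/(n_t-K)$ is paired either with a factor $q_r$ or with $(n_k-K)$, so that the sums stay logarithmic.
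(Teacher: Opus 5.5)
Your proposal is correct and follows essentially the paper's route: take logarithms, linearize $-\log(1-x_t)$ using Lemma~\ref{lemma_expectation} with $\sum_t x_t^2=O(M^2/\alpha)$, and evaluate the main sums by comparing $n_t-K$ with $\tfrac{\ell}{2}t^2$. The paper packages that comparison as Lemmas~\ref{lemma_bounds2} and~\ref{lemma_sums} (splitting at $\alpha-r=M\log\alpha$), whereas your monotonicity bound $(n_k-K)/(n_t-K)\le n_k/n_t$ handles the $(n_k-K)$ term without a split.

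One small fix concerns the range $t\approx k$, where the relative errors of your replacements are larger than you state. For example, subtracting $K$ gives a relative error of $O(k^2/(t(t-k)))$, not $O(k^2/t^2)$. Since these terms carry $q_r^2\propto(t-k)^2$, the absolute error is still $O(Mk^2/(\alpha t^2))$, so your totals $O(Mk/\alpha)$ and $O(M^2\log\alpha/\alpha)$ are unaffected.
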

	
	\begin{lemma}\label{lemma_concentration}
		Consider the random variable $$\Pi:=\prod_{r=0}^{\alpha-k-1}\frac{n_{\alpha-r}-\psi_{p_r}}{n_{\alpha-r}-\Psi_r}.$$
		\begin{enumerate}[(i)]
			\item\label{lemma_concentration_zero} If $k=0$, then $\mathbb{E}(\Pi)=1+O(\mathscr{E})$.
			\item\label{lemma_concentration_small} If $k\geq1$ with $k=O(1)$, then $\mathbb{E}(\Pi)\leq1+o(1)$.
			\item\label{lemma_concentration_large} If $k\geq1$, then $\mathbb{E}(\Pi)\leq\alpha^{o(k)}$.
		\end{enumerate}
	\end{lemma}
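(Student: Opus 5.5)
The plan is to take logarithms and control $\log\Pi$ by a first-order expansion around the simplified-model means. Write $t:=\alpha-r$ and
$$X_r:=\frac{\Psi_r-\psi_{p_r}}{n_{t}-\psi_{p_r}},\qquad \log\Pi=-\sum_{r=0}^{\alpha-k-1}\log(1+X_r)=-\sum_r X_r+O\Big(\sum_r X_r^2\Big),$$
which is valid while $|X_r|\le\frac12$. I would split the range of $r$ at a threshold $t_0:=C\,M\log\alpha$ (with $k\le t_0$).

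\textbf{Early steps} ($t> t_0$). Here Lemma~\ref{lemma_expectation} and Lemma~\ref{lemma_terms} give $\psi_{p_r}=O(tM)$ while $n_t\asymp t^2$. So the denominators are $\Theta(t^2)$, and the expansion is legitimate once we know $|\Psi_r-\psi_{p_r}|=o(t^2)$.

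The main tool is concentration of $\Psi_p$ in the simplified model via Vu's inequality. Recall that $\Psi_p=|\widetilde V_p\setminus V_p|+\b(V_p)$.
\begin{itemize}
\item By $2$-uniformity, each $v\notin S$ has at most two neighbours in $S$. Hence the indicator that $v\in\widetilde V_p$ is a product of at most two ``absence'' indicators $1-\xi_s$, where $\xi_s=\mathbbm{1}[s\in S_p]$.
\item Membership in $R^*(S_p)$ adds at most one further factor $\xi_s$.
\end{itemize}
So $\Psi_p$ is a polynomial of degree at most $3$ in the independent indicators $\xi_s$, $s\in S$, with coefficients bounded by $1+\b_{\max}$. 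The partial-derivative expectations entering Vu's inequality are governed by the quantities in $M$:
\begin{itemize}
\item $F_{\max}$ and $R^{(b)}_{\max}$ control first derivatives, which are $O(tM/\alpha\cdot\alpha)$-type sums over a neighbourhood;
\item $(R^{(c)}_{\max})^2\le M$ controls second derivatives;
\item third derivatives are $O(1)$.
\end{itemize}
This should give $\mathbb{P}(|\Psi_p-\psi_p|>\lambda\sqrt{tM}\,\mathrm{polylog})\le e^{-\lambda}$. To transfer this to the true model, where $S_r$ is a uniform $r$-subset, I would condition the binomial set on having size exactly $r$; this costs only a factor $O(\sqrt{\alpha})$ in probability, which is absorbed by the $e^{-\lambda}$ tail with $\lambda\asymp\log\alpha$. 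On the high-probability event this yields $|X_r|=O(\sqrt{M}(\log\alpha)/t^{3/2})$. Summing,
$$\sum_{t>t_0}X_r^2=O\Big(\frac{M(\log\alpha)^2}{t_0^2}\Big),$$
so the quadratic term is small. The bad event is handled by a crude deterministic bound $\Pi\le\alpha^{O(\alpha)}$ times a tail probability of $\alpha^{-\omega(\alpha)}$; this part needs care.

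\textbf{The linear term.} For $\sum_r X_r$, I would not use the high-probability bounds directly. Instead I would show two things.
\begin{itemize}
\item $\mathbb{E}\Psi_r-\psi_{p_r}$ is small. A hypergeometric-versus-binomial comparison gives corrections of relative order $1/(\alpha-k)$ to the pair probabilities $q^2$, hence $\mathbb{E}X_r=O(M/(\alpha t))$. Summing over $t$ gives $O(M\log\alpha/\alpha)$.
\item The centred sum has an exponential moment $\mathbb{E}\exp(-\sum_r(X_r-\mathbb{E}X_r))=1+O(\mathscr E)$. Its variance is $\sum$ of covariances of order $M/t^3$-type terms, giving $O(M^2\log\alpha/\alpha+M(\log\alpha)^2/\alpha)$ after restoring the scaling. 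I expect the cleanest route is a Taylor expansion of the exponential on the good event, using the Vu tail bounds for the higher moments.
\end{itemize}
This produces part~(\ref{lemma_concentration_zero}) when $k=0$.

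\textbf{Late steps} ($t\le t_0$). This is the main obstacle, because here $\psi_{p_r}$ can be comparable to $n_t$ and $X_r$ is not small. I would not expand. Instead I would bound each factor directly: $n_t-\Psi_r\ge n_t-K-(\text{excess})$, and near the end Lemma~\ref{lemma_expectation} gives $\psi_{p_r}=K+O(q M t+\ldots)$ with $q=(t-k)/(\alpha-k)$. This suggests comparing each factor $\frac{n_t-\psi}{n_t-\Psi}$ to $1$ using a tail bound on the small random excess $\Psi_r-K$. That excess counts vertices of $\widetilde V_r$ killed via the $q$-fraction of $S$ still missing, which is a sum over $O(t)$ elements of $S$ with bounded effect, and it admits Chernoff-type tails. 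For $k=0$ the late contribution should again be $1+O(M(\log\alpha)^2/\alpha)$.

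For $k\ge1$ the issue is that $n_t-K$ may be tiny as $t\downarrow k+1$. I would therefore only aim for the weaker claims:
\begin{itemize}
\item a factor $1+o(1)$ when $k=O(1)$, because only $O(1)$ steps are critical;
\item a factor $\alpha^{o(k)}$ in general, because each of the $O(k)$ critical factors is at most $\alpha^{o(1)}$ in expectation, obtained by Hölder's inequality across factors combined with the exponential tail of $\Psi_r-K$.
\end{itemize}
This yields parts~(\ref{lemma_concentration_small}) and (\ref{lemma_concentration_large}).
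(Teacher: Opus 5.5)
There is a genuine gap. Your handling of typical behaviour matches the paper: Vu's inequality in the simplified model, transfer to the true model by conditioning on $|S_{p_r}|=r$ at a cost $O(\alpha^{1/2})$, a hypergeometric-versus-binomial first moment of order $M\log\alpha/\alpha$, and a second moment of order $\mathscr{E}$. Two small corrections there. Each factor equals $(1-X_r)^{-1}$, so $\log\Pi=-\sum_r\log(1-X_r)$. Also, your pathwise bound $\sum_{t>t_0}X_r^2=O(M(\log\alpha)^2/t_0^2)$ is $O(1/M)$ for $t_0=CM\log\alpha$, which is not $O(\mathscr{E})$; for (i) the quadratic term must be bounded in expectation, as in Lemma~\ref{lemma_error}.

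\textbf{Bad events.} The gap is how you control $\Pi$ off the typical event. Your fallback, $\Pi\le\alpha^{O(\alpha)}$ against a tail of $\alpha^{-\omega(\alpha)}$, is not available, because the events on which $\Pi$ is large are only polynomially rare. Already for $k=0$, two of the last $t$ remaining elements of $S$ interfere with probability that can be of order $t^2M/\alpha$. The last denominators $n_{\alpha-r}-\Psi_r$ collapse to order $t$ with probability roughly $(M/\alpha)^{\Theta(t)}$, after which each factor can be of order $t$. Vu cannot reach such events: $\beta_p(\lambda)\ge\lambda^2$ exceeds $n_t$ once $\lambda$ is a constant times $t$. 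So $\Pi$ has to be integrated against these events, and the paper does this in two pieces.
\begin{itemize}
\item On $\mathcal{P}^{\delta}$ it uses $\Pi\le e^{\hat\Sigma/\delta}$ and proves $\mathbb{E}e^{C\hat\Sigma}\le e^{o(k)}$ (resp.\ the $k=0$ version) for $C$ as large as $\Theta(\log\alpha/\log\log\alpha)$ (Lemma~\ref{lemma_sigma}). This needs an interference-counting tail bound for the last $\lambda\log\alpha$ steps (Lemma~\ref{lemma_stronger}) and a layering over the events $A(\lambda_i)$.
\item On $\mathcal{P}\setminus\mathcal{P}^{\delta}$ it proves $\mathbb{E}(\Pi1_{\mathcal{P}\setminus\mathcal{P}^{\delta}})\le\frac{M}{\alpha}e^{O(k\log\alpha/\log\log\alpha)}$ (Lemma~\ref{lemma_remaining_e}).
\end{itemize}

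\textbf{Late steps.} Your late-step plan does not reach the second estimate, for three reasons.
\begin{itemize}
\item The excess $\Delta_r^{(2)}$ counts blocked vertices attached to interfering pairs inside a uniformly random $t$-subset of $S$. This is a quadratic statistic of a sample without replacement, with no Chernoff bound at the needed scale.
\item Collapse can occur at any $t\le T^{\delta}=O(M)$, not only in $O(k)$ critical steps near $t=k$. After collapse each remaining factor can be of order $k+t$ (Lemma~\ref{lemma_pit}), costing $e^{t\log(k+t)+O(t)}$.
\item This loss is beaten only by the sharp rate $\mathbb{P}(\mathcal{Q}_t^{\delta})\le\exp((1-(\tfrac12+o(1))\delta)t\log\tfrac{M}{\alpha}+O(t))$ of Lemma~\ref{lemma_pd}, obtained by counting interference configurations. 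The net factor, roughly $(tM/\alpha)^t\le(O(M^2/\alpha))^t$, is small precisely because $M=O(\alpha^{1/2}/(\log\alpha)^2)$. A per-factor H\"older bound claiming $\alpha^{o(1)}$ for each factor does not track this balance, and for $k$ polynomial in $\alpha$ losses like $k^{\Theta(k)}$ are not $\alpha^{o(k)}$.
\end{itemize}
Moreover, the prefix product before the collapse time is correlated with the collapse event. The paper handles this by conditioning on $S_r$ and re-applying Lemma~\ref{lemma_sigma}~(ii) to $S_r$, viewed as an FIS of deficiency $k+t$ (Lemma~\ref{lemma_recycle}), with Lemma~\ref{lemma_depth} for the $(K'-K)^+$ term.

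\textbf{The case $k\geq1$.} Vertices with exactly one neighbour in $S$ give linear monomials whose grouped coefficients $f(s)$ can be as large as $\ell k$. So your claim that coefficients are bounded by $1+\b_{\max}$ fails, and Vu gives nothing useful for this part. The paper controls $\hat\Sigma^{(1)}$ separately via negative association (Lemma~\ref{lemma_sigma1hat}).

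Without these ingredients, parts (ii) and (iii), and the $O(\mathscr{E})$ precision in (i), do not follow.
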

	
	While Lemmas \ref{lemma_expectation} and \ref{lemma_product} are proven in Sections \ref{subsection_proof_simplified} and \ref{subsection_proof_product} respectively, the proof of Lemma~\ref{lemma_concentration} is more involved. In the rest of this section, we show that Lemma~\ref{lemma_concentration} is implied by six more subordinate lemmas.
	
	We consider the logarithm
	\begin{align}
		\log\Pi=\sum_{r=0}^{\alpha-k-1}\log(1+\widetilde{Q}_r),&&\widetilde{Q}_r:=\frac{\Psi_r-\psi_{p_r}}{n_{\alpha-r}-\Psi_r}.
	\end{align}
	Since $\log(1+x)\leq(x)^+:=\max\{x,0\}$ for $1+x>0$, this motivates studying $\log\Pi$ in terms of the sums
	\begin{align}
		\Sigma&:=\sum_{r=0}^{\alpha-k-1}Q_r,&Q_r&:=\frac{|\Psi_r-\psi_{p_r}|}{n_{\alpha-r}-n_k},
		\\\hat{\Sigma}&:=\sum_{r=0}^{\alpha-k-1}\hat{Q}_r,&\hat{Q}_r&:=\frac{(\Psi_r-\psi_{p_r})^+}{n_{\alpha-r}-n_k}.
	\end{align}
	In order to compare $\log\Pi$ with $\Sigma$ and $\hat{\Sigma}$, for a parameter $0<\delta<1$, we consider the event $$\mathcal{P}^{\delta}:=\{n_{\alpha-r}-\Psi_r\geq\delta(n_{\alpha-r}-n_k),\ \forall r=0,\ldots,\alpha-k-1\}\subset\mathcal{P}.$$ In $\mathcal{P}^{\delta}$, we find that $|\widetilde{Q}_r|\leq\frac1{\delta}Q_r$ and $(\widetilde{Q}_r)^+\leq\frac1{\delta}\hat{Q}_r$. We thus have
	\begin{equation}\label{equation_pdelta}
		\Pi\leq e^{\frac1{\delta}\hat{\Sigma}}\leq e^{\frac1{\delta}\Sigma}\ \text{in}\ \mathcal{P}^{\delta}.
	\end{equation}
	
	For the case $k=0$, we need to be more precise. We define
	\begin{align}
		\overline{\Sigma}:=\sum_{r=0}^{\alpha-1}\overline{Q}_r,&&\overline{Q}_r:=\frac{\Psi_r-\psi_{p_r}}{n_{\alpha-r}}.
	\end{align}
	The idea is to approximate $\Pi$ with $1+\overline{\Sigma}$, so we write $$\mathbb{E}(\Pi)=1+\mathbb{E}(\overline{\Sigma})+\mathbb{E}(\Pi-(1+\overline{\Sigma})).$$
	
	For an event $\mathcal{Q}\subset\mathcal{P}$, let $1_{\mathcal{Q}}$ denote its indicator function. We show that Lemma~\ref{lemma_concentration} follows from the following six lemmas.
	
	\begin{lemma}\label{lemma_bar_e}
		If $k=0$, we have $\mathbb{E}(\overline{\Sigma})=O(M\log\alpha/\alpha)$.
	\end{lemma}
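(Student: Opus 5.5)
We need to bound $\mathbb{E}(\overline{\Sigma})=\sum_{r=0}^{\alpha-1}\mathbb{E}(\Psi_r-\psi_{p_r})/n_{\alpha-r}$ when $k=0$. The point is that $\Psi_r$ in the true model is computed from a uniformly random $r$-subset of $S$, while $\psi_{p_r}=\mathbb{E}(\Psi_{p_r})$ is computed from the binomial subset $S_{p_r}$. So each summand compares a hypergeometric-type expectation with its binomial analogue. My plan is to compute $\mathbb{E}(\Psi_r)$ exactly in the true model by the same decomposition used for Lemma~\ref{lemma_expectation}, and then estimate the difference from $\psi_{p_r}$ term by term.

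\textbf{Step 1: decompose $\Psi_r$ over elements of $S$.} Write $\Psi_r=|\widetilde V_r\cap F|+|\widetilde V_r\cap R^*(S_r)\setminus F|+\b(V_r)$. By $2$-uniformity, any $v\notin S$ has exactly two neighbours $a,b\in S$, and $v\in\widetilde V_r$ iff $a,b\notin S_r$. Vertices of $S$ itself lie in $\widetilde V_r$ iff they are not chosen, and there are only $\alpha$ of them, which is negligible with weights $M/\alpha$.

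\begin{itemize}
\item The forbidden term is $\sum_{v\in F}\mathbf 1[a_v,b_v\notin S_r]$. Its expectation in the true model is $|F|\binom{\alpha-2}{r}/\binom{\alpha}{r}=|F|\frac{(\alpha-r)(\alpha-r-1)}{\alpha(\alpha-1)}$, versus $|F|q_r^2$ in the simplified model.
\item The weight term is handled the same way.
\item The $R^*$ term counts pairs $(v,s)$ with $v\in R^*(s)$, $s\in S_r$ and $a_v,b_v\notin S_r$, corrected for multiplicities. Up to $s\in\{a_v,b_v\}$ degeneracies, this is a three-element inclusion/exclusion event. Its expectation is $\frac{r(\alpha-r)(\alpha-r-1)}{\alpha(\alpha-1)(\alpha-2)}$ versus $p_rq_r^2$.
\end{itemize}

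Overcounting from $v$ lying in several $R^*(s)$ is controlled exactly as in the proof of Lemma~\ref{lemma_expectation}, via $R^{(b)}_{\max}$ and $R^{(c)}_{\max}$.

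\textbf{Step 2: bound the hypergeometric-minus-binomial differences.} One computes
\[
\frac{(\alpha-r)(\alpha-r-1)}{\alpha(\alpha-1)}-q_r^2=-\frac{q_rp_r}{\alpha-1}=O\!\left(\frac{q_r}{\alpha}\right),
\]
and the triple product differs from $p_rq_r^2$ by $O(q_r/\alpha)$ as well. With $|F|\le\alpha M$ and $|R^*(S)|\le\alpha M$ from Lemma~\ref{lemma_terms}, and $\b(V)\le n_\alpha\b_{\max}=O(\alpha M)$, each difference is $O(q_rM)$. The degenerate cases, where one of $a_v,b_v$ equals the $s$ with $v\in R^*(s)$, contribute $O(q_r M)$ directly, since each such $s$ has at most $R^{(a)}_{\max}$ partners. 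The same holds for the vertices of $S$ in $\widetilde V_r$, which give at most $(\alpha-r)\b_{\max}=O(q_rM)$. Hence
\[
|\mathbb{E}(\Psi_r)-\psi_{p_r}|=O(q_rM)=O\!\left(\frac{(\alpha-r)M}{\alpha}\right).
\]

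\textbf{Step 3: sum.} With $n_{\alpha-r}=\Theta((\alpha-r)^2)$ (using $n_t\ge t$ and $n_t\ge\frac{\ell}{2}t(t-1)$), we get
\[
\sum_{r=0}^{\alpha-1}\frac{(\alpha-r)M/\alpha}{n_{\alpha-r}}=O\!\left(\frac{M}{\alpha}\sum_{t=1}^{\alpha}\frac1t\right)=O\!\left(\frac{M\log\alpha}{\alpha}\right),
\]
as required.

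\textbf{Main obstacle.} The delicate part is Step 1 for the $R^*$ term. Here $\Psi_r$ involves $|\widetilde V_r\cap R^*(S_r)\setminus F|$, a union over $s\in S_r$ and not a sum. The inclusion--exclusion corrections must be shown to be $O(q_rM)$ rather than $O(M)$, and this has to hold uniformly in both models. I would follow whatever exact representation the proof of Lemma~\ref{lemma_expectation} uses. I expect the correction terms to involve four or more elements of $S$ being simultaneously excluded, so they would carry a factor $q_r^2$, and the difference of those factors between the two models is again $O(q_r/\alpha)$ times $O(\alpha M)$. If necessary, the residual overlap terms can simply be bounded crudely by $(\alpha-r)R^{(b)}_{\max}$, since they require a vertex of $\widetilde V_r$. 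This is $O(q_r\alpha M)$, which is too big, so the cancellation between the two models must really be used there.
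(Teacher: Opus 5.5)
Your overall route is the paper's. The paper obtains Lemma~\ref{lemma_bar_e} from $\mathbb{E}(\Psi_r-\psi_{p_r})=O(q_rM)$, which is Lemma~\ref{lemma_psipsi}~(\ref{lemma_psipsi_1}). That bound comes from comparing the true-model moments $\mathbb{E}(x^r_{s_1}\cdots x^r_{s_d})$ with $q_r^d$ (Lemma~\ref{lemma_x}) and multiplying by the $O(\alpha M)$ sizes from Lemma~\ref{lemma_terms}. The paper then sums using Lemma~\ref{lemma_sums}~(\ref{lemma_sums_3}). Your Steps~2 and~3 do exactly this, and your explicit differences $-\frac{q_rp_r}{\alpha-1}$ and $O(q_r/\alpha)$ for the triple are correct.

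The gap is in Step~1 for the $R^*$ term, together with your ``main obstacle'' paragraph, so the argument is unfinished as written. You appeal to an overcounting control ``via $R^{(b)}_{\max}$ and $R^{(c)}_{\max}$'' in Lemma~\ref{lemma_expectation}, but no such step exists there. You also posit inclusion--exclusion corrections and degenerate cases, admit that the crude bound on them is too large, and never supply the cancellation you say is required.

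The missing observation is that none of these corrections exist.
\begin{itemize}
\item Because $S$ is feasible, no two of its elements are $R$-equivalent. Since $R^*(s)\subset R(s)$ and distinct equivalence classes are disjoint, the sets $R^*(s)$ for $s\in S$ are pairwise disjoint. By definition they are also disjoint from $F$ and from $S$.
\item $R^*(s)\cap\overline{N}(s)=\emptyset$, so for $v\in R^*(s)$ the vertex $s$ is never one of the two $S$-neighbours $a_v,b_v$ of $v$. The paper uses these facts in Lemma~\ref{lemma_expect}~(\ref{lemma_expect_dr}) and Lemma~\ref{lemma_vu}~(\ref{lemma_vu_r}).
\end{itemize}
Write $s_v$ for the unique $s\in S$ with $v\in R^*(s)$. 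Then you have the exact identity
\[
|\widetilde{V}_r\cap R^*(S_r)|=\sum_{v\in R^*(S)}\mathbf{1}[s_v\in S_r]\,\mathbf{1}[a_v,b_v\notin S_r],
\]
and the same holds with weights $\b(v)$ for $\b(\widetilde{V}_r\cap R^*(S_r))$.

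Its true-model expectation is exactly $|R^*(S)|\frac{r(\alpha-r)(\alpha-r-1)}{\alpha(\alpha-1)(\alpha-2)}$, against $|R^*(S)|\,p_rq_r^2$ in the simplified model. With this, Step~1 is an exact sum of indicators and your Step~2 comparison gives $O(q_rM)$. No four-element terms and no further cancellation are needed, and the proof is complete.
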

	
	\begin{lemma}\label{lemma_bar_bound}
		If $k=0$, we have $|1+\overline{\Sigma}|=O(M\log\alpha)$.
	\end{lemma}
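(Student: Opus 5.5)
The plan is to bound each summand $\overline{Q}_r$ of $\overline{\Sigma}$ deterministically, for every permutation $\pi$, by $O(M/(\alpha-r))$. Summing over $r=0,\ldots,\alpha-1$ then gives a harmonic sum $O(M\sum_{t=1}^{\alpha}1/t)=O(M\log\alpha)$. The additional $1$ is absorbed because $M\geq1$. Since $\psi_{p_r}=\mathbb{E}(\Psi_{p_r})$ is an average of values of $\Psi$ computed from subsets of $S$, it suffices to show the following: for any subset $S'\subset S$ with $|S'|=r$, the quantity $\Psi$ computed from $S'$ is $O(M(\alpha-r))$. The triangle inequality then gives $|\Psi_r-\psi_{p_r}|=O(M(\alpha-r))$ in both models. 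Note that in the simplified model $|S_p|$ is random, so for the bound on $\psi_{p_r}$ I will use the cruder bound $O(M(\alpha-|S_p|))$ together with $\mathbb{E}(\alpha-|S_p|)=\alpha-r$, which is linear and therefore harmless.

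Fix $S'\subset S$ with $|S'|=r$, and write $\widetilde{V}':=V\setminus\overline{N}(S')$ and $t:=\alpha-r$. Assume first $t\geq2$. By Proposition~\ref{proposition_hereditary}, the induced graph $H$ on $\widetilde{V}'$ is $2$-uniform with $\alpha(H)=t$. Because $k=0$, the set $S$ is an MIS of $G$, so $S\setminus S'$ is an MIS of $H$; this is the crucial place where $k=0$ is used. I split $\Psi=|\widetilde{V}'\cap(F\cup R^*(S'))|+\b(\widetilde{V}'\setminus(F\cup R^*(S')))$ into three pieces.
\begin{enumerate}[(a)]
	\item For the forbidden vertices, I rerun the double counting of Lemma~\ref{lemma_terms}~(\ref{lemma_terms_f}) inside $H$. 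Every $v\in F\cap\widetilde{V}'$ lies in $\overline{N}_H(s)$ for at least one $s\in S\setminus S'$. Since $\overline{N}_H(s)\subset\overline{N}_G(s)$, this gives $|F\cap\widetilde{V}'|\leq tF_{\max}\leq tM$.
	\item For the $R$-equivalent vertices, the same double counting with the MIS $S\setminus S'$ of $H$ gives $|R^*(S')\cap\widetilde{V}'|\leq\sum_{s\in S\setminus S'}|\overline{N}(s)\cap R^*(S')|\leq tR_{\max}^{(b)}\leq tM$, using that $S'$ is an independent set.
	\item For the weights, recall that $\b\geq0$ after the earlier shift. Then $0\leq\b(\cdot)\leq\b_{\max}|\widetilde{V}'|=\b_{\max}n_t=O((M/\alpha)t^2)=O(Mt)$, by Corollary~\ref{corollary_hereditary} and $t\leq\alpha$.
\end{enumerate}
The cases $t\in\{0,1\}$ are trivial, since then $|\widetilde{V}'|=n_t\leq1$ by Corollary~\ref{corollary_hereditary}.

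For the denominator, $n_t=\ell\binom{t}{2}+t\geq t$, and $n_t=\Omega(t^2)$ for $t\geq1$ since $\ell$ is fixed. Hence $|\overline{Q}_r|=O(Mt/t^2)=O(M/(\alpha-r))$, which is the per-term bound claimed at the start. Summing over $r$ and adding $1\leq M$ gives $|1+\overline{\Sigma}|=O(M\log\alpha)$.

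The main obstacle I expect is piece (b): controlling $|R^*(S')\cap\widetilde{V}'|$ linearly in $\alpha-r$ rather than by the naive $rR_{\max}^{(a)}$, which would be far too large for small $\alpha-r$. The route I would take is the one above. It counts pairs against the complementary MIS $S\setminus S'$ of the hereditary graph $H$, and it relies on the definition of $R_{\max}^{(b)}$ being a maximum over all independent sets, so that it applies with $S'$ in that role.
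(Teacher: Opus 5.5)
Your proposal is correct and follows essentially the paper's route, which goes through Lemma~\ref{lemma_universal}: for every permutation you get the deterministic bound $|\Psi_r-\psi_{p_r}|=O(M(\alpha-r))$ by double counting against $S\setminus S_r$ with $F_{\max}+R_{\max}^{(b)}$ and by the weight bound $\b_{\max}n_{\alpha-r}$, and then $n_{\alpha-r}\geq(\alpha-r)^2$ turns the sum into a harmonic sum. The only minor difference is that you bound $\psi_{p_r}$ by averaging your pointwise bound, which is linear in $|S_{p_r}|$, whereas the paper reads the sharper $\psi_{p_r}^{(2)}=O(M(\alpha-r)^2/\alpha)$ directly off the explicit expectation formulas in Lemma~\ref{lemma_psi2}.
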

	
	\begin{lemma}\label{lemma_delta}
		If $k=0$, then for any $\Omega(1)\leq\delta\leq1-\Omega(1)$, we have $\mathbb{E}((\Pi-(1+\overline{\Sigma}))1_{\mathcal{P}^{\delta}})=O(\mathscr{E})$.
	\end{lemma}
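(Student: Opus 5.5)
The plan is a second-order Taylor expansion of $\log\Pi$ on $\mathcal{P}^{\delta}$, with all moments controlled by the concentration estimates that feed Lemmas~\ref{lemma_bar_e} and \ref{lemma_bar_bound} (Vu's inequality for $\Psi_p$ in the simplified model, transferred to the true model). Since $k=0$, we have $n_k=0$ and $K=0$. On $\mathcal{P}^{\delta}$ we therefore have $n_{\alpha-r}-\Psi_r\ge\delta n_{\alpha-r}$ for every $r$.

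\emph{Step 1: compare $\widetilde{Q}_r$ with $\overline{Q}_r$.} The identity
$$\widetilde{Q}_r=\overline{Q}_r\,\frac{n_{\alpha-r}}{n_{\alpha-r}-\Psi_r}=\overline{Q}_r+\overline{Q}_r\,\frac{\Psi_r}{n_{\alpha-r}-\Psi_r}$$
gives $|\widetilde{Q}_r-\overline{Q}_r|\le\delta^{-1}Q_r\Psi_r/n_{\alpha-r}$ on $\mathcal{P}^{\delta}$. Also $|\widetilde{Q}_r|\le\delta^{-1}Q_r$ there, with $\delta^{-1}=O(1)$ and $\widetilde{Q}_r\ge-1+\delta$. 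Hence $\log(1+\widetilde{Q}_r)=\widetilde{Q}_r+O(\widetilde{Q}_r^2)$ uniformly, and
$$\log\Pi=\overline{\Sigma}+\mathcal{R},\qquad |\mathcal{R}|=O\Big(\sum_r Q_r^2+\sum_r Q_r\frac{\Psi_r}{n_{\alpha-r}}\Big).$$
Then $|e^{x}-1-y|\le|x-y|e^{|x|}+\tfrac12 y^2e^{|y|}$ with $x=\overline{\Sigma}+\mathcal{R}$, $y=\overline{\Sigma}$ gives
$$|\Pi-(1+\overline{\Sigma})|\le\big(|\mathcal{R}|+\overline{\Sigma}^2\big)e^{O(\Sigma)}\quad\text{on }\mathcal{P}^{\delta}.$$
By Cauchy--Schwarz, it then suffices to prove three bounds: $\mathbb{E}(e^{C\Sigma})=O(1)$ for a fixed $C$; $\mathbb{E}(\Sigma^4)^{1/2}=O(\mathscr{E})$, which bounds $\overline{\Sigma}^2$ and $(\sum_rQ_r^2)$; and $\mathbb{E}\big((\sum_rQ_r\Psi_r/n_{\alpha-r})^2\big)^{1/2}=O(\mathscr{E})$.

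\emph{Step 2: per-step tail bounds.} Write $t=\alpha-r$, so $q_r=t/\alpha$ and $n_t\asymp\tfrac{\ell}{2}t^2$. Lemma~\ref{lemma_expectation} and Lemma~\ref{lemma_terms} give $\psi_{p_r}=O(q_r^2\alpha M+q_rM^2)=O(tM)$. In the simplified model, $\Psi_p$ is a polynomial in the independent indicators of $S_p$, and each indicator has influence $O(M)$ by the definitions of $F_{\max}$ and $R^{(a)}_{\max},R^{(b)}_{\max},R^{(c)}_{\max}$. Vu's inequality should then give
$$\mathbb{P}\big(|\Psi_p-\psi_p|\ge\lambda\sqrt{M\,tM}\,\big)\le e^{-c\lambda}\quad\text{for }\lambda\le\lambda_{\max}\text{ with }\lambda_{\max}\text{ of order }\log\alpha.$$
Conditioning on $|S_p|=r$ transfers this to the true model at a polynomial cost $O(\sqrt{\alpha})$, which is absorbed by taking $\lambda\gtrsim\log\alpha$. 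Consequently $Q_r\lesssim \lambda M\sqrt{t}/t^2$ and $\Psi_r/n_t\lesssim M/t$ with overwhelming probability. Summing over $t\ge t_0$ gives $\Sigma=O(M\sqrt{\log\alpha}\,t_0^{-1/2})$, $\sum_rQ_r^2=O(M^2\log\alpha/t_0^2)$, and $\sum_rQ_r\Psi_r/n_t=O(M^2\sqrt{\log\alpha}\,t_0^{-3/2})$. These are summed in a mixed way so that the $\alpha$-scale bulk contributes $M^2\log\alpha/\alpha$.

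\emph{Main obstacle: the final steps, where $t$ is small.} There the denominators $n_t$ are small and the crude bounds above do not give $O(\mathscr{E})$. I would cut at $t_0\asymp\alpha/(\log\alpha)^2$, scaled by the size of $M$. For $t<t_0$, instead of concentration I would use two deterministic facts: $\Psi_r\le|\widetilde V_r\cap(F\cup R^*(S_r))|+\b(V_r)=O(tM)$, by the argument of Lemma~\ref{lemma_terms}~(\ref{lemma_terms_f}) applied in the hereditary subgraph (Proposition~\ref{proposition_hereditary}); and the lower bound $\delta$ on $\mathcal{P}^{\delta}$. Together with Lemma~\ref{lemma_bar_bound}, these bound that tail's contribution. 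The expectation of the tail terms is then weighted by the probability that the relevant vertices of $F$ or $R^*$ survive to step $r$, which is of order $q_r^2$ or $q_r$ by Lemma~\ref{lemma_expectation}. This should yield the $M(\log\alpha)^2/\alpha$ part of $\mathscr{E}$. Finally, $\mathbb{E}(e^{C\Sigma})=O(1)$ follows from the exponential tails in Step~2 together with $M=O(\alpha^{1/2}/(\log\alpha)^2)$, since this makes the typical size of $\Sigma$ equal to $o(1)$.
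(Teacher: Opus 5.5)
There is a genuine gap in the reduction at the end of Step~1. After Cauchy--Schwarz you need $\mathbb{E}(\Sigma^4)^{1/2}=O(\mathscr{E})$ and $\mathbb{E}\bigl((\sum_rQ_r\Psi_r/n_{\alpha-r})^2\bigr)^{1/2}=O(\mathscr{E})$. Both are false in general, because $\Sigma$ has a heavy tail coming from the last few steps of the process.

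Suppose the last two vertices $s_{\alpha-2},s_{\alpha-1}$ of the random ordering of $S$ have a common neighbour $v\in F$. At $r=\alpha-2$ this $v$ lies in $\widetilde{V}_r\setminus V_r$, so $\Psi_r\geq1-o(1)$. Meanwhile $\psi_{p_r}=O(M^2/\alpha)$ by Lemma~\ref{lemma_expectation} and $n_2=\ell+2$, so $Q_r$ and $\overline{Q}_r$ are both $\Omega(1)$. This event has probability $\Theta(M/\alpha)$ whenever $S$ has $\Theta(\alpha M)$ such pairs. A concrete case is $G_{\d}$ with $F=F_{\d}$ and a $2$-regular degree sequence: there $M=O(1)$, any two edges of $S$ sharing an endpoint qualify, and the event is compatible with $\mathcal{P}^{\delta}$ for $\delta=\tfrac12$.

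So in that example $\mathbb{E}(\overline{\Sigma}^41_{\mathcal{P}^{\delta}})=\Omega(1/\alpha)$, and your route can give at best $\Omega(\alpha^{-1/2})$, whereas $\mathscr{E}\asymp(\log\alpha)^2/\alpha$. Such rare end-of-process events contribute $O(1)$ to $\Pi-(1+\overline{\Sigma})$. They have to be charged at their probability, not at its square root, so no Cauchy--Schwarz splitting of this kind can work.

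The other ingredients are not in place either.
\begin{itemize}
\item $\mathbb{E}(e^{C\Sigma})=O(1)$ is true, but it does not follow from $\Sigma$ being typically $o(1)$; you need tail bounds at small $t$. Your deterministic estimate $\Psi_r=O(tM)$ there only gives $Q_r=O(M/t)$, hence $\Sigma=O(M\log\alpha)$ and $e^{C\Sigma}\leq\alpha^{O(M)}$. The paper gets the tail from large deviations of $\Delta_r^{(2)}$ via interfering pairs (Lemmas~\ref{lemma_stronger} and \ref{lemma_pb}) and a layered union over the events $A(\lambda_i)$ (Lemma~\ref{lemma_an}).
\item In the bulk, your per-step deviation $\lambda M\sqrt{t}$ is too crude. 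It gives $\sum_{t\geq t_0}Q_r\lesssim\lambda Mt_0^{-1/2}$, whose square is of order $M^2(\log\alpha)^4/\alpha$ for $t_0\asymp\alpha/(\log\alpha)^2$ and $\lambda\asymp\log\alpha$. One needs the $q_r$-dependent size $\sqrt{q_r^3\alpha M^2}+\sqrt{q_r^2\alpha M}$ instead.
\end{itemize}

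The paper avoids Cauchy--Schwarz altogether by splitting $\mathcal{P}^{\delta}$ along the event $A(\lambda_0)\cap B_1(t_1)\cap B_0(t_0)$ of Lemma~\ref{lemma_sigma2}. Off this event it uses $|\Pi-(1+\overline{\Sigma})|\leq\Pi+|1+\overline{\Sigma}|$ with first-moment bounds:
\begin{itemize}
\item outside $A(\lambda_0)\cap B_1(t_1)$, exponential moments give $O(M^2/\alpha)$;
\item on $\mathcal{P}^{\delta}\cap A(\lambda_0)\cap B_1(t_1)$, both $\Pi$ and $|1+\overline{\Sigma}|$ are $O(1)$, while $\mathbb{P}(\mathcal{P}\setminus B_0(t_0))=O(M(\log\alpha)^2/\alpha)$.
\end{itemize}
On the event itself $\Sigma=o(1)$, so the Taylor expansion needs no exponential factor. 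The quadratic term is then bounded by Titu's lemma with weights $1/(\alpha-r)$ together with the exact hypergeometric second moments of Lemma~\ref{lemma_psipsi} (this is Lemma~\ref{lemma_error}), which is precisely what produces $\mathscr{E}$.
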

	
	\begin{lemma}\label{lemma_sigma}
		Let $C=\Omega(1)$ be a parameter.
		\begin{enumerate}[(i)]
			\item\label{lemma_sigma_zero} If $k=0$ and $$C\leq\left(8\left(1-\tfrac{\log M}{\log\alpha}\right)+\Omega(1)\right)^{-1}\tfrac{\log\alpha}{\log\log\alpha},$$ then for any event $\mathcal{Q}\subset\mathcal{P}^{\delta}$ with $\delta:=\tfrac1{C}$, we have $$\mathbb{E}(\Pi1_{\mathcal{Q}})\leq(1+o(1))\mathbb{P}(\mathcal{Q})+O(\mathscr{E}).$$
			\item\label{lemma_sigma_large} If $k\geq1$ and $$C\leq\left(8\left(1-\tfrac{\log M}{\log\alpha}\right)+2+\Omega(1)\right)^{-1}\tfrac{\log\alpha}{\log\log\alpha},$$ then $\mathbb{E}(e^{C\hat{\Sigma}})\leq e^{o(k)}$.
		\end{enumerate}
	\end{lemma}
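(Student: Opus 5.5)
The plan is to reduce both parts to tail bounds for $\Sigma$ (respectively $\hat{\Sigma}$). Those tail bounds come from the simplified model via Vu's concentration inequality, and are then transferred back to the true model.

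\textbf{Reduction.} For part (\ref{lemma_sigma_zero}), take $\delta=1/C$. By \eqref{equation_pdelta}, on $\mathcal{Q}\subset\mathcal{P}^{\delta}$ we have $\Pi\le e^{C\Sigma}$. Fix a threshold $\varepsilon$ with $C\varepsilon=o(1)$; for instance $\varepsilon=(\log\alpha)^{-2}$ should be compatible with the allowed size of $C$. Then
\begin{equation}
\mathbb{E}(\Pi 1_{\mathcal{Q}})\le e^{C\varepsilon}\,\mathbb{P}(\mathcal{Q})+\mathbb{E}\bigl(e^{C\Sigma}1_{\{\Sigma>\varepsilon\}}\bigr).
\end{equation}
The first term is $(1+o(1))\mathbb{P}(\mathcal{Q})$, so it remains to show that the second term is $O(\mathscr{E})$. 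Part (\ref{lemma_sigma_large}) has the same structure with $\hat{\Sigma}$ in place of $\Sigma$. There the target is $\mathbb{E}(e^{C\hat{\Sigma}})\le e^{o(k)}$, and only the positive parts of the deviations matter. The denominators are now $n_{\alpha-r}-n_k$, which vanish at rate $\asymp\ell k(\alpha-k-r)$ as $r\to\alpha-k$, so the final steps must be treated separately.

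\textbf{Concentration per step.} For fixed $r$, I would first bound $\Psi_p$ in the simplified model.
\begin{itemize}
\item A vertex $v\notin S$ lies in $\widetilde{V}_p$ exactly when its (at most two, by $2$-uniformity) neighbours in $S$ are absent from $S_p$. Membership in $R^*(S_p)$ is an OR over a small set of $S$-vertices.
\item Hence $\Psi_p=|\widetilde{V}_p\cap(F\cup R^*(S_p))|+\b(V_p)$ is, up to lower-order inclusion--exclusion corrections, a polynomial of bounded degree (at most $3$) in the independent indicators $x_s$, $s\in S$.
\item Its partial derivatives are controlled by $F_{\max}$, $R^{(a)}_{\max}$, $R^{(b)}_{\max}$, $(R^{(c)}_{\max})^2$ and $\b_{\max}\alpha$, that is, by $M$.
\end{itemize}
Vu's inequality then gives
\begin{equation}
\mathbb{P}\bigl(|\Psi_p-\psi_p|\ge c\lambda^{3}\sqrt{E_0E_1}\bigr)\le e^{-\Omega(\lambda)},\qquad E_0=O(q\alpha M),\quad E_1=O(M).
\end{equation}
The uniform subset $S_r$ of the true model is obtained from $S_{p_r}$ by conditioning on $|S_{p_r}|=r$, an event of probability $\Omega(\alpha^{-1/2})$. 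The same tail therefore holds in the true model at the cost of a factor $O(\sqrt{\alpha})$.

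With $\lambda\asymp\log\alpha$ and a union bound over $r$, with probability $1-\alpha^{-\omega(1)}$ we get, for all $r$,
\begin{equation}
Q_r\lesssim\frac{M(\log\alpha)^3\sqrt{q_r\alpha}}{n_{\alpha-r}}\asymp\frac{M(\log\alpha)^3}{\sqrt{\alpha}\,(\alpha-r)^{3/2}}.
\end{equation}
Summing over $r$ outside a final window gives $\Sigma=o(\varepsilon)$ there, using $M=O(\alpha^{1/2}/(\log\alpha)^2)$. Integrating the tail $\mathbb{P}(\Sigma>x)$ against $Ce^{Cx}$ converts this into the required $O(\mathscr{E})$ bound, provided the final window is handled.

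\textbf{Main obstacle: the final window.} The hard part is the range where $\alpha-r$ is small, say $\alpha-r\le\alpha^{\theta}$. There concentration is too weak, and $Q_r$ can be as large as about $M/(\alpha-r)$. This already yields the crude deterministic bound $\Sigma=O(M\log\alpha)$, consistent with Lemma~\ref{lemma_bar_bound}, so $e^{C\Sigma}$ could in principle be as large as $\alpha^{O(CM)}$.

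My approach is to bound the probability that the late-stage deviations are large, using a product or exchangeability structure. Near the end the relevant configuration is a small random subset of $S$ of size $\alpha-r$. The event that $\Psi_r$ deviates by about $j\,M$ requires about $j$ specific ``bad'' coincidences. Each such coincidence has probability roughly $(M/\alpha)^{\Theta(1)}$, so it costs a factor of order $\alpha^{-\Theta(1)(1-\log M/\log\alpha)}$, while the resulting gain in $e^{C\Sigma}$ is only $e^{O(Cj\log\log\alpha)}$ after dyadic grouping of $r$.

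Balancing these two effects is what produces the constraint
\begin{equation}
C\le\bigl(8(1-\tfrac{\log M}{\log\alpha})+\Omega(1)\bigr)^{-1}\frac{\log\alpha}{\log\log\alpha}.
\end{equation}
I expect the constant $8$ to come from the degree of the polynomial and the dyadic bookkeeping. I would first try to make this sharp by a moment computation over the last $\alpha^{\theta}$ steps, choosing $\theta$ to optimise.

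For $k\ge1$ there is an extra $+2$ in the constraint. I expect it to absorb the smaller denominators $n_{\alpha-r}-n_k$ near $r=\alpha-k$, and it should yield only the $e^{o(k)}$ loss claimed in part (\ref{lemma_sigma_large}).
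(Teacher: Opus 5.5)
\textbf{Part (i): the reduction fails.} Your intermediate target $\mathbb{E}(e^{C\Sigma}1_{\{\Sigma>\varepsilon\}})=O(\mathscr{E})$ is false in general.
\begin{itemize}
\item Take bounded $M$, e.g.\ the configuration space of a fixed degree $d\geq2$, so that $\mathscr{E}\asymp(\log\alpha)^2/\alpha$. Take $C\asymp\log\alpha/\log\log\alpha$, which is the size actually used later ($C=1/\delta_2$).
\item With probability $\Theta(1/\alpha)$, the last two elements $s_{\alpha-2},s_{\alpha-1}$ of the random permutation are edges sharing a vertex. Then a self-loop pair lies in $N(s_{\alpha-2})\cap N(s_{\alpha-1})\cap F$.
\item On this event $\Psi_r\geq\Delta_r\geq1$, while $\psi_{p_r}=O(t^2/\alpha)$ for $2\leq t=\alpha-r\leq\alpha^{1/3}$. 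Hence $\Sigma\geq\sum_{t\geq2}(1-o(1))/n_t\geq c_0>0$, and the event typically lies in $\mathcal{P}^{\delta}$.
\item Its contribution is therefore of order at least $\alpha^{-1}e^{c_0C}=\alpha^{-1+\Omega(1/\log\log\alpha)}$, which exceeds every $(\log\alpha)^{O(1)}/\alpha$.
\end{itemize}
The loss comes from replacing $\Pi$ by $e^{C\Sigma}$ on this event, whereas in fact $\Pi=O(1)$ there.

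The paper's argument is two-tiered instead.
\begin{itemize}
\item On $\mathcal{P}^{\delta}\cap A(\lambda_0)\cap B_1(t_1)$ (at most one interfering pair late), it bounds $\Pi=O(1)$ directly from the true denominators. These satisfy $n_{\alpha-r}-\Psi_r=\Omega((\alpha-r)^2)$, except for boundedly many final steps, where $n_{\alpha-r}-\Psi_r\geq\tfrac12|V_r|\geq\tfrac12(\alpha-r)$.
\item It then pays only $\mathbb{P}(\mathcal{P}\setminus B_0(t_0))=O(M(\log\alpha)^2/\alpha)$, which is precisely the second term of $\mathscr{E}$.
\item The bound $\Pi\leq e^{C\Sigma}$ is used only where two late defects occur. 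That event has probability $O((t_1^2M/\alpha)^2)$, small enough to absorb $e^{C\Sigma}$.
\end{itemize}
In part (ii) the crude bound is harmless for the $\hat\Sigma^{(2)}$ part, because the target there is only $e^{o(k)}$.

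\textbf{The late window and part (ii) are left heuristic.} The mechanisms you guess are not the operative ones.
\begin{itemize}
\item The late window is controlled through \emph{interference}. $\Delta_r^{(2)}$ is at most $\ell$ times the number of interfering pairs among the $t$ remaining elements of $S$. So a deviation $\rho$ forces one remaining vertex to interfere with at least $2\rho/(\ell t)$ others, while each vertex interferes with at most $2M$ others.
\item A hypergeometric tail then gives probability at most about $t^{3/2}\exp(-\tfrac{\rho}{\ell t}(\log\alpha-O(\log\log\alpha)))$. With thresholds $\ell t\lambda/(10\log\alpha)$, this yields $\mathbb{P}(\mathcal{P}\setminus A(\lambda))\leq e^{-\lambda/10+o(\lambda)}$, while $\Sigma^{(2)}\leq\tfrac{2\lambda\log\log\alpha}{5\log\alpha}+O(\lambda/\log\alpha)$ on $A(\lambda)$.
\item Peeling over $\lambda_i=20(1+\varepsilon)^i(1-\tfrac{\log M}{\log\alpha})\log\alpha$ gives a contribution $O(M^2/\alpha)$ exactly when $C<\tfrac18(1-\tfrac{\log M}{\log\alpha})^{-1}\tfrac{\log\alpha}{\log\log\alpha}$. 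So the constant is $8=20\cdot\tfrac25$, not a polynomial degree.
\end{itemize}

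The $+2$ in part (ii) comes from a term you never isolate: the vertices of $F\cup R^*(S)$ with exactly one neighbour in $S$.
\begin{itemize}
\item Their contribution is the linear sum $\sum_{s\in S\setminus S_r}f(s)$, with $f(s)\leq\ell k$ and $\sum_s f(s)\leq2kM$. Against denominators $n_{\alpha-r}-n_k\geq t(t+\ell k)$, each of the last $\approx\ell k$ steps can contribute $\Theta(1)$.
\item The paper proves $\mathbb{E}(e^{C\hat\Sigma^{(1)}})\leq e^{o(k)}$ for $C\leq(\tfrac12-\Omega(1))\tfrac{\log\alpha}{\log\log\alpha}$. It uses negative association (comparison with independent sampling) together with an extremal argument over $f$.
\item It then merges this with the $\hat\Sigma^{(2)}$ bound by convexity of $\exp$, with weights proportional to $8(1-\tfrac{\log M}{\log\alpha})+\varepsilon$ and $2+\varepsilon$. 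The remaining pieces contribute $O(M^2\log\alpha/\alpha)$ or nothing.
\end{itemize}

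\textbf{Bulk estimate.} There is also an arithmetic slip. Your $E_0=O(q\alpha M)$ and $E_1=O(M)$ give $Q_r\lesssim\lambda^3M(\alpha-r)^{-3/2}$, without your extra factor $\alpha^{-1/2}$. Its tail sum is not small when $M$ is near $\alpha^{1/2}/(\log\alpha)^2$. You need the sharper $E_0\approx q^2\alpha M$ and $E_1\approx qM$, and for the cubic $R^*(S_p)$ term you need $q^3\alpha M$, $q^2M$ and $qM^{1/2}$. That last bound is where $(R^{(c)}_{\max})^2\leq M$ enters.
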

	
	\begin{lemma}\label{lemma_remaining_p}
		For $0<\delta\leq1-\Omega(1)$, if $k=0$, we have $\mathbb{P}(\mathcal{P}\setminus\mathcal{P}^{\delta})=O(M/\alpha)$.
	\end{lemma}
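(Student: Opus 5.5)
The plan is to reduce the event $\mathcal{P}\setminus\mathcal{P}^{\delta}$ to a purely combinatorial statement about which elements of $S$ are chosen last. Since $k=0$, we have $n_k=0$ and $|S|=\alpha$. The event $\mathcal{P}^{\delta}$ then reads $\Psi_r\le(1-\delta)n_{\alpha-r}$ for all $r$.

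\emph{Step 1 (deterministic reduction).} Write $t:=\alpha-r$ and $L_t:=S\setminus S_r$, which is a uniformly random $t$-subset of $S$. The induced graph on $\widetilde V_r$ is $2$-uniform with MIS $L_t$ by Proposition~\ref{proposition_hereditary}. Hence every $v\in\widetilde V_r\setminus L_t$ is adjacent to exactly two vertices of $L_t$, and these are its only neighbours in $S$. The elements of $L_t$ are never bad, because $S$ is feasible. Put $B:=F\cup R^*(S)$ and note $\widetilde V_r\setminus V_r\subset B$. Then
\[
\Psi_r\le e_H(L_t)+\b_{\max}n_t ,
\]
where $H$ is the multigraph on $S$ in which each $v\in B\setminus S$ contributes one edge between its two $S$-neighbours, and $e_H(L_t)$ counts the edges of $H$ inside $L_t$. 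As in Lemma~\ref{lemma_terms}, each $a\in S$ is incident to at most $|\overline N(a)\cap F|+|\overline N(a)\cap R^*(S)|\le F_{\max}+R_{\max}^{(b)}\le 2M$ edges of $H$. Since $\b_{\max}=O(M/\alpha)=o(1)$, failure at step $r$ forces $e_H(L_t)\ge c\,n_t\ge c\,t^2$ for some constant $c=c(\delta)>0$, using $n_t\ge t^2$ for $\ell\ge2$. On the other hand, $e_H(L_t)\le tM$ by the degree bound. So failure is impossible unless $t\le M/c$, and only the last $O(M)$ steps matter.

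\emph{Step 2 (probabilistic bound for small $t$).} Here $t\ge2$ is needed for any edge to exist. It remains to bound $\sum_{2\le t\le M/c}\mathbb{P}(e_H(L_t)\ge ct^2)$. The total edge weight is $|H|\le|B|\le 2\alpha M$ by Lemma~\ref{lemma_terms}~(\ref{lemma_terms_f}),(\ref{lemma_terms_rs}). A fixed edge lies in $L_t$ with probability $\binom{t}{2}/\binom{\alpha}{2}\le t^2/\alpha^2$. This gives $\mathbb{E}\,e_H(L_t)=O(t^2M/\alpha)$, and Markov yields $O(M/\alpha)$ for each $t$. That settles $t=2$ (and any bounded $t$), but summing over $t$ loses a factor $M$.

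\emph{Step 3 (the main obstacle).} For larger $t$ I would use higher factorial moments. Let $j:=\lceil c t^2/2\rceil$. The event $e_H(L_t)\ge ct^2$ implies that $L_t$ contains at least $j$ edges of $H$ spanning some vertex set $W$. Because degrees are at most $2M$ and $t\le M/c$, we can count the configurations by the number $w=|W|\le t$ of spanned vertices. There are at most $\alpha\,(2M)$ ways to pick each new edge attached to a new endpoint. The probability that a fixed $w$-set lies in $L_t$ is at most $(t/\alpha)^w$. A $j$-edge configuration needs $w\ge\sqrt{2j}$ vertices and at least $w/2$ "free" choices of the form $\alpha M$ factor. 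This should give a bound like $\binom{t}{\cdot}$-weighted $(O(M t/\alpha))^{w/2}$. Summed over $t\le M/c$ and using $M^2=o(\alpha)$, this is dominated by the $t=2$ term, $O(M/\alpha)$. Making this counting clean, in particular controlling multi-edges and the choice of which edges to count so that the exponent really grows with $t$, is where I expect the difficulty. If it proves awkward, a fallback is a stochastic comparison of the uniform $t$-subset $L_t$ with a binomial subset of density $2t/\alpha$. One would then apply Vu-type or Kim--Vu concentration to the degree-$2$ polynomial $e_H$, whose expectation $O(t^2M/\alpha)$ is far below $ct^2$. This should give tail probabilities summable in $t$ to $O(M/\alpha)$.
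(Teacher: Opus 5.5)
Your Steps~1 and~2 are sound and essentially match the paper. The reduction of failure at $t=\alpha-r$ to $e_H(L_t)\ge c\,n_t$ is Lemma~\ref{lemma_boundpt} with $k=0$, the cutoff $t\le M/c$ is Lemma~\ref{lemma_t}, and the first-moment bound is Lemma~\ref{lemma_pd0}.

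The gap is Step~3, which carries the real content of the lemma. For $t$ beyond a constant you need a bound like $\mathbb{P}(e_H(L_t)\ge ct^2)\le\alpha^{O(1)-\Omega(t)}$, and neither of your routes gives it.

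\emph{Your counting.} As described, an edge whose endpoints are placed freely costs $\alpha\cdot 2M$ but buys only a factor $(t/\alpha)^2$. So $w/2$ such choices give $(2Mt^2/\alpha)^{w/2}$, not $(O(Mt/\alpha))^{w/2}$. For $t$ near $M/c$ and $M\gg\alpha^{1/3}$, which the hypothesis allows, this base exceeds $1$.

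\emph{Your fallback.} It fails for a different reason. For a degree-$2$ polynomial whose first and second derivative expectations are $O(1)$, Kim--Vu or Vu only give tails $e^{-\Omega(\lambda)}$ for deviations of order $\lambda^2$. Here that means $e^{-\Omega(t)}$, which loses the factor $\log(\alpha/M)$ in the exponent. Then $\sum_{t\ge t_0}e^{-\Omega(t)}=O(M/\alpha)$ forces $t_0\gtrsim\log(\alpha/M)$, and Markov over $t<t_0$ costs $O(M\log\alpha/\alpha)$. This is exactly why the paper supplements Lemma~\ref{lemma_vu} with Lemma~\ref{lemma_stronger} when $\alpha-k-r<\lambda\log\alpha$.

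The missing idea is to localize the dense configuration to a single vertex.
\begin{itemize}
\item Each pair in $S$ carries at most $\ell$ edges of $H$. Hence $e_H(L_t)\ge ct^2$ forces some $s\in L_t$ to have at least $i_0:=\lceil 2ct/\ell\rceil$ distinct $H$-neighbours in $L_t$ (Lemma~\ref{lemma_delta2}).
\item Conditional on $s\in L_t$, the set $L_t\setminus\{s\}$ is a uniform $(t-1)$-subset of $S\setminus\{s\}$, and $s$ has at most $2M$ $H$-neighbours. Lemma~\ref{lemma_subset} then bounds the probability of hitting $i\ge i_0$ of them by $O(\sqrt t)\,(2eMt/(i\alpha))^{i}$. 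Since $t/i=O(1)$, the base is $O(M/\alpha)\le\alpha^{-1/2+o(1)}$.
\item A union bound over $s$ (total weight $\alpha\cdot t/\alpha$) gives $\exp(O(\log\alpha)-\Omega(t\log\alpha))$. This is Lemma~\ref{lemma_pd}~(\ref{lemma_pd_2small}).
\end{itemize}
Use this bound for $t$ above a large constant $t^*$, and your Markov bound for $t\le t^*$; together they give $O(M/\alpha)$.

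Alternatively, you can repair your own counting by rooting only connected sets:
\begin{itemize}
\item The simple graph underlying $H[L_t]$ has at least $ct^2/\ell$ edges, so it has a component on at least $i_0$ vertices.
\item A graph of maximum degree $2M$ on $\alpha$ vertices has at most $\alpha(2eM)^{w-1}$ connected $w$-sets, each lying in $L_t$ with probability at most $(t/\alpha)^w$.
\item So the probability is at most $t\,(2eMt/\alpha)^{i_0-1}$. Since $Mt/\alpha=O(M^2/\alpha)=o(1)$, this sums over $t$ beyond a large constant to $o(M/\alpha)$.
\end{itemize}
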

	
	\begin{lemma}\label{lemma_remaining_e}
		Let $0<\delta<1$ with $\delta=O(\tfrac{\log\log\alpha}{\log\alpha})$, such that there exist $\mu_1,\mu_2>0$ with $\mu_2=\Omega(1)$, $\mu_1+\mu_2=1$, and
		\begin{align}
			\delta&\geq\tfrac1{\mu_1}\left(8\left(1-\tfrac{\log M}{\log\alpha}\right)+2+\Omega(1)\right)\tfrac{\log\log\alpha}{\log\alpha},
			\\\delta\left(1-\tfrac{\log M}{\log\alpha}\right)&\leq4\left(\tfrac12-\tfrac{\log M}{\log\alpha}\right)-(\tfrac{1}{\mu_2}+\Omega(1))\tfrac{\log\log\alpha}{\log\alpha}.
		\end{align}
		Then we have $$\mathbb{E}(\Pi1_{\mathcal{P}\setminus\mathcal{P}^{\delta}})\leq\tfrac{M}{\alpha}\exp\left(O\left(\tfrac{k\log\alpha}{\log\log\alpha}\right)\right).$$
	\end{lemma}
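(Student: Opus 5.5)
The plan is to control the bad event $\mathcal{P}\setminus\mathcal{P}^{\delta}$ through the first time it occurs. Let $\tau$ be the smallest $r$ with $n_{\alpha-r}-\Psi_r<\delta(n_{\alpha-r}-n_k)$, and write $t:=\alpha-\tau$ for the number of MIS slots still open at that moment. I then split $\Pi=\Pi_{<\tau}\,\Pi_{\geq\tau}$ according to whether the index $r$ is before or after $\tau$.

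For $r<\tau$ the defining inequality of $\mathcal{P}^{\delta}$ still holds. The argument behind \eqref{equation_pdelta} therefore applies verbatim to the truncated product and gives $\Pi_{<\tau}\leq e^{\frac1{\delta}\hat{\Sigma}}$.

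For $r\geq\tau$ I only use a deterministic bound. Since $S$ is an FIS, $S\setminus S_r\subset V_r$. With $\b\geq0$ and $\b_{\max}=o(1)$ this gives $n_{\alpha-r}-\Psi_r\geq(1-o(1))(\alpha-k-r)$. Hence each factor is at most $O(n_{\alpha-r}/(\alpha-k-r))=O(\alpha-r)$, up to the factor $\alpha-r$ versus $\alpha-r-k$, which is where the dependence on $k$ enters. Consequently
\[
\Pi_{\geq\tau}\leq t^{O(t)}\cdot\alpha^{O(k)/\,\text{(suitable)}}\quad\text{and, after cruder bookkeeping,}\quad \Pi_{\geq\tau}\leq \exp\bigl(O(t\log t)+O(k\log\alpha)\bigr).
\]
The $k$-dependent part can be sharpened to the stated $\exp(O(k\log\alpha/\log\log\alpha))$ by handling the last $O(k\log\alpha/\log\log\alpha)$ steps separately.

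Next I apply H\"older's inequality with exponents $1/\mu_1$ and $1/\mu_2$, summed over the value of $t$:
\[
\mathbb{E}(\Pi1_{\mathcal{P}\setminus\mathcal{P}^{\delta}})\leq\sum_{t}\max\Pi_{\geq\tau}\cdot\mathbb{E}\bigl(e^{\frac{1}{\mu_1\delta}\hat{\Sigma}}\bigr)^{\mu_1}\,\mathbb{P}(\tau=\alpha-t)^{\mu_2}.
\]
The middle factor is handled by Lemma~\ref{lemma_sigma}~(\ref{lemma_sigma_large}) with $C:=1/(\mu_1\delta)$. The first hypothesis on $\delta$ is precisely the admissibility condition on $C$ there, so this factor is $e^{o(k)}$. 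When $k=0$, the same lemma's argument, or Lemma~\ref{lemma_sigma}~(\ref{lemma_sigma_zero}), gives a factor of $O(1)$.

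It remains to bound $\mathbb{P}(\tau=\alpha-t)$, and this is the main obstacle. Lemma~\ref{lemma_expectation} gives $\psi_{p}\leq K+O(q^2\alpha M+qM^2)\leq n_k+O(t^2M/\alpha+tM^2/\alpha)$, since $q\approx t/\alpha$. A failure at time $\tau$ therefore requires a deviation $\Psi_\tau-\psi_{p_\tau}\geq(1-\delta)(n_t-n_k)-O(t^2M/\alpha)=\Omega(t^2)$.

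To bound this I transfer from the true model to the simplified model. Conditioning on $|S_p|=r$ costs only a polynomial factor $O(\sqrt{\alpha})$. In the simplified model, $\Psi_p$ is a polynomial of degree at most $2$ in the independent inclusion indicators of $S$, since every vertex outside $S$ sees exactly two vertices of $S$ by $2$-uniformity. Its coefficients are bounded using $F_{\max}$, $R_{\max}^{(a)}$, $R_{\max}^{(b)}$ and $(R_{\max}^{(c)})^2\leq M$. Vu's concentration inequality~\cite{Vu} then gives a tail of the form $(CM/\alpha)^{c\,t}$, or more precisely $\exp(-\Omega(t)\cdot(\log\alpha-\log M))$ with the constant governed by the $\delta$-dependent threshold. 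Raising this to the power $\mu_2$ and multiplying by $t^{O(t)}$, the second hypothesis on $\delta$ is designed so that the exponent $\mu_2\cdot 4(\tfrac12-\tfrac{\log M}{\log\alpha})\log\alpha$ beats $\delta(1-\tfrac{\log M}{\log\alpha})\log\alpha+(1+\Omega(1))\log\log\alpha$ per unit of $t$. The sum over $t\geq1$ is then dominated by its first term, which is of order $M/\alpha$.

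I expect the real work to lie in getting the Vu exponent sharp enough, with explicit constants tracking $\log M/\log\alpha$, and in handling the range of small $t$ where $t^2M/\alpha$ is not negligible relative to $n_t$. I would try first to treat $t\leq(\log\alpha)^2$ with a direct union bound over the at most two neighbours in $S$ of each vertex of $\widetilde{V}_r$, and reserve Vu's inequality for large $t$. Combining the three factors then yields $\mathbb{E}(\Pi1_{\mathcal{P}\setminus\mathcal{P}^{\delta}})\leq\frac{M}{\alpha}\exp(O(k\log\alpha/\log\log\alpha))$.
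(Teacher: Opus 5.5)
\paragraph{The H\"older step is a genuine gap.} The first-failure decomposition and the bound $\Pi_{<\tau}\le e^{\frac1\delta\hat{\Sigma}}$ are fine, but decoupling $e^{\frac1\delta\hat{\Sigma}}$ from $1_{\{\tau=\alpha-k-t\}}$ by H\"older cannot be repaired by sharper tails. Here $t$ is the number of elements of $S$ still unchosen at the failure time.

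\emph{The loss.} H\"older leaves the failure probability only to the power $\mu_2$. The hypotheses force $\mu_1=1-\mu_2=\Omega(1)$: combine the first inequality with $\delta=O(\tfrac{\log\log\alpha}{\log\alpha})$. Other exponents hit the same constraint through the admissibility condition of Lemma~\ref{lemma_sigma}~(\ref{lemma_sigma_large}).

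\emph{Why this is fatal for large $t$.} Take $\tfrac12-\tfrac{\log M}{\log\alpha}=\Theta(\tfrac{\log\log\alpha}{\log\alpha})$, which is admissible and is exactly the case the constants $8,2,4$ are calibrated for. Three facts combine:
\begin{enumerate}[(i)]
\item The first failure can occur with $t$ of order $M$, since Lemma~\ref{lemma_t} only gives $T^{\delta}=O(M)$.
\item After failure, $V_r$ can stay essentially equal to $S\setminus S_r$. Each later factor of $\Pi$ is then of order $t'$, so $\Pi_{\geq\tau}$ is genuinely $e^{t\log t-O(t)}$.
\item The failure probability can be as large as $(cM/\alpha)^{t}$.
\end{enumerate}
Facts (ii) and (iii) both occur, for instance, if $F\supset N(s)\cap N(s')$ for all pairs $s,s'$ in blocks of $\Theta(M)$ elements of $S$, and $S\setminus S_r$ is such a block.

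With the probability at full power, the $t$-th term is at most $\exp(t\log\tfrac{tM}{\alpha}+\tfrac12\delta t\log\tfrac{\alpha}{M}+O(t))$. Since $t=O(M)$, the margin is $t\log\tfrac{\alpha}{M^2}=\Theta(t\log\log\alpha)$. This is exactly what the second hypothesis encodes. The term $4(\tfrac12-\tfrac{\log M}{\log\alpha})\log\alpha=2\log\tfrac{\alpha}{M^2}$ carries no factor $\mu_2$, contrary to your reading. The term $\tfrac{1}{\mu_2}\log\log\alpha$ pays for a loss of only $\tfrac{1}{2\mu_2}t\log\log\alpha$.

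Raising the probability to the power $\mu_2$ instead costs about $\mu_1 t\log\tfrac{\alpha}{M}=\Omega(t\log\alpha)$. Your terms then grow like $\alpha^{(\mu_1/2-o(1))t}$ for $t\asymp M$, and the sum over $t$ diverges.

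For the same reason, $t^{O(t)}$ is too crude. The coefficient of $t\log t$ must be exactly $1$, i.e.\ you need $\Pi_{\geq\tau}\le\exp(t\log(k+t)+O(t+k))$. Your factor bound $O((t'+k)^2/t')$ does give this if you evaluate the product exactly, rather than bounding each factor by $O(\alpha-r)$.

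\paragraph{The missing idea: condition rather than decouple.} The event $\mathcal{Q}_t^{\delta}$ is $S_r$-measurable. Given $S_r$, the first $r$ steps form a uniformly random ordering of the FIS $S'=S_r$, with $k'=k+t$. So the whole concentration machinery can be rerun for $S'$. This gives the uniform bound $\mathbb{E}(\Pi_t1_{\mathcal{P}_t^{\delta}}\mid S_r)\le\exp((\tfrac1{2\mu_2}+o(1))t\log\log\alpha+O(\tfrac1\delta k))$ of Lemma~\ref{lemma_recycle}. After that, $\mathbb{P}(\mathcal{Q}_t^{\delta})$ enters at full power (Lemmas~\ref{lemma_pit} and~\ref{lemma_pipt}).

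The price is recentring from $\psi_{p_{r'}}$ to $\psi'_{p'_{r'}}$. This creates the term $(K'-K)^+/(n_{\alpha-r'}-\Psi_{r'})$ with $K'=\Delta_r$ large, which needs the separate argument of Lemma~\ref{lemma_depth}. The $\mu_1,\mu_2$ split is then a convexity split \emph{inside} the conditional expectation. It separates this term from the recentred fluctuations, which are handled by Lemma~\ref{lemma_sigma}~(\ref{lemma_sigma_large}) with $C=\tfrac1{\mu_1\delta}$. That is why it costs only $O(\log\log\alpha)$ per unit of $t$.

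\paragraph{The tail estimate also fails as planned.}
\begin{enumerate}[(i)]
\item For a quadratic polynomial, the threshold in Vu's inequality is at least of order $\lambda^2$. For the cubic part coming from $R^*(S_p)$ it is of order $\lambda^3$. At deviations of order $t^2$ you therefore get only $e^{-O(t)}$, before the extra $\sqrt{\alpha}$ from conditioning on $|S_p|=r$.
\item The exponents $(1-\delta)t\log\tfrac{M}{\alpha}$ and $(1-(\tfrac12+o(1))\delta)t\log\tfrac{M}{\alpha}$ come instead from direct counting of interfering pairs in the uniform $t$-subset $S\setminus S_r$ (Lemmas~\ref{lemma_stronger} and~\ref{lemma_pd}).
\item For $k\geq1$ it is false that every vertex outside $S$ has two neighbours in $S$. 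The vertices of $V^{(1)}$ contribute $\Delta_r^{(1)}$, which needs its own tail bound (Lemma~\ref{lemma_pd}~(\ref{lemma_pd_1})).
\end{enumerate}
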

	
	\begin{proof}[Proof of Lemma~\ref{lemma_concentration}]
		Let $\delta_1:=\frac12$ and $$\delta_2:=\tfrac{5}{3}\left(8\left(1-\tfrac{\log M}{\log\alpha}\right)+2+\tfrac15\right)\tfrac{\log\log\alpha}{\log\alpha},$$ and assume $\alpha$ is large enough, such that $\delta_2\leq\delta_1$. Note that $C=\tfrac1{\delta_2}$ fulfills the requirements of Lemma~\ref{lemma_sigma}~(\ref{lemma_sigma_zero},\ref{lemma_sigma_large}), and $\delta_1$ fulfills the requirement of Lemma~\ref{lemma_remaining_p}.
		
		Consider $\mu_1:=\tfrac35$ and $\mu_2:=\tfrac25$, such that $$\delta_2\geq\tfrac1{\mu_1}\left(8\left(1-\tfrac{\log M}{\log\alpha}\right)+2+\Omega(1)\right)\tfrac{\log\log\alpha}{\log\alpha}.$$ We furthermore have
		\begin{align}
			\tfrac12-\tfrac{\log M}{\log\alpha}
			&\geq\tfrac{2\log\log\alpha-O(1)}{\log\alpha},
			\\\tfrac{\log\log\alpha}{\log\alpha}
			&\leq(\tfrac12+o(1))\left(\tfrac12-\tfrac{\log M}{\log\alpha}\right),
			\\\delta_2
			&=\tfrac{5}{3}(4+2+\tfrac15)\tfrac{\log\log\alpha}{\log\alpha}+\tfrac{40}{3}\tfrac{\log\log\alpha}{\log\alpha}\left(\tfrac12-\tfrac{\log M}{\log\alpha}\right)
			\\&=\tfrac{31}{3}\tfrac{\log\log\alpha}{\log\alpha}+o\left(\tfrac12-\tfrac{\log M}{\log\alpha}\right),
			\\\delta_2\left(1-\tfrac{\log M}{\log\alpha}\right)
			&=\tfrac{31}{3}\left(\tfrac12+\left(\tfrac12-\tfrac{\log M}{\log\alpha}\right)\right)\tfrac{\log\log\alpha}{\log\alpha}+o\left(\tfrac12-\tfrac{\log M}{\log\alpha}\right)
			\\&=\tfrac{31}{6}\tfrac{\log\log\alpha}{\log\alpha}+o\left(\tfrac12-\tfrac{\log M}{\log\alpha}\right)
			\\&=(8-\tfrac16)\tfrac{\log\log\alpha}{\log\alpha}-(\tfrac1{\mu_2}+\tfrac16)\tfrac{\log\log\alpha}{\log\alpha}+o\left(\tfrac12-\tfrac{\log M}{\log\alpha}\right)
			\\&\leq(4-\tfrac1{12}+o(1))\left(\tfrac12-\tfrac{\log M}{\log\alpha}\right)-(\tfrac1{\mu_2}+\tfrac16)\tfrac{\log\log\alpha}{\log\alpha}.
		\end{align}
		We find that $\delta_2$ fulfills the requirements of Lemma~\ref{lemma_remaining_e}.
		
		(\ref{lemma_concentration_zero}) Using $\mathcal{P}^{\delta_1}\subset\mathcal{P}^{\delta_2}$, we split up
		\begin{align}
			\mathbb{E}(\Pi)
			&=1+\mathbb{E}(\overline{\Sigma})+\mathbb{E}(\Pi-(1+\overline{\Sigma}))
			\\&=1+\mathbb{E}(\overline{\Sigma})+\mathbb{E}(\Pi1_{\mathcal{P}\setminus\mathcal{P}^{\delta_2}})+\mathbb{E}(\Pi1_{\mathcal{P}^{\delta_2}\setminus\mathcal{P}^{\delta_1}})
			\\&\quad-\mathbb{E}((1+\overline{\Sigma})1_{\mathcal{P}\setminus\mathcal{P}^{\delta_1}})+\mathbb{E}((\Pi-(1+\overline{\Sigma}))1_{\mathcal{P}^{\delta_1}}).
		\end{align}
		Then Lemma~\ref{lemma_bar_e} gives $\mathbb{E}(\overline{\Sigma})=O(M\log\alpha/\alpha)$, Lemma~\ref{lemma_delta} gives $\mathbb{E}((\Pi-(1+\overline{\Sigma}))1_{\mathcal{P}^{\delta_1}})=O(\mathscr{E})$, and Lemma~\ref{lemma_bar_bound} and Lemma~\ref{lemma_remaining_p} give  $$\mathbb{E}((1+\overline{\Sigma})1_{\mathcal{P}\setminus\mathcal{P}^{\delta_1}})=O(M\log\alpha\cdot\mathbb{P}(\mathcal{P}\setminus\mathcal{P}^{\delta_1}))=O(M^2\log\alpha/\alpha).$$ Next, Lemma~\ref{lemma_sigma}~(\ref{lemma_sigma_zero}) and Lemma~\ref{lemma_remaining_p} give $$\mathbb{E}(\Pi1_{\mathcal{P}^{\delta_2}\setminus\mathcal{P}^{\delta_1}})\leq(1+o(1))\mathbb{P}(\mathcal{P}\setminus\mathcal{P}^{\delta_1})+O(\mathscr{E})=O(\mathscr{E}).$$
		Finally, by Lemma~\ref{lemma_remaining_e}, we have $\mathbb{E}(\Pi1_{\mathcal{P}\setminus\mathcal{P}^{\delta_2}})=O(M/\alpha)$.
		
		(\ref{lemma_concentration_small},\ref{lemma_concentration_large}) By \eqref{equation_pdelta}, Lemma~\ref{lemma_sigma}~(\ref{lemma_sigma_large}), and Lemma~\ref{lemma_remaining_e}, we have
		\begin{align}
			\mathbb{E}(\Pi)&=\mathbb{E}(\Pi1_{\mathcal{P}^{\delta_2}})+\mathbb{E}(\Pi1_{\mathcal{P}\setminus\mathcal{P}^{\delta_2}})
			\\&\leq\mathbb{E}(e^{\frac1{\delta_2}\hat{\Sigma}})+\mathbb{E}(\Pi1_{\mathcal{P}\setminus\mathcal{P}^{\delta_2}})
			\\&\leq e^{o(k)}+\alpha^{o(k)-\Omega(1)},
		\end{align}
		so the results follow.
	\end{proof}
	
	Throughout Sections~\ref{subsection_proof_concentration}--\ref{subsection_proof_psi1}, we prove Lemmas~\ref{lemma_bar_e}--\ref{lemma_sigma}, in Section~\ref{subsection_proof_remaining}, we prove Lemma~\ref{lemma_remaining_p}, and in Sections~\ref{subsection_proof_further} and \ref{subsection_proof_rpe}, we prove Lemma~\ref{lemma_remaining_e}. Together, these conclude the proof of Lemma~\ref{lemma_concentration}, Lemma~\ref{lemma_generalization}, and therefore, the proof of Theorem~\ref{theorem_main}~(\ref{theorem_main_equation}).
	
	\subsection{Simplified model}\label{subsection_proof_simplified}
	
	We prove Lemma~\ref{lemma_expectation}. We thus need to determine the expected value $\mathbb{E}(\Psi_p)$ for $p\in[0,1]$. We split up $\Psi_p$ into all of its relevant components. We write $\Delta_p:=|\widetilde{V}_p\setminus V_p|$ and $\Lambda_p:=\b(V_p)$, such that $\Psi_p=\Delta_p+\Lambda_p$. For a set of vertices $U\subset V$, we write $\Delta_p(U):=|\widetilde{V}_p\cap U|$ and $\Lambda_p(U):=\b(\widetilde{V}_p\cap U)$, such that
	\begin{align}
		\Delta_p&=\Delta_p(F)+\Delta_p(R^*(S_p)),
		\\\Lambda_p&=\Lambda_p(V\setminus S)+\Lambda_p(S)-\Lambda_p(F)-\Lambda_p(R^*(S_p)).
	\end{align}
	Note that this relies on $R^*(S)$ being disjoint from $F$ by definition. If $k=0$, then $S$ is an MIS, so each vertex outside $S$ is adjacent to two elements of $S$ by $2$-uniformity. However, if $k>0$, this is no longer true, and we have to consider the vertices outside $S$ adjacent to a different number of elements of $S$ separately. We thus write $V^{(i)}$, $F^{(i)}$, $R^{(i)}$, and $R_p^{(i)}$ for the sets of vertices from $V\setminus S$, $F$, $R^*(S)$, and $R^*(S_p)$ respectively with exactly $i$ neighbors in $S$.
	
	\begin{lemma}\label{lemma_vi}
		The set $V\setminus S$ is the disjoint union of $V^{(2)}$, $V^{(1)}$, and $V^{(0)}$.
	\end{lemma}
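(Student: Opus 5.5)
The claim is that no vertex outside $S$ has three or more neighbours in $S$. Once this is shown, the decomposition is immediate: the sets $V^{(i)}$ are pairwise disjoint by definition, since each vertex has a unique number of neighbours in $S$. So the plan is to fix $v\in V\setminus S$ and prove $|N(v)\cap S|\leq2$.

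First, extend $S$ to an MIS. By Proposition~\ref{proposition_imis}, every maximal independent set of a $2$-uniform graph is maximum, because $2$-uniform graphs have regular independent sets. The independent set $S$ (forget feasibility) is contained in some maximal independent set $S'$, and hence $S'$ is an MIS with $S\subset S'$.

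Next, apply $2$-uniformity to $v$, splitting into two cases.
\begin{itemize}
\item If $v\notin S'$, then by Definition~\ref{definition_uniform}, $v$ is adjacent to exactly two vertices of $S'$. Hence $v$ has at most two neighbours in $S\subset S'$.
\item If $v\in S'$, then $v$ has no neighbours in $S'$ at all, since $S'$ is independent. In particular $v$ has no neighbours in $S$, so $v\in V^{(0)}$.
\end{itemize}
In both cases $|N(v)\cap S|\in\{0,1,2\}$, which gives the disjoint union.

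No step is a real obstacle. The only point needing care is invoking Proposition~\ref{proposition_imis} to guarantee that the extension $S'$ is maximum, and not merely maximal, so that the $2$-uniformity condition applies to it.
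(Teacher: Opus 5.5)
Your proof is correct and is essentially the paper's argument. You extend $S$ to a maximal independent set, which is an MIS by Proposition~\ref{proposition_imis}. A vertex of $V\setminus S$ then either lies in the extension, so it lies in $V^{(0)}$, or is adjacent to exactly two vertices of the extension, so it has at most two neighbours in $S$.
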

	
	\begin{proof}
		Let $\overline{S}$ be a maximal independent set that contains $S$, such that $\overline{S}$ is an MIS by Proposition \ref{proposition_imis}. Since $G$ is $2$-uniform, any vertex in $V\setminus S$ is either in $\overline{S}\setminus S$ and thus in $V^{(0)}$, or in $V\setminus\overline{S}$ and thus adjacent to exactly two elements of $\overline{S}$, so at most two elements of $S$.
	\end{proof}
	
	Since $F$, $R^*(S)$ and $R^*(S_p)$ are subsets of $V\setminus S$, we get
	\begin{align}
		\Delta_p(F)&=\Delta_p(F^{(2)})+\Delta_p(F^{(1)})+\Delta_p(F^{(0)}),
		\\\Delta_p(R^*(S_p))&=\Delta_p(R^{(2)}_p)+\Delta_p(R^{(1)}_p)+\Delta_p(R^{(0)}_p),
		\\\Lambda_p(V\setminus S)&=\Lambda_p(V^{(2)})+\Lambda_p(V^{(1)})+\Lambda_p(V^{(0)}),
		\\\Lambda_p(F)&=\Lambda_p(F^{(2)})+\Lambda_p(F^{(1)})+\Lambda_p(F^{(0)}),
		\\\Lambda_p(R^*(S_p))&=\Lambda_p(R^{(2)}_p)+\Lambda_p(R^{(1)}_p)+\Lambda_p(R^{(0)}_p).
	\end{align}
	We calculate $\mathbb{E}(\Psi_p)$ by adding up the expected value of each relevant component.
	
	\begin{lemma}\label{lemma_expect}
		For $p\in[0,1]$ and $i\in\{2,1,0\}$, we have
		\begin{enumerate}[(i)]
			\item\label{lemma_expect_df} $\begin{aligned}[t]
				\mathbb{E}(\Delta_p(F^{(i)}))=q^i|F^{(i)}|,
			\end{aligned}$
			\item\label{lemma_expect_dr} $\begin{aligned}[t]
				\mathbb{E}(\Delta_p(R^{(i)}_p))=pq^i|R^{(i)}|,
			\end{aligned}$
			\item\label{lemma_expect_lv} $\begin{aligned}[t]
				\mathbb{E}(\Lambda_p(V^{(i)}))=q^i\b(V^{(i)}),
			\end{aligned}$
			\item\label{lemma_expect_ls} $\begin{aligned}[t]
				\mathbb{E}(\Lambda_p(S))=q\b(S),
			\end{aligned}$
			\item\label{lemma_expect_lf} $\begin{aligned}[t]
				\mathbb{E}(\Lambda_p(F^{(i)}))=q^i\b(F^{(i)}),
			\end{aligned}$
			\item\label{lemma_expect_lr} $\begin{aligned}[t]
				\mathbb{E}(\Lambda_p(R^{(i)}_p))=pq^i\b(R^{(i)}).
			\end{aligned}$
		\end{enumerate}
	\end{lemma}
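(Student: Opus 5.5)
The plan is to compute each expectation by linearity, writing every quantity as a sum over single vertices of an indicator (possibly weighted by $\b$), and computing the probability of each indicator in the simplified model. In that model each element of $S$ lies in $S_p$ independently with probability $p$.

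The basic observation is the following. Take $v\in V\setminus S$ with exactly $i$ neighbours in $S$. Then $v\in\widetilde{V}_p=V\setminus\overline{N}(S_p)$ if and only if none of those $i$ neighbours was chosen into $S_p$. Since $v\notin S$, $v$ itself is never in $S_p$. By independence this event has probability $q^i$. Summing over $F^{(i)}$, and over $V^{(i)}$ with weights $\b(v)$, immediately gives (\ref{lemma_expect_df}), (\ref{lemma_expect_lv}) and (\ref{lemma_expect_lf}). For (\ref{lemma_expect_ls}), a vertex $s\in S$ has no neighbours in $S$, since $S$ is independent. Hence $s\in\widetilde{V}_p$ if and only if $s\notin S_p$, which has probability $q$. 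Weighting by $\b(s)$ and summing yields $q\b(S)$.

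The parts involving $R^*(S_p)$ need one more structural fact. Take $v\in R^{(i)}\subset R^*(S)$. Then $v\in R^*(s)$ for some $s\in S$, and I claim this $s$ is unique. Suppose $v\in R^*(s)\cap R^*(s')$ with $s\neq s'$. Then $s$ and $s'$ are both $R$-equivalent to $v$, hence to each other by transitivity. But then $s'\in R(s)$, and $s'\notin\overline{N}(s)$ because $S$ is independent. So $s'\in R^*(s)$ unless $s'\in F$, and $S$ being an FIS forbids both alternatives. Therefore each $v\in R^{(i)}$ has a unique witness $s_v\in S$.

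Next, $v\in R^*(s_v)$ means $v\notin\overline{N}(s_v)$, so $s_v$ is not among the $i$ neighbours of $v$ in $S$. Consequently the event $\{v\in\widetilde{V}_p\cap R^*(S_p)\}$ is exactly the event that $s_v\in S_p$ and none of the $i$ neighbours of $v$ in $S$ lies in $S_p$. These are $i+1$ distinct elements of $S$, so by independence the probability is $pq^i$. Summing the indicators over $R^{(i)}$ gives (\ref{lemma_expect_dr}), and summing with weights $\b(v)$ gives (\ref{lemma_expect_lr}).

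The main obstacle is exactly this uniqueness-and-distinctness check for the witness $s_v$. Everything else is direct linearity of expectation. One subtlety to handle is that $R^{(i)}_p$ is defined through $S_p$ while the partition index $i$ is taken relative to $S$. This is consistent, because the partition $V^{(i)}$ is fixed by $S$ and $R^*(S_p)\subset R^*(S)$.
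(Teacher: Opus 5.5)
Your proposal is correct and follows the paper's proof. The argument is linearity of expectation, using that a vertex with $i$ neighbours in $S$ lies in $\widetilde{V}_p$ with probability $q^i$, together with independence of the witness $s\in S$ (a non-neighbour of $v$) for the $R^*$ terms. The one addition is your explicit check that the witness $s_v$ is unique, i.e.\ that the sets $R^*(s)$ for $s\in S$ are pairwise disjoint because $S$ is feasible. The paper uses this implicitly here and only states it later, in the proof of Lemma~\ref{lemma_vu}.
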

	
	\begin{proof}
		(\ref{lemma_expect_df},\ref{lemma_expect_lv},\ref{lemma_expect_lf}) These follow, since any vertex in $F^{(i)}$ or $V^{(i)}$ is in $\widetilde{V}_p$ exactly if all $i$ neighbors in $S$ are not in $S_p$.
		
		(\ref{lemma_expect_dr},\ref{lemma_expect_lr}) Consider a vertex $v\in R^{(i)}$, such that $v\in R^*(s)$ for some $s\in S$. Then $v\in\widetilde{V}_p$ holds exactly if all $i$ neighbors in $S$ are not in $S_p$, and $v\in R^*(S_p)$ holds exactly if $s\in S_p$. The results follow, since these events are independent, because $s$ is not a neighbor of $v$ by definition of $R^*(s)$.
		
		(\ref{lemma_expect_ls}) This follows, since any vertex $s\in S$ is in $\widetilde{V}_p$ exactly if $s\not\in S_p$.
	\end{proof}
	
	In order to prove Lemma~\ref{lemma_expectation} from this, we need to bound some of these quantities. We first briefly show some fundamental properties of the value $n_t$ for integer $t\geq0$.
	
	\begin{lemma}\label{lemma_nr}
		For integers $t,t'\geq0$, we have
		\begin{enumerate}[(i)]
			\item\label{lemma_nr_geq} $\begin{aligned}[t]
				n_t\geq t^2,
			\end{aligned}$
			\item\label{lemma_nr_leq} $\begin{aligned}[t]
				n_t\leq\tfrac{\ell}{2}t^2,
			\end{aligned}$
			\item\label{lemma_nr_sum} $\begin{aligned}[t]
				n_{t+t'}=n_t+n_{t'}+\ell tt'.
			\end{aligned}$
		\end{enumerate}
	\end{lemma}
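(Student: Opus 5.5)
The plan is to unfold $n_t=\ell\binom{t}{2}+t=\tfrac{\ell}{2}t^2+(1-\tfrac{\ell}{2})t$ and use the fact that $\ell\geq2$ (by Proposition~\ref{proposition_even}, $\ell$ is even and at least $2$).

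For (\ref{lemma_nr_geq}), I would write
\[
n_t-t^2=\left(\tfrac{\ell}{2}-1\right)t^2-\left(\tfrac{\ell}{2}-1\right)t=\left(\tfrac{\ell}{2}-1\right)t(t-1).
\]
This is non-negative because $\ell\geq2$ and $t(t-1)\geq0$ for every integer $t\geq0$.

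For (\ref{lemma_nr_leq}), I would note that
\[
\tfrac{\ell}{2}t^2-n_t=\left(\tfrac{\ell}{2}-1\right)t\geq0,
\]
again using $\ell\geq2$ and $t\geq0$.

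For (\ref{lemma_nr_sum}), I would use Vandermonde's identity $\binom{t+t'}{2}=\binom{t}{2}+\binom{t'}{2}+tt'$. Multiplying by $\ell$ and adding $t+t'$ gives
\[
n_{t+t'}=\ell\binom{t}{2}+t+\ell\binom{t'}{2}+t'+\ell tt'=n_t+n_{t'}+\ell tt'.
\]
Combinatorially, this identity counts the pairs of points in a union of disjoint sets of sizes $t$ and $t'$.

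No step is a real obstacle. The only point that needs care is remembering that $\ell\geq2$ comes from Proposition~\ref{proposition_even}; without it, (\ref{lemma_nr_geq}) and (\ref{lemma_nr_leq}) would fail.
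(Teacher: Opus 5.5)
Your proposal is correct and matches the paper's proof: both unfold $n_t=\ell\binom{t}{2}+t$, use $\ell\geq2$ from Proposition~\ref{proposition_even} for (\ref{lemma_nr_geq}) and (\ref{lemma_nr_leq}), and verify (\ref{lemma_nr_sum}) algebraically. Your explicit factorizations $(\tfrac{\ell}{2}-1)t(t-1)$ and $(\tfrac{\ell}{2}-1)t$, and the identity $\binom{t+t'}{2}=\binom{t}{2}+\binom{t'}{2}+tt'$, just spell out the same steps.
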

	
	\begin{proof}
		(\ref{lemma_nr_geq}) Since $\ell\geq2$, we have $n_t=\ell\binom{t}{2}+t\geq2\binom{t}{2}+t=t^2$.
		
		(\ref{lemma_nr_leq}) Since $\ell\geq2$, we have $n_t=\tfrac{\ell}{2}t^2+(1-\tfrac{\ell}{2})t\leq\tfrac{\ell}{2}t^2$.
		
		(\ref{lemma_nr_sum}) This can be verified algebraically from the definition.
	\end{proof}
	
	\begin{remark}
		There is also a combinatorial interpretation of Lemma~\ref{lemma_nr}~(\ref{lemma_nr_sum}). Consider a $2$-uniform graph $H$ with $\ell(H)=\ell$ and with an MIS $A\cup B$ with $|A|=t$ and $|B|=t'$. Then $V(H)$ is the disjoint union of $V(H)\setminus\overline{N}(A)$, $V(H)\setminus\overline{N}(B)$, and $\overline{N}(A)\cap\overline{N}(B)$. The equation then follows from Proposition~\ref{proposition_n}, Corollary~\ref{corollary_hereditary}, and Proposition~\ref{proposition_ell}.
	\end{remark}
	
	\begin{lemma}\label{lemma_bounds1}
		We have the bounds
		\begin{enumerate}[(i)]
			\item\label{lemma_bounds1_f2} $\begin{aligned}[t]
				|F^{(2)}|\leq|F|=O(\alpha M),
			\end{aligned}$
			\item\label{lemma_bounds1_r2} $\begin{aligned}[t]
				|R^{(2)}|\leq|R^*(S)|=O(\alpha M),
			\end{aligned}$
			\item\label{lemma_bounds1_f1} $\begin{aligned}[t]
				|F^{(1)}|\leq kF_{\max}=O(M^2),
			\end{aligned}$
			\item\label{lemma_bounds1_r1} $\begin{aligned}[t]
				|R^{(1)}|\leq kR^{(b)}_{\max}=O(M^2),
			\end{aligned}$
			\item\label{lemma_bounds1_v1} $\begin{aligned}[t]
				|V^{(1)}|=\ell k(\alpha-k)=O(\alpha M),
			\end{aligned}$
			\item\label{lemma_bounds1_v0} $\begin{aligned}[t]
				K=|F^{(0)}|+|R^{(0)}|\leq|V^{(0)}|=n_k=O(k^2)=O(M^2).
			\end{aligned}$
		\end{enumerate}
	\end{lemma}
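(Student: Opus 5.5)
Each bound will follow from the definitions of $F_{\max}$, $R^{(a)}_{\max}$, $R^{(b)}_{\max}$, Lemma~\ref{lemma_terms}, and the structure of $2$-uniform graphs given by Corollary~\ref{corollary_hereditary} and Lemma~\ref{lemma_nr}. Throughout, $\overline{S}\supset S$ denotes an MIS extending $S$ (it exists by Proposition~\ref{proposition_imis}), and $S':=\overline{S}\setminus S$, so $|S'|=k$.

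For (\ref{lemma_bounds1_f2}) and (\ref{lemma_bounds1_r2}), the first inequalities are immediate because $F^{(2)}\subset F$ and $R^{(2)}\subset R^*(S)$. The $O(\alpha M)$ bounds then follow from Lemma~\ref{lemma_terms}~(\ref{lemma_terms_f}) and from Lemma~\ref{lemma_terms}~(\ref{lemma_terms_rs}) with $U=S$, using $|S|\leq\alpha$.

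For (\ref{lemma_bounds1_v1}) and (\ref{lemma_bounds1_v0}), I will use the classification from the proof of Lemma~\ref{lemma_vi}. A vertex of $V\setminus S$ lies in $V^{(0)}$ exactly if it lies in $\overline{S}\setminus S$ or it is outside $\overline{S}$ with both of its $\overline{S}$-neighbours in $S'$. Hence $V^{(0)}=V\setminus\overline{N}(S)$, which has $n_k$ elements by Corollary~\ref{corollary_hereditary}. Similarly, $V^{(1)}$ consists of the vertices with one $\overline{S}$-neighbour in $S$ and one in $S'$. By Proposition~\ref{proposition_ell}, each of the $(\alpha-k)k$ pairs in $S\times S'$ has exactly $\ell$ common neighbours, so $|V^{(1)}|=\ell k(\alpha-k)$. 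As a consistency check, $n_\alpha=|S|+|V^{(2)}|+|V^{(1)}|+|V^{(0)}|$ agrees with Lemma~\ref{lemma_nr}~(\ref{lemma_nr_sum}). Next, $K=|(V\setminus\overline{N}(S))\cap(F\cup R^*(S))|$, and $F$ and $R^*(S)$ are disjoint by definition of $R^*$. Since $V\setminus\overline{N}(S)=V^{(0)}$, this gives $K=|F^{(0)}|+|R^{(0)}|\leq|V^{(0)}|=n_k$. Finally, $n_k=O(k^2)$ by Lemma~\ref{lemma_nr}~(\ref{lemma_nr_leq}), and $k=O(M)$ gives $O(M^2)$.

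The main step is (\ref{lemma_bounds1_f1}) and (\ref{lemma_bounds1_r1}), where I double count against $S'$. Every $v\in F^{(1)}$ or $R^{(1)}$ lies outside $\overline{S}$ and has exactly one $\overline{S}$-neighbour in $S'$, so $v\in\overline{N}(s')$ for some $s'\in S'$. This gives $|F^{(1)}|\leq\sum_{s'\in S'}|\overline{N}(s')\cap F|\leq kF_{\max}$. Likewise, $R^{(1)}\subset R^*(S)$ gives $|R^{(1)}|\leq\sum_{s'\in S'}|\overline{N}(s')\cap R^*(S)|\leq kR^{(b)}_{\max}$, using that $S$ is an independent set in the definition of $R^{(b)}_{\max}$. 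Both bounds are $O(M^2)$ since $k=O(M)$. The one point I need to check carefully is that $S'$-neighbourhoods really cover these sets, which is exactly the $2$-uniformity property behind Lemma~\ref{lemma_vi}.
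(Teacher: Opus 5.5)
Your proof is correct and is essentially the paper's argument. You extend $S$ to an MIS $\overline{S}$, use $2$-uniformity to see that every vertex of $F^{(1)}\cup R^{(1)}\cup V^{(1)}$ has exactly one neighbour in $\overline{S}\setminus S$, and combine this with Lemma~\ref{lemma_terms}, Proposition~\ref{proposition_ell}, Corollary~\ref{corollary_hereditary} and Lemma~\ref{lemma_nr}. The only step you leave implicit is that $F\cap S=R^*(S)\cap S=\emptyset$ by feasibility of $S$, which is what identifies $F^{(0)}$ and $R^{(0)}$ with $F\cap V^{(0)}$ and $R^*(S)\cap V^{(0)}$.
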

	
	\begin{proof}
		Let $\overline{S}$ be a maximal independent set that contains $S$, such that $\overline{S}$ is an MIS by Proposition \ref{proposition_imis}. Since $G$ is $2$-uniform, any vertex in $V^{(1)}$ neighbors exactly one vertex in $S$ and one vertex in $\overline{S}\setminus S$.
		
		(\ref{lemma_bounds1_f2},\ref{lemma_bounds1_r2}) These follow from Lemma~\ref{lemma_terms}~(\ref{lemma_terms_f},\ref{lemma_terms_rs}), because $F^{(2)}\subset F$ and $R^{(2)}\subset R^*(S)$.
		
		(\ref{lemma_bounds1_f1},\ref{lemma_bounds1_r1}) Every element of $F^{(1)}$ or $R^{(1)}$ neighbors exactly one vertex in $\overline{S}\setminus S$, so $|F^{(1)}|\leq F_{\max}|\overline{S}\setminus S|$ and $|R^{(1)}|\leq R^{(b)}_{\max}|\overline{S}\setminus S|$.
		
		(\ref{lemma_bounds1_v1}) Every element of $V^{(1)}$ neighbors exactly one vertex in $S$ and one vertex in $\overline{S}\setminus S$. The result thus follows from Proposition~\ref{proposition_ell}.
		
		(\ref{lemma_bounds1_v0}) We have $V^{(0)}=V\setminus\overline{N}(S)$, which has size $n_k$ by Corollary~\ref{corollary_hereditary}, which is $O(k^2)$ by Lemma~\ref{lemma_nr}~(\ref{lemma_nr_leq}). We also find that $F^{(0)}$ and $R^{(0)}$ are disjoint subsets of $V^{(0)}$ with combined cardinality $K$ by definition.
	\end{proof}
	
	We can now prove Lemma~\ref{lemma_expectation}.
	
	\begin{proof}[Proof of Lemma~\ref{lemma_expectation}]
		We combine Lemma~\ref{lemma_expect} and Lemma~\ref{lemma_bounds1}, and we use $p,q\leq1$, $M\geq1$, and $\b_{\max}\leq M/\alpha$. We get
		\begin{align}
			&\mathbb{E}(\Delta_p(F^{(2)}))=q^2(|F|-|F^{(0)}|-|F^{(1)}|)=q^2|F|+O(qM^2),
			\\&\mathbb{E}(\Delta_p(R^{(2)}_p))=pq^2(|R^*(S)|-|R^{(0)}|-|R^{(1)}|)=pq^2|R^*(S)|+O(qM^2),
			\\&\mathbb{E}(\Lambda_p(V^{(2)}))=q^2(\b(V)-\b(S)-\b(V^{(1)})-\b(V^{(0)}))=q^2\b(V)+O(qM^2),
			\\&\mathbb{E}(\Delta_p(F^{(1)})+\Delta_p(R^{(1)}_p))=q(|F^{(1)}|+p|R^{(1)}|)=O(qM^2),
			\\&\mathbb{E}(\Lambda_p(V^{(1)})+\Lambda_p(S)-\Lambda_p(F^{(2)})-\Lambda_p(R^{(2)}_p)-\Lambda_p(F^{(1)})-\Lambda_p(R^{(1)}_p))
			\\&=q(\b(V^{(1)})+\b(S)-q\b(F^{(2)})-pq\b(R^{(2)})-\b(F^{(1)})-p\b(R^{(1)}))
			\\&=O(qM^2),
			\\&\mathbb{E}(\Delta_p(F^{(0)})+\Delta_p(R^{(0)}_p))=|F^{(0)}|+p|R^{(0)}|=K-q|R^{(0)}|=K+O(qM^2),
			\\&\mathbb{E}(\Lambda_p(V^{(0)})-\Lambda_p(F^{(0)})-\Lambda_p(R^{(0)}_p))=\b(V^{(0)})-\b(F^{(0)})-p\b(R^{(0)})
			\\&=\b(V^{(0)}\setminus(F^{(0)}\cup R^{(0)}))+q\b(R^{(0)})=O((n_k-K)M/\alpha+qM^2).
		\end{align}
		The result follows.
	\end{proof}
	
	\subsection{Product estimate}\label{subsection_proof_product}
	
	We prove Lemma~\ref{lemma_product}. We estimate the product
	\begin{align}
		\widetilde\Pi&=\prod_{r=0}^{\alpha-k-1}\frac{n_{\alpha-r}-K}{n_{\alpha-r}-\psi_{p_r}}
		\\&=\prod_{r=0}^{\alpha-k-1}\left(1-\frac{\psi_{p_r}-K}{n_{\alpha-r}-K}\right)^{-1}
		\\&=\exp\left(\sum_{r=0}^{\alpha-k-1}-\log\left(1-\frac{\psi_{p_r}-K}{n_{\alpha-r}-K}\right)\right).
	\end{align}
	The idea is to now use the approximation $-\log(1-x)=x+O(x^2)$ for $x=o(1)$. However, this requires some more bounds.
	
	\begin{lemma}\label{lemma_bounds2}
		For $r=0,\ldots,\alpha-k-1$, we have the bounds
		\begin{enumerate}[(i)]
			\item\label{lemma_bounds2_nrnk} $\begin{aligned}[t]
				n_{\alpha-r}-K\geq n_{\alpha-r}-n_k\geq(\alpha-r)(\alpha-k-r),
			\end{aligned}$
			\item\label{lemma_bounds2_nrk} $\begin{aligned}[t]
				\frac1{n_{\alpha-r}-K}=\left(1+O\left(\frac{k+1}{\alpha-r}\right)\right)\tfrac2{\ell}(\alpha-r)^{-2},\ \text{if}\ \frac{k+1}{\alpha-r}=o(1),
			\end{aligned}$
			\item\label{lemma_bounds2_pr} $\begin{aligned}[t]
				p_r=(1+O(\tfrac{k}{\alpha}))\tfrac{r}{\alpha},
			\end{aligned}$
			\item\label{lemma_bounds2_qr} $\begin{aligned}[t]
				q_r=(1+O(\tfrac{k}{\alpha}))\tfrac{\alpha-k-r}{\alpha}=(1+O(\tfrac{k}{\alpha-r}))\tfrac{\alpha-r}{\alpha},
			\end{aligned}$
			\item\label{lemma_bounds2_psipr} $\begin{aligned}[t]
				\frac{\psi_{p_r}-K}{n_{\alpha-r}-K}=O\left(\frac{M}{\alpha}+\frac{M^2}{(\alpha-r)\alpha}\right)=O(M^2/\alpha).
			\end{aligned}$
		\end{enumerate}
	\end{lemma}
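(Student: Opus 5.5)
We prove the five bounds of Lemma~\ref{lemma_bounds2} in order, since each feeds into the next. Throughout we use $n_t=\tfrac{\ell}{2}t^2+(1-\tfrac{\ell}{2})t$, Lemma~\ref{lemma_nr}, and $K\leq n_k$ from Lemma~\ref{lemma_bounds1}~(\ref{lemma_bounds1_v0}).

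For (\ref{lemma_bounds2_nrnk}), the first inequality is immediate from $K\leq n_k$. For the second, write $\alpha-r=k+t'$ with $t'=\alpha-k-r\geq1$. Lemma~\ref{lemma_nr}~(\ref{lemma_nr_sum}) gives
\[
n_{\alpha-r}-n_k=n_{t'}+\ell kt'.
\]
Lemma~\ref{lemma_nr}~(\ref{lemma_nr_geq}) and $\ell\geq2$ then give
\[
n_{\alpha-r}-n_k\geq t'^2+kt'=t'(t'+k)=(\alpha-k-r)(\alpha-r).
\]

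For (\ref{lemma_bounds2_nrk}), expand
\[
n_{\alpha-r}-K=\tfrac{\ell}{2}(\alpha-r)^2\Bigl(1+O\bigl(\tfrac{1}{\alpha-r}\bigr)-\tfrac{2K}{\ell(\alpha-r)^2}\Bigr).
\]
Since $K\leq n_k\leq\tfrac{\ell}{2}k^2$, the last correction is at most $(k/(\alpha-r))^2$. This is $O\bigl(\tfrac{k+1}{\alpha-r}\bigr)$ once $\tfrac{k+1}{\alpha-r}=o(1)$. Inverting via $(1+x)^{-1}=1+O(x)$ gives the claim.

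For (\ref{lemma_bounds2_pr}) and (\ref{lemma_bounds2_qr}), use $k=O(M)=o(\alpha)$. Then
\[
p_r=\frac{r}{\alpha-k}=\frac{r}{\alpha}\Bigl(1-\frac{k}{\alpha}\Bigr)^{-1}=\bigl(1+O(\tfrac{k}{\alpha})\bigr)\frac{r}{\alpha}.
\]
Similarly,
\[
q_r=\frac{\alpha-k-r}{\alpha-k}=\bigl(1+O(\tfrac{k}{\alpha})\bigr)\frac{\alpha-k-r}{\alpha}.
\]
For the second form of $q_r$, note $\alpha-k-r=(\alpha-r)\bigl(1-\tfrac{k}{\alpha-r}\bigr)$. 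The factor $1+O(k/\alpha)$ is absorbed into $1+O(k/(\alpha-r))$, because $\alpha-r\leq\alpha$. The form $(1+O(\cdot))$ is valid even when $k/(\alpha-r)$ is not small, since $1-\tfrac{k}{\alpha-r}\in(0,1]$. The $O$ is then simply a bounded additive error on a bounded quantity.

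Part (\ref{lemma_bounds2_psipr}) is the main step. By Lemma~\ref{lemma_expectation} with $q=q_r$,
\[
\psi_{p_r}-K=q_r^2\bigl(|F|+p_r|R^*(S)|+\b(V)\bigr)+O\bigl(q_rM^2+(n_k-K)M/\alpha\bigr).
\]
By Lemma~\ref{lemma_terms} and $\b_{\max}\leq M/\alpha$, the bracket is $O(\alpha M)$, using $|\b(V)|\leq n_\alpha M/\alpha=O(\alpha M)$. By (\ref{lemma_bounds2_qr}), $q_r\leq(\alpha-r)/\alpha$ up to constants, since $q_r\leq1$ and $\alpha-k-r\leq\alpha-r$. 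Hence the numerator is
\[
O\Bigl(\frac{(\alpha-r)^2M}{\alpha}+\frac{(\alpha-r)M^2}{\alpha}+\frac{(n_k-K)M}{\alpha}\Bigr).
\]

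For the denominator, the first two terms are handled by (\ref{lemma_bounds2_nrnk}). The difficulty is that (\ref{lemma_bounds2_nrnk}) only gives $(\alpha-r)(\alpha-k-r)$, which degenerates near $r=\alpha-k$. The main obstacle is therefore the regime $\alpha-r=O(k)$, where I would argue directly. The plan is to use $n_{\alpha-r}-K\geq\max\{(\alpha-r)(\alpha-k-r),\,n_{\alpha-r}-n_k\}$ together with the more precise fact that $q_r=(\alpha-k-r)/(\alpha-k)$ carries a factor $\alpha-k-r$.

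With that factor, the $q_r^2$ term is bounded by $(\alpha-r)(\alpha-k-r)M/\alpha^{2}\cdot\alpha$ over $(\alpha-r)(\alpha-k-r)$, giving $O(M/\alpha)$. The $q_rM^2$ term divided by $(\alpha-r)(\alpha-k-r)$ gives $O(M^2/(\alpha(\alpha-r)))$. For the last term, $(n_k-K)/(n_{\alpha-r}-K)\leq1$ because $n_k\leq n_{\alpha-r}$, so it contributes $O(M/\alpha)$.

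Summing, the ratio is $O(M/\alpha+M^2/((\alpha-r)\alpha))$. Finally $\alpha-r\geq1$ and $M\geq1$ give the uniform bound $O(M^2/\alpha)$.
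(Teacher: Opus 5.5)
Your proposal is correct and follows the paper's proof essentially step for step. Part (i) uses Lemma~\ref{lemma_nr}, parts (ii)--(iv) are direct expansions, and part (v) combines Lemma~\ref{lemma_expectation} with the factor $\alpha-k-r$ in $q_r$ against the lower bound from (i), plus $n_k-K\leq n_{\alpha-r}-K$ for the last term. The one difference is cosmetic: the ``degenerate regime'' you flag in (v) is not a separate case, since keeping $q_r=O((\alpha-k-r)/\alpha)$ from the outset, as the paper does and as your final computation does anyway, handles all $r$ uniformly.
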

	
	\begin{proof}
		(\ref{lemma_bounds2_nrnk}) The first inequality follows by Lemma~\ref{lemma_bounds1}~(\ref{lemma_bounds1_v0}).  By Lemma~\ref{lemma_nr}~(\ref{lemma_nr_sum},\ref{lemma_nr_geq}), we have $$n_{\alpha-r}-n_k=n_{\alpha-k-r}+\ell k(\alpha-k-r)\geq(\alpha-k-r)^2+k(\alpha-k-r).$$
		
		(\ref{lemma_bounds2_nrk}) By Lemma~\ref{lemma_bounds1}~(\ref{lemma_bounds1_v0}), we have
		\begin{align}
			n_{\alpha-r}-K
			&=\left(1+\frac{\frac{2}{\ell}-1}{\alpha-r}+\frac{K}{\frac{\ell}{2}(\alpha-r)^2}\right)\tfrac{\ell}{2}(\alpha-r)^2
			\\&=\left(1+O\left(\frac{k+1}{\alpha-r}\right)\right)\tfrac{\ell}{2}(\alpha-r)^2,
		\end{align}
		in particular, since $K/(\alpha-r)^2=O((k/(\alpha-r))^2)=O(k/(\alpha-r))$.
		
		(\ref{lemma_bounds2_pr}) We have $$p_r=\tfrac{r}{\alpha-k}=(1+\tfrac{k}{\alpha-k})\tfrac{r}{\alpha},$$ so the result follows.
		
		(\ref{lemma_bounds2_qr}) We have
		\begin{align}
			q_r&=\tfrac{\alpha-k-r}{\alpha-k}
			\\&=(1+\tfrac{k}{\alpha-k})\tfrac{\alpha-k-r}{\alpha}
			\\&=(1-\tfrac{kr}{(\alpha-k)(\alpha-r)})\tfrac{\alpha-r}{\alpha},
		\end{align}
		so the result follows.
		
		(\ref{lemma_bounds2_psipr}) By Lemma~\ref{lemma_expectation} and Lemma~\ref{lemma_terms}, we have $$\psi_{p_r}-K=O(q_r^2\alpha M+q_rM^2+(n_k-K)M/\alpha).$$ By (\ref{lemma_bounds2_qr}), we have $q_r=O((\alpha-k-r)/\alpha)$, so (\ref{lemma_bounds2_nrnk}) gives $$\frac{\psi_{p_r}-K}{n_{\alpha-r}-K}=O\left(\frac{(\alpha-k-r)M+M^2}{(\alpha-r)\alpha}+\frac{(n_k-K)M/\alpha}{n_{\alpha-r}-K}\right).$$ The result follows, since $\alpha-k-r\leq\alpha-r$ and $n_k-K\leq n_{\alpha-r}-K$.
	\end{proof}
	
	Note that $$\sum_{r=0}^{\alpha-k-1}\left(\frac{M}{\alpha}+\frac{M^2}{(\alpha-r)\alpha}\right)^2=O(M^2/\alpha).$$ By Lemma~\ref{lemma_expectation} and Lemma~\ref{lemma_bounds2}~(\ref{lemma_bounds2_nrnk},\ref{lemma_bounds2_psipr}), we thus get
	\begin{align}
		\log\widetilde\Pi&=(|F|+\b(V))\sum_{r=0}^{\alpha-k-1}\frac{q_r^2}{n_{\alpha-r}-K}+|R^*(S)|\sum_{r=0}^{\alpha-k-1}\frac{q_r^2p_r}{n_{\alpha-r}-K}
		\\&\quad+O\left(\sum_{r=0}^{\alpha-k-1}\frac{q_rM^2+(n_k-K)M/\alpha}{n_{\alpha-r}-n_k}\right)+O(M^2/\alpha).
	\end{align}
	Since $n_k-K\leq n_k=O(k^2)=O((k+1)M)$ by Lemma~\ref{lemma_nr}~(\ref{lemma_nr_leq}), by Lemma~\ref{lemma_terms}, we find that Lemma~\ref{lemma_product} follows from the following lemma.
	
	\begin{lemma}\label{lemma_sums}
		We have the sum estimates
		\begin{enumerate}[(i)]
			\item\label{lemma_sums_1} $\begin{aligned}[t]
				\sum_{r=0}^{\alpha-k-1}\frac{q_r^2}{n_{\alpha-r}-K}=P+O(M\log\alpha/\alpha^2),
			\end{aligned}$
			\item\label{lemma_sums_2} $\begin{aligned}[t]
				\sum_{r=0}^{\alpha-k-1}\frac{q_r^2p_r}{n_{\alpha-r}-K}=\frac12P+O(M\log\alpha/\alpha^2),
			\end{aligned}$
			\item\label{lemma_sums_3} $\begin{aligned}[t]
				\sum_{r=0}^{\alpha-k-1}\frac{q_r}{n_{\alpha-r}-n_k}=O(\log\alpha/\alpha),
			\end{aligned}$
			\item\label{lemma_sums_4} $\begin{aligned}[t]
				\sum_{r=0}^{\alpha-k-1}\frac1{n_{\alpha-r}-n_k}=O(\tfrac1{k+1}(\log(k+1)+1))=O(1).
			\end{aligned}$
		\end{enumerate}
	\end{lemma}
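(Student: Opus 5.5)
The plan is to reindex by $t:=\alpha-r$, which runs over $t=k+1,\ldots,\alpha$. Then $q_r=\frac{t-k}{\alpha-k}$, $p_r=\frac{\alpha-t}{\alpha-k}$, and each summand becomes an explicit rational function of $t$. The sums are compared against elementary sums $\sum t^{-1}$, $\sum t^{-2}$ or $\sum 1$. The two tools are Lemma~\ref{lemma_bounds2}~(\ref{lemma_bounds2_nrnk}), giving the crude bound $n_t-K\ge n_t-n_k\ge t(t-k)$, and Lemma~\ref{lemma_bounds2}~(\ref{lemma_bounds2_nrk},\ref{lemma_bounds2_pr},\ref{lemma_bounds2_qr}), giving sharp asymptotics when $(k+1)/t=o(1)$. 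I will use $k=O(M)$ throughout.

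Parts (\ref{lemma_sums_3}) and (\ref{lemma_sums_4}) are purely crude bounds, so I do them first. For (\ref{lemma_sums_3}), the summand is at most
\[
\frac{t-k}{(\alpha-k)\,t(t-k)}=\frac{1}{(\alpha-k)t}.
\]
Summing over $t\le\alpha$ gives $O(\log\alpha/(\alpha-k))=O(\log\alpha/\alpha)$, since $k=O(M)=o(\alpha)$. For (\ref{lemma_sums_4}), the summand is at most $\frac{1}{t(t-k)}$. If $k=0$ this is $\sum_t t^{-2}=O(1)$. If $k\ge1$, use partial fractions $\frac1k\bigl(\frac1{t-k}-\frac1t\bigr)$: the sum telescopes to at most $\frac1k H_k=O(\frac{\log k+1}{k})$. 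In both cases the total is $O(\frac1{k+1}(\log(k+1)+1))=O(1)$.

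For (\ref{lemma_sums_1}) I split the range at $t_0:=\lceil C(k+1)\rceil$ for a large constant $C$, or more precisely with $C\to\infty$ slowly so that $(k+1)/t=o(1)$ for $t>t_0$.

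\emph{Head range} $t\le t_0$. The crude bound gives a summand of at most
\[
\frac{(t-k)^2}{(\alpha-k)^2\,t(t-k)}\le\frac{1}{(\alpha-k)^2}.
\]
There are $O(k+1)$ such terms, so the head contributes $O((k+1)/\alpha^2)$, which is absorbed in the error term.

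\emph{Tail range} $t>t_0$. Here Lemma~\ref{lemma_bounds2} gives
\[
q_r^2=\Bigl(1+O\bigl(\tfrac{k}{t}\bigr)\Bigr)\frac{t^2}{\alpha^2},
\qquad
\frac1{n_t-K}=\Bigl(1+O\bigl(\tfrac{k+1}{t}\bigr)\Bigr)\frac2\ell\,t^{-2}.
\]
So each summand is $\frac{2}{\ell\alpha^2}\bigl(1+O(\frac{k+1}{t})\bigr)$. The main terms sum to
\[
\frac{2}{\ell\alpha^2}\bigl(\alpha+O(k+1)\bigr)=P+O\Bigl(\frac{k+1}{\alpha^2}\Bigr).
\]
The relative errors sum to $\frac{2}{\ell\alpha^2}\sum_t\frac{k+1}{t}=O\bigl((k+1)\log\alpha/\alpha^2\bigr)$. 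Since $k+1=O(M)$, the total error is $O(M\log\alpha/\alpha^2)$.

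Part (\ref{lemma_sums_2}) follows the same scheme. The additional factor $p_r=(1+O(\frac k\alpha))\frac{\alpha-t}{\alpha}$ is bounded by $1$, so the head range and the relative errors are handled exactly as in (\ref{lemma_sums_1}). The main term becomes
\[
\frac{2}{\ell\alpha^2}\sum_{t}\frac{\alpha-t}{\alpha}=\frac{2}{\ell\alpha^2}\Bigl(\frac{\alpha}{2}+O(k+1)\Bigr)=\frac12P+O\Bigl(\frac{k+1}{\alpha^2}\Bigr).
\]

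The only delicate point is the bookkeeping at the junction $t\approx k$. There the asymptotic form of Lemma~\ref{lemma_bounds2}~(\ref{lemma_bounds2_nrk}) is unavailable, and one must check that the crude bound loses only $O(k/\alpha^2)$ rather than something of order $\log$ times larger. The split at $t_0=\Theta(k+1)$ handles this. I expect this to be the main obstacle, but a mild one.
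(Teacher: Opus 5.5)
Your proof is correct and follows essentially the same route as the paper. Parts (iii) and (iv) use the crude bound $n_{\alpha-r}-K\geq n_{\alpha-r}-n_k\geq(\alpha-r)(\alpha-k-r)$ and the same telescoping. Parts (i) and (ii) split the sum into a head handled by that crude bound and a tail handled by the sharp asymptotics of Lemma~\ref{lemma_bounds2}; the only difference is that the paper splits at $\alpha-r\approx M\log\alpha$ rather than at $\Theta(k+1)$.

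There is one small bookkeeping slip. If you let $C\to\infty$, the head has $O(C(k+1))$ terms rather than $O(k+1)$, and likewise your main term is $P+O(C(k+1)/\alpha^2)$. This is harmless as long as $C=O(\log\alpha)$. Alternatively, keep $C$ a fixed large constant, since the expansions of $q_r^2$ and $1/(n_t-K)$ hold with uniform constants once $(k+1)/t\leq 1/C$.
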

	
	\begin{proof}[Proof of Lemma~\ref{lemma_sums}]
		(\ref{lemma_sums_1}) For $k+1=o(\alpha-r)$, Lemma~\ref{lemma_bounds2}~(\ref{lemma_bounds2_nrk},\ref{lemma_bounds2_qr}) gives $$\frac{q_r^2}{n_{\alpha-r}-K}=\left(1+O\left(\frac{k+1}{\alpha-r}\right)\right)\tfrac{2}{\ell}\alpha^{-2}=\tfrac{P}{\alpha}+O\left(\frac{M}{(\alpha-r)\alpha^2}\right).$$ For the sum, we get
		\begin{align}
			&\sum_{r=0}^{\alpha-\lfloor M\log\alpha\rfloor-1}\frac{q_r^2}{n_{\alpha-r}-K}
			\\&=\tfrac{P}{\alpha}(\alpha+O(M\log\alpha))+O\left(\sum_{r=0}^{\alpha-\lfloor M\log\alpha\rfloor-1}\frac{M}{(\alpha-r)\alpha^2}\right)
			\\&=P+O(M\log\alpha/\alpha^2).
		\end{align}
		For the rest of the sum, since $\alpha-k-r\leq\alpha-r$, Lemma~\ref{lemma_bounds2}~(\ref{lemma_bounds2_nrnk},\ref{lemma_bounds2_qr}) gives $$\sum_{r=\alpha-\lfloor M\log\alpha\rfloor}^{\alpha-k-1}\frac{q_r^2}{n_{\alpha-r}-K}=O\left(\sum_{r=\alpha-\lfloor M\log\alpha\rfloor}^{\alpha-k-1}\frac{\alpha-k-r}{\alpha^2(\alpha-r)}\right)=O(M\log\alpha/\alpha^2).$$
		
		(\ref{lemma_sums_2}) For $k+1=o(\alpha-r)$, Lemma~\ref{lemma_bounds2}~(\ref{lemma_bounds2_nrk},\ref{lemma_bounds2_pr},\ref{lemma_bounds2_qr}) gives $$\frac{q_r^2p_r}{n_{\alpha-r}-K}=\left(1+O\left(\frac{k+1}{\alpha-r}\right)\right)\tfrac{2}{\ell}r\alpha^{-3}=\tfrac{P}{\alpha^2}r+O\left(\frac{M}{(\alpha-r)\alpha^2}\right).$$ For the sum, we get
		\begin{align}
			&\sum_{r=0}^{\alpha-\lfloor M\log\alpha\rfloor-1}\frac{q_r^2p_r}{n_{\alpha-r}-K}
			\\&=\tfrac{P}{\alpha^2}(\tfrac12\alpha^2+O(\alpha M\log\alpha))+O\left(\sum_{r=0}^{\alpha-\lfloor M\log\alpha\rfloor-1}\frac{M}{(\alpha-r)\alpha^2}\right)
			\\&=\tfrac12P+O(M\log\alpha/\alpha^2).
		\end{align}
		For the rest of the sum, we can reuse the estimate from (\ref{lemma_sums_1}), since $p_r\leq1$.
		
		(\ref{lemma_sums_3}) Lemma~\ref{lemma_bounds2}~(\ref{lemma_bounds2_nrnk},\ref{lemma_bounds2_qr}) gives $$\sum_{r=0}^{\alpha-k-1}\frac{q_r}{n_{\alpha-r}-n_k}=O\left(\sum_{r=0}^{\alpha-k-1}\frac1{\alpha(\alpha-r)}\right)=O(\log\alpha/\alpha).$$
		
		(\ref{lemma_sums_4}) By Lemma~\ref{lemma_bounds2}~(\ref{lemma_bounds2_nrnk}), the substitution $t=\alpha-k-r$ gives $$\sum_{r=0}^{\alpha-k-1}\frac1{n_{\alpha-r}-n_k}\leq\sum_{t=1}^{\infty}\frac1{t(t+k)}.$$ If $k=0$, then this evaluates to $\tfrac{\pi^2}{6}=O(1)$. Otherwise, this evaluates to $\tfrac1{k}H_k$ for $H_k:=\sum_{t=1}^{k}\tfrac1{t}\leq1+\log k$ the $k$'th harmonic number.
	\end{proof}
	
	\subsection{Sigma concentration}\label{subsection_proof_concentration}
	
	We show that Lemmas~\ref{lemma_delta} and \ref{lemma_sigma} are implied by a series of lemmas, the proofs of which are spread across Sections~\ref{subsection_proof_vu}--\ref{subsection_proof_psi1}. Figure~\ref{fig:proof-framework-2} gives a schematic representation of the proof structure.
	
	\begin{figure}[htbp]
		\centering
		\begin{tikzpicture}[
			>=Stealth,
			every node/.style={font=\normalsize, align=center},
			thick
			]
			% --- Top level ---
			\node (lem8) at (0, 3)     {Lemma~\ref{lemma_sigma}};
			\node (lem7) at (5, 3)     {Lemma~\ref{lemma_delta}};
			
			% --- Middle level ---
			\node (lem17) at (-4, 1)   {Lemma~\ref{lemma_sigma1hat} \\ Section~\ref{subsection_proof_psi1}};
			\node (lem18) at (-1, 1)   {Lemma~\ref{lemma_sigmaihat}};
			\node (lem19) at (2, 1)    {Lemma~\ref{lemma_sigma2} \\ Sections~\ref{subsection_proof_vu}--\ref{subsection_proof_partition}};
			\node (lem20) at (6, 1)    {Lemma~\ref{lemma_error} \\ Section~\ref{subsection_proof_error}};
			
			% --- Arrows into Lemma~6.9 ---
			\draw[->] (lem17) -- (lem8);
			\draw[->] (lem18) -- (lem8);
			\draw[->] (lem19) -- (lem8);
			
			% --- Arrows into Lemma~6.8 ---
			\draw[->] (lem19) -- (lem7);
			\draw[->] (lem20) -- (lem7);
			
		\end{tikzpicture}
		\caption{Proof structure for Lemmas~\ref{lemma_delta} and~\ref{lemma_sigma}. Arrows indicate logical dependencies between components. Anything labeled with section numbers is shown in those sections. The rest is established here, in Section~\ref{subsection_proof_concentration}. Furthermore, note that Section~\ref{subsection_proof_stronger} also proves Lemma~\ref{lemma_bar_bound}, and Section~\ref{subsection_proof_error} also proves Lemma~\ref{lemma_bar_e}.}
		\label{fig:proof-framework-2}
	\end{figure}
	
	We split up $\Psi_r$ into all of its relevant components, in the same way that $\Psi_p$ was split up in Section~\ref{subsection_proof_simplified}. We define $\Delta_r:=|\widetilde{V}_r\setminus V_r|$ and $\Lambda_r:=\b(V_r)$, and for a set of vertices $U\subset V$, we write $\Delta_r(U):=|\widetilde{V}_r\cap U|$ and $\Lambda_r(U):=\b(\widetilde{V}_r\cap U)$. Finally, we write $R_r^{(i)}$ for the set of vertices from $R^*(S_r)$ with exactly $i$ neighbors in $S$. We split up
	\begin{align}
		\Psi_r^{(2)}&:=\Delta_r(F^{(2)})-\Lambda_r(F^{(2)})+\Delta_r(R_r^{(2)})-\Lambda_r(R_r^{(2)})+\Lambda_r(V^{(2)})+\Lambda_r(S),
		\\\Psi_r^{(1)}&:=\Delta_r(F^{(1)})-\Lambda_r(F^{(1)})+\Delta_r(R_r^{(1)})-\Lambda_r(R_r^{(1)}),
		\\\Psi_r^{(a)}&:=\Lambda_r(V^{(1)}),
		\\\Psi_r^{(b)}&:=\Delta_r(F^{(0)})-\Lambda_r(F^{(0)})+\Lambda_r(V^{(0)}),
		\\\Psi_r^{(c)}&:=\Delta_r(R_r^{(0)})-\Lambda_r(R_r^{(0)}),
	\end{align}
	such that $\Psi_r=\Psi_r^{(2)}+\Psi_r^{(1)}+\Psi_r^{(a)}+\Psi_r^{(b)}+\Psi_r^{(c)}$. For $i\in\{2,1,a,b,c\}$, we similarly define $\Psi_p^{(i)}$ for $p\in[0,1]$, and we define $\psi_p^{(i)}:=\mathbb{E}(\Psi_p^{(i)})$, and
	\begin{align}
		\Sigma^{(2)}:=\sum_{r=0}^{\alpha-k-1}Q_r^{(2)},&&Q_r^{(2)}:=\frac{|\Psi_r^{(2)}-\psi_{p_r}^{(2)}|}{n_{\alpha-r}-n_k},
		\\\hat{\Sigma}^{(i)}:=\sum_{r=0}^{\alpha-k-1}\hat{Q}_r^{(i)},&&\hat{Q}_r^{(i)}:=\frac{(\Psi_r^{(i)}-\psi_{p_r}^{(i)})^+}{n_{\alpha-r}-n_k},
	\end{align}
	such that $\hat{\Sigma}\leq\hat{\Sigma}^{(2)}+\hat{\Sigma}^{(1)}+\hat{\Sigma}^{(a)}+\hat{\Sigma}^{(b)}+\hat{\Sigma}^{(c)}$. We bound $\hat{\Sigma}^{(1)}$, $\hat{\Sigma}^{(a)}$, $\hat{\Sigma}^{(b)}$, and $\hat{\Sigma}^{(c)}$ with the following two lemmas.
	
	\begin{lemma}\label{lemma_sigma1hat}
		Let $C=\Omega(1)$ be a parameter with $C\leq(\frac12-\Omega(1))\frac{\log\alpha}{\log\log\alpha}$. If $k\geq1$, then $\mathbb{E}(e^{C\hat{\Sigma}^{(1)}})\leq e^{o(k)}$.
	\end{lemma}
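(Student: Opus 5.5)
The plan is to discard the centring $\psi_{p_r}^{(1)}$ and bound $\hat{\Sigma}^{(1)}$ pathwise by a quantity that depends only on the positions of the elements of $S$ in the random permutation $\pi$. We then control its exponential moment using the weak dependence between these positions. Since $\b\geq0$ (as assumed without loss of generality) and $\b_{\max}=o(1)$, every vertex $v$ counted in $\Psi_r^{(1)}$ contributes $1-\b(v)\in[0,1]$. Hence $\Psi_r^{(1)},\psi^{(1)}_{p_r}\geq0$, and so $(\Psi^{(1)}_r-\psi^{(1)}_{p_r})^+\leq\Psi^{(1)}_r$.

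Every $v\in F^{(1)}\cup R^{(1)}$ has a unique neighbour $s(v)\in S$, and $v\in\widetilde V_r$ exactly when $s(v)\notin S_r$. Moreover, $v\in R_r^{(1)}$ requires $v\in R^*(S)$. Writing $c_s$ for the number of such $v$ with $s(v)=s$, we obtain
\[
\Psi^{(1)}_r\leq\sum_{s\in S}c_s1[s\notin S_r].
\]
As in Lemma~\ref{lemma_bounds1}~(\ref{lemma_bounds1_f1},\ref{lemma_bounds1_r1}), we have $\sum_sc_s\leq k(F_{\max}+R^{(b)}_{\max})\leq2kM$. Per element, $c_s\leq|\overline N(s)\cap F|+|\overline N(s)\cap R^*(S)|\leq2M$.

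Let $T_s$ be the number of elements of $S$ chosen after $s$, so that $T_s$ is uniform on $\{0,\ldots,\alpha-k-1\}$. By Lemma~\ref{lemma_bounds2}~(\ref{lemma_bounds2_nrnk}) and the substitution $t=\alpha-k-r$ of Lemma~\ref{lemma_sums}~(\ref{lemma_sums_4}),
\[
\hat{\Sigma}^{(1)}\leq\sum_{s\in S}c_s\,f(T_s),\qquad f(T):=\sum_{t\geq T+1}\frac1{t(t+k)}\leq\min\Big\{\frac1{T+1},\frac{1+\log k}{k}\Big\}.
\]
The positions $(T_s)_s$ are negatively associated, since they come from sampling without replacement. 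Because $x\mapsto e^{Cc_sf(x)}$ is monotone in each coordinate, this gives
\[
\mathbb{E}\big(e^{C\hat{\Sigma}^{(1)}}\big)\leq\prod_s\mathbb{E}\big(e^{Cc_sf(T)}\big)\leq\exp\Big(\sum_s\mathbb{E}\big(e^{Cc_sf(T)}-1\big)\Big).
\]
It then suffices to show that $\sum_s\mathbb{E}(e^{Cc_sf(T)}-1)=o(k)$.

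We split the expectation at a threshold $L$ of order $CM$. For $T\geq L$ we have $Cc_sf(T)=O(1)$, so $e^{x}-1=O(x)$ applies and yields a contribution $O(Cc_s\log\alpha/\alpha)$. Summing over $s$ gives $O(CkM\log\alpha/\alpha)=o(k)$, using $M=O(\alpha^{1/2}/(\log\alpha)^2)$ and $C=O(\log\alpha/\log\log\alpha)$.

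The main obstacle is the range $T<L$. There, the per-$s$ bound $c_s\leq2M$ together with $f(T)\leq1/(T+1)$ gives terms of size up to $e^{2CM}$, which is too large against the $L/\alpha$ probability weight. I would first try to handle this range by refining $f$ with the cap $(1+\log k)/k$, which is useful when $k$ is large. I would also exploit that only $k$ elements $s\in\overline S\setminus S$ generate $F^{(1)}\cup R^{(1)}$. The vertices $v$ with $s(v)=s$ lie in $\bigcup_{u\in\overline S\setminus S}N(s)\cap N(u)$, which has size $\ell k=O(k)$ by Proposition~\ref{proposition_ell}, so one also has $c_s=O(k)$. Combined with $c_s\leq2M$, this gives
\[
Cc_sf(T)\leq C\min\{2M,O(k)\}\cdot\frac{1+\log k}{k}=O(C\log\alpha).
\]
The small-$T$ contribution is then at most $\frac{L}{\alpha}\alpha^{O(C\log\alpha/\log\alpha)}$ per $s$. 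Here the hypothesis $C\leq(\frac12-\Omega(1))\frac{\log\alpha}{\log\log\alpha}$ should be exactly what keeps this at $\alpha^{-\Omega(1)}$. Summing over the $\alpha-k$ elements, and more carefully only over those $s$ with $c_s>0$, of which there are at most $2kM$, then yields $o(k)$. If this bookkeeping fails, the fallback is a direct hypergeometric tail bound on the number of $s$ with $c_s>0$ and $T_s<L$.
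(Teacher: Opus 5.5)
There is a genuine gap in the small-$T$ range. It comes from your decoupling step, not from the bookkeeping after it. Applying negative association to the positions $(T_s)_{s\in S}$ replaces the permutation by independent uniform positions. That forgets that each position near the end of $\pi$ holds only one element. The only information you use about the weights is $c_s\le\min\{2M,\ell k\}$ and $\sum_s c_s\le 2kM$, and these permit $2M/\ell$ elements with $c_s=\ell k$. For each such element, $\mathbb{E}(e^{Cc_sf(T)})\ge 1+(e^{C\ell H_k}-1)/\alpha\ge 1+(k^{2C}-1)/\alpha$, where $H_k$ is the $k$-th harmonic number. Take for instance $k\approx(\log\alpha)^2\le M$ and $C=\frac{\log\alpha}{3\log\log\alpha}$, which satisfies the hypothesis. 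Then $k^{2C}=\alpha^{4/3}$, so your bound $\prod_s\mathbb{E}(e^{Cc_sf(T)})$ is at least $\exp(\Omega(M\log\alpha))$. That is not $e^{o(k)}$ when $M\gg k$. Sharper denominators do not help: the factors become $1+\alpha^{-1/3+o(1)}$, and the product is still $\exp(\alpha^{1/6-o(1)})$ for $M\approx\alpha^{1/2}/(\log\alpha)^2$.

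Your specific estimate is also wrong. The bound $Cc_sf(T)=O(C\log\alpha)$ only gives a factor $\alpha^{O(C)}$, which is superpolynomial for $C=\Theta(\log\alpha/\log\log\alpha)$. The hypothesis on $C$ controls $(\log\alpha)^{C}\le\alpha^{1/2-\Omega(1)}$, not $\alpha^{O(C)}$. Your fallback, a tail bound on the number of $s$ with $c_s>0$ and $T_s<L$, is too coarse as well, since up to $2kM$ elements can have $c_s>0$.

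The missing idea is to decouple over \emph{positions} rather than over elements for the last $\ell k$ positions; this is the paper's route.
\begin{itemize}
\item Split $\hat{\Sigma}^{(1)}\le\Sigma^{(l)}+\Sigma^{(u)}$ at $t=\ell k$. Condition on the set of elements in positions $t>\ell k$; this makes the two parts independent.
\item In $\Sigma^{(l)}$, the weights $f(s_{\alpha-k-t})$ for $t\le\ell k$ are a sample without replacement from the multiset $\{f(s)\}$. Since $\exp(C\sum_t\theta(t)x_t)$ is supermodular, Theorem~\ref{theorem_so} lets you replace them by i.i.d.\ draws.
\item An exchange argument shows the worst multiset puts the mass $2kM$ on $\lceil\frac{2}{\ell}M\rceil$ values equal to $\ell k$. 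Combined with $\ell k\,\theta(t)\le\log(2e\ell k/t)$, this gives $\prod_{t\le\ell k}(1+\alpha^{-1/2}(2e\ell k/t)^C)$. Only $\ell k$ factors appear, not $\Theta(M)$.
\item Only now does the hypothesis on $C$ enter. It gives $t^*=2e\ell k\,\alpha^{-1/(2C)}\le k(\log\alpha)^{-1-\Omega(1)}$, so the logarithm of the product is at most $(C+\log 2)t^*=o(k)$.
\end{itemize}
This step needs the sharp bound $n_{k+i}-n_k\ge i^2+\ell ki$ from Lemma~\ref{lemma_nr}. Your bound $t(t+k)$ puts $C\ell$ in the exponent instead of $C$, which breaks the constant $\frac12$ in the hypothesis. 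That constant is exactly what Lemma~\ref{lemma_sigma} later requires. Your large-$T$ estimate is fine and matches the paper's treatment of $\Sigma^{(u)}$.
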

	
	\begin{lemma}\label{lemma_sigmaihat}
		We have
		\begin{enumerate}[(i)]
			\item\label{lemma_sigmaihat_a} $\hat{\Sigma}^{(a)}=O(M^2\log\alpha/\alpha)$,
			\item\label{lemma_sigmaihat_b} $\hat{\Sigma}^{(b)}=0$,
			\item\label{lemma_sigmaihat_c} $\hat{\Sigma}^{(c)}=O(M^2\log\alpha/\alpha)$.
		\end{enumerate}
	\end{lemma}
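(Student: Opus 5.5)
The plan is to treat the three components separately. In each case I use two facts. First, vertices of $V^{(0)}=V\setminus\overline{N}(S)$ lie in $\widetilde{V}_r$ for every $r$. Second, a vertex of $V^{(1)}$ lies in $\widetilde{V}_r$ exactly when its unique neighbour in $S$ is not in $S_r$. I then bound $\hat Q_r^{(i)}$ deterministically and sum over $r$, using the lower bound $n_{\alpha-r}-n_k\geq(\alpha-r)(\alpha-k-r)$ from Lemma~\ref{lemma_bounds2}~(\ref{lemma_bounds2_nrnk}) together with Lemma~\ref{lemma_sums}~(\ref{lemma_sums_3},\ref{lemma_sums_4}). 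Throughout, $\b\geq0$ (as assumed without loss of generality) and $\b_{\max}\leq M/\alpha$.

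For (\ref{lemma_sigmaihat_b}), every term of $\Psi_r^{(b)}$ depends only on sets contained in $V^{(0)}$, and $V^{(0)}\subset\widetilde{V}_r$ always. So $\Delta_r(F^{(0)})=|F^{(0)}|$, $\Lambda_r(F^{(0)})=\b(F^{(0)})$ and $\Lambda_r(V^{(0)})=\b(V^{(0)})$ deterministically. Hence $\Psi_r^{(b)}$ is constant and equals its simplified-model mean $\psi^{(b)}_{p_r}$; the same holds in the simplified model, consistent with Lemma~\ref{lemma_expect} at $i=0$. Every $\hat Q_r^{(b)}$ vanishes, so $\hat\Sigma^{(b)}=0$.

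For (\ref{lemma_sigmaihat_a}), let $\overline{S}\supset S$ be an MIS, as in the proof of Lemma~\ref{lemma_bounds1}. By Proposition~\ref{proposition_ell}, each $s\in S$ has exactly $\ell k$ vertices of $V^{(1)}$ attached to it, namely $\ell$ for each $t\in\overline{S}\setminus S$. Exactly those attached to some $s\notin S_r$ survive in $\widetilde{V}_r$, so
\[
0\leq\Psi_r^{(a)}\leq\ell k(\alpha-k-r)\b_{\max}\leq\ell k(\alpha-k-r)M/\alpha.
\]
Since $\psi^{(a)}_{p_r}\geq0$, the same quantity bounds $(\Psi_r^{(a)}-\psi^{(a)}_{p_r})^+$. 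Dividing by $(\alpha-r)(\alpha-k-r)$ gives $\hat Q_r^{(a)}=O(kM/(\alpha(\alpha-r)))$. Summing the harmonic series over $r$ yields $O(kM\log\alpha/\alpha)=O(M^2\log\alpha/\alpha)$, using $k=O(M)$.

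For (\ref{lemma_sigmaihat_c}), since $R_r^{(0)}\subset V^{(0)}\subset\widetilde{V}_r$, we have $\Psi_r^{(c)}=|R_r^{(0)}|-\b(R_r^{(0)})$, and by Lemma~\ref{lemma_expect} its mean is $\psi_{p_r}^{(c)}=p_r|R^{(0)}|-p_r\b(R^{(0)})$. The naive bound $(\cdot)^+\leq|R^{(0)}|$ combined with Lemma~\ref{lemma_sums}~(\ref{lemma_sums_4}) only gives $O(M)$, so this is the step needing care. The remedy is to exploit that $|R_r^{(0)}|\leq|R^{(0)}|$, which gives $(|R_r^{(0)}|-p_r|R^{(0)}|)^+\leq q_r|R^{(0)}|$. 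The weight terms contribute at most $\b(R^{(0)})\leq|R^{(0)}|M/\alpha$. Hence
\[
\hat Q_r^{(c)}\leq\frac{q_r|R^{(0)}|+|R^{(0)}|M/\alpha}{n_{\alpha-r}-n_k}.
\]
Next I bound $|R^{(0)}|$. Every vertex of $V^{(0)}$ lies in $\overline{N}(t)$ for some $t\in\overline{S}\setminus S$, so $|R^{(0)}|\leq kR^{(b)}_{\max}=O(kM)=O(M^2)$. Lemma~\ref{lemma_sums}~(\ref{lemma_sums_3}) then bounds the first part by $O(M^2\log\alpha/\alpha)$, and Lemma~\ref{lemma_sums}~(\ref{lemma_sums_4}) bounds the second by $O(M^3/\alpha)$. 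The latter is not obviously dominated, so I would refine it using $|R^{(0)}|\leq K\leq n_k=O(k^2)$ when $k=O(1)$. For large $k$, Lemma~\ref{lemma_sums}~(\ref{lemma_sums_4}) gives the better factor $\frac{\log(k+1)+1}{k+1}$, so the second part is $O(kM\cdot\frac{M\log k}{k\alpha})=O(M^2\log\alpha/\alpha)$, as required.
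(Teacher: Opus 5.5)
Your proof is correct and follows the paper's argument for (a) and (b) essentially verbatim. For (c) it uses the same key idea as the paper: gain a factor $q_r$ from $R_r^{(0)}\subset R^{(0)}$, then apply Lemma~\ref{lemma_sums}~(\ref{lemma_sums_3}).

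The only difference is that the paper handles the weights in (c) in one stroke. It notes that $1-\b(v)\geq0$ for large $\alpha$ gives $\Psi_r^{(c)}=\sum_{v\in R_r^{(0)}}(1-\b(v))\leq|R^{(0)}|-\b(R^{(0)})$, and hence $(\Psi_r^{(c)}-\psi_{p_r}^{(c)})^+\leq q_r|R^{(0)}|$. This makes three of your steps unnecessary: the separate weight term, the bound $|R^{(0)}|\leq kR_{\max}^{(b)}$, and the appeal to Lemma~\ref{lemma_sums}~(\ref{lemma_sums_4}).
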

	
	It remains to bound $\hat{\Sigma}^{(2)}$, and study the case $k=0$. If $k=0$, then $V\setminus S=V^{(2)}$ by $2$-uniformity, so we get $\Psi_r=\Psi_r^{(2)}$, which gives $Q_r=Q_r^{(2)}$ and $\Sigma=\Sigma^{(2)}\geq\hat{\Sigma}=\hat{\Sigma}^{(2)}$. We start with the following lemma.
	
	\begin{lemma}\label{lemma_sigma2}
		Let $C=\Omega(1)$ be a parameter with $$C\leq\left(8\left(1-\tfrac{\log M}{\log\alpha}\right)+\Omega(1)\right)^{-1}\tfrac{\log\alpha}{\log\log\alpha}.$$ Then there exists an event $\mathcal{R}\subset\mathcal{P}$, such that
		\begin{enumerate}[(i)]
			\item\label{lemma_sigma2_pi} $\begin{aligned}[t]
				\mathbb{E}(\Pi1_{\mathcal{P}^{\delta}\setminus\mathcal{R}})=O(\mathscr{E})\ \text{and}\ \mathbb{E}(|1+\overline{\Sigma}|1_{\mathcal{P}^{\delta}\setminus\mathcal{R}})=O(\mathscr{E})\ \text{for}\ \delta:=\tfrac1{C}\ \text{if}\ k=0,
			\end{aligned}$
			\item\label{lemma_sigma2_exp} $\begin{aligned}[t]
				\mathbb{E}(e^{C\Sigma^{(2)}}1_{\mathcal{P}\setminus\mathcal{R}})=o(1),
			\end{aligned}$
			\item\label{lemma_sigma2_out} $\begin{aligned}[t]
				C\Sigma^{(2)}=o(1)\ \text{in}\ \mathcal{R}.
			\end{aligned}$
		\end{enumerate}
	\end{lemma}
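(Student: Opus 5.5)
We plan to build the event $\mathcal{R}$ from a concentration inequality for each $\Psi_r^{(2)}$, applied in the simplified model and then transferred to the true model. Concretely, $\mathcal{R}$ will be the event that for every $r=0,\ldots,\alpha-k-1$ we have
\[
|\Psi_r^{(2)}-\psi_{p_r}^{(2)}|\leq\lambda_r,
\]
for thresholds $\lambda_r$ chosen so that $\sum_r\lambda_r/(n_{\alpha-r}-n_k)=o(1/C)$.

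The key steps, in order, are as follows.

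\textbf{Step 1 (polynomial structure).} In the simplified model, each component of $\Psi_p^{(2)}$ is a polynomial of degree at most $3$ in the independent indicators $\{1_{s\in S_p}\}_{s\in S}$. A vertex of $V^{(2)}$ survives iff both of its $S$-neighbours are absent, and membership in $R^*(S_p)$ adds one further indicator. The weights are $1$ or $\b(v)\leq M/\alpha$.

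\textbf{Step 2 (Vu's inequality).} We apply Vu's concentration inequality for polynomials. The required maximal expected partial derivatives are controlled by the parameters in $M$:
\begin{itemize}
\item fixing one element $s\in S$ affects at most $O(F_{\max}+R_{\max}^{(b)}+\alpha\b_{\max})=O(M)$ of the mass, roughly scaled by $q$;
\item fixing two elements affects at most $O(1+R_{\max}^{(c)})$, contributing the $(R_{\max}^{(c)})^2\leq M$ term;
\item the quantity $|R^*(S)|$ is handled by $R_{\max}^{(a)}$.
\end{itemize}
This should give a tail bound of the form
\[
\mathbb{P}\bigl(|\Psi_p^{(2)}-\psi_p^{(2)}|\geq x\sqrt{qM\alpha}+x^2M\bigr)\leq e^{-\Omega(x)+O(\log\alpha)}.
\]

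\textbf{Step 3 (transfer to the true model).} We pass from $S_p$ to the uniform $r$-subset $S_r$ by conditioning on $|S_p|=r$. This event has probability $\Omega(1/\sqrt{\alpha})$, costing only a polynomial factor that is absorbed by a union bound over $r$.

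\textbf{Step 4 (choice of thresholds).} We take $x\asymp\log\alpha$, so that $\mathbb{P}(\mathcal{P}\setminus\mathcal{R})$ is polynomially small, say $\alpha^{-\omega}$ after tuning. Since $n_{\alpha-r}-n_k\gtrsim(\alpha-r)(\alpha-k-r)$ and $q_r\approx(\alpha-r)/\alpha$, summing $\lambda_r/(n_{\alpha-r}-n_k)$ gives
\[
O\Bigl(\sqrt{M/\alpha}\,\log\alpha\cdot\textstyle\sum_t t^{-3/2}+M(\log\alpha)^2\sum_t t^{-2}\Bigr).
\]
The range of very small $t=\alpha-k-r$ needs separate treatment: there $\Psi_r^{(2)}$ itself is trivially $O(t^2\cdot\text{stuff})$, so we would truncate at $t\approx M\log\alpha$. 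After this truncation the sum should be $O(\sqrt{M/\alpha}\log\alpha+M(\log\alpha)^2/\alpha)$, which is $o(1/C)$ under $M=O(\alpha^{1/2}/(\log\alpha)^2)$. This gives (iii).

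\textbf{Step 5 (parts (i) and (ii)).} For (ii), outside $\mathcal{R}$ we use the deterministic bound $\Sigma^{(2)}=O(M\log\alpha)$, obtained as in Lemma~\ref{lemma_bar_bound}, together with the sharper layered tails. The idea is to stratify $\mathcal{P}\setminus\mathcal{R}$ by the largest $x$ attained. This yields $\sum_x e^{CO(x\cdot\text{scale})}e^{-\Omega(x)}\alpha^{O(1)}$. This sum converges only if $C\cdot(\text{per-unit contribution})$ beats the decay; that requirement is where the constraint on $C$ in terms of $1-\log M/\log\alpha$ and $\log\alpha/\log\log\alpha$ enters. Part (i) then follows, since on $\mathcal{P}^{\delta}$ we have $\Pi\leq e^{C\Sigma}$ and $|1+\overline{\Sigma}|=O(1+\Sigma)$, and both are integrated against a probability $O(\mathscr{E})$ by the same stratification.

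\textbf{Expected obstacle.} The main obstacle is Step 5: getting the exponential moment $\mathbb{E}(e^{C\Sigma^{(2)}}1_{\mathcal{P}\setminus\mathcal{R}})$ small. Vu's bound has only stretched-exponential tails in the degree-$3$ regime, so the sum over strata must be balanced carefully against the deterministic cap $O(M\log\alpha)$. I would first try stratifying dyadically in $x$ and choosing the number of strata to be about $\log\alpha/\log\log\alpha$, which seems to be exactly what produces the stated threshold on $C$.
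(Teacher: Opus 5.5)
There is a genuine gap. It sits in the range of small $t=\alpha-k-r$, the last polylogarithmically many steps, which your Step~4 disposes of by truncation.

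\textbf{Why the small-$t$ range is not negligible.} Deterministically one only has $Q_r^{(2)}\le\min\{1,O(M/t)\}$. Configurations in which $\Theta(M)$ mutually interfering elements of $S$ (permitted by $F_{\max}\le M$) occupy the last positions make the truncated range contribute order $M$ to $\Sigma^{(2)}$, not $o(1/C)$. Vu's inequality does not rescue this range: its $\lambda$-only terms are of order $\lambda^2$ and $\lambda^3$ with $\lambda\gtrsim\log\alpha$, so it is vacuous or too weak for all $t$ up to a power of $\log\alpha$. Your bookkeeping also does not close on its own terms. With threshold $x^2M$ and $x\asymp\log\alpha$, the tail $x^2M\sum_{t\ge M\log\alpha}t^{-2}$ is of order $\log\alpha$. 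The Gaussian term in Step~2 also omits the dominant contribution $\sqrt{\lambda q^3\alpha M^2}$; this is harmless after summation, but it is not what Vu gives.

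\textbf{Why $\mathcal{R}$ cannot have super-polynomially small complement.} Take $k=0$. A single vertex of $F$ whose two $S$-neighbours lie among the last $t$ elements of the permutation contributes about $\tfrac{2}{\ell t}$ to $\Sigma^{(2)}$. So for $C$ near its maximum, (iii) forces $\mathcal{R}$ to exclude this for all $t$ up to order $\log\alpha/\log\log\alpha$. That event has probability of order $t^2M/\alpha$ in general, so $\mathbb{P}(\mathcal{P}\setminus\mathcal{R})$ cannot be made $\alpha^{-\omega(1)}$. Consequently (ii) cannot follow from the cap $\Sigma^{(2)}=O(M\log\alpha)$ plus Vu-type stratification. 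In (i), the bound $\Pi\le e^{C\Sigma}$ is too lossy on this comparatively likely bad event: $e^{C\cdot O(1)}=\alpha^{o(1)}$ times $M(\log\alpha)^2/\alpha$ is not $O(\mathscr{E})$.

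\textbf{The missing idea.} It is the interference counting of Lemma~\ref{lemma_delta2}: $\Delta_r^{(2)}$ is at most $\ell$ times the number of interfering pairs inside the uniformly random $t$-set $S\setminus S_r$, and $S$ contains only $O(\alpha M)$ interfering pairs. This yields two tools Vu cannot supply.
\begin{enumerate}
\item[(a)] A hypergeometric tail $\mathbb{P}(|\Psi_r^{(2)}-\psi_{p_r}^{(2)}|\ge\rho)\le t^{3/2}\exp(-\tfrac{\rho}{\ell t}(\log\alpha-O(\log\log\alpha)))$ for $t<\lambda\log\alpha$. With thresholds $\tfrac{\ell t\lambda}{10\log\alpha}$, the harmonic sum over $t\in[\lambda/\log\alpha,\lambda\log\alpha]$ gives $\Sigma^{(2)}\le\tfrac{2\lambda\log\log\alpha}{5\log\alpha}+O(\tfrac{\lambda}{\log\alpha})$ on $A(\lambda)$. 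The constraint on $C$ comes from balancing this against $\mathbb{P}(\mathcal{P}\setminus A(\lambda))\le e^{-\lambda/10+o(\lambda)}$ over a geometric sequence with ratio $1+\varepsilon$, starting at $\lambda_0=20(1-\tfrac{\log M}{\log\alpha})\log\alpha$. So the $\log\log\alpha$ is this harmonic sum, not a count of strata, and dyadic strata would worsen the constant $8$.
\item[(b)] Markov bounds $\mathbb{P}(\mathcal{P}\setminus B_i(t))=O((t^2M/\alpha)^{i+1})$ for the events $B_i(t)$ that $\Delta_r^{(2)}\le i\ell$ throughout the last $t$ steps.
\end{enumerate}

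\textbf{How the paper assembles these.} It takes $\mathcal{R}=A(\lambda_0)\cap B_1(t_1)\cap B_0(t_0)$ with $t_0=\lfloor\log\alpha\rfloor$ and $t_1=t_0^4$.
\begin{itemize}
\item On $A(\lambda_0)\setminus B_1(t_1)$, the growth $e^{C\Sigma^{(2)}}\le\alpha^{1-\Omega(1)}$ is paid for by the \emph{squared} probability $O(t_1^4M^2/\alpha^2)$.
\item On $A(\lambda_0)\cap B_1(t_1)\setminus B_0(t_0)$, one has $\Sigma^{(2)}=O(1)$ against probability $O(t_0^2M/\alpha)$. For (i), the paper proves $\Pi=O(1)$ there directly, avoiding the factor $1/\delta$ in the last $t_1$ steps. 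It uses $\Psi_r=O(1)$ on $B_1(t_1)$ together with $n_{\alpha-r}-\Psi_r\ge\tfrac12|V_r|\ge\tfrac12(\alpha-r)$. This is affordable only because $\mathscr{E}$ contains the term $M(\log\alpha)^2/\alpha$.
\end{itemize}
Your Steps~1--3, and the use of the deterministic cap on the extreme stratum, do match the paper.
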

	
	With the previous lemma, for the case $k=0$, the idea is that $\log\Pi\leq\tfrac1{\delta}\Sigma=o(1)$ in $\mathcal{P}^{\delta}\cap\mathcal{R}$ by \eqref{equation_pdelta}, which gives the approximation $$\Pi=\exp(\log\Pi)=1+\log\Pi+O((\log\Pi)^2).$$ Since, furthermore, we have $|\widetilde{Q}_r|\leq\tfrac1{\delta}Q_r\leq\tfrac1{\delta}\Sigma=o(1)$ for $r=0,\ldots,\alpha-1$, we also have the approximation $$\log\Pi=\sum_{r=0}^{\alpha-1}\log(1+\widetilde{Q}_r)=\overline{\Sigma}+\sum_{r=0}^{\alpha-1}(\widetilde{Q}_r-\overline{Q}_r)+O\left(\sum_{r=0}^{\alpha-1}\widetilde{Q}_r^2\right).$$ We are thus interested in the following lemma.
	
	\begin{lemma}\label{lemma_error}
		Assume $k=0$, let $\Omega(1)\leq\delta\leq1-\Omega(1)$, and let $\mathcal{R}\subset\mathcal{P}$ be an event, such that $\tfrac1{\delta}\Sigma=o(1)$ in $\mathcal{P}^{\delta}\cap\mathcal{R}$. Then
		\begin{enumerate}[(i)]
			\item\label{lemma_error_diff} $\begin{aligned}[t]
				\mathbb{E}\left(1_{\mathcal{P}^{\delta}}\sum_{r=0}^{\alpha-1}|\widetilde{Q}_r-\overline{Q}_r|\right)=O(M/\alpha),
			\end{aligned}$
			\item\label{lemma_error_sum} $\begin{aligned}[t]
				\mathbb{E}\left(1_{\mathcal{P}^{\delta}}\sum_{r=0}^{\alpha-1}\widetilde{Q}_r^2\right)=O(M/\alpha),
			\end{aligned}$
			\item\label{lemma_error_log} $\begin{aligned}[t]
				\mathbb{E}(1_{\mathcal{P}^{\delta}\cap\mathcal{R}}(\log\Pi)^2)=O(\mathscr{E}).
			\end{aligned}$
		\end{enumerate}
		It follows that $\mathbb{E}(1_{\mathcal{P}^{\delta}\cap\mathcal{R}}(\Pi-(1+\overline{\Sigma})))=O(\mathscr{E})$.
	\end{lemma}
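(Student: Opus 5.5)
The plan is to control everything through second moments of the fluctuations $\Psi_r-\psi_{p_r}$ in the true model, using the exact hypergeometric law of $S_r$ rather than any concentration inequality. Throughout, $k=0$, so $K=n_k=0$, $S$ is an MIS and $V\setminus S=V^{(2)}$.

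\textbf{Step 1: $\psi_{p_r}$ and the bias.} With $k=0$, the sets $F^{(1)},F^{(0)},R^{(1)},R^{(0)},V^{(1)},V^{(0)}$ are all empty. In the proof of Lemma~\ref{lemma_expectation} the only surviving error terms then come from $\b(S)\leq\alpha\b_{\max}\leq M$. This gives
\begin{equation}
\psi_{p_r}=q_r^2(|F|+p_r|R^*(S)|+\b(V))+O(q_rM).
\end{equation}
By Lemma~\ref{lemma_terms} the main term is $O(q_r^2\alpha M)$, and $q_r=(\alpha-r)/\alpha$. Since $n_{\alpha-r}\geq(\alpha-r)^2$, we get
\begin{equation}
\psi_{p_r}/n_{\alpha-r}=O\bigl(M/\alpha+M/(\alpha(\alpha-r))\bigr).
\end{equation}
Each summand of $\Psi_r$ is an indicator, or a weight times an indicator, of an event of one of three forms: both $S$-neighbours of $v$ lie outside $S_r$; additionally the $R$-partner $s$ of $v$ lies in $S_r$; or $s\notin S_r$ for $s\in S$. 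In the true model $S_r$ is a uniform $r$-subset, so the probability that two or three given elements of $S$ are absent or present is explicit. For example, two given elements are both absent with probability $q_r(q_r-\frac1\alpha)/(1-\frac1\alpha)=q_r^2+O(q_r/\alpha)$. Hence
\begin{equation}
|\mathbb{E}\Psi_r-\psi_{p_r}|=O(q_r(|F|+|R^*(S)|+|\b(V)|)/\alpha+q_rM)=O(q_rM).
\end{equation}

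\textbf{Step 2: variance.} Write $\Psi_r-\mathbb{E}\Psi_r$ as a sum of centred indicators and expand the variance. Two summands whose governing sets of at most three $S$-elements are disjoint have covariance $O(q_r^2/\alpha)$ times their weights, again from the explicit hypergeometric formulas. Such pairs contribute at most $O(q_r^4(\alpha M)^2/\alpha)=O(q_r^4\alpha M^2)$. For pairs that share an element $s\in S$, count over $s$. Each $s$ has at most $F_{\max}+R^{(b)}_{\max}$ vertices of $F\cup R^*(S)$ in $\overline N(s)$, and at most $R^{(a)}_{\max}$ vertices in $R^*(s)$. This bounds the number of such pairs by $O(\alpha M^2)$, each with probability $O(q_r^3)$. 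The $\b$-weighted terms are handled the same way using $\b_{\max}\leq M/\alpha$. The target bound is
\begin{equation}
\mathbb{E}(\Psi_r-\psi_{p_r})^2=O(q_r^3\alpha M^2+q_r^2M^2).
\end{equation}
\textbf{This covariance bookkeeping is the main obstacle.} There are several component types, and the counts must be uniform in $r$, including late steps where $\alpha-r$ is small. I would first try to organise it by the at most three elements of $S$ governing each summand.

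\textbf{Step 3: items (i) and (ii).} On $\mathcal{P}^{\delta}$ we have $n_{\alpha-r}-\Psi_r\geq\delta n_{\alpha-r}$. Then
\begin{equation}
|\widetilde Q_r-\overline Q_r|=\frac{|\Psi_r-\psi_{p_r}|\,\Psi_r}{n_{\alpha-r}(n_{\alpha-r}-\Psi_r)}\leq\frac{(\Psi_r-\psi_{p_r})^2+\psi_{p_r}|\Psi_r-\psi_{p_r}|}{\delta n_{\alpha-r}^2},
\qquad
\widetilde Q_r^2\leq\frac{(\Psi_r-\psi_{p_r})^2}{\delta^2n_{\alpha-r}^2}.
\end{equation}
Plug in Steps 1--2, with Cauchy--Schwarz for $\mathbb{E}|\Psi_r-\psi_{p_r}|$, and $n_{\alpha-r}\geq(\alpha-r)^2$. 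The per-$r$ contributions are
\begin{equation}
O\bigl(M^2/(\alpha^2(\alpha-r))+M^2/(\alpha^2(\alpha-r)^2)\bigr)\quad\text{and}\quad O\bigl(M^2/(\alpha^{3/2}(\alpha-r)^{1/2})+M^2/(\alpha^2(\alpha-r))\bigr).
\end{equation}
Summing over $r$ gives $O(M^2\log\alpha/\alpha^2+M^2/\alpha^{3/2})$. This is $O(M/\alpha)$ because $M=O(\alpha^{1/2}/(\log\alpha)^2)$.

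\textbf{Step 4: item (iii) and the conclusion.} On $\mathcal{P}^{\delta}\cap\mathcal{R}$ we have $|\widetilde Q_r|\leq\Sigma/\delta=o(1)$. Hence
\begin{equation}
\log\Pi=\overline\Sigma+\sum_r(\widetilde Q_r-\overline Q_r)+O\Bigl(\sum_r\widetilde Q_r^2\Bigr),
\end{equation}
and every term on the right has absolute value $o(1)$ there. Therefore
\begin{equation}
(\log\Pi)^2\leq3\overline\Sigma^2+o(1)\Bigl(\sum_r|\widetilde Q_r-\overline Q_r|+\sum_r\widetilde Q_r^2\Bigr).
\end{equation}
The last part has expectation $O(M/\alpha)$ by (i) and (ii). For $\overline\Sigma$, apply Minkowski together with Steps 1--2:
\begin{equation}
\sqrt{\mathbb{E}\overline\Sigma^2}\leq\sum_r\frac{\sqrt{\mathbb{E}(\Psi_r-\psi_{p_r})^2}}{n_{\alpha-r}}=O\Bigl(\sum_r\frac{M}{\alpha(\alpha-r)^{1/2}}+\frac{M}{\alpha(\alpha-r)}\Bigr)=O\bigl(M\alpha^{-1/2}+M\log\alpha/\alpha\bigr).
\end{equation}
Squaring gives $\mathbb{E}\overline\Sigma^2=O(M^2/\alpha)=O(\mathscr{E})$, which proves (iii). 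Finally, on $\mathcal{P}^{\delta}\cap\mathcal{R}$ we have $|\log\Pi|=o(1)$, so $\Pi=1+\log\Pi+O((\log\Pi)^2)$. Hence
\begin{equation}
|\Pi-(1+\overline\Sigma)|\leq\sum_r|\widetilde Q_r-\overline Q_r|+O\Bigl(\sum_r\widetilde Q_r^2\Bigr)+O((\log\Pi)^2).
\end{equation}
Taking expectations on $\mathcal{P}^{\delta}\cap\mathcal{R}$ and applying (i)--(iii) yields $O(M/\alpha+\mathscr{E})=O(\mathscr{E})$.
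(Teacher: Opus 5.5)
Your architecture is sound, and for (i), (ii) and the final deduction it is the paper's own. On $\mathcal{P}^{\delta}$ you control $|\widetilde Q_r-\overline Q_r|$ and $\widetilde Q_r^2$ by $(\Psi_r-\psi_{p_r})^2$ and $\psi_{p_r}|\Psi_r-\psi_{p_r}|$ over $\delta n_{\alpha-r}^2$. You then feed in moments computed from the exact law of the uniform $r$-subset $S_r$, which is what Lemma~\ref{lemma_psipsi} does.

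The problem is Step~2: the target $\mathbb{E}(\Psi_r-\psi_{p_r})^2=O(q_r^3\alpha M^2+q_r^2M^2)$ is false. You give every pair of summands with intersecting governing sets probability $O(q_r^3)$. But a pair sharing \emph{both} elements $s_1,s_2$ of a two-element governing set---in particular every summand paired with itself---has probability $\Theta(q_r^2)$. For the $R^*(S_r)$-summands it is about $p_rq_r^2\approx q_r^2$ in late steps. Each pair $\{s_1,s_2\}$ carries at most $\ell$ vertices, so these terms contribute $\Theta(q_r^2(|F|+|R^*(S)|))$ to the variance. Moreover $|F|+|R^*(S)|$ really can be of order $\alpha M$: for $d$-regular sequences in the configuration space, $|R^*(S)|=\alpha(d-1)^2$ while $M=\Theta(d^2)$. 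That exceeds your bound by a factor $\alpha/(M(\alpha-r+1))$, i.e.\ whenever $\alpha-r\lesssim\alpha/M$, which is exactly the late regime you flagged. The correct statement is Lemma~\ref{lemma_psipsi}~(\ref{lemma_psipsi_2}), namely $O(q_r^3\alpha M^2+q_r^2\alpha M)$, which also absorbs your squared bias $q_r^2M^2$.

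This is repairable and the lemma survives, but your intermediate numbers change.
In (i) and (ii), the square term gains $q_r^2\alpha M/(\alpha-r)^4=M/(\alpha(\alpha-r)^2)$ per step, and the cross term gains $M^{3/2}/(\alpha^{3/2}(\alpha-r))$. These sum to $O(M/\alpha)$ and $o(M/\alpha)$ respectively. So the $O(M/\alpha)$ in the statement comes precisely from the last few steps, and your total $O(M^2\log\alpha/\alpha^2+M^2/\alpha^{3/2})$ is not correct in general.
In (iii), Minkowski picks up $\sum_r\sqrt{q_r^2\alpha M}/n_{\alpha-r}\le\sum_r\sqrt{M/\alpha}/(\alpha-r)=O(\sqrt{M/\alpha}\,\log\alpha)$. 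You therefore get $\mathbb{E}\overline\Sigma^2=O(M^2/\alpha+M(\log\alpha)^2/\alpha)$ rather than $O(M^2/\alpha)$. This is still $O(\mathscr{E})$, and it is in fact where the summand $M(\log\alpha)^2/\alpha$ of $\mathscr{E}$ originates.

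With that correction, your (iii) is a valid variant of the paper's argument. The paper applies Titu's lemma with weights $1/(\alpha-r)$ directly to $\log\Pi=\sum_r\log(1+\widetilde Q_r)$. It uses $\log(1+\widetilde Q_r)=O(\widetilde Q_r)$ on $\mathcal{P}^{\delta}\cap\mathcal{R}$ and reuses the per-step bound on $\mathbb{E}(1_{\mathcal{P}^{\delta}}\widetilde Q_r^2)$ from (ii). This gives $O(M^2\log\alpha/\alpha+M(\log\alpha)^2/\alpha)$ without isolating $\overline\Sigma$. Your route instead splits off $\overline\Sigma$, handles the remainder with (i)--(ii), and bounds $\overline\Sigma$ in $L^2$ by Minkowski. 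This needs no indicator, since the denominators $n_{\alpha-r}$ are deterministic, and it is marginally sharper on the first term, at the price of the extra decomposition.
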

	
	We find that Lemma~\ref{lemma_delta} follows from Lemma~\ref{lemma_sigma2} and Lemma~\ref{lemma_error}, with the former handling $\mathcal{P}^{\delta}\setminus\mathcal{R}$ by simply estimating $|\Pi-(1+\overline{\Sigma})|\leq\Pi+|1+\overline{\Sigma}|$, and with the latter handling $\mathcal{P}^{\delta}\cap\mathcal{R}$ more carefully. We can also prove Lemma~\ref{lemma_sigma}.
	
	\begin{proof}[Proof of Lemma~\ref{lemma_sigma}]
		(\ref{lemma_sigma_zero}) Let $\mathcal{R}\subset\mathcal{P}$ be as in Lemma~\ref{lemma_sigma2}. We split up $$\mathbb{E}(\Pi1_{\mathcal{Q}})=\mathbb{E}(\Pi1_{\mathcal{Q}\setminus\mathcal{R}})+\mathbb{E}(\Pi1_{\mathcal{Q}\cap\mathcal{R}}).$$ By Lemma~\ref{lemma_sigma2}~(\ref{lemma_sigma2_pi},\ref{lemma_sigma2_out}), since $\mathcal{Q}\subset\mathcal{P}^{\delta}$, combined with \eqref{equation_pdelta}, we get
		\begin{align}
			\mathbb{E}(\Pi1_{\mathcal{Q}\setminus\mathcal{R}})&\leq\mathbb{E}(\Pi1_{\mathcal{P}^{\delta}\setminus\mathcal{R}})=O(\mathscr{E}),
			\\\mathbb{E}(\Pi1_{\mathcal{Q}\cap\mathcal{R}})&\leq\mathbb{E}(e^{C\Sigma}1_{\mathcal{Q}\cap\mathcal{R}})=(1+o(1))\mathbb{P}(\mathcal{Q}\cap\mathcal{R})\leq(1+o(1))\mathbb{P}(\mathcal{Q}).
		\end{align}
		
		(\ref{lemma_sigma_large}) By Lemma~\ref{lemma_sigmaihat}, we have $C\hat{\Sigma}^{(i)}=o(1)$ for $i\in\{a,b,c\}$, so $$e^{C\hat{\Sigma}}\leq e^{C\hat{\Sigma}^{(2)}+C\hat{\Sigma}^{(1)}+o(1)}.$$ Assume that $\alpha$ is large enough, such that $$C\leq\left(8\left(1-\tfrac{\log M}{\log\alpha}\right)+2+2\varepsilon\right)^{-1}\tfrac{\log\alpha}{\log\log\alpha}$$ for some fixed $\varepsilon>0$. Let
		\begin{align}
			\widetilde{\mu}^{(2)}&:=8\left(1-\tfrac{\log M}{\log\alpha}\right)+\varepsilon,
			\\\widetilde{\mu}^{(1)}&:=2+\varepsilon,
		\end{align}
		such that $(\widetilde{\mu}^{(2)}+\widetilde{\mu}^{(1)})C\leq\tfrac{\log\alpha}{\log\log\alpha}$. Let $\mu^{(i)}:=\widetilde{\mu}^{(i)}/(\widetilde{\mu}^{(2)}+\widetilde{\mu}^{(1)})$ for $i\in\{1,2\}$, such that $\mu^{(2)},\mu^{(1)}>0$ and $\mu^{(2)}+\mu^{(1)}=1$. By convexity of the exponential function, we get
		\begin{align}
			e^{C\hat{\Sigma}^{(2)}+C\hat{\Sigma}^{(1)}}
			&=\exp\left(\mu^{(2)}\tfrac{C}{\mu^{(2)}}\hat{\Sigma}^{(2)}+\mu^{(1)}\tfrac{C}{\mu^{(1)}}\hat{\Sigma}^{(1)}\right)
			\\&\leq\mu^{(2)}\exp\left(\tfrac{C}{\mu^{(2)}}\hat{\Sigma}^{(2)}\right)+\mu^{(1)}\exp\left(\tfrac{C}{\mu^{(1)}}\hat{\Sigma}^{(1)}\right)
			\\&\leq\mu^{(2)}\exp\left((\widetilde{\mu}^{(2)})^{-1}\tfrac{\log\alpha}{\log\log\alpha}\hat{\Sigma}^{(2)}\right)
			\\&\quad+\mu^{(1)}\exp\left((\widetilde{\mu}^{(1)})^{-1}\tfrac{\log\alpha}{\log\log\alpha}\hat{\Sigma}^{(1)}\right).
		\end{align}
		By Lemma~\ref{lemma_sigma2}~(\ref{lemma_sigma2_exp},\ref{lemma_sigma2_out}) and Lemma~\ref{lemma_sigma1hat}, for $C^{(i)}:=(\widetilde{\mu}^{(i)})^{-1}\tfrac{\log\alpha}{\log\log\alpha}$, there exists an event $\mathcal{R}\subset\mathcal{P}$, such that
		\begin{align}
			\mathbb{E}(e^{C^{(2)}\Sigma^{(2)}})&=\mathbb{E}(e^{C^{(2)}\Sigma^{(2)}}1_{\mathcal{P}\setminus\mathcal{R}})+\mathbb{E}(e^{C^{(2)}\Sigma^{(2)}}1_{\mathcal{R}})\leq1+o(1),
			\\\mathbb{E}(e^{C^{(1)}\hat{\Sigma}^{(1)}})&\leq e^{o(k)}.
		\end{align}
		The result follows, because $\hat{\Sigma}^{(2)}\leq\Sigma^{(2)}$ and $\mu^{(1)}+\mu^{(2)}=1$.
	\end{proof}
	
	We can prove Lemma~\ref{lemma_sigmaihat} immediately.
	
	\begin{proof}[Proof of Lemma~\ref{lemma_sigmaihat}]
		(\ref{lemma_sigmaihat_a}) Let $\overline{S}$ be a maximal independent set that contains $S$, such that $\overline{S}$ is an MIS by Proposition \ref{proposition_imis}. Since $G$ is $2$-uniform, every vertex in $\widetilde{V}_r\cap V^{(1)}$ is adjacent to one vertex in $S\setminus S_r$ and one vertex in $\overline{S}\setminus S$. By Proposition~\ref{proposition_ell}, we get $|\widetilde{V}_r\cap V^{(1)}|=\ell k(\alpha-k-r)\leq\ell q_rk\alpha$, and thus $\Psi_r^{(a)}=O(q_rM^2)$. Since $(\Psi_r^{(a)}-\psi_{p_r}^{(a)})^+\leq\Psi_r^{(a)}$, the result follows by Lemma~\ref{lemma_sums}~(\ref{lemma_sums_3}).
		
		(\ref{lemma_sigmaihat_b}) Note that $\Psi_r^{(b)}=\Psi_p^{(b)}=|F^{(0)}|+\b(V^{(0)}\setminus F^{(0)})$ is deterministic and independent of $r$ or $p$, and thus equal to $\psi_p^{(b)}$.
		
		(\ref{lemma_sigmaihat_c}) Since $\b_{\max}=O(M/\alpha)=o(1)$, for $\alpha$ large enough such that $1-\b_{\max}\geq0$, we have $\Psi_r^{(c)}=\sum_{v\in S_r}(1-\b(v))\leq\sum_{v\in S}(1-\b(v))=|R^{(0)}|-\b(R^{(0)})$. Since Lemma~\ref{lemma_expect}~(\ref{lemma_expect_dr},\ref{lemma_expect_lr}) furthermore gives $\psi_{p_r}^{(c)}=p_r(|R^{(0)}|-\b(R^{(0)}))$, by Lemma~\ref{lemma_bounds1}~(\ref{lemma_bounds1_v0}), we get $\Psi_r^{(c)}-\psi_{p_r}^{(c)}\leq q_r|R^{(0)}|=O(q_rM^2)$. The result follows by Lemma~\ref{lemma_sums}~(\ref{lemma_sums_3}).
	\end{proof}
	
	In Sections~\ref{subsection_proof_vu}--\ref{subsection_proof_partition}, we prove Lemma~\ref{lemma_sigma2}, in Section~\ref{subsection_proof_error}, we prove Lemma~\ref{lemma_error}, and in Section~\ref{subsection_proof_psi1}, we prove Lemma~\ref{lemma_sigma1hat}. Together, these conclude the proofs of Lemmas~\ref{lemma_delta} and \ref{lemma_sigma}. Furthermore, Section~\ref{subsection_proof_stronger} also proves Lemma~\ref{lemma_bar_bound}, and Section~\ref{subsection_proof_error} also proves Lemma~\ref{lemma_bar_e}.
	
	\subsection{Vu's concentration inequality}\label{subsection_proof_vu}
	
	We analyze concentration of $\Psi_r^{(2)}$. We use Vu's concentration inequality to analyze concentration of $\Psi_{p_r}^{(2)}$. The following version follows from Theorem~4.2 in~\cite{Vu}, which generalizes an earlier result by Kim and Vu~\cite{KimVu}.
	
	\begin{theorem}[Vu's concentration inequality]
		Let $t_1,\ldots,t_n$ be independent random variables in $[0,1]$. Let $Y(t_1,\ldots,t_n)$ be a multivariate polynomial of degree $d$ with coefficients in $[0,U]$ for some $U>0$ and with no variables raised to a power higher than $1$.
		
		For any multiset of variables $A$, let $\partial_A$ denote the partial derivative with respect to the variables in $A$. For $j\leq d$, define $\mathbb{E}_j(Y):=\max_{|A|=j}\mathbb{E}(\partial_AY)$. Let $\lambda>1$ be a parameter, and let $\mathcal{E}_0>\ldots>\mathcal{E}_d=1$ be a sequence satisfying $\mathcal{E}_j/\mathcal{E}_{j+1}\geq\lambda+4j\log n$ and $U\mathcal{E}_j\geq\mathbb{E}_j(Y)$.
		
		Finally, define recursively $c_1=1$, $c_j=2j^{\sfrac12}(c_{j-1}+1)$, and $c'_1=2$, $c'_j=2(c'_{j-1}+1)$. Then $$\mathbb{P}\left(|Y-\mathbb{E}(Y)|\geq Uc_d\sqrt{\lambda\mathcal{E}_0\mathcal{E}_1}\right)\leq c'_de^{-\lambda/4}.$$
	\end{theorem}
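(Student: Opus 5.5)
The plan is to obtain the statement from Theorem~4.2 of~\cite{Vu} by rescaling the polynomial and then checking that our hypotheses are exactly Vu's hypotheses after normalisation. Set $Y':=Y/U$. Then $Y'$ is a multilinear polynomial of degree $d$ in the independent variables $t_1,\ldots,t_n\in[0,1]$, and its coefficients lie in $[0,1]$, which is the normalisation Vu works with.

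Partial differentiation is linear, so $\partial_AY'=U^{-1}\partial_AY$ for every multiset $A$, and hence $\mathbb{E}_j(Y')=U^{-1}\mathbb{E}_j(Y)$. For $j=0$ we have $\partial_\emptyset Y=Y$, so $\mathbb{E}_0$ is just $\mathbb{E}(Y)$. The hypothesis $U\mathcal{E}_j\geq\mathbb{E}_j(Y)$ therefore becomes $\mathcal{E}_j\geq\mathbb{E}_j(Y')$ for $j=0,\ldots,d$. Together with $\mathcal{E}_d=1$ and the gap condition $\mathcal{E}_j/\mathcal{E}_{j+1}\geq\lambda+4j\log n$, this is the hypothesis set of Vu's theorem. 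Two structural points need checking here. Because no variable appears with power higher than $1$, derivatives with respect to a multiset containing a repeated variable vanish, so only genuine sets $A$ contribute to the maxima. Because $\mathcal{E}_d=1$ is admissible, $\mathbb{E}_d(Y')\leq1$, and this holds automatically since every top-degree derivative is a single coefficient in $[0,1]$.

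Vu's conclusion then reads $\mathbb{P}\bigl(|Y'-\mathbb{E}(Y')|\geq c_d\sqrt{\lambda\mathcal{E}_0\mathcal{E}_1}\bigr)\leq c'_de^{-\lambda/4}$. Multiplying the event inside by $U$ gives the displayed inequality.

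The main obstacle is matching this formulation, with the explicit constants $c_j$, $c'_j$ and the threshold $\lambda+4j\log n$, to the one in~\cite{Vu}. There the theorem is proved by induction on $d$, and the constants come out of that induction. My first step is to re-run the induction step. I would apply the degree-$(d-1)$ statement to each partial derivative $\partial_{t_i}Y'$, which are again multilinear with coefficients in $[0,1]$ and whose expectation profile is the sequence $(\mathcal{E}_1,\ldots,\mathcal{E}_d)$ shifted by one. I would then feed the resulting bounds into the martingale (Azuma-type) step of Vu's argument. In that induction, the union bound over the $n$ derivatives costs the factor $n$ that is absorbed by the $4j\log n$ term. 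The square-root loss in the martingale step produces the recursion $c_j=2j^{1/2}(c_{j-1}+1)$, and adding up the failure probabilities produces $c'_j=2(c'_{j-1}+1)$. The base case $d=1$ is Hoeffding/Chernoff for a weighted sum of independent $[0,1]$ variables with weights in $[0,1]$, which gives $c_1=1$ and $c'_1=2$.

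If the constants in~\cite{Vu} turn out to be stated differently, for instance with larger $c_d$, the fallback is to note that the later application only needs $d\leq2$ with fixed constants. So it suffices to verify that some admissible constants satisfy the stated recursion, and larger constants can be used if necessary.
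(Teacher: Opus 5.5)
Your main route matches the paper's: the justification there (the remark after the statement) applies Theorem~4.2 of \cite{Vu} to $Y/U$ and scales back by $U$. The constants $c_j$, $c'_j$ and the threshold $\lambda+4j\log n$ are quoted directly from that theorem, so you do not need to re-run Vu's induction. Your fallback paragraph is also wrong on two counts. First, the paper uses the inequality with $d=3$, for the cubic polynomial $Y_3$ in Lemma~\ref{lemma_vu}; that is where $c_3$ enters $\beta^{(b)}_p$. Second, replacing $c_d,c'_d$ by larger constants would prove a different statement from the one displayed.

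The real gap in your verification is the claim that, after rescaling, the conditions $\mathcal{E}_j\geq\mathbb{E}_j(Y')$ ``are the hypothesis set of Vu's theorem''. The statement defines $\mathbb{E}_j$ as a maximum over $|A|=j$, but Vu's hypothesis uses $\max_{|A|\geq j}\mathbb{E}(\partial_AY')$. For example, at $j=0$ Vu needs $\mathcal{E}_0$ to dominate every $\mathbb{E}(\partial_AY')$, not only $\mathbb{E}(Y')$. You close this with the monotonicity $\mathcal{E}_0>\cdots>\mathcal{E}_d$, which your proposal never uses:
\begin{itemize}
\item for $|A|=j'$ with $j\leq j'\leq d$, you have $\mathbb{E}(\partial_AY')\leq\mathcal{E}_{j'}\leq\mathcal{E}_j$;
\item for $|A|>d$, the derivative vanishes.
\end{itemize}
Vu's other condition is the pointwise bound $\max_{t\in[0,1]^n,\,|A|\geq d}\partial_AY'(t)\leq1$, not a bound in expectation. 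Your observation does give it: top-order derivatives of a multilinear polynomial are single coefficients in $[0,1]$, and derivatives with repeated variables or of order above $d$ vanish. State it in that pointwise form. With these two checks, your reduction is complete and coincides with the paper's.
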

	
	\begin{remark}
		If $U=1$, then this version follows, since $\max_{t\in\Omega,|A|\geq d}\partial_AY(t)\leq1$ for $\Omega=[0,1]^n$, and since $\mathcal{E}_0>\ldots>\mathcal{E}_d$ implies $\mathcal{E}_j\geq\max_{|A|\geq j}\mathbb{E}(\partial_AY)$. Note that the former relies on the lack of variables raised to a power higher than $1$. If $U\neq1$, then we can simply consider $Y/U$.
	\end{remark}
	
	Because of the independence requirement on the random variables, we will apply Vu's concentration inequality to the simplified model rather than the true model. To translate concentration of $\Psi_{p_r}^{(2)}$ to concentration of $\Psi_r^{(2)}$, we can condition the simplified model on the event that $|S_{p_r}|=r$.
	
	\begin{lemma}\label{lemma_reduction}
		For $r=0,\ldots,\alpha-k-1$, we have $\mathbb{P}(|S_{p_r}|=r)=\Omega(\alpha^{-\sfrac12})$.
	\end{lemma}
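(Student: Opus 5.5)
In the simplified model, $|S_{p_r}|$ is binomially distributed with parameters $N:=\alpha-k$ and $p=p_r=r/N$. So the statement reduces to a lower bound on the probability that a binomial variable equals its mean, which is an integer here. I will bound
$$\mathbb{P}(|S_{p_r}|=r)=\binom{N}{r}p^rq^{N-r},\qquad q=1-p=(N-r)/N,$$
from below uniformly in $0\le r\le N-1$.

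For $r=0$ the probability is $q^N=1$, so that case is trivial. For $1\le r\le N-1$, I will apply Stirling's formula with explicit two-sided error bounds, $\sqrt{2\pi n}(n/e)^n\le n!\le e^{1/(12n)}\sqrt{2\pi n}(n/e)^n$. The powers $(n/e)^n$ cancel exactly against $p^rq^{N-r}=r^r(N-r)^{N-r}/N^N$, and what remains is
$$\mathbb{P}(|S_{p_r}|=r)\ge e^{-1/6}\sqrt{\frac{N}{2\pi r(N-r)}}.$$
Since $r(N-r)\le N^2/4$, the right-hand side is at least $e^{-1/6}\sqrt{2/(\pi N)}$. As $N=\alpha-k\le\alpha$, this gives the bound $\Omega(\alpha^{-1/2})$ with an absolute constant, uniformly in $r$ and $k$.

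The only point requiring care is this uniformity, in particular near the endpoints $r=1$ and $r=N-1$. Using Stirling bounds with explicit constants, rather than asymptotic equivalence, handles these cases directly. As a cross-check, or as an alternative route, one can use the standard fact that the mode of $\mathrm{Bin}(N,r/N)$ is $r$ together with Chebyshev's inequality: at most $O(\sqrt{N})$ values lie within two standard deviations of the mean and they carry mass at least $3/4$, so the largest point mass, which sits at the mode $r$, is $\Omega(N^{-1/2})$.
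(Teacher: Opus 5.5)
Your proof is correct, and it handles the one delicate point, uniformity in $r$, differently from the paper. The paper writes $\mathbb{P}(|S_{p_r}|=r)=f_{\alpha-k}(r)$ with $f_n(r)=\frac{r^r(n-r)^{n-r}}{n^n}\binom{n}{r}$. It first shows, via the ratio $f_n(r+1)/f_n(r)$ and the monotonicity of $x\mapsto(1+\frac1x)^x$, that $f_n$ is symmetric about $n/2$ and minimised at $r=\lfloor n/2\rfloor$. Only then does it apply Stirling asymptotically to that central term, getting $f_{\alpha-k}(\lfloor\frac12(\alpha-k)\rfloor)\sim\sqrt{2/(\pi(\alpha-k))}$.

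You instead apply explicit two-sided Stirling bounds for every $1\le r\le N-1$ simultaneously. The reduction to the worst case then comes for free from $r(N-r)\le N^2/4$, with no monotonicity lemma. Your version is shorter and fully non-asymptotic: it gives the explicit constant $e^{-1/6}\sqrt{2/\pi}$. It also does not need $\alpha-k\to\infty$, which the paper's ``$\sim$'' tacitly uses, justified there by $k=O(M)=o(\alpha)$. The paper's route identifies the exact minimiser and the sharp constant $\sqrt{2/\pi}$, at the cost of the extra ratio argument.

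Your Chebyshev-plus-mode alternative is also valid and avoids Stirling entirely. The mode of $\mathrm{Bin}(N,r/N)$ is $r$ because $(N+1)r/N=r+r/N$ with $0\le r/N<1$. Mass at least $3/4$ then sits on $O(\sqrt N)$ integers, so the point mass at the mode is $\Omega(N^{-1/2})$.
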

	
	\begin{proof}
		Since $|S_{p_r}|$ follows a binomial distribution, we have
		\begin{align}
			\mathbb{P}(|S_{p_r}|=r)&=p_r^r(1-p_r)^{\alpha-k-r}\tbinom{\alpha-k}{r}=f_{\alpha-k}(r),\ \text{for}
			\\f_n(r)&:=\tfrac{r^r(n-r)^{n-r}}{n^n}\cdot\tfrac{n!}{r!(n-r)!}.
		\end{align}
		For fixed $n$, the function $f_n$ is symmetric around $\tfrac12n$ and attains its minimum at $\lfloor\tfrac12n\rfloor$. To see this, we calculate $$\frac{f(n,r+1)}{f(n,r)}=\frac{(1+1/r)^r}{(1+1/(n-r-1))^{n-r}}.$$ Since $x\mapsto(1+\tfrac1x)^x$ is monotonically increasing for $x>0$, if $r\leq n-r-1$, then we get $$(1+\tfrac1r)^r\leq(1+\tfrac1{n-r-1})^{n-r-1}\leq(1+\tfrac1{n-r-1})^{n-r},$$ so $f(n,r+1)\leq f(n,r)$. It follows that $$\mathbb{P}(|S_{p_r}|=r)\geq f_{\alpha-k}(\lfloor\tfrac12(\alpha-k)\rfloor)\sim\sqrt{\tfrac2{\pi(\alpha-k)}},$$ using Stirling's approximation.
	\end{proof}
	
	\begin{lemma}\label{lemma_vu}
		Let $\lambda\geq4\log\alpha$. For $p\in[0,1]$, we define the functions
		\begin{align}
			\beta^{(a)}_p(\lambda)&:=\ell c_2\sqrt{\lambda(qM+2\lambda)(q^2\alpha M+3\lambda^2)},
			\\\beta^{(b)}_p(\lambda)&:=3\ell c_3\sqrt{\lambda(q^2M+7\lambda^2)(q^3\alpha M+8\lambda^3)},
			\\\beta_p(\lambda)&:=4\beta^{(a)}_p(\lambda)+\min\{\beta^{(b)}_p(\lambda),2q^2\alpha M\}.
		\end{align}
		Then the events
		\begin{enumerate}[(i)]
			\item\label{lemma_vu_f} $\begin{aligned}[t]
				|\Delta_p(F^{(2)})-\Lambda_p(F^{(2)})-\mathbb{E}(\Delta_p(F^{(2)})-\Lambda_p(F^{(2)}))|\geq\beta^{(a)}_p(\lambda),
			\end{aligned}$
			\item\label{lemma_vu_r} $\begin{aligned}[t]
				&|\Delta_p(R_p^{(2)})-\Lambda_p(R_p^{(2)})-\mathbb{E}(\Delta_p(R_p^{(2)})-\Lambda_p(R_p^{(2)}))|\\&\geq\beta^{(a)}_p(\lambda)+\min\{\beta^{(b)}_p(\lambda),2q^2\alpha M\},
			\end{aligned}$
			\item\label{lemma_vu_v} $\begin{aligned}[t]
				|\Lambda_p(V^{(2)})-\mathbb{E}(\Lambda_p(V^{(2)}))|\geq\beta^{(a)}_p(\lambda),
			\end{aligned}$
			\item\label{lemma_vu_s} $\begin{aligned}[t]
				|\Lambda_p(S)-\mathbb{E}(\Lambda_p(S))|\geq\beta^{(a)}_p(\lambda)
			\end{aligned}$
		\end{enumerate}
		have probability $O(e^{-\frac14\lambda})$ each. For $r=0,\ldots,\alpha-k-1$, it follows that
		\begin{align}
			\mathbb{P}(|\Psi_p^{(2)}-\psi_p^{(2)}|\geq\beta_p(\lambda))&=O(e^{-\frac14\lambda}),
			\\\mathbb{P}(|\Psi_r^{(2)}-\psi_{p_r}^{(2)}|\geq\beta_{p_r}(\lambda))&=O(\alpha^{\sfrac12}e^{-\frac14\lambda}).
		\end{align}
	\end{lemma}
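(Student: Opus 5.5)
The plan is to apply Vu's concentration inequality separately to each of the four quantities, viewed as polynomials in independent indicators, and then combine them. In the simplified model, let $t_s:=1_{\{s\notin S_p\}}$ for $s\in S$; these are independent Bernoulli$(q)$ variables, with $n=\alpha-k\leq\alpha$ of them. Since $\lambda\geq4\log\alpha\geq4\log n$, the ratio condition $\mathcal{E}_j/\mathcal{E}_{j+1}\geq\lambda+4j\log n$ holds once each $\mathcal{E}_j$ is padded by a suitable multiple of $\lambda^{d-j}$. This padding is where the terms $2\lambda$, $3\lambda^2$, $7\lambda^2$, $8\lambda^3$ in $\beta^{(a)}_p$ and $\beta^{(b)}_p$ come from; I expect only the constants to need checking here. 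Throughout, I use that $\b\geq0$ and $\b_{\max}=o(1)$, so all coefficients $1-\b(v)$ and $\b(v)$ lie in $[0,1]$.

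Next I write each quantity as such a polynomial. Every $v\in V^{(2)}$ has exactly two neighbours $a_v,b_v\in S$, and $v\in\widetilde V_p$ iff $t_{a_v}t_{b_v}=1$. Hence
\[
\Delta_p(F^{(2)})-\Lambda_p(F^{(2)})=\sum_{v\in F^{(2)}}(1-\b(v))\,t_{a_v}t_{b_v},
\]
which is multilinear of degree $2$. By Proposition~\ref{proposition_ell}, at most $\ell$ vertices share a given pair $\{a,b\}$, so after collecting terms the coefficients lie in $[0,\ell]$, and I take $U=\ell$. The expectations are as follows. First, $\mathbb{E}(Y)\leq q^2|F|\leq q^2\alpha M$ by Lemma~\ref{lemma_terms}. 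Second, $\mathbb{E}(\partial_{t_a}Y)\leq q\,|\overline N(a)\cap F|/\ell\cdot\ell\leq qM$, using $F_{\max}$. Third, the second derivatives are at most $\ell$. Vu's inequality with $d=2$ then yields (\ref{lemma_vu_f}) with threshold $\beta^{(a)}_p(\lambda)$. Item (\ref{lemma_vu_v}) is identical with coefficients $\b(v)$: we have $\b(V)\leq n_\alpha M/\alpha=O(\alpha M)$ and, for the first derivatives, $d(G)\b_{\max}=O(M)$. Item (\ref{lemma_vu_s}) is the linear polynomial $\sum_s\b(s)t_s$, a degenerate case of the same bound.

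The main obstacle is (\ref{lemma_vu_r}), because membership in $R^*(S_p)$ involves the factor $1-t_s$, where $v\in R^*(s)$. This gives
\[
\sum_{v\in R^{(2)}}(1-\b(v))\,t_{a_v}t_{b_v}\;-\;\sum_{v\in R^{(2)}}(1-\b(v))\,t_st_{a_v}t_{b_v},
\]
a difference of two polynomials with nonnegative coefficients. I concentrate each one separately. The quadratic part is handled exactly as in (\ref{lemma_vu_f}), using $|R^*(S)|\leq\alpha M$ and $R^{(b)}_{\max}\leq M$, and contributes $\beta^{(a)}_p$. For the cubic part I use $d=3$ and bound the derivatives as follows:
\[
\mathbb{E}(\partial_{t_s}Y)\lesssim q^2 R^{(a)}_{\max},\qquad
\mathbb{E}(\partial_{t_a}Y)\lesssim q^2 R^{(b)}_{\max},\qquad
\partial_{t_a}\partial_{t_s}Y,\ \partial_{t_a}\partial_{t_b}Y\lesssim R^{(c)}_{\max}\leq M^{1/2}\leq M.
\]
This gives $\mathcal{E}_1\approx q^2M$ and $\mathcal{E}_0\approx q^3\alpha M$ and hence the threshold $\beta^{(b)}_p$ with constant $c_3$. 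The factor $3$ in front of $\beta^{(b)}_p$ accounts for the three ways a derivative variable can play the role of $s$, $a$ or $b$. The alternative bound $2q^2\alpha M$ comes from a cruder argument. The cubic sum $Y_3$ is dominated termwise by the quadratic sum $Y_2$, and $Y_2$ is already within $\beta^{(a)}_p$ of its mean $\leq q^2\alpha M$. So outside an event of probability $O(e^{-\lambda/4})$ we have $|Y_3-\mathbb{E}Y_3|\leq\max(Y_3,\mathbb{E}Y_3)\leq q^2\alpha M+\beta^{(a)}_p$, and the excess $\beta^{(a)}_p$ is absorbed into the slack of the factor $4$ in $\beta_p$.

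Finally, I combine the pieces. By definition, $\Psi^{(2)}_p$ is the signed sum of the quantities in (\ref{lemma_vu_f})--(\ref{lemma_vu_s}), so a union bound over the four events, with total threshold at most $4\beta^{(a)}_p+\min\{\beta^{(b)}_p,2q^2\alpha M\}=\beta_p(\lambda)$, gives $\mathbb{P}(|\Psi^{(2)}_p-\psi^{(2)}_p|\geq\beta_p(\lambda))=O(e^{-\lambda/4})$. For the true model, I note that conditional on $|S_{p_r}|=r$, the set $S_{p_r}$ is a uniformly random $r$-subset of $S$, so it has the law of $S_r$, and $\Psi^{(2)}_r$ is the same function of $S_r$. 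Therefore
\[
\mathbb{P}(|\Psi^{(2)}_r-\psi^{(2)}_{p_r}|\geq\beta_{p_r})\leq\frac{\mathbb{P}(|\Psi^{(2)}_{p_r}-\psi^{(2)}_{p_r}|\geq\beta_{p_r})}{\mathbb{P}(|S_{p_r}|=r)},
\]
and Lemma~\ref{lemma_reduction} bounds this by $O(\alpha^{1/2}e^{-\lambda/4})$.
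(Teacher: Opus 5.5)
Your overall route is the paper's. You apply Vu's inequality to each piece as a polynomial in the independent indicators $t_s$, take a union bound, and transfer to the true model via Lemma~\ref{lemma_reduction}. Items (i), (iii), (iv) and the transfer step are fine. The gap is in the $2q^2\alpha M$ branch of item (ii).

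\textbf{The gap in item (ii).} You split $Y=Y_2-Y_3$ first and then bound $|Y_3-\mathbb{E}Y_3|\le\max\{Y_3,\mathbb{E}Y_3\}\le\max\{Y_2,\mathbb{E}Y_2\}$. This uses the deviation of $Y_2$ a second time. On the good event for $Y_2$ you therefore only get
$|Y-\mathbb{E}Y|\le|Y_2-\mathbb{E}Y_2|+|Y_3-\mathbb{E}Y_3|<2\beta^{(a)}_p+q^2\alpha M$.

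Item (ii) requires the threshold $\beta^{(a)}_p+2q^2\alpha M$, and the two agree only when $\beta^{(a)}_p\le q^2\alpha M$. That fails exactly where this branch matters. For $\alpha-k-r$ of order $\lambda\log\alpha$, which lies in the range where Lemma~\ref{lemma_sum} invokes the $2q^2\alpha M$ term, we have $q^2\alpha M=O(\lambda^2(\log\alpha)^2M/\alpha)=o(\lambda^2)$. Meanwhile $\beta^{(a)}_p\ge\sqrt6\,\ell c_2\lambda^2$ always.

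There is also no slack in the factor $4$: $\beta_p(\lambda)$ is exactly the sum of the four thresholds in (i)--(iv). So the extra $\beta^{(a)}_p$ cannot be absorbed without an additional argument, for example showing that the Vu thresholds for (iii) and (iv) are much smaller than $\beta^{(a)}_p$. Even then, item (ii) as stated would remain unproved.

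\textbf{The fix.} Use positivity before splitting. The polynomial $Y=\sum_{v\in R^{(2)}}(1-\b(v))(1-t_s)t_{a_v}t_{b_v}$ satisfies $0\le Y\le Y_2$. Hence
$|Y-\mathbb{E}Y|\le Y+\mathbb{E}Y\le Y_2+\mathbb{E}Y_2\le|Y_2-\mathbb{E}Y_2|+2\mathbb{E}Y_2$.
Together with $\mathbb{E}Y_2\le q^2\alpha M$, this gives the threshold $\beta^{(a)}_p+2q^2\alpha M$ on the good event for $Y_2$ alone. This is the paper's argument. Your $\max$ trick, applied to $Y$ itself rather than to $Y_3$, would even give $\beta^{(a)}_p+q^2\alpha M$.

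\textbf{A second point: the cubic part needs more than constant-checking.}
\begin{itemize}
\item The level-$2$ expectations must carry the factor $q$ from the remaining variable: $\mathbb{E}(\partial_{t_a}\partial_{t_b}Y_3)\le q(\ell+2R^{(c)}_{\max})$. This forces $\mathcal{E}_2=qM^{1/2}+3\lambda$.
\item The chain condition $\mathcal{E}_1\ge2\lambda\mathcal{E}_2$ with $\mathcal{E}_1=q^2M+7\lambda^2$ is then exactly $(qM^{1/2}-\lambda)^2\ge0$.
\item This is the one place in the whole proof where $(R^{(c)}_{\max})^2\le M$ is needed.
\item With your deterministic second-derivative bound (no factor $q$), the chain condition fails for small $q$. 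After weakening $M^{1/2}$ to $M$, it fails for $M\gg\lambda$. In either case you do not obtain $\beta^{(b)}_p$.
\end{itemize}
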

	
	\begin{proof}
		First, note that the given concentration inequality on $\Psi_p^{(2)}$ follows from (\ref{lemma_vu_f},\ref{lemma_vu_r},\ref{lemma_vu_v},\ref{lemma_vu_s}) by definition of $\beta_p(\lambda)$. By Lemma~\ref{lemma_reduction}, we get
		\begin{align}
			\mathbb{P}(|\Psi_{p_r}^{(2)}-\psi_{p_r}^{(2)}|\geq\beta_{p_r}(\lambda))&\geq\mathbb{P}(|S_{p_r}|=r)\mathbb{P}(|\Psi_{p_r}^{(2)}-\psi_{p_r}^{(2)}|\geq\beta_{p_r}(\lambda)\,|\,|S_{p_r}|=r)
			\\&=\Omega\left(\alpha^{-\sfrac12}\mathbb{P}(|\Psi_r^{(2)}-\psi_{p_r}^{(2)}|\geq\beta_{p_r}(\lambda))\right),
		\end{align}
		so the given concentration formula for $\Psi_r^{(2)}$ follows as well.
		
		To prove (\ref{lemma_vu_f},\ref{lemma_vu_r},\ref{lemma_vu_v},\ref{lemma_vu_s}), consider for every $s\in S$ the random variable $x_s$ that is $0$ if $s\in S_p$ and $1$ otherwise, such that they are independent Bernoulli distributed random variables with mean $q$. We can write the quantities $\Delta_p(U)$ and $\Lambda_p(U)$ for $U\subset V$ as multivariate polynomials in terms of these $\alpha-k$ random variables, which allows us to apply Vu's concentration inequality.
		
		Note that $\lambda\geq4\log\alpha$ implies, for $\alpha$ sufficiently large, that $\lambda>1$, and that $\mathcal{E}_j\geq(j+1)\lambda\mathcal{E}_{j+1}$ suffices for $\mathcal{E}_0>\ldots>\mathcal{E}_d$ and $\mathcal{E}_j/\mathcal{E}_{j+1}\geq\lambda+4j\log(\alpha-k)$ in Vu's concentration inequality.
		
		(\ref{lemma_vu_f}) Since $S$ is feasible, it is disjoint from $F^{(2)}$. We find that $Y:=\Delta_p(F^{(2)})-\Lambda_p(F^{(2)})$ is the multivariate polynomial $$Y=\sum_{\substack{v\in F^{(2)}\\N(v)\cap S=\{s,t\}}}(1-\b(v))x_sx_t.$$ Note that the coefficients are in $[0,U]$ for $U=\ell$, since any pair $\{s,t\}$ appears at most $|N(s)\cap N(t)|=\ell$ times. We get the values
		\begin{align}
			\mathbb{E}_2(Y)&\leq\ell,&\mathcal{E}_2^{(\ref{lemma_vu_f})}&=1,
			\\\mathbb{E}_1(Y)&\leq qM,&\mathcal{E}_1^{(\ref{lemma_vu_f})}&=qM+2\lambda,
			\\\mathbb{E}_0(Y)&\leq q^2\alpha M,&\mathcal{E}_0^{(\ref{lemma_vu_f})}&=q^2\alpha M+3\lambda^2.
		\end{align}
		Indeed, we have $\mathbb{E}_0(Y)=\mathbb{E}(Y)\leq q^2|F|\leq q^2\alpha M$ by Lemma~\ref{lemma_terms}~(\ref{lemma_terms_f}). Furthermore, for any $s\in S$, we have $\mathbb{E}(\partial_{x_s}Y)\leq q|N(s)\cap F|\leq qF_{\max}\leq qM$, and $\mathbb{E}_2(Y)$ is the largest coefficient.
		
		For $\alpha$ large enough, we also have $\ell\mathcal{E}_j^{(\ref{lemma_vu_f})}\geq\mathbb{E}_j(Y)$ and $\mathcal{E}_j^{(\ref{lemma_vu_f})}\geq(j+1)\lambda\mathcal{E}_{j+1}^{(\ref{lemma_vu_f})}$. In particular, because $\mathcal{E}_0^{(\ref{lemma_vu_f})}-\lambda\mathcal{E}_1^{(\ref{lemma_vu_f})}\geq(qM-\lambda)^2\geq0$, for $\alpha$ large enough, such that $\alpha\geq M$. We find that Vu's concentration inequality applies, and the result follows.
		
		(\ref{lemma_vu_r}) Since $S$ is feasible, no two elements are $R$-equivalent. It follows that all sets $R^*(s)$ for $s\in S$ are disjoint from each other and disjoint from $S$. We find that $Y:=\Delta_p(R_p^{(2)})-\Lambda_p(R_p^{(2)})$ is the multivariate polynomial
		\begin{align}
			Y&=\sum_{s\in S}(1-x_s)\sum_{\substack{v\in R^*(s)\cap V^{(2)}\\N(v)\cap S=\{t,r\}}}(1-\b(v))x_tx_r
			\\&=\sum_{\substack{v\in R^{(2)}\\N(v)\cap S=\{t,r\}}}(1-\b(v))x_tx_r-\sum_{\substack{s\in S\\v\in R^*(s)\cap V^{(2)}\\N(v)\cap S=\{t,r\}}}(1-\b(v))x_sx_tx_r=:Y_2-Y_3.
		\end{align}
		We find that Vu's concentration inequality applies on $Y_2$ with the sequence $\mathcal{E}^{(\ref{lemma_vu_f})}_j$, by the same reasoning as in (\ref{lemma_vu_f}).
		
		For $Y_3$, note that the coefficients are in $[0,U]$ for $U=3\ell$, since any triple $\{s,t,r\}$ appears at most $|N(s)\cap N(t)|+|N(s)\cap N(r)|+|N(t)\cap N(r)|=3\ell$ times. We get the values
		\begin{align}
			\mathbb{E}_3(Y_3)&\leq3\ell,&\mathcal{E}_3^{(\ref{lemma_vu_r})}&=1,
			\\\mathbb{E}_2(Y_3)&\leq q(\ell+2M^{\sfrac12}),&\mathcal{E}_2^{(\ref{lemma_vu_r})}&=qM^{\sfrac12}+3\lambda,
			\\\mathbb{E}_1(Y_3)&\leq2q^2M,&\mathcal{E}_1^{(\ref{lemma_vu_r})}&=q^2M+7\lambda^2,
			\\\mathbb{E}_0(Y_3)&\leq q^3\alpha M,&\mathcal{E}_0^{(\ref{lemma_vu_r})}&=q^3\alpha M+8\lambda^3.
		\end{align}
		Indeed, we have $\mathbb{E}_0(Y_3)=\mathbb{E}(Y_3)\leq q^3|R^*(S)|\leq q^3\alpha M$ by Lemma~\ref{lemma_terms}~(\ref{lemma_terms_rs}). Furthermore, for any $s\in S$, we have $$\mathbb{E}(\partial_{x_s}Y_3)\leq q^2(|R^*(s)|+|N(s)\cap R^*(S)|)\leq q^2(R_{\max}^{(a)}+R_{\max}^{(b)}),$$ for any $s,t\in S$ we have
		\begin{align}
			\mathbb{E}(\partial_{x_s,x_t}Y_3)
			&\leq q(|N(s)\cap N(t)\cap R^*(S)|+|N(s)\cap R^*(t)|+|N(t)\cap R^*(s)|)
			\\&\leq q(\ell+2R_{\max}^{(c)}),
		\end{align}
		and $\mathbb{E}_3(Y_3)$ is the largest coefficient. Note that this is the only place in the entire proof of Theorem~\ref{theorem_main} where we need $M\geq(R_{\max}^{(c)})^2$.
		
		For $\alpha$ large enough, we also have $3\ell\mathcal{E}_j^{(\ref{lemma_vu_r})}\geq\mathbb{E}_j(Y_3)$ and $\mathcal{E}_j^{(\ref{lemma_vu_r})}\geq(j+1)\lambda\mathcal{E}_{j+1}^{(\ref{lemma_vu_r})}$. In particular, because $\mathcal{E}_1^{(\ref{lemma_vu_r})}-2\lambda\mathcal{E}_2^{(\ref{lemma_vu_r})}=(qM^{\sfrac12}-\lambda)^2\geq0$ and $\mathcal{E}_0^{(\ref{lemma_vu_r})}-\lambda\mathcal{E}_1^{(\ref{lemma_vu_r})}\geq(q^2M-\lambda)^2\geq0$, for $\alpha$ large enough, such that $\alpha\geq M\geq qM$ and $\lambda^3\geq\lambda^2$. We find that Vu's concentration inequality applies on $Y_3$ with the sequence $\mathcal{E}^{(\ref{lemma_vu_r})}_j$.
		
		Together with $\mathbb{E}(Y_2)\leq q^2|R^*(S)|\leq q^2\alpha M$ and using $Y_2\geq Y\geq0$, the result follows from $$|Y-\mathbb{E}(Y)|\leq Y+\mathbb{E}(Y)\leq Y_2+\mathbb{E}(Y_2)\leq |Y_2-\mathbb{E}(Y_2)|+2\mathbb{E}(Y_2).$$
		
		(\ref{lemma_vu_v}) We find that $Y:=\Lambda_p(V^{(2)})$ is the multivariate polynomial $$Y=\sum_{\substack{v\in V^{(2)}\\N(v)\cap S=\{s,t\}}}\b(v)x_sx_t.$$ Note that the coefficients are in $[0,U]$ for $U=\ell\b_{\max}$, since any pair $\{s,t\}$ appears at most $|N(s)\cap N(t)|=\ell$ times. For $\alpha$ large enough, we get the values
		\begin{align}
			\mathbb{E}_2(Y)&\leq\ell\b_{\max},&\mathcal{E}_2^{(\ref{lemma_vu_v})}&=1,
			\\\mathbb{E}_1(Y)&\leq q\alpha\ell\b_{\max},&\mathcal{E}_1^{(\ref{lemma_vu_v})}&=q\alpha+2\lambda,
			\\\mathbb{E}_0(Y)&\leq q^2\alpha^2\ell\b_{\max},&\mathcal{E}_0^{(\ref{lemma_vu_v})}&=q^2\alpha^2+3\lambda^2.
		\end{align}
		Indeed, we have $\mathbb{E}_0(Y)=\mathbb{E}(Y)\leq q^2\b_{\max}|V\setminus S|$ with $$|V\setminus S|=\ell\tbinom{\alpha}{2}+k\leq\tfrac12\alpha^2\ell+M\leq\alpha^2\ell$$ for $\alpha$ large enough, such that $M\leq\frac12\alpha^2\ell$, by Proposition~\ref{proposition_n}. Furthermore, for any $s\in S$, we have $\mathbb{E}(\partial_{x_s}Y)\leq q\b_{\max}|N(s)|$, with $|N(s)|=d(G)\leq\ell\alpha$ by Proposition~\ref{proposition_n}, and $\mathbb{E}_2(Y)$ is the largest coefficient.
		
		We also have $\ell\b_{\max}\mathcal{E}_j^{(\ref{lemma_vu_v})}\geq\mathbb{E}_j(Y)$ and $\mathcal{E}_j^{(\ref{lemma_vu_v})}\geq(j+1)\lambda\mathcal{E}_{j+1}^{(\ref{lemma_vu_v})}$. In particular, because $\mathcal{E}_0^{(\ref{lemma_vu_v})}-\lambda\mathcal{E}_1^{(\ref{lemma_vu_v})}\geq(q\alpha-\lambda)^2\geq0$. We find that Vu's concentration inequality applies, and the result follows, since $\b_{\max}\mathcal{E}_j^{(\ref{lemma_vu_v})}\leq\mathcal{E}_j^{(\ref{lemma_vu_f})}$ for $\alpha$ large enough, such that $\b_{\max}\leq M/\alpha\leq1$.
		
		(\ref{lemma_vu_s}) We find that $Y:=\Lambda_p(S)$ is the multivariate polynomial $$Y=\sum_{s\in S}\b(s)x_s.$$ Note that the coefficients are in $[0,U]$ for $U=\b_{\max}$. We get the values
		\begin{align}
			\mathbb{E}_1(Y)&\leq\b_{\max},&\mathcal{E}_1^{(\ref{lemma_vu_s})}&=1,
			\\\mathbb{E}_0(Y)&\leq q\alpha\b_{\max},&\mathcal{E}_0^{(\ref{lemma_vu_s})}&=q\alpha+\lambda.
		\end{align}
		Indeed, we have $\mathbb{E}_0(Y)=\mathbb{E}(Y)\leq q\b_{\max}|S|$, and $\mathbb{E}_1(Y)$ is the largest coefficient. We also have $\b_{\max}\mathcal{E}_j^{(\ref{lemma_vu_s})}\geq\mathbb{E}_j(Y)$ and $\mathcal{E}_0^{(\ref{lemma_vu_s})}\geq\lambda\mathcal{E}_1^{(\ref{lemma_vu_s})}$. We find that Vu's concentration inequality applies, and the result follows, since $c_1\leq c_2$ and $\b_{\max}\mathcal{E}_j^{(\ref{lemma_vu_s})}\leq\ell\mathcal{E}_j^{(\ref{lemma_vu_f})}$ for $\alpha$ large enough, such that $\b_{\max}\leq M/\alpha\leq1$ and $\lambda\geq1$. In particular, because $\mathcal{E}_0^{(\ref{lemma_vu_f})}-\b_{\max}\mathcal{E}_0^{(\ref{lemma_vu_s})}\geq(qM-\lambda)^2\geq0$.
	\end{proof}
	
	Lemma \ref{lemma_vu} shows that, with probability $1-O(\alpha^{\sfrac32}e^{-\frac14\lambda})$, we have $$Q_r^{(2)}\leq\frac{\beta_{p_r}(\lambda)}{n_{\alpha-r}-n_k}$$ for all $r=0,\ldots,\alpha-k-1$. This lets us bound $\Sigma^{(2)}$ with the following lemma.
	
	\begin{lemma}\label{lemma_sum}
		For $\lambda\geq1$, we have
		\begin{align}
			\sum_{\alpha-k-r\geq\lambda\log\alpha}^{\alpha-k-r\leq\alpha-k}\frac{\beta_{p_r}(\lambda)}{n_{\alpha-r}-n_k}=O\left(\frac{\lambda}{\log\alpha}\right)&&\text{if}\ \lambda=O\left(\frac{\alpha}{M(\log\alpha)^2}\right).
		\end{align}
		Here, the notation $\sum_{\alpha-k-r\geq a}^{\alpha-k-r\leq b}$ represents the sum over all integers $r$ with $a\leq\alpha-k-r\leq b$.
	\end{lemma}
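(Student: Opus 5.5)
Substitute $t:=\alpha-k-r$, so the sum runs over $\lambda\log\alpha\leq t\leq\alpha-k$. By Lemma~\ref{lemma_bounds2}~(\ref{lemma_bounds2_nrnk}) the denominator satisfies $n_{\alpha-r}-n_k\geq(\alpha-r)(\alpha-k-r)=(t+k)t\geq t^2$. By Lemma~\ref{lemma_bounds2}~(\ref{lemma_bounds2_qr}) we have $q_r=O(t/\alpha)$, and trivially $q_r\leq1$. The strategy is to bound $\beta_{p_r}(\lambda)$ term by term as a sum of monomials in $q,\lambda,\alpha,M$, substitute $q\lesssim t/\alpha$, divide by $t^2$, and sum over $t\geq\lambda\log\alpha$. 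Each monomial should then give either a convergent tail $\sum_{t\geq\lambda\log\alpha}t^{-a}$ with $a>1$, or a harmonic-type sum $O(\log\alpha)$ multiplied by a small prefactor.

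For $\beta^{(a)}$, I would expand $\sqrt{\lambda(qM+2\lambda)(q^2\alpha M+3\lambda^2)}$ and use $\sqrt{x+y}\leq\sqrt x+\sqrt y$. This gives four terms of order
\[
\sqrt{\lambda q^3\alpha M^2},\quad \sqrt{\lambda^3 qM},\quad \sqrt{\lambda^2 q^2\alpha M},\quad \lambda^2 .
\]
Substituting $q\leq t/\alpha$ yields $\lambda^{1/2}Mt^{3/2}/\alpha$, $\lambda^{3/2}(Mt/\alpha)^{1/2}$, $\lambda t(M/\alpha)^{1/2}$ and $\lambda^2$. After dividing by $t^2$ and summing over $\lambda\log\alpha\leq t\leq\alpha$, the four terms behave as follows:
\begin{itemize}
\item the first is $O(\lambda^{1/2}M\alpha^{-1/2})$;
\item the second is $O(\lambda^{3/2}(M/\alpha)^{1/2}(\lambda\log\alpha)^{-1/2})=O(\lambda(M/(\alpha\log\alpha))^{1/2})$;
\item the third is $O(\lambda(M/\alpha)^{1/2}\log\alpha)$;
\item the last is $O(\lambda^2/(\lambda\log\alpha))=O(\lambda/\log\alpha)$.
\end{itemize}
The last term is already the target bound. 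For the others, the hypothesis $\lambda=O(\alpha/(M(\log\alpha)^2))$ gives $(M/\alpha)^{1/2}=O(\lambda^{-1/2}(\log\alpha)^{-1})$. The third term then becomes $O(\lambda^{1/2})$, which is not good enough as it stands. It needs to be combined with the standing assumption $M=O(\alpha^{1/2}/(\log\alpha)^2)$, i.e.\ $(M/\alpha)^{1/2}\leq\alpha^{-1/4}$, together with $\lambda\geq1$. Since the harmonic sum $\sum_t 1/t$ only produces a factor $\log\alpha$ and $(M/\alpha)^{1/2}(\log\alpha)^2=o(1)$, this gives $O(\lambda(M/\alpha)^{1/2}\log\alpha)=O(\lambda/\log\alpha)$. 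The first term is handled in the same way: $M\alpha^{-1/2}=O((\log\alpha)^{-2})$, so $\lambda^{1/2}M\alpha^{-1/2}=O(\lambda/\log\alpha)$.

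For the $\min\{\beta^{(b)},2q^2\alpha M\}$ part, I will use $\beta^{(b)}$ only for large $t$ and $2q^2\alpha M$ where it is smaller. Expanding $\beta^{(b)}$ in the same way gives the terms
\[
\sqrt{\lambda q^5\alpha M^2},\quad \sqrt{\lambda^4 q^2M}\sim\lambda^2 qM^{1/2},\quad \sqrt{\lambda^3q^3\alpha M},\quad \lambda^3 .
\]
The worrying term is $\lambda^3/t^2$, whose sum is $O(\lambda^2/\log\alpha)$, too large by a factor $\lambda$. For this range I will instead use $2q^2\alpha M\leq 2t^2M/\alpha$, whose contribution after dividing by $t^2$ is $O(M/\alpha)$ per term. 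So I split at the threshold $t_0$ where $\lambda^3\approx t^2M/\alpha$, use $2q^2\alpha M$ for $t\leq t_0$ and $\beta^{(b)}$ for $t>t_0$. The first piece is $O(t_0M/\alpha)$ and the second is $O(\lambda^3/t_0)$, so the total is $O(\lambda^{3/2}(M/\alpha)^{1/2})$, which under the hypothesis on $\lambda$ is $O(\lambda/\log\alpha)$. The remaining $\beta^{(b)}$ terms are handled like the corresponding $\beta^{(a)}$ terms: $q^{5/2}$ gives a convergent-to-$\alpha$ sum of size $\lambda^{1/2}M\alpha^{-1/2}$, and $\lambda^2qM^{1/2}/t^2$ sums to $\lambda^2M^{1/2}\log\alpha/\alpha$, which is small.

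I expect the main obstacle to be this bookkeeping: matching each monomial against either the $\lambda$-hypothesis or the $M=O(\alpha^{1/2}/(\log\alpha)^2)$ assumption, and choosing the $\min$-split correctly so that the top-degree $\lambda^3$ term does not produce $\lambda^2$.
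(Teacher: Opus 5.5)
Your proposal is correct and follows the paper's proof essentially step for step: denominator $\geq t^2$, $q_r=O(t/\alpha)$, subadditivity of the square root, and term-by-term sums controlled by both the hypothesis on $\lambda$ and $M=O(\alpha^{1/2}/(\log\alpha)^2)$. The paper splits the $\min$ at $t=\lambda^2\log\alpha$ rather than at your balancing point $t_0=\lambda^{3/2}(\alpha/M)^{1/2}$ (both give $O(\lambda/\log\alpha)$), and the one term you list but never bound, $\sqrt{\lambda^3q^3\alpha M}$, sums to $O(\lambda^{3/2}(M/\alpha)^{1/2})$, so it needs the $\lambda$-hypothesis exactly as your split term does.
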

	
	\begin{proof}
		By Lemma~\ref{lemma_bounds2}~(\ref{lemma_bounds2_nrnk}), we have $n_{\alpha-r}-n_k\geq(\alpha-k-r)^2$, so
		\begin{align}
			&\sum_{\alpha-k-r\geq\lambda\log\alpha}^{\alpha-k-r\leq\alpha-k}\frac{\beta_{p_r}(\lambda)}{n_{\alpha-r}-n_k}
			\\&\leq4\sum_{\alpha-k-r\geq\lambda\log\alpha}^{\alpha-k-r\leq\alpha-k}\frac{\beta_{p_r}^{(a)}(\lambda)}{(\alpha-k-r)^2}+\sum_{\alpha-k-r\geq\lambda^2\log\alpha}^{\alpha-k-r\leq\alpha-k}\frac{\beta_{p_r}^{(b)}(\lambda)}{(\alpha-k-r)^2}
			\\&\quad+\sum_{\alpha-k-r\geq\lambda\log\alpha}^{\alpha-k-r\leq\lambda^2\log\alpha}\frac{2q_r^2\alpha M}{(\alpha-k-r)^2}.
		\end{align}
		For the last sum, note that $2q_r^2\alpha M/(\alpha-k-r)^2=2\alpha M/(\alpha-k)^2=O(M/\alpha)$, so the sum is $O(\lambda^2\log\alpha\cdot M/\alpha)=O(\lambda/\log\alpha)$. For the first two sums, note that subadditivity of the square root gives
		\begin{align}
			\beta_p^{(a)}(\lambda)&\leq\ell c_2\left(\sqrt{\lambda q^3\alpha M^2}+\sqrt{2\lambda^2q^2\alpha M}+\sqrt{3\lambda^3qM}+\sqrt{6\lambda^4}\right),
			\\\beta_p^{(b)}(\lambda)&\leq3\ell c_3\left(\sqrt{\lambda q^5\alpha M^2}+\sqrt{7\lambda^3q^3\alpha M}+\sqrt{8\lambda^4q^2M}+\sqrt{56\lambda^6}\right)
		\end{align}
		for $p\in[0,1]$. Using Lemma~\ref{lemma_bounds2}~(\ref{lemma_bounds2_qr}), we get
		\begin{align}
			&\sum_{\alpha-k-r\geq\lambda\log\alpha}^{\alpha-k-r\leq\alpha-k}\frac{(\lambda q_r^3\alpha M^2)^{\sfrac12}}{(\alpha-k-r)^2}=\sum_{\alpha-k-r\geq\lambda\log\alpha}^{\alpha-k-r\leq\alpha-k}\frac{O(\lambda^{\sfrac12}M/\alpha)}{(\alpha-k-r)^{\sfrac12}}=O\left(\frac{\lambda^{\sfrac12}M}{\alpha^{\sfrac12}}\right),
			\\&\sum_{\alpha-k-r\geq\lambda\log\alpha}^{\alpha-k-r\leq\alpha-k}\frac{(\lambda^2q_r^2\alpha M)^{\sfrac12}}{(\alpha-k-r)^2}=\sum_{\alpha-k-r\geq\lambda\log\alpha}^{\alpha-k-r\leq\alpha-k}\frac{O(\lambda M^{\sfrac12}/\alpha^{\sfrac12})}{\alpha-k-r}=O\left(\frac{\lambda M^{\sfrac12}\log\alpha}{\alpha^{\sfrac12}}\right),
			\\&\sum_{\alpha-k-r\geq\lambda^2\log\alpha}^{\alpha-k-r\leq\alpha-k}\frac{(\lambda^3q_r^3\alpha M)^{\sfrac12}}{(\alpha-k-r)^2}=\sum_{\alpha-k-r\geq\lambda^2\log\alpha}^{\alpha-k-r\leq\alpha-k}\frac{O(\lambda^{\sfrac32}M^{\sfrac12}/\alpha)}{(\alpha-k-r)^{\sfrac12}}=O\left(\frac{\lambda^{\sfrac32}M^{\sfrac12}}{\alpha^{\sfrac12}}\right),
			\\&\sum_{\alpha-k-r\geq\lambda\log\alpha}^{\alpha-k-r\leq\alpha-k}\frac{(\lambda^4q_rM)^{\sfrac12}}{(\alpha-k-r)^2}=\sum_{\alpha-k-r\geq\lambda\log\alpha}^{\alpha-k-r\leq\alpha-k}\frac{O(\lambda^2M^{\sfrac12}/\alpha^{\sfrac12})}{(\alpha-k-r)^{\sfrac32}}=O\left(\frac{\lambda^{\sfrac32}M^{\sfrac12}}{(\alpha\log\alpha)^{\sfrac12}}\right),
			\\&\sum_{\alpha-k-r\geq\lambda\log\alpha}^{\alpha-k-r\leq\alpha-k}\frac{\lambda^2}{(\alpha-k-r)^2}=O\left(\frac{\lambda}{\log\alpha}\right),
			\\&\sum_{\alpha-k-r\geq\lambda^2\log\alpha}^{\alpha-k-r\leq\alpha-k}\frac{\lambda^3}{(\alpha-k-r)^2}=O\left(\frac{\lambda}{\log\alpha}\right).
		\end{align}
		Finally, using $\lambda\geq1$ and $M=O(\alpha^{\sfrac12}/(\log\alpha)^2)$, we get
		\begin{align}
			&\frac{\lambda^{\sfrac12}M}{\alpha^{\sfrac12}}\leq\frac{\lambda M}{\alpha^{\sfrac12}}=O\left(\frac{\lambda}{(\log\alpha)^2}\right)=O\left(\frac{\lambda}{\log\alpha}\right),
			\\&\frac{\lambda M^{\sfrac12}\log\alpha}{\alpha^{\sfrac12}}=O\left(\frac{\lambda}{\alpha^{\sfrac14}}\right)=O\left(\frac{\lambda}{\log\alpha}\right),
		\end{align}
		and using $\lambda=O(\alpha/(M(\log\alpha)^2))$ as well, we get
		\begin{equation}
			\frac{\lambda^{\sfrac32}M^{\sfrac12}}{(\alpha\log\alpha)^{\sfrac12}}=O\left(\frac{\lambda^{\sfrac32}M^{\sfrac12}}{\alpha^{\sfrac12}}\right)=O\left(\frac{\lambda}{\log\alpha}\right),
		\end{equation}
		so the result follows.
	\end{proof}
	
	\subsection{Specialized concentration}\label{subsection_proof_stronger}
	
	Since Lemma~\ref{lemma_sum} only sums over $\alpha-k-r\geq\lambda\log\alpha$, we show a stronger concentration than Lemma~\ref{lemma_vu} for $\alpha-k-r<\lambda\log\alpha$ to finish the bound on $\Sigma^{(2)}$, and we prove Lemma~\ref{lemma_bar_bound}.
	
	\begin{lemma}\label{lemma_stronger}
		For $r=0,\ldots,\alpha-k-1$, write $t:=\alpha-k-r$, and let $\rho_r\geq t^2\exp(-O(\log\log\alpha))$. If $t\leq\alpha\exp(-\omega(\log\log\alpha))$, then
		\begin{enumerate}[(i)]
			\item\label{lemma_stronger_delta} $\begin{aligned}[t]
				\mathbb{P}(\Delta_r^{(2)}\geq\rho_r)\leq t^{\sfrac32}\exp\left(\tfrac{2\rho_r}{\ell t}(\log\tfrac{M}{\alpha}+\log\tfrac{t^2}{\rho_r}+O(1))\right),
			\end{aligned}$
			\item\label{lemma_stronger_psi} $\begin{aligned}[t]
				\mathbb{P}(|\Psi_r^{(2)}-\psi_{p_r}^{(2)}|\geq\rho_r)\leq t^{\sfrac32}\exp\left(\tfrac{\rho_r}{\ell t}(-\log\alpha+O(\log\log\alpha))\right).
			\end{aligned}$
		\end{enumerate}
	\end{lemma}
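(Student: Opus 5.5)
The plan is to exploit that, in the true model, $S\setminus S_r$ is a uniformly random subset of $S$ of size $t=\alpha-k-r$. Here $\Delta_r^{(2)}$ denotes $\Delta_r(F^{(2)})+\Delta_r(R_r^{(2)})$, and such a vertex survives in $\widetilde V_r$ exactly when both of its neighbours in $S$ lie in $T:=S\setminus S_r$. I would encode these vertices as an auxiliary multigraph $H$ on $S$. Each $v\in F^{(2)}\cup R^{(2)}$ with $N(v)\cap S=\{s,s'\}$ gives an edge $ss'$. Each pair $\{s,s'\}$ carries at most $\ell$ parallel edges, since $|N(s)\cap N(s')|=\ell$. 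By the bounds $F_{\max},R^{(b)}_{\max}\le M$ used in Lemma~\ref{lemma_vu}, every $s\in S$ has at most $2M$ distinct $H$-neighbours. Then $\Delta_r^{(2)}\le e(H[T])$. A large value forces many of the $t$ sampled vertices to have an $H$-neighbour among the other sampled vertices, which is unlikely because a single vertex has only $O(M)$ neighbours out of $\alpha-k$ candidates.

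For (i), I would expose $T$ as a uniformly random ordered sequence $s_1,\dots,s_t$ drawn without replacement from $S$. Call step $j$ a \emph{hit} if $s_j$ is an $H$-neighbour of some earlier $s_i$. Each hit vertex contributes at most $\ell(j-1)\le\ell t$ edges to $H[T]$. Hence $e(H[T])\ge\rho_r$ forces at least $m:=\lceil\rho_r/(\ell t)\rceil$ hits; the constant in the exponent, claimed with the factor $2$, must be checked here, either by charging each edge to its later endpoint or via the symmetry between the two ends. Conditionally on the past, the probability of a hit at step $j$ is at most $2tM/(\alpha-k-t)=O(tM/\alpha)$, using $t\le\alpha e^{-\omega(\log\log\alpha)}$ so that the remaining pool has size $\Theta(\alpha)$. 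A union bound over the $\binom{t}{m}\le(et/m)^m$ possible hit positions, with sequential conditioning, gives
\[
\mathbb{P}(\Delta_r^{(2)}\ge\rho_r)\le\Bigl(\frac{e\,t^2 M}{m\,\alpha}\cdot O(1)\Bigr)^{m}=\exp\Bigl(m\bigl(\log\tfrac{M}{\alpha}+\log\tfrac{t^2}{\rho_r}+O(1)\bigr)\Bigr),
\]
where $\log(\ell t^2/\rho_r)$ is absorbed into $\log(t^2/\rho_r)+O(1)$. The prefactor $t^{\sfrac32}$ is slack. It covers rounding of $m$ and a possible passage through the simplified model via Lemma~\ref{lemma_reduction}, and it also covers the regime where the bracket is not negative, which the assumption $\rho_r\ge t^2e^{-O(\log\log\alpha)}$ controls.

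For (ii), I would bound the remaining parts of $\Psi_r^{(2)}-\psi_{p_r}^{(2)}$ deterministically. All $\Lambda_r$-terms are $\b$-weighted counts over $\widetilde V_r\cap(V^{(2)}\cup S)$, which has size $O(\ell t^2+t)$, so they are $O(t^2M/\alpha)$. Lemma~\ref{lemma_expect} and Lemma~\ref{lemma_bounds2}~(\ref{lemma_bounds2_qr}) give $\psi_{p_r}^{(2)}=O(q_r^2\alpha M+q_rM^2)=O(t^2M/\alpha+tM^2/\alpha)$. With $M=O(\alpha^{\sfrac12}/(\log\alpha)^2)$ and $\rho_r\ge t^2e^{-O(\log\log\alpha)}$, these are $o(\rho_r)$. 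Consequently the lower deviation $\Psi_r^{(2)}-\psi_{p_r}^{(2)}\le-\rho_r$ is impossible, and the upper one requires $\Delta_r^{(2)}\ge(1-o(1))\rho_r$. Applying (i) and using $\log(M/\alpha)\le-\tfrac12\log\alpha+O(\log\log\alpha)$ and $\log(t^2/\rho_r)=O(\log\log\alpha)$, the exponent becomes $\tfrac{2\rho_r}{\ell t}\bigl(-\tfrac12\log\alpha+O(\log\log\alpha)\bigr)$, which is the claimed bound.

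The main obstacle is the hit-counting step. I need a clean conditional bound for the probability of a hit that remains valid under sampling without replacement, and the constant relating hits to edges must come out as $2/(\ell t)$ rather than $1/(\ell t)$. If the sequential exposure gives only $1/(\ell t)$, I would instead count pairs among the hit vertices, or split $T$ into two halves and count $H$-edges crossing between them to recover the factor.
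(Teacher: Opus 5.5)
\textbf{There is a genuine gap in your hit-counting step for (i), and it is more serious than the missing factor $2$ you flag.}

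With $m=\lceil\rho_r/(\ell t)\rceil$ you have $t^2/m\le\ell t^3/\rho_r$, not $\ell t^2/\rho_r$. So your estimate $\bigl(\tfrac{et^2M}{m\alpha}\cdot O(1)\bigr)^m$ only gives $\exp\bigl(m(\log\tfrac{M}{\alpha}+\log\tfrac{t^2}{\rho_r}+\log t+O(1))\bigr)$, and with only $\rho_r/(\ell t)$ in front instead of $2\rho_r/(\ell t)$.

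The extra $\log t$ per hit cannot be absorbed. The lemma allows $t$ up to $\alpha e^{-\omega(\log\log\alpha)}$. It is used in Lemma~\ref{lemma_pai} with $t$ up to $\alpha^{\sfrac12}e^{O(\log\log\alpha)}$, and, via Lemma~\ref{lemma_pd}, with $t$ up to order $M$. There your bracket is $\log\tfrac{tM}{\alpha}+O(\log\log\alpha)$. In the first case this can be nonnegative. In the second it is about $\log\tfrac{M^2}{\alpha}$, possibly only $-O(\log\log\alpha)$, instead of $\log\tfrac{M}{\alpha}$. So (ii), with its rate $-\log\alpha$, does not follow.

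A sharper analysis of the same event cannot help. A configuration of $m$ edges of $H[T]$ with distinct later endpoints, e.g.\ an $m$-matching, can genuinely occur with probability of order $(t^2M/(m\alpha))^m$. What makes $\Delta_r^{(2)}\ge\rho_r$ rare is that $\rho_r\ge t^2e^{-O(\log\log\alpha)}$ forces a dense interference graph on $T$, and recording only hits discards this. Splitting $T$ into halves or counting pairs among hit vertices addresses only the constant, not this loss.

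\textbf{The missing idea is to concentrate the edges at a single vertex.} By Lemma~\ref{lemma_delta2}, $\Delta_r^{(2)}\ge\rho_r$ forces at least $\rho_r/\ell$ interfering pairs inside $T$. By the handshake identity, some $s\in T$ then interferes with at least $2\rho_r/(\ell t)$ other vertices of $T$; this is where the $2$ comes from. The union bound over $s$ costs $\sum_{s\in S}\mathbb{P}(s\in T)=t$.

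Conditional on $s\in T$, the set $T\setminus\{s\}$ is a uniform $(t-1)$-subset of $S\setminus\{s\}$, and $s$ has at most $2M$ interference partners. Lemma~\ref{lemma_subset} then bounds the probability of exactly $i$ partners in $T$ by $t^{\sfrac12}\exp(O(1+t^2/\alpha))\bigl(\tfrac{2eMt}{i\alpha}\bigr)^i$. For $i\ge2\rho_r/(\ell t)$ the base is at most $e\ell Mt^2/(\rho_r\alpha)=o(1)$. Summing the geometric tail gives exactly $t^{\sfrac32}\exp\bigl(\tfrac{2\rho_r}{\ell t}(\log\tfrac{M}{\alpha}+\log\tfrac{t^2}{\rho_r}+O(1))\bigr)$. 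The $t^2/\alpha$ term is absorbed using $t\le\alpha e^{-\omega(\log\log\alpha)}$ and $\rho_r/t\ge te^{-O(\log\log\alpha)}$. So the prefactor $t^{\sfrac32}$ is not slack: it is the union bound times Stirling, and a detour through Lemma~\ref{lemma_reduction} would cost $\alpha^{\sfrac12}$ instead.

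Your reduction of (ii) to (i) then matches the paper's, with one correction. Use $\psi_{p_r}^{(2)}=O(Mt^2/\alpha)$ from Lemma~\ref{lemma_psi2}, not the cruder $O(q_r^2\alpha M+q_rM^2)$. For small $t$, the term $tM^2/\alpha$ need not be $O(\rho_r\log\log\alpha/\log\alpha)$, and that is the precision the exponent in (ii) tolerates.
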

	
	We introduce the quantities $\Delta_r^{(2)}:=\Delta_r(F^{(2)})+\Delta_r(R_r^{(2)})$ and $\Lambda_r^{(2)}:=\Lambda_r(V^{(2)})+\Lambda_r(S)-\Lambda_r(F^{(2)})-\Lambda_r(R_r^{(2)})$, such that $\Psi_r^{(2)}=\Delta_r^{(2)}+\Lambda_r^{(2)}$. We use the following lemma.
	
	\begin{lemma}\label{lemma_psi2}
		For $r=0,\ldots,\alpha-k-1$, we have
		\begin{enumerate}[(i)]
			\item\label{lemma_psi2_lambda} $\Lambda_r^{(2)}=O(M(\alpha-k-r)^2/\alpha)$,
			\item\label{lemma_psi2_psi} $\psi_{p_r}^{(2)}=O(M(\alpha-k-r)^2/\alpha)$.
		\end{enumerate}
	\end{lemma}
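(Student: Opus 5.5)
The plan is to bound each term directly, using only three facts: $\b$ is non-negative (the standing without-loss-of-generality assumption), $\b_{\max}=O(M/\alpha)$, and the counts from Lemma~\ref{lemma_bounds1} and Lemma~\ref{lemma_expect}. Throughout, write $t:=\alpha-k-r\geq1$. Note that $|S\setminus S_r|=t$, and that $q_r=t/(\alpha-k)=O(t/\alpha)$ because $k=O(M)=o(\alpha)$.

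For (\ref{lemma_psi2_lambda}), I will expand
\[
\Lambda_r^{(2)}=\Lambda_r(V^{(2)})+\Lambda_r(S)-\Lambda_r(F^{(2)})-\Lambda_r(R_r^{(2)}).
\]
All four terms are non-negative, and $F^{(2)},R_r^{(2)}\subset V^{(2)}$. It therefore suffices to bound $\b(\widetilde{V}_r\cap V^{(2)})$ and $\b(\widetilde{V}_r\cap S)$, which gives $|\Lambda_r^{(2)}|\leq 2\b(\widetilde V_r\cap V^{(2)})+\b(\widetilde V_r\cap S)$.
\begin{itemize}
\item A vertex of $V^{(2)}$ lies in $\widetilde{V}_r$ exactly when both of its neighbours in $S$ lie in $S\setminus S_r$. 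Each such pair of vertices of $S$ has exactly $\ell$ common neighbours (Proposition~\ref{proposition_ell}). Hence $|\widetilde{V}_r\cap V^{(2)}|\leq\ell\binom{t}{2}$.
\item Since $\widetilde V_r\cap S=S\setminus S_r$, we have $|\widetilde{V}_r\cap S|=t\leq t^2$.
\end{itemize}
Multiplying by $\b_{\max}=O(M/\alpha)$ gives $\Lambda_r^{(2)}=O(Mt^2/\alpha)$.

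For (\ref{lemma_psi2_psi}), I will evaluate $\psi^{(2)}_{p}$ term by term with Lemma~\ref{lemma_expect}:
\[
\psi_p^{(2)}=q^2|F^{(2)}|+pq^2|R^{(2)}|+q^2\b(V^{(2)})+q\b(S)-q^2\b(F^{(2)})-pq^2\b(R^{(2)}).
\]
The terms are bounded as follows.
\begin{itemize}
\item By Lemma~\ref{lemma_bounds1}~(\ref{lemma_bounds1_f2},\ref{lemma_bounds1_r2}), $|F^{(2)}|,|R^{(2)}|=O(\alpha M)$.
\item By Lemma~\ref{lemma_terms}~(\ref{lemma_terms_n}), every $\b$-sum over a subset of $V$ is at most $\b_{\max}|V|=O(\alpha M)$.
\item $\b(S)\leq\alpha\b_{\max}=O(M)$.
\end{itemize}
At $p=p_r$ this gives
\[
|\psi_{p_r}^{(2)}|=O(q_r^2\alpha M+q_rM)=O\!\left(\frac{t^2M}{\alpha}+\frac{tM}{\alpha}\right)=O(Mt^2/\alpha),
\]
using $t\geq1$.

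There is no real obstacle. The only points needing care are the following.
\begin{itemize}
\item The linear terms $\Lambda_r(S)$ and $q_r\b(S)$ are absorbed into the $t^2$ bound only because $t\geq1$.
\item Replacing $\alpha-k$ by $\alpha$ in $q_r$ is justified by $k=O(M)=o(\alpha)$.
\end{itemize}
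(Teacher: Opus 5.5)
Your proposal is correct and follows the paper's proof. Part (i) bounds $\Lambda_r^{(2)}$ by $\b_{\max}$ times $|(V^{(2)}\cup S)\cap\widetilde{V}_r|$, and part (ii) bounds the terms of $\psi_{p_r}^{(2)}$ from Lemma~\ref{lemma_expect} using Lemma~\ref{lemma_bounds1} and $q_r=O(t/\alpha)$. The only cosmetic difference is that you count $|(V^{(2)}\cup S)\cap\widetilde{V}_r|\leq\ell\binom{t}{2}+t$ directly via common neighbourhoods. The paper instead identifies this set as $V\setminus\overline{N}((\overline{S}\setminus S)\cup S_r)$ for an MIS $\overline{S}\supset S$ and reads off its size $n_t$ from Corollary~\ref{corollary_hereditary}, which is the same number.
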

	
	\begin{proof}
		(\ref{lemma_psi2_lambda}) Note that $\Lambda_r^{(2)}\leq\b(X)$ for $X:=(V^{(2)}\cup S)\cap\widetilde{V}_r$. Let $\overline{S}$ be a maximal independent set that contains $S$, such that $\overline{S}$ is an MIS by Proposition \ref{proposition_imis}. We get $V^{(2)}\cup S=V\setminus\overline{N}(\overline{S}\setminus S)$ by $2$-uniformity of $G$. By Corollary~\ref{corollary_hereditary}, the set $X=V\setminus\overline{N}((\overline{S}\setminus S)\cup S_r)$ has size $n_{\alpha-k-r}$, so the result follows by Lemma~\ref{lemma_nr}~(\ref{lemma_nr_leq}).
		
		(\ref{lemma_psi2_psi}) This follows from Lemma~\ref{lemma_expect}, Lemma~\ref{lemma_bounds1}~(\ref{lemma_bounds1_f2},\ref{lemma_bounds1_r2}), and Lemma~\ref{lemma_bounds2}~(\ref{lemma_bounds2_qr}).
	\end{proof}
	
	To bound $\Delta_r^{(2)}$, we say that two distinct vertices $s_1,s_2\in S$ are \emph{interfering} if their intersection of neighborhoods $N(s_1)\cap N(s_2)$ contains a vertex from $F\cup R^*(S)$.
	
	\begin{lemma}\label{lemma_delta2}
		The quantity $\Delta_r^{(2)}$ is at most $\ell$ times the number of interfering pairs of vertices in $S\setminus S_r$. It follows that there is some vertex $s\in S\setminus S_r$ that interferes with at least $\frac{2}{\ell}\Delta_r^{(2)}/(\alpha-k-r)$ other vertices from $S\setminus S_r$.
	\end{lemma}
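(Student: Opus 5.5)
We bound $\Delta_r^{(2)}=\Delta_r(F^{(2)})+\Delta_r(R_r^{(2)})$ by sorting the vertices it counts according to their pair of neighbours in $S$, and then finish with a double-counting (averaging) step.

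\textbf{Counting step.} By definition, $\Delta_r^{(2)}$ counts the vertices $v\in\widetilde{V}_r$ that lie in $F^{(2)}$ or in $R_r^{(2)}\subset R^*(S)$. So each counted $v$ lies in $F\cup R^*(S)$ and has exactly two neighbours $s_1,s_2\in S$. Since $v\in\widetilde{V}_r=V\setminus\overline{N}(S_r)$, neither $s_1$ nor $s_2$ lies in $S_r$. Hence $s_1,s_2\in S\setminus S_r$ are distinct, and $v\in N(s_1)\cap N(s_2)\cap(F\cup R^*(S))$, so $\{s_1,s_2\}$ is an interfering pair in $S\setminus S_r$.

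We then group the counted vertices by their pair $\{s_1,s_2\}$. Because $s_1,s_2$ are non-adjacent (both lie in the independent set $S$), Proposition~\ref{proposition_ell} gives $|N(s_1)\cap N(s_2)|=\ell$. So at most $\ell$ counted vertices share a given pair, and each counted vertex is attached to exactly one pair. Summing over pairs gives $\Delta_r^{(2)}\le\ell I$, where $I$ is the number of interfering pairs in $S\setminus S_r$.

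\textbf{Averaging step.} The set $S\setminus S_r$ has $|S|-r=\alpha-k-r$ elements. Consider the interference graph on $S\setminus S_r$, whose edges are the interfering pairs. Its degrees sum to $2I\ge\frac{2}{\ell}\Delta_r^{(2)}$. Some vertex therefore has degree at least the average, $\frac{2}{\ell}\Delta_r^{(2)}/(\alpha-k-r)$, which is the claimed bound. The set is non-empty because $r\le\alpha-k-1$.

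Neither step is really an obstacle. The only point needing care is that each counted vertex is assigned to exactly one pair of neighbours inside $S\setminus S_r$, rather than to a pair with an element in $S_r$. This is exactly what membership in $\widetilde{V}_r$ guarantees, together with the definition of the superscript $(2)$ as having exactly two neighbours in $S$.
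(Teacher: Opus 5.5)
Your proof is correct and is the paper's argument: each counted vertex has exactly two neighbours in $S$, both in $S\setminus S_r$, and each intersection $N(s_1)\cap N(s_2)$ has size $\ell$ by Proposition~\ref{proposition_ell}, followed by the same averaging over the $\alpha-k-r$ elements of $S\setminus S_r$. One step is left implicit: reading $\Delta_r^{(2)}$ as the size of a union uses that $F^{(2)}$ and $R_r^{(2)}$ are disjoint, which holds because $R^*(\cdot)$ excludes $F$ by definition.
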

	
	\begin{proof}
		Because $G$ is $2$-uniform, all intersections of neighborhood $N(s_1)\cap N(s_2)$ are disjoint, so this follows from Proposition~\ref{proposition_ell}.
	\end{proof}
	
	Finally, we use the following lemma to analyze interference.
	
	\begin{lemma}\label{lemma_subset}
		Consider integers $b\geq c\geq i\geq1$ and $a\geq0$. Let $A\subset B$ be sets of sizes $a$ and $b$ respectively. Then the probability that a uniformly random subset of $B$ of size $c$ overlaps $A$ in exactly $i$ elements is bounded from above by $$\exp\left(\tfrac12\log(2\pi c)+\tfrac{1}{12c}+\tfrac{c^2}{b-c}\right)\left(\tfrac{eca}{ib}\right)^i.$$
	\end{lemma}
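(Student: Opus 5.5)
The probability in question is hypergeometric, and the plan is to bound it by a binomial-type expression using Stirling estimates. A uniformly random $c$-subset of $B$ meets $A$ in exactly $i$ elements with probability
$$h:=\binom{a}{i}\binom{b-a}{c-i}\Big/\binom{b}{c}.$$
Throughout, we may assume $i\leq a$, since otherwise the probability is $0$.

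First, bound $\binom{a}{i}\leq a^i/i!$ and $\binom{b-a}{c-i}\leq b^{c-i}/(c-i)!$. This gives
$$h\leq \frac{c!}{i!\,(c-i)!}\cdot\frac{a^i\,b^{c-i}}{b(b-1)\cdots(b-c+1)}.$$
Next, write $b(b-1)\cdots(b-c+1)=b^c\prod_{j<c}(1-j/b)$. Using $-\log(1-x)\leq x/(1-x)$, the product is at least $\exp(-\sum_{j<c}j/(b-j))\geq\exp(-c^2/(b-c))$, because each $j/(b-j)\leq c/(b-c)$ and there are $c$ terms. This already produces the factor $e^{c^2/(b-c)}$ and leaves
$$h\leq e^{c^2/(b-c)}\binom{c}{i}\Big(\frac{a}{b}\Big)^i.$$

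The remaining task is to show $\binom{c}{i}\leq \exp(\tfrac12\log(2\pi c)+\tfrac1{12c})(ec/i)^i$. The crude bound $\binom{c}{i}\leq c^i/i!$ together with the Robbins lower bound $i!\geq\sqrt{2\pi i}\,(i/e)^i$ gives $\binom{c}{i}\leq (ec/i)^i/\sqrt{2\pi i}$. This is stronger than required, since $1/\sqrt{2\pi i}\leq1\leq\sqrt{2\pi c}\,e^{1/(12c)}$.

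So the extra prefactor $\sqrt{2\pi c}\,e^{1/(12c)}$ is just slack, presumably left from the authors' use of the upper Robbins bound $c!\leq\sqrt{2\pi c}(c/e)^c e^{1/(12c)}$. The only real obstacle is the factor $c^2/(b-c)$, which must be handled carefully when $c$ is close to $b$. The elementary estimate above handles this, given $b>c$. If $b=c$ the bound is trivially infinite, or can be read as vacuous.
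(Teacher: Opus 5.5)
Your proof is correct. It differs from the paper's in how the factor $c!$ is handled. The paper bounds the three binomial coefficients separately: $\binom{a}{i}\le(ea/i)^i$, $\binom{b-a}{c-i}\le\binom{b}{c-i}\le(eb/(c-i))^{c-i}$ and $\binom{b}{c}\ge(b-c)^c/c!$. This leaves a bare $c!$ in the numerator. The paper then applies the upper Robbins bound $c!\le\sqrt{2\pi c}\,(c/e)^c e^{1/(12c)}$, and recovers the factors $e^{c^2/(b-c)}$ and $e^i$ from $(b/(b-c))^c$ and $(c/(c-i))^{c-i}$ via $1+x\le e^x$. You instead keep $c!/(i!(c-i)!)$ together as $\binom{c}{i}$. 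This gives a clean intermediate comparison with a binomial-type probability, $h\le e^{c^2/(b-c)}\binom{c}{i}(a/b)^i$, after which you only need the lower Stirling bound on $i!$. The two treatments of the denominator are equivalent, since both amount to $b^c/b^{\underline{c}}\le e^{c^2/(b-c)}$. Your route, however, avoids losing and then recovering the factor $c!$. It yields a strictly stronger bound, with prefactor $1/\sqrt{2\pi i}$ in place of $\sqrt{2\pi c}\,e^{1/(12c)}$, which confirms your diagnosis that this prefactor is slack from the upper bound on $c!$. In the paper's applications (Lemma~\ref{lemma_stronger} and Lemma~\ref{lemma_pd}), your improvement would only remove a polynomial factor of order $t^{1/2}$, which is asymptotically irrelevant there. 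So the paper's cruder form costs nothing, and yours is simply tidier.
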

	
	\begin{proof}
		There are $\binom{b}{c}$ possible subsets, $\binom{a}{i}\binom{b-a}{c-i}$ of which overlap $A$ in exactly $i$ elements, so the probability in question is $$\tbinom{b}{c}^{-1}\tbinom{a}{i}\tbinom{b-a}{c-i}\leq\tbinom{b}{c}^{-1}\tbinom{a}{i}\tbinom{b}{c-i}.$$ We use the general bounds $\frac{(n-k)^k}{k!}\leq\binom{n}{k}\leq(\frac{en}{k})^k$ to get the upper bound $$(\tfrac{a}{i})^i(\tfrac{b}{c-i})^{c-i}(\tfrac{e}{b-c})^cc!.$$ We use the bound $n!\leq\sqrt{2\pi n}(\frac{n}{e})^n\exp(\frac{1}{12n})$ for $n\geq1$, which is shown in~\cite{Stirling}, to get the upper bound
		\begin{align}
			&\exp\left(\tfrac12\log(2\pi c)+\tfrac1{12c}\right)(\tfrac{a}{i})^i(\tfrac{b}{c-i})^{c-i}(\tfrac{c}{b-c})^c
			\\&=\exp\left(\tfrac12\log(2\pi c)+\tfrac1{12c}\right)(\tfrac{ca}{ib})^i(\tfrac{c}{c-i})^{c-i}(\tfrac{b}{b-c})^c.
		\end{align}
		Using $1+x\leq e^x$, the result follows from $\frac{c}{c-i}=1+\frac{i}{c-i}$ and $\frac{b}{b-c}=1+\frac{c}{b-c}$. 
	\end{proof}
	
	\begin{proof}[Proof of Lemma~\ref{lemma_stronger}]
		(\ref{lemma_stronger_delta}) If $\Delta_r^{(2)}\geq\rho_r$, then, by Lemma~\ref{lemma_delta2}, there exists $s\in S\setminus S_r$ interfering with at least $$\iota_r:=\tfrac{2}{\ell}\Delta_r^{(2)}/t\geq\tfrac{2\rho_r}{\ell t}$$ other vertices in $S\setminus S_r$. If $t=1$, note that there are no pairs of vertices in $S\setminus S_r$, so we get $\mathbb{P}(\Delta_r^{(2)}\geq\rho_r)=0$. We may thus assume $t\geq2$.
		
		For fixed $s\in S$, consider conditioning on $s\in S\setminus S_r$. Then $S\setminus S_r\setminus\{s\}$ is a uniformly random subset of $S\setminus\{s\}$ of size $t-1$. Since $s$ interferes with at most $F_{\max}+R_{\max}^{(b)}\leq2M$ other vertices in $S$, by Lemma~\ref{lemma_subset}, the probability that $S\setminus S_r\setminus\{s\}$ contains exactly $i\geq1$ of them is at most $$\exp\left(\tfrac12\log(2\pi(t-1))+\tfrac1{12(t-1)}+\tfrac{(t-1)^2}{\alpha-k-t}\right)\left(\tfrac{2eM(t-1)}{i(\alpha-k-1)}\right)^i.$$ Since there are $\alpha-k$ options for $s\in S$, each with $\mathbb{P}(s\in S\setminus S_r)=t/(\alpha-k)$, we get $$\mathbb{P}(|\Psi_r^{(2)}-\psi_{p_r}^{(2)}|\geq\rho_r)\leq t\sum_{i\geq\iota_r}\exp\left(\tfrac12\log t+O\left(\tfrac{t^2}{\alpha-k-t}+1\right)\right)\left(\tfrac{2eMt}{i(\alpha-k-1)}\right)^i.$$ For $i\geq\iota_r$, we have $\tfrac{t}{i}\leq\tfrac{\ell t^2}{2\rho_r}$, and thus $$\tfrac{2eMt}{i(\alpha-k-1)}\leq\exp\left(\log\tfrac{M}{\alpha}+\log\tfrac{t^2}{\rho_r}+O(1)\right)=o(1).$$ Since $\sum_{i\geq n}x^i=(1+o(1))x^n$ for $x=o(1)$, we get $$\mathbb{P}(|\Psi_r^{(2)}-\psi_{p_r}^{(2)}|\geq\rho_r)\leq t^{\sfrac32}\exp\left(\lceil\iota_r\rceil(\log\tfrac{M}{\alpha}+\log\tfrac{t^2}{\rho_r}+O(1))+O\left(\tfrac{t^2}{\alpha-k-t}\right)\right).$$ Finally, since $\lceil\iota_r\rceil\geq\tfrac{2\rho_r}{\ell t}$, and since the given bounds on $\rho_r$ and $t$ give $$\tfrac{2\rho_r}{\ell t}\geq t\exp(-O(\log\log\alpha))=\Omega\left(\tfrac{t^2}{\alpha-k-t}\right),$$ the result follows.
		
		(\ref{lemma_stronger_psi}) By Lemma~\ref{lemma_psi2}, if $|\Psi_r^{(2)}-\psi_{p_r}^{(2)}|\geq\rho_r$, we have $$\Delta_r^{(2)}\geq|\Psi_r^{(2)}-\psi_{p_r}^{(2)}|-\Lambda_r^{(2)}-\psi_{p_r}^{(2)}\geq\rho_r-O(Mt^2/\alpha),$$ so since $\log\tfrac{M}{\alpha}\leq-\tfrac12\log\alpha+O(\log\log\alpha)$ and $$\tfrac{\rho_r\log\log\alpha}{\ell t}\geq t\exp(-O(\log\log\alpha))=\Omega(Mt\log\alpha/\alpha),$$ the result follows by (\ref{lemma_stronger_delta}).
	\end{proof}
	
	For $\lambda\geq1$, we study the event
	\begin{align}
		A(\lambda)&:=\{|\Psi_r^{(2)}-\psi_{p_r}^{(2)}|\leq T_r(\lambda),\ \forall r=0,\ldots,\alpha-k-1\}\subset\mathcal{P},
		\\T_r(\lambda)&:=\begin{cases}
			\beta_{p_r}(\lambda)&\alpha-k-r\geq\lambda\log\alpha
			\\\tfrac{\ell(\alpha-k-r)\lambda}{10\log\alpha}&\alpha-k-r<\lambda\log\alpha
		\end{cases}.
	\end{align}
	We find the following.
	
	\begin{lemma}\label{lemma_pai}
		For $\lambda\geq(10-o(1))\log\alpha$ with $\lambda\leq\alpha^{\sfrac12}\exp(O(\log\log\alpha))$, we have $$\mathbb{P}(\mathcal{P}\setminus A(\lambda))\leq\exp(-\tfrac1{10}\lambda+o(\lambda)).$$
	\end{lemma}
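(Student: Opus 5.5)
The plan is a union bound over $r=0,\ldots,\alpha-k-1$, with the two regimes in the definition of $T_r(\lambda)$ handled by different earlier lemmas. Write $t:=\alpha-k-r$. We have
\[
\mathbb{P}(\mathcal{P}\setminus A(\lambda))\le\sum_{t\ge\lambda\log\alpha}\mathbb{P}\bigl(|\Psi_r^{(2)}-\psi_{p_r}^{(2)}|\ge\beta_{p_r}(\lambda)\bigr)+\sum_{t<\lambda\log\alpha}\mathbb{P}\Bigl(|\Psi_r^{(2)}-\psi_{p_r}^{(2)}|\ge\tfrac{\ell t\lambda}{10\log\alpha}\Bigr),
\]
and it suffices to show that each sum is at most $\exp(-\frac1{10}\lambda+o(\lambda))$.

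\textbf{First sum.} Since $\lambda\ge(10-o(1))\log\alpha\ge4\log\alpha$ for $\alpha$ large, Lemma~\ref{lemma_vu} applies. Each term is then $O(\alpha^{\sfrac12}e^{-\frac14\lambda})$, and there are at most $\alpha$ terms. The total is therefore $O(\alpha^{\sfrac32}e^{-\frac14\lambda})$. Because $\log\alpha=O(\lambda)$, we have $\alpha^{\sfrac32}=e^{O(\lambda)}$, and more precisely $\frac32\log\alpha\le\frac{3}{20}\lambda(1+o(1))$. So this sum is at most $\exp(-\frac14\lambda+\frac{3}{20}\lambda+o(\lambda))=\exp(-\frac1{10}\lambda+o(\lambda))$. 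This is where the constant $10$ comes from.

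\textbf{Second sum.} Here I would apply Lemma~\ref{lemma_stronger}~(\ref{lemma_stronger_psi}) with $\rho_r:=\frac{\ell t\lambda}{10\log\alpha}$. Two hypotheses need checking.
\begin{itemize}
\item The lower bound on $\rho_r$: since $t<\lambda\log\alpha$, we get $\rho_r/t^2=\frac{\ell\lambda}{10t\log\alpha}\ge\frac{\ell}{10(\log\alpha)^2}=\exp(-O(\log\log\alpha))$, as required.
\item The upper bound on $t$: $t<\lambda\log\alpha\le\alpha^{\sfrac12}\exp(O(\log\log\alpha))$, which is $\alpha\exp(-\omega(\log\log\alpha))$.
\end{itemize}
The lemma then gives each term at most
\[
t^{\sfrac32}\exp\Bigl(\tfrac{\lambda}{10\log\alpha}\bigl(-\log\alpha+O(\log\log\alpha)\bigr)\Bigr)=t^{\sfrac32}\exp\bigl(-\tfrac1{10}\lambda+o(\lambda)\bigr).
\]
The number of terms is at most $\lambda\log\alpha$, and $t^{\sfrac32}\le(\lambda\log\alpha)^{\sfrac32}$. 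So the prefactor is $(\lambda\log\alpha)^{5/2}=\exp(O(\log\lambda+\log\log\alpha))$, which is $e^{o(\lambda)}$ because $\lambda=\Omega(\log\alpha)$ tends to infinity.

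Adding the two sums gives the claim. I expect the main obstacle to be the bookkeeping of the $o(\lambda)$ errors in the second regime. In particular, the $O(\log\log\alpha)$ term from Lemma~\ref{lemma_stronger}, multiplied by $\frac{\lambda}{10\log\alpha}$, must really be $o(\lambda)$ uniformly over $t$, and the admissible range of $\rho_r$ must hold uniformly over $t$. Both checks reduce to $\log\log\alpha/\log\alpha\to0$ and the displayed bounds on $\lambda$.
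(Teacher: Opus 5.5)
Your proposal is correct and follows the paper's proof essentially step for step. The paper also splits the union bound at $\alpha-k-r=\lambda\log\alpha$. For the bulk it uses Lemma~\ref{lemma_vu}, giving $O(\alpha^{3/2}e^{-\lambda/4})$, and obtains the constant $\tfrac1{10}$ from $\tfrac32\log\alpha\le\tfrac{3}{20}\lambda+o(\lambda)$. For the tail it uses Lemma~\ref{lemma_stronger}~(\ref{lemma_stronger_psi}) with $\rho_r=\tfrac{\ell t\lambda}{10\log\alpha}$ and absorbs the $(\lambda\log\alpha)^{5/2}$ prefactor via $\log\log\alpha=O(\log\lambda)=o(\lambda)$. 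Your explicit check of the hypotheses of Lemma~\ref{lemma_stronger}, which the paper leaves implicit, is a welcome addition.
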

	
	\begin{proof}
		By Lemma~\ref{lemma_vu}, the probability that $|\Psi_r^{(2)}-\psi_{p_r}^{(2)}|\geq T_r(\lambda)$ for some $r=0,\ldots,\alpha-k-1$ with $\alpha-k-r\geq\lambda\log\alpha$ is $O(\alpha^{\sfrac32}e^{-\frac14\lambda})$. By Lemma~\ref{lemma_stronger}~(\ref{lemma_stronger_psi}), the probability that $|\Psi_r^{(2)}-\psi_{p_r}^{(2)}|\geq T_r(\lambda)$ for some $r=0,\ldots,\alpha-k-1$ with $\alpha-k-r<\lambda\log\alpha$ is at most $$(\lambda\log\alpha)^{\sfrac52}\exp\left(\tfrac{\lambda}{10\log\alpha}(-\log\alpha+O(\log\log\alpha))\right).$$ The result follows, in particular, since $\tfrac32\log\alpha\leq\tfrac3{20}\lambda+o(\lambda)$, and since $\lambda\geq(10-o(1))\log\alpha\to\infty$ gives $\log\log\alpha=O(\log\lambda)=o(\lambda)$.
	\end{proof}
	
	This finally lets us fully bound $\Sigma^{(2)}$ with the following lemma.
	
	\begin{lemma}\label{lemma_sai}
		For $\lambda\geq1$, in $A(\lambda)$, we have
		\begin{align}
			\Sigma^{(2)}\leq\tfrac{2\lambda\log\log\alpha}{5\log\alpha}+O\left(\tfrac{\lambda}{\log\alpha}\right)&&\text{if}\ \lambda=O\left(\tfrac{\alpha}{M(\log\alpha)^2}\right).
		\end{align}
	\end{lemma}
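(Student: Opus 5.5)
The plan is to split $\Sigma^{(2)}=\sum_r Q_r^{(2)}$ according to $t:=\alpha-k-r$ into three ranges. Throughout I use the lower bound
\[
n_{\alpha-r}-n_k=n_t+\ell kt\ge n_t=\ell\tbinom{t}{2}+t,
\]
which comes from Lemma~\ref{lemma_nr}~(\ref{lemma_nr_sum}). The key point is that the stated constant $\tfrac25$ comes from the middle range, and the logarithm there stays of order $\log\log\alpha$ because the smallest $t$ are handled by a deterministic bound rather than by $T_r(\lambda)$. The three ranges are $t\ge\lambda\log\alpha$, $\lambda/(5\log\alpha)<t<\lambda\log\alpha$, and $t\le\lambda/(5\log\alpha)$.

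\emph{Range $t\ge\lambda\log\alpha$.} In $A(\lambda)$ we have $Q_r^{(2)}\le\beta_{p_r}(\lambda)/(n_{\alpha-r}-n_k)$. Since $\lambda=O(\alpha/(M(\log\alpha)^2))$, Lemma~\ref{lemma_sum} applies directly and bounds this part of the sum by $O(\lambda/\log\alpha)$.

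\emph{Range $t\le\lambda/(5\log\alpha)$ (deterministic bound).} This range ignores $A(\lambda)$ and uses a bound valid for every outcome. Every summand of $\Psi_r^{(2)}$ is non-negative, because $1-\b\ge0$ and $\b\ge0$ by the earlier normalization. Also, $\Delta_r^{(2)}$ counts only vertices of $\widetilde V_r$ having both of their $S$-neighbours in $S\setminus S_r$. By $2$-uniformity, and since the sets $N(s_1)\cap N(s_2)$ are pairwise disjoint of size $\ell$ (as in Lemma~\ref{lemma_delta2}), this gives $\Delta_r^{(2)}\le\ell\binom{t}{2}$. Combining with Lemma~\ref{lemma_psi2}, which gives $\Lambda_r^{(2)},\psi_{p_r}^{(2)}=O(Mt^2/\alpha)$, we get
\[
|\Psi_r^{(2)}-\psi_{p_r}^{(2)}|\le\ell\tbinom{t}{2}+O(Mt^2/\alpha).
\]
Dividing by $n_t\ge\ell\binom t2+t\ge t^2$ gives $Q_r^{(2)}\le1+O(M/\alpha)=O(1)$. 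Summing over at most $\lambda/(5\log\alpha)$ values of $t$ contributes $O(\lambda/\log\alpha)$.

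\emph{Middle range $\lambda/(5\log\alpha)<t<\lambda\log\alpha$.} Here $A(\lambda)$ gives $|\Psi_r^{(2)}-\psi_{p_r}^{(2)}|\le\ell t\lambda/(10\log\alpha)$. For $t\ge2$ we have $n_t\ge\ell t(t-1)/2$, so
\[
Q_r^{(2)}\le\frac{\lambda}{5\log\alpha}\cdot\frac1{t-1}.
\]
The value $t=1$ contributes at most $\ell\lambda/(10\log\alpha)=O(\lambda/\log\alpha)$, using $n_1=1$. Summing the harmonic tail gives
\[
\frac{\lambda}{5\log\alpha}\bigl(\log(\lambda\log\alpha)-\log(\lambda/(5\log\alpha))+O(1)\bigr)=\frac{\lambda}{5\log\alpha}\bigl(2\log\log\alpha+O(1)\bigr).
\]
This is exactly $\tfrac{2\lambda\log\log\alpha}{5\log\alpha}+O(\lambda/\log\alpha)$. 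If $\lambda/(5\log\alpha)<1$, the lower endpoint is $1$ and the bound only improves.

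Adding the three ranges gives the lemma. The main obstacle is the observation in the second range: $T_r(\lambda)$ alone would produce $\log(\lambda\log\alpha)$, i.e.\ a $\log\lambda$ term. The fix is that for $t\lesssim\lambda/\log\alpha$ the trivial bound $|\Psi_r^{(2)}-\psi_{p_r}^{(2)}|\lesssim\ell\binom t2$ is at least as good as $T_r(\lambda)$. The care needed there is checking the deterministic bound on $\Psi_r^{(2)}$, in particular that the subtracted $\Lambda$ terms and $\psi_{p_r}^{(2)}$ are only $O(Mt^2/\alpha)$.
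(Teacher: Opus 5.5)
Your proof is correct and follows essentially the paper's argument. Both use the same three ranges in $t=\alpha-k-r$ (the paper cuts at $\lambda/\log\alpha$ rather than $\lambda/(5\log\alpha)$, which only changes the $O(1)$ in the harmonic sum), Lemma~\ref{lemma_sum} for $t\ge\lambda\log\alpha$, $T_r(\lambda)$ with a harmonic sum in the middle, and a deterministic $Q_r^{(2)}=O(1)$ bound for small $t$. The only cosmetic difference is that the paper gets $Q_r^{(2)}\le1$ directly from $0\le\Psi_r^{(2)},\psi_{p_r}^{(2)}\le n_{\alpha-k-r}$, whereas you obtain $1+O(M/\alpha)$ via $\Delta_r^{(2)}\le\ell\tbinom{t}{2}$ and Lemma~\ref{lemma_psi2}.
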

	
	\begin{proof}
		Note that $0\leq\Psi_r^{(2)}\leq|V^{(2)}|=n_{\alpha-k-r}$ for $\alpha$ large enough, since $\Psi_r^{(2)}$ sums either $\b(v)$ or $1$ over all $v\in V^{(2)}$. By the same logic, we have $0\leq\Psi_p^{(2)}\leq n_{\alpha-k-r}$ and thus $0\leq\psi_p^{(2)}\leq n_{\alpha-k-r}$ for $p\in[0,1]$, so we have $|\Psi_r^{(2)}-\psi_{p_r}^{(2)}|\leq n_{\alpha-k-r}$. By Lemma~\ref{lemma_nr}~(\ref{lemma_nr_sum}), we have $n_{\alpha-r}-n_k\geq n_{\alpha-k-r}$, and thus $Q_r^{(2)}\leq1$. By Lemma~\ref{lemma_sum}, in $A(\lambda)$, we get $$\Sigma^{(2)}\leq\sum_{\alpha-k-r\geq1}^{\alpha-k-r\leq\lambda/\log\alpha}1+\sum_{\alpha-k-r\geq\lambda/\log\alpha}^{\alpha-k-r\leq\lambda\log\alpha}\frac{\ell(\alpha-k-r)\lambda}{10\log\alpha\cdot n_{\alpha-k-r}}+O\left(\frac{\lambda}{\log\alpha}\right).$$ To bound the second sum, note that
		\begin{align}
			n_{\alpha-k-r}
			&=\tfrac{\ell}{2}(\alpha-k-r)^2+(1-\tfrac{\ell}{2})(\alpha-k-r)
			\\&=(1-(1-\tfrac{2}{\ell})\tfrac{1}{\alpha-k-r})\tfrac{\ell}{2}(\alpha-k-r)^2.
		\end{align}
		For $y:=(1-\tfrac{2}{\ell})\tfrac{1}{\alpha-k-r}\leq1-\tfrac{2}{\ell}$, note that $\tfrac{1}{1-y}=1+\tfrac{y}{1-y}\leq1+\tfrac{\ell}{2}y$, so $$\frac{\ell(\alpha-k-r)\lambda}{10\log\alpha\cdot n_{\alpha-k-r}}\leq\frac{\lambda}{5\log\alpha}\left(\frac1{\alpha-k-r}+O\left(\frac{1}{(\alpha-k-r)^2}\right)\right).$$ Finally, since
		\begin{align}
			&\sum_{\alpha-k-r\geq\lambda/\log\alpha}^{\alpha-k-r\leq\lambda\log\alpha}\frac1{\alpha-k-r}\leq\int_{\lambda/\log\alpha}^{\lambda\log\alpha}\frac1x\,\mathrm{d}x+1=2\log\log\alpha+1,
			\\&\sum_{\alpha-k-r\geq\lambda/\log\alpha}^{\alpha-k-r\leq\lambda\log\alpha}\frac1{(\alpha-k-r)^2}\leq\sum_{\alpha-k-r\geq1}\frac{1}{(\alpha-k-r)^2}=\frac{\pi^2}{6},
		\end{align}
		the result follows.
	\end{proof}
	
	We also have the following universal bound on $\Sigma^{(2)}$, which we use to prove Lemma~\ref{lemma_bar_bound}.
	
	\begin{lemma}\label{lemma_universal}
		We have $\Sigma^{(2)}=O(M\log\alpha)$.
	\end{lemma}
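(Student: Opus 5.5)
We need to bound $\Sigma^{(2)}=\sum_{r}Q_r^{(2)}$ deterministically by $O(M\log\alpha)$. The plan is a termwise bound on $|\Psi_r^{(2)}-\psi_{p_r}^{(2)}|$, and we will show each of $\Psi_r^{(2)}$ and $\psi_{p_r}^{(2)}$ is $O(q_r\alpha M)$ up to the right scaling. The denominator is handled by Lemma~\ref{lemma_bounds2}~(\ref{lemma_bounds2_nrnk}), which gives $n_{\alpha-r}-n_k\geq(\alpha-r)(\alpha-k-r)$. If we can show $|\Psi_r^{(2)}-\psi_{p_r}^{(2)}|=O((\alpha-k-r)M)$, then
$$Q_r^{(2)}=O\left(\frac{M}{\alpha-r}\right),$$
and summing over $r=0,\ldots,\alpha-k-1$ gives $O(M\log\alpha)$ by the harmonic sum. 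This is the same pattern used in Lemma~\ref{lemma_sums}~(\ref{lemma_sums_3}).

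So the key step is a deterministic bound $\Psi_r^{(2)}=O((\alpha-k-r)M)$, together with the matching bound for its expectation. We split $\Psi_r^{(2)}=\Delta_r^{(2)}+\Lambda_r^{(2)}$.

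For $\Lambda_r^{(2)}$, Lemma~\ref{lemma_psi2}~(\ref{lemma_psi2_lambda}) gives $O(M(\alpha-k-r)^2/\alpha)$. This is $O(M(\alpha-k-r))$ because $\alpha-k-r\leq\alpha$.

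For $\Delta_r^{(2)}=\Delta_r(F^{(2)})+\Delta_r(R^{(2)}_r)$, every counted vertex lies in $\widetilde{V}_r\cap V^{(2)}$. Such a vertex has both of its $S$-neighbours in $S\setminus S_r$, a set of size $\alpha-k-r$. We charge each counted vertex to one of these two neighbours $s$:
\begin{itemize}
\item a vertex of $F$ is charged to a neighbour $s\in S\setminus S_r$, and $s$ receives at most $F_{\max}\leq M$ such charges;
\item a vertex of $R^*(S_r)\cap N(s)$ is charged to $s$, and $s$ receives at most $R^{(b)}_{\max}\leq M$ such charges.
\end{itemize}
Hence $\Delta_r^{(2)}\leq 2M(\alpha-k-r)$.

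For the expectation, Lemma~\ref{lemma_psi2}~(\ref{lemma_psi2_psi}) gives $\psi^{(2)}_{p_r}=O(M(\alpha-k-r)^2/\alpha)$, which is again $O(M(\alpha-k-r))$. Alternatively, one can use Lemma~\ref{lemma_expect} with $q_r=O((\alpha-k-r)/\alpha)$ and $|F|,|R^*(S)|=O(\alpha M)$.

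Combining, $|\Psi_r^{(2)}-\psi^{(2)}_{p_r}|\leq\Psi_r^{(2)}+\psi^{(2)}_{p_r}=O((\alpha-k-r)M)$. Note that $\Psi_r^{(2)}\geq0$ because $\b$ is non-negative and $\b_{\max}<1$. This gives the termwise bound above and hence the claim.

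The only delicate point is the charging argument for $\Delta_r(R_r^{(2)})$. Charging to $s\in S\setminus S_r$ works because the vertex is adjacent to $s$ and lies in $R^*(S_r)\subset R^*(S)$, so it is counted in $\overline N(s)\cap R^*(S)$, whose size is bounded by $R^{(b)}_{\max}$ by definition.
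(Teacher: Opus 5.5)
Your proof is correct and follows the paper's argument essentially step for step. The paper also bounds $\Delta_r^{(2)}\leq(\alpha-k-r)(F_{\max}+R_{\max}^{(b)})$, controls $\Lambda_r^{(2)}$ and $\psi_{p_r}^{(2)}$ via Lemma~\ref{lemma_psi2}, and uses $n_{\alpha-r}-n_k\geq(\alpha-r)(\alpha-k-r)$ to get $Q_r^{(2)}=O(M/(\alpha-r))$ before summing the harmonic series; your charging argument and your remark on the non-negativity of $\Psi_r^{(2)}$ simply spell out details the paper leaves implicit.
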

	
	\begin{proof}
		For $r=0,\ldots\alpha-k-1$, since $\Delta_r^{(2)}\leq(\alpha-k-r)(F_{\max}+R_{\max}^{(b)})$, by Lemma~\ref{lemma_psi2}, we have $$|\Psi_r^{(2)}-\psi_{p_r}^{(2)}|\leq\Delta_r^{(2)}+\Lambda_r^{(2)}+\psi_{p_r}^{(2)}=O(M(\alpha-k-r)).$$ By Lemma~\ref{lemma_bounds2}~(\ref{lemma_bounds2_nrnk}), we get $Q_r^{(2)}=O(M/(\alpha-r))$, so the result follows.
	\end{proof}
	
	\begin{proof}[Proof of Lemma~\ref{lemma_bar_bound}]
		If $k=0$, then $\Sigma=\Sigma^{(2)}$ and $|\overline{Q}_r|=Q_r$ for $r=0,\ldots,\alpha-1$, so we have $|1+\overline{\Sigma}|\leq1+\Sigma=O(M\log\alpha)$ by Lemma~\ref{lemma_universal}.
	\end{proof}
	
	\subsection{Partitioning}\label{subsection_proof_partition}
	
	We prove Lemma~\ref{lemma_sigma2}. We have $$(1+\varepsilon)C\leq\left(8\left(1-\tfrac{\log M}{\log\alpha}\right)+\Omega(1)\right)^{-1}\tfrac{\log\alpha}{\log\log\alpha}$$ for some fixed $\varepsilon>0$. For $i\geq0$, define $$\lambda_i:=20(1+\varepsilon)^i\left(1-\tfrac{\log M}{\log\alpha}\right)\log\alpha,$$ and let $N\geq0$ be minimal such that $\lambda_N\geq\alpha^{\sfrac12}$.
	
	\begin{lemma}\label{lemma_an}
		We have $$\mathbb{E}(e^{C\Sigma^{(2)}}1_{\mathcal{P}\setminus A(\lambda_0)})=O(M^2/\alpha).$$
	\end{lemma}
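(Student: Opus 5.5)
Bound $\mathbb{E}(e^{C\Sigma^{(2)}}1_{\mathcal{P}\setminus A(\lambda_0)})$ by peeling off the geometric sequence of events $A(\lambda_0)\subset A(\lambda_1)\subset\ldots\subset A(\lambda_N)$. Note that $A(\lambda)$ is monotone in $\lambda$: the thresholds $T_r(\lambda)$ grow with $\lambda$, and the switching point $\lambda\log\alpha$ moves so that the threshold only increases. Write
\begin{equation}
\mathcal{P}\setminus A(\lambda_0)=\bigcup_{i=1}^{N}\bigl(A(\lambda_i)\setminus A(\lambda_{i-1})\bigr)\cup\bigl(\mathcal{P}\setminus A(\lambda_N)\bigr)
\end{equation}
and bound the expectation on each piece as $\sup e^{C\Sigma^{(2)}}$ on the piece times its probability. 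The probability is at most $\mathbb{P}(\mathcal{P}\setminus A(\lambda_{i-1}))$.

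For the intermediate pieces $i\le N$, Lemma~\ref{lemma_pai} gives $\mathbb{P}(\mathcal{P}\setminus A(\lambda_{i-1}))\le\exp(-\tfrac1{10}\lambda_{i-1}+o(\lambda_{i-1}))$. Its hypotheses hold because $\lambda_0\ge 20\cdot\tfrac12\log\alpha\ge(10-o(1))\log\alpha$, using $M\le\alpha^{1/2}$. Also $\lambda_{i-1}<\alpha^{1/2}$ by minimality of $N$. On $A(\lambda_i)$, Lemma~\ref{lemma_sai} gives
\begin{equation}
C\Sigma^{(2)}\le C\lambda_i\Bigl(\tfrac{2\log\log\alpha}{5\log\alpha}+O\bigl(\tfrac1{\log\alpha}\bigr)\Bigr).
\end{equation}
This requires $\lambda_i=O(\alpha/(M(\log\alpha)^2))$. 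That holds for $i\le N$ up to the overshoot factor $1+\varepsilon$, since $\lambda_N\le(1+\varepsilon)\alpha^{1/2}\log\alpha$-ish and $M=O(\alpha^{1/2}/(\log\alpha)^2)$. Expect a small check that $\alpha^{1/2}\cdot M(\log\alpha)^2/\alpha$ stays bounded; it does.

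By the hypothesis on $C$, we have $\tfrac{2}{5}C(1+\varepsilon)\log\log\alpha/\log\alpha\le\tfrac{2}{5}\bigl(8(1-\tfrac{\log M}{\log\alpha})\bigr)^{-1}$. So the exponent is at most
\begin{equation}
\lambda_{i-1}\cdot\frac{(1+\varepsilon)\cdot 2/5}{8(1-\log M/\log\alpha)+\Omega(1)}\cdot\frac{1}{1+\varepsilon}\cdots,
\end{equation}
which in terms of $\lambda_{i-1}$ is at most $(\tfrac1{20}-\Omega(1))\lambda_{i-1}/(1-\tfrac{\log M}{\log\alpha})$-type quantities. The precise bookkeeping is the main obstacle: one must check that $C\lambda_i\cdot\tfrac{2\log\log\alpha}{5\log\alpha}\le(\tfrac1{10}-\eta)\lambda_{i-1}$ for a fixed $\eta>0$, with the constant $20$ in $\lambda_i$ and the factor $8$ in the hypothesis on $C$ designed to make this work. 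The $O(\lambda_i/\log\alpha)$ term times $C=O(\log\alpha/\log\log\alpha)$ is $o(\lambda_i)$, so it is absorbed. Each term is then at most $\exp(-\eta\lambda_{i-1})$. Summing the geometric sequence $\lambda_{i-1}=\lambda_0(1+\varepsilon)^{i-1}$ gives $O(\exp(-\eta'\lambda_0))$. With $\lambda_0=20(1-\tfrac{\log M}{\log\alpha})\log\alpha=20\log(\alpha/M)$, this is $(M/\alpha)^{20\eta'}$. That is not obviously $O(M^2/\alpha)$, so instead use the sharper statement: the exponent is at most $-\tfrac1{10}\lambda_{i-1}+C\lambda_i\tfrac{2\log\log\alpha}{5\log\alpha}+o(\lambda)$. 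Already at $i=1$, $\tfrac1{10}\lambda_0=2\log(\alpha/M)$, so $e^{-\lambda_0/10}=(M/\alpha)^2$. Aim to show the positive part costs at most a factor comparable to $\alpha/M\cdot$(slack). Since $\lambda_0\ge10\log\alpha$ and $M/\alpha\le\alpha^{-1/2}$, the target $O(M^2/\alpha)$ is weaker than $(M/\alpha)^{1}\cdot M$, and the constant choices give enough room.

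For the last piece $\mathcal{P}\setminus A(\lambda_N)$, use the universal bound Lemma~\ref{lemma_universal}, $\Sigma^{(2)}=O(M\log\alpha)$, so $e^{C\Sigma^{(2)}}\le\exp(O(M(\log\alpha)^2/\log\log\alpha))$. Against this, Lemma~\ref{lemma_pai} gives $\mathbb{P}\le\exp(-\tfrac1{10}\alpha^{1/2}+o(\alpha^{1/2}))$. Since $M(\log\alpha)^2=O(\alpha^{1/2})$ with a $1/\log\log\alpha$ gain, this product is superpolynomially small and hence $O(M^2/\alpha)$. If Lemma~\ref{lemma_pai} cannot be applied directly at $\lambda_N$ because $\lambda_N$ slightly exceeds $\alpha^{1/2}$, apply it at $\lambda_{N-1}$ or at $\alpha^{1/2}$ itself, using monotonicity of $A(\cdot)$.
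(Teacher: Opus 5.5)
Your decomposition and the lemmas you use are exactly the paper's: Lemma~\ref{lemma_pai} at $\lambda_{i-1}$ for the probability, Lemma~\ref{lemma_sai} at $\lambda_i=(1+\varepsilon)\lambda_{i-1}$ for $\Sigma^{(2)}$, and Lemma~\ref{lemma_universal} on $\mathcal{P}\setminus A(\lambda_N)$. Your treatment of that last piece is correct. The gap is the one step that actually produces $M^2/\alpha$, which you never carry out. Your explicit attempt absorbs the positive term into $(\tfrac1{10}-\eta)\lambda_{i-1}$ and yields only $(M/\alpha)^{20\eta'}$, as you notice. Your replacement (``the positive part costs at most a factor comparable to $\alpha/M\cdot$(slack)'') is asserted rather than derived, its accounting is off, and it only looks at $i=1$, not at the sum over $i$.

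The missing idea is to keep the exact coefficient and use its sign. Put $x:=1-\tfrac{\log M}{\log\alpha}$. The hypothesis on $C$ gives $\tfrac{2(1+\varepsilon)C\log\log\alpha}{5\log\alpha}\le\tfrac1{20x}-\Omega(1)$. Absorbing the $o(\lambda)$ from Lemma~\ref{lemma_pai} and the $O(C\lambda_i/\log\alpha)=o(\lambda_i)$ from Lemma~\ref{lemma_sai}, the exponent on the $i$-th piece is at most $\bigl(-\tfrac1{10}+\tfrac1{20x}\bigr)\lambda_{i-1}-\Omega(\lambda_{i-1})$. Since $M<\alpha^{1/2}$ for large $\alpha$, we have $x>\tfrac12$, so the bracket is negative. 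You may therefore replace $\lambda_{i-1}$ in that term by the smaller $\lambda_0=20x\log\alpha$, which gives exactly $(1-2x)\log\alpha=2\log M-\log\alpha$. In your terms: $e^{-\lambda_0/10}=(M/\alpha)^2$ is offset by a positive part of up to $\lambda_0/(20x)=\log\alpha$. That is a factor of up to $\alpha$ (not $\alpha/M$), and it is precisely where $M^2/\alpha$ comes from. The leftover $-\Omega(\lambda_{i-1})$, with $\lambda_{i-1}=(1+\varepsilon)^{i-1}\lambda_0=\Omega((1+\varepsilon)^{i-1}\log\alpha)$, makes the sum over $i$ converge, giving $O(M^2/\alpha)$ in total.

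A smaller point: your reason for $A(\cdot)$ being monotone is wrong. If $\lambda\log\alpha\le\alpha-k-r<(1+\varepsilon)\lambda\log\alpha$, then $T_r(\lambda)=\beta_{p_r}(\lambda)\ge4\beta^{(a)}_{p_r}(\lambda)\ge4\sqrt6\,c_2\ell\lambda^2$. On the other hand, $T_r((1+\varepsilon)\lambda)<\tfrac{(1+\varepsilon)^2}{10}\ell\lambda^2$. So the threshold drops at the switching point, the events need not be nested, and your displayed equality should be an inclusion. That inclusion still holds by the paper's argument: for $\pi\in A(\lambda_N)\setminus A(\lambda_0)$, take the largest $i\le N-1$ with $\pi\notin A(\lambda_i)$, so $\pi\in A(\lambda_{i+1})\setminus A(\lambda_i)$. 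Nothing else in your argument needs monotonicity, since you bound each piece's probability by $\mathbb{P}(\mathcal{P}\setminus A(\lambda_{i-1}))$ directly. Note also that minimality of $N$ gives $\lambda_N<(1+\varepsilon)\alpha^{1/2}$, with no extra $\log\alpha$. This is what makes Lemma~\ref{lemma_pai} and Lemma~\ref{lemma_sai} applicable up to $i=N$ without any detour through $\alpha^{1/2}$.
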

	
	\begin{proof}
		Note that $\lambda_0\geq(10-o(1))\log\alpha$. By Lemma~\ref{lemma_pai} and Lemma~\ref{lemma_universal}, since $CM\log\alpha=o(\lambda_N)$, we have $$\mathbb{E}(e^{C\Sigma^{(2)}}1_{\mathcal{P}\setminus A(\lambda_N)})\leq\exp\left(-\tfrac1{10}\lambda_N+o(\lambda_N)\right)\leq\exp\left(-\Omega(\alpha^{\sfrac12})\right)=O(\alpha^{-1}).$$ Next, for $i=0,\ldots,N-1$, by Lemma~\ref{lemma_pai} and Lemma~\ref{lemma_sai}, we get $$\mathbb{E}(e^{C\Sigma^{(2)}}1_{A(\lambda_{i+1})\setminus A(\lambda_i)})\leq\exp\left(-\tfrac1{10}\lambda_i+\tfrac{2(1+\varepsilon)C\lambda_i\log\log\alpha}{5\log\alpha}+o(\lambda_i)\right).$$ We have
		\begin{align}
			&-\tfrac1{10}\lambda_i+\tfrac{2(1+\varepsilon)C\lambda_i\log\log\alpha}{5\log\alpha}
			\\&\leq\left(-\tfrac1{10}+\tfrac1{20}\left(1-\tfrac{\log M}{\log\alpha}\right)^{-1}-\Omega(1)\right)\lambda_i
			\\&\leq\left(-\tfrac1{10}+\tfrac1{20}\left(1-\tfrac{\log M}{\log\alpha}\right)^{-1}\right)20\left(1-\tfrac{\log M}{\log\alpha}\right)\log\alpha-\Omega(\lambda_i)
			\\&=2\log M-\log\alpha-\Omega(\lambda_i).
		\end{align}
		The second inequality uses that $M<\alpha^{\sfrac12}$ for $\alpha$ large enough, such that $$\left(-\tfrac1{10}+\tfrac1{20}\left(1-\tfrac{\log M}{\log\alpha}\right)^{-1}\right)<0,$$ and $\lambda_i\geq\lambda_0$. We get $$\mathbb{E}(e^{C\Sigma^{(2)}}1_{A(\lambda_{i+1})\setminus A(\lambda_i)})\leq(M^2/\alpha)e^{-\Omega(\lambda_i)}\leq(M^2/\alpha)e^{-\Omega((1+i\varepsilon)\log\alpha)},$$ so summing over $i=0,\ldots,N-1$ gives $$\mathbb{E}(e^{C\Sigma^{(2)}}1_{A(\lambda_N)\setminus A(\lambda_0)})=O(M^2/\alpha).$$ Indeed, for each permutation $\pi\in A(\lambda_N)\setminus A(\lambda_0)$, there is a largest $i=0,\ldots,N-1$ such that $\pi\not\in A(\lambda_i)$, which gives $\pi\in A(\lambda_{i+1})\setminus A(\lambda_i)$. The result follows.
	\end{proof}
	
	It remains to bound $\Sigma^{(2)}$ in $A(\lambda_0)$.
	
	\begin{lemma}\label{lemma_suma0}
		In $A(\lambda_0)$, for $t\geq\lambda_0^2\log\alpha$, we have $$\sum_{\alpha-k-r\geq t}^{\alpha-k-r\leq\alpha-k}Q_r^{(2)}=O\left(\frac{M(\log\alpha)^{\sfrac12}}{\alpha^{\sfrac12}}+\frac{M^{\sfrac12}(\log\alpha)^2}{\alpha^{\sfrac12}}+\frac{(\log\alpha)^3}{t}\right).$$
	\end{lemma}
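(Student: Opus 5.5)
In $A(\lambda_0)$ we have $Q_r^{(2)}\leq T_r(\lambda_0)/(n_{\alpha-r}-n_k)$ for every $r$. Since $t\geq\lambda_0^2\log\alpha\geq\lambda_0\log\alpha$, every $r$ in the sum satisfies $\alpha-k-r\geq\lambda_0\log\alpha$, so the first case of $T_r$ applies and $T_r(\lambda_0)=\beta_{p_r}(\lambda_0)$. By Lemma~\ref{lemma_bounds2}~(\ref{lemma_bounds2_nrnk}), $n_{\alpha-r}-n_k\geq(\alpha-k-r)^2$. The plan is therefore to bound
\[\sum_{\alpha-k-r\geq t}^{\alpha-k-r\leq\alpha-k}\frac{\beta_{p_r}(\lambda_0)}{(\alpha-k-r)^2}\]
term by term, as in the proof of Lemma~\ref{lemma_sum}, but with $\lambda=\lambda_0=\Theta(\log\alpha)$ fixed rather than general. (Here $\lambda_0=\Theta(\log\alpha)$ holds since $1-\log M/\log\alpha\in[\tfrac12,1]$.) The gain comes from using the larger lower cutoff $t$ instead of $\lambda\log\alpha$.

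First, split $\beta_p=4\beta^{(a)}_p+\min\{\beta^{(b)}_p,2q^2\alpha M\}$. Since $t\geq\lambda_0^2\log\alpha$, we can always use $\beta^{(b)}_p$, so the truncated term $2q^2\alpha M$ is never needed. Expand both pieces by subadditivity of the square root, exactly as in Lemma~\ref{lemma_sum}, and use $q_r=O((\alpha-k-r)/\alpha)$ from Lemma~\ref{lemma_bounds2}~(\ref{lemma_bounds2_qr}). Writing $u=\alpha-k-r$, each piece becomes a power sum over $u\geq t$:
\begin{itemize}
\item $(\lambda q^3\alpha M^2)^{1/2}/u^2=O(\lambda^{1/2}M u^{-1/2}/\alpha)$, which sums to $O(\lambda_0^{1/2}M/\alpha^{1/2})=O(M(\log\alpha)^{1/2}/\alpha^{1/2})$;
\item $(\lambda^2q^2\alpha M)^{1/2}/u^2=O(\lambda M^{1/2}/(\alpha^{1/2}u))$, which sums to $O(\lambda_0M^{1/2}\log\alpha/\alpha^{1/2})=O(M^{1/2}(\log\alpha)^2/\alpha^{1/2})$;
\item the $\beta^{(b)}$ terms $(\lambda q^5\alpha M^2)^{1/2}$ and $(\lambda^3q^3\alpha M)^{1/2}$ are dominated by the previous two after using $q\leq1$;
\item the terms $(\lambda^3qM)^{1/2}$ and $(\lambda^4q^2M)^{1/2}$ give $O(\lambda^{3/2}M^{1/2}\alpha^{-1/2}u^{-3/2})$ and $O(\lambda^2M^{1/2}\alpha^{-1}u^{-1})$, which sum to lower-order quantities absorbed into the $M^{1/2}(\log\alpha)^2/\alpha^{1/2}$ term;
\item the constant terms $\lambda^2/u^2$ and $\lambda^3/u^2$ sum to $O(\lambda_0^3/t)=O((\log\alpha)^3/t)$.
\end{itemize}

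Collecting these contributions gives the three stated terms. The main obstacle is bookkeeping rather than any new idea. One point is checking that the $\lambda^3$ constant from $\beta^{(b)}$ yields exactly $(\log\alpha)^3/t$. Another is checking that no residual term needs the condition $\lambda=O(\alpha/(M(\log\alpha)^2))$ from Lemma~\ref{lemma_sum}; that condition holds anyway for $\lambda_0=O(\log\alpha)$. Finally, the absolute constants $\ell,c_2,c_3$ are fixed, so they only affect the implied constants.
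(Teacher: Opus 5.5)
Your proposal is correct and is essentially the paper's proof: in $A(\lambda_0)$ you bound $Q_r^{(2)}\le\beta_{p_r}(\lambda_0)/(\alpha-k-r)^2$, expand $\beta$ by subadditivity of the square root, and sum each power of $u=\alpha-k-r$ over $u\ge t$ using $q_r=O(u/\alpha)$ and $\lambda_0=\Theta(\log\alpha)$. One minor slip is your claim that $(\lambda^3q^3\alpha M)^{1/2}$ is dominated by $(\lambda^2q^2\alpha M)^{1/2}$ using $q\le1$; that fails because their ratio is $(\lambda q)^{1/2}$, but computing that term directly as you do for the others gives $O(\lambda_0^{3/2}M^{1/2}/\alpha^{1/2})=O(M^{1/2}(\log\alpha)^{3/2}/\alpha^{1/2})$, which the second stated term absorbs (this is the paper's third displayed estimate).
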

	
	\begin{proof}
		Following the proof of Lemma~\ref{lemma_sum}, using $\lambda_0=O(\log\alpha)$, we find that the inequalities
		\begin{align}
			&\sum_{\alpha-k-r\geq\lambda_0^2\log\alpha}^{\alpha-k-r\leq\alpha-k}\frac{(\lambda_0q_r^3\alpha M^2)^{\sfrac12}}{(\alpha-k-r)^2}=O\left(\frac{M(\log\alpha)^{\sfrac12}}{\alpha^{\sfrac12}}\right),
			\\&\sum_{\alpha-k-r\geq\lambda_0^2\log\alpha}^{\alpha-k-r\leq\alpha-k}\frac{(\lambda_0^2q_r^2\alpha M)^{\sfrac12}}{(\alpha-k-r)^2}=O\left(\frac{M^{\sfrac12}(\log\alpha)^2}{\alpha^{\sfrac12}}\right),
			\\&\sum_{\alpha-k-r\geq\lambda_0^2\log\alpha}^{\alpha-k-r\leq\alpha-k}\frac{(\lambda_0^3q_r^3\alpha M)^{\sfrac12}}{(\alpha-k-r)^2}=O\left(\frac{M^{\sfrac12}(\log\alpha)^{\sfrac32}}{\alpha^{\sfrac12}}\right),
			\\&\sum_{\alpha-k-r\geq\lambda_0^2\log\alpha}^{\alpha-k-r\leq\alpha-k}\frac{(\lambda_0^4q_rM)^{\sfrac12}}{(\alpha-k-r)^2}=O\left(\frac{M^{\sfrac12}(\log\alpha)^{\sfrac12}}{\alpha^{\sfrac12}}\right),
			\\&\sum_{\alpha-k-r\geq t}^{\alpha-k-r\leq\alpha-k}\frac{\lambda_0^3}{(\alpha-k-r)^2}=O\left(\frac{(\log\alpha)^3}t\right)
		\end{align}
		suffice to prove the result.
	\end{proof}
	
	For $t\geq1$ and $i\in\{0,1\}$, we thus study the event $$B_i(t):=\{\Delta_r^{(2)}\leq i\ell,\ \forall r\geq\alpha-k-t\}\subset\mathcal{P}.$$ Let $t_0:=\lfloor\log\alpha\rfloor$ and $t_1:=t_0^4$. We get the following.
	
	\begin{lemma}\label{lemma_sumab}
		Let $\delta:=\tfrac1{C}$.
		\begin{enumerate}[(i)]
			\item\label{lemma_sumab_ab1} In $A(\lambda_0)\cap B_1(t_1)$, we have $\Sigma^{(2)}=O(1)$.
			\item\label{lemma_sumab_ab0} In $A(\lambda_0)\cap B_1(t_1)\cap B_0(t_0)$, we have $\Sigma^{(2)}=O(1/\log\alpha)$.
			\item\label{lemma_sumab_pi} In $\mathcal{P}^{\delta}\cap A(\lambda_0)\cap B_1(t_1)$, if $k=0$, we have $\Pi=O(1)$.
		\end{enumerate}
	\end{lemma}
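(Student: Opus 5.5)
We split $\Sigma^{(2)}=\sum_r Q_r^{(2)}$ according to $t:=\alpha-k-r$ into three ranges: $t\geq t_1$, $t_0<t<t_1$, and $t\leq t_0$. Throughout we use $n_{\alpha-r}-n_k\geq t^2$ from Lemma~\ref{lemma_bounds2}~(\ref{lemma_bounds2_nrnk}), together with $\Lambda_r^{(2)},\psi_{p_r}^{(2)}=O(Mt^2/\alpha)$ from Lemma~\ref{lemma_psi2}.

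\emph{Large range.} Note that $t_1=\lfloor\log\alpha\rfloor^4\geq\lambda_0^2\log\alpha$ fails in general, since $\lambda_0^2\log\alpha\asymp(\log\alpha)^3$ up to constants. So we first cover $t\geq\lambda_0^2\log\alpha$ by Lemma~\ref{lemma_suma0} with $t=\lambda_0^2\log\alpha$. With $M=O(\alpha^{\sfrac12}/(\log\alpha)^2)$, the first two terms of that bound are $O((\log\alpha)^{-3/2})$ and $O(1/\log\alpha)$, and the third term is $O(1)$, or more precisely $(\log\alpha)^3/(\lambda_0^2\log\alpha)=O(1/\log\alpha)$. So this range contributes $O(1/\log\alpha)$.

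\emph{Intermediate range.} For $t_1\leq t<\lambda_0^2\log\alpha$ we are in $A(\lambda_0)$. For $t<\lambda_0\log\alpha$ we have $T_r(\lambda_0)=\ell t\lambda_0/(10\log\alpha)=O(t)$, so $Q_r^{(2)}=O(1/t)$, and for $t\geq\lambda_0\log\alpha$ the $\beta$-bound from Lemma~\ref{lemma_sum} gives the same order. Rather than summing $1/t$ over this range, we use $B_1(t_1)$ wherever it applies, which is every $t\leq t_1$. There $\Delta_r^{(2)}\leq\ell$, so $|\Psi_r^{(2)}-\psi_{p_r}^{(2)}|\leq\ell+O(Mt^2/\alpha)$ and hence $Q_r^{(2)}=O(1/t^2+M/\alpha)$.

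\emph{Remaining gap.} The range between $t_1$ and $\lambda_0^2\log\alpha$ is where care is needed. Its upper end is $\Theta((\log\alpha)^3)$, which is smaller than $t_1=\Theta((\log\alpha)^4)$ for large $\alpha$, so Lemma~\ref{lemma_suma0} applied at $t=t_1$ covers all $t\geq t_1$ and gives $O(1/\log\alpha)$. For $t<t_1$, the event $B_1(t_1)$ gives $\sum_{t<t_1}Q_r^{(2)}=O(\sum_t 1/t^2+t_1M/\alpha)=O(1)$. This proves~(\ref{lemma_sumab_ab1}).

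For~(\ref{lemma_sumab_ab0}) we refine the $t<t_1$ part. On $B_0(t_0)$, every $t\leq t_0$ has $\Delta_r^{(2)}=0$, so $Q_r^{(2)}=O(M/\alpha)$, summing to $O(t_0M/\alpha)=o(1/\log\alpha)$. For $t_0<t<t_1$ we have $\sum_{t>t_0}1/t^2=O(1/t_0)=O(1/\log\alpha)$.

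For~(\ref{lemma_sumab_pi}), take $k=0$. By~\eqref{equation_pdelta}, in $\mathcal{P}^{\delta}$ we have $\Pi\leq e^{C\Sigma}=e^{C\Sigma^{(2)}}$. Since $C$ can be as large as $\Theta(\log\alpha/\log\log\alpha)$, the bound $\Sigma^{(2)}=O(1)$ from part~(\ref{lemma_sumab_ab1}) is too weak on its own. We therefore bound $\log\Pi=\sum\log(1+\widetilde{Q}_r)\leq\sum(\widetilde{Q}_r)^+$ termwise.
\begin{itemize}
\item For $t\geq t_1$, use $(\widetilde{Q}_r)^+\leq C\hat{Q}_r$; this range contributes $C\cdot O(1/\log\alpha)=o(1)$.
\item For $t<t_1$, use $n_{\alpha}-\Psi_r\geq n_{t}-\Psi_r$. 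On $B_1(t_1)$ we have $\Psi_r\leq\ell+O(Mt^2/\alpha)$, so $n_t-\Psi_r\geq\tfrac12 t^2$ once $t$ exceeds a constant, while $n_t-\Psi_r>0$ for every $t$ by $\mathcal{P}^{\delta}$. Hence $(\widetilde{Q}_r)^+=O(1/t^2+M/\alpha)$ for $t$ beyond a constant, and these terms sum to $O(1)$.
\item The finitely many small $t$ remain. For these, $n_t-\Psi_r\geq\delta n_t$ gives $\widetilde{Q}_r\leq(\ell+o(1))/(\delta n_t)$, which is $O(C)$, and that is not yet $O(1)$.
\end{itemize}
The main obstacle is this last bullet: showing $\Pi=O(1)$ without a stray factor of $C$ coming from the smallest $t$. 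The fix I would try is an integrality argument: $\Psi_r$ is a count minus $O(M/\alpha)$, and $n_t-\Psi_r$ is at least the number of feasible remaining vertices. That number should be at least $n_t-\ell\geq 1$ with $n_t-\ell$ of order $t^2$, once we note that a single interfering pair kills at most $\ell$ vertices. This would bound each such factor by $n_t/(n_t-\ell)=O(1)$ for $t\geq2$, and by $1$ for $t=1$.
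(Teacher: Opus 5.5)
Your proof is correct and is essentially the paper's proof. You split at $t_1$, apply Lemma~\ref{lemma_suma0} on $t\ge t_1$, use $Q_r^{(2)}=O(M/\alpha+i/t^2)$ from $B_i$ on $t<t_1$, and for (iii) bound the bounded-$t$ factors through a $\delta$-independent lower bound on $n_{\alpha-r}-\Psi_r$ in terms of $|V_r|$. Two cleanups are needed: $t_1\ge\lambda_0^2\log\alpha$ does hold for large $\alpha$, and at $t=\lambda_0^2\log\alpha$ the third term is only $O(1)$, so your opening ``Large range'' paragraph should be dropped in favour of your corrected application at $t=t_1$.

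The only real difference is in the lower bound on $|V_r|$. The paper uses $n_{\alpha-r}-\Psi_r\ge\tfrac12|V_r|$ together with $|V_r|\ge|S\setminus S_r|=\alpha-r$, which needs no event and covers $t=1$ automatically. Your $|V_r|\ge n_t-\ell$, obtained from $B_1(t_1)$, forces the separate $t=1$ case. Note also that $n_t-\Psi_r=\sum_{v\in V_r}(1-\b(v))$ is at least $(1-\b_{\max})|V_r|$, not at least $|V_r|$.
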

	
	\begin{proof}
		(\ref{lemma_sumab_ab1},\ref{lemma_sumab_ab0}) Since $|\Psi_r^{(2)}-\psi_{p_r}^{(2)}|\leq\Delta_r^{(2)}+\Lambda_r^{(2)}+\psi_{p_r}^{(2)}$, if $\Delta_r^{(2)}\leq i\ell$, then we have $Q_r^{(2)}=O(M/\alpha+i/(\alpha-k-r)^2)$ by Lemma~\ref{lemma_bounds2}~(\ref{lemma_bounds2_nrnk}) and Lemma~\ref{lemma_psi2}. The results follow by Lemma~\ref{lemma_suma0}.
		
		(\ref{lemma_sumab_pi}) We have $$\log\Pi=\sum_{r=0}^{\alpha-1}\log(1+\widetilde{Q}_r)\leq\sum_{r=0}^{\alpha-1}|\widetilde{Q}_r|.$$ In $\mathcal{P}^{\delta}$, we have $|\widetilde{Q}_r|\leq CQ_r$ for all $r=0,\ldots,\alpha-1$, so Lemma~\ref{lemma_suma0} gives $$\sum_{\alpha-r\geq t_1}^{\alpha-r\leq\alpha}|\widetilde{Q}_r|=O\left(\tfrac{C}{\log\alpha}\right)=o(1)$$ in $\mathcal{P}^{\delta}\cap A(\lambda_0)$. For $\alpha-r<t_1$, in $B_1(t_1)$, we have
		\begin{align}
			|\Psi_r-\psi_{p_r}|&\leq\Delta_r+\Lambda_r+\psi_{p_r}\leq\ell+O((\alpha-r)^2M/\alpha)=O(1),
			\\\Psi_r&=\Delta_r+\Lambda_r\leq\ell+O((\alpha-r)^2M/\alpha)=O(1)
		\end{align}
		by Lemma~\ref{lemma_psi2}. Since $n_{\alpha-r}\geq(\alpha-r)^2$ by Lemma~\ref{lemma_nr}~(\ref{lemma_nr_geq}), for some constant $T$ and for $\alpha$ large enough, we get $n_{\alpha-r}-\Psi_r=\Omega((\alpha-r)^2)$ for $\alpha-r\geq T$. It follows that $$\sum_{\alpha-r\geq T}^{\alpha-r\leq t_1-1}|\widetilde{Q}_r|=O\left(\sum_{\alpha-r\geq T}^{\alpha-r\leq t_1-1}\frac{1}{(\alpha-r)^2}\right)=O(1).$$ For $\alpha$ large enough, such that $\b_{\max}\leq\tfrac12$, we furthermore have $$n_{\alpha-r}-\Psi_r=\sum_{v\in V_r}(1-\b(v))\geq\tfrac12|V_r|.$$ Since $|V_r|\geq|S\setminus S_r|=\alpha-r$, we finally get $$\sum_{\alpha-r\geq1}^{\alpha-r\leq T-1}|\widetilde{Q}_r|=O\left(\sum_{\alpha-r\geq 0}^{\alpha-r\leq T-1}\frac{1}{\alpha-r}\right)=O(1).$$ The result follows.
	\end{proof}
	
	\begin{lemma}\label{lemma_pb}
		For $t\geq1$ and $i\in\{0,1\}$, we have $\mathbb{P}(\mathcal{P}\setminus B_i(t))=O((t^2M/\alpha)^{i+1})$.
	\end{lemma}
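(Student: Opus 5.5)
We must bound $\mathbb{P}(\mathcal{P}\setminus B_i(t))$, the probability that $\Delta_r^{(2)}>i\ell$ for some $r\geq\alpha-k-t$. The plan is to use monotonicity in $r$ and then the interference structure of Lemma~\ref{lemma_delta2}.

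\textbf{Reduction to the largest window.} As $r$ increases, $S\setminus S_r$ shrinks, so $\widetilde{V}_r$ shrinks. Hence $\Delta_r^{(2)}$ is non-increasing in $r$. This holds even with the factor $R_r^{(2)}$: a vertex of $R^*(S_r)\cap\widetilde V_r$ with two neighbours in $S\setminus S_r$ lies in $R^*(s)$ for some $s\in S$. Such a vertex is counted whenever $s\in S_r$, and $\Delta_r(R_r^{(2)})\le |R^{(2)}\cap \widetilde V_r|$ anyway. It is therefore enough to bound the larger quantity
\[
D_r:=|(F^{(2)}\cup R^{(2)})\cap\widetilde V_r|,
\]
which is monotone and dominates $\Delta_r^{(2)}$. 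So the event $\mathcal{P}\setminus B_i(t)$ is contained in $\{D_{r_0}>i\ell\}$ with $r_0:=\alpha-k-t$ (or $r_0=0$ if $t>\alpha-k$, where the bound is trivial since $t^2M/\alpha\geq1$ up to constants). Here $T:=S\setminus S_{r_0}$ is a uniformly random $t$-subset of $S$.

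\textbf{Counting via interfering pairs.} By $2$-uniformity, each vertex of $F^{(2)}\cup R^{(2)}$ lies in $N(s_1)\cap N(s_2)$ for a unique pair $\{s_1,s_2\}\subset S$, and it lies in $\widetilde V_{r_0}$ iff both $s_1,s_2\in T$. Each pair carries at most $\ell$ such vertices. So $D_{r_0}\le \ell\cdot(\text{number of interfering pairs inside }T)$.

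\begin{itemize}
\item For $i=0$: the event $D_{r_0}>0$ requires at least one interfering pair in $T$. The number of interfering pairs is at most $\tfrac12|S|(F_{\max}+R^{(b)}_{\max})=O(\alpha M)$, since each $s$ interferes with at most $2M$ others. Each given pair lies in $T$ with probability $\binom{t}{2}/\binom{\alpha-k}{2}=O(t^2/\alpha^2)$. A union bound gives $O(t^2M/\alpha)$.
\item For $i=1$: the event $D_{r_0}>\ell$ requires at least two distinct interfering pairs inside $T$. These either share a vertex (a path $s_1s_2s_3$ spanning $3$ vertices, with $O(\alpha M^2)$ choices and inclusion probability $O(t^3/\alpha^3)$) or are disjoint ($O(\alpha^2M^2)$ choices, probability $O(t^4/\alpha^4)$). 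The union bound gives $O(t^3M^2/\alpha^2+t^4M^2/\alpha^2)=O((t^2M/\alpha)^2)$.
\end{itemize}

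The inclusion probabilities for a fixed $j$-set in a uniform $t$-subset of an $(\alpha-k)$-set are $\binom{\alpha-k-j}{t-j}/\binom{\alpha-k}{t}\le (t/(\alpha-k))^j$. We use $\alpha-k=(1-o(1))\alpha$, valid because $k=O(M)=o(\alpha)$.

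\textbf{Main obstacle.} The subtle point is the monotonicity and domination step. One must check that replacing $R_r^{(2)}$ by the full $R^{(2)}$ only increases the count, and that the two-pair count for $i=1$ is genuinely exhaustive. In particular, a single pair contributes at most $\ell$, so exceeding $\ell$ really forces two distinct pairs. Everything else is the routine union bound above.
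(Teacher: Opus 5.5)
Your proof is correct and is essentially the paper's argument: you pass to the largest window $S\setminus S_{\alpha-k-t}$, bound $\Delta^{(2)}$ by $\ell$ times the number of interfering pairs there (Lemma~\ref{lemma_delta2}), and apply a Markov/union bound. For $i=0$ this runs over single interfering pairs; for $i=1$ it runs over overlapping ($O(\alpha M^2)$ choices, probability $O(t^3/\alpha^3)$) and disjoint ($O(\alpha^2M^2)$ choices, probability $O(t^4/\alpha^4)$) pairs of pairs. One small correction: $\Delta_r^{(2)}$ itself is not monotone in $r$, since $R_r^{(2)}$ grows with $S_r$, but your switch to the monotone dominating quantity $D_r$ (equivalently, the paper's $r$-independent notion of interference via $F\cup R^*(S)$) is exactly what makes the reduction valid.
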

	
	\begin{proof}
		Let $r:=\alpha-k-t$. In $\mathcal{P}\setminus B_i(t)$, we have $\Delta_{r'}^{(2)}\geq i\ell+1$ for some $r'\geq\alpha-k-t$, so by Lemma~\ref{lemma_delta2}, there are at least $i+1$ pairs of interfering vertices in $S\setminus S_{r'}$, which are thus also in $S\setminus S_r$. By Markov's inequality, we find that $\mathbb{P}(\mathcal{P}\setminus B_0(t))$ is at most the expected number of pairs of interfering vertices in $S\setminus S_r$, and $\mathbb{P}(\mathcal{P}\setminus B_1(t))$ is at most the expected number of pairs of distinct pairs of interfering vertices in $S\setminus S_r$.
		
		Since any vertex $s\in S$ interferes with at most $F_{\max}+R_{\max}^{(b)}\leq2M$ other vertices in $S$, there are at most $\alpha M$ interfering pairs of vertices in $S$. Since $S\setminus S_r$ is a uniformly random subset of $S$ of size $\alpha-k-r$, each such pair is in $S\setminus S_r$ with probability $$\tbinom{\alpha-k-2}r/\tbinom{\alpha-k}r=\tfrac{(\alpha-k-r)(\alpha-k-r-1)}{(\alpha-k)(\alpha-k-1)}=O(t^2/\alpha^2).$$ The bound on $\mathbb{P}(\mathcal{P}\setminus B_0(t))$ follows.
		
		We also find that there are at most $\binom{\alpha M}{2}$ pairs of distinct pairs of interfering vertices in $S$. Each such pair is in $S\setminus S_r$ with probability $$\tbinom{\alpha-k-4}r/\tbinom{\alpha-k}r=O(t^4/\alpha^4)$$ if they do not overlap. For any vertex $s\in S$, at most $\binom{2M}{2}$ such pairs overlap in $s$, so we have at most $\alpha\binom{2M}{2}$ overlapping pairs in total. Each such pair is in $S\setminus S_r$ with probability $$\tbinom{\alpha-k-3}r/\tbinom{\alpha-k}r=O(t^3/\alpha^3).$$ The bound on $\mathbb{P}(\mathcal{P}\setminus B_1(t))$ follows.
	\end{proof}
	
	\begin{lemma}\label{lemma_b1}
		We have $$\mathbb{E}(e^{C\Sigma^{(2)}}1_{A(\lambda_0)\setminus B_1(t_1)})=O(M^2/\alpha).$$
	\end{lemma}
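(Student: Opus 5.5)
We need to bound $\mathbb{E}(e^{C\Sigma^{(2)}}1_{A(\lambda_0)\setminus B_1(t_1)})$. The plan is to split the event $A(\lambda_0)\setminus B_1(t_1)$ according to where the failure of $B_1(t_1)$ first occurs, and to combine a pointwise bound on $\Sigma^{(2)}$ with the probability bound of Lemma~\ref{lemma_pb}.

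The starting point is the structure of $\Sigma^{(2)}$ inside $A(\lambda_0)$. By Lemma~\ref{lemma_suma0} with $t=t_1\geq\lambda_0^2\log\alpha$ (valid since $\lambda_0=O(\log\alpha)$, so after enlarging $t_1$ by a constant factor if needed), the part of $\Sigma^{(2)}$ with $\alpha-k-r\geq t_1$ is $o(1)$. Hence in $A(\lambda_0)$ only the tail $\alpha-k-r<t_1$ matters. There $Q_r^{(2)}\leq(\Delta_r^{(2)}+O(M(\alpha-k-r)^2/\alpha))/(\alpha-k-r)^2$ by Lemma~\ref{lemma_psi2} and Lemma~\ref{lemma_bounds2}~(\ref{lemma_bounds2_nrnk}). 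In that range the $M$-part sums to $O(t_1M/\alpha)=o(1)$.

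So it remains to control $\sum_{\alpha-k-r<t_1}\Delta_r^{(2)}/(\alpha-k-r)^2$. This requires a finer decomposition of the complement of $B_1(t_1)$ into dyadic scales. For $j\geq0$, let $D_j$ be the event that $\Delta_{r}^{(2)}\geq\ell+1$ for some $r$ with $\alpha-k-r\leq 2^{j}$, but not for $\alpha-k-r\leq 2^{j-1}$, with $2^j\leq t_1$. By the argument of Lemma~\ref{lemma_pb}, $\mathbb{P}(D_j)=O((4^jM/\alpha)^2)$.

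On $D_j$, we have $\Delta_r^{(2)}\leq\ell$ for $\alpha-k-r\leq 2^{j-1}$, so those terms contribute $O(1)$ via $\sum 1/t^2$. For $\alpha-k-r>2^{j-1}$, we use the trivial bound $Q_r^{(2)}\leq1$ only where needed. Better, we use the interference bound $\Delta_r^{(2)}\leq\ell\cdot\#\{\text{interfering pairs in }S\setminus S_r\}$, together with $A(\lambda_0)$, which gives $\Delta_r^{(2)}\leq T_r(\lambda_0)=\ell(\alpha-k-r)\lambda_0/(10\log\alpha)$ in the short range. Then $\sum_{2^{j-1}<t<t_1}\Delta/t^2\leq\frac{\lambda_0}{10\log\alpha}\ell\log(t_1/2^{j-1})$. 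This is at most $\frac{2}{5}(1-\frac{\log M}{\log\alpha})(4\log\log\alpha)$-ish.

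On $D_j$, therefore, $C\Sigma^{(2)}\leq O(1)+C\cdot 8(1-\tfrac{\log M}{\log\alpha})\log\log\alpha\cdot(1+o(1))$. By the hypothesis on $C$, this is at most $(1-\Omega(1))\log\alpha$. Hence $e^{C\Sigma^{(2)}}\leq\alpha^{1-\Omega(1)}$.

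This bound alone is too crude against $\mathbb{P}(D_j)=O(t_1^4M^2/\alpha^2)$; it yields $O(M^2\alpha^{-1-\Omega(1)}\mathrm{polylog})$. Fortunately this is already $O(M^2/\alpha)$, since $\alpha^{-\Omega(1)}$ absorbs the polylog.

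The main obstacle is exactly this calibration. We must check that the logarithmic sum $\frac{\lambda_0\ell}{10\log\alpha}\sum 1/t$ times $C$ stays below $\log\alpha$ by a polynomial margin, given the precise constants $8(1-\log M/\log\alpha)$ in the hypothesis on $C$ and $20$ in $\lambda_0$. If the crude estimate fails, I would fall back on the dyadic refinement: for $t\approx 2^j$, the gain $\mathbb{P}(D_j)\sim (4^jM/\alpha)^2$ is compared with a sum that only runs from $2^{j-1}$. Summing over $j$ then gives a geometric series dominated by small $j$, with total $O(M^2/\alpha)$.
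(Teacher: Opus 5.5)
Your architecture is the paper's: bound $e^{C\Sigma^{(2)}}$ deterministically by $\alpha^{1-\Omega(1)}$ on $A(\lambda_0)$, then multiply by $\mathbb{P}(\mathcal{P}\setminus B_1(t_1))=O(t_1^4M^2/\alpha^2)$ from Lemma~\ref{lemma_pb}. The dyadic events $D_j$ are not needed.

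The gap is in the pointwise bound, and there the constant is everything. The hypothesis on $C$ leaves only a factor $1-\Omega(1)$ of room. So the coefficient of $\log\log\alpha$ in $\Sigma^{(2)}$ must come out as $\tfrac{2\lambda_0}{5\log\alpha}=8\bigl(1-\tfrac{\log M}{\log\alpha}\bigr)$, and your derivation does not give that. Two things go wrong:
\begin{enumerate}
\item On $A(\lambda_0)$, the linear threshold $T_r(\lambda_0)=\tfrac{\ell(\alpha-k-r)\lambda_0}{10\log\alpha}$ holds only for $\alpha-k-r<\lambda_0\log\alpha=O((\log\alpha)^2)$, not up to $t_1=\lfloor\log\alpha\rfloor^4$. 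It also controls $|\Psi_r^{(2)}-\psi_{p_r}^{(2)}|$, not $\Delta_r^{(2)}$.
\item You divide by $(\alpha-k-r)^2$ instead of $n_{\alpha-r}-n_k\geq n_{\alpha-k-r}\approx\tfrac{\ell}{2}(\alpha-k-r)^2$.
\end{enumerate}
Your own expression $\tfrac{\ell\lambda_0}{10\log\alpha}\log t_1$ therefore evaluates to $8\ell\bigl(1-\tfrac{\log M}{\log\alpha}\bigr)\log\log\alpha$, which is a factor $\ell\geq2$ too large. The ``$\tfrac25(\cdot)(4\log\log\alpha)$'' and the subsequent $8(\cdot)\log\log\alpha$ do not follow from it. With that coefficient you only get $e^{C\Sigma^{(2)}}\leq\alpha^{\ell(1-\Omega(1))}$, and the product with $t_1^4M^2/\alpha^2$ is no longer $O(M^2/\alpha)$.

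The dyadic fallback cannot compensate. For the scales $2^j=O(1)$, Lemma~\ref{lemma_pb} still gives only $O(M^2/\alpha^2)$, while on $D_j$ almost the entire harmonic sum survives.

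The missing ingredient is Lemma~\ref{lemma_sai} with $\lambda=\lambda_0$, which is admissible since $\lambda_0=O(\log\alpha)$. In that lemma the sum splits into three ranges:
\begin{itemize}
\item Terms with $\alpha-k-r<\lambda_0/\log\alpha$ are bounded by $Q_r^{(2)}\leq1$, contributing $O(1)$.
\item Terms with $\alpha-k-r\geq\lambda_0\log\alpha$ contribute $O(\lambda_0/\log\alpha)=O(1)$ by Lemma~\ref{lemma_sum}.
\item Only the window $\lambda_0/\log\alpha\leq\alpha-k-r\leq\lambda_0\log\alpha$, of logarithmic length $2\log\log\alpha$, uses the linear threshold, with denominator $n_{\alpha-k-r}$.
\end{itemize}
This gives $\Sigma^{(2)}\leq 8\bigl(1-\tfrac{\log M}{\log\alpha}\bigr)\log\log\alpha+O(1)$ on $A(\lambda_0)$. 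By the hypothesis on $C$, and using $O(C)=o(\log\alpha)$, this yields $e^{C\Sigma^{(2)}}\leq\alpha^{1-\Omega(1)}$. Then $(t_1^4M^2/\alpha^2)\,\alpha^{1-\Omega(1)}=O(M^2/\alpha)$ because $t_1$ is polylogarithmic, which is exactly the paper's two-line proof.
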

	
	\begin{proof}
		Note that $$C\leq(1-\Omega(1))\tfrac1{8}\left(1-\tfrac{\log M}{\log\alpha}\right)^{-1}\tfrac{\log\alpha}{\log\log\alpha}.$$ By Lemma~\ref{lemma_sai} and Lemma~\ref{lemma_pb}, we get
		\begin{align}
			&\mathbb{E}(e^{C\Sigma^{(2)}}1_{A(\lambda_0)\setminus B_1(t_1)})
			\\&\leq(t_1^4M^2/\alpha^2)\exp\left(\tfrac{2C\lambda_0\log\log\alpha}{5\log\alpha}+O\left(\tfrac{C\lambda_0}{\log\alpha}\right)\right)
			\\&\leq(M^2/\alpha^2)\exp\left((1-\Omega(1))\log\alpha\right)=O(M^2/\alpha),
		\end{align}
		in particular, because $\log(t_1)=o(\log\alpha)$ and $C\lambda_0/\log\alpha=o(\log\alpha)$.
	\end{proof}
	
	We can now finally prove Lemma~\ref{lemma_sigma2}.
	
	\begin{proof}[Proof of Lemma~\ref{lemma_sigma2}]
		Let $\mathcal{R}:=A(\lambda_0)\cap B_1(t_1)\cap B_0(t_0)$.
		
		(\ref{lemma_sigma2_pi}) We have $|1+\overline{\Sigma}|\leq1+\Sigma\leq e^{\Sigma}$, since $|\overline{Q}_r|\leq Q_r$, and in $\mathcal{P}^{\delta}$, we have $\Pi\leq e^{C\Sigma}$ by \eqref{equation_pdelta}. By Lemma~\ref{lemma_an} and Lemma~\ref{lemma_b1}, we get $$\mathbb{E}((\Pi+|1+\overline{\Sigma}|)1_{\mathcal{P}^{\delta}\setminus(A(\lambda_0)\cap B_1(t_1))})\leq2\mathbb{E}(e^{C\Sigma}1_{\mathcal{P}\setminus(A(\lambda_0)\cap B_1(t_1))})=O(M^2/\alpha).$$ In $\mathcal{P}^{\delta}\cap A(\lambda_0)\cap B_1(t_1)$, we furthermore have $\Pi=O(1)$ and $|1+\overline{\Sigma}|\leq e^{\Sigma}=O(1)$ by Lemma~\ref{lemma_sumab}~(\ref{lemma_sumab_pi},\ref{lemma_sumab_ab1}). By Lemma~\ref{lemma_pb}, we get $$\mathbb{E}((\Pi+|1+\overline{\Sigma}|)1_{\mathcal{P}^{\delta}\cap A(\lambda_0)\cap B_1(t_1)\setminus B_0(t_0)})=O(\mathbb{P}(\mathcal{P}\setminus B_0(t_0)))=O(t_0^2M/\alpha).$$ The result follows.
		
		(\ref{lemma_sigma2_exp}) By Lemma~\ref{lemma_an} and Lemma~\ref{lemma_b1}, we have $$\mathbb{E}(e^{C\Sigma^{(2)}}1_{\mathcal{P}\setminus(A(\lambda_0)\cap B_1(t_1))})=O(M^2/\alpha).$$ By Lemma~\ref{lemma_sumab}~(\ref{lemma_sumab_ab1}) and Lemma~\ref{lemma_pb}, since $C=o(\log\alpha)$, we furthermore have $$\mathbb{E}(e^{C\Sigma^{(2)}}1_{A(\lambda_0)\cap B_1(t_1)\setminus B_0(t_0)})\leq t_0^2M\alpha^{-1+o(1)}=o(1).$$ The result follows.
		
		(\ref{lemma_sigma2_out}) This follows by Lemma~\ref{lemma_sumab}~(\ref{lemma_sumab_ab0}).
	\end{proof}
	
	\subsection{Error terms}\label{subsection_proof_error}
	
	We prove Lemma~\ref{lemma_bar_e} and Lemma~\ref{lemma_error}. We may thus assume $k=0$. We mainly need to study the random variables $\Psi_r-\psi_{p_r}$ for $r=0,\ldots,\alpha-1$. We split up
	\begin{align}
		Y_r^{(a)}&:=\Delta_r(F\cup R^*(S))-\Lambda_r(F\cup R^*(S))
		\\&-\mathbb{E}(\Delta_{p_r}(F\cup R^*(S))-\Lambda_{p_r}(F\cup R^*(S))),
		\\Y_r^{(b)}&:=\Delta_r(R^*(S\setminus S_r))-\Lambda_r(R^*(S\setminus S_r))
		\\&-\mathbb{E}(\Delta_{p_r}(R^*(S\setminus S_{p_r}))-\Lambda_{p_r}(R^*(S\setminus S_{p_r}))),
		\\Y_r^{(c)}&:=\Lambda_r(V\setminus S)-\mathbb{E}(\Lambda_{p_r}(V\setminus S)),
		\\Y_r^{(d)}&:=\Lambda_r(S)-\mathbb{E}(\Lambda_{p_r}(S)),
	\end{align}
	such that $\Psi_r-\psi_{p_r}=Y_r^{(a)}-Y_r^{(b)}+Y_r^{(c)}+Y_r^{(d)}$. For every $s\in S$, we consider the random variable $x_s^r$ that is $0$ if $s\in S_r$ and $1$ otherwise, and the random variable $\widetilde{x}_s^r$ that is $0$ if $s\in S_{p_r}$ and $1$ otherwise. We get
	\begin{align}
		Y_r^{(a)}&=\sum_{\substack{v\in F\cup R^*(S)\\N(v)\cap S=\{s_1,s_2\}}}(1-\b(v))(x_{s_1}^rx_{s_2}^r-\mathbb{E}(\widetilde{x}_{s_1}^r\widetilde{x}_{s_2}^r)),
		\\Y_r^{(b)}&=\sum_{\substack{s_1\in S\\v\in R^*(s_1)\\N(v)\cap S=\{s_2,s_3\}}}(1-\b(v))(x_{s_1}^rx_{s_2}^rx_{s_3}^r-\mathbb{E}(\widetilde{x}_{s_1}^r\widetilde{x}_{s_2}^r\widetilde{x}_{s_3}^r)),
		\\Y_r^{(c)}&=\sum_{\substack{v\in V\setminus S\\N(v)\cap S=\{s_1,s_2\}}}\b(v)(x_{s_1}^rx_{s_2}^r-\mathbb{E}(\widetilde{x}_{s_1}^r\widetilde{x}_{s_2}^r)),
		\\Y_r^{(d)}&=\sum_{s\in S}\b(s)(x_s^r-\mathbb{E}(\widetilde{x}_s^r)).
	\end{align}
	To bound the expected values of these quantities, we use the following lemma.
	
	\begin{lemma}\label{lemma_x}
		For $r=0,\ldots,\alpha-1$ and distinct vertices $s_1,\ldots,s_d\in S$ for some constant $d\geq1$, we have
		\begin{enumerate}[(i)]
			\item\label{lemma_x_simple} $\begin{aligned}[t]
				\mathbb{E}(\widetilde{x}_{s_1}^r\ldots\widetilde{x}_{s_d}^r)=q_r^d,
			\end{aligned}$
			\item\label{lemma_x_true} $\begin{aligned}[t]
				\mathbb{E}(x_{s_1}^r\ldots x_{s_d}^r)=\tbinom{\alpha-d}{\alpha-r-d}/\tbinom{\alpha}{\alpha-r}=q_r^d+O(q_r^{d-1}/\alpha).
			\end{aligned}$
		\end{enumerate}
	\end{lemma}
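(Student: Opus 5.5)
Part (i) is immediate from the construction of the simplified model: the indicators $\widetilde{x}_s^r$, $s\in S$, are independent Bernoulli variables with mean $q_r=1-p_r$. Hence for distinct $s_1,\ldots,s_d$ the expectation of the product factorises as $q_r^d$. Here $k=0$, so $|S|=\alpha$ and $p_r=r/\alpha$.

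For part (ii), I would use that in the true model $S_r$ is a uniformly random $r$-subset of $S$, so $S\setminus S_r$ is a uniformly random $(\alpha-r)$-subset. The product $x_{s_1}^r\cdots x_{s_d}^r$ equals $1$ exactly when $s_1,\ldots,s_d\in S\setminus S_r$. Counting subsets of size $\alpha-r$ that contain the $d$ given elements gives $\binom{\alpha-d}{\alpha-r-d}/\binom{\alpha}{\alpha-r}$, which is the exact formula. It is interpreted as $0$ when $\alpha-r<d$.

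For the asymptotic form, write this ratio as a falling-factorial quotient:
\[\prod_{j=0}^{d-1}\frac{\alpha-r-j}{\alpha-j}.\]
Each factor satisfies
\[\frac{\alpha-r-j}{\alpha-j}=q_r-\frac{j\,r}{\alpha(\alpha-j)},\]
so it equals $q_r+O(1/\alpha)$ and is bounded above by $q_r$. The product is therefore at most $q_r^d$. For the lower bound, expand the product: every term other than $q_r^d$ contains at least one error factor $O(1/\alpha)$ and at most $d-1$ factors of size $q_r$.

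The step needing the most care is that such terms are $O(q_r^{d-1}/\alpha)$ and not merely $O(1/\alpha)$. This matters when $q_r$ is small, i.e. $\alpha-r$ is small. I would handle it by cases.

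\emph{Case $\alpha-r\ge 2d$.} Each factor lies in $[q_r/2,\,q_r]$, so the product is $q_r^d\prod_j(1-O(j/(\alpha-r)))$. This gives a difference of $O(q_r^d/(\alpha-r))=O(q_r^{d-1}/\alpha)$, using $q_r=(\alpha-r)/\alpha$.

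\emph{Case $\alpha-r<2d$.} Both the exact value and $q_r^d$ are $O(q_r^d)$. Since $q_r<2d/\alpha$, this is $O(q_r^{d-1}/\alpha)$, with constants depending only on the fixed $d$.

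Combining the two cases yields $\mathbb{E}(x_{s_1}^r\cdots x_{s_d}^r)=q_r^d+O(q_r^{d-1}/\alpha)$.
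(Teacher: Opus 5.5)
Your proof is correct and follows the paper's route: the same exact count $\binom{\alpha-d}{\alpha-r-d}/\binom{\alpha}{\alpha-r}$, rewritten as the falling-factorial product $\prod_{j=0}^{d-1}\frac{\alpha-r-j}{\alpha-j}$. The paper avoids your case split by writing each factor uniformly as $q_r\bigl(1+O(\tfrac{1}{\alpha-r})\bigr)$; since $d$ is fixed the bounded relative errors multiply to $q_r^d+O(q_r^d/(\alpha-r))=q_r^d+O(q_r^{d-1}/\alpha)$ in one step, so your split into $\alpha-r\ge 2d$ and $\alpha-r<2d$ is harmless but not needed.
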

	
	\begin{proof}
		(\ref{lemma_x_simple}) This follows from the definition of the simplified model, since the variables $\widetilde{x}_{s_1}^r,\ldots,\widetilde{x}_{s_d}^r$ are independent with expected value $q_r$.
		
		(\ref{lemma_x_true}) The product $x_{s_1}^r\ldots x_{s_d}^r$ is $1$ if $s_1,\ldots,s_d\in S\setminus S_r$ and $0$ otherwise. Since $S\setminus S_r$ is a uniformly random subset of $S$ of size $\alpha-r$, we get
		\begin{align}
			\mathbb{E}(x_{s_1}^r\ldots x_{s_d}^r)
			&=\tbinom{\alpha-d}{\alpha-r-d}/\tbinom{\alpha}{\alpha-r}
			\\&=\tfrac{(\alpha-r)!/(\alpha-r-d)!}{\alpha!/(\alpha-d)!}
			\\&=\tfrac{\alpha-r}{\alpha}\cdot\tfrac{\alpha-r-1}{\alpha-1}\cdot\ldots\cdot\tfrac{\alpha-r-d+1}{\alpha-d+1}.
		\end{align}
		For $i=0,\ldots,d-1$, we have $$\tfrac{\alpha-r-i}{\alpha-i}=\tfrac{\alpha-r}{\alpha}\left(1+\tfrac{i}{\alpha-i}\right)\left(1-\tfrac{i}{\alpha-r}\right)=q_r\left(1+O(\tfrac1{\alpha-r})\right).$$ We thus get $\mathbb{E}(x_{s_1}^r\ldots x_{s_d}^r)=q_r^d(1+O(\tfrac1{\alpha-r}))=q_r^d+O(q_r^{d-1}/\alpha)$.
	\end{proof}
	
	We can now bound $\Psi_r-\psi_{p_r}$.
	
	\begin{lemma}\label{lemma_psipsi}
		For $r=0,\ldots,\alpha-1$, we have
		\begin{enumerate}[(i)]
			\item\label{lemma_psipsi_1} $\begin{aligned}[t]
				\mathbb{E}(\Psi_r-\psi_{p_r})=O(q_rM),
			\end{aligned}$
			\item\label{lemma_psipsi_2} $\begin{aligned}[t]
				\mathbb{E}((\Psi_r-\psi_{p_r})^2)=O(q_r^3\alpha M^2+q_r^2\alpha M),
			\end{aligned}$
			\item\label{lemma_psipsi_abs} $\begin{aligned}[t]
				\mathbb{E}(|\Psi_r-\psi_{p_r}|)=O(\sqrt{q_r^3\alpha M^2}+\sqrt{q_r^2\alpha M}).
			\end{aligned}$
		\end{enumerate}
	\end{lemma}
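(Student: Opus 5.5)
We use the decomposition $\Psi_r-\psi_{p_r}=Y_r^{(a)}-Y_r^{(b)}+Y_r^{(c)}+Y_r^{(d)}$ together with the polynomial expressions in the indicators $x_s^r$ and $\widetilde{x}_s^r$ given above. Here $k=0$, so $S$ is an MIS, and every $v\notin S$ has exactly two neighbours in $S$. We treat the four pieces separately and combine them via $(a+b+c+d)^2\le4(a^2+b^2+c^2+d^2)$. Part (iii) then follows from (ii) by Cauchy--Schwarz, $\mathbb{E}|X|\le\sqrt{\mathbb{E}X^2}$, and subadditivity of the square root. So the work is in (i) and (ii).

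\textbf{Part (i).} For each monomial of degree $d\in\{1,2,3\}$, Lemma~\ref{lemma_x} gives $\mathbb{E}(x_{s_1}^r\cdots x_{s_d}^r)-\mathbb{E}(\widetilde{x}_{s_1}^r\cdots\widetilde{x}_{s_d}^r)=O(q_r^{d-1}/\alpha)$, with coefficients bounded by $1$ in absolute value. We then count monomials.
\begin{itemize}
\item $Y^{(a)}$ has $|F|+|R^*(S)|=O(\alpha M)$ terms of degree $2$, contributing $O(q_rM)$.
\item $Y^{(b)}$ has at most $|R^*(S)|=O(\alpha M)$ terms of degree $3$, contributing $O(q_r^2M)$.
\item $Y^{(c)}$ has $|V|=O(\alpha^2)$ terms with weights $\b\le M/\alpha$, contributing $O(q_rM)$.
\item $Y^{(d)}$ has $\alpha$ terms of weight at most $M/\alpha$, contributing $O(M/\alpha)$.
\end{itemize}
The last bound is $O(q_rM)$ because $q_r\ge1/\alpha$ for $r\le\alpha-1$.

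\textbf{Part (ii), bias–variance split.} Write $\mathbb{E}(Y^2)=\operatorname{Var}(Y)+(\mathbb{E}Y)^2$ for each piece. The squared bias is $O(q_r^2M^2)$ by part (i), and this is $O(q_r^3\alpha M^2+q_r^2\alpha M)$ since $M=O(\alpha)$. For the variance we expand $\operatorname{Var}(\sum_j c_j m_j)=\sum_{j,j'}c_jc_{j'}\operatorname{Cov}(m_j,m_{j'})$ over monomials $m_j$ in the true model. The variables $x^r_s$ are indicators of a uniform random $(\alpha-r)$-subset of $S$, so they are negatively associated. Using Lemma~\ref{lemma_x}~(\ref{lemma_x_true}), the covariance of two monomials with disjoint supports of sizes $d,d'$ is $O(q_r^{d+d'-1}/\alpha)$. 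Two monomials that share $j\ge1$ variables have covariance at most $\mathbb{E}(m_jm_{j'})=O(q_r^{d+d'-j})$.

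\textbf{Part (ii), counting pairs.} This counting is the main obstacle.

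For $Y^{(a)}$:
\begin{itemize}
\item Overlapping pairs sharing one vertex $s$: there are at most $\alpha\cdot(2M)^2$ of them, since each $s$ is a neighbour of at most $F_{\max}+R^{(b)}_{\max}\le2M$ vertices of $F\cup R^*(S)$. Each contributes $q_r^3$, for a total of $O(q_r^3\alpha M^2)$.
\item Identical-support pairs: there are at most $\ell\cdot O(\alpha M)$ of them, each contributing $q_r^2$, for a total of $O(q_r^2\alpha M)$.
\item Disjoint pairs: there are $O((\alpha M)^2)$ of them, each contributing $O(q_r^3/\alpha)$. This gives $O(q_r^3\alpha M^2)$ only after noting that the leading correction cancels. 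I expect to have to compute the disjoint covariance exactly, as $-q_r^3 p_r\cdot O(1/\alpha)$ from the hypergeometric formula, and use that it is nonpositive. Then the disjoint contribution can be dropped from an upper bound.
\end{itemize}

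For $Y^{(b)}$ the argument is the same with degree-$3$ monomials, using $R^{(a)}_{\max},R^{(b)}_{\max}\le M$ and $(R^{(c)}_{\max})^2\le M$ to bound pairs sharing one or two variables.

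For $Y^{(c)}$ and $Y^{(d)}$, the small weights $\b\le M/\alpha$ compensate for the $\alpha^2$ vertices. For example, overlapping pairs in $Y^{(c)}$ contribute $\alpha\cdot(\ell\alpha)^2\cdot(M/\alpha)^2q_r^3=O(q_r^3\alpha M^2)$.

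If the negative-covariance argument for disjoint pairs turns out to be awkward, the fallback is to compare with the simplified model. There the disjoint covariances vanish exactly, and we would transfer back by conditioning on $|S_{p_r}|=r$, at the cost of a direct second-moment comparison.
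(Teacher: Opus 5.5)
Your proposal is correct and is essentially the paper's proof. It uses the same four-way split $Y_r^{(a)},\dots,Y_r^{(d)}$, the same appeal to Lemma~\ref{lemma_x} for (i), and for (ii) the same pair counting by overlap type (identical support, one shared vertex, two shared vertices for $Y_r^{(b)}$, disjoint); your bias--variance split only repackages the paper's identity $\mathbb{E}((m-q_r^d)(m'-q_r^d))=\mathrm{Cov}(m,m')+(\mathbb{E}m-q_r^d)(\mathbb{E}m'-q_r^d)$.

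No cancellation or negative-association argument is needed for disjoint pairs: the per-pair bound $O(q_r^3/\alpha)$ you already state, times $O(\alpha^2M^2)$ pairs, gives $O(q_r^3\alpha M^2)$ directly, which is how the paper concludes. Your simplified-model fallback would actually fail, because conditioning on $|S_{p_r}|=r$ loses a factor $\alpha^{1/2}$ (cf.\ Lemma~\ref{lemma_reduction}).
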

	
	\begin{proof}
		(\ref{lemma_psipsi_1}) We have $\mathbb{E}(\Psi_r-\psi_{p_r})=\mathbb{E}(Y_r^{(a)})-\mathbb{E}(Y_r^{(b)})+\mathbb{E}(Y_r^{(c)})+\mathbb{E}(Y_r^{(d)})$. By Lemma~\ref{lemma_x} and Lemma~\ref{lemma_terms}, we get
		\begin{align}
			\mathbb{E}(Y_r^{(a)})&=O(|F\cup R^*(S)|q_r/\alpha)=O(q_rM),
			\\\mathbb{E}(Y_r^{(b)})&=O(|R^*(S)|q_r^2/\alpha)=O(q_r^2M),
			\\\mathbb{E}(Y_r^{(c)})&=O(|V\setminus S|\b_{\max}q_r/\alpha)=O(q_rM),
			\\\mathbb{E}(Y_r^{(d)})&=O(|S|\b_{\max}/\alpha)=O(M/\alpha).
		\end{align}
		Since $q_r\geq1/\alpha$, the result follows.
		
		(\ref{lemma_psipsi_2}) By convexity of the function $x\mapsto x^2$, we have
		\begin{align}
			(\Psi_r-\psi_{p_r})^2&=(\tfrac14(4Y_r^{(a)}-4Y_r^{(b)}+4Y_r^{(c)}+4Y_r^{(d)}))^2
			\\&\leq\tfrac14((4Y_r^{(a)})^2+(4Y_r^{(b)})^2+(4Y_r^{(c)})^2+(4Y_r^{(d)})^2)
			\\&=4(Y_r^{(a)})^2+4(Y_r^{(b)})^2+4(Y_r^{(c)})^2+4(Y_r^{(d)})^2.
		\end{align}
		By Lemma~\ref{lemma_x}, we thus need to study, for two sequences of distinct vertices $s_1,\ldots,s_d\in S$ and $s'_1,\ldots,s'_d\in S$ for some constant $d\geq1$, how the expected value
		\begin{align}
			&\mathbb{E}((x_{s_1}^r\ldots x_{s_d}^r-q_r^d)(x_{s'_1}^r\ldots x_{s'_d}^r-q_r^d))
			\\&=\mathbb{E}(x_{s_1}^r\ldots x_{s_d}^rx_{s'_1}^r\ldots x_{s'_d}^r)-\mathbb{E}(x_{s_1}^r\ldots x_{s_d}^r)\mathbb{E}(x_{s'_1}^r\ldots x_{s'_d}^r)
			\\&+\mathbb{E}(x_{s_1}^r\ldots x_{s_d}^r-q_r^d)\mathbb{E}(x_{s'_1}^r\ldots x_{s'_d}^r-q_r^d)
			\\&=\mathbb{E}(x_{s_1}^r\ldots x_{s_d}^rx_{s'_1}^r\ldots x_{s'_d}^r)-q_r^{2d}+O(q_r^{2d-1}/\alpha)
		\end{align}
		depends on the overlap $i:=|\{s_1,\ldots,s_d\}\cap\{s'_1,\ldots,s'_d\}|$. If $i=0$, we get $$\mathbb{E}((x_{s_1}^r\ldots x_{s_d}^r-q_r^d)(x_{s'_1}^r\ldots x_{s'_d}^r-q_r^d))=O(q_r^{2d-1}/\alpha),$$ and otherwise, we get $$\mathbb{E}((x_{s_1}^r\ldots x_{s_d}^r-q_r^d)(x_{s'_1}^r\ldots x_{s'_d}^r-q_r^d))=O(q_r^{2d-i}).$$
		
		For $Y_r^{(a)}$, the expected value $\mathbb{E}((Y_r^{(a)})^2)$ is bounded by $$\sum_{\substack{v\in F\cup R^*(S)\\N(v)\cap S=\{s_1,s_2\}}}\sum_{\substack{v'\in F\cup R^*(S)\\N(v')\cap S=\{s_3,s_4\}}}|\mathbb{E}((x_{s_1}^rx_{s_2}^r-q_r^2)(x_{s_3}^rx_{s_4}^r-q_r^2))|.$$ We use that $|F\cup R^*(S)|=O(\alpha M)$ by Lemma~\ref{lemma_terms}~(\ref{lemma_terms_f},\ref{lemma_terms_rs}).
		\begin{itemize}
			\item Since any pair $\{s_1,s_2\}$ appears at most $|N(s_1)\cap N(s_2)|=\ell$ times in $Y_r^{(a)}$, there are at most $\ell|F\cup R^*(S)|=O(\alpha M)$ terms of order $O(q_r^2)$.
			\item Any vertex $s\in S$ appears at most $F_{\max}+R_{\max}^{(b)}$ times in $Y_r^{(a)}$, so there are $O(M^2\alpha)$ remaining terms of order $O(q_r^3)$.
			\item There are at most $|F\cup R^*(S)|^2=O(M^2\alpha^2)$ remaining terms, which are of order $O(q_r^3/\alpha)$.
		\end{itemize}
		We conclude that $$\mathbb{E}((Y_r^{(a)})^2)=O(q_r^3M^2\alpha+q_r^2\alpha M).$$
		
		For $Y_r^{(b)}$, the expected value $\mathbb{E}((Y_r^{(b)})^2)$ is bounded by $$\sum_{\substack{s_1\in S\\v\in R^*(s_1)\\N(v)\cap S=\{s_2,s_3\}}}\sum_{\substack{s_4\in S\\v'\in R^*(s_4)\\N(v')\cap S=\{s_5,s_6\}}}|\mathbb{E}((x_{s_1}^rx_{s_2}^rx_{s_3}^r-q_r^3)(x_{s_4}^rx_{s_5}^rx_{s_6}^r-q_r^3))|.$$ We use that $|R^*(S)|=O(\alpha M)$ by Lemma~\ref{lemma_terms}~(\ref{lemma_terms_rs}).
		\begin{itemize}
			\item Since any triple $\{s_1,s_2,s_3\}$ appears at most $$|N(s_1)\cap N(s_2)|+|N(s_1)\cap N(s_3)|+|N(s_2)\cap N(s_3)|=3\ell$$ times in $Y_r^{(b)}$, there are at most $3\ell|R^*(S)|=O(\alpha M)$ terms of order $O(q_r^3)$.
			\item Any two distinct vertices $s,t\in S$ appear together at most $$|N(s)\cap N(t)\cap R^*(S)|+|N(s)\cap R^*(t)|+|N(t)\cap R^*(s)|\leq\ell+2R_{\max}^{(c)}$$ times in $Y_r^{(b)}$, so there are at most $(\ell+2R_{\max}^{(c)})|R^*(S)|=O(M^{\sfrac32}\alpha)$ remaining terms of order $O(q_r^4)$.
			\item Any vertex $s\in S$ appears at most $$|R^*(s)|+|N(s)\cap R^*(S)|\leq R_{\max}^{(a)}+R_{\max}^{(b)}$$ times in $Y_r^{(b)}$, so there are $O(M^2\alpha)$ remaining terms of order $O(q_r^5)$.
			\item There are at most $|R^*(S)|^2=O(M^2\alpha^2)$ remaining terms, which are of order $O(q_r^5/\alpha)$.
		\end{itemize}
		Since $q_r\leq1\leq M$, we conclude that $$\mathbb{E}((Y_r^{(b)})^2)=O(q_r^3M^2\alpha).$$
		
		For $Y_r^{(c)}$, the expected value $\mathbb{E}((Y_r^{(c)})^2)$ is bounded by $$\sum_{\substack{v\in V\setminus S\\N(v)\cap S=\{s_1,s_2\}}}\sum_{\substack{v'\in V\setminus S\\N(v')\cap S=\{s_3,s_4\}}}\b_{\max}^2|\mathbb{E}((x_{s_1}^rx_{s_2}^r-q_r^2)(x_{s_3}^rx_{s_4}^r-q_r^2))|.$$ We use that $|V|=O(\alpha^2)$ by Lemma~\ref{lemma_terms}~(\ref{lemma_terms_n}).
		\begin{itemize}
			\item Since any pair $\{s_1,s_2\}$ appears at most $|N(s_1)\cap N(s_2)|=\ell$ times in $Y_r^{(c)}$, there are at most $\ell|V|=O(\alpha^2)$ terms of order $O(q_r^2M^2/\alpha^2)$.
			\item Any vertex $s\in S$ appears at most $d(G)=\ell(\alpha-1)=O(\alpha)$ times in $Y_r^{(c)}$ by Proposition~\ref{proposition_n}, so there are $O(\alpha|V|)=O(\alpha^3)$ remaining terms of order $O(q_r^3M^2/\alpha^2)$.
			\item There are at most $|V|^2=O(\alpha^4)$ remaining terms, which are of order $O(q_r^3M^2/\alpha^3)$.
		\end{itemize}
		Since $q_r\alpha=\alpha-r\geq1$, we conclude that $$\mathbb{E}((Y_r^{(c)})^2)=O(q_r^3M^2\alpha).$$
		
		Finally, for $Y_r^{(d)}$, the expected value $\mathbb{E}((Y_r^{(d)})^2)$ is bounded by $$\sum_{s_1\in S}\sum_{s_2\in S}\b_{\max}^2|\mathbb{E}((x_{s_1}^r-q_r)(x_{s_2}^r-q_r))|.$$
		\begin{itemize}
			\item Since any vertex $s_1\in S$ appears only once in $Y_r^{(d)}$, there are $\alpha$ terms of order $O(q_rM^2/\alpha^2)$.
			\item There are at most $\alpha^2$ remaining terms, which are of order $O(q_rM^2/\alpha^3)$.
		\end{itemize}
		Since $q_r\alpha=\alpha-r\geq1$, we conclude that $$\mathbb{E}((Y_r^{(d)})^2)=O(q_rM^2/\alpha)=O(q_r^3M^2\alpha).$$ The result follows.
		
		(\ref{lemma_psipsi_abs}) By Jensen's inequality, we have $(\mathbb{E}(|\Psi_r-\psi_{p_r}|))^2\leq\mathbb{E}((\Psi_r-\psi_{p_r})^2)$, so the result follows by (\ref{lemma_psipsi_2}) and subadditivity of the square root.
	\end{proof}
	
	We can now prove Lemma~\ref{lemma_bar_e} and Lemma~\ref{lemma_error}.
	
	\begin{proof}[Proof of Lemma~\ref{lemma_bar_e}]
		By Lemma~\ref{lemma_psipsi}~(\ref{lemma_psipsi_1}), we have $\mathbb{E}(\overline{Q}_r)=O(q_rM/n_{\alpha-r})$ for $r=0,\ldots,\alpha-1$, so the result follows by Lemma~\ref{lemma_sums}~(\ref{lemma_sums_3}).
	\end{proof}
	
	\begin{proof}[Proof of Lemma~\ref{lemma_error}]
		(\ref{lemma_error_diff}) In $\mathcal{P}^{\delta}$, for $r=0,\ldots,\alpha-1$, we have
		\begin{align}
			|\widetilde{Q}_r-\overline{Q}_r|
			&=\left|\frac{\Psi_r-\psi_{p_r}}{n_{\alpha-r}-\Psi_r}-\frac{\Psi_r-\psi_{p_r}}{n_{\alpha-r}}\right|
			\\&=\frac{|\Psi_r-\psi_{p_r}|\Psi_r}{n_{\alpha-r}(n_{\alpha-r}-\Psi_r)}
			\\&\leq\frac{|\Psi_r-\psi_{p_r}|(\psi_{p_r}+|\Psi_r-\psi_{p_r}|)}{\delta n_{\alpha-r}^2}
			\\&=O\left(\frac{(\Psi_r-\psi_{p_r})^2+\psi_{p_r}|\Psi_r-\psi_{p_r}|}{n_{\alpha-r}^2}\right).
		\end{align}
		By Lemma~\ref{lemma_psipsi}~(\ref{lemma_psipsi_2},\ref{lemma_psipsi_abs}), Lemma~\ref{lemma_psi2}~(\ref{lemma_psi2_psi}), and Lemma~\ref{lemma_nr}~(\ref{lemma_nr_geq}), we get
		\begin{align}
			\mathbb{E}(|\widetilde{Q}_r-\overline{Q}_r|)
			&=O\left(\frac{q_r^3\alpha M^2}{(\alpha-r)^4}+\frac{q_r^2\alpha M}{(\alpha-r)^4}+\frac{M\sqrt{q_r^3\alpha M^2}}{\alpha(\alpha-r)^2}+\frac{M\sqrt{q_r^2\alpha M}}{\alpha(\alpha-r)^2}\right)
			\\&=O\left(\frac{M^2/\alpha^2}{\alpha-r}+\frac{M/\alpha}{(\alpha-r)^2}+\frac{M^2/\alpha^2}{(\alpha-r)^{\sfrac12}}+\frac{M^{\sfrac32}/\alpha^{\sfrac32}}{\alpha-r}\right).
		\end{align}
		Summing over $r=0,\ldots,\alpha-1$, the result follows, since $M=O(\alpha^{\sfrac12}/(\log\alpha)^2)$.
		
		(\ref{lemma_error_sum}) By Lemma~\ref{lemma_psipsi}~(\ref{lemma_psipsi_2}) and Lemma~\ref{lemma_nr}~(\ref{lemma_nr_geq}), in $\mathcal{P}^{\delta}$, we have
		\begin{align}
			\mathbb{E}(\widetilde{Q}_r^2)
			&\leq\frac{\mathbb{E}((\Psi_r-\psi_{p_r})^2)}{\delta^2n_{\alpha-r}^2}
			\\&=O\left(\frac{q_r^3\alpha M^2}{(\alpha-r)^4}+\frac{q_r^2\alpha M}{(\alpha-r)^4}\right)
			\\&=O\left(\frac{M^2/\alpha^2}{\alpha-r}+\frac{M/\alpha}{(\alpha-r)^2}\right).
		\end{align}
		Summing over $r=0,\ldots,\alpha-1$, the result follows, since $M=O(\alpha^{\sfrac12}/(\log\alpha)^2)$.
		
		(\ref{lemma_error_log}) We apply Titu's lemma, a variant of Cauchy-Schwartz, to the sequences $a_r:=\log(1+\widetilde{Q}_r)$ and $b_r:=\tfrac1{\alpha-r}$ for $r=0,\ldots,\alpha-1$. We get
		\begin{align}
			(\log\Pi)^2&=\left(\sum_{r=0}^{\alpha-1}a_r\right)^2\leq\left(\sum_{r=0}^{\alpha-1}\frac{a_r^2}{b_r}\right)\left(\sum_{r=0}^{\alpha-1}b_r\right)
			\\&=\left(\sum_{r=0}^{\alpha-1}(\alpha-r)(\log(1+\widetilde{Q}_r))^2\right)\left(\sum_{r=0}^{\alpha-1}\frac1{\alpha-r}\right).
		\end{align}
		In $\mathcal{P}^{\delta}\cap\mathcal{R}$, for $r=0,\ldots,\alpha-1$, we have $|\widetilde{Q}_r|\leq\tfrac1{\delta}Q_r\leq\tfrac1{\delta}\Sigma=o(1)$, so $\log(1+\widetilde{Q}_r)=O(\widetilde{Q}_r)$. Reusing the estimate on $\mathbb{E}(\widetilde{Q}_r^2)$ from (\ref{lemma_error_sum}) thus gives
		\begin{align}
			\mathbb{E}((\log\Pi)^2)
			&=O\left(\log\alpha\sum_{r=0}^{\alpha-1}(\alpha-r)\mathbb{E}(\widetilde{Q}_r^2)\right)
			\\&=O\left(\log\alpha\sum_{r=0}^{\alpha-1}\left(\frac{M^2}{\alpha^2}+\frac{M/\alpha}{\alpha-r}\right)\right)
		\end{align}
		The result follows by the definition of $\mathscr{E}$.
	\end{proof}
	
	\subsection{Singularly adjacent terms}\label{subsection_proof_psi1}
	
	We prove Lemma~\ref{lemma_sigma1hat}. We have $(\Psi_r^{(1)}-\psi_{p_r}^{(1)})^+\leq\Delta_r^{(1)}:=\Delta_r(F^{(1)})+\Delta_r(R_r^{(1)})$, so $\hat{Q}_r^{(1)}\leq\Delta_r^{(1)}/(n_{\alpha-r}-n_k)$. For $s\in S$, define $f(s):=|N(s)\cap(F^{(1)}\cup R^{(1)})|$, such that $\Delta_r^{(1)}\leq\sum_{s\in S\setminus S_r}f(s)$. Recall that $S\setminus S_r=\{s_r,\ldots,s_{\alpha-k-1}\}$ with $\pi=(s_0,\ldots,s_{\alpha-k-1})\in\mathcal{P}$ uniformly random. We get $$\hat{\Sigma}^{(1)}\leq\sum_{r=0}^{\alpha-k-1}\frac{\Delta_r^{(1)}}{n_{\alpha-r}-n_k}\leq\sum_{r=0}^{\alpha-k-1}\sum_{i=r}^{\alpha-k-1}\frac{f(s_i)}{n_{\alpha-r}-n_k}.$$ Let $\theta(t):=\sum_{r=0}^{\alpha-k-t}\frac1{n_{\alpha-r}-n_k}$, such that switching the order of summation and substituting $t=\alpha-k-i$ gives $$\hat{\Sigma}^{(1)}\leq\sum_{i=0}^{\alpha-k-1}f(s_i)\theta(\alpha-k-i)=\sum_{t=1}^{\alpha-k}f(s_{\alpha-k-t})\theta(t).$$ We bound $f(s)$ and $\theta(t)$.
	
	\begin{lemma}\label{lemma_fs}
		We have
		\begin{enumerate}[(i)]
			\item\label{lemma_fs_one} $f(s)\leq\ell k$ for all $s\in S$,
			\item\label{lemma_fs_sum} $\sum_{s\in S}f(s)\leq2kM$.
		\end{enumerate}
	\end{lemma}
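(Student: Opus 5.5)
The plan is to use the structure of vertices in $V^{(1)}$ relative to a fixed MIS $\overline{S}\supset S$, as in the proofs of Lemma~\ref{lemma_bounds1} and Lemma~\ref{lemma_sigmaihat}. Let $\overline{S}$ be a maximal independent set containing $S$; it is an MIS by Proposition~\ref{proposition_imis}, and $|\overline{S}\setminus S|=k$. By $2$-uniformity, every vertex of $V^{(1)}$ is adjacent to exactly one vertex of $S$ and exactly one vertex of $\overline{S}\setminus S$. Since $F^{(1)}\cup R^{(1)}\subset V^{(1)}$, for fixed $s\in S$ we have
\begin{equation}
N(s)\cap(F^{(1)}\cup R^{(1)})\subset N(s)\cap V^{(1)}=\bigcup_{u\in\overline{S}\setminus S}N(s)\cap N(u).
\end{equation}

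For (\ref{lemma_fs_one}), observe that $s$ and $u$ are non-adjacent for each $u\in\overline{S}\setminus S$, so Proposition~\ref{proposition_ell} gives $|N(s)\cap N(u)|=\ell$. The union above therefore has at most $\ell|\overline{S}\setminus S|=\ell k$ elements, which bounds $f(s)$.

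For (\ref{lemma_fs_sum}), I double count pairs $(s,v)$ with $s\in S$ and $v\in N(s)\cap(F^{(1)}\cup R^{(1)})$. Each $v\in F^{(1)}\cup R^{(1)}$ has exactly one neighbour in $S$, so
\begin{equation}
\sum_{s\in S}f(s)=|F^{(1)}\cup R^{(1)}|\leq|F^{(1)}|+|R^{(1)}|.
\end{equation}
I then apply Lemma~\ref{lemma_bounds1}~(\ref{lemma_bounds1_f1},\ref{lemma_bounds1_r1}). These give $|F^{(1)}|\leq kF_{\max}\leq kM$ and $|R^{(1)}|\leq kR_{\max}^{(b)}\leq kM$, and summing yields $2kM$.

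There is no real obstacle here. The only point needing care is to use the exact count of one neighbour in $S$ for vertices of $V^{(1)}$, which holds by the definition of $V^{(1)}$; this is what makes the double count an equality rather than merely an inequality.
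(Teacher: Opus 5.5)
Your proof is correct and follows the paper's argument essentially step for step. For (i) you use an MIS $\overline{S}\supset S$ and note that each of the $k$ vertices of $\overline{S}\setminus S$ shares exactly $\ell$ common neighbours with $s$. For (ii) you use the exactly-one-neighbour-in-$S$ double count together with Lemma~\ref{lemma_bounds1}~(\ref{lemma_bounds1_f1},\ref{lemma_bounds1_r1}).
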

	
	\begin{proof}
		(\ref{lemma_fs_one}) Let $\overline{S}$ be an MIS that contains $S$. Since $G$ is $2$-uniform, any element of $N(s)\cap V^{(1)}$ is adjacent to exactly one vertex in $\overline{S}\setminus S$. There are $k$ such vertices, each having $\ell$ neighbors in $N(s)$, so $f(s)\leq|N(s)\cap V^{(1)}|=\ell k$.
		
		(\ref{lemma_fs_sum}) Since every vertex in $F^{(1)}\cup R^{(1)}$ is adjacent to exactly one vertex in $S$, we have $\sum_{s\in S}f(s)=|F^{(1)}\cup R^{(1)}|$. The result follows by Lemma~\ref{lemma_bounds1}~(\ref{lemma_bounds1_f1},\ref{lemma_bounds1_r1}).
	\end{proof}
	
	\begin{lemma}\label{lemma_theta}
		For $t=1,\ldots,\alpha-k$, we have $\theta(t)\leq\frac{1}{\ell k}(\frac1t+\log(1+\frac{\ell k}{t}))\leq\frac2t$.
	\end{lemma}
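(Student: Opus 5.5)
We need to bound $\theta(t)=\sum_{r=0}^{\alpha-k-t}\frac1{n_{\alpha-r}-n_k}$. The plan is to bound each denominator from below, substitute $u=\alpha-k-r$, and compare the resulting sum with an integral.

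For the lower bound, Lemma~\ref{lemma_nr}~(\ref{lemma_nr_sum},\ref{lemma_nr_geq}) gives
\[
n_{\alpha-r}-n_k=n_{u}+\ell ku\geq u^2+\ell ku=u(u+\ell k).
\]
This is sharper than Lemma~\ref{lemma_bounds2}~(\ref{lemma_bounds2_nrnk}), since it keeps the factor $\ell$ on the cross term. As $r$ ranges over $0,\ldots,\alpha-k-t$, the variable $u$ ranges over $t,\ldots,\alpha-k$. Extending the sum to infinity,
\[
\theta(t)\leq\sum_{u=t}^{\infty}\frac1{u(u+\ell k)}=\frac1{\ell k}\sum_{u=t}^{\infty}\left(\frac1u-\frac1{u+\ell k}\right).
\]
Here $k\geq1$ is assumed, as it is throughout Section~\ref{subsection_proof_psi1}. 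For $k=0$ the expression $\frac1{\ell k}$ is undefined, although the bound $\frac2t$ still holds directly.

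Next, isolate the $u=t$ term and compare the rest with an integral. The summand $g(u)=\frac1{u(u+\ell k)}$ is decreasing in $u$. Bounding the first term by $\frac1{\ell k}\cdot\frac1t$ and the remainder by $\int_t^\infty g(x)\,\mathrm{d}x$ gives
\[
\int_t^\infty\frac{\mathrm{d}x}{x(x+\ell k)}=\frac1{\ell k}\log\frac{t+\ell k}{t}=\frac1{\ell k}\log\left(1+\frac{\ell k}{t}\right),
\]
which is exactly the first claimed bound. The isolated term needs some care: $g(t)=\frac1{t(t+\ell k)}\leq\frac1{\ell kt}$ holds because $t+\ell k\geq\ell k$. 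The telescoping form gives the same conclusion directly: $\sum_{u\geq t}\left(\frac1u-\frac1{u+\ell k}\right)=\sum_{j=0}^{\ell k-1}\frac1{t+j}\leq\frac1t+\int_t^{t+\ell k}\frac{\mathrm{d}x}{x}$.

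For the second inequality, use $\log(1+x)\leq x$ with $x=\frac{\ell k}{t}$. Then
\[
\frac1{\ell k}\left(\frac1t+\log\left(1+\frac{\ell k}t\right)\right)\leq\frac1{\ell kt}+\frac1t\leq\frac2t,
\]
where the last step uses $\ell k\geq1$.

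None of these steps is a real obstacle. The only points needing attention are using the sharper lower bound $u(u+\ell k)$ rather than $u(u+k)$, so that the factor $\ell k$ appears, and recording the assumption $k\geq1$.
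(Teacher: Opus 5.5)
Your proposal is correct and follows essentially the paper's proof: the same substitution $u=\alpha-k-r$, the same lower bound $n_{k+u}-n_k=n_u+\ell ku\geq u(u+\ell k)$ from Lemma~\ref{lemma_nr}, the same telescoping/integral comparison yielding $\frac1{\ell k}\bigl(\frac1t+\log(1+\frac{\ell k}{t})\bigr)$, and the final step via $\log(1+x)\leq x$ and $\ell k\geq1$. Your note that $k\geq1$ is implicitly required matches the paper, whose proof likewise uses $\ell k\geq1$ in the setting $k\geq1$ of Section~\ref{subsection_proof_psi1}.
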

	
	\begin{proof}
		Substituting $i=\alpha-k-r$ gives $$\theta(t)=\sum_{i=t}^{\alpha-k}\frac1{n_{k+i}-n_k}\leq\sum_{i=t}^{\infty}\frac1{i^2+\ell ki}=\frac1{\ell k}\sum_{i=t}^{\infty}\left(\frac1{i}-\frac1{i+\ell k}\right),$$ since Lemma~\ref{lemma_nr}~(\ref{lemma_nr_sum},\ref{lemma_nr_geq}) give $n_{k+i}-n_k=n_i+\ell ki\geq i^2+\ell ki$. Telescoping gives $$\theta(t)\leq\frac1{\ell k}\sum_{i=t}^{t+\ell k-1}\frac1{i}\leq\frac1{\ell k}\left(\frac1{t}+\int_t^{t+\ell k}\frac1x\,\mathrm{d}x\right)=\frac1{\ell k}\left(\frac1{t}+\log\left(1+\frac{\ell k}{t}\right)\right).$$ The result follows, since $\log(1+x)\leq x$ and $\ell k\geq1$.
	\end{proof}
	
	Let $\Sigma^{(l)}:=\sum_{t=1}^{\ell k}f(s_{\alpha-k-t})\theta(t)$ and $\Sigma^{(u)}:=\sum_{t=\ell k+1}^{\alpha-k}f(s_{\alpha-k-t})\theta(t)$, such that $\hat{\Sigma}^{(1)}\leq\Sigma^{(l)}+\Sigma^{(u)}$. Consider the random variable $T:=\{s_{\alpha-k-t}:t=\ell k+1,\ldots,\alpha-k\}$. We show that Lemma~\ref{lemma_sigma1hat} follows from the following two lemmas.
	
	\begin{lemma}\label{lemma_sigmal}
		Let $C=\Omega(1)$ be a parameter with $C\leq(\frac12-\Omega(1))\frac{\log\alpha}{\log\log\alpha}$. Then $$\mathbb{E}(e^{C\Sigma^{(l)}})\leq e^{o(k)}.$$
	\end{lemma}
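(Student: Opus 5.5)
The plan is to bound $\Sigma^{(l)}$ deterministically and then absorb the resulting constant into the exponent. By Lemma~\ref{lemma_fs}~(\ref{lemma_fs_one}) we have $f(s)\leq\ell k$ for every $s\in S$. By Lemma~\ref{lemma_theta}, $\theta(t)\leq\frac1{\ell k}(\frac1t+\log(1+\frac{\ell k}{t}))$. Inserting both bounds gives
\begin{align}
\Sigma^{(l)}\leq\sum_{t=1}^{\ell k}\left(\frac1t+\log\left(1+\frac{\ell k}{t}\right)\right).
\end{align}
The first part is a harmonic sum, of size $O(\log(\ell k)+1)$. The second part is $\sum_{t\leq\ell k}\log(1+\ell k/t)\leq\ell k\int_0^1\log(1+1/x)\,\mathrm{d}x+O(\log(\ell k))=2\log 2\cdot\ell k+O(\log(\ell k))$. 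This deterministic bound is of order $k$, so $e^{C\Sigma^{(l)}}$ would be $e^{O(Ck)}$. That is not $e^{o(k)}$ when $C\to\infty$, so a purely deterministic bound is too weak, and I will need to use the randomness of the permutation.

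The refined approach exploits that $f$ has small total mass: $\sum_s f(s)\leq 2kM$ by Lemma~\ref{lemma_fs}~(\ref{lemma_fs_sum}), while $|S|=\alpha-k\gg kM$. The vertices $s_{\alpha-k-t}$ for $t=1,\ldots,\ell k$ are a uniformly random ordered sample of $\ell k$ distinct elements of $S$. Each has expected $f$-value $\frac{2kM}{\alpha-k}=O(kM/\alpha)$, which is tiny. I will bound $\mathbb{E}(e^{C\Sigma^{(l)}})$ by first dominating sampling without replacement by sampling with replacement; the exponential moment of a sum without replacement is at most that with replacement, by Hoeffding's comparison for convex functions. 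Independence then factorizes the expectation:
\begin{align}
\mathbb{E}(e^{C\Sigma^{(l)}})\leq\prod_{t=1}^{\ell k}\mathbb{E}\left(e^{C\theta(t)f(\boldsymbol{s})}\right),
\end{align}
where $\boldsymbol{s}$ is uniform on $S$. Since $0\leq f\leq\ell k$ and $e^{x}\leq1+\frac{x}{a}(e^{a}-1)$ for $0\leq x\leq a$, each factor is at most $1+\frac{2kM}{\alpha-k}\cdot\frac{e^{C\theta(t)\ell k}-1}{\ell k}$. I then use $\theta(t)\ell k\leq\frac1t+\log(1+\ell k/t)$ to get
\begin{align}
e^{C\theta(t)\ell k}\leq e^{C/t}(1+\ell k/t)^{C}.
\end{align}

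The main obstacle is controlling $\sum_{t\leq\ell k}\frac{2kM}{\alpha}\cdot\frac{e^{C/t}(1+\ell k/t)^C}{\ell k}$ and showing that it is $o(k)$. The worst term is $t=1$, of size about $\frac{M}{\alpha}e^{C}(\ell k)^{C}$. With $k=O(M)$, $M\leq\alpha^{1/2}$ and $C\leq(\frac12-\Omega(1))\frac{\log\alpha}{\log\log\alpha}$, we have $(\ell k)^C\leq\alpha^{(\frac12-\Omega(1))\log(\ell k)/\log\log\alpha}$. This is only $\alpha^{\frac12-\Omega(1)}$ when $\log k\lesssim\log\log\alpha$. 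For larger $k$ the product approach is too crude, so I would split into two cases.

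When $k\leq(\log\alpha)^{O(1)}$, the factor-by-factor bound gives a total of $\ell k\cdot\frac{M}{\alpha}\alpha^{\frac12-\Omega(1)}\cdot\mathrm{polylog}=o(k)$, using $M=O(\alpha^{1/2}/(\log\alpha)^2)$. When $k$ is larger, I would instead cap $f(s)\theta(t)\leq\ell k\theta(t)$ only for $t\leq t^*:=\lceil\ell k/\log\alpha\rceil$, which contributes $O(\ell k(\log\log\alpha)/\log\alpha)$ to $\Sigma^{(l)}$ deterministically and hence $o(k)$ after multiplying by $C$. For $t>t^*$, $\theta(t)\ell k=O(\log\log\alpha)$, so the same convexity/with-replacement estimate gives factors $1+O(\frac{kM}{\alpha}(\log\alpha)^{O(C)}/(\ell k))$, whose product is $e^{o(k)}$. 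The delicate bookkeeping is exactly the interplay between $(\log\alpha)^{O(C)}=\alpha^{\frac12-\Omega(1)}$ and $M/\alpha\leq\alpha^{-1/2}$, which is where the hypothesis on $C$ enters.
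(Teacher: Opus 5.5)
Write $C\le(\frac12-\epsilon)\frac{\log\alpha}{\log\log\alpha}$ with $\epsilon>0$ fixed. Your reduction matches the paper's up to the per-factor estimate. You compare with independent draws and then apply the chord bound, which gives factors $1+\frac{2M}{\ell(\alpha-k)}\bigl(e^{C\theta(t)\ell k}-1\bigr)$, exactly as in the paper. (For the weighted sum $\sum_t\theta(t)f(s_{\alpha-k-t})$ the right comparison tool is negative association as in Theorem~\ref{theorem_so}, not Hoeffding's unweighted comparison.)

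The gap is in how you finish.

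\textbf{Second case.} Replacing $f$ by its maximum $\ell k$ for $t\le t^*=\lceil\ell k/\log\alpha\rceil$ does not give $o(k)$. By Lemma~\ref{lemma_theta} the capped part is about $\sum_{t\le t^*}\log(1+\ell k/t)\approx t^*(\log\log\alpha+1)\approx\ell k\log\log\alpha/\log\alpha$. As a worst-case bound this is sharp up to constants, since up to $2M/\ell\ge t^*$ vertices may have $f(s)=\ell k$. Multiplying by $C$ gives up to $(\frac12-\epsilon+o(1))\ell k=\Theta(k)$, so you only get $\mathbb{E}(e^{C\Sigma^{(l)}})\le e^{O(k)}$. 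This is exactly the regime that matters: Lemma~\ref{lemma_sigma}~(\ref{lemma_sigma_large}) invokes the lemma with $C=(2+\varepsilon)^{-1}\frac{\log\alpha}{\log\log\alpha}$.

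\textbf{First case.} It does not absorb the rest either. Its $t=1$ term is about $\frac{M}{\alpha}e^{C}(\ell k)^{C}$. For $k=(\log\alpha)^{c}$ and $M$ near $\alpha^{1/2}/(\log\alpha)^2$ this equals $\alpha^{c(\frac12-\epsilon)-\frac12+o(1)}$, which is far from $o(k)$ once $c(\frac12-\epsilon)>\frac12$. Your own remark that the bound is good only for $\log k\lesssim\log\log\alpha$ is the correct limit, not $k\le(\log\alpha)^{O(1)}$. So the two cases do not close.

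\textbf{What is missing.} A deterministic cap throws away the sparsity gain $\frac{M}{\alpha}\lesssim\alpha^{-1/2}$. That loss is affordable only on a range shorter than $\ell k/\log\alpha$ by a further factor $(\log\alpha)^{\Omega(1)}$, and this is where the $\Omega(1)$ slack in the hypothesis on $C$ must be used a second time.

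\textbf{A repair of your route.} Choose $\eta>0$ with $(\frac12-\epsilon)(1+\eta)<\frac12$, and cap only for $t\le\ell k(\log\alpha)^{-1-\eta}$. Then $C$ times the capped part is $O(k(\log\alpha)^{-\eta})=o(k)$. For the remaining $t$ one has $e^{C\ell k\theta(t)}\le\alpha^{(\frac12-\epsilon)(1+\eta)+o(1)}$, so each factor is $1+\alpha^{-\Omega(1)}$ and their product is $e^{o(k)}$. No case split on $k$ is needed.

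\textbf{The paper's route.} It bounds each factor by $1+\alpha^{-1/2}(2e\ell k/t)^{C}=1+(t^*/t)^{C}$ with $t^*:=2e\ell k\,\alpha^{-1/(2C)}$. For $t\le t^*$ it uses $1+x\le 2x$ (valid for $x\ge1$) instead of $1+x\le e^{x}$, so the sparsity gain is kept even for small $t$. For $t>t^*$ it uses $\exp((t^*/t)^{C})$. This yields $e^{O(Ct^*)}$, and $Ct^*=k\exp(\log C-\frac{\log\alpha}{2C}+O(1))=o(k)$. That last step holds precisely because the hypothesis gives $\frac{\log\alpha}{2C}\ge(1+\Omega(1))\log\log\alpha$, i.e.\ $t^*\le\ell k(\log\alpha)^{-1-\Omega(1)}$.
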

	
	\begin{lemma}\label{lemma_sigmau}
		Let $C=\Omega(1)$ be a parameter with $C=o(\log\alpha)$. Then $$\mathbb{E}(e^{C\Sigma^{(u)}}|T)\leq e^{o(k)}.$$
	\end{lemma}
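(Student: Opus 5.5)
The plan is to exploit the remaining randomness of the ordering. Conditioned on $T$, the positions $t=\ell k+1,\ldots,\alpha-k$ receive the elements of $T$ in a uniformly random order. So $\Sigma^{(u)}=\sum_{t}\theta(t)f_t$, where $(f_t)_t$ is a uniformly random permutation of the multiset $\{f(s):s\in T\}$.

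The key tool is negative association. The coordinates of a random permutation of a fixed vector are negatively associated (Joag-Dev--Proschan). Each map $f_t\mapsto e^{C\theta(t)f_t}$ is non-decreasing, since $C,\theta(t)\geq0$, and these maps depend on disjoint coordinates. Hence
\begin{equation}
\mathbb{E}(e^{C\Sigma^{(u)}}\,|\,T)\leq\prod_{t=\ell k+1}^{\alpha-k}\mathbb{E}(e^{C\theta(t)f_t}\,|\,T).
\end{equation}
Each $f_t$ is marginally uniform on the values $f(s)$, $s\in T$. Write $\bar f:=|T|^{-1}\sum_{s\in T}f(s)$. By Lemma~\ref{lemma_fs}~(\ref{lemma_fs_sum}) we have $\sum_{s\in T}f(s)\leq2kM$. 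Since $k=O(M)=o(\alpha)$, we have $|T|=\alpha-k-\ell k\geq\alpha/2$ for $\alpha$ large, and therefore $\bar f\leq4kM/\alpha$.

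Next I bound each factor, using $e^{x}-1\leq xe^{x}$ for $x\geq0$ and $1+y\leq e^y$:
\begin{equation}
\log\mathbb{E}(e^{C\theta(t)f_t}\,|\,T)\leq C\theta(t)\,\bar f\,e^{C\theta(t)\max_s f(s)}.
\end{equation}
Lemma~\ref{lemma_fs}~(\ref{lemma_fs_one}) gives $f(s)\leq\ell k$. Lemma~\ref{lemma_theta} gives $\theta(t)\leq2/t$, so $\theta(t)\ell k\leq2\ell k/t<2$ for $t>\ell k$. The exponential factor is therefore at most $e^{2C}$, uniformly in $t$. Summing over $t$ and using $\sum_t\theta(t)\leq\sum_{t\geq1}2/t=O(\log\alpha)$ gives
\begin{equation}
\log\mathbb{E}(e^{C\Sigma^{(u)}}\,|\,T)=O\!\left(C\,\frac{kM}{\alpha}\,\log\alpha\;e^{2C}\right).
\end{equation}

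It remains to check that this is $o(k)$. Since $C=o(\log\alpha)$, we have $e^{2C}=\alpha^{o(1)}$ and $C\log\alpha=\alpha^{o(1)}$. Combined with $M=O(\alpha^{\sfrac12})$, the bound is $k\,\alpha^{-\sfrac12+o(1)}=o(k)$, which is the claim.

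The step I expect to be the main obstacle is justifying the product bound, i.e.\ invoking negative association of permutation distributions with monotone functions of disjoint coordinates. The cited literature does not appear to state this, so I would quote Joag-Dev and Proschan directly. If that is unavailable, a fallback is to split the $t$-range into dyadic blocks. Within each block, Hoeffding's comparison of sampling without and with replacement applies, and the blocks can then be handled by iterated conditioning, which costs only constant factors in the exponent. The crucial point in either route is that restricting to $t>\ell k$ keeps $C\theta(t)f_t$ bounded by $2C$. That is exactly why $\Sigma^{(l)}$ is treated separately in Lemma~\ref{lemma_sigmal}.
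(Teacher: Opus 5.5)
Your proof is correct and follows essentially the paper's route. The paper also conditions on $T$ and uses negative association of sampling without replacement, via Theorem~\ref{theorem_so} applied to the supermodular map $\exp(C\sum_{s\in T}f(s)x_s)$; so the tool you were worried about citing is exactly what the paper uses. It then bounds each factor with the same ingredients: $\theta(t)\le 2/t$, $f(s)\le\ell k<t$, and $\sum_s f(s)\le 2kM$.

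The differences are cosmetic. The paper takes the $\theta$-values, indexed by $s\in T$, as the random sample, rather than the $f$-values indexed by $t$. It also splits the $t$-sum at $t\approx f(s)\log\alpha$ instead of using $e^x-1\le xe^x$, which gives the cruder but still sufficient bound $O(k/\log\alpha)$.
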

	
	\begin{proof}[Proof of Lemma~\ref{lemma_sigma1hat}]
		Conditioning on $T$, we find that $(s_{\alpha-k-t})_{t=1,\ldots,\ell k}$ and $(s_{\alpha-k-t})_{t=\ell k+1,\ldots,\alpha-k}$ are independent uniformly random permutations of $S\setminus T$ and $T$ respectively, so $\Sigma^{(l)}$ and $\Sigma^{(u)}$ are independent. By the law of total expectation, we get
		\begin{align}
			\mathbb{E}(e^{C\hat{\Sigma}^{(1)}})
			&=\mathbb{E}(\mathbb{E}(e^{C\hat{\Sigma}^{(1)}}|T))
			\\&\leq\mathbb{E}(\mathbb{E}(e^{C\Sigma^{(l)}}|T)\mathbb{E}(e^{C\Sigma^{(u)}}|T))
			\\&\leq e^{o(k)}\mathbb{E}(\mathbb{E}(e^{C\Sigma^{(l)}}|T))
			\\&=e^{o(k)}\mathbb{E}(e^{C\Sigma^{(l)}})\leq e^{o(k)}
		\end{align}
		by Lemma~\ref{lemma_sigmal} and Lemma~\ref{lemma_sigmau}.
	\end{proof}
	
	It remains only to prove Lemma~\ref{lemma_sigmal} and Lemma~\ref{lemma_sigmau}. Both rely on Theorem~9.A.23 from~\cite{BookSO} on stochastic orders.
	
	\begin{theorem}[Stochastic orders]\label{theorem_so}
		Let $X_1,\ldots,X_n$ be negatively associated random variables, and let $Y_1,\ldots,Y_n$ be independent random variables, such that $X_i$ and $Y_i$ have the same distribution for all $i$. Then for any supermodular function $\phi:\mathbb{R}^n\to\mathbb{R}$, we have $\mathbb{E}(\phi(X_1,\ldots,X_n))\leq\mathbb{E}(\phi(Y_1,\ldots,Y_n))$.
	\end{theorem}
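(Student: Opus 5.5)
We will prove the stochastic-order comparison by induction on $n$, replacing one negatively associated coordinate at a time by an independent copy. Throughout, we take $Y=(Y_1,\ldots,Y_n)$ independent of $X=(X_1,\ldots,X_n)$, and we assume that $\phi$ is such that all expectations involved are finite.

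\emph{Regularity reduction.} First assume $\phi$ is smooth, so that supermodularity is equivalent to $\partial_i\partial_j\phi\geq0$ for $i\neq j$. The general case should follow by mollification. Convolving $\phi$ with a product mollifier preserves supermodularity, since it is an average of translates of $\phi$. We then pass to the limit by dominated convergence, which requires a mild integrability assumption. For our application the variables are bounded (indicator-type variables) and $\phi$ is an exponential of a nonnegative combination, so this is harmless. Alternatively, for discrete variables one can replace derivatives with finite differences throughout and avoid smoothing altogether.

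\emph{Inductive step.} Write $X'=(X_1,\ldots,X_{n-1})$ and fix a base point $a$. Then
$$\phi(x',x_n)=\phi(x',a)+\int_{\mathbb{R}}\partial_n\phi(x',t)\bigl(1_{\{x_n>t\}}-1_{\{a>t\}}\bigr)\,\mathrm{d}t.$$
Let $Y_n$ be a copy of $X_n$ independent of $X'$. The term $\phi(x',a)$ and the term involving $1_{\{a>t\}}$ are the same whether the last argument is $X_n$ or $Y_n$, so they cancel in the difference. By Fubini we obtain
$$\mathbb{E}\phi(X',X_n)-\mathbb{E}\phi(X',Y_n)=\int_{\mathbb{R}}\mathrm{Cov}\bigl(\partial_n\phi(X',t),1_{\{X_n>t\}}\bigr)\,\mathrm{d}t.$$
Supermodularity makes $x'\mapsto\partial_n\phi(x',t)$ nondecreasing in each coordinate. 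The indicator $1_{\{X_n>t\}}$ is a nondecreasing function of $X_n$ alone. Since these are nondecreasing functions of disjoint sets of coordinates of $X$, the definition of negative association gives that each covariance is $\leq0$. Hence
$$\mathbb{E}\phi(X)\leq\mathbb{E}\phi(X',Y_n).$$

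\emph{Closing the induction.} Set $\widetilde\phi(x'):=\mathbb{E}\phi(x',Y_n)$. This is an average of supermodular functions of $x'$, so it is supermodular. The subvector $X'$ is again negatively associated, because negative association is inherited by subsets. By independence of $Y_n$ from $X'$, we have $\mathbb{E}\phi(X',Y_n)=\mathbb{E}\widetilde\phi(X')$. The induction hypothesis then gives
$$\mathbb{E}\widetilde\phi(X')\leq\mathbb{E}\widetilde\phi(Y_1,\ldots,Y_{n-1}),$$
which equals $\mathbb{E}\phi(Y)$ by independence of the $Y_i$. The base case $n=1$ is trivial, since then $X_1$ and $Y_1$ have the same law.

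\emph{Main obstacle.} The only delicate step is the regularity and integrability in the integral representation: justifying Fubini and the mollification limit for non-smooth or unbounded $\phi$. I would handle it by first treating bounded variables and bounded $\phi$, which covers every use in this paper. Alternatively, one can simply cite~\cite{BookSO} for full generality.
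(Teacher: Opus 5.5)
The paper gives no proof here; it simply imports the statement as Theorem~9.A.23 of \cite{BookSO}. Your argument is a genuine, self-contained alternative. You induct by replacing one coordinate at a time with an independent copy and write the resulting difference as $\int\mathrm{Cov}\bigl(\partial_n\phi(X',t),1_{\{X_n>t\}}\bigr)\,\mathrm{d}t$. This is correct when $\phi$ is $C^2$ and the variables are bounded. The monotonicity of $\partial_n\phi(\cdot,t)$, the use of negative association on the disjoint index sets $\{1,\ldots,n-1\}$ and $\{n\}$, and the survival of negative association and supermodularity when passing to $X'$ and $\widetilde\phi$ are all used correctly.

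That is exactly the regime of Lemmas~\ref{lemma_sigmal} and~\ref{lemma_sigmau}. There $\phi$ is the exponential of a nonnegative linear form, and the $X_i$ form a sample without replacement from a finite multiset. They are bounded and finitely supported, though not indicator variables as you say. So for the paper's purposes your argument supplies a justification the paper itself omits.

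The step that fails as written is the reduction of a general supermodular $\phi$ to the smooth case. The obstruction is not integrability but pointwise convergence: supermodular functions need not be continuous, and mollification converges only at continuity points. For example, $\phi(x)=1_{\{x_1\ge0\}}1_{\{x_2\ge0\}}$ is supermodular, but a symmetric product mollifier gives $\phi_\varepsilon(0,0)\to\tfrac14\neq1$. If the $X_i$ have atoms at $0$, dominated convergence then yields the inequality for the pointwise limit of $\phi_\varepsilon$, not for $\phi$. Adding the term $1_{\{x_1>1\}}1_{\{x_2>1\}}$ shows that no fixed one-sided mollifier helps either.

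Your remark about finite differences is the right fix and should be the main argument. If $X_n$ takes values $v_1<\cdots<v_m$, then on its support
\[
\phi(x',x_n)=\phi(x',v_1)+\sum_{j=1}^{m-1}\bigl(\phi(x',v_{j+1})-\phi(x',v_j)\bigr)1_{\{x_n\ge v_{j+1}\}}.
\]
Each coefficient is nondecreasing in $x'$: apply supermodularity to $(y',v_j)$ and $(x',v_{j+1})$ with $x'\le y'$. The same covariance argument then goes through with no regularity assumption on $\phi$ at all. This proves the theorem for all finitely supported $X_i$, which covers every use in the paper. For arbitrary laws you still need the citation or a genuinely more careful approximation argument.
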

	
	\begin{remark}
		Any uniform sample without replacement from a multiset of real numbers is negatively associated.
	\end{remark}
	
	\begin{remark}
		A $C^2$ function $f:\mathbb{R}^n\to\mathbb{R}$ is supermodular if and only if $\tfrac{\partial^2}{\partial x_i\partial x_j}f\geq0$ for all $i\neq j$.
	\end{remark}
	
	\begin{proof}[Proof of Lemma~\ref{lemma_sigmal}]
		Note that $X_t:=f(s_{\alpha-k-t})$ for $t=1,\ldots,\ell k$ forms a uniform sample without replacement from the multiset $\Omega:=\{f(s):s\in S\}$. We have $e^{C\Sigma^{(l)}}=\phi(X_1,\ldots,X_{\ell k})$, for $$\phi(x_1,\ldots,x_{\ell k}):=\exp\left(C\sum_{t=1}^{\ell k}\theta(t)x_t\right),$$ which is supermodular, since $\tfrac{\partial^2}{\partial x_i\partial x_j}\phi=C^2\theta(i)\theta(j)\phi\geq0$. For independent uniform samples $Y_1,\ldots,Y_{\ell k}$ from $\Omega$, by Theorem~\ref{theorem_so}, we have $$\mathbb{E}(e^{C\Sigma^{(l)}})\leq\mathbb{E}(\phi(Y_1,\ldots,Y_{\ell k}))=\prod_{t=1}^{\ell k}\left(\frac1{|S|}\sum_{s\in S}\exp(C\theta(t)f(s))\right).$$
		
		For fixed $t$, consider maximizing the sum $\Sigma_t(\widetilde{f}):=\sum_{s\in S}\exp(C\theta(t)\widetilde{f}(s))$ over all functions $\widetilde{f}$ satisfying the constraints $0\leq\widetilde{f}(s)\leq\ell k$ for all $s\in S$, and $\sum_{s\in S}\widetilde{f}(s)\leq 2kM$. Recall that $f$ satisfies these constraints by Lemma~\ref{lemma_fs}.
		
		Whenever there are two vertices $s_1,s_2\in S$ with $0<\widetilde{f}(s_1)\leq\widetilde{f}(s_2)<\ell k$, if we increase $\widetilde{f}(s_2)$ and decrease $\widetilde{f}(s_1)$ by the same amount, then $\widetilde{f}$ still satisfies its constraints, while $\Sigma_t(\widetilde{f})$ increases. It follows that $\Sigma_t(\widetilde{f})$ can only be maximized if there is at most one vertex $s$ with $0<\widetilde{f}(s)<\ell k$. We get $$\Sigma_t(f)\leq|S|+\lceil\tfrac{2}{\ell}M\rceil(\exp(C\theta(t)\ell k)-1).$$ By Lemma~\ref{lemma_theta}, we have $\theta(t)\ell k\leq1+\log(\frac{2\ell k}{t})=\log(\frac{2e\ell k}{t})$ for $t\leq\ell k$, so we get
		\begin{align}
			\mathbb{E}(e^{C\Sigma^{(l)}})
			&\leq\prod_{t=1}^{\ell k}\left(1+\frac{\lceil\frac{2}{\ell}M\rceil}{|S|}\exp(C\theta(t)\ell k)\right)
			\\&\leq\prod_{t=1}^{\ell k}(1+\alpha^{-\sfrac12}(\tfrac{2e\ell k}{t})^{C}),
		\end{align}
		for $\alpha$ large enough such that $\lceil\frac{2}{\ell}M\rceil/|S|\leq\alpha^{-\sfrac12}$. Let $t^*:=2e\ell k\cdot\alpha^{-\frac1{2C}}$, such that $1+x\leq e^x$ gives $$\mathbb{E}(e^{C\Sigma^{(l)}})\leq\prod_{t=1}^{\ell k}(1+(\tfrac{t^*}{t})^{C})\leq\prod_{t=1}^{\lfloor t^*\rfloor}(2(\tfrac{t^*}{t})^{C})\prod_{t=\lfloor t^*\rfloor+1}^{\ell k}\exp((\tfrac{t^*}{t})^{C})=:\Pi^{(l)}\Pi^{(u)}.$$ It thus suffices to show $\log\Pi^{(l)}=o(k)$ and $\log\Pi^{(u)}=o(k)$. Since both values are monotone with $C$, we may assume without loss of generality that $C\geq2$.
		
		For $\log\Pi^{(l)}$, we get an empty product if $t^*<1$, so we may assume $t^*\geq1$. We have $$\log\Pi^{(l)}=\lfloor t^*\rfloor(\log2+C\log t^*)-C\log(\lfloor t^*\rfloor!).$$ Since $x\mapsto\log x$ is monotonically increasing for $x\geq1$, we have
		\begin{align}
			\log(\lfloor t^*\rfloor!)
			&\geq\int_1^{\lfloor t^*\rfloor}\log x\,\mathrm{d}x
			\\&\geq\int_1^{t^*}\log x\,\mathrm{d}x-(t^*-\lfloor t^*\rfloor)\log t^*
			\\&=\lfloor t^*\rfloor\log t^*-t^*+1.
		\end{align}
		Since $C\geq\log2$, it follows that $$\log\Pi^{(l)}\leq(C+\log2)t^*\leq k\exp(\log C-\tfrac1{2C}\log\alpha+O(1)).$$ Since $\frac1{2C}\geq(1+\Omega(1))\frac{\log\log\alpha}{\log\alpha}$ and $\log C\leq\log\log\alpha+O(1)$, we get $$\log\Pi^{(l)}\leq k\exp(-\Omega(\log\log\alpha))=o(k).$$
		
		For $\log\Pi^{(u)}$, we have
		\begin{align}
			\log\Pi^{(u)}
			&=(t^*)^C\sum_{t=\lfloor t^*\rfloor+1}^{\ell k}t^{-C}
			\\&\leq\left(\frac{t^*}{\lfloor t^*\rfloor+1}\right)^{C}+(t^*)^C\int_{\lfloor t^*\rfloor+1}^{\ell k}x^{-C}\,\mathrm{d}x,
			\\\int_{\lfloor t^*\rfloor+1}^{\ell k}x^{-C}\,\mathrm{d}x
			&\leq\int_{t^*}^{\infty}x^{-C}\,\mathrm{d}x=\frac{(t^*)^{1-C}}{C-1}.
		\end{align}
		If $t^*\geq1$, then $(\frac{t^*}{\lfloor t^*\rfloor+1})^C\leq1\leq t^*$. Otherwise, $(\frac{t^*}{\lfloor t^*\rfloor+1})^C=(t^*)^C\leq t^*$, because $C\geq2$. It follows that $\log\Pi^{(u)}\leq(1+\tfrac1{C-1})t^*\leq Ct^*$, which we already showed to be $o(k)$.
	\end{proof}
	
	\begin{proof}[Proof of Lemma~\ref{lemma_sigmau}]
		For $s=s_{\alpha-k-t}\in T$, let $X_s:=\theta(t)$, such that $X_s$ for $s\in T$ forms a uniform sample without replacement from the set $\Omega:=\{\theta(t):t\in I\}$ with $I:=\{\ell k+1,\ldots,\alpha-k\}$. We have $e^{C\Sigma^{(u)}}=\phi((X_s)_{s\in T})$, for $$\phi((x_s)_{s\in T}):=\exp\left(C\sum_{s\in T}f(s)x_s\right),$$ which is supermodular, since $\frac{\partial^2}{\partial x_{s_1}\partial x_{s_2}}\phi=C^2f(s_1)f(s_2)\phi\geq0$. For independent uniform samples $Y_s$ for $s\in T$ from $\Omega$, by Theorem~\ref{theorem_so}, we have $$\mathbb{E}(e^{C\Sigma^{(u)}})\leq\mathbb{E}(\phi((Y_s)_{s\in T}))=\prod_{s\in T}\left(\frac1{|I|}\sum_{t\in I}\exp(Cf(s)\theta(t))\right).$$ By Lemma~\ref{lemma_theta}, we have $Cf(s)\theta(t)\leq2Cf(s)/t$. We use that $e^x-1=O(x)$ for $x=o(1)$ and $C=o(\log\alpha)$ to get
		\begin{align}
			&\frac1{|I|}\sum_{t\in I}\exp(Cf(s)\theta(t))
			\\&\leq1+\frac1{|I|}\sum_{t\in I}\left(\exp\left(\frac{2Cf(s)}{t}\right)-1\right)
			\\&\leq1+\frac1{|I|}\sum_{\substack{t\in I\\t\leq f(s)\log\alpha}}\exp\left(\frac{2Cf(s)}{t}\right)+\frac1{|I|}\sum_{\substack{t\in I\\t>f(s)\log\alpha}}O\left(\frac{2Cf(s)}{t}\right)
			\\&\leq1+\frac{1}{|I|}e^{2C}f(s)\log\alpha+O\left(\frac1{|I|}Cf(s)\log\alpha\right),
		\end{align}
		because $f(s)\leq\ell k\leq t$ for $t\in I$ by Lemma~\ref{lemma_fs}~(\ref{lemma_fs_one}), and $\sum_{t=1}^{\alpha}\frac1t=O(\log\alpha)$. Since $C=o(\log\alpha)$, we have $e^{2C}+O(C)=O(\alpha^{\sfrac12})$, so since $|I|=\Omega(\alpha)$, we get $$\mathbb{E}(e^{C\Sigma^{(u)}})\leq\prod_{s\in T}\left(1+O\left(\frac{f(s)\log\alpha}{\alpha^{\sfrac12}}\right)\right).$$ Since $f(s)\leq\ell k=O(M)$, we can use $\log(1+x)=O(x)$ for $x=o(1)$ to get $$\mathbb{E}(e^{C\Sigma^{(u)}})\leq\exp\left(O\left(\frac{\log\alpha}{\alpha^{\sfrac12}}\sum_{s\in T}f(s)\right)\right).$$ The result follows by Lemma~\ref{lemma_fs}~(\ref{lemma_fs_sum}).
	\end{proof}
	
	This finishes the proof of Lemma~\ref{lemma_sigma}.
	
	\subsection{Extremal probability}\label{subsection_proof_remaining}
	
	We prove Lemma~\ref{lemma_remaining_p}. For $i\in\{2,1,0\}$, we define $\Delta_r^{(i)}:=\Delta_r(F^{(i)})+\Delta_r(R_r^{(i)})$, such that $\Delta_r=\Delta_r^{(2)}+\Delta_r^{(1)}+\Delta_r^{(0)}$. Note that this coincides with the definitions given in Section~\ref{subsection_proof_stronger} and Section~\ref{subsection_proof_psi1}. We can thus bound $\Delta_r^{(2)}$ again with Lemma~\ref{lemma_delta2}, and bound $\Delta_r^{(1)}$ by $\sum_{s\in S\setminus S_r}f(s)$ again, for $f$ as defined in Section~\ref{subsection_proof_psi1}. We can furthermore bound $\Delta_r^{(0)}$ by $K$, since $R_r^{(0)}\subset R^{(0)}$.
	
	\begin{lemma}\label{lemma_boundpt}
		Let $0<\delta<1$ and $r=0,\ldots,\alpha-k-1$ with $t:=\alpha-k-r$. If $n_{\alpha-r}-\Psi_r<\delta(n_{\alpha-r}-K)$ and $M^2/\alpha=o(\delta)$, then $$\Delta_r^{(2)}+\Delta_r^{(1)}\geq(1-(1+o(1))\delta)(n_t+\ell tk).$$
	\end{lemma}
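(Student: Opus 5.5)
We rewrite the hypothesis as an upper bound on the number of still-admissible vertices. Since $n_{\alpha-r}-\Psi_r=|V_r|-\b(V_r)$ and $\b_{\max}=O(M/\alpha)=o(1)$, the hypothesis $n_{\alpha-r}-\Psi_r<\delta(n_{\alpha-r}-K)$ gives
\[
|V_r|\le(1+O(M/\alpha))\,\delta\,(n_{\alpha-r}-K).
\]
Using $|V_r|=n_{\alpha-r}-\Delta_r$ (because $|\widetilde{V}_r|=n_{\alpha-r}$ by Corollary~\ref{corollary_hereditary}), this becomes
\[
\Delta_r\ge n_{\alpha-r}-(1+O(M/\alpha))\,\delta\,(n_{\alpha-r}-K).
\]

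Next we split $\Delta_r=\Delta_r^{(2)}+\Delta_r^{(1)}+\Delta_r^{(0)}$ and bound $\Delta_r^{(0)}\le K$, as noted just before the statement. This yields
\[
\Delta_r^{(2)}+\Delta_r^{(1)}\ge (n_{\alpha-r}-K)\bigl(1-(1+O(M/\alpha))\delta\bigr).
\]

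To compare $n_{\alpha-r}-K$ with $n_t+\ell tk$, we use Lemma~\ref{lemma_nr}~(\ref{lemma_nr_sum}) with $\alpha-r=t+k$, which gives
\[
n_{\alpha-r}=n_t+n_k+\ell tk,\qquad\text{so}\qquad n_{\alpha-r}-K=(n_t+\ell tk)+(n_k-K).
\]
Combinatorially, $n_t+\ell tk$ counts the vertices of $\widetilde{V}_r$ with at least one neighbour in $S\setminus S_r$, i.e.\ $\widetilde{V}_r\cap(V^{(2)}\cup V^{(1)}\cup (S\setminus S_r))$. The leftover $n_k-K\ge0$ counts the feasible vertices of $V^{(0)}$. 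These are always admissible, so they lie in $V_r$ and cannot contribute to $\Delta_r^{(2)}+\Delta_r^{(1)}$.

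To avoid losing the extra term $(n_k-K)(1-\delta)$, we argue more directly on the complementary set. Write $V_r=V_r^{\ge1}\cup V_r^{0}$ according to whether vertices have a neighbour in $S\setminus S_r$. Then $V_r^{0}$ consists of exactly the $n_k-K$ feasible vertices of $V^{(0)}$, and
\[
|V_r^{\ge1}|=(n_t+\ell tk)-\Delta_r^{(2)}-\Delta_r^{(1)}.
\]
Weights $\b$ on $V_r^{\ge1}$ are at most $M/\alpha$ each, and likewise on $V_r^0$. The hypothesis then reads
\[
(1-O(M/\alpha))\bigl(|V_r^{\ge1}|+(n_k-K)\bigr)<\delta\bigl((n_t+\ell tk)+(n_k-K)\bigr).
\]

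The main obstacle is the $(n_k-K)$ term. On the left it appears with coefficient $1-O(M/\alpha)$, on the right with coefficient $\delta$. So we need $(1-O(M/\alpha)-\delta)(n_k-K)\ge -O(M/\alpha)\,\delta\,(n_t+\ell tk)$. This holds for $\delta$ bounded away from $1$. For general $\delta<1$, we absorb the error $O(M/\alpha)(n_k-K)=O(M^3/\alpha)$ using $n_k=O(M^2)$ and $n_t+\ell tk\ge t(t+k)\ge1$. The hypothesis $M^2/\alpha=o(\delta)$ is exactly what allows this absorption into a $(1+o(1))\delta$ factor once $n_t+\ell tk$ is compared with $n_k$. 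If needed, we would split into the cases $n_t+\ell tk\ge M$ and $n_t+\ell tk<M$, handling the latter via $\b\ge0$ on $V_r^0$, which makes the $V_r^0$ contribution favorable.

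Dropping the nonnegative $(n_k-K)$ contributions then gives
\[
|V_r^{\ge1}|\le(1+o(1))\delta\,(n_t+\ell tk),
\]
which rearranges to the claimed bound $\Delta_r^{(2)}+\Delta_r^{(1)}\ge(1-(1+o(1))\delta)(n_t+\ell tk)$.
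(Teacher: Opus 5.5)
Your first three displays already form a complete proof, and it is slightly sharper than the paper's. They give $\Delta_r^{(2)}+\Delta_r^{(1)}>\bigl(1-\tfrac{\delta}{1-\b_{\max}}\bigr)(n_{\alpha-r}-K)$. There are two cases:
\begin{itemize}
\item If the coefficient is nonnegative, your own identity $n_{\alpha-r}-K=(n_t+\ell tk)+(n_k-K)\ge n_t+\ell tk$ finishes the proof.
\item If it is negative, the claimed right-hand side (with $1+o(1)=1/(1-\b_{\max})$) is negative, and there is nothing to prove.
\end{itemize}
The paper instead writes $\Delta_r^{(2)}+\Delta_r^{(1)}=\Psi_r-\Delta_r^{(0)}-\Lambda_r$ and bounds $\Lambda_r\le\b_{\max}n_{\alpha-r}$ additively. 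Converting $n_{\alpha-r}$ into $n_{\alpha-r}-K$ then costs a factor $1+K/(n_{\alpha-r}-K)=O(k)=O(M)$, which is exactly why the paper needs $M^2/\alpha=o(\delta)$. Your estimate $\b(V_r)\le\b_{\max}|V_r|$, fed back into the hypothesis, makes the error multiplicative in $\delta$. So your $o(1)$ is $O(M/\alpha)$, and you never use $M^2/\alpha=o(\delta)$.

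Everything from ``To avoid losing the extra term'' onward should be cut. Dropping $(n_k-K)$ times a nonnegative coefficient from a lower bound loses nothing. Your ``main obstacle'' also disappears under the same sign split: for $\delta>1-\b_{\max}$, the target $|V_r^{\ge1}|\le\tfrac{\delta}{1-\b_{\max}}(n_t+\ell tk)$ holds trivially because $|V_r^{\ge1}|\le n_t+\ell tk$.

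Worse, the absorption you sketch is not valid as justified. From $M^2/\alpha=o(\delta)$ you only get $M^3/\alpha=o(\delta M)$, so comparing an error $O(M^3/\alpha)$ with $\delta(n_t+\ell tk)\ge\delta$ fails whenever $n_t+\ell tk$ is much smaller than $M$. A correct version would use $n_k-K\le n_k=O(k^2)$ and $n_t+\ell tk\ge\ell k$ (for $k\ge1$). That gives an error $O(Mk^2/\alpha)=o(\delta)\,k$, which is precisely the paper's $K/(n_{\alpha-r}-K)=O(k)$ step. The proposed split according to whether $n_t+\ell tk\ge M$ is not an argument.

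A minor point: $V_r^0=V^{(0)}\setminus(F\cup R^*(S_r))$ has \emph{at least}, not exactly, $n_k-K$ elements, since the inclusion $R^*(S_r)\subseteq R^*(S)$ may be proper. This direction is harmless for your inequality.
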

	
	\begin{proof}
		Since $\Delta_r^{(2)}+\Delta_r^{(1)}=\Psi_r-\Delta_r^{(0)}-\Lambda_r$ and $\Delta_r^{(0)}\leq K$, we have
		\begin{align}
			&\Delta_r^{(2)}+\Delta_r^{(1)}>(1-\delta)(n_{\alpha-r}-K)-\Lambda_r,
			\\&\Lambda_r\leq\b(\widetilde{V}_r)\leq\b_{\max}n_{\alpha-r}\leq\tfrac{M}{\alpha}\left(1+\tfrac{K}{n_{\alpha-r}-K}\right)(n_{\alpha-r}-K).
		\end{align}
		Note that $n_{\alpha-r}-K\geq n_t+\ell tk\geq k$ by Lemma~\ref{lemma_bounds2}~(\ref{lemma_bounds2_nrnk}) and Lemma~\ref{lemma_nr}~(\ref{lemma_nr_sum}), so $K/(n_{\alpha-r}-K)=O(k)=O(M)$ by Lemma~\ref{lemma_bounds1}~(\ref{lemma_bounds1_v0}). We get $$\Delta_r^{(2)}+\Delta_r^{(1)}\geq(1-\delta-O(M^2/\alpha))(n_{\alpha-r}-K),$$ so the result follows.
	\end{proof}
	
	To bound the probability of $\mathcal{P}\setminus\mathcal{P}^{\delta}$, it thus suffices to bound $\Delta_r^{(2)}$ and $\Delta_r^{(1)}$.
	
	\begin{lemma}\label{lemma_pd0}
		For $r=0,\ldots,\alpha-k-1$ with $t:=\alpha-k-r$, we have
		\begin{enumerate}[(i)]
			\item\label{lemma_pd0_d2} $\begin{aligned}[t]
				\mathbb{P}(\Delta_r^{(2)}\geq1)=O(t^2M/\alpha),
			\end{aligned}$
			\item\label{lemma_pd0_d1} $\begin{aligned}[t]
				\mathbb{P}(\Delta_r^{(1)}\geq1)=O(tkM/\alpha).
			\end{aligned}$
		\end{enumerate}
	\end{lemma}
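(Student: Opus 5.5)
Both bounds follow from a first-moment (Markov) argument on the uniformly random set $S\setminus S_r$, which has size $t$. We bound the expected number of ``witnesses'' that make $\Delta_r^{(2)}$ or $\Delta_r^{(1)}$ positive.

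For (\ref{lemma_pd0_d2}), we use Lemma~\ref{lemma_delta2}. If $\Delta_r^{(2)}\geq1$, then $S\setminus S_r$ contains at least one interfering pair of vertices. Every $s\in S$ interferes with at most $F_{\max}+R_{\max}^{(b)}\leq2M$ other vertices, so there are at most $\alpha M$ interfering pairs in $S$. Since $S\setminus S_r$ is a uniformly random subset of $S$ of size $t$, a fixed pair lies in it with probability
\[
\frac{t(t-1)}{(\alpha-k)(\alpha-k-1)}=O(t^2/\alpha^2),
\]
using $k=O(M)=o(\alpha)$. Markov's inequality then gives $\mathbb{P}(\Delta_r^{(2)}\geq1)\leq\alpha M\cdot O(t^2/\alpha^2)=O(t^2M/\alpha)$. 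This is exactly the computation for $B_0(t)$ in Lemma~\ref{lemma_pb}, and I would cite it directly.

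For (\ref{lemma_pd0_d1}), we use the bound $\Delta_r^{(1)}\leq\sum_{s\in S\setminus S_r}f(s)$ from Section~\ref{subsection_proof_psi1}. If $\Delta_r^{(1)}\geq1$, some $s\in S\setminus S_r$ has $f(s)\geq1$. Lemma~\ref{lemma_fs}~(\ref{lemma_fs_sum}) gives $\sum_{s\in S}f(s)\leq2kM$, so at most $2kM$ vertices $s$ have $f(s)\geq1$. Each vertex lies in $S\setminus S_r$ with probability $t/(\alpha-k)=O(t/\alpha)$. A union bound (equivalently, $\mathbb{P}(\Delta_r^{(1)}\geq1)\leq\mathbb{E}(\Delta_r^{(1)})\leq\frac{t}{\alpha-k}\sum_s f(s)$) yields $O(tkM/\alpha)$.

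There is no real obstacle. The only point to check is that the justification $\Delta_r^{(1)}\leq\sum_{s\in S\setminus S_r}f(s)$ holds. It does: a vertex of $F^{(1)}\cup R^{(1)}_r$ lying in $\widetilde V_r$ has its unique $S$-neighbour in $S\setminus S_r$, so it is counted in $f$ of that neighbour.
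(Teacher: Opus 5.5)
Your proof is correct and matches the paper's: part (i) is exactly the $B_0(t)$ case of Lemma~\ref{lemma_pb}, and part (ii) is the same first-moment (Markov) bound. That bound combines $\Delta_r^{(1)}\leq\sum_{s\in S\setminus S_r}f(s)$ with $\sum_{s\in S}f(s)\leq 2kM$ from Lemma~\ref{lemma_fs}~(\ref{lemma_fs_sum}). Your justification of that inequality is the fact the paper relies on: $R_r^{(1)}\subset R^{(1)}$, and the unique $S$-neighbour of such a vertex in $\widetilde{V}_r$ lies in $S\setminus S_r$.
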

	
	\begin{proof}
		(\ref{lemma_pd0_d2}) This follows from Lemma~\ref{lemma_pb}.
		
		(\ref{lemma_pd0_d1}) Recall that $S\setminus S_r=\{s_r,\ldots,s_{\alpha-k-1}\}$ with $\pi=(s_0,\ldots,s_{\alpha-k-1})\in\mathcal{P}$ uniformly random. We use Lemma~\ref{lemma_fs}. By Markov's inequality, we have $$\mathbb{P}(\Delta_r^{(1)}\geq1)\leq\mathbb{E}\left(\sum_{s\in S\setminus S_r}f(s)\right)=\sum_{i=r}^{\alpha-k-1}\mathbb{E}(f(s_i)).$$ By Lemma~\ref{lemma_fs}~(\ref{lemma_fs_sum}), we have $\mathbb{E}(f(s_i))\leq2kM/(\alpha-k)$ for any $i$. The result follows.
	\end{proof}
	
	\begin{lemma}\label{lemma_pd}
		Let $0<\delta<1$ and $r=0,\ldots,\alpha-k-1$ with $t:=\alpha-k-r$.
		\begin{enumerate}[(i)]
			\item\label{lemma_pd_2small} If $\delta\leq1-\Omega(1)$ and $t\leq\alpha\exp(-\omega(\log\log\alpha))$, we have $$\mathbb{P}(\Delta_r^{(2)}\geq(1-\delta)n_t)\leq\exp\left((1-\delta)t\log\tfrac{M}{\alpha}+O(t+\log\alpha)\right).$$
			\item\label{lemma_pd_2large} If $\delta=o(1)$, $t=o(\alpha)$, and $\delta t\to\infty$, we have $$\mathbb{P}(\Delta_r^{(2)}\geq(1-\delta)n_t)\leq\exp\left((1-(\tfrac12+o(1))\delta)t\log\tfrac{M}{\alpha}+O(t)\right).$$
			\item\label{lemma_pd_1} If $\delta\leq1-\Omega(1)$ and $t=o(\alpha)$, we have $$\mathbb{P}(\Delta_r^{(1)}\geq(1-\delta)\ell tk)\leq\exp\left((1-(1+o(1))\delta)t\log\tfrac{M}{\alpha}+o(t\log\log\alpha)\right).$$
		\end{enumerate}
	\end{lemma}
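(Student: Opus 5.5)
For (\ref{lemma_pd_2small}) and (\ref{lemma_pd_2large}) I would reuse the interference machinery from the proof of Lemma~\ref{lemma_stronger}. By Lemma~\ref{lemma_delta2}, the event $\Delta_r^{(2)}\geq(1-\delta)n_t$ forces some $s\in S\setminus S_r$ to interfere with at least $\iota:=\tfrac{2}{\ell}(1-\delta)n_t/t$ other vertices of $S\setminus S_r$. Since $n_t=\tfrac{\ell}{2}t^2+O(t)$, this is $\iota=(1-\delta)t-O(1)$. Each $s$ interferes with at most $2M$ vertices of $S$. Conditioning on $s\in S\setminus S_r$ and applying Lemma~\ref{lemma_subset} with $a=2M$, $b=\alpha-k-1$, $c=t-1$, the probability of at least $\iota$ hits is bounded by a geometric tail whose first term is $(2eMt/(i\alpha))^{i}$, evaluated at $i\approx(1-\delta)t$. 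Because $\delta\leq1-\Omega(1)$, we have $t/i=O(1)$. The prefactor from Lemma~\ref{lemma_subset}, $\exp(\tfrac12\log t+O(1)+t^2/(\alpha-t))$, is $\exp(O(t+\log\alpha))$ under $t\leq\alpha e^{-\omega(\log\log\alpha)}$. A union bound over $s$ contributes a factor $t$. Together these give $\exp((1-\delta)t\log\tfrac{M}{\alpha}+O(t+\log\alpha))$.

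Part (\ref{lemma_pd_2large}) needs more care, since the $O(\log\alpha)$ term must be absorbed. The crude count also loses because a single high-interference vertex only certifies $\iota\approx(1-\delta)t$ hits. Instead I would count structurally. The interference graph restricted to $S\setminus S_r$ has at least $\tfrac{2}{\ell}\Delta_r^{(2)}\geq(1-\delta)\binom{t}{2}\cdot(1+o(1))$ edges on $t$ vertices, so it is nearly complete. In particular, at least $(1-(\tfrac12+o(1))\delta)t$ vertices have interference degree at least $(1-o(1))t$ within the set, by averaging with $\delta t\to\infty$. I would then bound the probability that a fixed $t$-subset contains many vertices that each have $\Omega(t)$ interfering partners. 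It is simplest to view this as: the set contains a chain of $m\geq(1-(\tfrac12+o(1))\delta)t$ vertices, each chosen among the $\leq 2M$ interferers of an earlier vertex. Counting such configurations gives at most $\binom{t}{m}\alpha\,(2M)^{m}$ choices. Each has probability about $(t/\alpha)^{m+1}$ of lying in the random $t$-set. Since $\binom{t}{m}\leq 2^t$ and $t^{m}/\text{(orderings)}$ is absorbed into $e^{O(t)}$, this yields $\exp(m\log\tfrac{M}{\alpha}+O(t))$, as claimed. I expect this to be the main obstacle: getting the exact constant $\tfrac12$ in front of $\delta$ requires this degree-averaging step, and its bookkeeping is where I anticipate the most difficulty.

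For (\ref{lemma_pd_1}) I use $\Delta_r^{(1)}\leq\sum_{s\in S\setminus S_r}f(s)$ together with Lemma~\ref{lemma_fs}: $f(s)\leq\ell k$ and $\sum_s f(s)\leq 2kM$. Reaching $(1-\delta)\ell tk$ requires at least $(1-\delta)t$ of the $t$ sampled vertices to lie in the set $\{s:f(s)>0\}$, whose size is at most $2kM$. More precisely, by the same extremal argument as in the proof of Lemma~\ref{lemma_sigmal}, at least $j\geq(1-(1+o(1))\delta)t$ of them must have $f(s)\geq(1-o(1))\ell k$. That set has size $O(M)$. A hypergeometric bound via Lemma~\ref{lemma_subset} with $a=O(M)$ and $i=j$ gives $(O(Mt/(j\alpha)))^{j}$ times a $\sqrt t$ prefactor, that is $\exp(j\log\tfrac{M}{\alpha}+O(j))$. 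Choosing the $o(1)$ threshold slack appropriately, e.g. $1/\log\log\alpha$, converts the error into the stated $o(t\log\log\alpha)$.
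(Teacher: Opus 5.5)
Your part (i) is correct and follows the paper's argument. The paper simply invokes Lemma~\ref{lemma_stronger}~(i) with $\rho_r=(1-\delta)n_t$, and that lemma's proof is exactly the star-plus-Lemma~\ref{lemma_subset} computation you sketch.

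Part (ii) has a genuine gap, in two places.

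First, the averaging claim fails at the precision you need. To reach the coefficient $1-(\frac12+o(1))\delta$ you must control degrees up to errors $o(\delta t)$. There are at most about $\frac12\delta t^2$ non-interfering pairs in $S\setminus S_r$, so averaging only shows that at most about $\delta t/\epsilon$ vertices have interference degree below $(1-\epsilon)t$. For $\epsilon=o(1)$ this is $\omega(\delta t)$, and it can be attained. For example, if the non-interfering pairs form a roughly $\delta t$-regular graph, every vertex has degree $(1-\delta)t+O(1)$, so no vertex reaches $(1-\epsilon)t$ for $\epsilon\le\delta/2$.

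Second, and more fundamentally, the chain count does not give $e^{O(t)}$. Multiplying $\alpha(2M)^m$ chains by the inclusion probability $(t/\alpha)^{m+1}$ gives $t(2Mt/\alpha)^m$. There is no $m!$ to divide out, because each chain is already counted once together with its parent structure. If the parent may be any earlier vertex, the pool at step $j$ even has size $2Mj$. The leftover factor $t^m=e^{m\log t}$ is not $e^{O(t)}$. It also could not be afforded where the lemma is used in the proof of Lemma~\ref{lemma_remaining_e}, which already carries a factor $e^{t\log t}$ from Lemma~\ref{lemma_pipt}. The factor $t^m$ cancels only when the $m$ vertices form an \emph{unordered} subset of a candidate pool $P$ fixed before they are chosen. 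Then $\binom{|P|}{m}(t/\alpha)^m\le(e|P|t/(m\alpha))^m$, and what remains is to ensure $m\log(|P|/M)=O(t)$.

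Building such a pool for the second layer is the missing idea. The paper proceeds as follows.
\begin{enumerate}
\item Let $U$ consist of $s$ together with $\widetilde u=\lceil\frac2\ell(1-\delta)n_t/t\rceil$ of its interferers. This is an unordered subset of at most $2M$ candidates.
\item At least $\frac1\ell(1-\delta)n_t-\binom u2-\binom{\overline u}2=(\frac12+o(1))\delta t^2$ interfering pairs run between $U$ and $\overline U=(S\setminus S_r)\setminus U$.
\item Each vertex of $\overline U$ has at most $t$ partners in $U$. Hence at least $z=(\frac12+o(1))\delta t$ vertices of $\overline U$ interfere with at least $t/y$ vertices of $U$, where $y=e^{1/\delta}$.
\item By double counting, at most $2Muy/t\le2My$ vertices of $S$ have this property. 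So these $z$ vertices form a subset of a pool of size $2My$ determined by $U$.
\item The first moment $\alpha\binom{2M}{\widetilde u}\binom{2My}{z}\binom{\alpha-k-u-z}{t-u-z}/\binom{\alpha-k}{t}$ is then $\exp((\widetilde u+z)\log\frac{2M}{\alpha}+z\log y+O(t))$. Here $z\log y=z/\delta=O(t)$ and $\widetilde u+z=(1-(\frac12+o(1))\delta)t$.
\end{enumerate}

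In part (iii) your threshold points the wrong way. Suppose $j$ of the sampled vertices have $f(s)\ge\theta\ell k$. Then $f\le\ell k$ gives $\Delta_r^{(1)}\le j\ell k+(t-j)\theta\ell k$, so the event only forces $j\ge(1-\frac{\delta}{1-\theta})t$. For $\theta=1-o(1)$ this is $(1-\delta/o(1))t$, which may be vacuous.

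Concretely, take $f\equiv(1-\delta)\ell k$ on $\frac{2M}{(1-\delta)\ell}$ vertices. This is compatible with Lemma~\ref{lemma_fs} and allows $\sum_{s\in S\setminus S_r}f(s)=(1-\delta)\ell tk$ with no vertex above $(1-\epsilon)\ell k$ for $\epsilon<\delta$. The extremal argument in Lemma~\ref{lemma_sigmal} maximizes a convex functional of $f$; it gives no pointwise statement about the sample.

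To obtain $j\ge(1-(1+o(1))\delta)t$ you need a vanishing threshold $\theta=1/y$ with $y\to\infty$. The pool $\{s:f(s)\ge\ell k/y\}$ then has size at most $\frac2\ell My$, not $O(M)$. Lemma~\ref{lemma_subset} gives $\exp(j\log\frac{My}{\alpha}+O(t))$, and the paper's choice $y=\log\log\alpha$ makes the extra $j\log y$ equal to $o(t\log\log\alpha)$. Your mention of $1/\log\log\alpha$ may be what you intended, but as written the step is false.
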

	
	\begin{proof}
		(\ref{lemma_pd_2small}) This follows by Lemma~\ref{lemma_stronger}~(\ref{lemma_stronger_delta}), since $n_t=\tfrac{\ell}{2}t^2+(1-\tfrac{\ell}{2})t$.
		
		(\ref{lemma_pd_2large}) Consider if $\Delta_r^{(2)}\geq(1-\delta)n_t$. Let $\widetilde{u}:=\lceil\tfrac{2}{\ell}(1-\delta)n_t/t\rceil$. By Lemma~\ref{lemma_delta2}, there exists a vertex $s\in S\setminus S_r$ that interferes with some set $\widetilde{U}\subset S\setminus S_r\setminus\{s\}$ of $\widetilde{u}$ other vertices in $S\setminus S_r$. Let $U:=\widetilde{U}\cup\{s\}$ with $|U|=u:=\widetilde{u}+1$, and let $\overline{U}:=S\setminus S_r\setminus U$ with $|\overline{U}|=\overline{u}:=t-u$.
		
		By Lemma~\ref{lemma_delta2}, there are at least $\tfrac1{\delta}\Delta_r^{(2)}\geq\tfrac1{\ell}(1-\delta)n_t$ interfering pairs of vertices in $S\setminus S_r$. So there are at least $$\tfrac1{\ell}(1-\delta)n_t-\tbinom{u}{2}-\tbinom{\overline{u}}{2}$$ pairs of interfering vertices between $U$ and $\overline{U}$. Each vertex in $\overline{U}$ interferes with at most $|U|\leq|S\setminus S_r|=t$ vertices in $U$. For $y:=\exp(\tfrac1{\delta})$, let $\widetilde{z}$ be the number of vertices in $\overline{U}$ that interfere with at least $\tfrac{t}{y}$ vertices in $U$. Then there are at most $\widetilde{z}\cdot t+(\overline{u}-\widetilde{z})\cdot\tfrac{t}{y}$ pairs of interfering vertices between $U$ and $\overline{U}$, so
		\begin{align}
			&\left(1-\tfrac1{y}\right)\widetilde{z}t\geq\tfrac1{\ell}(1-\delta)n_t-\tbinom{u}{2}-\tbinom{\overline{u}}{2}-\tfrac{t}{y}\overline{u},
			\\&\widetilde{z}\geq z:=\left\lceil\tfrac{y}{y-1}\tfrac1{t}\left(\tfrac1{\ell}(1-\delta)n_t-\tbinom{u}{2}-\tbinom{\overline{u}}{2}-\tfrac{t}{y}\overline{u}\right)\right\rceil.
		\end{align}
		
		For a vertex $s\in S$, we define the random variable $X_s$ as follows. If $s\in S_r$, then $X_s=0$. If $s\in S\setminus S_r$, then $X_s$ is the number of ways you can choose a set $\widetilde{U}\subset S\setminus S_r$ of $\widetilde{u}$ other vertices in $S\setminus S_r$ that each interfere with $s$, and a set $Z\subset\overline{U}$ of $z$ yet other vertices in $S\setminus S_r$ that each interfere with at least $\tfrac{t}{y}$ vertices in $U$. Here $U$ and $\overline{U}$ are defined as above. Then we can conclude that $$\mathbb{P}(\Delta_r^{(2)}\geq(1-\delta)n_t)\leq\mathbb{P}\left(\sum_{s\in S}X_s\geq1\right)\leq\mathbb{E}\left(\sum_{s\in S}X_s\right)=\sum_{s\in S}\mathbb{E}(X_s),$$ by Markov's inequality.
		
		We calculate $\mathbb{E}(X_s)$ by summing over all possible sets $\widetilde{U}\subset S\setminus\{s\}$ of $\widetilde{u}$ other vertices in $S$ that each interfere with $s$, and all possible sets $Z\subset S\setminus U$ of $z$ yet other vertices in $S$ that each interfere with at least $\tfrac{t}{y}$ vertices in $U$, the probability that $U\cup Z\subset S\setminus S_r$. Again, $U$ is defined as above. Since $S\setminus S_r$ is a uniformly random subset of $S$ of size $t$, and since $|U\cup Z|=u+z$, the probability in question is $\tbinom{\alpha-k-u-z}{t-u-z}/\tbinom{\alpha-k}{t}$. It remains to calculate how many possibilities there are for the sets $\widetilde{U}$ and $Z$.
		
		Since $s$ interferes with at most $F_{\max}+R_{\max}^{(b)}\leq2M$ other vertices in $S$, there are at most $\tbinom{2M}{\tilde{u}}$ possibilities for the set $\widetilde{U}$. For each such possibility, each vertex in $U$ interferes with at most $2M$ other vertices in $S\setminus U$, so there are at most $2Muy/t$ vertices in $S\setminus U$ that interfere with at least $\tfrac{t}{y}$ vertices in $U$. Without loss of generality, we have $2Muy/t\leq 2My$, since $u>t$ would imply the impossible $|U|>|S\setminus S_r|$ and thus $X_s=0$. We find at most $\tbinom{2My}{z}$ possibilities for the set $Z$. We conclude that $$\mathbb{P}(\Delta_r^{(2)}\geq(1-\delta)n_t)\leq\alpha\tbinom{2M}{\tilde{u}}\tbinom{2My}{z}\tbinom{\alpha-k-u-z}{t-u-z}/\tbinom{\alpha-k}{t}.$$ With the same combinatorial bounds as in the proof of Lemma~\ref{lemma_subset}, and $\alpha-k-u-z\leq\alpha$ and $u=\widetilde{u}+1$, we get
		\begin{align}
			\mathbb{P}(\Delta_r^{(2)}\geq(1-\delta)n_t)&\leq\alpha(\tfrac{2eM}{\tilde{u}})^{\tilde{u}}(\tfrac{2eMy}{z})^z(\tfrac{e\alpha}{t-u-z})^{t-u-z}\tfrac{t!}{(\alpha-k-t)^t}
			\\&\leq\tfrac{t}{e}\sqrt{2\pi t}\exp(\tfrac1{12t})(\tfrac{\alpha}{\alpha-k-t})^t(\tfrac{2Mt}{\alpha\tilde{u}})^{\tilde{u}}(\tfrac{2Mty}{\alpha z})^z(\tfrac{t}{t-u-z})^{t-u-z}.
		\end{align}
		To bound this, we need to approximate $\widetilde{u}$ and $z$.
		
		Since $n_t=\tfrac{\ell}{2}t^2+O(t)$, we have $\widetilde{u}=(1-\delta)t+O(1)$ and thus $u=(1-\delta)t+O(1)$ and $\overline{u}=\delta t+O(1)$. We get $\tbinom{u}{2}=\tfrac12(1-\delta)^2t^2+O(t)$ and $\tbinom{\overline{u}}{2}=\tfrac12\delta^2t^2+O(\delta t)$, so $$\tfrac1{\ell}(1-\delta)n_t-\tbinom{u}{2}-\tbinom{\overline{u}}{2}=\tfrac12\delta(1-2\delta)t^2+O(t)=(\tfrac12+o(1))\delta t^2.$$ Since $\overline{u}=O(\delta t)$ and $y\to\infty$, we also have $\tfrac{t}{y}\overline{u}=o(\delta t^2)$. Since, $\tfrac{y}{y-1}=1+\tfrac1{y-1}=1+o(1)$, we get $z=(\tfrac12+o(1))\delta t$.
		
		To finish the bound on $\mathbb{P}(\Delta_r^{(2)}\geq(1-\delta)n_t)$, first note that $\log t\leq t$ gives $$\tfrac{t}{e}\sqrt{2\pi t}\exp(\tfrac1{12 t})\leq\exp(O(t)).$$ Second, applying $1+x\leq e^x$ to $\tfrac{\alpha}{\alpha-k-t}=1+\tfrac{k+t}{\alpha-k-t}=1+o(1)$ and $\tfrac{t}{t-u-z}=1+\tfrac{u+z}{t-u-z}$ gives $$(\tfrac{\alpha}{\alpha-k-t})^t(\tfrac{t}{t-u-z})^{t-u-z}\leq\exp(o(t)+O(u+z))\leq\exp(O(t)).$$ Third, since $x^{1/x}\leq e^{1/e}$ for all $x>0$, we have $(t/\widetilde{u})^{\tilde{u}/t}=O(1)$ and $(t/z)^{z/t}=O(1)$, so $$(\tfrac{t}{\tilde{u}})^{\tilde{u}}(\tfrac{t}{z})^z\leq\exp(O(t)).$$ We are thus left with $$\mathbb{P}(\Delta_r^{(2)}\geq(1-\delta)n_t)\leq\exp\left((\widetilde{u}+z)\log\tfrac{2M}{\alpha}+z\log y+O(t)\right).$$ The result follows, since $\widetilde{u}+z=(1-(\tfrac12+o(1))\delta)t=O(t)$ and $z\log y=O(t)$.
		
		(\ref{lemma_pd_1}) We use Lemma~\ref{lemma_fs}. For $y:=\log\log\alpha$, consider if less than $(1-\tfrac{y}{y-1}\delta)t$ vertices $s\in S\setminus S_r$ have $f(s)\geq\tfrac1y\ell k$. Then Lemma~\ref{lemma_fs}~(\ref{lemma_fs_one}) gives $$\Delta_r^{(1)}\leq\sum_{s\in S\setminus S_r}f(s)<(1-\tfrac{y}{y-1}\delta)t\cdot\ell k+\tfrac{y}{y-1}\delta t\cdot\tfrac1y\ell k=(1-\delta)\ell tk.$$ By contraposition, $\Delta_r^{(1)}\geq(1-\delta)\ell tk$ implies that at least $(1-\tfrac{y}{y-1}\delta)t$ vertices $s\in S\setminus S_r$ have $f(s)\geq\tfrac1y\ell k$.
		
		By Lemma~\ref{lemma_fs}~(\ref{lemma_fs_sum}), at most $\tfrac{2}{\ell}My$ of the $\alpha-k$ vertices $s\in S$ have $f(s)\geq\tfrac1y\ell k$. Since $S\setminus S_r$ is a uniformly random subset of $S$ of size $t$, by Lemma~\ref{lemma_subset}, there are exactly $i$ vertices $s\in S\setminus S_r$ with $f(s)\geq\tfrac1y\ell k$ with probability at most $$\exp\left(\tfrac12\log(2\pi t)+\tfrac1{12t}+\tfrac{t^2}{\alpha-k-t}\right)\left(\tfrac{\frac{2}{\ell}etMy}{i(\alpha-k)}\right)^i.$$ For $i\geq(1-\tfrac{y}{y-1}\delta)t$, since $t=o(\alpha)$, this is bounded by $$\exp\left(i\log\tfrac{My}{\alpha}+O(i)\right),$$ in particular, because $\tfrac{y}{y-1}=1+\tfrac1{y-1}=1+o(1)$ and $0<\delta\leq1-\Omega(1)$ give $i=\Omega(t)$. Since $My/\alpha=o(1)$, summing over all $i\geq(1-\tfrac{y}{y-1}\delta)t$ yields $$\mathbb{P}(\Delta_r^{(1)}\geq(1-\delta)\ell tk)\leq\exp\left(\left(1-\tfrac{y}{y-1}\delta\right)t\log\tfrac{My}{\alpha}+O(t)\right).$$ The result follows, since $t\log y=o(t\log\log\alpha)$.
	\end{proof}
	
	For $0<\delta<1$, we split up the event $\mathcal{P}\setminus\mathcal{P}^{\delta}$. For $t=1,2,\ldots$, let $$\mathcal{P}_t^{\delta}:=\{n_{\alpha-r}-\Psi_r\geq\delta(n_{\alpha-r}-K),\ \forall r=0,\ldots,\alpha-k-t\}.$$ Note that $\mathcal{P}_t^{\delta}=\mathcal{P}$ for $t>\alpha-k$, so let $T^{\delta}$ be the smallest value of $t$ for which $\mathcal{P}_t^{\delta}=\mathcal{P}$. Also note that $\mathcal{P}_1^{\delta}\subset\mathcal{P}^{\delta}$ by Lemma~\ref{lemma_bounds1}~(\ref{lemma_bounds1_v0}). Since $\mathcal{P}_1^{\delta}\subset\mathcal{P}_2^{\delta}\subset\ldots\subset\mathcal{P}_{T^{\delta}}^{\delta}=\mathcal{P}$, we can split up $$\mathcal{P}\setminus\mathcal{P}^{\delta}\subset(\mathcal{P}_2^{\delta}\setminus\mathcal{P}_1^{\delta})\cup(\mathcal{P}_3^{\delta}\setminus\mathcal{P}_2^{\delta})\cup\ldots\cup(\mathcal{P}_{T^{\delta}}^{\delta}\setminus\mathcal{P}_{T^{\delta}-1}^{\delta}).$$ Then $\mathcal{P}_{t+1}^{\delta}\setminus\mathcal{P}_t^{\delta}$ is the event that $r=\alpha-k-t$ is the smallest value of $r$ for which $n_{\alpha-r}-\Psi_r<\delta(n_{\alpha-r}-K)$. For $t\leq\alpha$, we furthermore define the event $$\mathcal{Q}_t^{\delta}:=\{n_{\alpha-r}-\Psi_r<\delta(n_{\alpha-r}-K)\ \text{for}\ r:=\alpha-k-t\},$$ such that $\mathcal{P}_{t+1}^{\delta}\setminus\mathcal{P}_t^{\delta}=\mathcal{P}_{t+1}^{\delta}\cap\mathcal{Q}_t^{\delta}$.
	
	\begin{lemma}\label{lemma_t}
		For $0<\delta<1-\Omega(1)$ with $M^2/\alpha=o(\delta)$, we have $$T^{\delta}\leq2M/(1-(1+o(1))\delta)-k=O(M).$$
	\end{lemma}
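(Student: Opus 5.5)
Fix $t$ and set $r:=\alpha-k-t$. I will show that if $t$ exceeds the stated bound, then the event $\mathcal{Q}_t^{\delta}$ cannot occur. That means $n_{\alpha-r}-\Psi_r\geq\delta(n_{\alpha-r}-K)$ holds deterministically for every such $r$, and hence $\mathcal{P}_t^{\delta}=\mathcal{P}$ once $t$ is past the threshold.

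The first step is to apply Lemma~\ref{lemma_boundpt}, whose hypothesis $M^2/\alpha=o(\delta)$ we are given. On $\mathcal{Q}_t^{\delta}$ it yields
\[
\Delta_r^{(2)}+\Delta_r^{(1)}\geq(1-(1+o(1))\delta)(n_t+\ell tk).
\]
The second step is a deterministic upper bound on the left-hand side, of order $t\cdot O(M)$.
\begin{itemize}
\item For $\Delta_r^{(2)}$, I use Lemma~\ref{lemma_delta2}: $\Delta_r^{(2)}$ is at most $\ell$ times the number of interfering pairs in $S\setminus S_r$. Each vertex interferes with at most $F_{\max}+R^{(b)}_{\max}\leq 2M$ others, so the number of such pairs is at most $\tfrac12 t\cdot 2M$. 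This gives $\Delta_r^{(2)}\leq\ell tM$. A more careful count is also available: each $s\in S\setminus S_r$ has at most $|\overline{N}(s)\cap F|+|\overline{N}(s)\cap R^*(S)|\leq 2M$ neighbours in $F\cup R^*(S)$, and each such vertex in $V^{(2)}$ is counted by two elements of $S$. This gives $\Delta_r^{(2)}\leq tM$.
\item For $\Delta_r^{(1)}$, I use $\Delta_r^{(1)}\leq\sum_{s\in S\setminus S_r}f(s)$. Each $f(s)$ counts neighbours of $s$ in $F^{(1)}\cup R^{(1)}$, so $f(s)\leq F_{\max}+R^{(b)}_{\max}\leq 2M$.
\end{itemize}
Combining the two, and counting each vertex of $F\cup R^*(S)$ adjacent to $S\setminus S_r$ once, I aim for
\[
\Delta_r^{(2)}+\Delta_r^{(1)}\leq 2Mt .
\]

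The third step compares the two bounds. Using $n_t\geq t^2$ from Lemma~\ref{lemma_nr}, together with $\ell tk\geq tk$, we get $n_t+\ell tk\geq t(t+k)$. So $\mathcal{Q}_t^{\delta}$ forces
\[
(1-(1+o(1))\delta)\,t(t+k)\leq 2Mt,
\]
that is, $t\leq 2M/(1-(1+o(1))\delta)-k$. Hence $\mathcal{Q}_t^{\delta}$ is empty for every larger $t$, and the bound on $T^{\delta}$ follows. The estimate $O(M)$ then follows from $\delta<1-\Omega(1)$.

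The main obstacle is the constant $2M$ in the second step. A naive count gives $\ell tM+2tM$, and the factor $\ell$ has to be avoided. I would first try to charge every vertex of $(F\cup R^*(S))\cap\widetilde V_r$ to one adjacent $s\in S\setminus S_r$, and then bound the number charged to each $s$ by $|\overline N(s)\cap F|+|\overline N(s)\cap R^*(S)|\leq F_{\max}+R^{(b)}_{\max}\leq 2M$. If only a weaker constant can be obtained, the statement still holds with $O(M)$, and the exact constant is cosmetic for its later use.
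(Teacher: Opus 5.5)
Your proof is correct and follows the paper's argument. The paper also applies Lemma~\ref{lemma_boundpt} on $\mathcal{Q}_t^{\delta}$, uses $n_t+\ell tk\geq t(t+k)$, and proves the deterministic bound $\Delta_r^{(2)}+\Delta_r^{(1)}\leq|N(S\setminus S_r)\cap(F\cup R^*(S))|\leq t(F_{\max}+R_{\max}^{(b)})\leq 2tM$. This is exactly your charging step: a union bound over $s\in S\setminus S_r$, valid because every counted vertex lies in $\widetilde V_r$ and has a neighbour in $S$, which must then lie in $S\setminus S_r$. So the detour through Lemma~\ref{lemma_delta2} and the factor $\ell$ was never needed.
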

	
	\begin{proof}
		For $r=0,\ldots,\alpha-k-1$, consider if $n_{\alpha-r}-\Psi_r<\delta(n_{\alpha-r}-K)$. For $t:=\alpha-k-r$, by Lemma~\ref{lemma_boundpt}, we have $$\Delta_r^{(2)}+\Delta_r^{(1)}\geq(1-(1+o(1))\delta)(n_t+\ell tk).$$ Note that Lemma~\ref{lemma_nr}~(\ref{lemma_nr_geq}) gives $n_t+\ell tk\geq t^2+tk=t(\alpha-r)$. Since $$\Delta_r^{(2)}+\Delta_r^{(1)}\leq|N(S\setminus S_r)\cap(F\cup R^*(S))|\leq t(F_{\max}+R_{\max}^{(b)})\leq 2tM,$$ we get $\alpha-r\leq 2M/(1-(1+o(1))\delta)$. The result follows.
	\end{proof}
	
	We can now prove Lemma~\ref{lemma_remaining_p}.
	
	\begin{proof}[Proof of Lemma~\ref{lemma_remaining_p}]
		Since $\mathbb{P}(\mathcal{P}\setminus\mathcal{P}^{\delta})$ is monotonous with respect to $\delta$, we may assume without loss of generality that $M^2/\alpha=o(\delta)$. Consider some $t=1,\ldots,T^{\delta}-1$ with $r:=\alpha-t$. By Lemma~\ref{lemma_boundpt}, in $\mathcal{Q}_t^{\delta}$, we have $\Delta_r^{(2)}=\Delta_r\geq(1-(1+o(1))\delta)n_t$, because $k=0$.
		
		By Lemma~\ref{lemma_pd0}~(\ref{lemma_pd0_d2}), we get $\mathbb{P}(\mathcal{Q}_t^{\delta})=O(t^2M/\alpha)$. Since $t\leq T^{\delta}-1=O(M)$ by Lemma~\ref{lemma_t}, by Lemma~\ref{lemma_pd}~(\ref{lemma_pd_2small}), we also get $\mathbb{P}(\mathcal{Q}_t^{\delta})\leq e^{O(\log\alpha)-\Omega(t\log\alpha)}$. For $\alpha$ large enough, we thus have $\mathbb{P}(\mathcal{Q}_t^{\delta})\leq\alpha^{C-\varepsilon t}$ for some fixed constants $C,\varepsilon>0$.
		
		Summing over $t=1,\ldots,t^*:=\lfloor\tfrac{C+1}{\varepsilon}\rfloor$, we get $$\mathbb{P}(\mathcal{Q}_1^{\delta}\cup\ldots\cup\mathcal{Q}_{t^*}^{\delta})=O((t^*)^3M/\alpha)=O(M/\alpha).$$ Summing over $t=t^*+1,\ldots,T^{\delta}-1$, we get $$\mathbb{P}(\mathcal{Q}_{t^*+1}^{\delta}\cup\ldots\cup\mathcal{Q}_{T^{\delta}-1}^{\delta})\leq(1+o(1))\alpha^{-1}=O(\alpha^{-1}),$$ since $\sum_{i\geq n}x^i=(1+o(1))x^n$ for $x=o(1)$. The result follows.
	\end{proof}
	
	\subsection{Special term}\label{subsection_proof_further}
	
	We prove the following lemma, which is crucial for the proof of Lemma~\ref{lemma_remaining_e}.
	
	\begin{lemma}\label{lemma_depth}
		Let $0<\delta<1$ with $\delta=o(1)$ and $\tfrac1{\delta}=o(\log\alpha)$. Let $0\leq\widetilde{k}\leq k-1$ and $\widetilde{K}\leq n_{\tilde{k}}$ with $K\leq n_k-\delta(n_k-\widetilde{K})$. Consider the event $$\widetilde{\mathcal{P}}^{\delta}:=\{n_{\alpha-r}-\Psi_r\geq\delta(n_{\alpha-r}-\widetilde{K}),\ \forall r=0,\ldots,\alpha-k-1\}\subset\mathcal{P}.$$ Then for $C=O(1)$, we have
		\begin{align}
			&\mathbb{E}\left(1_{\widetilde{\mathcal{P}}^{\delta}}\exp\left(C\sum_{r=0}^{\alpha-k-1}\frac{(K-\widetilde{K})^+}{n_{\alpha-r}-\Psi_r}\right)\right)
			\\&\leq\exp\left((\tfrac{C}{2}+o(1))k\log\log\alpha+O(\tfrac{1}{\delta}\widetilde{k})\right).
		\end{align}
	\end{lemma}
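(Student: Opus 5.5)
The plan is to reduce the exponent to a sum of $1/(\alpha-r)$-type terms and then split the range of $r$ according to whether the denominator $n_{\alpha-r}-\Psi_r$ is controlled deterministically or only through the event $\widetilde{\mathcal{P}}^{\delta}$. Write $t:=\alpha-k-r$ and $D:=(K-\widetilde{K})^+$. We may assume $K>\widetilde{K}$, since otherwise the exponent vanishes and the bound is trivial. Note $D\leq n_k-\widetilde{K}$, and also $D\leq n_k=O(k^2)$ by Lemma~\ref{lemma_bounds1}~(\ref{lemma_bounds1_v0}).

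The first step is a deterministic lower bound on the denominator valid for all $r$. As in the end of the proof of Lemma~\ref{lemma_sumab}, $n_{\alpha-r}-\Psi_r=\sum_{v\in V_r}(1-\b(v))\geq\tfrac12|V_r|$. Moreover $V_r\supset\widetilde{V}_r\setminus(\overline{N}\text{-free part blocked})$ contains at least the vertices of $V^{(0)}\setminus(F^{(0)}\cup R^{(0)})$, of which there are $n_k-K\geq\delta(n_k-\widetilde{K})$ by hypothesis, together with $S\setminus S_r$. Hence $n_{\alpha-r}-\Psi_r\geq\tfrac12(\delta(n_k-\widetilde K)+t)$ everywhere. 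This bound is used in the range of small $t$.

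Next, in $\widetilde{\mathcal{P}}^{\delta}$ we have $n_{\alpha-r}-\Psi_r\geq\delta(n_{\alpha-r}-\widetilde{K})$. By Lemma~\ref{lemma_nr}~(\ref{lemma_nr_sum}), $n_{\alpha-r}-\widetilde K\geq (n_k-\widetilde K)+n_t+\ell tk$. So for each term
\[
\frac{D}{n_{\alpha-r}-\Psi_r}\leq\min\Bigl\{\frac{2D}{\delta(n_k-\widetilde K)+t},\ \frac{D}{\delta(n_k-\widetilde K+n_t+\ell tk)}\Bigr\}.
\]
We split the sum into three ranges.
\begin{itemize}
\item For $t\geq k\log\alpha$, the second bound gives terms $O(k^2/(\delta t^2))$, whose sum is $O(k/(\delta\log\alpha))=o(k)$ because $1/\delta=o(\log\alpha)$.
\item For $t$ between roughly $k/\delta$ and $k\log\alpha$, we use $D\leq n_k-\widetilde K$ and the dominant term $\ell tk$. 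The more careful form is $D/(\delta(D+\ell tk))$ combined with the first bound, and the sum should come out to $\tfrac12(1+o(1))k\log\log\alpha$ after the range cutoff. This is the step I would check most carefully.
\item For $t\leq k/\delta$, the first bound gives terms of order $D/(\delta(n_k-\widetilde K)+t)\lesssim\tfrac1\delta$, contributing $O(k/\delta)$.
\end{itemize}
The last range is too large unless $D$ is small. I expect the resolution here is that $D\leq n_k-n_{\tilde k}+\ldots$, which should pin the contribution to $O(\widetilde{k}/\delta)$ plus a harmonic-type sum $\tfrac{C}{2}k\log\log\alpha$.

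The main obstacle is getting the constant $\tfrac{C}{2}$ in front of $k\log\log\alpha$ rather than an $O(k\log\alpha)$ bound. Concretely, the harmonic sum $\sum D/(\ell tk)$ over $t$ from $k/\log\alpha$-scale to $k\log\alpha$ has ratio of endpoints $(\log\alpha)^2$, which gives $2\log\log\alpha$. Combined with $D/(\ell k)\approx k/4$ (using $D\lesssim n_k\approx\tfrac{\ell}{2}k^2$ scaled by $\tfrac1{\ell k}$), this yields the $\tfrac12 k\log\log\alpha$ form, but the bookkeeping of these constants is not yet verified. Should the deterministic bound be too weak near $t\lesssim k$, I would instead use the randomness: the large-$\Delta_r$ tail estimates of Lemma~\ref{lemma_pd} and Lemma~\ref{lemma_pd0} show that $\Psi_r$ is typically about $K$ plus $O(tM)$. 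I would integrate $\exp(C\cdot)$ against those tails, splitting $r$ by the smallest index where the denominator is close to $\delta$-critical (as in the proof of Lemma~\ref{lemma_remaining_p}). Since $C=O(1)$, the exponential moment is dominated by the deterministic harmonic contribution plus an $\exp(O(\widetilde{k}/\delta))$ factor coming from the $\widetilde K\leq n_{\tilde k}$ slack.
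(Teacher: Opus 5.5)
\paragraph{There is a genuine gap.} Every lower bound you use for $n_{\alpha-r}-\Psi_r$ is either $\delta$ times the natural scale $n_{\alpha-r}-\widetilde{K}$, or your deterministic floor, which is weaker still. This costs a factor $1/\delta$ exactly where the main term lives, and your suggested fixes do not recover it.

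Take $\widetilde{k}=0$, $\widetilde{K}=0$, $k\geq1/\delta$ and $K=\lfloor(1-\delta)n_k\rfloor$. Then $D\approx n_k\approx\tfrac{\ell}{2}k^2$, and the target is $(\tfrac{C}{2}+o(1))k\log\log\alpha$ with no $\widetilde{k}/\delta$ slack.
\begin{itemize}
\item Your second bound becomes $D/(\delta n_{k+t})\approx\tfrac1\delta\cdot\tfrac{k^2}{(k+t)^2}$. This is of order $1/\delta$ for every $t\lesssim k$, and the sum is of order $k/\delta$.
\item Your deterministic bound $2D/(\delta n_k+t)\approx2/\delta$ is no better in that range.
\item Where the lemma is actually used (Lemma~\ref{lemma_recycle}, with $1/\delta$ of order $\log\alpha/\log\log\alpha$), $k/\delta$ is far larger than $k\log\log\alpha$.
\end{itemize}
Your middle-range estimate silently drops this $1/\delta$. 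Bounding $D$ by $n_k-n_{\widetilde{k}}$ changes nothing, since $D$ can be of order $n_k$.

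The constant bookkeeping is also off.
\begin{itemize}
\item The harmonic range is $\delta k\lesssim t\lesssim k$, not $[k/\log\alpha,k\log\alpha]$: for $t\gtrsim k$ the quadratic term $n_t$ dominates and the sum converges.
\item The prefactor is $n_k/(\ell k)\approx k/2$, not $k/4$.
\item The $\log\log\alpha$ is really $\log(1/\delta)$, or $\log(k/\widetilde{k})$ when $\widetilde{k}\geq k/\log\alpha$.
\end{itemize}

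\paragraph{The missing idea} is a second threshold $\delta':=1/(1+\tau)$, a constant close to $1$, combined with a first-exit decomposition.

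On the typical event $\mathcal{P}_1^{\delta'}$, the hypothesis on $K$ gives
\[
n_{\alpha-r}-\Psi_r\geq\delta'(n_{\alpha-r}-K)\geq\delta'\bigl(n_t+\ell tk+\delta(n_k-\widetilde{K})\bigr).
\]
Now $\delta$ enters only as an additive floor, not as a factor. Lemma~\ref{lemma_special} then yields $(1+\tau)\bigl((\tfrac12+o(1))k\log\log\alpha+\widetilde{k}/\delta\bigr)$.

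On $\mathcal{P}_{t+1}^{\delta'}\cap\mathcal{Q}_t^{\delta'}$, only the last $t-1$ terms need the weak $\widetilde{\mathcal{P}}^{\delta}$ bound. Each is at most $1/\delta$, because $K\leq n_k\leq n_{\alpha-r}$. The resulting factor $e^{C(t-1)/\delta}=e^{o(t\log\alpha)}$ is where $1/\delta=o(\log\alpha)$ is genuinely needed. It is beaten by two tail bounds:
\begin{itemize}
\item $\mathbb{P}(\mathcal{Q}_t^{\delta'})\leq e^{O(\log\alpha)-\Omega(t\log\alpha)}$ from Lemma~\ref{lemma_pd}, which requires $\delta'\leq1-\Omega(1)$; this is why $\tau$ is fixed and sent to $0$ only at the end;
\item Lemma~\ref{lemma_pd0} for $t=O(1)$.
\end{itemize}

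Your fallback does not supply this. Splitting at the index where the denominator becomes critical for the lemma's own $\delta=o(1)$ just reproduces $\widetilde{\mathcal{P}}^{\delta}$. Also, $\Psi_r\leq K+O(tM)$ is the deterministic worst case, not typical behaviour. It can exceed $n_t+\ell tk$ when $t$ is small compared to $M$, so it gives no usable lower bound on the denominator.
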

	
	To prove this, for some $0<\delta'<1$, we split up $$\mathcal{P}=\mathcal{P}_1^{\delta'}\cup(\mathcal{P}_2^{\delta'}\cap\mathcal{Q}_1^{\delta'})\cup(\mathcal{P}_3^{\delta'}\cap\mathcal{Q}_2^{\delta'})\cup\ldots\cup(\mathcal{P}_{T^{\delta'}}^{\delta'}\cap\mathcal{Q}_{T^{\delta'}-1}^{\delta'}).$$ In $\mathcal{P}_t^{\delta'}$, if $K\leq n_k-\delta(n_k-\widetilde{K})$, then $n_{\alpha-r}-\Psi_r\geq\delta'(n_{\alpha-r}-n_k+\delta(n_k-\widetilde{K}))$.
	
	\begin{lemma}\label{lemma_special}
		Let $0<\delta<1$ with $\delta=o(1)$ and $\tfrac1{\delta}=O(\log\alpha)$. Let $0\leq\widetilde{k}\leq k-1$ and $\widetilde{K}\leq n_{\tilde{k}}$. Then $$\sum_{r=0}^{\alpha-k-1}\frac{(K-\widetilde{K})^+}{n_{\alpha-r}-n_k+\delta(n_k-\widetilde{K})}\leq (\tfrac12+o(1))k\log\log\alpha+\tfrac1{\delta}\widetilde{k}.$$
	\end{lemma}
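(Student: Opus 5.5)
The plan is to reduce the sum to an explicit series in $t:=\alpha-k-r$, using the identity $n_{\alpha-r}-n_k=n_{t}+\ell kt\geq t^2+\ell kt$ from Lemma~\ref{lemma_nr}~(\ref{lemma_nr_sum},\ref{lemma_nr_geq}). We may assume $K>\widetilde{K}$, since otherwise the sum vanishes. Write $D:=\delta(n_k-\widetilde{K})$ and $A:=K-\widetilde{K}$. Because $K\leq n_k$ by Lemma~\ref{lemma_bounds1}~(\ref{lemma_bounds1_v0}), we have $A\leq n_k-\widetilde{K}=D/\delta$. The sum is then bounded by
\begin{equation}
	\sum_{t=1}^{\alpha-k}\frac{A}{t^2+\ell kt+D}.
\end{equation}
We split this according to whether $t^2+\ell kt$ or $D$ dominates the denominator.

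For $t\leq T:=\lceil\sqrt{D}\rceil$, we use the denominator $\geq \ell kt+D$. For $t>T$, we use $\geq t^2+\ell kt$ and then apply the telescoping bound from the proof of Lemma~\ref{lemma_theta}:
\begin{equation}
	\sum_{t>T}\frac1{t(t+\ell k)}\leq\frac1{\ell k}\Bigl(\frac1T+\log\bigl(1+\tfrac{\ell k}{T}\bigr)\Bigr).
\end{equation}
Since $A\leq n_k-\widetilde{K}\leq n_k=\tfrac{\ell}{2}k^2+O(k)$ (Lemma~\ref{lemma_nr}~(\ref{lemma_nr_leq})), the tail contributes at most
\begin{equation}
	(\tfrac12+o(1))\,k\log\Bigl(1+\tfrac{\ell k}{T}\Bigr)+O(1).
\end{equation}
The important point is that $\tfrac{\ell k}{T}\lesssim \ell k/\sqrt{\delta(n_k-\widetilde{K})}$. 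The hypotheses $\widetilde{k}\leq k-1$ and $\widetilde{K}\leq n_{\widetilde{k}}$ give $n_k-\widetilde{K}\geq n_k-n_{k-1}=\Omega(k)$. This yields $\ell k/T=O(\sqrt{k/\delta})$ in general, and $O(1/\sqrt{\delta})$ when $n_k-\widetilde{K}=\Omega(k^2)$. Since $\tfrac1\delta=O(\log\alpha)$, the bound $O(1/\sqrt{\delta})$ gives $\log(1+\ell k/T)\leq\tfrac12\log\log\alpha+O(1)$, which is exactly the leading term.

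The main obstacle is the regime where $n_k-\widetilde{K}$ is much smaller than $k^2$, which forces $\widetilde{k}$ close to $k$. Here I would trade against the term $\tfrac1\delta\widetilde{k}$. Writing $n_k-n_{\widetilde{k}}=n_{k-\widetilde{k}}+\ell\widetilde{k}(k-\widetilde{k})$, we have $A\leq n_k-\widetilde{K}\leq$ roughly $\tfrac{\ell}{2}(k-\widetilde{k})^2+\ell\widetilde{k}(k-\widetilde{k})+(n_{\widetilde{k}}-\widetilde{K})$. I would split $A$ into the part attributable to $k-\widetilde{k}$ and the part involving $\widetilde{k}$, and bound the latter's contribution crudely by $A/(\ell k)\cdot\log(1+\ell k/T)$. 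Using $\log x\leq x$ and $T\geq\sqrt{\delta A}$, that contribution should be $O(\widetilde{k}/\delta)$ or at most $O(\tfrac1\delta\widetilde{k})$.

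For the head $t\leq T$, the estimate is
\begin{equation}
	\sum_{t\leq T}\frac{A}{\ell kt+D}\leq\frac{A}{\ell k}\log\Bigl(1+\frac{\ell kT}{D}\Bigr)+\frac{A}{D},
\end{equation}
where $A/D\leq1/\delta$. The logarithm is $O(\log(\ell k/\sqrt{D}))$, so the head is of the same form as the tail and is absorbed the same way. I expect the bookkeeping in this case split, namely getting exactly the constant $\tfrac12$ on $k\log\log\alpha$ while pushing all losses into $\tfrac1\delta\widetilde{k}$ and $o(k\log\log\alpha)$, to be the delicate part. If the split fails, I would instead use the cruder bound $A\leq \delta^{-1}D$ on the head, and on the tail use $A\leq n_k$ when $\widetilde{k}\leq k/\log\log\alpha$, and $A/(\ell k)\leq k\cdot O(1)$ with $\widetilde{k}=\Omega(k/\log\log\alpha)$ otherwise, where the $\tfrac1\delta\widetilde{k}\geq\widetilde{k}\log\alpha$ budget easily absorbs $k\log(\ell k/T)$.
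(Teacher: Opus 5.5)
Your split at $T=\lceil\sqrt{D}\rceil$ is a reasonable elementary substitute for the paper's partial fractions, and when $n_k-\widetilde K=\Omega(k^2)$ your head and tail logarithms do combine to $\tfrac k2\log\tfrac1\delta+O(k)$. The gap is the remaining regime.

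\textbf{The small-$E$ regime is not actually handled.} In the tail you bound the numerator by $A\le n_k$, while the logarithm still carries $T\approx\sqrt{\delta(n_k-\widetilde K)}$. When $n_k-\widetilde K$ is as small as $\Theta(k)$, this costs an extra term of order $k\log k$. That term cannot be charged to $\tfrac1\delta\widetilde k\le\tfrac k\delta$. The hypotheses give only $\delta=o(1)$ and $\tfrac1\delta=O(\log\alpha)$, while $k$ may be a power of $\alpha$. For example, take $\tfrac1\delta=\log\alpha/\log\log\alpha$ and $k=\alpha^{c}$; this is the regime relevant for Lemma~\ref{lemma_depth} inside Lemma~\ref{lemma_recycle}. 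Then $k\log k\gg k/\delta$.

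Your remedy for this regime has several problems:
\begin{itemize}
\item It is only asserted (``should be $O(\widetilde k/\delta)$''), and the part of $A$ attributable to $k-\widetilde k$ is never estimated.
\item The fallback rests on $\tfrac1\delta\widetilde k\ge\widetilde k\log\alpha$, which reverses the hypothesis: you only have an upper bound on $\tfrac1\delta$.
\item Bounding the head termwise by $A/D\le\tfrac1\delta$ gives about $T/\delta\approx\sqrt D/\delta$. This is not $O(\widetilde k/\delta)$ when $\widetilde k$ is small.
\item The additive $A/D\le\tfrac1\delta$ in your head estimate is not covered by the right-hand side when $\widetilde k=0$. Take $k=1$ and $\tfrac1\delta\asymp\log\alpha/\log\log\alpha$ to see this. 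Comparing the head sum with $\int_0^T A\,\mathrm{d}x/(\ell kx+D)$ removes this term.
\end{itemize}

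\textbf{The missing idea is to keep numerator and denominator coupled.} Put $E:=n_k-\widetilde K$ and $c_t:=n_{\alpha-r}-n_k=n_t+\ell kt$ with $t=\alpha-k-r$. You have $K\le n_k$ and $0\le\widetilde K\le n_{\widetilde k}<n_k$; nonnegativity of $\widetilde K$ is implicit, and the paper uses it too. So each summand is at most $E/(c_t+\delta E)$ with $0<E\le n_k$. Since $x\mapsto x/(c_t+\delta x)$ is increasing, each summand is at most $n_k/(t^2+\ell kt+\delta n_k)$. This is the $\widetilde k=0$ case, and your split closes it:
\begin{itemize}
\item Take $T=\lceil\sqrt{\delta n_k}\rceil$. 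Using $n_k\ge k^2$ you get $\ell k/T\le\ell/\sqrt\delta$ and $\ell kT/(\delta n_k)\le2\ell/\sqrt\delta$.
\item Hence the head and tail logarithms are each at most $\tfrac12\log\tfrac1\delta+O(1)$.
\item If $\delta n_k<1$, then $T=1$, and the two logarithms still sum to $\log\tfrac1\delta+O(1)$ because $\delta k<1$.
\item With $n_k/(\ell k)\le k/2$, the total is $\tfrac k2\log\tfrac1\delta+O(k)\le(\tfrac12+o(1))k\log\log\alpha$.
\end{itemize}
So the $\tfrac1\delta\widetilde k$ term is not even needed.

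The paper argues differently. It decouples the numerator and denominator: it bounds the numerator by $n_k$ and bounds $\delta(n_k-\widetilde K)$ below by $\delta(n_k-n_{\widetilde k})$. It pays for this by bounding the first $\widetilde k$ terms by $\tfrac1\delta$ each. It treats the rest exactly via $ai^2+bi+c=a(i+\xi^-)(i+\xi^+)$. The offset $\widetilde k$ in $\log\frac{\widetilde k+\xi^++1}{\widetilde k+\xi^-}$, with the case split $\widetilde k\le k/\log\alpha$ versus $\widetilde k\ge k/\log\alpha$, keeps that logarithm at $\log\log\alpha+O(1)$. That is exactly what your decoupled tail bound could not do.
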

	
	\begin{proof}
		By Lemma~\ref{lemma_bounds1}~(\ref{lemma_bounds1_v0}), substituting $i=\alpha-k-r$ gives $$\widetilde{\Sigma}^{(1)}:=\sum_{r=0}^{\alpha-k-1}\frac{(K-\widetilde{K})^+}{n_{\alpha-r}-n_k+\delta(n_k-\widetilde{K})}\leq\sum_{i=1}^{\alpha}\frac{(n_k-\widetilde{K})^+}{n_{k+i}-n_k+\delta(n_k-\widetilde{K})}.$$ Note that $\widetilde{K}\leq n_{\tilde{k}}<n_k$, so each term is at most $\tfrac1{\delta}$, which gives
		\begin{align}
			\widetilde{\Sigma}^{(1)}
			&\leq\tfrac1{\delta}\widetilde{k}+\sum_{i=\tilde{k}+1}^{\alpha}\frac{n_k}{n_{k+i}-n_k+\delta(n_k-\widetilde{K})}
			\\&\leq\tfrac1{\delta}\widetilde{k}+n_k\sum_{i=\tilde{k}+1}^{\alpha}\frac{1}{n_{k+i}-n_k+\delta(n_k-n_{\tilde{k}})}.
		\end{align}
		By Lemma~\ref{lemma_nr}~(\ref{lemma_nr_sum}), we get
		\begin{align}
			\widetilde{\Sigma}^{(2)}
			&:=\sum_{i=\tilde{k}+1}^{\alpha}\frac{1}{n_{k+i}-n_k+\delta(n_k-n_{\tilde{k}})}
			\\&=\sum_{i=\tilde{k}+1}^{\alpha}\frac{1}{n_i+\ell ki+\delta(n_k-n_{\tilde{k}})}
			\\&=\sum_{i=\tilde{k}+1}^{\alpha}\frac{1}{ai^2+bi+c}
		\end{align}
		for $a:=\tfrac{\ell}{2}$, $b:=\ell k+1-\tfrac{\ell}{2}$, and $c:=\delta(n_k-n_{\tilde{k}})$. Note that $a,b,c>0$, since $k\geq\widetilde{k}+1\geq1$. By Lemma~\ref{lemma_nr}~(\ref{lemma_nr_leq}), we find that $c\leq\delta k^2=o(b^2)$, so $D:=b^2-4ac=(1+o(1))b^2$ is also positive for $\alpha$ large enough.
		
		For $\xi^{\pm}:=\tfrac1{2a}(b\pm\sqrt{D})$, we have $ai^2+bi+c=a(i+\xi^-)(i+\xi^+)$, which gives $$\widetilde{\Sigma}^{(2)}=\sum_{i=\tilde{k}+1}^{\alpha}\frac1{\sqrt{D}}\left(\frac1{i+\xi^-}-\frac1{i+\xi^+}\right).$$ We use the bounds
		\begin{align}
			\sum_{i=\tilde{k}+1}^{\alpha}\frac1{i+\xi^-}
			&\leq\int_{\tilde{k}+\xi^-}^{\alpha+\xi^-}\frac1x\,\mathrm{d}x=\log\left(\frac{\alpha+\xi^-}{\tilde{k}+\xi^-}\right),
			\\\sum_{i=\tilde{k}+1}^{\alpha}\frac1{i+\xi^+}
			&\geq\int_{\tilde{k}+\xi^++1}^{\alpha+\xi^++1}\frac1x\,\mathrm{d}x=\log\left(\frac{\alpha+\xi^++1}{\tilde{k}+\xi^++1}\right),
			\\\widetilde{\Sigma}^{(2)}&\leq\frac1{\sqrt{D}}\log\left(\frac{\tilde{k}+\xi^++1}{\tilde{k}+\xi^-}\right),
		\end{align}
		where the last inequality uses $\alpha+\xi^-\leq\alpha+\xi^++1$.
		
		Note that $D=b^2(1-\tfrac{4ac}{b^2})\leq b^2(1-\tfrac{2ac}{b^2})^2\leq b^2$, so $\xi^+\leq\tfrac{b}{a}$ and $\xi^-\geq\tfrac{c}{b}$. If $\widetilde{k}\leq k/\log\alpha$, such that $c=\Omega(\delta k^2)$ and $b(b+a)=O(k^2)$, we thus get $$\tfrac{\xi^++1}{\xi^-}\leq\tfrac{b(b+a)}{ac}=O(\tfrac{1}{\delta})=O(\log\alpha).$$ Otherwise, if $\widetilde{k}\geq k/\log\alpha$, we get $$\tfrac{\xi^++1}{\tilde{k}}\leq\tfrac{b+a}{a\tilde{k}}=O(\tfrac{k}{\tilde{k}})=O(\log\alpha).$$ Either way, since $\sqrt{D}=(1+o(1))b$, we find that
		\begin{align}
			\frac{\tilde{k}+\xi^++1}{\tilde{k}+\xi^-}
			&\leq1+\frac{\xi^++1}{\max\{\tilde{k},\xi^-\}}=O(\log\alpha),
			\\\widetilde{\Sigma}^{(2)}
			&\leq\tfrac1{\sqrt{D}}\log(O(\log\alpha))\leq(1+o(1))\tfrac1{b}\log\log\alpha,
			\\\widetilde{\Sigma}^{(1)}
			&\leq\tfrac1{\delta}\widetilde{k}+(1+o(1))\tfrac{n_k}{b}\log\log\alpha.
		\end{align}
		Since $n_k=(\tfrac{\ell}{2}k+1-\tfrac{\ell}2)k\leq\tfrac12bk$, the result follows.
	\end{proof}
	
	We can now prove Lemma~\ref{lemma_depth}.
	
	\begin{proof}[Proof of Lemma~\ref{lemma_depth}]
		Fix some $0<\tau<1$, and let $\delta':=\tfrac1{1+\tau}$. We use
		\begin{align}
			\mathbb{E}(1_{\widetilde{\mathcal{P}}^{\delta}}e^{C\widetilde{\Sigma}})
			&\leq\mathbb{E}(1_{\widetilde{\mathcal{P}}^{\delta}\cap\mathcal{P}_1^{\delta'}}e^{C\widetilde{\Sigma}})+\sum_{t=1}^{T^{\delta'}-1}\mathbb{E}(1_{\widetilde{\mathcal{P}}^{\delta}\cap\mathcal{P}_{t+1}^{\delta'}\cap\mathcal{Q}_t^{\delta'}}e^{C\widetilde{\Sigma}}),
			\\\widetilde{\Sigma}
			&:=\sum_{r=0}^{\alpha-k-1}\frac{(K-\widetilde{K})^+}{n_{\alpha-r}-\Psi_r}.
		\end{align}
		
		For $t=1,\ldots,T^{\delta'}$, in $\widetilde{\mathcal{P}}^{\delta}\cap\mathcal{P}_t^{\delta'}$, we have $$\widetilde{\Sigma}\leq\tfrac1{\delta'}\sum_{r=0}^{\alpha-k-t}\frac{(K-\widetilde{K})^+}{n_{\alpha-r}-n_k+\delta(n_k-\widetilde{K})}+\tfrac1{\delta}\sum_{r=\alpha-k-t+1}^{\alpha-k-1}\frac{(K-\widetilde{K})^+}{n_{\alpha-r}-\widetilde{K}}.$$ Since $K\leq n_k\leq n_{\alpha-r}$ by Lemma~\ref{lemma_bounds1}~(\ref{lemma_bounds1_v0}), by Lemma~\ref{lemma_special}, we get
		\begin{align}
			\widetilde{\Sigma}
			&\leq\tfrac1{\delta'}\left((\tfrac12+o(1))k\log\log\alpha+\tfrac1{\delta}\widetilde{k}\right)+\tfrac1{\delta}(t-1)
			\\&\leq(1+\tau)(\tfrac12+o(1))k\log\log\alpha+O(\tfrac1{\delta}\widetilde{k})+o((t-1)\log\alpha).
		\end{align}
		In particular, this gives $$\mathbb{E}(1_{\widetilde{\mathcal{P}}^{\delta}\cap\mathcal{P}_1^{\delta'}}e^{C\tilde{\Sigma}})\leq\exp\left((1+\tau)(\tfrac{C}{2}+o(1))k\log\log\alpha+O(\tfrac1{\delta}\widetilde{k})\right).$$
		
		For $r=0,\ldots,\alpha-k-1$ with $t:=\alpha-k-r\leq T^{\delta'}-1$, in $\mathcal{Q}_t^{\delta'}$, we have $\Delta_r^{(2)}+\Delta_r^{(1)}\geq(1-(1+o(1))\delta')(n_t+\ell tk)$ by Lemma~\ref{lemma_boundpt}. It follows that either $\Delta_r^{(2)}\geq(1-(1+o(1))\delta')n_t$ or $\Delta_r^{(1)}\geq(1-(1+o(1))\delta')\ell tk$. By Lemma~\ref{lemma_pd}~(\ref{lemma_pd_2small},\ref{lemma_pd_1}), since $\delta'\leq1-\Omega(1)$, we get $$\mathbb{P}(\mathcal{Q}_t^{\delta'})\leq\exp(O(\log\alpha)-\Omega(t\log\alpha)).$$ It follows that
		\begin{align}
			&\mathbb{E}(1_{\widetilde{\mathcal{P}}^{\delta}\cap\mathcal{P}_{t+1}^{\delta'}\cap\mathcal{Q}_t^{\delta'}}e^{C\tilde{\Sigma}})
			\\&\leq\exp\left((1+\tau)(\tfrac{C}{2}+o(1))k\log\log\alpha+O(\tfrac1{\delta}\widetilde{k}+\log\alpha)-\Omega(t\log\alpha)\right).
		\end{align}
		For $\alpha$ large enough, we thus have
		\begin{align}
			&\mathbb{E}(1_{\widetilde{\mathcal{P}}^{\delta}\cap\mathcal{P}_{t+1}^{\delta'}\cap\mathcal{Q}_t^{\delta'}}e^{C\tilde{\Sigma}})
			\\&\leq\alpha^{C-\varepsilon t}\exp\left((1+\tau)(\tfrac{C}{2}+o(1))k\log\log\alpha+O(\tfrac1{\delta}\widetilde{k})\right)
		\end{align}
		for some fixed constants $C,\varepsilon>0$. For $t^*:=\lfloor\tfrac{C}{\varepsilon}\rfloor$, we get
		\begin{align}
			&\sum_{t=t^*+1}^{T^{\delta'}-1}\mathbb{E}(1_{\widetilde{\mathcal{P}}^{\delta}\cap\mathcal{P}_{t+1}^{\delta'}\cap\mathcal{Q}_t^{\delta'}}e^{C\tilde{\Sigma}})
			\\&\leq(1+o(1))\exp\left((1+\tau)(\tfrac{C}{2}+o(1))k\log\log\alpha+O(\tfrac1{\delta}\widetilde{k})\right),
		\end{align}
		since $\sum_{i\geq n}x^i=(1+o(1))x^n$ for $x=o(1)$.
		
		Finally, for $t\leq t^*=O(1)$, by Lemma~\ref{lemma_pd0}, we get $$\mathbb{P}(\mathcal{Q}_t^{\delta'})=O((k+1)M/\alpha)=o(1).$$ It follows that
		\begin{align}
			&\mathbb{E}(1_{\widetilde{\mathcal{P}}^{\delta}\cap\mathcal{P}_{t+1}^{\delta'}\cap\mathcal{Q}_t^{\delta'}}e^{C\tilde{\Sigma}})
			\\&\leq\exp\left((1+\tau)(\tfrac{C}{2}+o(1))k\log\log\alpha+O(\tfrac1{\delta}\widetilde{k})\right),
			\\&\sum_{t=1}^{t^*}\mathbb{E}(1_{\widetilde{\mathcal{P}}^{\delta}\cap\mathcal{P}_{t+1}^{\delta'}\cap\mathcal{Q}_t^{\delta'}}e^{C\tilde{\Sigma}})
			\\&\leq\exp\left((1+\tau)(\tfrac{C}{2}+o(1))k\log\log\alpha+O(\tfrac1{\delta}\widetilde{k})\right).
		\end{align}
		
		Collecting results, we have $$\mathbb{E}(1_{\widetilde{\mathcal{P}}^{\delta}}e^{C\tilde{\Sigma}})\leq\exp\left((1+\tau)(\tfrac{C}{2}+o(1))k\log\log\alpha+O(\tfrac1{\delta}\widetilde{k})\right).$$ Since this holds for any $\tau>0$, the result follows.
	\end{proof}
	
	\subsection{Extremal expectation}\label{subsection_proof_rpe}
	
	We prove Lemma~\ref{lemma_remaining_e}. For a given $r=0,\ldots,\alpha-k-1$ with $t:=\alpha-k-r<T^{\delta}$, we thus need to bound $\mathbb{E}(\Pi1_{\mathcal{P}_{t+1}^{\delta}\setminus\mathcal{P}_t^{\delta}})$. We consider $$\Pi_t:=\prod_{r'=0}^{\alpha-k-t-1}\frac{n_{\alpha-r'}-\psi_{p_{r'}}}{n_{\alpha-r'}-\Psi_{r'}}.$$ The main idea is to condition on the random variable $S':=S_r$. Then $\mathbb{E}(\Pi_t|S')$ is related to $\mathbb{P}(\boldsymbol{S}_{\alpha-k-t}=S')$, similar to how $\mathbb{E}(\Pi)$ is related to $\mathbb{P}(\boldsymbol{S}_{\alpha-k}=S)$.
	
	\begin{lemma}\label{lemma_recycle}
		Let $0<\delta<1$ with $\delta=O(\tfrac{\log\log\alpha}{\log\alpha})$, and let $\mu_1,\mu_2>0$ with $\mu_2=\Omega(1)$, $\mu_1+\mu_2=1$, and
		\begin{equation}\label{equation_recycle}
			\delta\geq\tfrac1{\mu_1}\left(8\left(1-\tfrac{\log M}{\log\alpha}\right)+2+\Omega(1)\right)\tfrac{\log\log\alpha}{\log\alpha}.
		\end{equation}
		Let $r=0,\ldots,\alpha-k-1$ with $t:=\alpha-k-r=O(M)$. Then $$\mathbb{E}(\Pi_t1_{\mathcal{P}_t^{\delta}}|S_r)\leq\exp\left((\tfrac{1}{2\mu_2}+o(1))t\log\log\alpha+O(\tfrac1{\delta}k)\right).$$
	\end{lemma}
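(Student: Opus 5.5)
Conditionally on $S_r=S'$, the first $\alpha-k-t$ steps of the true model are a uniformly random ordering of $S'$. So $\Pi_t$ has the same structure as $\Pi$, but with $S$ replaced by the FIS $S'$ of size $\alpha-k-t=\alpha-k'$, where $k':=k+t=O(M)$. The plan is to redo the decomposition $\log\Pi_t=\sum_{r'}\log(1+\widetilde Q_{r'})$ with respect to $S'$ and bound it by the machinery of Lemma~\ref{lemma_sigma}~(\ref{lemma_sigma_large}), applied with $k'$ in place of $k$. Two discrepancies must be handled.

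\textbf{First discrepancy: the centering.} $\Pi_t$ is centered at $\psi_{p_{r'}}$, the simplified-model expectation computed for $S$, not for $S'$. By Lemma~\ref{lemma_expectation} applied to both sets, the difference $\psi^{S}_{p_{r'}}-\psi^{S'}_{p'_{r'}}$ consists of $K-K'$, where $K'$ is the analogue of $K$ for $S'$, plus $O(q M^2+\ldots)$ terms. Summed against $1/(n_{\alpha-r'}-\Psi_{r'})$, the $O(\cdot)$ terms contribute $O(M^2\log\alpha/\alpha)=o(1)$ by Lemma~\ref{lemma_sums}. The genuinely dangerous term is $(K'-K)^+$, because $K'$ can reach $n_{k'}$ while $K\le n_k$.

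\textbf{Second discrepancy: the denominators.} On $\mathcal{P}_t^{\delta}$ we only have $n_{\alpha-r'}-\Psi_{r'}\ge\delta(n_{\alpha-r'}-K)$, which is a lower bound relative to $K$ rather than $K'$. This is exactly the setting of Lemma~\ref{lemma_depth}, with $(k,K)$ replaced by $(k',K')$ and $(\widetilde k,\widetilde K)$ replaced by $(k,K)$. Its hypothesis $K'\le n_{k'}-\delta(n_{k'}-K)$ should follow from the definition of $\mathcal{P}_t^{\delta}$ at the index $r'=\alpha-k-t$ together with Lemma~\ref{lemma_bounds1}~(\ref{lemma_bounds1_v0}).

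\textbf{Hölder split.} Write
\[
\log\Pi_t\le \tfrac1\delta\hat\Sigma'+\widetilde\Sigma+o(1),
\]
where $\hat\Sigma'$ is the positive-deviation sum for $S'$ and $\widetilde\Sigma$ is the special term of Lemma~\ref{lemma_depth}. Apply Hölder with weights $\mu_1,\mu_2$:
\[
\mathbb{E}(\Pi_t1_{\mathcal{P}_t^\delta}\mid S_r)\le \mathbb{E}\bigl(e^{\frac{1}{\mu_1\delta}\hat\Sigma'}\bigr)^{\mu_1}\,\mathbb{E}\bigl(1_{\widetilde{\mathcal P}^\delta}e^{\frac1{\mu_2}\widetilde\Sigma}\bigr)^{\mu_2}.
\]
For the first factor, condition \eqref{equation_recycle} is precisely the requirement that $C:=1/(\mu_1\delta)$ satisfies the hypothesis of Lemma~\ref{lemma_sigma}~(\ref{lemma_sigma_large}) for $S'$. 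That lemma gives a bound of $e^{o(k')}=e^{o(t+k)}$, and $o(k)\le O(k/\delta)$ absorbs the $k$ part. For the second factor, Lemma~\ref{lemma_depth} with $C=1/\mu_2=O(1)$ gives
\[
\exp\bigl((\tfrac{1}{2\mu_2}+o(1))k'\log\log\alpha+O(\tfrac1\delta k)\bigr).
\]
After raising to the power $\mu_2$ and splitting $k'=k+t$, the $k\log\log\alpha$ part is $O(k/\delta)$, since $\delta=O(\log\log\alpha/\log\alpha)$. Combining the two factors yields the claimed bound.

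\textbf{Main obstacle.} The hardest part will be making the centering mismatch rigorous. Concretely, I must verify that every component of $\Psi^{(2)},\Psi^{(1)},\Psi^{(a)},\Psi^{(b)},\Psi^{(c)}$ recentered from $S$ to $S'$ changes by at most $(K'-K)^+$ plus negligible terms. In doing so I need to make sure the $p$ versus $p'$ reparametrization, with $p_{r'}=r'/(\alpha-k)$ against $r'/(\alpha-k')$, does not create an error of size $t\log\alpha$. I expect this to go through via Lemma~\ref{lemma_bounds2}~(\ref{lemma_bounds2_pr},\ref{lemma_bounds2_qr}), since $q$ changes by $O(t/\alpha)$ and the terms multiplied by it are of size $O(\alpha M)$, giving a total of $O(tM\log\alpha/\alpha)=o(t)$.
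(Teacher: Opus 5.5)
Your proposal matches the paper's proof. You condition on $S_r$ so that the prefix is a uniform ordering of $S'$, then split $(\Psi_{r'}-\psi_{p_{r'}})^+$ into three parts: the $S'$-centred deviation (Lemma~\ref{lemma_sigma}~(\ref{lemma_sigma_large}) with $C=1/(\mu_1\delta)$), the recentering error, and $(K'-K)^+$ (Lemma~\ref{lemma_depth} with $(\widetilde k,\widetilde K)=(k,K)$, its hypothesis coming from $K'=\Delta_r\le\Psi_r$ on $\mathcal{P}_t^{\delta}$). The recentering error is $o(1)$, but only because the relevant change is $q_{r'}^2-q_{r'}'^2=O(q'_{r'}M/\alpha+t^2/\alpha^2)$, which carries a factor $q'_{r'}$ so that Lemma~\ref{lemma_sums}~(\ref{lemma_sums_3}) applies; the bare bound $q_{r'}-q'_{r'}=O(t/\alpha)$ alone would not suffice.

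The only deviation from the paper is the final step. You combine the two factors by H\"older, whereas the paper uses convexity, $e^{\mu_1a+\mu_2b}\le\mu_1e^a+\mu_2e^b$. Both are valid, and H\"older even gives the constant $\tfrac12$ in place of $\tfrac1{2\mu_2}$.
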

	
	\begin{proof}
		Consider the FIS $S':=S_r$ of size $\alpha-k'$ with $k':=k+t=O(M)$, and let $K':=|(V\setminus\overline{N}(S'))\cap(F\cup R^*(S'))|=\Delta_r$. Since this fits the premise of Lemma~\ref{lemma_generalization}, we can follow the framework from Section~\ref{subsection_proof_framework} to reach the same conclusions.
		
		Let $\mathcal{P}'$ be the set of all possible permutations $\pi'=(s_0',\ldots,s_{\alpha-k'-1}')$ of the elements of $S'$, and consider if $\pi'$ is a uniformly random permutation from $\mathcal{P}'$. For $r'=0,\ldots,\alpha-k'-1$, we get the random variable $S_{r'}':=S_{r'}'(\pi'):=\{s_0',\ldots,s_{r'-1}'\}$. We can define $\widetilde{V}_{r'}'$, $V_{r'}'$, and $\Psi_{r'}'$ again, as done in Section~\ref{subsection_proof_framework}.
		
		Next, for $p\in[0,1]$, let $S_p'$ contain each element of $S'$ independently with probability $p$. Again, similarly define $\widetilde{V}_p'$, $V_p'$, and $\Psi_p'$, and define $\psi_p':=\mathbb{E}(\Psi_p')$. Furthermore, for $r'=0,\ldots,\alpha-k'-1$, define $p_{r'}':=r'/(\alpha-k')$, and write $q:=1-p$ and $q_{r'}':=1-p_{r'}'$.
		
		We can apply any Lemma~of which the proof is already finished to this new context. Firstly, Lemma~\ref{lemma_expectation} gives $$\psi_p'=K'+q^2(|F|+p|R^*(S')|+\b(V))+O(qM^2+(n_{k'}-K')M/\alpha).$$ Secondly, using \eqref{equation_recycle}, Lemma~\ref{lemma_sigma}~(\ref{lemma_sigma_large}) gives $$\mathbb{E}\left(\exp\left(\tfrac{1}{\mu_1\delta}\sum_{r'=0}^{\alpha-k'-1}\frac{(\Psi_{r'}'-\psi_{p_{r'}'}')^+}{n_{\alpha-r'}-n_{k'}}\right)\right)\leq e^{o(k')}.$$ Thirdly, and finally, if $K'\leq n_{k'}-\delta(n_{k'}-K)$, then for $$\widetilde{\mathcal{P}}'^{\delta}:=\{n_{\alpha-r'}-\Psi_{r'}'\geq\delta(n_{\alpha-r'}-K),\ \forall r'=0,\ldots,\alpha-k'-1\},$$ Lemma~\ref{lemma_depth} gives
		\begin{align}
			&\mathbb{E}\left(1_{\widetilde{\mathcal{P}}'^{\delta}}\exp\left(\tfrac1{\mu_2}\sum_{r'=0}^{\alpha-k'-1}\frac{(K'-K)^+}{n_{\alpha-r'}-\Psi_{r'}'}\right)\right)
			\\&\leq\exp\left((\tfrac{1}{2\mu_2}+o(1))k'\log\log\alpha+O(\tfrac1{\delta}k)\right).
		\end{align}
		
		The crucial observation is that the permutation $\overline{\pi}:=(s_0,\ldots,s_{\alpha-k'-1})$ is already a uniformly random permutation from $\mathcal{P}'$, if we condition on $S_r$. We may thus assume that $\pi'=\overline{\pi}$. For all $r'=0,\ldots,\alpha-k'-1$, this gives $S_{r'}'=S_{r'}$, and thus $\Psi_{r'}'=\Psi_{r'}$. We even get equality of the events $\mathcal{P}_{t+1}^{\delta}=\widetilde{\mathcal{P}}'^{\delta}$.
		
		Reinterpreting the results, we find that Lemma~\ref{lemma_sigma}~(\ref{lemma_sigma_large}) now gives $$\mathbb{E}\Bigg(\exp\Bigg(\tfrac{1}{\mu_1\delta}\sum_{r'=0}^{\alpha-k'-1}\frac{(\Psi_{r'}-\psi_{p_{r'}'}')^+}{n_{\alpha-r'}-n_{k'}}\Bigg)\Bigg|S_r\Bigg)\leq e^{o(k+t)}.$$ Furthermore, in $\mathcal{P}_t^{\delta}$, for $r'=0,\ldots,\alpha-k'-1$ we have $$n_{\alpha-r'}-\Psi_{r'}\geq\delta(n_{\alpha-r'}-K)\geq\delta(n_{\alpha-r'}-n_k)\geq\delta(n_{\alpha-r'}-n_{k'})$$ by Lemma~\ref{lemma_bounds1}~(\ref{lemma_bounds1_v0}), so it follows that $$\mathbb{E}\Bigg(1_{\mathcal{P}_t^{\delta}}\exp\Bigg(\tfrac{1}{\mu_1}\sum_{r'=0}^{\alpha-k'-1}\frac{(\Psi_{r'}-\psi_{p_{r'}'}')^+}{n_{\alpha-r'}-\Psi_{r'}}\Bigg)\Bigg|S_r\Bigg)\leq e^{o(k+t)}.$$ Next, in $\mathcal{P}_t^{\delta}$, note that we have $n_{k'}-\Psi_r\geq\delta(n_{k'}-K)$, and thus $K'=\Delta_r\leq\Psi_r\leq n_{k'}-\delta(n_{k'}-K)$. Since $\mathcal{P}_t^{\delta}\subset\mathcal{P}_{t+1}^{\delta}=\widetilde{\mathcal{P}}'^{\delta}$, Lemma~\ref{lemma_depth} thus now gives
		\begin{align}
			&\mathbb{E}\Bigg(1_{\mathcal{P}_t^{\delta}}\exp\Bigg(\tfrac1{\mu_2}\sum_{r'=0}^{\alpha-k'-1}\frac{(K'-K)^+}{n_{\alpha-r'}-\Psi_{r'}}\Bigg)\Bigg|S_r\Bigg)
			\\&\leq\exp\left((\tfrac{1}{2\mu_2}+o(1))(k+t)\log\log\alpha+O(\tfrac1{\delta}k)\right)
			\\&\leq\exp\left((\tfrac{1}{2\mu_2}+o(1))t\log\log\alpha+O(\tfrac1{\delta}k)\right),
		\end{align}
		since $\log\log\alpha=O(\tfrac{1}{\delta})$.
		
		In order to bound $\mathbb{E}(\Pi_t1_{\mathcal{P}_t^{\delta}}|S_r)$ from this, we use
		\begin{align}
			\log\Pi_t
			&\leq\sum_{r'=0}^{\alpha-k'-1}(\widetilde{Q}_{r'})^+=\sum_{r'=0}^{\alpha-k'-1}\frac{(\Psi_{r'}-\psi_{p_{r'}})^+}{n_{\alpha-r'}-\Psi_{r'}}
			\\&\leq\sum_{r'=0}^{\alpha-k'-1}\frac{(\Psi_{r'}-\psi_{p_{r'}'}')^++(\psi_{p_{r'}'}'-\psi_{p_{r'}}+K-K')^++(K'-K)^+}{n_{\alpha-r'}-\Psi_{r'}}.
		\end{align}
		For $r'=0,\ldots,\alpha-k'-1$, we use Lemma~\ref{lemma_expectation} to bound
		\begin{align}
			&(\psi_{p_{r'}'}'-\psi_{p_{r'}}+K-K')^+
			\\&\leq|q_{r'}'^2-q_{r'}^2|(|F|+\b(V))+|q_{r'}'^2p_{r'}'|R^*(S_r)|-q_{r'}^2p_{r'}|R^*(S)||
			\\&+O((q_{r'}'+q_{r'})M^2+(n_k-K+n_{k'}-K')M/\alpha).
		\end{align}
		Note that $k\leq k'$ gives $n_k-K+n_{k'}-K'\leq2n_{k'}\leq\ell k'^2$ by Lemma~\ref{lemma_nr}~(\ref{lemma_nr_leq}). We furthermore get $p_{r'}\leq p_{r'}'$, so $q_{r'}'\leq q_{r'}$, and thus $q_{r'}'+q_{r'}\leq2q_{r'}$. Next, we have
		\begin{align}
			&|q_{r'}'^2p_{r'}'|R^*(S_r)|-q_{r'}^2p_{r'}|R^*(S)||
			\\&\leq p_{r'}'|q_{r'}'^2-q_{r'}^2||R^*(S)|+q_{r'}'^2p_{r'}'|R^*(S\setminus S_r)|+q_{r'}^2|p_{r'}'-p_{r'}||R^*(S)|
			\\&\leq|q_{r'}'^2-q_{r'}^2||R^*(S)|+q_{r'}|R^*(S\setminus S_r)|+q_{r'}|p_{r'}'-p_{r'}||R^*(S)|,
		\end{align}
		so we need to bound
		\begin{align}
			q_{r'}^2-q_{r'}'^2
			&=\tfrac{(\alpha-k-r')^2}{(\alpha-k)^2}-\tfrac{(\alpha-k'-r')^2}{(\alpha-k')^2}
			\\&=\tfrac{(\alpha-k-r')^2(\alpha-k')^2-(\alpha-k'-r')^2(\alpha-k)^2}{(\alpha-k)^2(\alpha-k')^2}
			\\&=\tfrac{2r't(\alpha-k)(\alpha-k'-r')+r'^2t^2}{(\alpha-k)^2(\alpha-k')^2}=O(q_{r'}'M/\alpha+t^2/\alpha^2),
			\\p_{r'}'-p_{r'}
			&=\tfrac{r't}{(\alpha-k')(\alpha-k)}=O(M/\alpha).
		\end{align}
		We further use Lemma~\ref{lemma_terms} to conclude $$(\psi_{p_{r'}'}'-\psi_{p_{r'}}+K-K')^+=O(q_{r'}M^2+k'^2M/\alpha).$$ In $\mathcal{P}_t^{\delta}$, by Lemma~\ref{lemma_sums}~(\ref{lemma_sums_3},\ref{lemma_sums_4}), we thus get
		\begin{align}
			&\sum_{r'=0}^{\alpha-k'-1}\frac{(\psi_{p_{r'}'}'-\psi_{p_{r'}}+K-K')^+}{n_{\alpha-r'}-\Psi_{r'}}
			\\&=O\left(\tfrac{1}{\delta}\sum_{r'=0}^{\alpha-k-1}\frac{q_{r'}M^2}{n_{\alpha-r'}-n_k}+\tfrac{1}{\delta}\sum_{r'=0}^{\alpha-k'-1}\frac{k'^2M/\alpha}{n_{\alpha-r'}-n_{k'}}\right)
			\\&=O(\tfrac{1}{\delta}M^2\log\alpha/\alpha)=o(1).
		\end{align}
		
		By convexity of the exponential function, we get
		\begin{align}
			&\mathbb{E}(\Pi_t1_{\mathcal{P}_t^{\delta}}|S_r)
			\\&\leq\mathbb{E}\Bigg(1_{\mathcal{P}_t^{\delta}}\exp\Bigg(\sum_{r'=0}^{\alpha-k'-1}\frac{(\Psi_{r'}-\psi_{p_{r'}'}')^++(K'-K)^+}{n_{\alpha-r'}-\Psi_{r'}}+o(1)\Bigg)\Bigg|S_r\Bigg)
			\\&\leq\mu_1e^{o(1)}\mathbb{E}\Bigg(1_{\mathcal{P}_t^{\delta}}\exp\Bigg(\tfrac1{\mu_1}\sum_{r'=0}^{\alpha-k'-1}\frac{(\Psi_{r'}-\psi_{p_{r'}'}')^+}{n_{\alpha-r'}-\Psi_{r'}}\Bigg)\Bigg|S_r\Bigg)
			\\&+\mu_2e^{o(1)}\mathbb{E}\Bigg(1_{\mathcal{P}_t^{\delta}}\exp\Bigg(\tfrac1{\mu_2}\sum_{r'=0}^{\alpha-k'-1}\frac{(K'-K)^+}{n_{\alpha-r'}-\Psi_{r'}}\Bigg)\Bigg|S_r\Bigg).
		\end{align}
		The result follows, because $\mu_1+\mu_2=1$.
	\end{proof}
	
	To apply Lemma~\ref{lemma_recycle}, we need to bound $\Pi$ in terms of $\Pi_t$.
	
	\begin{lemma}\label{lemma_pit}
		For $r=0,\ldots,\alpha-k-1$ with $t:=\alpha-k-r$, we have $$\Pi\leq\Pi_t\exp(t\log(k+t)+O(t)).$$
	\end{lemma}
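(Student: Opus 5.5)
Since $\Pi$ and $\Pi_t$ share the factors for $r'=0,\ldots,\alpha-k-t-1$, the quotient is
\[
\frac{\Pi}{\Pi_t}=\prod_{r'=\alpha-k-t}^{\alpha-k-1}\frac{n_{\alpha-r'}-\psi_{p_{r'}}}{n_{\alpha-r'}-\Psi_{r'}},
\]
a product of exactly $t$ factors. The plan is to bound each factor separately: the numerator from above, the denominator from below.

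\emph{Numerator.} For the numerator we use the trivial bound $n_{\alpha-r'}-\psi_{p_{r'}}\le n_{\alpha-r'}$. This holds because $\psi_{p_{r'}}$ is the expectation of a non-negative quantity. Recall that $\b$ was assumed non-negative, and $\Psi_p=\Delta_p+\Lambda_p\ge 0$.

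\emph{Denominator.} This is the main step. We write
\[
n_{\alpha-r'}-\Psi_{r'}=\sum_{v\in V_{r'}}(1-\b(v)).
\]
For $\alpha$ large, $\b_{\max}=O(M/\alpha)\le\tfrac12$, so this is at least $\tfrac12|V_{r'}|$. Every vertex of $S\setminus S_{r'}$ lies in $V_{r'}$: it is not adjacent to $S_{r'}$, it is not in $F$, and it is not in $R^*(S_{r'})$ because $S$ is feasible. Hence
\[
|V_{r'}|\ge |S\setminus S_{r'}|=\alpha-k-r'=:t'\ge 1,
\]
the same argument used at the end of the proof of Lemma~\ref{lemma_sumab}(\ref{lemma_sumab_pi}). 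Therefore each denominator is at least $\tfrac12 t'$.

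\emph{Bounding a single factor.} Each factor is at most
\[
\frac{2n_{k+t'}}{t'}\le \frac{\ell(k+t')^2}{t'},
\]
using Lemma~\ref{lemma_nr}(\ref{lemma_nr_leq}). This is not quite $(k+t)e^{O(1)}$: when $t'$ is small compared with $k$, it is of order $k^2/t'$. We write the factor as $\ell(k+t')\cdot(k+t')/t'$. The product of the first pieces over $t'=1,\ldots,t$ is at most
\[
(\ell(k+t))^t=\exp\bigl(t\log(k+t)+O(t)\bigr).
\]
What remains is the extra product $\prod_{t'=1}^{t}(k+t')/t'=\binom{k+t}{t}$.

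\emph{The obstacle.} The extra product $\binom{k+t}{t}$ is the only real difficulty. It is at most $\bigl(e(k+t)/t\bigr)^t$, which would contribute a second term of size $t\log(k+t)$ and spoil the constant in the statement. I therefore expect the denominator bound to need sharpening.

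To sharpen it, I would use more than the $\alpha-k-r'$ vertices of $S\setminus S_{r'}$. Also the feasible vertices of $V^{(0)}$ lie in $V_{r'}$; there are $n_k-K$ of them, by Lemma~\ref{lemma_bounds1}(\ref{lemma_bounds1_v0}). Together with the vertices of $V^{(1)}\cup V^{(2)}$ not lying in $F\cup R^*$, this should give $|V_{r'}|=\Omega(n_{\alpha-r'})$ up to the $O(t'M)$ vertices removed. That would make each factor $O(1)$ once $t'\gtrsim M$.

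For small $t'$ I would accept the crude bound $\ell(k+t')^2/t'$. Then, using $k=O(M)$ and $t=O(M)$ as in the intended application (Lemma~\ref{lemma_recycle}), I would check that $\log\binom{k+t}{t}$ is absorbed. If that fails, the fallback is to state the bound as $\exp(2t\log(k+t)+O(t))$, which is still harmless downstream: there it is compared against $\mathbb{P}(\mathcal{Q}_t^\delta)\le\alpha^{O(1)-\Omega(t)}$ with $\log(k+t)=O(\log M)$, and $\log M\le\tfrac12\log\alpha$.
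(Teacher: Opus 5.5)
The gap is in your numerator bound. Bounding $n_{\alpha-r'}-\psi_{p_{r'}}$ by $n_{\alpha-r'}=n_{k+t'}$ discards the fact that $\psi_{p_{r'}}$ already contains $K$, and no lower bound on $|V_{r'}|$ can make up for this.

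Here is a concrete case. Take $t'=1$ with $V^{(0)}\subset F$, so $K=n_k$. Then $\widetilde V_{r'}$ consists of three parts: the single vertex $s$ of $S\setminus S_{r'}$, the $n_k$ vertices of $V^{(0)}$, and the $\ell k$ vertices of $N(s)\cap V^{(1)}$. If the last group also lies in $F$, then $V_{r'}=\{s\}$, and the denominator really is $1-\b(s)$. Your factor is then of order $k^2$, whereas the lemma with $t=1$ asserts $O(k+1)$. Your proposed $\Omega(n_{\alpha-r'})$ sharpening only acts when $t'\gtrsim M$, but the surplus $\binom{k+t}{t}$ comes from the range $t'<k$.

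The missing step is Lemma~\ref{lemma_expectation}. Since the $q_{r'}^2(\cdots)$ term there is non-negative and $q_{r'}=t'/(\alpha-k)$, it gives $n_{\alpha-r'}-\psi_{p_{r'}}\le n_{k+t'}-K+O(t'M^2/\alpha+(n_k-K)M/\alpha)$. By Lemma~\ref{lemma_nr}, $n_{k+t'}-K=n_{t'}+\ell kt'+(n_k-K)$, so the numerator is $O(t'(k+t)+(n_k-K))$. In the denominator, count both $S\setminus S_{r'}$ and the $n_k-K$ feasible vertices of $V^{(0)}$. These lie in $V_{r'}$ because they are non-adjacent to $S$, outside $F$, and outside $R^*(S)\supseteq R^*(S_{r'})$. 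This gives $n_{\alpha-r'}-\Psi_{r'}\ge\tfrac12(t'+n_k-K)$. The $(n_k-K)$ terms now match, each factor is $O(k+t)$, and the product of the $t$ factors is $\exp(t\log(k+t)+O(t))$.

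Your fallback $\exp(2t\log(k+t)+O(t))$ is not harmless downstream. The bounds available for $\mathbb{P}(\mathcal{Q}_t^\delta)$ in Lemma~\ref{lemma_pd} are of order $(M/\alpha)^{(1-O(\delta))t}$. When $M$ is $\alpha^{1/2}$ up to logarithmic factors, which the hypothesis allows, this is about $\alpha^{-t/2}$. Meanwhile $e^{2t\log(k+t)}$ can be about $M^{2t}\approx\alpha^{t}$, so the constant hidden in your $\Omega(t)$ is far too small to compensate. The proof of Lemma~\ref{lemma_remaining_e} closes only because the exact exponent $t\log(k+t)$, passed through Lemma~\ref{lemma_pipt}, leaves terms like $(tM/\alpha)^t$ with $tM/\alpha=O(M^2/\alpha)=O((\log\alpha)^{-4})$. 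Doubling the exponent gives $(t^2M/\alpha)^t$, which for $t$ of order $M$ diverges as soon as $M\gg\alpha^{1/3}$.

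As a side remark, you overestimated the obstacle: $\binom{k+t}{t}\le 2^{k+t}$, so your crude route actually yields $\exp(t\log(k+t)+O(t+k))$. That is weaker than the stated lemma when $t\ll k$. The extra $O(k)$ would, however, be absorbed by the $O(k/\delta)$ term already present in Lemma~\ref{lemma_pipt}.
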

	
	\begin{proof}
		For $r'=\alpha-k-t,\ldots,\alpha-k-1$ with $t':=\alpha-k-r'\leq t$, by Lemma~\ref{lemma_expectation} and Lemma~\ref{lemma_nr}~(\ref{lemma_nr_sum},\ref{lemma_nr_leq}), we have
		\begin{align}
			n_{\alpha-r'}-\psi_{p_{r'}}
			&=n_{t'+k}-n_k+n_k-K+O(q_{r'}^2\alpha M+q_{r'}M^2+(n_k-K)M/\alpha)
			\\&\leq\tfrac{\ell}{2}t'^2+\ell t'k+O(t'^2M/\alpha+t'M^2/\alpha+(n_k-K))
			\\&=O(t'(k+t)+(n_k-K)).
		\end{align}
		Furthermore, for $\alpha$ large enough, such that $\b_{\max}\leq\tfrac12$, we have $$n_{\alpha-r'}-\Psi_{r'}=\sum_{v\in V_{r'}}(1-\b(v))\geq\tfrac12|V_{r'}|.$$ Since $|V_{r'}|\geq|S\setminus S_{r'}|+|V^{(0)}\setminus (F^{(0)}\cup R^{(0)})|=t'+(n_k-K)$, we get $$\frac{n_{\alpha-r'}-\psi_{p_{r'}}}{n_{\alpha-r'}-\Psi_{r'}}=O(k+t).$$ The result follows by expanding $\log(\Pi/\Pi_t)$.
	\end{proof}
	
	Combining these two lemmas, we get the following bound on $\mathbb{E}(\Pi1_{\mathcal{P}_{t+1}^{\delta}\setminus\mathcal{P}_t^{\delta}})$.
	
	\begin{lemma}\label{lemma_pipt}
		Let $0<\delta<1$ with $\delta=O(\tfrac{\log\log\alpha}{\log\alpha})$, and let $\mu_1,\mu_2>0$ with $\mu_2=\Omega(1)$, $\mu_1+\mu_2=1$, and $$\delta\geq\tfrac1{\mu_1}\left(8\left(1-\tfrac{\log M}{\log\alpha}\right)+2+\Omega(1)\right)\tfrac{\log\log\alpha}{\log\alpha}.$$ Let $r=0,\ldots,\alpha-k-1$ with $t:=\alpha-k-r=O(M)$. Then $$\mathbb{E}(\Pi1_{\mathcal{P}_{t+1}^{\delta}\cap\mathcal{Q}_t^{\delta}})\leq\exp\left(t\log t+(\tfrac1{2\mu_2}+o(1))t\log\log\alpha+O(\tfrac1{\delta}k)\right)\mathbb{P}(\mathcal{Q}_t^{\delta}).$$
	\end{lemma}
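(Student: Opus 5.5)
We combine Lemma~\ref{lemma_pit} and Lemma~\ref{lemma_recycle} by conditioning on $S_r$. The key structural observation is about measurability. The event $\mathcal{Q}_t^{\delta}$ concerns only the single index $r=\alpha-k-t$ through $\Psi_r$, and $\Psi_r$ is a function of $S_r$ alone. The event $\mathcal{P}_{t+1}^{\delta}$ and the product $\Pi_t$ involve only $\Psi_{r'}$ for $r'\leq\alpha-k-t-1<r$. Hence $1_{\mathcal{Q}_t^{\delta}}$ is $\sigma(S_r)$-measurable, while $\Pi_t1_{\mathcal{P}_{t+1}^{\delta}}$ depends on the order of $s_0,\ldots,s_{r-1}$ within $S_r$. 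Conditionally on $S_r$, that order is a uniformly random permutation of $S_r$, which is exactly the setting of Lemma~\ref{lemma_recycle}.

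First, Lemma~\ref{lemma_pit} gives $\Pi\leq\Pi_t\exp(t\log(k+t)+O(t))$ pointwise. This bound only uses that $\b_{\max}\leq\frac12$ and the deterministic estimates there, so it holds everywhere. Therefore
\[
\mathbb{E}(\Pi1_{\mathcal{P}_{t+1}^{\delta}\cap\mathcal{Q}_t^{\delta}})\leq e^{t\log(k+t)+O(t)}\,\mathbb{E}\bigl(1_{\mathcal{Q}_t^{\delta}}\,\mathbb{E}(\Pi_t1_{\mathcal{P}_{t+1}^{\delta}}\mid S_r)\bigr).
\]
Next we need to match indices with Lemma~\ref{lemma_recycle}. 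Its proof shows the event $\mathcal{P}_{t+1}^{\delta}$ coincides with $\widetilde{\mathcal{P}}'^{\delta}$, and the bound is really proved under the indicator of that event together with the constraint $K'\leq n_{k'}-\delta(n_{k'}-K)$. In our setting this constraint is guaranteed on $\mathcal{P}_{t+1}^{\delta}$ rather than on $\mathcal{P}_t^{\delta}$. So we either reread Lemma~\ref{lemma_recycle} with $\mathcal{P}_{t+1}^{\delta}$ in place of $\mathcal{P}_t^{\delta}$, or use the monotonicity $\mathcal{P}_t^{\delta}\subset\mathcal{P}_{t+1}^{\delta}$ in the appropriate direction. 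Either way, inside the conditional expectation we obtain
\[
\mathbb{E}(\Pi_t1_{\mathcal{P}_{t+1}^{\delta}}\mid S_r)\leq\exp\bigl((\tfrac1{2\mu_2}+o(1))t\log\log\alpha+O(\tfrac1\delta k)\bigr),
\]
uniformly in $S_r$. Pulling this deterministic bound out leaves $\mathbb{P}(\mathcal{Q}_t^{\delta})$.

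Finally, we absorb $t\log(k+t)+O(t)$ into $t\log t+o(t\log\log\alpha)+O(\frac1\delta k)$. If $k\leq t$, then $\log(k+t)\leq\log t+\log 2$, and the remainder is $O(t)=o(t\log\log\alpha)$. If $k>t$, then $t\log(k+t)\leq t\log t+t\log(1+k/t)\leq t\log t+k$, which is $O(\frac1\delta k)$ since $\delta<1$. Combining the three displays gives the stated bound.

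The main obstacle is the index bookkeeping: checking that $\Pi_t$ and $\mathcal{P}_{t+1}^{\delta}$ only see $r'<r$, so that $\mathcal{Q}_t^{\delta}$ factors out through conditioning on $S_r$. A related point is ensuring that the feasibility condition used in Lemma~\ref{lemma_depth} holds on the event we keep. If a rigorous match with the statement of Lemma~\ref{lemma_recycle} fails, we will rerun its proof verbatim with $1_{\mathcal{P}_{t+1}^{\delta}}$, whose proof actually uses exactly that event.
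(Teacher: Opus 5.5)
Your skeleton matches the paper's: pull $1_{\mathcal{Q}_t^{\delta}}$ out by conditioning on $S_r$, apply Lemma~\ref{lemma_pit} and then Lemma~\ref{lemma_recycle}, and absorb $t\log(k+t)$. However, the step where you bound $\mathbb{E}(\Pi_t1_{\mathcal{P}_{t+1}^{\delta}}\mid S_r)$ is not justified, and your justification has the events backwards.

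In the proof of Lemma~\ref{lemma_recycle} at index $r$, Lemma~\ref{lemma_depth} needs the hypothesis $K'\leq n_{k'}-\delta(n_{k'}-K)$, where $K'=\Delta_r$ and $k'=k+t$. That hypothesis comes from the index-$r$ condition $n_{k'}-\Psi_r\geq\delta(n_{k'}-K)$. This condition belongs to $\mathcal{P}_t^{\delta}$ but not to $\mathcal{P}_{t+1}^{\delta}$, which only constrains indices $\leq r-1$. On $\mathcal{Q}_t^{\delta}$ the condition fails by definition, so there $\Delta_r>n_{k'}-\delta(n_{k'}-K)-\Lambda_r$ and the hypothesis of Lemma~\ref{lemma_depth} is unavailable. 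Rerunning the proof verbatim with $1_{\mathcal{P}_{t+1}^{\delta}}$ therefore breaks exactly there.

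Monotonicity does not rescue this. The inclusion $\mathcal{P}_t^{\delta}\subset\mathcal{P}_{t+1}^{\delta}$ means Lemma~\ref{lemma_recycle} controls the smaller event. On $\mathcal{Q}_t^{\delta}$ it only tells you $\mathbb{E}(\Pi_t1_{\mathcal{P}_t^{\delta}}\mid S_r)=0$, which says nothing about the larger event you need.

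The missing idea is to shift the index by one. Condition additionally on $S_{r-1}$ and apply Lemma~\ref{lemma_pit} with $t+1$ in place of $t$, giving $\Pi\leq\Pi_{t+1}\exp((t+1)\log(k+t+1)+O(t))$. Then note that $\mathcal{P}_{t+1}^{\delta}$ is precisely the event appearing in Lemma~\ref{lemma_recycle} at $(r-1,t+1)$. Its condition at index $r-1$ now supplies the hypothesis of Lemma~\ref{lemma_depth} for $\Delta_{r-1}$.

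Given $S_{r-1}$, the variable $\Pi_{t+1}1_{\mathcal{P}_{t+1}^{\delta}}$ is independent of $S_r$. So Lemma~\ref{lemma_recycle} gives $\exp((\tfrac1{2\mu_2}+o(1))(t+1)\log\log\alpha+O(\tfrac1\delta k))$ uniformly. The single extra factor of order $k+t$ is absorbed exactly as in your last paragraph. This is the paper's argument.
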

	
	\begin{proof}
		By the law of total expectation, since $\mathcal{Q}_t^{\delta}$ is $S_r$-measurable, we have
		\begin{align}
			\mathbb{E}(\Pi1_{\mathcal{P}_{t+1}^{\delta}\cap\mathcal{Q}_t^{\delta}})
			&=\mathbb{E}(\mathbb{E}(\Pi1_{\mathcal{P}_{t+1}^{\delta}\cap\mathcal{Q}_t^{\delta}}|S_r))
			\\&=\mathbb{E}(1_{\mathcal{Q}_t^{\delta}}\mathbb{E}(\Pi1_{\mathcal{P}_{t+1}^{\delta}}|S_r))
			\\&=\mathbb{E}(1_{\mathcal{Q}_t^{\delta}}\mathbb{E}(\mathbb{E}(\Pi1_{\mathcal{P}_{t+1}^{\delta}}|S_r,S_{r-1})|S_r)).
		\end{align}
		Let $r':=r-1$ and $t':=\alpha-k-r'=t+1$. By Lemma~\ref{lemma_pit}, we have $$\mathbb{E}(\Pi1_{\mathcal{P}_{t+1}^{\delta}}|S_r,S_{r-1})\leq\exp(t'\log(k+t')+O(t'))\mathbb{E}(\Pi_{t'}1_{\mathcal{P}_{t'}^{\delta}}|S_{r'+1},S_{r'}).$$ Conditioned on $S_{r'}$, we note that $\Pi_{t'}$ and $1_{\mathcal{P}_{t'}^{\delta}}$ are independent of $S_{r'+1}$, so by Lemma~\ref{lemma_recycle}, we get
		\begin{align}
			&\mathbb{E}(\Pi_{t'}1_{\mathcal{P}_{t'}^{\delta}}|S_{r'+1},S_{r'})
			=\mathbb{E}(\Pi_{t'}1_{\mathcal{P}_{t'}^{\delta}}|S_{r'})
			\\&\leq\exp\left((\tfrac1{2\mu_2}+o(1))t'\log\log\alpha+O(\tfrac1{\delta}k)\right).
		\end{align}
		Combining results, since $t'=o(t'\log\log\alpha)$, we get
		\begin{align}
			&\mathbb{E}(\Pi1_{\mathcal{P}_{t+1}^{\delta}\cap\mathcal{Q}_t^{\delta}})
			\\&\leq\exp\left(t'\log(k+t')+(\tfrac1{2\mu_2}+o(1))t'\log\log\alpha+O(\tfrac1{\delta}k)\right)\mathbb{E}(1_{\mathcal{Q}_t^{\delta}})
			\\&\leq\exp\left(t\log t+(\tfrac1{2\mu_2}+o(1))t\log\log\alpha+O(\tfrac1{\delta}k)\right)\mathbb{P}(\mathcal{Q}_t^{\delta}).
		\end{align}
		The second inequality uses $\log(k+t')\leq k+t'\leq O(\tfrac1{\delta}k)+o(t'\log\log\alpha)$ and $t\log(\tfrac{k+t'}{t})\leq k+1\leq O(\tfrac1{\delta}k)+o(t'\log\log\alpha)$.
	\end{proof}
	
	We can now finally prove Lemma~\ref{lemma_remaining_e}.
	
	\begin{proof}[Proof of Lemma~\ref{lemma_remaining_e}]
		We use $$\mathbb{E}(\Pi1_{\mathcal{P}\setminus\mathcal{P}^{\delta}})\leq\sum_{t=1}^{T^{\delta}-1}\mathbb{E}(\Pi1_{\mathcal{P}_{t+1}^{\delta}\cap\mathcal{Q}_t^{\delta}}).$$ Consider some $t=1,\ldots,T^{\delta}-1$ with $r:=\alpha-k-t$. By Lemma~\ref{lemma_boundpt}, in $\mathcal{Q}_t^{\delta}$, we have $\Delta_r^{(2)}+\Delta_r^{(1)}\geq(1-(1+o(1))\delta)(n_t+\ell tk)$. It follows that either $\Delta_r^{(2)}\geq(1-(1+o(1))\delta)n_t$ or $\Delta_r^{(1)}\geq(1-(1+o(1))\delta)\ell tk$.
		
		For $t\leq\log\alpha$, by Lemma~\ref{lemma_pd}~(\ref{lemma_pd_2small},\ref{lemma_pd_1}), we get $$\mathbb{P}(\mathcal{Q}_t^{\delta})\leq\exp\left(O(\log\alpha)-\Omega(t\log\alpha)\right).$$ By Lemma~\ref{lemma_pipt}, this gives $$\mathbb{E}(\Pi1_{\mathcal{P}_{t+1}^{\delta}\cap\mathcal{Q}_t^{\delta}})\leq\exp\left(O(\tfrac1{\delta}k+\log\alpha)-\Omega(t\log\alpha)\right).$$ For $\alpha$ large enough, we thus have $$\mathbb{E}(\Pi1_{\mathcal{P}_{t+1}^{\delta}\cap\mathcal{Q}_t^{\delta}})\leq\alpha^{C-\varepsilon t}e^{O(\frac1{\delta}k)}$$ for some fixed constants $C,\varepsilon>0$.
		
		First, let $t^{(1)}:=\lfloor\tfrac{C+1}{\varepsilon}\rfloor$ and $t^{(2)}:=\lfloor\log\alpha\rfloor$, and assume that $\alpha$ is large enough, such that $t^{(1)}<t^{(2)}$. We get $$\sum_{t=t^{(1)}+1}^{t^{(2)}}\mathbb{E}(\Pi1_{\mathcal{P}_{t+1}^{\delta}\cap\mathcal{Q}_t^{\delta}})\leq(1+o(1))\alpha^{C-\varepsilon(t^{(1)}+1)}e^{O(\frac1{\delta}k)}\leq\tfrac{1}{\alpha}e^{O(\frac1{\delta}k)},$$ since $\sum_{i\geq n}x^i=(1+o(1))x^n$ for $x=o(1)$, and $C-\varepsilon(t^{(1)}+1)<-1$.
		
		Second, consider $1\leq t\leq t^{(1)}=O(1)$. Note that $\Delta_r^{(2)}\leq|\widetilde{V}_r\cap V^{(2)}|=|\widetilde{V}_r\setminus S|=n_t-t$. In $\mathcal{Q}_t^{\delta}$, we get
		\begin{align}
			\Delta_r^{(1)}
			&\geq(1-(1+o(1))\delta)(n_t+\ell tk)-\Delta_r^{(2)}
			\\&\geq(1-(1+o(1))\delta)\ell tk+t-(1+o(1))\delta n_t
			\\&\geq(1-(1+o(1))\delta)\ell tk,
		\end{align}
		for $\alpha$ large enough, since $(1+o(1))\delta n_t=o(1)$. By Lemma~\ref{lemma_pd0}~(\ref{lemma_pd0_d1}), we get $$\mathbb{P}(\mathcal{Q}_t^{\delta})=O(kM/\alpha).$$ By Lemma~\ref{lemma_pipt}, and using case analysis on $k=0$ and $k\geq1$, this gives $$\mathbb{E}(\Pi1_{\mathcal{P}_{t+1}^{\delta}\cap\mathcal{Q}_t^{\delta}})\leq\tfrac{kM}{\alpha}\exp\left(O(\tfrac1{\delta}k+\log\log\alpha)\right)\leq\tfrac{M}{\alpha}e^{O(\frac1{\delta}k)}.$$ Indeed, if $k=0$, then the expectation evaluates to $0$, and otherwise, we have $\log k\leq k=O(\tfrac1{\delta}k)$ and $\log\log\alpha=O(\tfrac1{\delta}k)$, since $k\geq1$. It follows that $$\sum_{t=1}^{t^{(1)}}\mathbb{E}(\Pi1_{\mathcal{P}_{t+1}^{\delta}\cap\mathcal{Q}_t^{\delta}})\leq\tfrac{M}{\alpha}e^{O(\frac1{\delta}k)}.$$
		
		Third, let $t^{(3)}:=\min\{\lfloor k(\log\alpha)^{\sfrac12}\rfloor,T^{\delta}-1\}$, and consider if $t^{(2)}<t^{(3)}$. Note that we get $k=\Omega((\log\alpha)^{\sfrac12})$, such that $\log\alpha=O(\tfrac1{\delta}k)$. For $t=t^{(2)}+1,\ldots,t^{(3)}$, we furthermore have $t\log\log\alpha=O(\tfrac1{\delta}k)$.
		
		By Lemma~\ref{lemma_pd}~(\ref{lemma_pd_2small},\ref{lemma_pd_1}), we get $$\mathbb{P}(\mathcal{Q}_t^{\delta})\leq\exp\left(t\log\tfrac{M}{\alpha}+O(\tfrac1{\delta}k)\right).$$ By Lemma~\ref{lemma_pipt}, this gives
		\begin{align}
			\mathbb{E}(\Pi1_{\mathcal{P}_{t+1}^{\delta}\cap\mathcal{Q}_t^{\delta}})
			&\leq\exp\left(t\log\tfrac{tM}{\alpha}+O(\tfrac1{\delta}k)\right)\leq(\tfrac{T^{\delta}M}{\alpha})^te^{O(\frac1{\delta}k)},
			\\\sum_{t=t^{(2)}+1}^{t^{(3)}}\mathbb{E}(\Pi1_{\mathcal{P}_{t+1}^{\delta}\cap\mathcal{Q}_t^{\delta}})
			&\leq(1+o(1))(\tfrac{T^{\delta}M}{\alpha})^{t^{(2)}}e^{O(\frac1{\delta}k)}\leq\tfrac{1}{\alpha}e^{O(\frac1{\delta}k)},
		\end{align}
		because $T^{\delta}M/\alpha=O(M^2/\alpha)=o(1)$ by Lemma~\ref{lemma_t}.
		
		Fourth and finally, we consider $t=\max\{t^{(2)},t^{(3)}\}+1,\ldots,T^{\delta}-1$, such that $\delta t\to\infty$ and $k=o(t)$. Note that Lemma~\ref{lemma_fs}~(\ref{lemma_fs_one}) gives $\Delta_r^{(1)}\leq\ell tk$. In $\mathcal{Q}_t^{\delta}$, we get
		\begin{align}
			\Delta_r^{(2)}
			&\geq(1-(1+o(1))\delta)(n_t+\ell tk)-\Delta_r^{(1)}
			\\&\geq(1-(1+o(1))\delta)n_t-(1+o(1))\delta\ell tk
			\\&=(1-(1+o(1))\delta)n_t,
		\end{align}
		since $(1+o(1))\delta\ell tk=o(\delta t^2)=o(\delta n_t)$ by Lemma~\ref{lemma_nr}~(\ref{lemma_nr_geq}). By Lemma~\ref{lemma_pd}~(\ref{lemma_pd_2large}), we get $$\mathbb{P}(\mathcal{Q}_t^{\delta})\leq\exp\left((1-(\tfrac12+o(1))\delta)t\log\tfrac{M}{\alpha}+O(t)\right).$$ By Lemma~\ref{lemma_pipt}, this gives
		\begin{align}
			&\mathbb{E}(\Pi1_{\mathcal{P}_{t+1}^{\delta}\cap\mathcal{Q}_t^{\delta}})
			\\&\leq\exp\left(t\left(\log\tfrac{tM}{\alpha}+\tfrac1{2\mu_2}\log\log\alpha-\tfrac12\delta\log\tfrac{M}{\alpha}+o(\log\log\alpha)\right)+O(t+\tfrac1{\delta}k)\right)
			\\&\leq\exp\left(O(\tfrac1{\delta}k)-\Omega(t\log\log\alpha)\right),
		\end{align}
		where the second inequality uses
		\begin{align}
			-\tfrac12\delta\log\tfrac{M}{\alpha}
			&=\tfrac12\log\alpha\cdot\delta\left(1-\tfrac{\log M}{\log\alpha}\right)
			\\&\leq\log\alpha\left(2\left(\tfrac12-\tfrac{\log M}{\log\alpha}\right)-(\tfrac1{2\mu_2}+\Omega(1))\tfrac{\log\log\alpha}{\log\alpha}\right)
			\\&=-\log\tfrac{M^2}{\alpha}-\tfrac1{2\mu_2}\log\log\alpha-\Omega(\log\log\alpha).
		\end{align}
		For $\alpha$ large enough, we thus have $$\mathbb{E}(\Pi1_{\mathcal{P}_{t+1}^{\delta}\cap\mathcal{Q}_t^{\delta}})\leq(\log\alpha)^{-\varepsilon't}e^{O(\frac1{\delta}k)}$$ for some fixed constant $\varepsilon'>0$. We get
		\begin{align}
			\sum_{t=\max\{t^{(2)},t^{(3)}\}+1}^{T^{\delta}-1}\mathbb{E}(\Pi1_{\mathcal{P}_{t+1}^{\delta}\cap\mathcal{Q}_t^{\delta}})
			&\leq(1+o(1))(\log\alpha)^{-\varepsilon'(t^{(2)}+1)}e^{O(\frac1{\delta}k)}
			\\&\leq\exp\left(O(\tfrac1{\delta}k)-\Omega(\log\alpha\log\log\alpha)\right)
			\\&\leq\exp\left(O(\tfrac1{\delta}k)-\log\alpha\right)
		\end{align}
		for $\alpha$ large enough. The result follows.
	\end{proof}
	
	This finishes the proof of Lemma~\ref{lemma_concentration}, Lemma~\ref{lemma_generalization}, and Theorem~\ref{theorem_main}~(\ref{theorem_main_equation}).
	
	\subsection{Rejection}\label{subsection_proof_termination}
	
	We use Lemma~\ref{lemma_generalization} to prove Theorem~\ref{theorem_main}~(\ref{theorem_main_termination}). In order for $|\boldsymbol{S}_{\infty}|=\alpha$ to fail, we need $|\boldsymbol{S}_{\infty}|=\alpha-k$ for some integer $k\geq1$. We use a double counting argument, based on a concept of relatedness inspired by~\cite{MCKAY2003273}, to compare $\mathbb{P}(|\boldsymbol{S}_{\infty}|=\alpha-k)$ to $\mathbb{P}(|\boldsymbol{S}_{\infty}|=\alpha)$, which is at most $1$. We show that Theorem~\ref{theorem_main}~(\ref{theorem_main_termination}) follows from the following two lemmas.
	
	\begin{lemma}\label{lemma_size}
		For any maximal FIS $\widetilde{S}$ with $|\widetilde{S}|=\alpha-k$, we have $k\leq M+1$.
	\end{lemma}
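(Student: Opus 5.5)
Let $\widetilde{S}$ be a maximal FIS with $|\widetilde{S}|=\alpha-k$ and $k\geq1$; the case $k=0$ is trivial. The plan is to count the vertices of $\widetilde{V}:=V\setminus\overline{N}(\widetilde{S})$ in two ways. By Corollary~\ref{corollary_hereditary}, $|\widetilde{V}|=n_k=\ell\binom{k}{2}+k$. On the other hand, maximality of $\widetilde{S}$ as an FIS means that no vertex of $\widetilde{V}$ can be added, so $\widetilde{V}\subset F\cup R^*(\widetilde{S})$. The proof will therefore bound $|\widetilde{V}\cap F|$ and $|\widetilde{V}\cap R^*(\widetilde{S})|$ by something linear in $k$ times $M$, and compare with the quadratic count $n_k$.

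For the bound, I would fix an MIS $\overline{S}\supset\widetilde{S}$; this is possible by Proposition~\ref{proposition_imis}. Let $A:=\overline{S}\setminus\widetilde{S}$, so $|A|=k$ and $\widetilde{V}=V\setminus\overline{N}(\widetilde{S})$. By $2$-uniformity, every vertex of $\widetilde{V}$ either lies in $A$ or is adjacent to exactly two vertices of $\overline{S}$, neither of which is in $\widetilde{S}$, so both lie in $A$. Hence $\widetilde{V}\subset\overline{N}(A)$, and each vertex of $\widetilde{V}$ lies in $\overline{N}(a)$ for at least one $a\in A$ (in fact for two, unless it lies in $A$ itself). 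It follows that
\begin{align}
|\widetilde{V}\cap F|&\leq\sum_{a\in A}|\overline{N}(a)\cap F|\leq kF_{\max},\\
|\widetilde{V}\cap R^*(\widetilde{S})|&\leq\sum_{a\in A}|\overline{N}(a)\cap R^*(\widetilde{S})|\leq kR^{(b)}_{\max},
\end{align}
where the second line uses that $\widetilde{S}$ is an independent set. Using the double coverage, the argument of Lemma~\ref{lemma_terms}~(\ref{lemma_terms_f}) even gives $|\widetilde{V}\setminus A|\leq\tfrac12k(F_{\max}+R^{(b)}_{\max})$.

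Combining the two counts gives $\ell\binom{k}{2}+k\leq k(F_{\max}+R^{(b)}_{\max})$. A cruder version bounds the right-hand side by $2kM$. Since $\ell\geq2$ by Proposition~\ref{proposition_even}, the left-hand side is at least $k^2$, so $k^2\leq 2kM$ and $k\leq 2M$. This is only off by a constant factor, and the double coverage is what sharpens it. Each vertex of $\widetilde{V}\setminus A$ is counted at least twice in $\sum_{a\in A}|N(a)\cap(F\cup R^*(\widetilde{S}))|\leq 2kM$, while each vertex $a\in A$ contributes at most once. This yields $\ell\binom{k}{2}\leq kM$, so $k-1\leq \tfrac{2}{\ell}M\leq M$, which gives $k\leq M+1$.

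The main obstacle is this bookkeeping: making sure the coverage multiplicity and the shared budget are charged consistently. The two quantities $F_{\max}$ and $R^{(b)}_{\max}$ are each bounded by $M$ separately, so the sum is bounded by $2M$, not $M$. If the constant does not come out as $M+1$ that way, I would instead use $\overline{N}(a)\cap(F\cup R^*(\widetilde{S}))$ together with disjointness of $F$ and $R^*(\widetilde{S})$, which holds by definition of $R^*$. If still necessary, I would fall back on the weaker $k=O(M)$, which is all that Lemma~\ref{lemma_generalization} requires.
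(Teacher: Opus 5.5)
Your proof is correct, and it already closes at exactly $k\leq M+1$, so the hedging in your last paragraph is unnecessary. The vertices of $A=\overline{S}\setminus\widetilde{S}$ are pairwise non-adjacent, and every vertex of $\widetilde{V}\setminus A$ has exactly two neighbours in $A$. Hence $\sum_{a\in A}|N(a)\cap\widetilde{V}|=2|\widetilde{V}\setminus A|=2\ell\binom{k}{2}$. Bounding each summand by $F_{\max}+R^{(b)}_{\max}\leq2M$ gives $\ell(k-1)\leq 2M$, and $\ell\geq2$ then gives $k-1\leq M$. (With open neighbourhoods, the vertices of $A$ contribute zero times to the sum, not "at most once"; this is harmless.)

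The paper gets the same inequality $\ell(k-1)\leq F_{\max}+R^{(b)}_{\max}\leq 2M$ by a local argument rather than a global count. It notes that the induced graph $H$ on $\widetilde{V}$ is regular of degree $\ell(k-1)$, via Proposition~\ref{proposition_hereditary} and Proposition~\ref{proposition_n}. It then picks a single vertex $v\in\widetilde{V}$ and observes that all its $\ell(k-1)$ neighbours in $H$ lie in $\overline{N}(v)\cap(F\cup R^*(\widetilde{S}))$.

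Your double count over $A$ is exactly the average of that local inequality over the $k$ vertices of $A$, so the two proofs have the same content. The paper's version is shorter and uses only the regularity of $H$. Yours instead uses the vertex count $n_k$ from Corollary~\ref{corollary_hereditary} together with the two-neighbour structure relative to an MIS extension. It also handles $k=1$ uniformly, a case where Proposition~\ref{proposition_hereditary} does not formally apply.
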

	
	\begin{lemma}\label{lemma_stuck}
		Let $k\geq1$ be an integer.
		\begin{enumerate}[(i)]
			\item\label{lemma_stuck_small} If $k=O(1)$, then $\mathbb{P}(|\boldsymbol{S}_{\infty}|=\alpha-k)=O((M/\alpha)^k)$.
			\item\label{lemma_stuck_large} If $k=O(M)$, then $\mathbb{P}(|\boldsymbol{S}_{\infty}|=\alpha-k)\leq\alpha^{o(k)}(M/\alpha)^k$.
		\end{enumerate}
	\end{lemma}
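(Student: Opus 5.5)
We would prove both parts of Lemma~\ref{lemma_stuck} by a double-counting argument that compares incomplete outcomes with complete ones. The event $|\boldsymbol{S}_{\infty}|=\alpha-k$ is the disjoint union, over all maximal FISs $\widetilde{S}$ with $|\widetilde{S}|=\alpha-k$, of the events $\boldsymbol{S}_{\alpha-k}=\widetilde{S}$. By Lemma~\ref{lemma_size} we have $k\le M+1=O(M)$, so Lemma~\ref{lemma_generalization} applies. It gives $\mathbb{P}(\boldsymbol{S}_{\alpha-k}=\widetilde{S})\le(1+o(1))\mathscr{P}_k(\widetilde{S})$ when $k=O(1)$, and $\le\alpha^{o(k)}\mathscr{P}_k(\widetilde{S})$ in general. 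Since $\sum_S\mathbb{P}(\boldsymbol{S}_{\infty}=S)\le1$, Lemma~\ref{lemma_generalization}~(\ref{lemma_generalization_zero}) gives $\sum_S\mathscr{P}_0(S)=O(1)$ over FMISs $S$. It therefore suffices to show
\[\sum_{\widetilde{S}}\mathscr{P}_k(\widetilde{S})\le \alpha^{o(k)}(M/\alpha)^k\sum_{S}\mathscr{P}_0(S),\]
with the factor $\alpha^{o(k)}$ replaced by $O(1)$ when $k=O(1)$.

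\textbf{Step 1: compare $\mathscr{P}_k(\widetilde{S})$ with $\mathscr{P}_0(S)$.} Since $\widetilde{S}$ is maximal, every vertex of $V\setminus\overline{N}(\widetilde{S})$ lies in $F\cup R^*(\widetilde{S})$, so $K=n_k$. Then Lemma~\ref{lemma_nr}~(\ref{lemma_nr_sum}) gives $n_t-K=n_{t-k}+\ell k(t-k)$. Hence $\prod_{t=k+1}^{\alpha}\frac{t-k}{n_t-K}$ equals $\prod_{u=1}^{\alpha-k}\bigl(\tfrac{\ell}{2}u+\ell k+1-\tfrac{\ell}{2}\bigr)^{-1}$, whereas $\prod_{t=1}^{\alpha}\tfrac{t}{n_t}$ equals $\prod_{t=1}^{\alpha}\bigl(\tfrac{\ell}{2}t+1-\tfrac{\ell}{2}\bigr)^{-1}$. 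Their ratio is roughly $\alpha^{-k}$ times a factor $e^{O(k\log k)}$ coming from the shift by about $2k$ in the factorial. The exponential prefactors $\exp(P|F|+\tfrac12P|R^*(\cdot)|+P\b(V)-\b(\cdot))$ for $\widetilde{S}$ and for a set $S$ with small symmetric difference from it differ by at most $e^{O(kM/\alpha)}$, by Lemma~\ref{lemma_terms} and $\b_{\max}\le M/\alpha$.

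\textbf{Step 2: relatedness and the count.} We call $\widetilde{S}$ and an FMIS $S$ \emph{related} if $S$ is obtained from $\widetilde{S}$ by deleting a set $B$ of about $k$ vertices of $\widetilde{S}$ and adding about $2k$ vertices. The deleted vertices should be chosen so that the forbidden vertices in $V\setminus\overline{N}(\widetilde{S})$ become replaceable.

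From the side of $\widetilde{S}$, we would show that each $\widetilde{S}$ has at least $\Omega(\alpha)^k e^{-O(k\log k)}$ related FMISs. Most choices of $B$ avoid all conflicts: each vertex meets at most $2M=o(\alpha)$ forbidden or $R$-structures, by the bounds on $F_{\max}$ and $R^{(b)}_{\max}$. After removing $B$, the residual graph is a small $2$-uniform graph by Proposition~\ref{proposition_hereditary}, and it still has a feasible completion, which we argue directly.

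From the side of $S$, we would show that each FMIS $S$ has at most $(O(\alpha M))^k e^{O(k\log k)}$ related $\widetilde{S}$. Turning $S$ into a stuck set forces each removed pair to be witnessed by a forbidden or $R$-structure. There are at most $|F|+|R^*(S)|=O(\alpha M)$ such structures by Lemma~\ref{lemma_terms}, and each is charged once per unit of $k$.

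Combining the two counts with Step 1 gives $\sum_{\widetilde{S}}\mathscr{P}_k\le (\alpha M)^k\alpha^{-k}\alpha^{-k}e^{O(k\log k)}\sum_S\mathscr{P}_0$, which is $(M/\alpha)^k e^{O(k\log k)}\sum_S\mathscr{P}_0$. For $k=O(1)$ this is part~(\ref{lemma_stuck_small}).

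\textbf{Main obstacle.} The main obstacle is part~(\ref{lemma_stuck_large}). When $k$ is of order $M$, which may be as large as $\alpha^{1/2}$, the crude $e^{O(k\log k)}$ losses are not $\alpha^{o(k)}$. The relatedness has to be designed so that the $k!$-type factors on the two sides cancel exactly: the unordered choices of $B$ on one side should match the unordered choices of witnessing structures on the other. The remaining factor $(2k)!\,\alpha!/(\alpha+k)!$ from Step 1 should also be absorbed, rather than bounded loosely. We would first aim to count with unordered sets throughout and to use the exact product form of Step 1. If the losses still do not cancel, we would fall back on a more restrictive relatedness (only deleting vertices adjacent to the $n_k$ stuck vertices), which caps the left-side count at $O(M)^k$ but makes the right-side count sharper.
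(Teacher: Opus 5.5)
Your skeleton matches the paper's: Lemma~\ref{lemma_size}, Lemma~\ref{lemma_generalization} applied on both sides, and a double count over a relatedness. Your Step~1 is exactly Lemma~\ref{lemma_ratio}: the prefactors differ by $e^{o(1)}$ and the products by at most $(\ell/2)^k(2k)!/\alpha^k$. But Step~2 carries all the content and is only asserted, and for part~(\ref{lemma_stuck_large}) you leave the decisive cancellation open.

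With your relatedness (delete $k$ vertices $B$ from $\widetilde S$, add $2k$), the natural counts leave a surplus that no choice of ordered versus unordered counting removes.
\begin{itemize}
\item From $\widetilde S$: there are about $\alpha^k/k!$ choices of $B$. Since the stuck residual is entirely blocked, for typical $B$ the feasible completions are the bijections between the stuck half-edges and the half-edges of $B$. That is about $(2k)!$ completions for $\ell=4$ and $(k!)^2$ for $\ell=2$, so this side gives $\alpha^k k^k e^{O(k)}$.
\item From $S$: once $k$ unordered witnesses are fixed (at most $(O(\alpha M))^k/k!$ ways), $S\setminus\widetilde S$ is determined. But $B$ can still be any of the $e^{O(k)}k!$ MISs of the residual left by $(S\setminus\widetilde S)^c\cap S$ and the witnesses. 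So this side gives $(O(\alpha M))^k$.
\end{itemize}
Together with $(2k)!/\alpha^k$ this yields only $(kM/\alpha)^k e^{O(k)}$. The surplus $k^k=\alpha^{k\log k/\log\alpha}$ is not $\alpha^{o(k)}$ once $k$ is a power of $\alpha$. That case does occur, since $k$ ranges up to $M+1$ and $M$ may be close to $\alpha^{1/2}$.

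The missing ingredient is the paper's rigid local gadget. $S$ and $\widetilde S$ are related by $k$ disjoint chordless $6$-cycles $x_iu_iy_iv_iz_iw_i$ with $S\setminus\widetilde S=\{x_i,y_i,z_i\}$ and $\widetilde S\setminus S=\{u_i,v_i\}$ (delete $2k$, add $3k$), where the $w_i$ form an MIS of the stuck residual.
\begin{itemize}
\item From $S$, each cycle is rooted at one witness $w_i$ ($O(\alpha M)$ choices) and completed in at most $\ell\alpha$ ways, with no global regrouping.
\item From $\widetilde S$, letting each $w_r$ range over the whole current residual contributes $\prod_r n_{k-r+1}=e^{O(k)}(k!)^2$.
\end{itemize}
With the exact multiplicity $2^kk!$ from Lemma~\ref{lemma_sequences}, the counts become $e^{-O(k)}(\alpha^2k)^k$ and $e^{O(k)}(\alpha^2M/k)^k$ (Lemmas~\ref{lemma_related_good} and~\ref{lemma_related_bad}). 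Then $(M/k^2)^k$ cancels $(k^2/\alpha)^k$ exactly.

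A second gap affects even part~(\ref{lemma_stuck_small}). You charge witnesses to $F\cup R^*(S)$, but stuck vertices lie in $F\cup R^*(\widetilde S)$. The part $R^*(\widetilde S\setminus S)$ is not controlled by applying Lemma~\ref{lemma_terms} to $S$, and this is not harmless. Take a configuration space with $F=\emptyset$, fix an FMIS $S$, and let $R$ pair every $b\notin S$ with its unique non-neighbour in $N(a_1)\cap N(a_2)$, where $N(b)\cap S=\{a_1,a_2\}$. Then $M=O(1)$. Yet each of the $\Theta(\alpha^2)$ sets $S\setminus\{a_1,a_2\}\cup\{b\}$ is a maximal FIS of size $\alpha-1$: its only residual vertex is the partner of $b$, which lies in $R^*(b)$. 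Each such set is related to $S$ in your sense, while each stuck set is related to only $O(\alpha)$ FMISs, so your double count yields only $O(1)$ for $k=1$. The paper avoids this in two ways:
\begin{itemize}
\item it excludes $w_i\in R^*(\{u_i,v_i\})$ in the definition of relatedness, losing only $O(M)$ choices on the $\widetilde S$ side;
\item it treats $w_i\in R^*(\{u_j,v_j\})$ with $j\neq i$ through the recursion $A_r=O(\alpha^2MA_{r+1}+\alpha^4M^2A_{r+2})$.
\end{itemize}
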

	
	\begin{proof}[Proof of Theorem~\ref{theorem_main}~(\ref{theorem_main_termination})]
		By Lemma~\ref{lemma_stuck}, for $k\leq2$, we have $\mathbb{P}(|\boldsymbol{S}_{\infty}|=\alpha-k)=O(M/\alpha)$, and for $3\leq k\leq M+1$, we have $$\mathbb{P}(|\boldsymbol{S}_{\infty}|=\alpha-k)\leq\alpha^{-\frac13k}$$ for $\alpha$ large enough, since $\alpha^{o(1)}M/\alpha\leq\alpha^{-\sfrac12+o(1)}$. Since $\boldsymbol{S}_{\infty}$ is a maximal FIS, by Lemma~\ref{lemma_size}, we get $$\mathbb{P}(|\boldsymbol{S}_{\infty}|<\alpha)=\sum_{k=1}^{M+1}\mathbb{P}(|\boldsymbol{S}_{\infty}|=\alpha-k)\leq\sum_{k=3}^{M+1}\alpha^{-\frac13k}+O(M/\alpha).$$ The result follows, since $\sum_{i\geq n}x^i=(1+o(1))x^n$ for $x=o(1)$.
	\end{proof}
	
	We can prove Lemma~\ref{lemma_size} immediately.
	
	\begin{proof}[Proof of Lemma~\ref{lemma_size}]
		Since $\widetilde{S}$ is maximal, we know that $V\setminus\overline{N}(\widetilde{S})$ is contained in $F\cup R^*(\widetilde{S})$. By Proposition~\ref{proposition_hereditary}, the induced graph $H$ on $V\setminus\overline{N}(\widetilde{S})$ is $2$-uniform with $\alpha(H)=k$ and $\ell(H)=\ell$, so by Proposition~\ref{proposition_n}, we have $d(H)=\ell(k-1)$. Each vertex in $V\setminus\overline{N}(\widetilde{S})$ thus has $\ell(k-1)$ neighbors in $V\setminus\overline{N}(\widetilde{S})$. Since these are all in $F\cup R^*(\widetilde{S})$, there can be at most $F_{\max}+R_{\max}^{(b)}\leq2M$ such vertices. Therefore, $\ell(k-1)\leq2M$, and the result follows because $\ell\geq2$.
	\end{proof}
	
	To prove Lemma~\ref{lemma_stuck}, for an integer $k\geq1$, let $\mathcal{G}_k$ be the set of all maximal FISs $\widetilde{S}$ of $G$ of size $|\widetilde{S}|=\alpha-k$. Note that maximality means that $V\setminus\overline{N}(\widetilde{S})$ is contained in $F\cup R^*(\widetilde{S})$. Furthermore, let $\mathcal{G}$ be the set of all FMISs of $G$. We get
	\begin{align}
		\sum_{\widetilde{S}\in\mathcal{G}_k}\mathbb{P}(\boldsymbol{S}_{\infty}=\widetilde{S})
		&=\mathbb{P}(|\boldsymbol{S}_{\infty}|=\alpha-k),
		\\\sum_{S\in\mathcal{G}}\mathbb{P}(\boldsymbol{S}_{\infty}=S)
		&=\mathbb{P}(|\boldsymbol{S}_{\infty}|=\alpha)\leq1.
	\end{align}
	To relate these two sums, we introduce a notion of relatedness between sets $S\in\mathcal{G}$ and $\widetilde{S}\in\mathcal{G}_k$. In essence, we consider $S$ and $\widetilde{S}$ as related if you can convert $S$ into $\widetilde{S}$ by a series of swaps of a specific kind. Specifically, for a chordless 6-cycle $xuyvzw$, we consider swapping $x,y,z$ for $u,v$, leaving $w$ out to make sure that $\widetilde{S}$ can be smaller in size compared to $S$.
	
	\begin{definition}
		We say that two sets $S\in\mathcal{G}$ and $\widetilde{S}\in\mathcal{G}_k$ are \emph{related} if there exists a sequence of pairwise disjoint chordless 6-cycles $(x_iu_iy_iv_iz_iw_i)_{i=1,\ldots,k}$ in $G$, such that $S\setminus\widetilde{S}=\{x_i,y_i,z_i:i=1,\ldots,k\}$, $\widetilde{S}\setminus S=\{u_i,v_i:i=1,\ldots,k\}$, and $w_i\not\in R^*(\{u_i,v_i\})$ for $i=1,\ldots,k$. We refer to such a sequence as a \emph{cycle sequence} relating $S$ to $\widetilde{S}$.
	\end{definition}
	
	Here $\{x_i,y_i,z_i:i=1,\ldots,k\}$ is shorthand notation for $\{x_i:i=1,\ldots,k\}\cup\{y_i:i=1,\ldots,k\}\cup\{z_i:i=1,\ldots,k\}$. We show that Lemma~\ref{lemma_stuck} follows from the following three lemmas.
	
	\begin{lemma}\label{lemma_ratio}
		Let $k\geq1$ be an integer. Consider related sets $S\in\mathcal{G}$ and $\widetilde{S}\in\mathcal{G}_k$.
		\begin{enumerate}[(i)]
			\item\label{lemma_ratio_small} If $k=O(1)$, then $\mathbb{P}(\boldsymbol{S}_{\infty}=\widetilde{S})/\mathbb{P}(\boldsymbol{S}_{\infty}=S)=O((k^2/\alpha)^k)$.
			\item\label{lemma_ratio_large} If $k=O(M)$, then $\mathbb{P}(\boldsymbol{S}_{\infty}=\widetilde{S})/\mathbb{P}(\boldsymbol{S}_{\infty}=S)\leq\alpha^{o(k)}(k^2/\alpha)^k$.
		\end{enumerate}
	\end{lemma}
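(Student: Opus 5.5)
The plan is to compare the two probabilities through Lemma~\ref{lemma_generalization}. For $S$ (with $k=0$), part~(\ref{lemma_generalization_zero}) gives $\mathbb{P}(\boldsymbol{S}_{\infty}=S)=(1+O(\mathscr{E}))\mathscr{P}_0(S)$. For $\widetilde{S}$, maximality means $\mathbb{P}(\boldsymbol{S}_{\infty}=\widetilde{S})=\mathbb{P}(\boldsymbol{S}_{\alpha-k}=\widetilde{S})$. By Lemma~\ref{lemma_size} we have $k=O(M)$, so parts~(\ref{lemma_generalization_small}) and~(\ref{lemma_generalization_large}) bound it by $(1+o(1))\mathscr{P}_k(\widetilde{S})$ and $\alpha^{o(k)}\mathscr{P}_k(\widetilde{S})$ respectively. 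It therefore suffices to show $\mathscr{P}_k(\widetilde{S})/\mathscr{P}_0(S)=O((k^2/\alpha)^k)$ when $k=O(1)$, and $\le\alpha^{o(k)}(k^2/\alpha)^k$ in general.

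The ratio splits into an exponential factor and a product factor.

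\textbf{Exponential factor.} This is
\[
\exp\Big(\tfrac12P(|R^*(\widetilde{S})|-|R^*(S)|)-\b(\widetilde{S})+\b(S)\Big),
\]
since the terms $P|F|$ and $P\b(V)$ cancel. By Lemma~\ref{lemma_terms}~(\ref{lemma_terms_rs}), the symmetric difference $S\triangle\widetilde{S}$ has $5k$ elements, each contributing $O(M)$ to $|R^*(\cdot)|$. So $|R^*(\widetilde{S})|-|R^*(S)|\le 2kM$ up to constants, and the contribution is $O(kM/\alpha)=o(k)$. The same holds for the $\b$ terms, using $\b_{\max}\le M/\alpha$. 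For $k=O(1)$ this factor is $1+o(1)$; in general it is $e^{o(k)}$, which is absorbed by $\alpha^{o(k)}$.

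\textbf{Product factor.} This is
\[
\prod_{t=k+1}^{\alpha}\frac{t-k}{n_t-K}\Big/\prod_{t=1}^{\alpha}\frac{t}{n_t}
=\frac{(\alpha-k)!}{\alpha!}\prod_{t=1}^{k}n_t\cdot\prod_{t=k+1}^{\alpha}\frac{n_t}{n_t-K}.
\]
Here $\prod_{t\le k}n_t\le(\tfrac{\ell}{2})^k(k!)^2$ and $(\alpha-k)!/\alpha!\le(\alpha-k)^{-k}$, which already gives $O(1)^k(k^2/\alpha)^k$ after Stirling; for $k=O(1)$ the $O(1)^k$ is a constant.

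The remaining product $\prod_{t>k}(1-K/n_t)^{-1}$ is the delicate part, because $K=n_k$. Maximality forces $V\setminus\overline{N}(\widetilde{S})\subset F\cup R^*(\widetilde{S})$, so the term $t=k+1$ alone is of size $n_{k+1}/(n_{k+1}-n_k)=\Theta(k)$. I would bound $\log$ of this product by $\sum_{t>k}n_k/(n_t-n_k)$. By Lemma~\ref{lemma_nr}~(\ref{lemma_nr_sum}), $n_{k+i}-n_k\ge i^2+\ell ki$, and the argument of Lemma~\ref{lemma_sums}~(\ref{lemma_sums_4}) and Lemma~\ref{lemma_theta} then gives a bound
\[
n_k\cdot O\big(\tfrac1{k}\log(k+1)\big)=O(k\log(k+1)).
\]
This is $e^{O(1)}$ for $k=O(1)$. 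For $k=O(M)$ it is $e^{O(k\log M)}$, which is not obviously $\alpha^{o(k)}$, so this is the step I expect to be the main obstacle.

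To handle it, I would refine the estimate by writing $(k^2/\alpha)^k$ more carefully. Any $k^{O(k)}$ loss from $\prod n_t/(n_t-K)$ and from $\prod_{t\le k}n_t$ must be offset, possibly by using $\prod_{t\le k}n_t/(n_t\text{-type})$ cancellation, since $n_{k+i}-n_k\approx\ell k i$ for small $i$. If needed, I would accept a bound of $(Ck^2/\alpha)^k$ times $k^{O(k)}$ and check whether the application in Lemma~\ref{lemma_stuck} tolerates it, given $k\le M+1$ and $M\le\alpha^{1/2-o(1)}$.
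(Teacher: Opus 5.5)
Your reduction through Lemma~\ref{lemma_generalization}, your handling of the exponential factor, and part~(i) are fine. Part~(ii) has a genuine gap, exactly where you suspected it. You bound $\log\prod_{t>k}\frac{n_t}{n_t-n_k}$ by $\sum_{t>k}\frac{n_k}{n_t-n_k}$, which uses $-\log(1-x)\le\frac{x}{1-x}$ with $x=n_k/n_t$. This is very lossy when $x$ is close to $1$. For $t=k+i$ with $i\ll k$, the true term is $\log\frac{n_{k+i}}{n_{k+i}-n_k}\approx\log\frac{k}{2i}$, whereas your bound gives $\approx\frac{k}{2i}$. Summing over $i\le k$ turns a quantity that is actually $O(k)$ into $\Theta(k\log k)$, i.e.\ an extra factor $k^{\Theta(k)}$. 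Since $k$ may be as large as $M+1$, and $M$ may be as large as $\alpha^{1/2}/(\log\alpha)^2$, this factor is $\alpha^{\Theta(k)}$, not $\alpha^{o(k)}$. So (ii) does not follow from your argument. Falling back on a $k^{O(k)}$ loss would prove a weaker statement, not this one.

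The missing observation is that no estimate is needed, because the product telescopes exactly. Set $c:=\frac{2}{\ell}-1\in(-1,0]$. Then $n_t=\frac{\ell}{2}t(t+c)$ and $n_t-n_k=\frac{\ell}{2}(t-k)(t+k+c)$, so with $K=n_k$,
\[
\frac{\prod_{t=k+1}^{\alpha}\frac{t-k}{n_t-n_k}}{\prod_{t=1}^{\alpha}\frac{t}{n_t}}=\Big(\frac{\ell}{2}\Big)^k\frac{\prod_{i=1}^{2k}(i+c)}{\prod_{i=1}^{k}(\alpha+i+c)}\le\Big(\frac{\ell}{2}\Big)^k\frac{(2k)!}{\alpha^k}\le e^{O(k)}\Big(\frac{k^2}{\alpha}\Big)^k .
\]
This is the paper's argument. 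In your own decomposition, the same computation gives
\[
\prod_{t>k}\frac{n_t}{n_t-n_k}=\binom{\alpha}{k}\frac{\prod_{t=k+1}^{2k}(t+c)}{\prod_{t=\alpha+1}^{\alpha+k}(t+c)}\le\binom{2k}{k}\le 4^k .
\]
Since $e^{O(k)}=\alpha^{o(k)}$, your remaining bounds on $\frac{(\alpha-k)!}{\alpha!}\prod_{t\le k}n_t$ then close part~(ii).
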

	
	\begin{lemma}\label{lemma_related_good}
		Let $k\geq1$ be an integer with $k=O(M)$. Then any set $\widetilde{S}\in\mathcal{G}_k$ is related to at least $e^{-O(k)}(\alpha^2k)^k$ sets $S\in\mathcal{G}$.
	\end{lemma}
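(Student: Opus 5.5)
The plan is to construct many FMISs $S$ related to a given $\widetilde{S}\in\mathcal{G}_k$ by building cycle sequences explicitly, and then to divide by the number of cycle sequences that can produce the same $S$. Let $H$ be the induced graph on $V\setminus\overline{N}(\widetilde{S})$. By Proposition~\ref{proposition_hereditary} and Corollary~\ref{corollary_hereditary}, $H$ is $2$-uniform with $\alpha(H)=k$ and $\ell(H)=\ell$, or a single vertex when $k=1$. By maximality of $\widetilde{S}$, $H$ lies entirely inside $F\cup R^*(\widetilde{S})$.

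\textbf{Step 1: choosing the vertices $w_i$.} We choose an MIS $T=\{w_1,\ldots,w_k\}$ of $H$ together with an ordering of it. By Corollary~\ref{corollary_mis_count}, the number of MISs of $H$ is $\prod_{t=1}^{k}(\tfrac{\ell}{2}(t-1)+1)\geq k!=k^ke^{-O(k)}$, since $\ell\geq2$. This is where the factor $k^k$ comes from. Then $\widetilde{S}\cup T$ is an MIS of $G$, although typically it is not feasible.

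\textbf{Step 2: choosing $u_i,v_i$ and completing the cycles.} Sequentially for $i=1,\ldots,k$, we choose distinct $u_i,v_i\in\widetilde{S}$, not used before, which gives roughly $\alpha^2$ choices per cycle. Let $S^\circ:=(\widetilde{S}\cup T)\setminus\{u_i,v_i,w_i\}$. The induced graph on $V\setminus\overline{N}(S^\circ)$ is $2$-uniform with $\alpha=3$, and it has the MIS $\{u_i,v_i,w_i\}$. Proposition~\ref{proposition_even} describes each $N(a)\cap N(b)$ as a complement of a pairing of $\ell\geq2$ vertices. From this, I expect to extract $\Omega(1)$ (at least one) choices of $x_i\in N(u_i)\cap N(w_i)$, $y_i\in N(u_i)\cap N(v_i)$, $z_i\in N(v_i)\cap N(w_i)$ that are pairwise non-adjacent. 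Each such choice forms an MIS of that graph, and $x_iu_iy_iv_iz_iw_i$ is a chordless $6$-cycle. In the configuration space this amounts to re-matching the six half-edges of $u_i,v_i,w_i$ into three new pairs, one joining each two of the three pairs. Since the cycles live in these disjoint local pieces, $S:=(\widetilde{S}\cup T\cup\{x_i,y_i,z_i\}_i)\setminus\{u_i,v_i,w_i\}_i$ is an MIS of $G$ by the hereditary structure.

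\textbf{Step 3: feasibility (the main obstacle).} We must ensure $x_i,y_i,z_i\notin F$, that no two elements of $S$ are $R$-equivalent, and that $w_i\notin R^*(\{u_i,v_i\})$. I will show that for each $i$, conditional on previous choices, all but an $O((k+M)/\alpha)$ fraction of the $\approx\alpha^2$ pairs $(u_i,v_i)$ admit a good completion. The bad pairs are controlled by $F_{\max}$, $R^{(a)}_{\max}$, $R^{(b)}_{\max}$ and $R^{(c)}_{\max}$. For instance, $u_i$ is excluded when $\overline{N}(u_i)\cap N(w_i)$ meets $F$ or $R^*(S)$, and each $w_i$ or already chosen vertex forbids only $O(M)$ vertices of $\widetilde{S}$ through these maxima. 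The case $w_i\in R^*(\{u_i,v_i\})$ forbids only $O(R^{(a)}_{\max})$ vertices. Since $k=O(M)$ and $M^2=o(\alpha)$, the product of the surviving fractions over $i=1,\ldots,k$ is $e^{-o(k)}$. This gives at least $k!\,e^{-O(k)}\alpha^{2k}$ ordered cycle sequences, with the factor $k!$ coming from the ordered $T$. The bookkeeping of which interactions between different cycles, and with $\widetilde{S}$, must be excluded is the delicate part.

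\textbf{Step 4: overcounting.} Given $S$ and $\widetilde{S}$, the sets $S\setminus\widetilde{S}$ and $\widetilde{S}\setminus S$ are determined, and $T=\overline{N}$-complement data fixes the $w_i$ as an MIS of $H$ adjacent to the new vertices. Hence a cycle sequence is determined up to the $k!$ orderings and $O(1)^k$ internal relabelings; it may also be convenient to count only $\{u_i,v_i,w_i\}$ as a local triple. Dividing by $k!\,O(1)^k$ yields at least $e^{-O(k)}\alpha^{2k}k!\ge e^{-O(k)}(\alpha^2k)^k$ related sets $S\in\mathcal{G}$.
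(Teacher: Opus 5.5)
Your proposal is correct and follows essentially the same route as the paper. The paper picks $w_r\in V\setminus\overline{N}(\widetilde{S}_{r-1})$ one at a time and shows this set equals $V\setminus\overline{N}(\widetilde{S}\cup\{w_1,\ldots,w_{r-1}\})$, which is exactly your ordered MIS of $H$ with $\prod_{t=1}^{k}n_t$ choices; it then takes $\Omega(\alpha^2)$ pairs $u_r,v_r$, gets the completions you only ``expect'' from Lemma~\ref{lemma_cycle} ($\ell$ choices of $y_r$, then $\tfrac{\ell}{2}$ of $x_r,z_r$), discards $O(\alpha M)$ infeasible cycles per $w_r$ as in your Step~3 (using $R^*(\widetilde{S}_{r-1})$ where you wrote $R^*(S)$), and divides by the $2^kk!$ cycle sequences per related pair given by Lemma~\ref{lemma_sequences}. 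One bookkeeping slip: your Step~3 count of ordered constructions should be $e^{-O(k)}\alpha^{2k}\prod_{t=1}^{k}n_t\geq e^{-O(k)}\alpha^{2k}(k!)^2$ rather than $k!\,e^{-O(k)}\alpha^{2k}$, since otherwise dividing by $k!\,O(1)^k$ in Step~4 would lose the factor $k^k$.
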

	
	\begin{lemma}\label{lemma_related_bad}
		Let $k\geq1$ be an integer with $k=O(M)$. Then any set $S\in\mathcal{G}$ is related to at most $e^{O(k)}(\alpha^2M/k)^k$ sets $\widetilde{S}\in\mathcal{G}_k$.
	\end{lemma}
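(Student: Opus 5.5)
The plan is a direct count of cycle sequences. By definition, $\widetilde{S}=(S\setminus\{x_i,y_i,z_i\})\cup\{u_i,v_i\}$ is determined by the unordered collection of 5-tuples $(x_i,u_i,y_i,v_i,z_i)$. So it suffices to bound the number of cycle sequences relating $S$ to some $\widetilde{S}\in\mathcal{G}_k$ and divide by $k!$ for the ordering of the cycles. Stirling then turns the $k!$ into the $1/k^k$ factor, and every constant per cycle is absorbed into $e^{O(k)}$.

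The shape of one cycle is as follows. In the chordless 6-cycle $x_iu_iy_iv_iz_iw_i$, the vertices $x_i,y_i,z_i$ lie in $S$ and $u_i,v_i,w_i\notin S$. Since $S$ is an MIS, $2$-uniformity means each of $u_i,v_i,w_i$ has exactly two neighbours in $S$, namely the two cycle neighbours. Moreover, by Proposition~\ref{proposition_ell}, given the pair $\{x_i,y_i\}$ there are exactly $\ell$ candidates for $u_i$, and similarly for $v_i$ given $\{y_i,z_i\}$. The naive count is therefore $\alpha^3\ell^2$ per cycle, so one factor $\alpha$ must be traded for a factor $M$. This saving comes from $w_i$ and maximality. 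Since $w_i$ is adjacent in $S$ only to $x_i,z_i$, which are removed, and is non-adjacent to $u_i,v_i$ by chordlessness, in the generic situation $w_i\in V\setminus\overline{N}(\widetilde{S})$. Maximality of $\widetilde{S}$ then forces $w_i\in F\cup R^*(\widetilde{S})$. Because $w_i\notin R^*(\{u_i,v_i\})$, this leaves three cases: $w_i\in F$, or $w_i\in R^*(S\cap\widetilde{S})\subset R^*(S)$, or $w_i\in R^*(\{u_j,v_j\})$ for some $j\neq i$. The non-generic situation, where $w_i$ is adjacent to some $u_j$ or $v_j$ with $j\neq i$, I will charge to that other cycle, costing only a factor $O(k)$ spread over $k$ cycles. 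I expect this to be a routine, if fiddly, bookkeeping point.

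The counting then proceeds as follows.
\begin{enumerate}[(i)]
\item For cycles of the first two types, choose $x_i$ freely ($\alpha$ ways). Then choose $w_i\in\overline{N}(x_i)\cap(F\cup R^*(S))$, giving at most $F_{\max}+R_{\max}^{(b)}\le2M$ ways, where $S$ itself is the independent set in the definition of $R_{\max}^{(b)}$. Since $w_i$ has exactly two $S$-neighbours, this fixes $z_i$. Finally choose $y_i$ ($\alpha$ ways) and $u_i,v_i$ ($\ell^2$ ways).
\item For cycles of the third type, postpone the choice. First choose all $y_i,u_i,v_i$ together with the $x_i,z_i$ of the other cycles, which fixes the set $U=\{u_j,v_j\}$. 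The $w_i$ of third-type cycles are then distinct elements of the pool $R^*(U)$, which has size $|R^*(U)|\le2kR_{\max}^{(a)}\le2kM$ by Lemma~\ref{lemma_terms}~(\ref{lemma_terms_rs}). Choosing $k'$ of them costs $\binom{2kM}{k'}\le(2ekM/k')^{k'}$, and each choice determines $x_i,z_i$ for free.
\end{enumerate}
Combining the two types and summing over which cycles have which type (at most $2^k$ ways), the bound is $e^{O(k)}\alpha^{2k}M^k\cdot k^{k'}/(k!\,(k')^{k'})$. This is $e^{O(k)}(\alpha^2M/k)^k$, since $k^{k'}/(k')^{k'}\le e^{k}$.

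The main obstacle is step (ii). The set $R^*(\widetilde{S})$ depends on the very choices being counted, so a naive local bound such as $|N(x_i)\cap R^*(U)|\le 2kR_{\max}^{(c)}$ would lose a factor $kM^{1/2}$ per cycle, which is too much when $k$ is close to $M$. The remedy I will pursue first is the one in (ii): fix $U$ before choosing the third-type $w_i$, and exploit that these $w_i$ are distinct and that each determines its $x_i,z_i$, so the global pool size $2kM$ replaces a per-cycle bound. If the ordering of choices does not decouple cleanly, the fallback is to bound the number of third-type cycles per $u_j$ by $R_{\max}^{(a)}$ and absorb the resulting multinomial into $e^{O(k)}$.
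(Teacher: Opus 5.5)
Your type-(i) count is fine. It is equivalent to the paper's, which instead picks $w_i$ among the $O(\alpha M)$ elements of $F\cup R^*(S)$ and then completes the cycle in $O(\alpha)$ ways.

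\textbf{The gap is step (ii), which is circular.} To fix $U$ you must already have chosen the $u_i,v_i$ of the third-type cycles themselves. But your bound of $\ell$ candidates for $u_i$ (resp.\ $v_i$) presupposes that $x_i$ (resp.\ $z_i$) is known, and in your scheme $x_i,z_i$ are only determined afterwards by $w_i$. Suppose you instead choose $u_i,v_i\in N(y_i)$ directly. That costs $\Theta(\alpha^2)$, after which $x_i,z_i$ are the other $S$-neighbours of $u_i,v_i$, and $w_i$ is forced by Lemma~\ref{lemma_cycle}. The pool $R^*(U)$ then saves nothing, and each third-type cycle costs $\Theta(\alpha^3)$, a factor $\alpha/M$ over budget.

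Nor can you order the choices so that every target is fixed first. Third-type cycles may point at each other, e.g.\ $w_1\in R^*(\{u_2,v_2\})$ and $w_2\in R^*(\{u_1,v_1\})$. Your fallback has the same problem. So the final bound $e^{O(k)}\alpha^{2k}M^k k^{k'}/(k!(k')^{k'})$ does not follow.

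Separately, your ``non-generic'' case never occurs: $w_i$ is never adjacent to $u_j$ or $v_j$ for $j\neq i$. Indeed $w_i$ already has its two neighbours $x_i,z_i$ in the MIS $S\setminus\{x_j,y_j\}\cup\{u_j,u_j'\}$, where $u_j'$ is the unique vertex of $N(x_j)\cap N(y_j)$ not adjacent to $u_j$.

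\textbf{The missing idea is a balancing argument for these dependency loops.} This is exactly what the paper's recurrence does; process the cycles in index order.
\begin{itemize}
\item If $w_{r+1}\in R^*(\{u_j,v_j\})$ with cycle $j$ already fixed, choose $j$ ($\le k$ ways), then $w_{r+1}$ ($\le 2R_{\max}^{(a)}$ ways). Then read off $x_{r+1},z_{r+1}$ from $N(w_{r+1})\cap S$ and choose $y_{r+1},u_{r+1},v_{r+1}$ in $O(\alpha)$ ways. The total is $O(kM\alpha)$, within budget since $k\le\alpha$. Your worried loss of a factor involving $kM^{1/2}$ only arises because you pick $x_i$ before $w_i$.
\item If cycle $j$ is not yet fixed, fix it naively ($O(\alpha^2)$ choices of $w_j$, then $O(\alpha)$ completions). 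Then fix cycle $r+1$ at cost $O(M\alpha)$, and multiply by $k$ for the choice of $j$. The pair costs $O(kM\alpha^4)=O((\alpha^2M)^2)$, and this is where $k=O(M)$ is used.
\end{itemize}
This gives $A_r=O(\alpha^2MA_{r+1}+\alpha^4M^2A_{r+2})$, hence at most $(2C\alpha^2M)^k$ cycle sequences. Dividing by $2^kk!$ (Lemma~\ref{lemma_sequences}) yields the claim.
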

	
	\begin{proof}[Proof of Lemma~\ref{lemma_stuck}]
		Let $\mathcal{R}$ be the set of all related pairs $(S,\widetilde{S})\in\mathcal{G}\times\mathcal{G}_k$. We define
		\begin{align}
			\Sigma&:=\sum_{(S,\widetilde{S})\in\mathcal{R}}\mathbb{P}(\boldsymbol{S}_{\infty}=S),
			\\\widetilde{\Sigma}&:=\sum_{(S,\widetilde{S})\in\mathcal{R}}\mathbb{P}(\boldsymbol{S}_{\infty}=\widetilde{S}).
		\end{align}
		By Lemma~\ref{lemma_ratio}, we get
		\begin{align}
			\widetilde{\Sigma}/\Sigma&=O((k^2/\alpha)^k),&&\mbox{if}\ k=O(1),
			\\\widetilde{\Sigma}/\Sigma&\leq\alpha^{o(k)}(k^2/\alpha)^k,&&\mbox{if}\ k=O(M).
		\end{align}
		By Lemma~\ref{lemma_related_good} and Lemma~\ref{lemma_related_bad}, we furthermore get
		\begin{align}
			&\widetilde{\Sigma}\geq e^{-O(k)}(\alpha^2k)^k\sum_{\widetilde{S}\in\mathcal{G}_k}\mathbb{P}(\boldsymbol{S}_{\infty}=\widetilde{S})\geq e^{-O(k)}(\alpha^2k)^k\mathbb{P}(|\boldsymbol{S}_{\infty}|=\alpha-k),
			\\&\Sigma\leq e^{O(k)}(\alpha^2M/k)^k\sum_{S\in\mathcal{G}}\mathbb{P}(\boldsymbol{S}_{\infty}=S)\leq e^{O(k)}(\alpha^2M/k)^k,\ \text{so}
			\\&\mathbb{P}(|\boldsymbol{S}_{\infty}|=\alpha-k)\leq e^{O(k)}(M/k^2)^{k}\widetilde{\Sigma}/\Sigma.
		\end{align}
		The result follows, since $e^{O(k)}=O(1)$ if $k=O(1)$ and $e^{O(k)}=\alpha^{o(k)}$ if $k=O(M)$.
	\end{proof}
	
	The proof of Lemma~\ref{lemma_ratio} uses Lemma~\ref{lemma_generalization}.
	
	\begin{proof}[Proof of Lemma~\ref{lemma_ratio}]
		By Lemma~\ref{lemma_generalization}, we have $$\mathbb{P}(\boldsymbol{S}_{\infty}=\widetilde{S})/\mathbb{P}(\boldsymbol{S}_{\infty}=S)\leq\mathscr{E}_k\mathscr{P}_k(\widetilde{S})/\mathscr{P}(S)$$ for some error term $\mathscr{E}_k$ with $\mathscr{E}_k=O(1)$ for $k=O(1)$ and $\mathscr{E}_k\leq\alpha^{o(k)}$ for $k=O(M)$. Recall that $\widetilde{S}$ is a maximal FIS, such that $K=|V\setminus\overline{N}(\widetilde{S})|=n_k$ by Corollary~\ref{corollary_hereditary}. It thus suffices to show that $$\frac{\mathscr{P}_k(\widetilde{S})}{\mathscr{P}(S)}=\frac{\exp\left(\frac12P|R^*(\widetilde{S})|-\b(\widetilde{S})\right)\prod_{t=k+1}^\alpha\frac{t-k}{n_t-n_k}}{\exp\left(\frac12P|R^*(S)|-\b(S)\right)\prod_{t=1}^\alpha\frac{t}{n_t}}\leq e^{O(k)}\left(\frac{k^2}{\alpha}\right)^k.$$
		
		Firstly, since $|S\setminus\widetilde{S}|=3k$ and $|\widetilde{S}\setminus S|=2k$, we have
		\begin{align}
			|\b(S)-\b(\widetilde{S})|&\leq|\b(S\setminus\widetilde{S})|+|\b(\widetilde{S}\setminus S)|=O(kM/\alpha),
			\\||R^*(\widetilde{S})|-|R^*(S)||&\leq|R^*(\widetilde{S}\setminus S)|+|R^*(S\setminus\widetilde{S})|=O(kM).
		\end{align}
		It follows that $$\frac{\exp\left(\tfrac12P|R^*(\widetilde{S})|-\b(\widetilde{S})\right)}{\exp\left(\tfrac12P|R^*(S)|-\b(S)\right)}=e^{o(1)}.$$
		
		Secondly, since $n_t=\tfrac{\ell}{2}t(t+\tfrac{2}{\ell}-1)$ and thus $n_t-n_k=\tfrac{\ell}{2}(t-k)(t+k+\tfrac{2}{\ell}-1)$, telescoping gives
		\begin{align}
			\frac{\prod_{t=k+1}^\alpha\frac{t-k}{n_t-n_k}}{\prod_{t=1}^\alpha\frac{t}{n_t}}
			&=\frac{\prod_{t=1}^{\alpha}\frac{\ell}{2}(t+\frac{2}{\ell}-1)}{\prod_{t=k+1}^{\alpha}\frac{\ell}{2}(t+k+\frac{2}{\ell}-1)}
			\\&=\left(\frac{\ell}{2}\right)^k\frac{\prod_{i=1}^{2k}(i+\frac{2}{\ell}-1)}{\prod_{i=1}^{k}(\alpha+i+\frac{2}{\ell}-1)}
			\\&\leq\left(\frac{\ell}{2}\right)^k\frac{(2k)!}{\alpha^k}
			\\&\leq e^{O(k)}\left(\frac{k^2}{\alpha}\right)^k
		\end{align}
		by Stirling's approximation.
	\end{proof}
	
	For the proofs of Lemma~\ref{lemma_related_good} and Lemma~\ref{lemma_related_bad}, we first need some results on chordless $6$-cycles. First, note that any vertex $v$ outside some independent set $S$ is adjacent to at most two elements of $S$ by $2$-uniformity. Indeed, let $\overline{S}$ be a maximal independent set that contains $S$, such that $\overline{S}$ is an MIS by Proposition \ref{proposition_imis}. Then $v$ is either in $\overline{S}\setminus S$, or adjacent to exactly two elements of $\overline{S}$. For a chordless $6$-cycle $xuyvzw$ and an independent set $S$ with $x,y,z\in S$, it follows that $N(u)\cap S=\{x,y\}$, $N(v)\cap S=\{y,z\}$, and $N(w)\cap S=\{x,z\}$. We furthermore use the following two lemmas on chordless $6$-cycles.
	
	\begin{lemma}\label{lemma_sequences}
		For any related sets $S\in\mathcal{G}$ and $\widetilde{S}\in\mathcal{G}_k$, there are exactly $2^kk!$ cycle sequences relating $S$ to $\widetilde{S}$.
	\end{lemma}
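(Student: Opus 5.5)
The plan is to show that every cycle sequence relating $S$ to $\widetilde{S}$ is determined, up to the order of the $k$ cycles and a reflection of each cycle, by $S$ and $\widetilde{S}$ alone. We then check that each of these $2^k k!$ relabellings yields a valid cycle sequence. The factor $k!$ comes from permuting the indices $i=1,\ldots,k$. The factor $2^k$ comes from the reflection of a single cycle $x_iu_iy_iv_iz_iw_i$ that fixes $y_i$ and $w_i$: it swaps $x_i\leftrightarrow z_i$ and $u_i\leftrightarrow v_i$, giving $z_iv_iy_iu_ix_iw_i$. This is again a chordless $6$-cycle with the same vertex sets $\{x_i,y_i,z_i\}$ and $\{u_i,v_i\}$ and the same $w_i$, so the condition $w_i\notin R^*(\{u_i,v_i\})$ still holds.

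For the count, fix one cycle sequence and consider $D:=\widetilde{S}\setminus S=\{u_i,v_i\}$ and $A:=S\setminus\widetilde{S}=\{x_i,y_i,z_i\}$. By the remark just before the lemma, $N(u_i)\cap S=\{x_i,y_i\}$ and $N(v_i)\cap S=\{y_i,z_i\}$. Consider the bipartite graph $B$ between $D$ and $A$ given by adjacency in $G$; each $u\in D$ has exactly two neighbours in $A$.

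We claim $B$ is exactly a disjoint union of $k$ paths $x_i-u_i-y_i-v_i-z_i$. Each $u_i,v_i$ has both its $S$-neighbours inside $\{x_i,y_i,z_i\}$, so $B$ has no edges between different cycles. Any cycle sequence realising $(S,\widetilde{S})$ must pair up $D$ into the $\{u_i,v_i\}$ and $A$ into triples forming the components of $B$. The components of $B$ are therefore recovered from $S,\widetilde{S}$: each has vertex set $\{x_i,u_i,y_i,v_i,z_i\}$ and is a path whose middle vertex $y_i$ has degree $2$ in $B$ and whose endpoints $x_i,z_i$ have degree $1$.

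It remains to see that $w_i$ is forced. It must be the common neighbour of $x_i$ and $z_i$ completing the chordless $6$-cycle, i.e.\ a vertex of $N(x_i)\cap N(z_i)$ not adjacent to $u_i,v_i,y_i$. I expect this to be the main obstacle, because $|N(x_i)\cap N(z_i)|=\ell$, so uniqueness is not automatic. For this I would use Proposition~\ref{proposition_even} applied to the non-adjacent pair $x_i,z_i$: the induced graph on $(N(x_i)\cap N(z_i))\cup\{x_i,z_i\}$ is the complement of a perfect matching, and $v_i$ lies in $N(x_i)\cap N(z_i)$? No — $v_i$ is adjacent to $z_i$ and $y_i$, not $x_i$.

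So instead I would apply Proposition~\ref{proposition_even} to the non-adjacent pair $u_i,v_i$ (both in $\widetilde{S}$). The candidates for $w_i$ are vertices adjacent to $x_i,z_i$ and to neither of $u_i,v_i$. In the configuration-space picture, with $u=\{a,b\}$ and $v=\{c,d\}$, take $y=\{b,c\}$, $x=\{a,e\}$, $z=\{d,f\}$; then $w$ must meet both $x$ and $z$ while avoiding $a,b,c,d$, forcing $w=\{e,f\}$. Abstractly, I would use $2$-uniformity on an MIS containing $\widetilde{S}\cup\{\ldots\}$ to show $w_i$ is the unique vertex with $N(w_i)\cap S\supseteq\{x_i,z_i\}$ that is non-adjacent to $u_i$ and $v_i$. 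The argument applies Proposition~\ref{proposition_even} inside the $2$-uniform graph induced on $V\setminus\overline{N}(S\setminus\{x_i,y_i,z_i\})$, which has $\alpha=3$ by Proposition~\ref{proposition_hereditary}. There one identifies the vertices outside $\overline{N}(\{u_i,v_i\})$ via Corollary~\ref{corollary_hereditary}: that set has $n_1=1$ element, which is exactly $w_i$. This gives uniqueness, so the total count is exactly $2^kk!$.
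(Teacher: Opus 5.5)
Your argument is correct and is essentially the paper's proof. The pairing of $\widetilde S\setminus S$ through shared $S$-neighbours (your components of $B$) fixes each $\{x_i,u_i,y_i,v_i,z_i\}$ up to reversal, and $w_i$ is forced as the unique vertex of $V\setminus\overline{N}\bigl((S\setminus\{x_i,y_i,z_i\})\cup\{u_i,v_i\}\bigr)$ by Corollary~\ref{corollary_hereditary}. Your detours through Proposition~\ref{proposition_even} are unnecessary, since that Corollary~\ref{corollary_hereditary} step alone gives uniqueness, and your explicit check that all $2^kk!$ relabellings are valid (and distinct) makes precise what the paper leaves implicit.
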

	
	\begin{proof}
		For any cycle sequence $(x_iu_iy_iv_iz_iw_i)_{i=1,\ldots,k}$, we have $N(u_i)\cap S=\{x_i,y_i\}$ and $N(v_i)\cap S=\{y_i,z_i\}$ for all $i=1,\ldots,k$ by $2$-uniformity. For each vertex $u\in\widetilde{S}\setminus S$, there is thus a unique distinct vertex $v\in\widetilde{S}\setminus S$ with $N(u)\cap N(v)\cap S$ a singleton, say $N(u)\cap S=\{x,y\}$ and $N(v)\cap S=\{y,z\}$. Then $S':=S\setminus\{x,y,z\}\cup\{u,v\}$ is an independent set of size $\alpha-1$, so $V\setminus\overline{N}(S')$ has a single vertex $w$ by Corollary~\ref{corollary_hereditary}. It follows that any cycle sequence must include either $xuyvzw$ or its inversion $zvyuxw$. The cycle sequence is therefore unique up to $2^k$ inversions and $k!$ permutations.
	\end{proof}
	
	\begin{lemma}\label{lemma_cycle}
		Let $x,y,z\in V$ be pairwise non-adjacent. Then for any $u\in N(x)\cap N(y)$, there are exactly $\tfrac{\ell}{2}$ vertices $v\in N(y)\cap N(z)\setminus N(u)$, each yielding exactly one vertex $w\in V$, such that $xuyvzw$ is a chordless $6$-cycle.
	\end{lemma}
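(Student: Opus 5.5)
We reduce to the small $2$-uniform graph cut out by $x,y,z$, settle existence, uniqueness and chordlessness of $w$ using only $2$-uniformity, and then count the admissible $v$.

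\textbf{Reduction.} Extend $\{x,y,z\}$ to a maximal independent set $S$, which is an MIS by Proposition~\ref{proposition_imis}. Put $S_0:=S\setminus\{x,y,z\}$ and let $H$ be the induced graph on $V\setminus\overline{N}(S_0)$. By Proposition~\ref{proposition_hereditary}, $H$ is $2$-uniform with $\alpha(H)=3$ and $\ell(H)=\ell$. By Proposition~\ref{proposition_n} it has $3\ell+3$ vertices and is regular of degree $2\ell$. Every vertex of $H$ outside $\{x,y,z\}$ has exactly two neighbours among $x,y,z$. So $V(H)$ is the disjoint union of $\{x,y,z\}$ and the three sets $N(x)\cap N(y)$, $N(y)\cap N(z)$, $N(x)\cap N(z)$, each of size $\ell$ (Proposition~\ref{proposition_ell}). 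In particular $u\in H$, $u$ is not adjacent to $z$, and every candidate $v$ lies in $H$.

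\textbf{Existence, uniqueness and chordlessness of $w$.} Fix $v\in N(y)\cap N(z)\setminus N(u)$. The set $S':=S_0\cup\{u,v\}$ is independent of size $\alpha-1$. By Corollary~\ref{corollary_hereditary}, $V\setminus\overline{N}(S')$ is a single vertex $w$; this is our candidate. I then check the adjacencies of $x,y,z$ against the MIS $S'\cup\{w\}$, using that each vertex outside it has exactly two neighbours in it. The vertex $x$ is adjacent to $u$, and not to $v$ or to $S_0$, so $x\sim w$. Symmetrically $z\sim v$ forces $z\sim w$. The vertex $y$ is adjacent to $u$ and $v$, so $y\not\sim w$. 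The remaining non-edges hold for the following reasons. $u\not\sim z$ and $v\not\sim x$ because $u$ and $v$ have only two neighbours in $S$. $x,y,z$ are pairwise non-adjacent by assumption, and $u\not\sim v$ by the choice of $v$. Finally $w$ is adjacent to neither $u$ nor $v$ by construction. Hence $xuyvzw$ is a chordless $6$-cycle. Conversely, in any such cycle, $w$ is adjacent to $x$ and $z$, hence to no vertex of $S_0$. It is also non-adjacent to $u$ and $v$, so $w\in V\setminus\overline{N}(S')$. This forces the same $w$, giving uniqueness.

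\textbf{Counting the admissible $v$.} Let $b:=|N(u)\cap N(y)\cap N(z)|$ and $a:=|N(u)\cap N(x)\cap N(z)|$; the goal is $b=\ell/2$. First, Proposition~\ref{proposition_even} applied to the non-adjacent pair $x,y$ shows that $u$ is adjacent to all of $N(x)\cap N(y)$ except one partner $u'$. This gives $u$ exactly $2+(\ell-2)=\ell$ neighbours among $\{x,y\}\cup(N(x)\cap N(y))$, so $a+b=2\ell-\ell=\ell$. Second, $\{u,u',z\}$ is an MIS of $H$. Hence each of the $2\ell$ vertices in $N(z)\cap(N(x)\cup N(y))$ is adjacent to exactly one of $u,u'$. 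It remains to break the symmetry between $a$ and $b$.

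\textbf{Main obstacle: $a=b$.} This is the step I expect to be hardest. I would first try to exhibit an involution exchanging the roles of $x$ and $y$ while fixing $u$ and $z$. Concretely, I would apply the same partition arguments to the MIS $\{u,z,w_0\}$ of $H$, where $w_0$ is the unique vertex completing $\{u,z\}$, and use the degree identity for $w_0$. Failing that, I would double count the edges between $N(x)\cap N(y)$ and $N(y)\cap N(z)$, once from each side, using Proposition~\ref{proposition_even} on the pairs $(y,z)$ and $(x,z)$ together with the $2$-uniformity constraint on the MISs $\{p,q,\cdot\}$ of $H$ with $p\in N(x)\cap N(y)$ and $q\in N(y)\cap N(z)$ non-adjacent. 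The aim is to show that every vertex of $N(x)\cap N(y)$ misses the same number of vertices in $N(y)\cap N(z)$ as in $N(x)\cap N(z)$, which together with $a+b=\ell$ yields $b=\ell/2$.
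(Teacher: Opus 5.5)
Your reduction to $H$, the construction of $w$ as the unique vertex of $V\setminus\overline{N}(S_0\cup\{u,v\})$, the chordlessness check, the uniqueness argument and the identity $a+b=\ell$ are all correct. For the $w$-part this is more direct than the paper, which reaches $w$ through the partner $u'$.

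The gap is the count itself, which is the substance of the lemma: you never establish $b=\ell/2$, and your first route, as stated, cannot supply it. The vertex completing $\{u,z\}$ to an MIS of $H$ is just $u'$. Its degree identity says that $u'$ has $\ell$ neighbours in $(N(x)\cup N(y))\cap N(z)$, i.e.\ $(\ell-a)+(\ell-b)=\ell$. That is again $a+b=\ell$ and does not separate $a$ from $b$. The double-counting alternative is only a sketch.

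The missing idea is to apply Proposition~\ref{proposition_even} to the pair $(y,z)$ as well, not only to $(x,y)$.
\begin{itemize}
\item It splits the $\ell$ vertices of $N(y)\cap N(z)$ into $\ell/2$ non-adjacent pairs $\{v_1,v_2\}$.
\item For each such pair, $S_0\cup\{x,v_1,v_2\}$ is independent of size $\alpha$, hence an MIS, since each $v_i$ has exactly $y,z$ as neighbours in $S$.
\item The vertex $u$ lies outside this MIS and is adjacent to $x$ and to nothing in $S_0$. By $2$-uniformity it is therefore adjacent to exactly one of $v_1,v_2$.
\end{itemize}
Hence exactly $\ell/2$ vertices of $N(y)\cap N(z)$ avoid $N(u)$, and your $w$-argument finishes the proof.

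Alternatively, your own construction already yields $a=b$. It sends each admissible $v$ to some $w\in N(x)\cap N(z)\setminus N(u)$. The same construction with $x$ and $y$ interchanged sends $w$ back to $v$, because $v$ is the unique vertex of $V\setminus\overline{N}(S_0\cup\{u,w\})$. So $v\mapsto w$ is a bijection from $N(y)\cap N(z)\setminus N(u)$ onto $N(x)\cap N(z)\setminus N(u)$.

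The paper does the analogous step with the non-adjacent pair $(u',z)$. There $N(u')\cap N(z)$ is exactly $(N(x)\cup N(y))\cap N(z)\setminus N(u)$, and each of its $\ell/2$ non-adjacent pairs is $\{v,w\}$ for exactly one admissible $v$.
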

	
	\begin{proof}
		Let $S$ be a maximal independent set that contains $x$, $y$, and $z$, such that $S$ is an MIS by Proposition \ref{proposition_imis}. By Proposition~\ref{proposition_even}, there is a unique vertex $u'\in N(x)\cap N(y)\setminus\overline{N}(u)$. By $2$-uniformity, we find that $S':=S\setminus\{x,y\}\cup\{u,u'\}$ is an MIS. Then $u'$ is not adjacent to $z$, so by Proposition~\ref{proposition_even} again, $N(u')\cap N(z)$ consists of $\frac{\ell}{2}$ pairs of non-adjacent vertices.
		
		Consider such a pair $v,w$ of non-adjacent vertices in $N(u')\cap N(z)$. By $2$-uniformity, we find that $S'\setminus\{u',z\}\cup\{v,w\}=S\setminus\{x,y,z\}\cup\{u,v,w\}$ is an MIS. By $2$-uniformity, the induced graph on $\{x,y,z,u,v,w\}$ is $2$-regular and bipartite with parts $\{x,y,z\}$ and $\{u,v,w\}$, from which it follows that exactly one of $xuyvzw$ and $xuywzv$ is a chordless $6$-cycle.
		
		Conversely, it remains only to show for any chordless $6$-cycle $xuyvzw$ that the vertices $v,w$ are such a non-adjacent pair in $N(u')\cap N(z)$. By $2$-uniformity, we have $N(v)\cap S=\{y,z\}$, and $N(v)\cap S'=(N(v)\cap S\setminus\{x,y\})\cup(N(v)\cap\{u,u'\})$ has cardinality two, from which it follows that $N(v)\cap S'=\{z,u'\}$. By symmetry, we also have $N(w)\cap S'=\{z,u'\}$, so the result follows.
	\end{proof}
	
	Note that, by symmetry, the roles of $x,y,z$ and $u,v,w$ may be swapped in Lemma~\ref{lemma_cycle} by symmetry. We can now prove Lemma~\ref{lemma_related_good} and Lemma~\ref{lemma_related_bad}.
	
	\begin{proof}[Proof of Lemma~\ref{lemma_related_good}]
		We iteratively choose pairwise disjoint chordless $6$-cycles $x_iu_iy_iv_iz_iw_i$ for $i=1,\ldots,k$. We require that $u_i,v_i\in\widetilde{S}$ and $w_i\not\in R^*(\{u_i,v_i\})$ for each $i=1,\ldots,k$. We furthermore require that $$\widetilde{S}_r:=\widetilde{S}\setminus\{u_i,v_i:i=1,\ldots,r\}\cup\{x_i,y_i,z_i:i=1,\ldots,r\}$$ is an FIS for each $r=0,\ldots,k$. Then $(x_iu_iy_iv_iz_iw_i)_{i=1,\ldots,k}$ is a cycle sequence relating $\widetilde{S}_k$ to $\widetilde{S}$ by construction.
		
		For any $r=1,\ldots,k$, since $|\widetilde{S}_{r-1}|=\alpha-k+r-1$, by Corollary~\ref{corollary_hereditary}, there are $n_{k-r+1}$ ways to choose $w_r\in V\setminus\overline{N}(\widetilde{S}_{r-1})$. Each such choice of $w_r$ gives at least $(\alpha-k-2r+2-R_{\max}^{(a)})(\alpha-k-2r+2-R_{\max}^{(a)}-1)=\Omega(\alpha^2)$ ways to choose distinct $u_r,v_r\in\widetilde{S}\setminus\{u_i,v_i:i=1,\ldots,r-1\}\setminus R^*(w_r)$. Each such choice of $u_r,v_r$ gives exactly $\ell$ ways to choose $y_r\in N(u_r)\cap N(v_r)$. Finally, by Lemma~\ref{lemma_cycle}, each such choice of $y_r$ gives exactly $\tfrac{\ell}{2}$ ways to choose $x_r,z_r\in V$ such that $x_ru_ry_rv_rz_rw_r$ is a chordless $6$-cycle. We conclude that there are $\Omega(n_{k-r+1}\alpha^2)$ ways to choose a chordless $6$-cycle $x_ru_ry_rv_rz_rw_r$, such that $w_r\in V\setminus\overline{N}(\widetilde{S}_{r-1})\setminus R^*(\{u_r,v_r\})$ and $u_r,v_r\in\widetilde{S}\setminus\{u_i,v_i:i=1,\ldots,r-1\}$.
		
		By induction, we show that $\widetilde{S}_r$ and $\hat{S}_r:=\widetilde{S}\cup\{w_1,\ldots,w_r\}$ are independent sets with $\overline{N}(\widetilde{S}_r)=\overline{N}(\hat{S}_r)$, and all chosen $6$-cycles are disjoint. First, since $S_r':=\widetilde{S}_{r-1}\cup\{w_r\}$ is an independent set, by $2$-uniformity, we have $N(x_r)\cap S_r'=\{u_r,w_r\}$, $N(y_r)\cap S_r'=\{u_r,v_r\}$, and $N(z_r)\cap S_r'=\{v_r,w_r\}$. It follows that $\widetilde{S}_r=S_r'\setminus\{u_r,v_r,w_r\}\cup\{x_r,y_r,z_r\}$ is an independent set. Next, we have $w_r\in V\setminus\overline{N}(\widetilde{S}_{r-1})=V\setminus\overline{N}(\hat{S}_{r-1})$, which makes $\hat{S}_r$ an independent set. Then it follows that $T_r:=\{u_i,v_i,w_i:i=1,\ldots,r\}\subset\hat{S}_r$ is an independent set of $3r$ distinct vertices. Since each vertex $x_i$, $y_i$, and $z_i$ for $i=1,\ldots,r$ has a unique set of two adjacent vertices in $T_r$ by $2$-uniformity, it follows that all chosen $6$-cycles are disjoint. Finally, note that
		\begin{align}
			\overline{N}(\hat{S}_r)
			&=\overline{N}(\hat{S}_{r-1})\cup\overline{N}(w_r)
			\\&=\overline{N}(\widetilde{S}_{r-1})\cup\overline{N}(w_r)
			\\&=\overline{N}(\widetilde{S}_r\setminus\{x_r,y_r,z_r\}\cup\{u_r,v_r,w_r\}).
		\end{align}
		Let $\overline{S}_r$ be a maximal independent set containing $\widetilde{S}_r$, such that $\overline{S}_r$ is an MIS by Proposition \ref{proposition_imis}. By $2$-uniformity, we have $N(u_r)\cap\overline{S}_r=\{x_r,y_r\}$, $N(v_r)\cap\overline{S}_r=\{y_r,z_r\}$, and $N(w_r)\cap\overline{S}_r=\{z_r,x_r\}$, so $\overline{S}_r\setminus\{x_r,y_r,z_r\}\cup\{u_r,v_r,w_r\}$ is also an MIS. By $2$-uniformity, we find that $V\setminus\overline{N}(\hat{S}_r)$ and $V\setminus\overline{N}(\widetilde{S}_r)$ are both the set of vertices adjacent to two vertices in $\overline{S}_r\setminus\widetilde{S}_r$, together with those vertices themselves. This finishes the proof by induction.
		
		We analyze when the resulting set $\widetilde{S}_r$ is an FIS. Since $\widetilde{S}_r$ is an independent set, it is an FIS if $x_r,y_r,z_r\not\in F\cup R^*(\widetilde{S}_r)$, for which it is sufficient to check if $x_r,y_r,z_r\not\in F\cup R^*(\widetilde{S}_{r-1})\cup R^*(\{x_r,y_r,z_r\})$. We show that, for each possible choice of $w_r\in V\setminus\overline{N}(\widetilde{S}_{r-1})$, there are $O(\alpha M)$ possible chosen $6$-cycles $x_ru_ry_rv_rz_rw_r$ for which this fails.
		
		\begin{itemize}
			\item There are at most $F_{\max}+R_{\max}^{(b)}=O(M)$ ways to choose $x_r\in N(w_r)\cap(F\cup R^*(\widetilde{S}_{r-1}))$. Then, by $2$-uniformity, there is at most one possible choice for $u_r\in N(x_r)\cap\widetilde{S}_{r-1}\setminus\{w_r\}$. Next, there are at most $\alpha$ ways to choose $v_r\in\widetilde{S}_{r-1}$. Finally, by Lemma~\ref{lemma_cycle}, there are exactly $\tfrac{\ell}{2}$ ways to choose $y_r,z_r$, such that $x_ru_ry_rv_rz_rw_r$ is a chordless $6$-cycle. This gives $O(\alpha M)$ possible chosen $6$-cycles with $x_r\in F\cup R^*(\widetilde{S}_{r-1})$. By symmetry, the same holds for $z_r\in F\cup R^*(\widetilde{S}_{r-1})$.
			\item There are at most $|F\cup R^*(\widetilde{S}_{r-1})|=O(\alpha M)$ ways to choose $y_r\in F\cup R^*(\widetilde{S}_{r-1})$ by Lemma~\ref{lemma_terms}~(\ref{lemma_terms_f},\ref{lemma_terms_rs}). By $2$-uniformity, there are at most two possible choices for distinct $u_r,v_r\in N(y_r)\cap\widetilde{S}_{r-1}$. Then, by Lemma~\ref{lemma_cycle}, there are exactly $\tfrac{\ell}{2}$ ways to choose $x_r,z_r$, such that $x_ru_ry_rv_rz_rw_r$ is a chordless $6$-cycle. This gives $O(\alpha M)$ possible chosen $6$-cycles with $y_r\in F\cup R^*(\widetilde{S}_{r-1})$.
			\item There are $\ell(\alpha-1)$ ways to choose $x_r\in N(w_r)$ by Proposition~\ref{proposition_n}. Then there are at most $R_{\max}^{(a)}\leq M$ ways to choose $y_r\in R^*(x_r)$. By $2$-uniformity, there is at most one possible choice for $u_r\in N(x_r)\cap\widetilde{S}_{r-1}\setminus\{w_r\}$ and $v_r\in N(y_r)\cap\widetilde{S}_{r-1}\setminus\{u_r\}$. Finally, by Lemma~\ref{lemma_cycle}, there is exactly one vertex $z_r\in V$, such that $x_ru_ry_rv_rz_rw_r$ is a chordless $6$-cycle. This gives $O(\alpha M)$ possible chosen $6$-cycles with $x_rRy_r$. The same general argument applies to $x_rRz_r$ and $y_rRz_r$ as well.
		\end{itemize}
		
		We conclude that there are $\Omega(n_{k-r+1}\alpha^2)$ ways to choose a chordless $6$-cycle $x_ru_ry_rv_rz_rw_r$, such that $u_r,v_r\in\widetilde{S}\setminus\{u_i,v_i:i=1,\ldots,r-1\}$, $w_r\not\in R^*(\{u_r,v_r\})$, and $\widetilde{S}_r$ is an FIS. For $\alpha$ large enough, we get a fixed constant $C>0$, such that this gives at least $Cn_{k-r+1}\alpha^2$ possible choices for each $r=1,\ldots,k$. By Lemma~\ref{lemma_sequences}, it follows that $\widetilde{S}$ is related to at least
		\begin{align}
			&\frac1{2^kk!}\prod_{r=1}^kCn_{k-r+1}\alpha^2
			\\&=\frac{C^k\alpha^{2k}}{2^kk!}\prod_{i=1}^k\left(\tfrac{\ell}2i\left(i+\tfrac2{\ell}-1\right)\right)
			\\&=\left(\tfrac14\ell C\alpha^2\right)^k\prod_{i=1}^k\left(i+\tfrac{2}{\ell}-1\right)
			\\&\geq\left(\tfrac14\ell C\alpha^2\right)^k\cdot\tfrac{2}{\ell}(k-1)!
			\\&\geq e^{-O(k)}(\alpha^2k)^k
		\end{align}
		sets $S\in\mathcal{G}$ by Stirling's approximation, in particular, since $\log k\leq k$.
	\end{proof}
	
	\begin{proof}[Proof of Lemma~\ref{lemma_related_bad}]
		Consider a subset $I\subset\{1,\ldots,k\}$ and a corresponding sequence $(x_iu_iy_iv_iz_iw_i)_{i\in I}$ of $6$-cycles. We define $A((x_iu_iy_iv_iz_iw_i)_{i\in I})$ as the number of ways we can choose $6$-cycles $(x_iu_iy_iv_iz_iw_i)_{i\in\{1,\ldots,k\}\setminus I}$, such that $(x_iu_iy_iv_iz_iw_i)_{i=1,\ldots,k}$ is a cycle sequence relating $S$ to some set $\widetilde{S}\in\mathcal{G}_k$.
		
		Let $A(I)$ be the maximum of $A((x_iu_iy_iv_iz_iw_i)_{i\in I})$ over all possible sequences $(x_iu_iy_iv_iz_iw_i)_{i\in I}$ of $6$-cycles. Note that this only depends on the size of $I$ by symmetry, so it suffices to consider $A_r:=A(\{1,\ldots,r\})$ for $r=0,\ldots,k$. Then $A_0$ counts all possible cycle sequences relating $S$ to some set $\widetilde{S}\in\mathcal{G}_k$, so by Lemma~\ref{lemma_sequences}, $S$ relates to $\tfrac1{2^kk!}A_0$ such sets. By Stirling's approximation, it suffices to show $A_0\leq e^{O(k)}(\alpha^2M)^k$.
		
		Consider a cycle sequence $(x_iu_iy_iv_iz_iw_i)_{i=1,\ldots,k}$ relating $S$ to some set $\widetilde{S}\in\mathcal{G}_k$. Since $\widetilde{S}$ is a maximal FIS, we get $$w_1,\ldots,w_k\in V\setminus\overline{N}(\widetilde{S})\subset F\cup R^*(\widetilde{S})\subset F\cup R^*(S)\cup R^*(\{u_i,v_i:i=1,\ldots,k\}).$$ For $i=1,\ldots,k$, the vertex $w_i$ is thus either in $F\cup R^*(S)$ or in $R^*(\{u_j,v_j\})$ for some $j=1,\ldots,k$. Note that $i=j$ is not allowed, by the definition of a cycle sequence, so we only need to consider $j<i$ and $j>i$.
		
		We study the values $A_r$ for $r$ from $k$ down to $0$. Let us define $A_r=0$ for $r>k$ for convenience. Note that $A_k\leq1$, because if $I=\{1,\ldots,k\}$, then there is nothing left to choose.
		
		Next, consider choosing a $6$-cycle $x_iu_iy_iv_iz_iw_i$ for some $i=1,\ldots,k$ with $w_i$ already chosen. By $2$-uniformity, there are exactly two ways to choose distinct $x_i,z_i\in N(w_i)\cap S$. Next, there are at most $\alpha$ ways to choose $y_i\in S$. Finally, by Lemma~\ref{lemma_cycle}, there are exactly $\tfrac{\ell}{2}$ ways to choose $u_i,v_i$, such that $x_iu_iy_iv_iz_iw_i$ is a chordless $6$-cycle. So, with $w_i$ already chosen, there are at most $\ell\alpha$ ways to complete the cycle.
		
		For $r\leq k-1$, we bound $A_r$ in terms of $A_{r+1}$ and $A_{r+2}$ by distinguishing the three cases for $w_{r+1}$.
		
		\begin{itemize}
			\item Suppose $w_{r+1}\in F\cup R^*(S)$. There are at most $|F\cup R^*(S)|=O(\alpha M)$ ways to choose $w_{r+1}$ by Lemma~\ref{lemma_terms}~(\ref{lemma_terms_f},\ref{lemma_terms_rs}). Then there are at most $\ell\alpha$ ways to complete the cycle $x_{r+1}u_{r+1}y_{r+1}v_{r+1}z_{r+1}w_{r+1}$. Finally, we have at most $A(\{1,\ldots,r+1\})=A_{r+1}$ possible ways to choose the remaining cycles. In total, this gives $O(\alpha^2MA_{r+1})$ possible choices.
			\item Suppose $w_{r+1}\in R^*(\{u_j,v_j\})$ for some $j<r+1$. First, there are $r\leq k=O(M)$ ways to choose $j$. Then there are at most $2R_{\max}^{(a)}=O(M)$ ways to choose $w_{r+1}\in R^*(\{u_j,v_j\})$. The rest of the analysis is exactly the same as in the last case, giving $O(\alpha M^2A_{r+1})$ possible choices in total.
			\item Suppose $w_{r+1}\in R^*(\{u_j,v_j\})$ for some $j>r+1$. First, there are $k-r-1\leq k=O(M)$ ways to choose $j$. Then there are $n_{\alpha}=O(\alpha^2)$ ways to choose $w_j\in V$ by Lemma~\ref{lemma_terms}~(\ref{lemma_terms_n}), and at most $\ell\alpha$ ways to complete the cycle $x_ju_jy_jv_jz_jw_j$. Next, there are at most $2R_{\max}^{(a)}\leq2M$ ways to choose $w_{r+1}\in R^*(\{u_j,v_j\})$. Again, there are at most $\ell\alpha$ ways to complete the cycle $x_{r+1}u_{r+1}y_{r+1}v_{r+1}z_{r+1}w_{r+1}$. Finally, we have at most $A(\{1,\ldots,r+1,j\})=A_{r+2}$ possible ways to choose the remaining cycles. In total, this gives $O(\alpha^4M^2A_{r+2})$ possible choices.
		\end{itemize}
		
		We get the recurrence $A_r=O(\alpha^2MA_{r+1}+\alpha^4M^2A_{r+2})$. For $\alpha$ large enough, we thus get $A_r\leq C\alpha^2MA_{r+1}+C^2\alpha^4M^2A_{r+2}$ for some fixed constant $C>0$. By strong induction, it follows that $A_r\leq(2C\alpha^2M)^{k-r}$ for all $r=0,\ldots,k$. The result follows.
	\end{proof}
	
	This finishes the proof of Theorem~\ref{theorem_main}.
	
	\section*{Acknowledgments}
	IK gratefully acknowledges support from Netherlands Research Organisation (NWO), research program VIDI, project number VI.Vidi.213.108.
	
	\printbibliography

\end{document}